\documentclass[12pt, reqno]{amsart}
\usepackage[a4paper, margin=30mm]{geometry}
\usepackage{eucal, graphicx, tikz-cd, amssymb}
\usepackage[colorlinks=true,urlcolor=blue,linkcolor=blue, citecolor=blue]{hyperref}

\let\nc\newcommand

\nc{\cmtd}[1]{\color{blue}{{\fbox{D}} #1}\color{black}}
\nc{\cmtk}[1]{\color{red}{{\fbox{K}} #1}\color{black}}

\newcommand{\mb}[1]{{\mathbb{#1}}}
\newcommand{\mc}[1]{{\mathcal{#1}}}

\newcommand{\mf}[1]{{\mathfrak{#1}}}
\newcommand{\mrm}[1]{{\mathrm{#1}}}
\newcommand{\mit}[1]{{\mathit{#1}}}

\newcommand{\mhm}{\mrm{MHM}}
\newcommand{\HC}{\mrm{HC}}
\newcommand{\Mod}{\mrm{Mod}}
\newcommand{\Db}{{\mc{D}\mathit{b}}}
\newcommand{\shom}{{\mc{H}\mit{om}}}

\renewcommand{\hom}{\mrm{Hom}}
\newcommand{\coh}{\mrm{Coh}}

\newcommand{\mon}{{\mathit{mon}}}

\newcommand{\id}{{\mathrm{id}}}

\DeclareMathOperator{\Gr}{Gr}
\DeclareMathOperator{\spec}{Spec}
\DeclareMathOperator{\coker}{coker}

\DeclareMathOperator*{\Res}{Res}
\DeclareMathOperator{\pro}{Pro}

\DeclareMathOperator{\codim}{codim}

\makeatletter
\newcommand*\bigcdot{\mathpalette\bigcdot@{.5}}
\newcommand*\bigcdot@[2]{\mathbin{\vcenter{\hbox{\scalebox{#2}{$\m@th#1\bullet$}}}}}
\makeatother

\theoremstyle{plain}
\newtheorem{thm}{Theorem}[section]
\newtheorem{conj}[thm]{Conjecture}
\newtheorem{prop}[thm]{Proposition}
\newtheorem{cor}[thm]{Corollary}
\newtheorem{lem}[thm]{Lemma}
\theoremstyle{definition}
\newtheorem{defn}[thm]{Definition}
\newtheorem{rmk}[thm]{Remark}
\newtheorem{notation}[thm]{Notation}

\numberwithin{equation}{section}

\nc{\fA}{{\mathfrak A}}
\nc{\fB}{{\mathfrak B}}
\nc{\fC}{{\mathfrak C}}
\nc{\fD}{{\mathfrak D}}
\nc{\fE}{{\mathfrak E}}
\nc{\fF}{{\mathfrak F}}
\nc{\fG}{{\mathfrak G}}
\nc{\fH}{{\mathfrak H}}
\nc{\fI}{{\mathfrak I}}
\nc{\fJ}{{\mathfrak J}}
\nc{\fK}{{\mathfrak K}}
\nc{\fL}{{\mathfrak L}}
\nc{\fM}{{\mathfrak M}}
\nc{\fN}{{\mathfrak N}}
\nc{\fO}{{\mathfrak O}}
\nc{\fP}{{\mathfrak P}}
\nc{\fQ}{{\mathfrak Q}}
\nc{\fR}{{\mathfrak R}}
\nc{\fS}{{\mathfrak S}}
\nc{\fT}{{\mathfrak T}}
\nc{\fU}{{\mathfrak U}}
\nc{\fV}{{\mathfrak V}}
\nc{\fW}{{\mathfrak W}}
\nc{\fZ}{{\mathfrak Z}}
\nc{\fX}{{\mathfrak X}}
\nc{\fY}{{\mathfrak Y}}
\nc{\fa}{{\mathfrak a}}
\nc{\fb}{{\mathfrak b}}
\nc{\fc}{{\mathfrak c}}
\nc{\fd}{{\mathfrak d}}
\nc{\fe}{{\mathfrak e}}
\nc{\ff}{{\mathfrak f}}
\nc{\fg}{{\mathfrak g}}
\nc{\fh}{{\mathfrak h}}
\nc{\fiI}{{\mathfrak i}}  
\nc{\ffi}{{\mathfrak i}}  
\nc{\fj}{{\mathfrak j}}
\nc{\fk}{{\mathfrak k}}
\nc{\fl}{{\mathfrak{l}}}
\nc{\fm}{{\mathfrak m}}
\nc{\fn}{{\mathfrak n}}
\nc{\fo}{{\mathfrak o}}
\nc{\fp}{{\mathfrak p}}
\nc{\fq}{{\mathfrak q}}
\nc{\fr}{{\mathfrak r}}
\nc{\fs}{{\mathfrak s}}
\nc{\ft}{{\mathfrak t}}
\nc{\fu}{{\mathfrak u}}
\nc{\fv}{{\mathfrak v}}
\nc{\fw}{{\mathfrak w}}
\nc{\fz}{{\mathfrak z}}
\nc{\fx}{{\mathfrak x}}
\nc{\fy}{{\mathfrak y}}

\nc{\cB}{{\mathcal B}}
\nc{\cD}{{\mathcal D}}
\nc{\cM}{{\mathcal M}}
\nc{\cN}{{\mathcal N}}
\nc{\cO}{{\mathcal O}}
\nc{\cI}{{\mathcal I}}

\nc{\bC}{{\mathbb C}}
\nc{\bR}{{\mathbb R}}
\nc{\bZ}{{\mathbb Z}}
\nc{\bD}{{\mathbb D}}
\nc{\bL}{{\mathbb L}}

\newcommand{\Hom}{\operatorname{Hom}}

\title[Unitary representations and localization for Hodge modules]{Unitary representations of real groups and localization theory for Hodge modules}
\author{Dougal Davis}
\author{Kari Vilonen}

\dedicatory{Dedicated to Wilfried Schmid on the occasion of his 80th birthday}

\begin{document}

\begin{abstract}
We prove a conjecture of Schmid and the second named author \cite{SV} that the unitarity of a representation of a real reductive Lie group with real infinitesimal character can be read off from a canonical filtration, the Hodge filtration. Our proof rests on three main ingredients. The first is a wall crossing theory for mixed Hodge modules: the key result is that, in certain natural families, the Hodge filtration varies semi-continuously with jumps controlled by extension functors. The second ingredient is a Hodge-theoretic refinement of Beilinson-Bernstein localization: we show that the Hodge filtration of a mixed Hodge module on the flag variety satisfies the usual cohomology vanishing and global generation properties enjoyed by the underlying $\mc{D}$-module. The third ingredient is an explicit calculation of the Hodge filtration on a tempered Hodge module. As byproducts of our work, we obtain a version of Saito's Kodaira vanishing for twisted mixed Hodge modules, a calculation of the Hodge filtration on a certain object in category $\mc{O}$, and a host of new vanishing results for coherent sheaves on flag varieties.
\end{abstract}

\maketitle

\tableofcontents

\section{Introduction}

In \cite{SV}, Schmid and the second named author proposed a conceptual approach to the notoriously difficult problem of computing the unitary dual of a real reductive Lie group. They observed that an irreducible representation with real infinitesimal character carries a canonical Hodge filtration and conjectured that unitarity of the representation can be read off from the Hodge filtration. We prove this conjecture here. The main ingredients in the proof are new algebro-geometric vanishing theorems and deformation arguments for Hodge modules, as well as explicit calculations of the Hodge filtrations on several modules of interest.

The problem of determining the unitary dual of a Lie group, i.e., the set of its irreducible unitary representations, has a long history, going back at least to the 1930s; see, for example, \cite[\S 1]{ALTV} for a brief overview. The solution is known for nilpotent and (reasonable) solvable groups. This leaves the real reductive groups as the principal open case. We will adopt the convention that a real reductive group is a finite cover of an open subgroup of a linear reductive group, the latter being a real form of a connected complex reductive group.

We will not attempt to recall here all that is known about the unitary duals of real groups. Suffice it to say that a number of cases are known by explicit calculation (e.g., for $\mrm{GL}_n(F)$, $F = \mb{R}, \mb{C}, \mb{H}$ \cite{vogan-gln}, complex classical groups \cite{barbasch}, and several others) and that there is an algorithm, developed by Adams, van Leeuwen, Trapa and Vogan \cite{ALTV} (building on several decades of work by many authors) that can in principle compute the lists of unitary representations for linear real groups. This algorithm has been implemented effectively in the {\tt atlas} software, although computational resources become an issue for large examples.

Nevertheless, we are still far from a complete understanding of the unitary dual, even in the linear case. At present, there is not even a precise conjecture as to which representations are unitary in general. In the 1960s a conceptual approach to the problem was proposed, the orbit method, whose main idea is to produce the unitary representations by quantizing co-adjoint orbits.  It has turned out to be difficult to implement, however, and has so far served more as a guiding principle. 

The motivation behind \cite{SV} was to provide a different conceptual geometric approach to the problem of the unitary dual that could potentially overcome the limitations of the existing methods. It was proposed that a representation carries an (infinite dimensional) Hodge structure and that this Hodge structure should be obtained from Morihiko Saito's theory of mixed Hodge modules via Beilinson-Bernstein localization. Once the problem is cast within this framework many tools and techniques, in particular functoriality, can be used which were not available before. For a gentle introduction, see \cite{SV2}, where the theory is worked out for $\mrm{SL}_2(\mb{R})$. 

In this paper, we carry out enough of this program to obtain the desired results about unitarity: the main result in this direction is Theorem \ref{thm:intro unitarity criterion} below, which gives a complete characterization of unitary representations in terms of Hodge theory. Our unitarity criterion works for arbitrary real groups, not necessarily linear. En route to this result, we also prove several other results of both algebro-geometric and representation-theoretic interest: a general theorem on deformations and wall-crossing for mixed Hodge modules (Theorem \ref{thm:intro semi-continuity}), a generalization of Saito's Kodaira vanishing (Theorem \ref{thm:intro twisted kodaira}), a Hodge-theoretic version of Beilinson-Bernstein localization (Theorems \ref{thm:intro filtered exactness} and \ref{thm:intro full faithfulness}) and calculations of the Hodge filtrations on certain representations of particular importance (Theorems \ref{thm:intro xi hodge} and \ref{thm:intro tempered}). We explain these results in more detail below.

Let us begin with our most general results for mixed Hodge modules on arbitrary complex varieties. The general theory of mixed Hodge modules, constructed by Saito \cite{S1, S2}, is a vast generalization of classical Hodge theory. The objects are $\mc{D}$-modules equipped with extra structures, such as a Hodge filtration $F_\bullet$ and a weight filtration $W_\bullet$. These satisfy remarkable strictness properties, as well as all the good functoriality properties of holonomic $\mc{D}$-modules. With an eye on Beilinson-Bernstein localization, our primary interest here is in \emph{twisted} mixed Hodge modules. These can be thought of informally as mixed Hodge modules twisted by an $\mb{R}$-line bundle, or somewhat more formally as twisted $\mc{D}$-modules with Hodge structure; we give a precise definition in \S\ref{subsec:monodromic}. For example, if $\mc{L}$ is a line bundle on a smooth variety $X$, then for every $a \in \mb{R}$ we have an associated category
\[ \mhm_{\mc{L}^a}(X) =\mhm_a(\mc{L}^\times)\]
of $\mc{L}^{a}$-twisted mixed Hodge modules on $X$. When $a \in \mb{Z}$, the twisted theory reduces to
\[ \mhm_{\mc{L}^a}(X) = \{\mc{M} \otimes \mc{L}^a \mid \mc{M} \in \mhm(X)\},\]
where $\mhm(X)$ is the usual (untwisted) category of mixed Hodge modules on $X$.

Our first main theorem is a technical tool for controlling the associated graded of a twisted mixed Hodge module with respect to the Hodge filtration, via deformations and wall crossing. Suppose that $j \colon Q \hookrightarrow X$ is an affinely embedded subvariety and $f \in \mrm{H}^0(\bar{Q}, \mc{L})$ is a global equation for the boundary. Then, for $\mc{M} \in \mhm_{\mc{L}^a}(Q)$, there is an associated $1$-parameter deformation $f^s\mc{M} \in \mhm_{\mc{L}^{a + s}}(Q)$ for $s \in \mb{R}$. Pushing this forward, one obtains $\mc{L}^{a + s}$-twisted mixed Hodge modules $j_!f^s\mc{M}$ and $j_*f^s\mc{M}$ on $X$, such that $j_!f^s\mc{M} = j_*f^s\mc{M}$ for $s$ outside a discrete set. Our first main theorem describes the behavior of the associated gradeds as we vary $s$.

\begin{thm} \label{thm:intro semi-continuity}
The Hodge filtrations on $j_!f^s\mc{M}$ and $j_*f^s\mc{M}$ are semi-continuous in the sense that there are canonical isomorphisms
\[ \Gr^F j_!f^{s - \epsilon} \mc{M} \cong \Gr^F j_!f^s\mc{M} \quad \text{and} \quad \Gr^Fj_*f^{s + \epsilon} \mc{M} \cong \Gr^Fj_* f^s\mc{M} \]
for $0 < \epsilon \ll 1$.
\end{thm}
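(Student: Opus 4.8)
The plan is to reduce to a one-variable computation with the $V$-filtration along the boundary and to read both statements off Saito's explicit descriptions of the Hodge filtrations on $!$- and $*$-extensions, the asymmetry between $j_!$ and $j_*$ matching the asymmetry of the half-open truncation windows that enter those descriptions. First I would observe that $j_\bullet f^s\mc{M}$ and $\Gr^F$ are local on $X$ and compatible with restriction to opens, and that the content of the statement is concentrated near $\partial Q$; so I may assume $X$ is a smooth affine neighbourhood of $\partial Q$, $\mc{L}$ is trivial, and — after passing to the graph of a trivialisation of $f$ — that $f = t$ is a coordinate, $\bar Q = X$, $Q = \{t \neq 0\}$, $j\colon Q \hookrightarrow X$ the complementary immersion and $D = \{t = 0\}$ a smooth divisor. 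Second, since $j_! = \mb{D}_X j_* \mb{D}_Q$, since $\mb{D}_Q(f^s\mc{M}) \cong f^{-s}\mb{D}_Q\mc{M}$ up to twist and Tate shift, and since $\Gr^F\mb{D}(-) \cong \mb{D}_{T^*X}\Gr^F(-)$ up to shift, the $j_!$-isomorphism for $(\mc{M},s)$ is equivalent to the $j_*$-isomorphism for $(\mb{D}_Q\mc{M}, -s)$. Thus it suffices to prove right-continuity of $\Gr^F j_*f^s\mc{M}$.

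The core input, to be assembled from the monodromic formalism of \S\ref{subsec:monodromic} together with Saito's theory and valid uniformly in $s$, is a dictionary between these extensions and the $V$-filtration. Let $\widetilde{f^s\mc{M}}$ denote the canonical meromorphic extension across $D$ of the underlying twisted $\mc{D}$-module, carrying its $V$-filtration along $t$ and its Hodge filtration. I will use: (i) $F_\bullet(j_*f^s\mc{M})$ and $F_\bullet(j_!f^s\mc{M})$ are computed by applying the operators $\partial_t^i$ (with the corresponding raising of the Hodge index) to the Hodge filtration on a truncation of $\widetilde{f^s\mc{M}}$, the truncation being determined by a half-open length-one window of $V$-exponents — closed at one end for $j_*$ and at the other for $j_!$; (ii) the deformation $f^s$ translates $V$-exponents, in the sense that there is a canonical filtered identification $\Gr^V_\alpha\widetilde{f^{s+\delta}\mc{M}} \cong \Gr^V_{\alpha-\delta}\widetilde{f^s\mc{M}}$ for all $\delta \in \mb{R}$, compatible with $t$ and $\partial_t$ up to the usual one-step Hodge shifts ($t$ preserving, $\partial_t$ raising the index), compatible with weight filtrations and with restriction, and leaving the Hodge filtration on $\widetilde{f^s\mc{M}}$ itself unchanged as $s$ varies. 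An immediate consequence of (ii) is that the $V$-exponents of $\widetilde{f^s\mc{M}}$ are those of $\mc{M}$ translated by $s$, a set discrete modulo $\mb{Z}$; this already recovers the fact that $j_!f^s\mc{M} = j_*f^s\mc{M}$ off the discrete set of walls where some exponent becomes integral.

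Granting (i)--(ii), the continuity is the sawtooth combinatorics of a half-open interval. By (i), $\Gr^F(j_*f^s\mc{M})$ is built — via $\bigoplus_{i \ge 0}\partial_t^i$ and the attendant index shifts — out of the pieces $\Gr^V_\alpha\widetilde{f^s\mc{M}}$ with $\alpha$ in the fixed window $J$ attached to $j_*$, together with their induced Hodge filtrations. Choose $0 < \epsilon \ll 1$ so that $(s, s+\epsilon)$ contains no wall. Then, by (ii), translating the exponents of $\widetilde{f^{s+\epsilon}\mc{M}}$ down by $\epsilon$ carries the pieces lying in $J$ bijectively onto the pieces of $\widetilde{f^s\mc{M}}$ lying in $J$, \emph{precisely because} $J$ is closed at the end towards which exponents move as $\epsilon \downarrow 0$ — which is Saito's convention for $j_*$ — so that no exponent can slip in or out of $J$ through its closed end; the canonical identifications of (ii) then match the corresponding $\Gr^V$-pieces with their Hodge and weight filtrations. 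Assembling these term by term, and invoking the strictness built into Saito's theory to see that they glue to a morphism of filtered $\mc{D}$-modules, yields the canonical isomorphism $\Gr^F j_*f^{s+\epsilon}\mc{M} \cong \Gr^F j_*f^s\mc{M}$. Dualising as in the first paragraph — where the window attached to $j_!$ is closed at the opposite end, giving left-continuity — produces $\Gr^F j_!f^{s-\epsilon}\mc{M} \cong \Gr^F j_!f^s\mc{M}$.

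I expect the real work to be entirely in step (i)--(ii): making Saito's $!$/$*$ formulas and the interaction of the $V$-filtration with the deformation $f^s$ precise and uniform in $s$ in the twisted (monodromic) and, after the graph embedding, not-necessarily-clean situation — in particular establishing that $f^s$ translates $V$-exponents while preserving the Hodge filtration, and that the resulting identifications of $\Gr^V$-pieces are canonical and compatible with the $\partial_t$-towers, the weight filtrations, and restriction. Once this dictionary is in hand, the semi-continuity itself is just the elementary behaviour of a half-open length-one window sliding over a discrete set of exponents, and gluing the term-by-term identifications into a single isomorphism of filtered $\mc{D}$-modules is guaranteed by the strictness properties of Saito's theory.
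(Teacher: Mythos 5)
Your overall strategy coincides with the paper's core argument: after a graph embedding you invoke the $V$-filtration descriptions of the Hodge filtrations on the two extensions (the paper's \eqref{eq:shriek hodge}--\eqref{eq:star hodge}), observe that twisting by $f^{\epsilon}$ translates $V$-exponents, and read off one-sided constancy from the strict versus non-strict inequality at $-1$; your reduction of the $j_!$ statement to the $j_*$ statement by filtered duality is a legitimate (if unnecessary) variant, and your continuity directions are correct. The genuine gap is in the final assembly step. There is no morphism between $j_*f^{s+\epsilon}\mc{M}$ and $j_*f^s\mc{M}$ (they are modules over different twisted rings and are different $\mc{O}$-submodules of the meromorphic extension), so the asserted canonical isomorphism of $\Gr^F$'s cannot be ``induced''; and matching the pieces $\Gr^F\Gr_V^\alpha$ together with their $\partial_t$-towers does not determine $\Gr^F j_*f^s\mc{M}$ --- the $V$-filtration on $\Gr^F$ leaves an extension problem, and an appeal to ``strictness built into Saito's theory'' does not by itself glue your term-by-term identifications into an isomorphism of sheaves on $T^*X$. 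This is precisely where the paper does its real work: it interpolates in the deformation parameter, forming $j_*^{(s_0)}f^s\mc{M}[s]$ (the union over $p$ of $\varprojlim_n F_p\, j_*(f^s\mc{M}[s]/(s-s_0)^n)$), proves via pro-mixed Hodge modules, a resolution with simple normal crossings boundary and faithfulness of $\Gr_V^{-1}$ on modules supported on the divisor (Lemma \ref{lem:completed inclusion}) that these embed strictly into $j_+f^s\mc{M}[s]$ and are unchanged when $s_0$ is moved slightly in the appropriate direction (Lemma \ref{lem:semi-continuity inclusion}); then $\Gr^F j_*f^{s_0}\mc{M}$ and $\Gr^F j_*f^{s_0+\epsilon}\mc{M}$ are both cokernels of the \emph{same} map $s$ on the \emph{same} graded sheaf (scalars disappear in $\Gr^F$), which is what makes the isomorphism canonical. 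Your items (i)--(ii), demanded ``uniformly in $s$'', are in effect the specializations of this interpolating object, so what you defer as ``the real work'' is not a routine citation of Saito but the heart of the proof.

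Two smaller points. First, your one-coordinate reduction via the graph embedding is available only because in the statement as given $f$ is a single section of $\mc{L}$ cutting out the boundary; in the paper's general form (Theorem \ref{thm:semi-continuity}), $f\in\Gamma_\mb{R}(Q)_+$ is a product of real powers of boundary equations (Proposition \ref{prop:positivity}), no graph embedding exists, and an additional induction on the number of factors (the two-variable construction in the proof of Lemma \ref{lem:semi-continuity inclusion}) is required. Second, since your isomorphism is built locally after choosing a trivialization of $\mc{L}$ and a lift of $f$, you still owe an argument that these local identifications are independent of the choices, glue over $X$, and are compatible with the weak $H$-action so that they descend to the twisted setting (cf.\ Proposition \ref{prop:monodromic semi-continuity}); the paper sidesteps this by exhibiting both sides globally as cokernels of a single map between globally defined objects.
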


Theorem \ref{thm:intro semi-continuity} is proved in a slightly more general context in \S\ref{sec:deformations} as Theorem \ref{thm:semi-continuity}. By a series of wall-crossings, it reduces the problem of relating the $\Gr^F j_!f^s\mc{M}$ for different $s$ to the well-studied problem of relating $j_!f^s\mc{M}$ to $j_*f^s\mc{M}$. As well as being a key tool in our study of localization theory and unitary representations, we believe that Theorem \ref{thm:intro semi-continuity} should be of interest far beyond the scope of the present work, such as in singularity theory, where Hodge filtrations on twisted $\mc{D}$-modules of the form $j_*f^s\mc{O}$ are emerging as a powerful invariant (see, e.g., \cite{mustata-popa, schnell-yang}). Theorem \ref{thm:intro semi-continuity} also leads to a short and elegant proof of the following twisted version of Saito's Kodaira vanishing for mixed Hodge modules \cite[Proposition 2.33]{S2}.

\begin{thm} \label{thm:intro twisted kodaira}
Assume that $X$ is projective, $\mc{L}$ is ample, $a > 0$ and $\mc{M} \in \mhm_{\mc{L}^a}(X)$. Then
\[ \mb{H}^i(X, \Gr^F_p\mrm{DR}(\mc{M})) = 0 \quad \text{for $i > 0$ and all $p$},\]
where
\[ \Gr_p^F \mrm{DR}(\mc{M}) = [\Gr_p^F \mc{M} \to \Gr_{p + 1}^F\mc{M} \otimes \Omega^1_X \to \cdots \to \Gr_{p + \dim X}^F\mc{M} \otimes \Omega_X^{\dim X}].\]
\end{thm}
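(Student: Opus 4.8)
The plan is to run the strategy behind Saito's own proof of the untwisted Kodaira vanishing \cite[Proposition 2.33]{S2} — reduce to an affine open complement of an ample divisor, where Artin vanishing for perverse sheaves applies — and to use Theorem~\ref{thm:intro semi-continuity} to handle the terms that are not of $*$-extension type. Saito's argument replaces $\mc{M}$ by its localization $\mc{M}(*D)=j_*j^*\mc{M}$ along an ample divisor and exploits the fact that the Hodge filtration of a localization is built from very positive twists of coherent sheaves; the only new input needed in the twisted setting is a way to pass from a general twisted Hodge module to a generic $*$-extension, and this is exactly what the semi-continuity theorem provides.

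Concretely, I would first reduce to $\mc{L}$ very ample by the usual rescaling of the twist (replacing $\mc{L}$ by $\mc{L}^N$ and $a$ by $a/N$, which preserves $\mhm_{\mc{L}^a}(X)$, the complexes $\Gr^F_p\mrm{DR}(\mc{M})$, and the condition $a>0$). Fix a smooth $D\in|\mc{L}|$ in general, in particular non-characteristic, position relative to $\mc{M}$, with equation $f\in\oH^0(X,\mc{L})$; let $j\colon Q=X\setminus D\hookrightarrow X$ be the affine open complement and $i\colon D\hookrightarrow X$ the inclusion of $D$. Now induct on $d=\dim\mrm{supp}\,\mc{M}$, the case $d\le 0$ being immediate. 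For the inductive step, apply $R\Gamma(X,\Gr^F_p\mrm{DR}(-))$ to the standard triangle $j_!j^*\mc{M}\to\mc{M}\to i_*i^*\mc{M}$; by the resulting long exact sequence it suffices to prove $\mb{H}^i(X,\Gr^F_p\mrm{DR}(j_!j^*\mc{M}))=0$ and $\mb{H}^i(X,\Gr^F_p\mrm{DR}(i_*i^*\mc{M}))=0$ for $i>0$. For the second term: since $D$ is non-characteristic, $i^*\mc{M}$ is, up to a cohomological shift that only improves the vanishing range, a single object of $\mhm_{\mc{L}^a|_D}(D)$ supported in dimension $\le d-1$; as $D$ is projective, $\mc{L}|_D$ ample and $a>0$, the inductive hypothesis applies, using also that $\Gr^F\mrm{DR}$ commutes with the closed pushforward $i_*$.

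It remains to treat $j_!j^*\mc{M}$. Write $\mc{N}=f^{-a}j^*\mc{M}\in\mhm(Q)$, so that $j^*\mc{M}=f^a\mc{N}$. By Theorem~\ref{thm:intro semi-continuity}, for $0<\epsilon\ll 1$ there is a canonical isomorphism $\Gr^F j_!f^{a-\epsilon}\mc{N}\cong\Gr^F j_!f^a\mc{N}=\Gr^F j_!j^*\mc{M}$. Choosing such an $\epsilon$ outside the discrete set of jump values, we have $j_!f^{a-\epsilon}\mc{N}=j_*f^{a-\epsilon}\mc{N}$, and for this $*$-extension one expects $\mb{H}^i(X,\Gr^F_p\mrm{DR}(j_*f^{a-\epsilon}\mc{N}))=0$ for $i>0$: by strictness of the Hodge filtration under (twisted) projective pushforward this group is the relevant graded piece of $\oH^i$ of the pushforward of $f^{a-\epsilon}\mc{N}$ to a point, hence of the twisted de Rham cohomology of the single twisted Hodge module $f^{a-\epsilon}\mc{N}$ on the affine variety $Q$, which vanishes for $i>0$ by Artin vanishing for (twisted) perverse sheaves on affine varieties. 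Combining, $\mb{H}^i(X,\Gr^F_p\mrm{DR}(j_!j^*\mc{M}))=0$ for $i>0$, which closes the induction.

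The crux — and the step where Theorem~\ref{thm:intro semi-continuity} and the positivity $a>0$ must do genuine work — is the replacement of $j_!j^*\mc{M}$ by the generic $*$-extension $j_*f^{a-\epsilon}\mc{N}$. A priori $j_!j^*\mc{M}$ is neither a $!$- nor a $*$-extension across $D$, and the naive comparison map to $j_*j^*\mc{M}$ has a cone supported on $D$ whose contribution to $\mb{H}^1$ need not vanish, so one cannot conclude by a bare long exact sequence; the semi-continuity isomorphism circumvents this by identifying $\Gr^F j_!$ at the parameter $a$ with its value at a nearby generic parameter, where the $!$- and $*$-extensions coincide. I expect the technical difficulty to concentrate here: one must verify that this deformation stays within the range where the twisted Artin vanishing applies, that the twisted de Rham functor interacts with the semi-continuity isomorphism as needed, and — this being the point where $a>0$ enters, through the inductive hypothesis on $D$ and the sign of the deformation — that the induction is genuinely well-founded and does not secretly "prove" the false $a=0$ statement. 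Modulo this bookkeeping with twists and shifts, the remaining input is only Theorem~\ref{thm:intro semi-continuity}, Saito's strictness, and Artin vanishing.
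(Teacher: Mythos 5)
Your skeleton (induct on $\dim\operatorname{Supp}\mc{M}$, pass to the affine complement of an ample divisor, use Theorem \ref{thm:intro semi-continuity} to replace the $!$-extension at the given parameter by a clean extension at a nearby one, finish with Artin vanishing) is the same as the paper's, but the step you yourself flag as the crux does not work as stated. You claim that for the clean extension $j_*f^{a-\epsilon}\mc{N}$, with $a-\epsilon>0$ generic, the vanishing of $\mb{H}^i(X,\Gr^F_p\mrm{DR}(\,\cdot\,))$ for $i>0$ follows from ``strictness of the Hodge filtration under (twisted) projective pushforward'' plus Artin vanishing on the affine $Q$. For a genuinely (non-integrally) twisted module there is no pushforward to a point inside mixed Hodge modules and no strictness statement available: the underlying twisted $\mc{D}$-module pushforward to a point vanishes identically (the fiberwise monodromy on $\mc{L}^\times$ is nontrivial, so the whole twisted de Rham cohomology is zero in every degree), while $\mb{H}^0(X,\Gr^F_p\mrm{DR})$ is in general nonzero — so the filtered complex is manifestly non-strict and the identification you want is false. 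This is exactly the phenomenon the paper records after Proposition \ref{prop:monodromic filtered pushforward}: for non-trivial twist, $\tilde g_\dagger(\mc{M},F_\bullet)$ is zero as a complex of $\tilde{\mc{D}}$-modules but not at the graded level, and it does not underlie a complex of Hodge modules. In effect your argument reduces the theorem for general $\mc{M}$ to the theorem for a clean extension with slightly smaller positive twist, and then assumes that case; the failure of strictness for twisted pushforwards is precisely what makes the twisted statement nontrivial, so nothing has been gained.

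The paper closes this gap by deforming the twist all the way to zero rather than to a nearby generic value: choosing $f$ with $\varphi(f)=\lambda$, it considers the family $f^sj^*\mc{M}$ over $s\in[-1,0]$, which by Proposition \ref{prop:hyperplanes} has only finitely many reducibility walls, and runs a second induction on the number of walls inside the induction on $\dim\operatorname{Supp}\mc{M}$: at each wall the discrepancy between $j_!$ and $j_*$ is supported on the boundary and is absorbed by the dimension induction, and semi-continuity walks $\Gr^Fj_!j^*\mc{M}$ down to $\Gr^Fj_*f^{-1}j^*\mc{M}$, where the object is honestly untwisted; only there does one invoke strictness (Proposition \ref{prop:associated graded pushforward} together with ordinary Hodge-module pushforward) and exactness of the pushforward along the affine morphism. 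Your argument can be repaired along the same lines — iterate your deformation past each of the finitely many walls down to twist $0$, using the dimension induction at every wall and $*$-semicontinuity at the endpoint — but as written the vanishing for the twisted clean extension is unproved. Note also that this corrected route makes visible where $a>0$ enters: the untwisted endpoint is reached through $j_*$ (not $j_!$), and this asymmetry, invisible in your small-$\epsilon$ deformation, is what blocks the (false) $a=0$ statement.
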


Setting $a = 1$, Theorem \ref{thm:intro twisted kodaira} recovers Saito's result. Theorem \ref{thm:intro twisted kodaira} is a special case of a more general relative statement (Theorem \ref{thm:twisted kodaira}), which we prove in \S\ref{sec:twisted kodaira}. A version of Theorem \ref{thm:intro twisted kodaira} has been proved independently in recent work of Schnell and Yang \cite{schnell-yang}.

Let us now turn to representation theory. We fix from now on a connected complex reductive group $G$ with Lie algebra $\mf{g}$. We write $\mc{B}$ for the flag variety of $G$ and $\mf{h}$ for the Lie algebra of the abstract Cartan. Beilinson-Bernstein localization \cite{beilinson-ICM, BB1, BB2} associates to each $\lambda \in \mf{h}^*$ a sheaf $\mc{D}_\lambda$ of twisted differential operators on $\mc{B}$ and a pair of adjoint functors
\[ \Gamma \colon \Mod(\mc{D}_\lambda) \to \Mod(U(\mf{g}))_{\chi_\lambda}, \quad \Delta \colon \Mod(U(\mf{g}))_{\chi_\lambda} \to \Mod(\mc{D}_\lambda);\]
here $\Mod(U(\mf{g}))_{\chi_\lambda}$ is the category of $U(\mf{g})$-modules on which the center $Z(U(\mf{g}))$ acts via the scalar
\[ \chi_\lambda \colon Z(U(\mf{g})) \cong S(\mf{h})^W \subset S(\mf{h}) \xrightarrow{\lambda} \mb{C},\]
where $W$ is the Weyl group and the isomorphism above is the Harish-Chandra isomorphism. The right adjoint $\Gamma$ is given by taking global sections of the $\mc{D}_\lambda$-module. Beilinson and Bernstein's main results are that $\Gamma$ is exact whenever $\lambda$ is integrally dominant, and an equivalence if, in addition, $\lambda$ is regular.

Assume now that $\lambda \in \mf{h}^*_\mb{R}$ is real. Then we have a category $\mhm(\mc{D}_\lambda)$ of twisted mixed Hodge modules on $\mc{B}$ with a forgetful functor to filtered $\mc{D}_\lambda$-modules. We prove the following refinements of Beilinson and Bernstein's results for this category.

First, we have a refinement of the exactness theorem for dominant $\lambda$:

\begin{thm} \label{thm:intro filtered exactness}
Let $\lambda \in \mf{h}^*_\mb{R}$ be real dominant and let $\mc{M} \in \mhm(\mc{D}_\lambda)$ with Hodge filtration $F_\bullet \mc{M}$. Then
\[ \mrm{H}^i(\mc{B}, F_p\mc{M}) = 0 \quad \text{for $i > 0$ and all $p$}.\]
Hence, the functor
\[ \Gamma \colon \mhm(\mc{D}_\lambda) \to \Mod(U(\mf{g}), F_\bullet)_{\chi_\lambda} \]
is filtered exact.
\end{thm}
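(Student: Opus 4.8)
The plan is to deduce the cohomology vanishing for each piece $F_p\mc{M}$ from the twisted Kodaira vanishing statement (Theorem \ref{thm:intro twisted kodaira}) by a standard dévissage, replacing the ample line bundle there by an ample twist coming from $\lambda$. First I would reduce to the case where $\mc{M}$ is a \emph{pure} Hodge module: the weight filtration $W_\bullet\mc{M}$ is a finite filtration by sub-mixed-Hodge-modules whose Hodge filtration is strictly compatible, so $\Gr^W_k\mc{M}$ inherits the induced Hodge filtration and $F_p\mc{M}$ has a finite filtration with subquotients $F_p\Gr^W_k\mc{M}$; by the long exact sequence in cohomology it suffices to treat each $\Gr^W_k\mc{M}$. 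Next, for a pure Hodge module one can further decompose into direct summands supported on irreducible subvarieties, and by Kashiwara equivalence pass to the generic situation, though in fact no reduction of support is needed: the point is simply that $\mc{M}$ itself is a twisted mixed Hodge module on the \emph{projective} variety $\mc{B}$, so Theorem \ref{thm:intro twisted kodaira} applies directly once we arrange the twist to be ample.

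The key geometric input is that, for $\lambda$ real dominant, the twist $\mc{D}_\lambda$ is (up to the untwisted part) a power $\mc{L}^a$ of an ample line bundle on $\mc{B}$ with $a \geq 0$, and strictly positive in the directions where $\lambda$ is strictly dominant. More precisely, writing $\lambda = \lambda_0 + \nu$ with $\lambda_0$ integral and $\nu$ suitably small, the integral part contributes an honest line bundle $\mc{O}(\lambda_0)$ on $\mc{B}$ which is nef (and ample modulo the walls), while the continuous part $\nu$ contributes the $\mb{R}$-twist. Applying Theorem \ref{thm:intro twisted kodaira} — in its relative form (Theorem \ref{thm:twisted kodaira}) to handle the walls where dominance is not strict, by pushing forward along the relevant partial flag variety fibration — yields $\mb{H}^i(\mc{B}, \Gr^F_p\mrm{DR}(\mc{M})) = 0$ for $i > 0$. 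One then converts vanishing for the associated graded de Rham complex into vanishing for the individual $F_p\mc{M}$: the naive (Hodge) filtration on $\mrm{DR}(\mc{M})$ has $\Gr^F_p$ equal to the complex displayed in Theorem \ref{thm:intro twisted kodaira}, and a spectral sequence / induction on the length of this complex, combined with the fact that $\mc{D}_\lambda$ acts on the top piece by an ample twist, gives $\mrm{H}^i(\mc{B}, F_p\mc{M}) = 0$ for $i>0$. This is the familiar argument by which Saito's Kodaira vanishing implies the $\mc{O}$-module cohomology vanishing for $F_p$ of a Hodge module on a projective variety.

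Finally, to conclude filtered exactness of $\Gamma$: given a short exact sequence $0 \to \mc{M}' \to \mc{M} \to \mc{M}'' \to 0$ in $\mhm(\mc{D}_\lambda)$, the Hodge filtrations are strict, so for each $p$ the sequence $0 \to F_p\mc{M}' \to F_p\mc{M} \to F_p\mc{M}'' \to 0$ is exact; the vanishing $\mrm{H}^1(\mc{B}, F_p\mc{M}') = 0$ then makes the sequence $0 \to \Gamma(F_p\mc{M}') \to \Gamma(F_p\mc{M}) \to \Gamma(F_p\mc{M}'') \to 0$ exact, which is exactly filtered exactness of $\Gamma$ into $\Mod(U(\mf{g}), F_\bullet)_{\chi_\lambda}$ (the filtration on the global sections being $F_p\Gamma(\mc{M}) = \Gamma(F_p\mc{M})$). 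The main obstacle I anticipate is the handling of the non-regular / wall directions: when $\lambda$ is dominant but not regular, the relevant twist on $\mc{B}$ is only nef, not ample, so a direct application of Kodaira vanishing fails and one must genuinely use the relative version along the fibration $\mc{B} \to \mc{P}$ contracting the curves on which the twist is trivial, checking that the Hodge-module pushforward interacts correctly with the filtration (Saito's strictness of direct images) and that positivity on the base suffices. The integrality bookkeeping — matching $\mc{D}_\lambda$ up to the $\mb{R}$-twist formalism of \S\ref{subsec:monodromic} — is routine but must be done carefully so that Theorem \ref{thm:intro twisted kodaira} applies on the nose.
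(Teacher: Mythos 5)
Your reduction to pure objects and your final step (strictness of Hodge filtrations plus $\mrm{H}^1$-vanishing gives filtered exactness, with $F_p\Gamma(\mc{M}) = \Gamma(F_p\mc{M})$) are fine and match Corollary \ref{cor:filtered exactness}. But the central step of your argument has a genuine gap: Theorem \ref{thm:intro twisted kodaira} gives vanishing of the \emph{hypercohomology of the graded de Rham complex} $\Gr^F_p\mrm{DR}(\mc{M})$, and this does not imply vanishing of $\mrm{H}^i(\mc{B}, F_p\mc{M})$ (or of $\mrm{H}^i(\mc{B},\Gr^F_p\mc{M})$) term by term. There is no ``familiar argument'' converting one into the other; the only piece one gets for free is the lowest one, where the complex degenerates to a single term --- that is exactly Lemma \ref{lem:lowest hodge vanishing} in the paper, and it is all that Kodaira vanishing yields directly. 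The positivity bookkeeping is also off: because of the $\rho$-shift, $\mhm(\mc{D}_\lambda) = \mhm_{\lambda-\rho}(\tilde{\mc{B}})$, so for dominant $\lambda$ the actual twist is $\lambda - \rho$, which is not nef in general (for $\lambda = 0$ it is $-\rho$; for $\lambda = \rho$ the category is the \emph{untwisted} $\mhm(\mc{B})$, where no positivity is available and the claimed vanishing is already a strong new theorem --- cf.\ Corollary \ref{cor:closed subvariety vanishing}, which does not follow from any standard vanishing theorem). This is strong evidence that no d\'evissage applying Kodaira vanishing directly to $\mc{M}$ on $\mc{B}$ can work.

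The paper's actual mechanism is quite different and uses the specific geometry of $\mc{B}$. One first shows (Lemma \ref{lem:localize globalize vanishing}, via flatness of the Grothendieck--Springer map) that the desired vanishing is equivalent to $\mc{H}^i\Gr^F(\tilde{\mc{D}}\overset{\mrm{L}}\otimes_{U(\mf{g})}\mrm{R}\Gamma(\mc{M})) = 0$ for $i>0$; then one rewrites this complex as a convolution $\tilde{\Xi} * \mc{M}$ with the big pro-projective $\tilde{\Xi} = \tilde{\mc{D}}\otimes_{U(\mf{g})}\tilde{\mc{D}}$ (Lemma \ref{lem:localize globalize convolution}); the heart of the proof is that $(\tilde{\Xi}, F_\bullet)$ underlies a pro-mixed Hodge module with $\Gr^F\tilde{\Xi}\cong\mc{O}_{\mrm{St}}$ (Theorems \ref{thm:xi hodge filtration} and \ref{thm:xitilde hodge filtration}), together with its $S$-stalk property; only then is (monodromic) Kodaira vanishing applied, on $\mc{B}\times\mc{B}$, to this convolution (Proposition \ref{prop:convolution vanishing}). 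Your instinct about pushing along $\mc{B}\to\mc{P}_S$ at the walls does appear there, but as a step inside the convolution argument, not as a substitute for it. To repair your proposal you would need to supply something playing the role of the Hodge structure on the big projective; without it, the passage from de Rham-complex vanishing to the sheaf-level vanishing of $F_p\mc{M}$ is unjustified and, in the stated generality, false as an implication.
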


Theorem \ref{thm:intro filtered exactness} is restated as Theorem \ref{thm:filtered exactness} and Corollary \ref{cor:filtered exactness} in \S\ref{sec:localization}, and proved in \S\ref{sec:vanishing}. It was originally stated as a theorem in \cite{SV} without proof\footnote{The argument that the authors of \cite{SV} had in mind does not work.} and answers in the affirmative an old question of Tanisaki \cite[\S 5.1]{tanisaki}.

The vanishing in Theorem \ref{thm:intro filtered exactness} is remarkably strong. To illustrate this, we note the following corollary, obtained by applying Theorem \ref{thm:intro filtered exactness} when $\mc{M}$ is the pushforward of a line bundle on a closed subvariety of $\mc{B}$.

\begin{cor} \label{cor:closed subvariety vanishing}
Let $X \subset \mc{B}$ be a smooth closed subvariety with normal bundle $\mc{N}_{X/\mc{B}}$ and canonical bundle $\omega_X$. Then for any ample line bundle $\mc{L}$ on $\mc{B}$, we have
\[ \mrm{H}^i(X, \mrm{Sym}^p(\mc{N}_{X/\mc{B}}) \otimes \omega_X \otimes \mc{L}) = 0 \quad \text{for all $i>0$ and all $p$}.\]
\end{cor}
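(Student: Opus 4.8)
The plan is to deduce this from Theorem~\ref{thm:intro filtered exactness} applied to a single explicit twisted Hodge module built from $X$. Write $i \colon X \hookrightarrow \mc{B}$ for the closed embedding and let $\mc{M}_0 := i_* \mb{Q}^H_X[\dim X] \in \mhm(\mc{B})$ be the pushforward of the constant Hodge module on $X$, whose underlying filtered $\mc{D}$-module is $i_+ \mc{O}_X$ with its Hodge filtration. The first step is a standard computation of the Hodge filtration on a pushforward along a closed embedding (going back to Saito): there is an integer $p_0$, depending only on $\codim(X, \mc{B})$, with
\[ \Gr^F_{p_0 + j}\, \mc{M}_0 \;\cong\; i_*\!\left( \mrm{Sym}^j \mc{N}_{X/\mc{B}} \otimes \det \mc{N}_{X/\mc{B}} \right) \quad (j \ge 0), \qquad \Gr^F_p\, \mc{M}_0 = 0 \ \ (p < p_0),\]
and by adjunction $\det \mc{N}_{X/\mc{B}} \cong \omega_X \otimes \omega_{\mc{B}}^{-1}|_X$ — this is the source of the canonical bundle $\omega_X$.

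Next, given an ample line bundle $\mc{L}$ on $\mc{B}$, I would set $\mc{M} := \mc{M}_0 \otimes (\mc{L} \otimes \omega_{\mc{B}})$. Since tensoring a mixed Hodge module by a line bundle commutes with passing to the associated graded, and by the projection formula,
\[ \Gr^F_{p_0 + j}\, \mc{M} \;\cong\; i_*\!\left( \mrm{Sym}^j \mc{N}_{X/\mc{B}} \otimes \det \mc{N}_{X/\mc{B}} \otimes (\mc{L}\otimes\omega_{\mc{B}})|_X \right) \;\cong\; i_*\!\left( \mrm{Sym}^j \mc{N}_{X/\mc{B}} \otimes \omega_X \otimes \mc{L}|_X \right),\]
using $\det\mc{N}_{X/\mc{B}} \otimes \omega_{\mc{B}}|_X \cong \omega_X$ again. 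On the other hand, $\mc{M}$ lies in $\mhm(\mc{D}_\lambda)$ for the weight $\lambda$ attached to $\mc{L} \otimes \omega_{\mc{B}}$; since $\mc{L}$ is ample its weight is regular dominant, hence equal to $\rho$ plus a dominant weight, and a short check with the $\rho$-shift in the definition of $\mc{D}_\lambda$ shows $\lambda$ is real dominant. Thus Theorem~\ref{thm:intro filtered exactness} applies to $\mc{M}$. The main point to get right — and the only real subtlety — is this matching of normalizations: one must twist by $\mc{L} \otimes \omega_{\mc{B}}$ rather than by $\mc{L}$, so that the $\omega_{\mc{B}}^{-1}|_X$ produced by $\det\mc{N}_{X/\mc{B}}$ is absorbed while $\lambda$ stays dominant, and this is exactly what allows $\mc{L}$ to range over \emph{all} ample bundles.

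The remainder is formal. Theorem~\ref{thm:intro filtered exactness} gives $\oH^i(\mc{B}, F_p\mc{M}) = 0$ for all $i > 0$ and all $p$. Feeding the exact sequences $0 \to F_{p-1}\mc{M} \to F_p\mc{M} \to \Gr^F_p\mc{M} \to 0$ into the long exact cohomology sequence and using that $\oH^{>0}$ vanishes for $F_p\mc{M}$ and for $F_{p-1}\mc{M}$ (both cases of the same theorem) yields $\oH^i(\mc{B}, \Gr^F_p\mc{M}) = 0$ for all $i > 0$ and all $p$. Finally, as $i$ is a closed embedding, $\oH^i(\mc{B}, i_*\mc{F}) = \oH^i(X, \mc{F})$, so the formula for $\Gr^F_{p_0 + j}\mc{M}$ above gives $\oH^i(X, \mrm{Sym}^p \mc{N}_{X/\mc{B}} \otimes \omega_X \otimes \mc{L}|_X) = 0$ for all $i > 0$ and all $p \ge 0$, which is the claim (the reindexing shift $p_0$ is harmless since the conclusion is asserted for all $p$). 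The only genuine input is the flag-variety vanishing of Theorem~\ref{thm:intro filtered exactness}; the rest is the identification of $\Gr^F(i_+\mc{O}_X)$ and a diagram chase.
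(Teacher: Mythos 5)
Your proof is correct and follows exactly the route the paper indicates (the paper only sketches this corollary in one sentence in the introduction, as an application of Theorem \ref{thm:intro filtered exactness} to the pushforward of a line bundle on a closed subvariety): your $\mc{M}$ is precisely $i_*\bigl((\mc{L}\otimes\omega_{\mc{B}})|_X\bigr)$, the twist by $\mc{L}\otimes\omega_{\mc{B}}$ correctly absorbs the $\det\mc{N}_{X/\mc{B}}=\omega_X\otimes\omega_{\mc{B}}^{-1}|_X$ factor while keeping $\lambda=\mu_{\mc{L}}-\rho$ dominant, and the passage from vanishing for $F_p\mc{M}$ to vanishing for $\Gr^F_p\mc{M}$ is the intended diagram chase.
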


Corollary \ref{cor:closed subvariety vanishing} has a similar form to the vanishing theorem of Griffiths \cite[Theorem G]{griffiths-vanishing} for positive vector bundles, although neither result implies the other. In very special cases, Corollary \ref{cor:closed subvariety vanishing} recovers some other well-known vanishing theorems in the literature, such as a version of the main result of \cite{broer} (for $X$ the diagonal inside $\mc{B} \times \mc{B}$) and a slight strengthening of \cite[(2.26)]{schmid-discrete-series} (for $X$ the closed orbit attached to a discrete series representation).

There are two main ingredients in the proof of Theorem \ref{thm:intro filtered exactness}: the twisted Kodaira vanishing theorem above and an explicit calculation of the Hodge filtration on a certain mixed Hodge module $\tilde{\Xi}$, which controls the localization theory for Hodge modules in a precise sense (see \S\ref{subsec:filtered exactness outline}).

\begin{thm} \label{thm:intro xi hodge}
Let $\tilde{\Xi}$ denote the pro-projective cover of the irreducible object associated with the closed orbit in the category of $G$-equivariant monodromic mixed Hodge modules on $\mc{B} \times \mc{B}$ with infinitesimal character $0$. Then
\[ \Gr^F \tilde\Xi \cong \mc{O}_{\mrm{St}},\]
where
\[ \mrm{St} = \tilde{\mf{g}}^* \times_{\mf{g}^*} \tilde{\mf{g}}^*\]
is the Steinberg variety. In particular, the Hodge filtration on $\tilde{\Xi}$ is the filtration generated by its unique $G$-invariant section.
\end{thm}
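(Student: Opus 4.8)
The plan is to reduce the statement to the single isomorphism $\Gr^F\tilde\Xi\cong\mc{O}_{\mrm{St}}$, to bound $\Gr^F\tilde\Xi$ from above using the invariant section, to control it from below using a standard flag, and to extract the one genuinely nontrivial point — that the Hodge filtration is generated by the invariant section — from the wall crossing of Theorem \ref{thm:intro semi-continuity}. I would begin by assembling the classical picture of $\tilde\Xi$ at the level of (monodromic, twisted) $\mc{D}$-modules. Beilinson--Bernstein localization at the regular integral infinitesimal character identifies the category of $G$-equivariant monodromic mixed Hodge modules on $\mc{B}\times\mc{B}$ with infinitesimal character $0$ with a Hodge enrichment of the deformed principal block of category $\mc{O}$, equivalently a block of Harish-Chandra bimodules; under this identification $\tilde\Xi$ is the universal version of the big projective $P(w_0\cdot 0)$, with underlying object the regular bimodule. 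Three facts I would record: (i) $\tilde\Xi$ is generated as a monodromic twisted $\mc{D}$-module by a section $v$, unique up to scalar and $G$-invariant — the unit of the regular bimodule — so that the two moment-map actions of $\mf{g}$ agree on $v$; (ii) $\tilde\Xi$ carries a flag by monodromic standards $\tilde\Delta_w$, one for each $w\in W$, since BGG reciprocity gives $[P(w_0\cdot 0):M(w\cdot 0)]=[M(w\cdot 0):L(w_0\cdot 0)]=P_{w,w_0}(1)=1$; (iii) hence, by additivity of characteristic cycles and the fact that each $\tilde\Delta_w$ contributes the closure of the conormal to the $w$-th orbit with multiplicity one, the characteristic cycle of $\tilde\Xi$ equals $[\mrm{St}]=\sum_{w\in W}[\,\overline{T^*_{C_w}}\,]$, all multiplicities one. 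Finally, since the two $\mf{g}$-actions agree on $v$, its symbol $\bar v\in\Gr^F\tilde\Xi$ is annihilated by the differences of the two moment maps, which generate the ideal of the classically reduced Steinberg scheme $\mrm{St}=\tilde{\mf{g}}^*\times_{\mf{g}^*}\tilde{\mf{g}}^*$; thus $I(\mrm{St})\subseteq\mrm{Ann}(\bar v)$.

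Granting that $\Gr^F\tilde\Xi$ is generated over $\Gr^F\mc{D}$ by $\bar v$, the theorem follows immediately: $\Gr^F\tilde\Xi\cong\Gr^F\mc{D}/\mrm{Ann}(\bar v)$ is then a quotient of $\Gr^F\mc{D}/I(\mrm{St})=\mc{O}_{\mrm{St}}$, and the surjection $\mc{O}_{\mrm{St}}\twoheadrightarrow\Gr^F\tilde\Xi$ has kernel with vanishing characteristic cycle (both sides have characteristic cycle $[\mrm{St}]$ with all multiplicities one); since $\mrm{St}$ is reduced it has no associated points in positive codimension, so this kernel vanishes and $\Gr^F\tilde\Xi\cong\mc{O}_{\mrm{St}}$. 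The final sentence of the theorem is then automatic: $\mrm{St}$ is conical, so $\mc{O}_{\mrm{St}}$ is generated in fibre-degree zero by its $G$-invariant unit, which matches $v$, whence $F_\bullet\tilde\Xi$ is the filtration generated by $v$.

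The crux is therefore the generation statement, and this is where Theorem \ref{thm:intro semi-continuity} enters. I would realize $\tilde\Xi$ through an iterated maximal extension (in the sense of Beilinson's gluing) of the universal monodromic object from the open $G$-orbit $Q\subseteq\mc{B}\times\mc{B}$, which is affinely embedded with boundary the union $D$ of the codimension-one Schubert divisors, cut out by a section $f$ of $\mc{O}(D)$. For the monodromic standards $\tilde\Delta_w=j_{w!}(\cdots)$ the Hodge filtration is generated in its lowest degree, and, moving across the walls separating $!$- from $*$-extensions, Theorem \ref{thm:intro semi-continuity} identifies $\Gr^Fj_*f^{s+\epsilon}\mc{M}$ with $\Gr^Fj_*f^s\mc{M}$ and $\Gr^Fj_!f^{s-\epsilon}\mc{M}$ with $\Gr^Fj_!f^s\mc{M}$; this makes $\Gr^F$ exact on the short exact sequences that build the maximal extension, so that $\Gr^F\tilde\Xi$ is assembled from the degree-zero-generated associated gradeds of standards and nearby cycles without introducing new generators in higher degree, whence the generation property propagates to $\tilde\Xi$. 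Equivalently — and perhaps more cleanly — one shows that the order filtration generated by $v$, whose associated graded $\mc{O}_{\mrm{St}}$ is Cohen--Macaulay, verifies Saito's nearby- and vanishing-cycle axioms and therefore coincides with the Hodge filtration, the verification of those axioms being precisely the content of Theorem \ref{thm:intro semi-continuity}.

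The main obstacle is exactly this last step. A priori one knows only the inclusion $F_\bullet\tilde\Xi\subseteq(\text{order filtration generated by }v)$, valid for any extension of this type, and proving equality amounts to showing that the genuinely singular Schubert boundary $D$ is nevertheless mild enough that the Hodge and pole-order filtrations of $\tilde\Xi$ agree — a statement false for general boundary divisors, which must be wrung out of the $G$-geometry via the wall-crossing analysis. A secondary, purely technical obstacle is that $\tilde\Xi$ and the ambient monodromic objects are pro-objects; everything above should be carried out at the level of finite Jordan-block truncations and passed to the limit, with care that characteristic cycles and generation degrees are preserved in the limit.
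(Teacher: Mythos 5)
Your reduction (granting that the Hodge filtration is generated by the $G$-invariant section, compare $\mc{O}_{\mrm{St}} \twoheadrightarrow \Gr^F\tilde\Xi$ via characteristic cycles) is reasonable in outline, but the crux that you yourself isolate --- the generation statement, equivalently the identification of the Hodge filtration with the order filtration generated by $v$ --- is not actually proved, and the tool you propose for it does not deliver it. Theorem \ref{thm:intro semi-continuity} only compares $\Gr^F$ of $j_!f^{s}\mc{M}$ and $j_*f^{s}\mc{M}$ along a deformation; it neither verifies Saito's nearby/vanishing-cycle axioms for a candidate filtration nor controls the Hodge filtration of the $!$-extensions across the singular boundary strata that occur in any standard flag of $\tilde\Xi$. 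Your propagation argument also starts from the assertion that the monodromic standards have Hodge filtration generated in lowest degree; for $!$-extensions across singular orbit closures this is essentially a statement of the same depth as the theorem itself, and exactness of $\Gr^F$ on short exact sequences of mixed Hodge modules (which is automatic by strictness) is not the missing ingredient. The paper's mechanism is entirely different: one works first with the finite-length quotient $\Xi=\tilde{\mc{D}}\otimes_{U(\mf{g})}\mc{D}_0$, shows it is a projective cover and lifts it to a Hodge module $\Xi^H$ in the style of BGS (Lemmas \ref{lem:xi projective}--\ref{lem:xi hodge cover}); the Koszul resolution coming from flatness of $(\tilde\mu,\mu)$ shows $(\Xi,F^{\mathrm{ord}}_\bullet)$ is Cohen--Macaulay and self-dual up to shift (Lemma \ref{lem:xi koszul}), while $\mb{D}\Xi^H(-2\dim\mc{B})$ is again a Hodge lift of the projective cover (Lemma \ref{lem:xi hodge dual}); twisted Kodaira vanishing (Theorem \ref{thm:intro twisted kodaira}, via Lemma \ref{lem:lowest hodge vanishing}) lifts the $G$-invariant vector into $\Gamma(F_0\Xi^H)$, giving a filtered map $(\Xi,F^{\mathrm{ord}}_\bullet)\to(\Xi^H,F_\bullet)$; dualizing gives a filtered map back, and uniqueness of projective covers forces the composite to be the identity, so both filtrations coincide (Theorem \ref{thm:xi hodge filtration}). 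No wall-crossing enters at this point.

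Two secondary problems. First, your cycle comparison needs both that the scheme-theoretic fibre product $\mrm{St}$ is (generically) reduced and that $\mrm{CC}(\tilde\Xi)=[\mrm{St}]$ with all multiplicities one; your derivation of the latter from ``each $\tilde\Delta_w$ contributes a single conormal closure with multiplicity one'' is unjustified (characteristic cycles of standards are not single conormal closures in general), and the paper avoids both issues by computing $\Gr^F$ of the order filtration directly from the Koszul resolution, never invoking reducedness. Second, the passage from $\Xi$ to the pro-object $\tilde\Xi$ is not a matter of ``finite Jordan-block truncations and limits with care'': the paper introduces the notion of a \emph{good} pro-object and proves (Proposition \ref{prop:good pro-objects}, Theorem \ref{thm:xitilde hodge filtration}) that constancy of the $\mrm{Tor}$'s against $\mb{C}$ lets one detect filtered surjectivity and filtered isomorphism after base change $-\otimes_{S(\mf{h}(1))}\mb{C}$, which is what reduces the pro-statement to the finite-length one; some such mechanism is needed and is absent from your sketch.
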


We give a more precise statement (as Theorem \ref{thm:xitilde hodge filtration}) and proof in \S\ref{subsec:monodromic big projective}. We remark that $\tilde{\Xi}$ is a geometric incarnation of the big pro-projective in category $\mc{O}$, which plays a key role in Soergel theory.

We are grateful to Roman Bezrukavnikov for explaining the statement of Theorem \ref{thm:intro xi hodge} to the second named author many years ago. An independent proof will appear in the forthcoming manuscript \cite{BIR}. We understand that the proof in \cite{BIR} deduces Theorem \ref{thm:intro xi hodge} from a similar description of the Hodge filtration on costandard modules; conversely, Theorem \ref{thm:intro xi hodge} can be used to recover this result, see Remark \ref{rmk:standards}.

In addition to the vanishing of Theorem \ref{thm:intro filtered exactness}, the following theorem guarantees abundance of global sections of the Hodge filtration in many situations.

\begin{thm}[=Theorem \ref{thm:hodge generation}]\label{thm:intro hodge generation}
For $\lambda \in \mf{h}^*_\mb{R}$ dominant and $\mc{M} \in \mhm(\mc{D}_\lambda)$, if the $\mc{D}_\lambda$-module $\mc{M}$ is globally generated, then so is its Hodge filtration $F_\bullet \mc{M}$.
\end{thm}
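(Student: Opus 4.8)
\emph{Strategy and reduction to the associated graded.} The plan is to reduce global generation of $F_\bullet\mc{M}$ to a statement about $\Gr^F\mc{M}$, and then to extract the latter from the computation $\Gr^F\tilde{\Xi}\cong\cO_{\mrm{St}}$ of Theorem~\ref{thm:intro xi hodge}, in the spirit of the proof of Theorem~\ref{thm:intro filtered exactness}. First I would observe that, for $\lambda$ dominant and $\mc{M}\in\mhm(\mc{D}_\lambda)$, the following are equivalent: (i) $F_p\mc{M}$ is a globally generated $\cO_\mc{B}$-module for every $p$; (ii) $\Gr^F_p\mc{M}$ is a globally generated $\cO_\mc{B}$-module for every $p$; (iii) $\Gr^F\mc{M}$ is globally generated as a $\Gr^F\mc{D}_\lambda$-module, i.e.\ as a coherent sheaf on $T^*\mc{B}$; (iv) the counit $\Delta\Gamma(\mc{M},F)\to(\mc{M},F)$ is a strict epimorphism of filtered $\mc{D}_\lambda$-modules. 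Here (i)$\Rightarrow$(ii) is trivial; for (ii)$\Rightarrow$(i), Theorem~\ref{thm:intro filtered exactness} enters, since $\mrm{H}^1(\mc{B},F_{p-1}\mc{M})=0$ makes $\Gamma(\mc{B},F_p\mc{M})\to\Gamma(\mc{B},\Gr^F_p\mc{M})$ surjective, and one inducts on $p$, the Hodge filtration being bounded below. The equivalence (ii)$\Leftrightarrow$(iii) is elementary, using only that $T_\mc{B}$, and hence every $\mrm{Sym}^kT_\mc{B}$, is a quotient of a trivial bundle. And (iii)$\Leftrightarrow$(iv) holds because, again by Theorem~\ref{thm:intro filtered exactness}, $\Gamma$ is filtered exact, so it commutes with $\Gr^F$ and the image of $\Gr^F(\Delta\Gamma\mc{M})$ in $\Gr^F\mc{M}$ is exactly the $\Gr^F\mc{D}_\lambda$-submodule generated by global sections. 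Since the hypothesis says precisely that $\Delta\Gamma\mc{M}\to\mc{M}$ is surjective on underlying $\mc{D}_\lambda$-modules (here we use $\lambda$ dominant), it remains to upgrade this to strictness, i.e.\ to show surjectivity survives $\Gr^F$.

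\emph{The key input.} For this I would appeal to the localization theory of \S\ref{subsec:monodromic big projective}, which realizes $\Delta\Gamma(-)$ as an integral transform with kernel the specialization $\tilde{\Xi}_\lambda$ of the big pro-projective $\tilde{\Xi}$ to infinitesimal character $\lambda$, the counit being induced by the canonical map from $\tilde{\Xi}$ to the unit for convolution. Applying $\Gr^F$ and Theorem~\ref{thm:intro xi hodge} replaces $\tilde{\Xi}$ by $\cO_{\mrm{St}}$ and this canonical map by the (surjective) restriction of functions to the diagonal copy of $\tilde{\mf{g}}^*$ inside the Steinberg variety. Because $\cO_{\mrm{St}}$ is Cohen--Macaulay and flat over each of its two projections to $\tilde{\mf{g}}^*$, the transform is computed with no higher cohomology; this is where relative twisted Kodaira vanishing (Theorem~\ref{thm:intro twisted kodaira}), applied on $\mc{B}\times\mc{B}$, supplies the vanishing needed. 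One then concludes that $\Gr^F(\Delta\Gamma\mc{M})$ is the corresponding transform of $\Gr^F\mc{M}$ with kernel $\cO_{\mrm{St}}$, and that $\Gr^F$ of the counit is surjective as soon as the underlying map of $\mc{D}_\lambda$-modules is — which is what we need.

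\emph{Main obstacle.} The hard part is the passage from the $\mc{D}_\lambda$-module statement to the Hodge-theoretic one. Global generation of the $\mc{D}_\lambda$-module constrains only the characteristic cycle, whereas global generation of each $F_p\mc{M}$ is far stronger: it fails, for instance, for the pullback of $\omega_\mc{B}$ to $T^*\mc{B}$, whose direct image along the moment map to $\mf{g}^*$ is nonetheless globally generated. There is no formal implication, and the whole force of the argument is carried by the identification $\Gr^F\tilde{\Xi}\cong\cO_{\mrm{St}}$ together with the positivity — flatness and Cohen--Macaulayness — of the Steinberg variety, fed through enough Kodaira-type vanishing to keep the transform exact. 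A secondary subtlety to watch is that $\mhm(\mc{D}_\lambda)$ is strictly larger than the Hodge-theoretic category~$\mc{O}$, so one cannot simply quote ``enough projectives in category~$\mc{O}$'': one must either verify that $\tilde{\Xi}_\lambda$ is a projective (pro-)generator of the relevant category, or else run a d\'evissage directly over the strata of an arbitrary Whitney stratification adapted to $\mc{M}$, using that the standard objects $j_!$ on affine cells have globally generated Hodge filtration — which there reduces to the pole-order description of the Hodge filtration on $j_*\cO$ and the very-ampleness of dominant line bundles on $\mc{B}$.
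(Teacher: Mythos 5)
Your reformulation of the theorem as strict (filtered) surjectivity of the counit $\Delta\Gamma(\mc{M},F_\bullet)\to(\mc{M},F_\bullet)$, and the chain of equivalences (i)--(iv), are fine and agree with how the paper sets things up. The gap is at the step you present as the conclusion: that ``$\Gr^F$ of the counit is surjective as soon as the underlying map of $\mc{D}_\lambda$-modules is.'' Nothing in the transform argument you sketch actually uses the global-generation hypothesis: the kernel $\cO_{\mrm{St}}$, the surjection onto the diagonal copy of $\tilde{\mf{g}}^*$, and the Kodaira-type vanishing are all independent of $\mc{M}$. If that mechanism produced surjectivity of $\Gr^F(\Delta\Gamma\mc{M})\to\Gr^F\mc{M}$, it would do so unconditionally; and since the filtrations are exhaustive and bounded below, graded surjectivity forces surjectivity of the underlying map, so every $\mc{M}\in\mhm(\mc{D}_\lambda)$ with $\lambda$ dominant would be globally generated --- false when $\lambda$ is singular (take $\mc{M}$ irreducible with $\Gamma(\mc{M})=0$). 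The true obstruction is that for a filtered map which is surjective on underlying modules, the cokernel of $\Gr^F$ of the map is $\bigoplus_p F_p\mc{M}/(F_{p-1}\mc{M}+\im (F_p\Delta\Gamma\mc{M}))$, and this is not computed by applying any transform to $\Gr^F$ of a kernel object: $\Gr^F$ does not commute with images of non-strict maps, so no argument carried out purely at the associated-graded level can convert $\mc{D}$-module surjectivity into filtered surjectivity. This is exactly why the paper does not argue on $\Gr^F$; it proves instead (Theorem \ref{thm:localization hodge module}) that the filtered counit \emph{underlies a morphism of mixed Hodge modules}, after which strictness --- hence filtered surjectivity under your hypothesis --- is automatic from the MHM formalism.

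Producing that MHM lift is where your ingredients get used, but through a more delicate chain than your ``specialization $\tilde{\Xi}_\lambda$'' suggests. Since $\Delta(V)$ is a direct summand of $\tilde{\mc{D}}\otimes_{U(\mf{g})}V$ but \emph{not} a filtered summand, one cannot simply specialize $\tilde{\Xi}$: the paper identifies $\Delta\Gamma(\mc{M},F_\bullet)$ with $\mc{H}^0$ of the convolution of $(\mc{M},F_\bullet)$ with $j_{S*}j_S^*\tilde{\Xi}^H$, where $S$ is the set of simple roots singular for $\lambda$ (Lemma \ref{lem:xitilde S}); this uses the $S$-stalk property of $\tilde{\Xi}^H$ and the convolution vanishing of Proposition \ref{prop:convolution vanishing}, where for singular $\lambda$ the twist is only ample relative to $\mc{B}\times\mc{B}\to\mc{B}\times\mc{P}_S$, which is why the $S$-stalk property is needed at all. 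The kernels are then deformed from twist $(0,0)$ to $(\lambda,-\lambda)$ (Lemma \ref{lem:xitilde deformation}) so that the convolution becomes an honest operation on (pro-)mixed Hodge modules (Proposition \ref{prop:good convolution}); that is what exhibits the counit as a morphism of mixed Hodge modules. Two smaller inaccuracies in your sketch: $\cO_{\mrm{St}}$ is not flat over either projection $\mrm{St}\to\tilde{\mf{g}}^*$ (the fibers $\tilde{\mu}^{-1}(\tilde{\mu}(x))$ jump in dimension); the flatness actually used is that of $(\tilde{\mu},\tilde{\mu})\colon\tilde{\mf{g}}^*\times\tilde{\mf{g}}^*\to\mf{g}^*$. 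And the fallback d\'evissage over strata does not obviously help, since global generation of $\mc{M}$ is not inherited by the pieces of such a d\'evissage.
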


Theorem \ref{thm:intro hodge generation} is proved in \S\ref{sec:hodge generation}. As with Theorem \ref{thm:intro filtered exactness}, the main ingredients in the proof are Theorems \ref{thm:intro twisted kodaira} and \ref{thm:intro xi hodge}.

As a consequence, we obtain a Hodge analog of Beilinson and Bernstein's equivalence of categories for regular dominant $\lambda$. In the statement below, $\mhm^{\mathit{weak}}(U(\mf{g}))_{\chi_\lambda}$ denotes the category of weak mixed Hodge $U(\mf{g})$-modules defined in \S\ref{subsec:mhm globalization}.

\begin{thm}\label{thm:intro full faithfulness}
Let $\lambda \in \mf{h}^*_\mb{R}$ be real dominant and regular. Then the Hodge globalization functor
\[
\Gamma \colon \mhm(\mc{D}_\lambda) \to \mhm^{\mathit{weak}}(U(\mf{g}))_{\chi_\lambda}
\]
is fully faithful and filtered exact.
\end{thm}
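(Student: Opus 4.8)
The plan is to deduce Theorem~\ref{thm:intro full faithfulness} from the filtered exactness of Theorem~\ref{thm:intro filtered exactness}, the global generation result Theorem~\ref{thm:intro hodge generation}, and Beilinson--Bernstein's classical equivalence, by upgrading the latter to the level of filtered $\mc{D}$-modules and then to mixed Hodge structure. First I would recall that for regular dominant $\lambda$ the underlying functor $\Gamma \colon \Mod(\mc{D}_\lambda) \to \Mod(U(\mf{g}))_{\chi_\lambda}$ is an equivalence with inverse the localization $\Delta$, and that on the Hodge side $\Gamma$ is filtered exact by Theorem~\ref{thm:intro filtered exactness}. The key point for full faithfulness is that the filtered $\mc{D}_\lambda$-module underlying $\mc{M} \in \mhm(\mc{D}_\lambda)$ can be recovered from its global sections: filtered exactness gives $\mrm{R}\Gamma F_p\mc{M} = \Gamma F_p \mc{M}$ for all $p$, hence $\Gamma$ commutes with the associated graded and with the filtered pieces, and one needs that the natural map $\mc{D}_\lambda \otimes_{U(\mf{g})} \Gamma F_\bullet \mc{M} \to F_\bullet \mc{M}$ (filtered localization) is a filtered isomorphism. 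Surjectivity of the underlying $\mc{D}_\lambda$-module map holds because $\mc{M}$ is globally generated (regular dominant case), and by Theorem~\ref{thm:intro hodge generation} the Hodge filtration is globally generated too, which promotes this to filtered surjectivity; injectivity then follows from the classical equivalence on the underlying $\mc{D}$-modules combined with strictness.

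Next I would address what ``fully faithful'' means on the target category $\mhm^{\mathit{weak}}(U(\mf{g}))_{\chi_\lambda}$: one must check that $\Gamma$ is fully faithful not merely as a functor of filtered $U(\mf{g})$-modules but as a functor of weak mixed Hodge modules, i.e., that it is compatible with the full package of Hodge and weight data and that morphisms of weak mixed Hodge $U(\mf{g})$-modules in the image come from morphisms of $\mhm(\mc{D}_\lambda)$. For this I would use that $\Gamma$ is induced by the de Rham / global sections functor at the level of the underlying filtered $\bC$- or $\bR$-structures, so the Hodge and weight filtrations on $\Gamma\mc{M}$ are by definition the images of those on $\mc{M}$; a morphism $\phi \colon \Gamma\mc{M} \to \Gamma\mc{N}$ of weak mixed Hodge modules is in particular a morphism of filtered $\mc{D}$-modules after localizing, hence equals $\Gamma(\psi)$ for a unique $\mc{D}_\lambda$-module morphism $\psi$ by the classical equivalence; one then checks $\psi$ is a morphism in $\mhm(\mc{D}_\lambda)$ by verifying strict compatibility with $F_\bullet$ and $W_\bullet$, which transfers from $\phi$ because $\Gamma = \mrm{R}\Gamma$ is filtered exact and faithful on the underlying structures. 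Faithfulness of $\Gamma$ itself is immediate from faithfulness on underlying $\mc{D}$-modules.

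The main obstacle I expect is the filtered/Hodge-theoretic surjectivity of the localization map, i.e., showing that $F_\bullet \mc{M}$ is generated over $\mc{D}_\lambda$ by $F_\bullet \Gamma \mc{M}$ in a way compatible with the Hodge structure, rather than merely that the underlying $\mc{D}_\lambda$-module is generated by global sections. This is exactly where Theorem~\ref{thm:intro hodge generation} (global generation of the Hodge filtration) is indispensable: for $\lambda$ regular dominant every $\mc{M} \in \mhm(\mc{D}_\lambda)$ has globally generated underlying $\mc{D}_\lambda$-module, so the theorem applies and yields global generation of $F_\bullet\mc{M}$; combined with filtered exactness (so that $\Gamma$ is exact on each $F_p$ and commutes with $\Gr^F$), this gives that $\Gr^F$ of the filtered localization map is surjective, hence the map itself is a filtered epimorphism, and strictness of $\mc{M}$ upgrades this to a filtered isomorphism. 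A secondary technical point will be checking that these constructions are compatible with the $\bR$-structure and weight filtration defining $\mhm^{\mathit{weak}}$, but since all the functors involved ($\Gamma$, $\Delta$, $\Gr^F$) are exact on the relevant subcategories, this should be a formal consequence of the $\mc{D}$-module statement together with strictness, handled by the general formalism of \S\ref{subsec:mhm globalization} and \S\ref{sec:localization}.
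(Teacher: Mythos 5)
Your first two steps reproduce the paper's argument for the algebraic part: filtered exactness of $\Gamma$ is Corollary \ref{cor:filtered exactness}, and the filtered isomorphism $\Delta\Gamma(\mc{M}, F_\bullet) \overset{\sim}\to (\mc{M}, F_\bullet)$, obtained from global generation of the Hodge filtration (Theorem \ref{thm:hodge generation} / Theorem \ref{thm:localization hodge module}), is Corollary \ref{cor:hodge full faithful}. Up to that point you have shown that any morphism of global sections compatible with $F_\bullet$ and $W_\bullet$ localizes to a unique $\tilde{\mc{D}}$-module morphism $\Delta(f)$ compatible with $F_\bullet$ and $W_\bullet$. This matches the paper.

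The gap is in the last step, where you assert that $\Delta(f)$ is then a morphism of mixed Hodge modules once one checks ``strict compatibility with $F_\bullet$ and $W_\bullet$,'' with the remaining data dismissed as a formal compatibility with ``the $\bR$-structure.'' In the complex Hodge module formalism used here there is no rational or real structure; an object of $\mhm(\mc{D}_\lambda)$ is a tuple $(\mc{M}, F_\bullet\mc{M}, F_\bullet\mc{M}^h, W_\bullet\mc{M})$, and a $\mc{D}$-module morphism compatible with $F_\bullet$ and $W_\bullet$ need not respect the filtration on the Hermitian dual, hence need not be a morphism of mixed Hodge modules (already over a point: the identity of $\mb{C}$ from the Hodge structure of type $(0,0)$ to the one of type $(1,-1)$ preserves $F_\bullet$ and $W_\bullet$ but is not a morphism of Hodge structures). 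This is precisely why the target category $\mhm^{\mathit{weak}}(U(\mf{g}))_{\chi_\lambda}$ carries the third filtration $\bar{F}_\bullet$, defined by $\bar{F}_q\Gamma(\mc{M}) = \Gamma(F_{-q-1}\mc{M}^h)^\perp$ with respect to the integral $\Gamma(\mf{s})$ of the tautological pairing over $U_\mb{R}$, and why the paper's proof of fullness must show that the Hermitian dual morphism $\Delta(f)^h \colon \mc{N}^h \to \mc{M}^h$ respects Hodge filtrations. That step uses the hypothesis that $f$ respects $\bar{F}_\bullet$, the non-degeneracy of the pairing between $\Gamma(\mc{M})$ and $\Gamma(\mc{M}^h)$ (Lemma \ref{lem:pairing non-degenerate}, which rests on Proposition \ref{prop:polarization non-degenerate}, i.e.\ definiteness of the polarization on the lowest Hodge piece plus Theorem \ref{thm:hodge generation}), and a second application of Corollary \ref{cor:hodge full faithful}, now to the Hermitian duals. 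The same non-degeneracy is also needed for the exactness of $\bar{F}_q\Gamma$, which is part of the ``filtered exact'' assertion in the statement and is proved by dualizing exact sequences of the finite-dimensional spaces $\Gamma(F_{-q-1}\mc{M}^h)$. None of this is formal, and without it your argument only proves full faithfulness into the category of $(W_\bullet, F_\bullet)$-filtered $U(\mf{g})$-modules, not into $\mhm^{\mathit{weak}}(U(\mf{g}))_{\chi_\lambda}$ as claimed.
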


We give a more detailed statement, as Theorem \ref{thm:hodge localization} in \S\ref{subsec:mhm globalization}.

Theorems \ref{thm:intro filtered exactness} and \ref{thm:intro hodge generation} give strong control over the algebraic parts of a mixed Hodge module under global sections. Like classical Hodge theory, however, the theory of Hodge modules also incorporates analysis in an essential way. In particular, every pure Hodge module can be equipped with a \emph{polarization}: a Hermitian form satisfying a natural positivity condition with respect to the Hodge structure. By integration, the polarization produces a Hermitian form on the global sections. The main conjecture of \cite{SV} predicts the signature of this integral on each piece of the Hodge filtration. The conjecture would imply that the functor of Theorem \ref{thm:intro full faithfulness} factors through the subcategory of $\mhm^{\mathit{weak}}(U(\mf{g}))_{\chi_\lambda}$ consisting of $U(\mf{g})$-modules with (possibly infinite dimensional) mixed Hodge structure, as well as results about unitary representations of real groups. While we do not prove the full conjecture here, we prove via a slightly different route that the main consequence for unitary representations of real groups still holds.

Let us now explain the connection to the unitary dual of a real group $G_\mb{R}$, given by a finite covering of a real form of $G$. Recall that to each admissible representation of $G_\mb{R}$ is associated a dense subspace, the corresponding Harish-Chandra $(\mf{g}, K)$-module. This is a complex vector space equipped with compatible algebraic actions of the Lie algebra $\mf{g}$ (the complexification of $\mf{g}_\mb{R} = \mrm{Lie}(G_\mb{R})$) and of the complexification $K$ of a maximal compact subgroup $K_\mb{R} \subset G_\mb{R}$. The irreducible Harish-Chandra modules were completely classified by Harish-Chandra and Langlands. Harish-Chandra also showed that the irreducible unitary representations of $G_\mb{R}$ are in bijection with the irreducible unitary Harish-Chandra modules; here we say that a Harish-Chandra module $V$ is \emph{unitary} if it admits a positive definite $(\mf{g}_\mb{R}, K_\mb{R})$-invariant Hermitian form. So the problem of computing the unitary dual is reduced to the question of which irreducible Harish-Chandra modules are unitary. This can be reduced further to the case of real infinitesimal character (see, for example, \cite[Chapter 16]{knapp1986book}), so we will consider only this case from now on.

By Beilinson-Bernstein localization, each irreducible Harish-Chandra module $V$ with real infinitesimal character arises as the global sections of a unique irreducible $K$-equivariant $\mc{D}_\lambda$-module $\mc{M}$ on $\mc{B}$ with $\lambda \in \mf{h}^*_\mb{R}$ real dominant. Moreover, such a $\mc{D}_\lambda$-module has a canonical choice of lift to a twisted Hodge module in $\mhm(\mc{D}_\lambda)$, so $V$ carries a canonical Hodge filtration $F_\bullet V = \Gamma(F_\bullet \mc{M})$.

It is not difficult to determine which irreducible Harish-Chandra modules admit a non-degenerate $(\mf{g}_\mb{R}, K_\mb{R})$-invariant Hermitian form (see, e.g., Remark \ref{rmk:hermitian}). We call such modules \emph{Hermitian}. In the case of real infinitesimal character, these are precisely those Harish-Chandra modules $V$ that admit a Cartan involution $\theta \colon V \to V$, compatible with the Cartan involution on $\theta$ on $G_\mb{R}$ such that $K_\mb{R} = G_\mb{R}^\theta$. Our main theorem is:

\begin{thm} \label{thm:intro unitarity criterion}
Let $V$ be an irreducible Harish-Chandra module with real infinitesimal character as above and suppose that $V$ is Hermitian. Then the Cartan involution $\theta \colon V \to V$ preserves the Hodge filtration $F_\bullet V$ and $V$ is unitary if and only if $\pm \theta$ acts on $\Gr^F_p V$ by $(-1)^p$ for all $p$.
\end{thm}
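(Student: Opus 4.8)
The plan is to deduce the unitarity criterion from the Hodge-theoretic localization package, by transporting the question from the Harish-Chandra module $V$ to the irreducible $K$-equivariant twisted Hodge module $\mc{M} \in \mhm(\mc{D}_\lambda)$ with $\Gamma(\mc{M}) = V$, computing the integral of the polarization piece by piece. First I would recall that since $V$ is Hermitian with real infinitesimal character, the underlying $\mc{D}_\lambda$-module $\mc{M}$ is isomorphic to its own dual twisted by the Cartan involution $\theta$; under the canonical Hodge lift, this should promote to a self-duality of $\mc{M}$ as a (twisted) mixed Hodge module, i.e. $\mc{M}$ is polarizable (after possibly a Tate twist to fix weights), and the polarization form $S$ is canonically normalized up to a global sign. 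The Cartan involution $\theta$ on $V = \Gamma(\mc{M})$ is then induced by a morphism of Hodge modules, so it automatically preserves $F_\bullet \mc{M}$, hence $F_\bullet V = \Gamma(F_\bullet \mc{M})$ by Theorem~\ref{thm:intro filtered exactness}; this settles the first assertion. The invariant Hermitian form on $V$ is, up to scalar, the integral of the polarization: $\langle v, w\rangle = \int_{\mc{B}} S(v, \bar w)$, suitably interpreted via the $\mrm{DR}$-complex.

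The heart of the matter is then to show that the signature of this integral, restricted to $\Gr^F_p V$, is governed by $(-1)^p$ times the sign by which $\theta$ acts. The key geometric input is Theorem~\ref{thm:intro filtered exactness}: because $\lambda$ is real dominant, $\mrm{H}^i(\mc{B}, F_p \mc{M}) = 0$ for $i > 0$, so $\Gamma$ is filtered exact and $\Gr^F_p V = \mrm{H}^0(\mc{B}, \Gr^F_p \mc{M})$ with no higher cohomology interfering. This means the integral pairing on $\Gr^F V$ is computed \emph{locally} on $\mc{B}$ from the polarization on the graded pieces $\Gr^F_\bullet \mrm{DR}(\mc{M})$, and the positivity axiom of a polarization of a pure Hodge module says precisely that on $\Gr^F_p$ (more precisely on the primitive parts of the Lefschetz decomposition with respect to the weight filtration) the Hermitian form $(-1)^?\, S$ is positive definite. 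Combining the local positivity of the polarization with the vanishing of higher cohomology, the global integral on $\mrm{H}^0(\Gr^F_p \mrm{DR}(\mc{M}))$ inherits a definite sign, equal to $(-1)^p$ times a global constant; that constant is exactly the sign $\pm$ in the statement, fixed by comparing with $\theta$ on a single convenient piece (e.g. the lowest one, where $\Gr^F$ is the pushforward of a line bundle and everything is explicit).

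To make ``the integral on $\Gr^F_p V$ has a definite sign'' precise I would work with the Rees/associated-graded module and the filtered de Rham complex: the polarization on $\mc{M}$ induces a pairing of $\Gr^F_p\mrm{DR}(\mc{M})$ with $\Gr^F_{-p}\mrm{DR}(\mb{D}\mc{M}) \cong \Gr^F_{-p}\mrm{DR}(\mc{M})$ (using the self-duality), and the signature statement is the ``Hodge--Riemann bilinear relations'' for the Hodge module, pushed to cohomology via the (degenerate, by Theorem~\ref{thm:intro filtered exactness}) Hodge-to-de-Rham spectral sequence. The main obstacle I anticipate is the careful bookkeeping of signs and the comparison between the abstract polarization on the Hodge module, the $(\mf{g}_\mb{R}, K_\mb{R})$-invariant form on $V$, and the action of $\theta$: one must check that the canonical polarization is compatible with $\theta$ up to the \emph{correct} global sign (this is where the $\pm$ ambiguity in the theorem genuinely enters, and where one cannot avoid tracking weights and Tate twists), and that the integration pairing really does compute the invariant form rather than some twist of it. I would isolate this sign comparison as a separate lemma, verified on principal series or on a minimal $K$-type, and then feed it into the general positivity argument; once the sign is pinned down on one graded piece, filtered exactness propagates it to all of them.
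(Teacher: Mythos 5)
Your reduction of the first assertion is fine: the irreducible Harish-Chandra sheaf has a canonical Hodge lift, so the isomorphism $\theta\colon \theta^*\mc{M}\to\mc{M}$ is a morphism of Hodge modules and preserves $F_\bullet\mc{M}$, hence $F_\bullet V$ by filtered exactness. But the heart of your argument has a genuine gap. You claim that the polarization axiom gives positivity ``locally on $\mc{B}$'' on the graded pieces $\Gr^F_p$, and that combining this with the vanishing $\mrm{H}^i(\mc{B},F_p\mc{M})=0$ forces the integrated form $\Gamma(S)$ to be $(-1)^p$-definite (up to a global sign) on each $\Gr^F_p V$. That statement is precisely Conjecture \ref{conj:schmid-vilonen}, which the paper explicitly does not prove and which does not follow from Theorem \ref{thm:intro filtered exactness}. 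The polarization conditions on a singular Hodge module are not pointwise definiteness statements on $\Gr^F_\bullet\mrm{DR}(\mc{M})$ (they are formulated via nearby/vanishing cycles), and even for a polarized variation of Hodge structure on a non-closed orbit, integrating the distribution-valued pairing over the compact form $U_\mb{R}$ mixes the fibers in a way that is not known to preserve the Hodge--Riemann signature pattern. Cohomology vanishing controls the size of $\Gamma(F_p\mc{M})$; it does not transport fiberwise Hodge--Riemann relations to the global integral. The only unconditional signature input available is Proposition \ref{prop:polarization non-degenerate}: definiteness on the \emph{lowest} piece of the Hodge filtration (and non-degeneracy of $\Gamma(S)$), not on all of $\Gr^F$.

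The paper's actual route is quite different and you would need some version of it. One first proves the criterion for tempered Harish-Chandra sheaves (Theorem \ref{thm:tempered hodge}), where unitarity is known a priori and the Hodge filtration is computed explicitly --- in the split case $\Gr^F\Gamma(\mc{M})\cong\mc{O}_{\mc{N}^*_K}\otimes\eta$, and in general the filtration is generated by its lowest piece, so the lowest-piece positivity of Proposition \ref{prop:polarization non-degenerate} suffices there. One then proves only a weak, mod-$(\zeta^2-1)$ form of the conjecture, the character identity $\chi^{\mathit{sig}}(\mc{M},\zeta)=\zeta^c\chi^H(\mc{M},\zeta)$ of Theorems \ref{thm:hodge and signature K} and \ref{thm:hodge and signature K'}, by deforming an arbitrary irreducible Harish-Chandra sheaf to a tempered one along a positive element $f\in\Gamma^K_\mb{R}(\tilde Q_S)^{\mon}_+$ (Proposition \ref{prop:hc deformations}): Theorem \ref{thm:semi-continuity} makes $\chi^H$ piecewise constant in the deformation parameter, continuity of eigenvalues does the same for $\chi^{\mathit{sig}}$, and the jumps at walls on both sides are matched by the Jantzen filtration and Theorem \ref{thm:jantzen}, with an induction on the dimension of the support; passing to the extended group $K'=K\times\{1,\theta\}$ then turns the character identity into the stated unitarity criterion. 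Your proposal uses none of this wall-crossing machinery, and without it (or a proof of the full conjecture, which is open) the key positivity claim on each $\Gr^F_p V$ is unsupported.
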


Theorem \ref{thm:intro unitarity criterion} is an immediate consequence of the more general conjecture of \cite{SV}. We explain the statement in more detail in \S\ref{subsec:intro harish-chandra} (where we also restate it as Theorem \ref{thm:unitarity criterion}) and give the proof in \S\S\ref{sec:tempered}--\ref{sec:pf of unitarity}. For now, let us briefly recall how the statement arises.

One of the principal innovations in \cite{ALTV}, which gave rise to the {\tt atlas} algorithm, was to replace $\mf{g}_\mb{R}$-invariant forms with $\mf{u}_\mb{R}$-invariant forms, where $\mf{u}_\mb{R}$ is the Lie algebra of a \emph{compact} real form of $G$. In the setting of real infinitesimal characters, non-degenerate $\mf{u}_\mb{R}$-invariant forms always exist, their signs can be normalized (this makes them amenable to recursive calculations) and they are related to the $\mf{g}_\mb{R}$-invariant forms of interest via the Cartan involution $\theta$. In terms of Hodge theory, $\mf{u}_\mb{R}$-invariant forms arise by integrating the polarizations on pure Hodge modules: thus, their signs are fixed automatically and the main conjecture of \cite{SV} predicts their signatures. Theorem \ref{thm:intro unitarity criterion} arises from the corresponding prediction of the signature of the $\mf{g}_\mb{R}$-invariant form.

The proof of Theorem \ref{thm:intro unitarity criterion} roughly follows a sketch of Adams, Trapa and Vogan \cite{ATV}, who observed that it follows rather formally from the methods of \cite{ALTV} assuming a number of conjectures about Hodge modules. The main idea is that, by imitating the {\tt atlas} algorithm, one can prove a weak form of the main conjecture of \cite{SV}:

\begin{thm}[Theorems \ref{thm:hodge and signature K} and \ref{thm:hodge and signature K'}] \label{thm:intro characters}
Let $\lambda \in \mf{h}^*_\mb{R}$ be dominant and $\mc{M} \in \mhm(\mc{D}_\lambda, K)$ be pure of weight $w$. Then
\[ \chi^{\mathit{sig}}(\mc{M}, \zeta) = \zeta^c \chi^H(\mc{M}, \zeta) \mod \zeta^2 - 1\]
where $c = \dim \mc{B} - w$ and $\chi^H$ and $\chi^{\mathit{sig}}$ are the Hodge and signature $K$-characters of $\Gamma(\mc{M})$. If $\mc{M}$ is equivariant with respect to the extended group $K \times \{1, \theta\}$, then the same is true for the extended Hodge and signature characters.
\end{thm}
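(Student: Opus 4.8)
The plan is to re-run the \texttt{atlas}/ALTV signature recursion, interpreting each step Hodge-theoretically and using the wall-crossing Theorem~\ref{thm:intro semi-continuity} to control the Hodge side. First I would record the formal properties. By Theorem~\ref{thm:intro filtered exactness} the functor $\Gamma$ is filtered exact, and $\Gr^F$ is exact on $\mhm$ by strictness, so $\chi^H(\mc{M},\zeta)$ depends only on the class of $\mc{M}$ in $K(\mhm(\mc{D}_\lambda,K))$ and is additive; the signature character $\chi^{\mathit{sig}}$ is likewise additive, once one knows that the relevant Hermitian forms exist in one-parameter families and degenerate only on walls. Since a pure $\mc{M}$ is semisimple, it is enough to treat $\mc{M}$ simple, say $\mc{M}=j_{Q,!*}\mc{L}_Q[\dim Q]$ (possibly Tate-twisted) for a $K$-orbit $Q$; then $c=\dim\mc{B}-w$ equals $\codim Q$ up to the twist. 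Unwinding the definitions, the claim becomes: the Hermitian form on $\Gamma(\mc{M})$ obtained by integrating the polarization of $\mc{M}$ is $(-1)^{p+c}$-definite on each $\Gr^F_p\Gamma(\mc{M})$, and this integral form is the $\mf{u}_\mb{R}$-invariant form entering the signature recursion. Neither assertion is formal, because $\lambda$ is non-integral and so $\Gamma(\mc{M})$ is not a priori the cohomology of a polarized Hodge module; positivity has to be propagated by deformation.

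For the base case I would take $\mc{M}$ supported on a closed $K$-orbit, i.e.\ the pushforward of the variation underlying a tempered Harish-Chandra module (equivalently, one deforms the parameter deep into the dominant chamber). Such a representation is unitary, so its $\mf{u}_\mb{R}$-invariant form is definite and $\chi^{\mathit{sig}}$ is the full $K$-character with a single sign; on the Hodge side, Theorem~\ref{thm:intro tempered} computes $\Gr^F\Gamma(\mc{M})$ explicitly, and one reads off that all nonzero $\Gr^F_p$ lie in the single congruence class $p\equiv -c\pmod 2$, so that $\zeta^c\chi^H\bmod(\zeta^2-1)$ is again the full $K$-character. This matches, and simultaneously pins down the normalization --- the exact value of the shift $c=\dim\mc{B}-w$ --- by fixing the weight convention for $\Gamma$.

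For the inductive step, let $\mc{M}$ be simple and non-tempered. Following the \texttt{atlas} algorithm there are two moves. First, one passes between a standard module $j_{Q,!}\mc{L}_Q[\dim Q]$ and its simple quotient $\mc{M}$: the Hodge filtrations are related through the weight filtration (decomposition theorem) by Hodge-theoretic Kazhdan--Lusztig polynomials, which must be identified with the hermitian Kazhdan--Lusztig polynomials of \cite{ALTV} governing the signature change of basis. Second, one deforms the twist toward or across a wall as in Theorem~\ref{thm:intro semi-continuity}: there is an affinely embedded union of orbits $j\colon Q\hookrightarrow\mc{B}$, a boundary equation $f$, and a polarized $\mc{N}$ with $j_!f^{s}\mc{N}=j_*f^{s}\mc{N}$ away from a wall value $s_0$, and semi-continuity gives $\Gr^Fj_!f^{s_0-\epsilon}\mc{N}\cong\Gr^Fj_!f^{s_0}\mc{N}$ and the analogous statement for $j_*$, while the difference between $j_!f^{s_0}\mc{N}$ and $j_*f^{s_0}\mc{N}$ is an explicit extension term supported on the boundary, of strictly smaller support, to which induction applies. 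On the signature side the ALTV wall-crossing formula involves exactly the same boundary term, and the only real issue is the sign with which it enters; the claim is that this sign equals $(-1)$ raised to the Hodge-degree shift of Theorem~\ref{thm:intro semi-continuity} --- this is the precise sense in which ``the jump is controlled by extension functors'' --- so the two recursions agree modulo $\zeta^2-1$. The extended statement (Theorem~\ref{thm:hodge and signature K'}) then follows by carrying $\{1,\theta\}$-equivariance through the whole argument: the polarization, the Hodge filtration, and all the deformations above are $\theta$-equivariant, so $\chi^H$ and the $\mf{u}_\mb{R}$-form refine to $K\times\{1,\theta\}$ with no new input, beyond keeping track of whether $\theta$ acts by $+1$ or $-1$ on each simple constituent.

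The main obstacle is the inductive step, specifically the bookkeeping of signs and shifts: one must show that the integer controlling the signature change across a non-compact wall (as encoded in the hermitian Kazhdan--Lusztig recursion of \cite{ALTV}) agrees modulo $2$ with the shift in Hodge-filtration degree produced by the deformation $f^s\mc{N}$ near $s=s_0$. This requires a careful analysis of the nearby and vanishing cycles of the family, identifying their weight and Hodge filtrations, and comparing with the Hecke-algebra combinatorics; equivalently, it is the assertion that the elementary step of the \texttt{atlas} recursion is realized geometrically by the boundary term of Theorem~\ref{thm:intro semi-continuity}. A secondary point, handled along the way, is establishing the positivity asserted in the first paragraph --- that the integrated polarization on $\Gamma(\mc{M})$ is $(-1)^{p+c}$-definite on $\Gr^F_p$ --- and the identification of this form with the $\mf{u}_\mb{R}$-invariant form of \cite{ALTV}; both are forced once the base case and the recursion are in place.
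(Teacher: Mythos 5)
Your overall strategy (reduce to irreducible objects, deform to the tempered case, control jumps at walls, induct) is the same in outline as the paper's, but two of its load-bearing steps are wrong or missing as you have set them up.

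First, the base case. You claim that unitarity of a tempered module makes the $\mf{u}_\mb{R}$-invariant form $\Gamma(S)$ definite, and that Theorem \ref{thm:intro tempered} shows $\Gr^F_p\Gamma(\mc{M})\neq 0$ only for $p$ in a single class mod $2$. Both assertions are false. Unitarity gives definiteness of the $\mf{g}_\mb{R}$-form $\Gamma(S)(\theta(\cdot),\overline{\,\cdot\,})$, not of $\Gamma(S)$ itself, whose signature is what $\chi^{\mathit{sig}}$ records (already for the tempered spherical principal series of $\mrm{SL}_2(\mb{R})$ the $\mf{u}_\mb{R}$-form alternates in sign with the $K$-type); and $\Gr^F\Gamma(\mc{M})\cong\mc{O}_{\mc{N}^*_K}\otimes\eta$ has nonzero pieces in every degree, not one parity. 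The correct mechanism requires the Cartan involution: $\theta$ acts on $\Gr^F_p$ by $(-1)^{p+c}$ (Lemma \ref{lem:tempered hodge sign}, via Proposition \ref{prop:cohomological induction hodge} and Theorem \ref{thm:split tempered}), while $\Gamma(S)$ is $\epsilon$-definite on the $\epsilon$-eigenspace of $\theta$ (Lemma \ref{lem:tempered theta signature}); only the combination yields the congruence. Note also that tempered Harish-Chandra sheaves are not supported on closed orbits in general: they are clean extensions from orbits fibering over a closed orbit in a partial flag variety with open split-Levi fibers (Proposition \ref{prop:regular+tempered}), and the relevant deformation moves $\lambda$ toward the $\theta_Q$-fixed point $\tfrac12(1+\theta_Q)\lambda$, i.e.\ toward walls, not ``deep into the dominant chamber.''

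Second, the inductive step. Your move of matching Hodge-theoretic Kazhdan--Lusztig data against the hermitian KL recursion of \cite{ALTV} is precisely the route the paper avoids, because that comparison is only understood for linear groups while the theorem is asserted for arbitrary real groups; and you explicitly leave the decisive sign comparison at a wall as an open obstacle rather than supplying it. The paper needs no KLV combinatorics: for the family $j_{!*}f^s\gamma$ with $\varphi(f)=(1-\theta_Q)\lambda$ positive (Proposition \ref{prop:hc deformations}), the jump of $\chi^H$ at a wall is expressed through the Jantzen filtration on $j_!f^{s_i}\gamma$ via Theorem \ref{thm:semi-continuity}, and the jump of $\chi^{\mathit{sig}}$ through the \emph{same} filtration via Theorem \ref{thm:jantzen}, which says $\Gr^J_{-n}j_!f^{s_i}\gamma$ is pure of weight $w-n$ with the Jantzen form a polarization. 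These wall terms are supported on the boundary, so the induction on dimension of support applies to them with $c$ replaced by $c+n$, and the resulting factors $u^{-n}$ and $\zeta^n$ cancel modulo $\zeta^2-1$; that is the entire sign bookkeeping, and it rests on the real-exponent version of the Jantzen/polarization theorem, which your sketch does not provide. Two further points you gloss over: when the deformed parameter leaves the dominant cone before reaching the tempered endpoint, the parameter becomes non-regular and the statement is vacuous by Lemma \ref{lem:irregular parameters}; and in the extended-group case the $K'$-equivariance of the deformation is not free --- one must check $(1-\theta_Q)\lambda\in(\mf{h}^*_\mb{R})^{\delta}$, i.e.\ that $\theta_Q$ and $\delta$ commute for a $\theta$-stable orbit.
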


The characters $\chi^H$ and $\chi^{\mathit{sig}}$ above record the multiplicities of the $K$-types (and, in the ``extended group'' setting, the action of the Cartan involution $\theta$) as well as their Hodge degrees and signatures with respect to the polarization respectively; so Theorem \ref{thm:intro characters} says that the Hodge filtration determines the signature. We give the proof in \S\ref{sec:pf of unitarity}. Theorem \ref{thm:intro characters} is weaker than the main conjecture of \cite{SV}, since it concerns only Harish-Chandra modules and does not assert non-degeneracy of the polarization restricted to the Hodge filtration, but is nevertheless strong enough to imply Theorem \ref{thm:intro unitarity criterion}.

Very roughly, Theorem \ref{thm:intro characters} is proved as follows. First, we check the statement by hand when the $(\mf{g}, K)$-module $\Gamma(\mc{M})$ is tempered. Since tempered representations are known to be unitary on general grounds, this boils down to a computation of the Hodge filtration and its associated graded in this case. For example, we have:

\begin{thm}\label{thm:intro tempered}
Let $G_\mb{R}$ be a split group and $\mc{M} \in \mhm(\mc{D}_0, K)$ the irreducible Hodge module such that $V = \Gamma(\mc{M})$ is the tempered spherical principal series representation with real infinitesimal character. Then
\[ \Gr^F V \cong \mc{O}_{\mc{N}^*_K} \quad \text{and} \quad \Gr^F\mc{M} \cong \mc{O}_{\tilde{\mc{N}}^*_K},\]
where $\mc{N}^*_K = \mc{N}^* \cap (\mf{g}/\mf{k})^*$ is the $K$-nilpotent cone and $\tilde{\mc{N}}^*_K$ is its scheme-theoretic pre-image under the Springer resolution.
\end{thm}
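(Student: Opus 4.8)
The plan is to compute $\Gr^F\mc{M}$ directly on the flag variety $\mc{B}$ and then deduce the statement for $V$ by taking global sections. For a split group, $K$ has a unique open orbit $j\colon Q\hookrightarrow\mc{B}$ --- the orbit of a Borel $B_0$ containing a maximally split Cartan, with $\dim Q=\dim\mc{B}$ --- and the first step is to identify $\mc{M}$ as the minimal extension $j_{!*}\mc{L}_Q$ of a rank one $K$-equivariant Hodge module $\mc{L}_Q$ on $Q$, corresponding to the character of $\pi_0(K\cap B_0)$ for which $\Gamma(\mc{M})$ is the spherical principal series (this character is pinned down by the Langlands/Vogan classification; since $\lambda=0$ there is no twist, so $\mc{L}_Q$ is honestly a variation of Hodge structure of rank one, with Hodge filtration concentrated in one step, whence $\Gr^F\mc{L}_Q\cong\mc{O}_Q$ on the zero section of $T^*Q$). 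Because $\mc{M}$ is $K$-equivariant and has full support, its characteristic variety is the union of the conormals to all $K$-orbits, which is exactly $\tilde{\mc{N}}^*_K$; the content of the theorem is that the Hodge filtration realises the honest structure sheaf of this variety, with all multiplicities one. Once $\Gr^F\mc{M}\cong\mc{O}_{\tilde{\mc{N}}^*_K}$ is known, the statement for $V$ follows: by the filtered exactness of $\Gamma$ on $\mhm(\mc{D}_0)$ (Theorem \ref{thm:intro filtered exactness}), $\Gr^FV=\Gamma(\mc{B},\Gr^F\mc{M})=R\Gamma(\tilde{\mc{N}}^*_K,\mc{O})$, and since the projection $\tilde{\mc{N}}^*_K\to\mc{B}$ is affine this already gives $R^{>0}\Gamma=0$; combined with $R\pi_*\mc{O}_{\tilde{\mc{N}}^*_K}=\mc{O}_{\mc{N}^*_K}$ for the Springer map $\pi\colon\tilde{\mc{N}}^*_K\to\mc{N}^*_K$ (a rational-resolution-type property of the $K$-nilpotent cone, known for this symmetric pair) and $\mc{N}^*_K$ affine, one gets $\Gr^FV\cong\mc{O}_{\mc{N}^*_K}$.

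The heart of the argument is the computation of $\Gr^F\mc{M}=\Gr^Fj_{!*}\mc{L}_Q$, for which I would use the wall crossing theorem (Theorem \ref{thm:intro semi-continuity}). The complement $\mc{B}\setminus Q$ is a hypersurface, cut out by a section $f$ of an ample line bundle $\mc{L}$ whose class is a regular dominant weight; this produces a one-parameter family of principal series modules $j_!f^s\mc{L}_Q,\,j_*f^s\mc{L}_Q\in\mhm_{\mc{L}^s}(\mc{B})$ with $j_!f^s\mc{L}_Q=j_{!*}f^s\mc{L}_Q=j_*f^s\mc{L}_Q$ for $0<|s|\ll1$, since the twisted monodromy along the boundary is then non-resonant. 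Theorem \ref{thm:intro semi-continuity} gives $\Gr^Fj_!f^{-\epsilon}\mc{L}_Q\cong\Gr^Fj_!\mc{L}_Q$ and $\Gr^Fj_*f^{+\epsilon}\mc{L}_Q\cong\Gr^Fj_*\mc{L}_Q$ for $0<\epsilon\ll1$; combining this with cleanness at $s\neq0$ and the vanishing of the extension groups governing the wall crossings (for support or weight reasons, as in the proof of Theorem \ref{thm:intro semi-continuity}), one gets $\Gr^Fj_!\mc{L}_Q\cong\Gr^F\mc{M}\cong\Gr^Fj_*\mc{L}_Q$, all equal to $\Gr^F$ of the clean module at a nearby regular infinitesimal character. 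That module is the minimal extension of a non-resonant rank one variation of Hodge structure, whose Hodge filtration is computed by Saito's explicit formula (after resolving $\mc{B}\setminus Q$ to normal crossings if necessary), and one checks that the resulting associated graded is the structure sheaf of the characteristic variety with all multiplicities one. As a twisted cotangent bundle is abstractly isomorphic to $T^*\mc{B}$ and carries the same characteristic variety $\tilde{\mc{N}}^*_K$, this identifies $\Gr^F\mc{M}\cong\mc{O}_{\tilde{\mc{N}}^*_K}$, using in addition the Cohen--Macaulayness of $\mc{O}_{\tilde{\mc{N}}^*_K}$ (whose components are the conormals to the $K$-orbits, meeting generically transversally) and purity of $\mc{M}$ to upgrade ``same support, generic rank one'' to an honest isomorphism of sheaves.

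The main obstacle, I expect, is precisely this last point: good filtrations on a $\mc{D}$-module are highly non-unique, so showing that the Hodge filtration produces the honest structure sheaf of $\tilde{\mc{N}}^*_K$ --- with the correct behaviour on the lower-dimensional conormal components, not merely some coherent sheaf with that support --- is the real work, and it genuinely needs the combination of the wall-crossing reduction to the clean case, the explicit Saito-type computation there, and a Cohen--Macaulayness/rigidity argument exploiting purity. Secondary difficulties are the input geometry: pinning down the correct $\mc{L}_Q$, verifying that $\mc{B}\setminus Q$ is a suitable ample hypersurface, and establishing the reducedness and Cohen--Macaulayness of $\tilde{\mc{N}}^*_K$ together with $R\pi_*\mc{O}_{\tilde{\mc{N}}^*_K}=\mc{O}_{\mc{N}^*_K}$ --- each of which should be standard or reducible to known results on symmetric pairs, but must be assembled carefully.
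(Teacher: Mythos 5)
Your overall scaffolding (identify $\mc{M}$ as the clean/minimal extension of a rank-one twisted local system on the open $K$-orbit, compute $\Gr^F\mc{M}$ on $T^*\mc{B}$, then deduce $\Gr^F V$ via filtered exactness and $\mrm{R}\mu_{\bigcdot}\mc{O}_{\tilde{\mc{N}}^*_K}=\mc{O}_{\mc{N}^*_K}$) is sound, but the heart of your argument has a genuine gap, and it sits exactly where you say the "real work" is. First, the wall-crossing step buys you nothing here: because the module is tempered, the twisted monodromy around the boundary of the open orbit is already non-resonant at $s=0$ (this is Proposition \ref{prop:regular+tempered}), so $j_!\gamma=j_{!*}\gamma=j_*\gamma$ on the nose, and Theorem \ref{thm:semi-continuity} merely tells you that nearby deformed modules have the same $\Gr^F$ — which is just as unknown as the original. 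Second, the proposed computation of that $\Gr^F$ "by Saito's explicit formula after resolving $\mc{B}\setminus Q$ to normal crossings" is not viable as stated: the boundary of the open $K$-orbit is far from normal crossings, cleanness need not survive pullback to a resolution (exceptional divisors can carry trivial monodromy), and recovering $\Gr^F$ of the clean extension on $\mc{B}$ from the resolution requires the full derived pushforward of Proposition \ref{prop:associated graded pushforward}, not a closed formula. Third, and most seriously, your final rigidity step — "same support, generic rank one, Cohen--Macaulay, pure, hence $\cong\mc{O}_{\tilde{\mc{N}}^*_K}$" — is not a valid inference: a CM sheaf on $T^*\mc{B}$ with support $\tilde{\mc{N}}^*_K$ and generic rank one on each component is very far from unique (line-bundle twists, different scheme structures on the lower-dimensional conormal components, etc.), and in fact the claim "all multiplicities one" is itself unjustified, since $\tilde{\mc{N}}^*_K$ is only asserted to be the scheme-theoretic preimage, not a reduced union of conormal varieties.

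For comparison, the paper closes precisely this gap by a different mechanism. It first uses the global generation theorem (Theorem \ref{thm:hodge generation}) together with restriction to the simple-root $\mb{P}^1$'s to prove that the lowest Hodge piece $F_0 j_{!*}\gamma$ is the \emph{trivial} bundle $\mc{O}_{\mc{B}}\otimes\eta$ (Proposition \ref{prop:non-linear lowest hodge}); this produces an explicit filtered comparison map from the candidate $\mc{D}_0\otimes_{U(\mf{k})}\eta$ with its order filtration. The identification $\Gr^F(\mc{D}_0\otimes_{U(\mf{k})}\eta)\cong\mc{O}_{\tilde{\mc{N}}^*_K}\otimes\eta$ is then pure commutative algebra: flatness of the $K$-moment map (using $\dim\mc{B}=\dim\mf{k}^0$ in the split case) plus Kostant and Kostant--Rallis, with no Hodge-theoretic computation at all. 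Finally, the comparison map is shown to be a filtered isomorphism by combining irreducibility of the $\mc{D}$-module (via $\mrm{End}=\mb{C}$, computed from $K$-invariants in $\mc{O}_{\mc{N}^*_K}$) with filtered self-duality of both sides coming from Koszul resolutions — this duality argument is the rigidity input that replaces your "support + rank + CM" heuristic. If you want to salvage your route, you would need a genuine substitute for these last two inputs; without them the identification of the Hodge filtration with the order filtration does not follow.
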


In fact, we prove a somewhat more general version of Theorem \ref{thm:intro tempered} (Theorem \ref{thm:split tempered}), which also includes certain tempered principal series for non-linear groups. The key input in the proof is Theorem \ref{thm:intro hodge generation}, which ensures that the lowest piece of the Hodge filtration has non-zero global sections. Theorem \ref{thm:intro tempered} implies that the Hodge filtration of a general tempered Harish-Chandra sheaf is always generated by its lowest piece (Theorem \ref{thm:tempered hodge}).

Finally, Theorem \ref{thm:intro characters} is proved for a general irreducible Harish-Chandra module $V$ by deforming it to a tempered one using the techniques of Theorem \ref{thm:intro semi-continuity}. By that theorem, the Hodge $K$-character $\chi^H$ is piecewise constant, with jumps at a discrete set of reducibility walls. By continuity of eigenvalues, the same is true for the signature $K$-character $\chi^{\mathit{sig}}$. Thus, we need only check that the jumps are consistent with Theorem \ref{thm:intro characters}. In \cite{ATV}, it was suggested that this could be done by computing these jumps explicitly in terms of Kazhdan-Lusztig-Vogan polynomials, as in \cite{ALTV}. This can be carried out for linear groups (cf., \cite{DV1}), but the combinatorics for non-linear groups are not so explicitly understood. Instead, we give a more conceptual geometric argument using Theorem \ref{thm:intro semi-continuity} and a mild generalization of \cite[Theorem 1.2]{DV1}.

\subsection{Conventions}

Throughout this paper, all varieties are assumed quasi-projective and defined over the complex numbers. When working with $\mc{D}$-modules, we will always take these to be left algebraic $\mc{D}$-modules unless otherwise specified. When working with mixed Hodge modules, we will always mean the complex Hodge modules of, say, \cite{SS}.

If $X$ is a smooth variety, we will write $\Mod(\mc{D}_X)$ for the category of all quasi-coherent $\mc{D}_X$-modules on $X$ and $\Mod(\mc{D}_X)_{rh}$ for the full subcategory of regular holonomic modules. See also Notation \ref{notation:functors} for our notation for the various pullback and pushforward operations, and \S\ref{subsec:flag variety} for our conventions for reductive groups and their flag varieties.

\subsection{Outline of the paper}

The paper is divided broadly into three parts.

The first part (\S\S\ref{sec:mhm}--\ref{sec:twisted kodaira}) deals with general results about mixed Hodge modules. We begin in \S\ref{sec:mhm} with a brief recollection of the basics of mixed Hodge module theory and the definition of twisted (or more generally, monodromic) mixed Hodge modules. In \S\ref{sec:deformations}, we develop our theory of deformations and wall-crossing for mixed Hodge modules, including the proof of Theorem \ref{thm:intro semi-continuity} and a generalization of \cite[Theorem 1.2]{DV1}. In \S\ref{sec:twisted kodaira}, we prove both the twisted Kodaira vanishing theorem (Theorem \ref{thm:intro twisted kodaira}) and a variant for monodromic mixed Hodge modules.

The second part (\S\S\ref{sec:localization}--\ref{sec:hodge generation}) concerns localization theory for mixed Hodge modules on the flag variety. We state our main results and their consequences in \S\ref{sec:localization} and give the proofs in \S\S\ref{sec:vanishing}--\ref{sec:hodge generation}. The proof of Theorem \ref{thm:intro xi hodge} can also be found in \S\ref{sec:vanishing}.

The third and final part (\S\S\ref{sec:harish-chandra}--\ref{sec:pf of unitarity}) concerns unitary representations of real groups. We explain our unitarity criterion in more detail in \S\ref{sec:harish-chandra} and give an illustrative example of it in action by reproving Vogan's theorem on unitarity of cohomological induction \cite[Theorem 1.3]{vogan-annals}. In \S\ref{sec:tempered}, we study the Hodge theory of tempered representations, proving Theorem \ref{thm:intro tempered} and that tempered representations satisfy the main conjecture of \cite{SV}. Finally, in \S\ref{sec:pf of unitarity}, we run the deformation argument outlined above to deduce Theorem \ref{thm:intro characters} and hence Theorem \ref{thm:intro unitarity criterion}.

\subsection*{Acknowledgements}

We would like to thank Jeff Adams, Sung Gi Park, Wilfried Schmid, Peter Trapa, David Vogan, Ruijie Yang, Shilin Yu and Xinwen Zhu for helpful conversations, and Roman Bezrukavnikov, who explained to the second named author that Theorem \ref{thm:intro xi hodge} should be true many years ago. We would also like to thank Toshiyuki Tanisaki and Qixian Zhao for bringing to our attention some precedents for our vanishing results in the literature.

\section{Twisted and monodromic mixed Hodge modules} \label{sec:mhm}

In this section, we briefly recall the basics of mixed Hodge modules (\S\ref{subsec:mhm}) and their twisted and monodromic variants (\S\ref{subsec:monodromic}).

\subsection{Mixed Hodge modules} \label{subsec:mhm}

In this subsection, we recall some aspects of the theory of mixed Hodge modules, following the treatment in \cite{SS}. For our purposes, the starting point is the notion of polarized Hodge structure:

\begin{defn} \label{defn:polarized hodge str}
Let $w \in \mb{Z}$. A \emph{pure Hodge structure of weight $w$} is a finite dimensional complex vector space $V$ equipped with a \emph{Hodge decomposition}
\[ V = \bigoplus_{p + q = w} V^{p, q}.\]
A \emph{polarized Hodge structure of weight $w$} is a pure Hodge structure $V$ equipped with a Hermitian form $S \colon V \otimes \overline{V} \to \mb{C}$ (the \emph{polarization}) such that
\begin{enumerate}
\item the Hodge decomposition is orthogonal with respect to $S$, and
\item the form $S|_{V^{p, q}}$ is $(-1)^q$-definite.
\end{enumerate}
\end{defn}

Definition \ref{defn:polarized hodge str} gives a robust way to normalize the signs of certain indefinite signature Hermitian forms in the presence of a Hodge structure. This is crucial both in general Hodge theory and in our applications to unitary representation theory.

For a pure Hodge structure $V$, we can recover the Hodge decomposition from the (increasing) \emph{Hodge filtration} and \emph{conjugate Hodge filtration}
\[ F_p V := \bigoplus_{p' \geq -p} V^{p', w - p'} \quad \text{and} \quad \bar{F}_q V := \bigoplus_{q' \geq -q} V^{w - q', q'}\]
via the formula
\[ V^{p, q} = F_{-p} V \cap \bar{F}_{-q} V.\]
If the polarization $S$ and the weight $w$ are fixed, then we have, moreover,
\[\bar{F}_q V = (F_{-w - q - 1} V)^\perp;\]
thus, a polarized Hodge structure is captured by the data $(V, F_\bullet, S, w)$.

Now let $X$ be a smooth variety over $\mb{C}$. A \emph{polarized Hodge module of weight $w$} consists of a triple $(\mc{M}, F_\bullet\mc{M}, S)$, where $\mc{M}$ is a regular holonomic $\mc{D}_X$-module, $F_\bullet \mc{M}$ is a good filtration (the \emph{Hodge filtration}) compatible with the order filtration $F_\bullet \mc{D}_X$ and
\[ S \colon \mc{M} \otimes \overline{\mc{M}} \to \Db_X \]
is a $\mc{D}_X \otimes \overline{\mc{D}}_X$-linear Hermitian form (the \emph{polarization}), valued in the sheaf $\Db_X$ of complex-valued distributions on $X$ regarded as a smooth manifold. Here we view distributions as continuous duals of smooth compactly supported top-dimensional differential forms. Such pairings to distributions were first considered by Kashiwara \cite{kashiwara}.

In order for the data $(\mc{M}, F_\bullet\mc{M}, S)$ to define a polarized Hodge module, it must satisfy a further condition, which amounts to requiring that any specialization to a point produces a polarized Hodge structure. For example, if $\mc{M}$ is a vector bundle, equipped with a flat connection by the $\mc{D}$-module structure, then the polarization $S$ is equivalent to a flat Hermitian form on $\mc{M}$ in the usual sense; in this case, one requires that $(\mc{M}_x, F_\bullet \mc{M}_x, S_x)$ is a polarized Hodge structure of weight $w - \dim X$ for all $x \in X$, i.e., that $(\mc{M}, F_\bullet \mc{M}, S)$ is a polarized variation of Hodge structure. When $\mc{M}$ is singular, the key idea of Saito is to interpret ``specialization to a point'' in terms of nearby and vanishing cycles. We direct the reader to, for example, \cite[Chapter 14]{SS} for the precise definition.

We note here that, for $(\mc{M}, F_\bullet \mc{M}, S)$ a polarized Hodge module, the polarization is always perfect, i.e., it induces an isomorphism
\[ \mc{M} \cong \mc{M}^h \]
at the level of $\mc{D}$-modules, where $\mc{M}^h$ (the \emph{Hermitian dual} of $\mc{M}$) is the regular holonomic $\mc{D}$-module characterized by
\[ (\mc{M}^h)^{\mathit{an}} = \shom_{\overline{\mc{D}}_X}(\overline{\mc{M}}, \Db_X);\]
see also \cite[\S 3.1]{DV1}.

In our study, the polarized Hodge modules are in some sense the main objects of interest: roughly speaking, they are analogous to (although much more general than) unitary representations. Like unitary representations, however, they do not form a very good category on their own. The main idea in mixed Hodge theory is to embed the polarized objects into a much larger abelian category with better formal properties.

First, one relaxes the structure slightly to obtain an abelian category of pure objects (aka polarizable, in this context). There are a few ways to set this up; we will use the following formulation, which is equivalent to the language of ``triples'' of \cite{SS}. In place of the polarization $S$, we remember the filtration
\[ F_\bullet \mc{M}^h := F_{\bullet - w} \mc{M} \]
on the Hermitian dual induced by the isomorphism $S \colon \mc{M} \cong \mc{M}^h$. Over a point, where $\mc{M} = V$ is a Hodge structure, $F_\bullet \mc{M}^h$ is nothing but the orthogonal complement of $\bar{F}_\bullet V$. In general, we say that the data $(\mc{M}, F_\bullet\mc{M}, F_\bullet \mc{M}^h)$ defines a \emph{polarizable Hodge module of weight $w$} if it arises from a polarized Hodge module $(\mc{M}, F_\bullet\mc{M}, S, w)$ in this way.

Note that if $\mc{M}$ is irreducible, then the polarization $S$ is unique up to a positive scalar if it exists; in particular, the sign is fixed by the Hodge structure.

The abelian category of mixed Hodge modules is now built by allowing certain extensions of pure objects of different weights. Over a point, one obtains the category of \emph{mixed Hodge structures}; that is, tuples $(V, F_\bullet, \bar{F}_\bullet, W_\bullet)$, where $V$ is a finite dimensional vector space equipped with finite increasing filtrations $F_\bullet$, $\bar{F}_\bullet$, $W_\bullet$ such that $(\Gr^W_w V, F_\bullet, \bar{F}_\bullet)$ is a pure (=polarizable) Hodge structure of weight $w$ for all $w$. Over a general smooth variety, a \emph{mixed Hodge module} is specified by a tuple
\[ (\mc{M}, F_\bullet \mc{M}, F_\bullet \mc{M}^h, W_\bullet \mc{M}),\]
where $F_\bullet \mc{M}$ and $F_\bullet \mc{M}^h$ are good filtrations as before and $W_\bullet \mc{M}$ is a finite filtration by $\mc{D}$-submodules such that
\[ (\Gr^W_w\mc{M}, F_\bullet \Gr^W_w\mc{M}, F_\bullet (\Gr^W_w\mc{M})^h) \]
is a polarizable Hodge module of weight $w$ for all $w \in \mb{Z}$. The filtration $W_\bullet$ is called the \emph{weight filtration}. Over a point this is the full definition, but on a general $X$ one needs to impose further conditions on the weight filtration near singularities (again relating to nearby and vanishing cycles). For example, when $\mc{M}$ is a vector bundle, a tuple $(\mc{M}, F_\bullet\mc{M}, F_\bullet\mc{M}^h, W_\bullet\mc{M})$ as above is the same thing as a graded polarizable variation of mixed Hodge structure, and the condition that this be a mixed Hodge module is that this variation of mixed Hodge structure be admissible at infinity \cite[Theorem 3.27]{S2} \cite{kashiwara2}.

As an illustrative example, the conventions are such that $(\mc{O}_X, S)$ is a polarized Hodge module of weight $\dim X$, where
\[ F_p \mc{O}_X = \begin{cases} 0, & \mbox{if $p < 0$}, \\ \mc{O}_X, &\mbox{otherwise}\end{cases} \quad \text{and} \quad S(f, \bar{g}) = f\bar{g}.\]
Hence, the triple $(\mc{O}_X, F_\bullet\mc{O}_X, F_\bullet\mc{O}_X^h = F_{\bullet - \dim X} \mc{O}_X)$ is a pure Hodge module of weight $\dim X$.

The abelian category $\mhm(X)$ of mixed Hodge modules on a variety $X$ has many remarkable properties, which we will not attempt to recall here. A fundamental feature is that the standard six functors $(\mb{D}, \boxtimes, f_*, f^*, f_!, f^!)$ for regular holonomic $\mc{D}$-modules (or equivalently, for perverse sheaves) lift to mixed Hodge modules. For example, if $f \colon X \to Y$ is a morphism, then we have functors
\[ f_*, f_! \colon \mrm{D}^b\mhm(X) \to \mrm{D}^b \mhm(Y) \]
and
\[ f^*, f^! \colon \mrm{D}^b\mhm(Y) \to \mrm{D}^b \mhm(X) \]
such that $f^*$ is left adjoint to $f_*$ and $f^!$ is right adjoint to $f_!$. The main upshot is that any $\mc{D}$-module constructed functorially from a polarizable variation of Hodge structure is endowed with a canonical mixed Hodge module structure.

\begin{notation} \label{notation:functors}
In this paper, we will reserve the notation $f_*, f_!, f^*, f^!$ for the above functors coming from the six functor formalism for mixed Hodge modules or regular holonomic $\mc{D}$-modules. For the usual pushforward of sheaves (or $\mc{O}$-modules) along a morphism $f \colon X \to Y$, we will write $f_{\bigcdot}$. We write $f^{-1}$ for the pullback of sheaves and $f^{\bigcdot}$ for the pullback of $\mc{O}$-modules. Finally, when the morphism $f$ is smooth of relative dimension $d$, we write
\[ f^\circ := f^*[d] = f^![-d] \colon \Mod(\mc{D}_Y)_{rh} \to \Mod(\mc{D}_X)_{rh} \]
and
\[ f^\circ := f^*[d] = f^!(-d)[-d] \colon \mhm(Y) \to \mhm(X).\]
where $(-d)$ denotes a Tate twist. Here, for $n \in \mb{Z}$, the Tate twist $(n)$ is defined by $\mc{M}(n) = \mc{M} \otimes \mb{C}_{n, n}$, where
\[ (\mc{M}, F_\bullet \mc{M}, F_\bullet \mc{M}^h, W_\bullet \mc{M})\otimes \mb{C}_{p, q} := (\mc{M}, F_{\bullet - p} \mc{M}, F_{\bullet + q}\mc{M}^h, W_{\bullet + p + q}\mc{M}) \]
is the $(p, q)$-Hodge twist on complex mixed Hodge modules. The conventions are such that $f^\circ \mc{M} = f^{\bigcdot}\mc{M}$ as $\mc{O}_X$-modules, with Hodge and weight filtrations given by
\[ F_pf^\circ \mc{M} = f^{\bigcdot}F_p\mc{M} \quad \mbox{and} \quad W_wf^\circ \mc{M} = f^{\bigcdot}W_{w - d}\mc{M}.\]
For example, we have $f^\circ \mc{O}_Y = \mc{O}_X$ as mixed Hodge modules.
\end{notation}

\subsection{The monodromic setting} \label{subsec:monodromic}

For applications to representation theory, we need to work not only with the sheaf $\mc{D}_X$ of differential operators on a variety $X$, but also with certain sheaves of twisted differential operators and their modules. In this subsection, we explain how the theory of mixed Hodge modules extends to this setting.

In order to transfer standard $\mc{D}$-module (and mixed Hodge module) theory to the twisted setting, it is convenient to introduce an auxiliary space $\tilde{X} \to X$ and define twisted objects in terms of untwisted $\mc{D}$-modules on $\tilde{X}$ as in \cite[\S 2.5]{BB2}. This can be regarded as a formal device, however: we explain below how to view these as modules over a sheaf of rings on the original space $X$. We will often adopt this latter point of view.

Let $\tilde X$ be a smooth variety equipped with an action of an algebraic torus $H$. Recall that a \emph{weakly $H$-equivariant $\mc{D}$-module} on $\tilde X$ is a $\mc{D}_{\tilde X}$-module equipped with an action of $H$ as an $\mc{O}_X$-module such that the map $\mc{D}_{\tilde X} \otimes \mc{M} \to \mc{M}$ is $H$-equivariant. Every such $\mc{M}$ comes equipped with a functorial action of the Lie algebra $\mf{h} = \mrm{Lie}(H)$, commuting with the action of $\mc{D}_{\tilde X}$, given by
\begin{equation} \label{eq:weak difference}
hm := i(h) m - h \cdot m \quad \text{for $h \in \mf{h}$ and $m \in \mc{M}$},
\end{equation}
where $i \colon \mf{h} \to \mc{D}_{\tilde X}$ is the derivative of the $H$-action on $\tilde X$ and $h \cdot m$ denotes the derivative of the $H$-action on $\mc{M}$.

\begin{defn}
Let $\mc{M}$ be a weakly $H$-equivariant $\mc{D}$-module on $\tilde{X}$ and fix $\lambda \in \mf{h}^*$. We say that $\mc{M}$ is \emph{$\lambda$-twisted} (resp., \emph{$\lambda$-monodromic}) if the $\mf{h}$-action \eqref{eq:weak difference} is such that $h - \lambda(h)$ acts by zero (resp., nilpotently) on $\mc{M}$ for all $h \in \mf{h}$. We say that $\mc{M}$ is \emph{monodromic} if it is a direct sum of its generalized eigenspaces under \eqref{eq:weak difference}. We write
\[ \Mod_\lambda(\mc{D}_{\tilde{X}}) \subset \Mod_{\widetilde{\lambda}}(\mc{D}_{\tilde{X}}) \subset \Mod_{\mon}(\mc{D}_{\tilde{X}}) \subset \Mod_H^{\mathit{weak}}(\mc{D}_{\tilde X}) \]
for the categories of $\lambda$-twisted, $\lambda$-monodromic, monodromic and weakly $H$-equivariant $\mc{D}$-modules respectively.
\end{defn}

For fixed $\lambda \in \mf{h}^*$, $\lambda$-monodromicity is in fact a property of the underlying $\mc{D}_{\tilde X}$-module, not extra structure. Indeed, the weak $H$-action, if it exists, can be deduced from the decomposition of local sections of $\mc{M}$ into generalized eigenspaces under $i(\mf{h}) \subset \mc{D}_{\tilde X}$. More generally, we have the following.

\begin{prop}[{\cite[Lemma 2.5.4]{BB2}}] \label{prop:bb monodromic}
The forgetful functor
\[ \mrm{D}^b\Mod_{\widetilde{\lambda}}(\mc{D}_{\tilde{X}}) \to \mrm{D}^b \Mod(\mc{D}_{\tilde{X}}) \]
is fully faithful.
\end{prop}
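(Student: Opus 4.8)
The abelian-category form of this statement is the observation recorded just above: for fixed $\lambda \in \mf{h}^*$, the forgetful functor $\Mod_{\widetilde{\lambda}}(\mc{D}_{\tilde{X}}) \to \Mod(\mc{D}_{\tilde{X}})$ is fully faithful and identifies $\Mod_{\widetilde{\lambda}}(\mc{D}_{\tilde{X}})$ with the full subcategory of $\mc{D}_{\tilde{X}}$-modules on which $i(\mf{h})$ acts locally finitely with generalized eigenvalues in the coset $\lambda + \mb{X}^*(H) \subset \mf{h}^*$; this subcategory is in particular closed under subobjects, quotients and extensions. The plan is to bootstrap this to the level of derived categories.

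First I would invoke the standard criterion for the bounded derived category of an extension-closed abelian subcategory to reduce the statement to showing that, for all $\mc{M}, \mc{N} \in \Mod_{\widetilde{\lambda}}(\mc{D}_{\tilde{X}})$, the natural maps
\[ \mrm{Ext}^n_{\Mod_{\widetilde{\lambda}}(\mc{D}_{\tilde{X}})}(\mc{M}, \mc{N}) \longrightarrow \mrm{Ext}^n_{\Mod(\mc{D}_{\tilde{X}})}(\mc{M}, \mc{N}) \]
are isomorphisms for every $n$.

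These $\mrm{Ext}$-groups are the global cohomology of the corresponding derived internal Hom sheaves, so their comparison is local on $\tilde{X}$; I would therefore reduce to the case where the $H$-action is free, and indeed trivial, say $\tilde{X} = U \times H$ with $H = \mb{G}_m^r$ acting by translations on the second factor. Then $\mc{D}_{\tilde{X}} = \mc{D}_U \boxtimes \mc{D}_H$ and every $\lambda$-monodromic $\mc{D}_{\tilde{X}}$-module is an exterior product $\mc{P} \boxtimes \mc{Q}$ with $\mc{Q}$ a $\lambda$-monodromic $\mc{D}_H$-module, so the computation takes place on the torus. There, after untwisting by $\lambda$, the category $\Mod_{\widetilde{\lambda}}(\mc{D}_H)$ is equivalent to the category of $\sym(\mf{h})$-modules on which the maximal ideal $\mf{m}_\lambda$ acts locally nilpotently, and the $\mrm{Ext}$-comparison becomes the elementary fact that the $\mrm{Ext}$-groups between $\mf{m}_\lambda$-power-torsion $\sym(\mf{h})$-modules may be computed from the Koszul resolution, whose terms again lie in the torsion subcategory.

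The delicate point --- and the reason one cannot simply quote a formal retraction argument --- is that $\Mod_{\widetilde{\lambda}}(\mc{D}_{\tilde{X}})$ is not a direct factor of $\Mod(\mc{D}_{\tilde{X}})$: the obvious candidates for a one-sided inverse to the inclusion, the maximal $\lambda$-monodromic submodule and the maximal $\lambda$-monodromic quotient, are respectively only left- and right-exact when the torus does not act freely, so the reduction to the local, free picture is doing real work. A cleaner way to organize the same computation, which I would fall back on if the bookkeeping becomes unwieldy, is to identify $\Mod_{\widetilde{\lambda}}(\mc{D}_{\tilde{X}})$ globally with the category of locally $\mf{m}_\lambda$-nilpotent modules over the sheaf of rings on $\tilde{X}/H$ obtained by $\mf{m}_\lambda$-adically completing the pushforward of $\mc{D}_{\tilde{X}}$, and to deduce the $\mrm{Ext}$-comparison from the flatness of $\mc{D}_{\tilde{X}}$ over $\sym(\mf{h})$ together with the corresponding statement over the complete local ring $\sym(\mf{h})^{\wedge}_\lambda$.
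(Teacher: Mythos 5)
The paper itself gives no argument for this proposition (it is quoted from [BB2, Lemma 2.5.4]), so what follows compares your outline against what a correct proof must contain. Your general skeleton is sound: full faithfulness on hearts, closure of the monodromic subcategory under extensions, the standard d\'evissage reducing derived full faithfulness to the comparison of $\mrm{Ext}^n$ for objects of the heart, and ultimately the freeness of $\mc{D}_{\tilde{X}}$ over $\sym(\mf{h})$. But two load-bearing steps are wrong as written. First, the claim that after trivializing the torsor every $\lambda$-monodromic module on $U \times H$ is an exterior product $\mc{P} \boxtimes \mc{Q}$ is false: under the $\mc{D}$-module analog of Lemma \ref{lem:monodromic local model}, such a module corresponds to a $\mc{D}_U$-module $N$ with an action of $\sym(\mf{h})$ for which $\mf{m}_\lambda$ is locally nilpotent, and it is an exterior product only when this pair has the special form $(P \otimes V, \id_P \otimes \nu_V)$. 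Already for $U = \mb{A}^1$ the pair $N = \mc{O} \oplus \mc{O}[x^{-1}]$, with nilpotent operator induced by the inclusion $\mc{O} \subset \mc{O}[x^{-1}]$, is not of this form (Krull--Schmidt for the two non-isomorphic indecomposable summands). So the "computation takes place on the torus" reduction collapses. Second, the one concrete computational assertion you make on the torus --- that $\mrm{Ext}$ between $\mf{m}_\lambda$-power-torsion $\sym(\mf{h})$-modules "may be computed from the Koszul resolution, whose terms again lie in the torsion subcategory" --- is incorrect: the Koszul resolution has free terms, which are never torsion.

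More importantly, the heart of the proposition is never addressed: one must compare $\mrm{Ext}$ taken in the monodromic category (equivalently, among locally $\mf{m}_\lambda$-nilpotent modules over $\mc{D}_U \otimes \sym(\mf{h})$) with $\mrm{Ext}$ over the full ring $\mc{D}_{U \times H}$, after both the equivariance and the weight decomposition have been forgotten. A correct argument needs (i) the change-of-rings isomorphism $\mrm{Ext}^\bullet_{\mc{D}_{U\times H}}(\mc{M}, \mc{M}') \cong \mrm{Ext}^\bullet_{\mc{D}_U \otimes \sym(\mf{h})}(N, \mc{M}')$, using that $\mc{M} \cong \mc{D}_{U\times H} \otimes_{\mc{D}_U \otimes \sym(\mf{h})} N$ and that $\mc{D}_{U\times H}$ is free over $\mc{D}_U \otimes \sym(\mf{h})$; (ii) the vanishing of the contribution of every nonzero weight component of $\mc{M}'$, which holds because suitable elements of $\mf{m}_\lambda$ act locally nilpotently on $N$ but invertibly on those components (disjoint supports over the central $\sym(\mf{h})$); and (iii) the identification of $\mrm{Ext}$ in the locally nilpotent subcategory with $\mrm{Ext}$ over $\mc{D}_U \otimes \sym(\mf{h})$, via torsion injectives and Artin--Rees, or a completion argument. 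Your closing "cleaner way" names the right objects (the $\mf{m}_\lambda$-adic completion, flatness over $\sym(\mf{h})$), but the "corresponding statement over the complete local ring" to which it defers is essentially the proposition being proved, so at exactly the point where the work happens the argument is circular; and the sheaf-level localization of the subcategory $\mrm{Ext}$'s, which you assert in one line, also needs justification (e.g.\ by pushing forward along the affine map $\pi$ and working with $\tilde{\mc{D}}$-modules on $X$). Finally, note that the proposition is stated for an arbitrary smooth $H$-variety $\tilde{X}$, whereas your localization silently assumes the action is free; this is harmless for the torsor setting in which the paper uses the result, but it is not a reduction available in the stated generality.
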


Note that the analog of Proposition \ref{prop:bb monodromic} for twisted $\mc{D}$-modules is false. For this reason (and others) it is often technically more convenient to work in the larger category of monodromic $\mc{D}$-modules. We will work in this generality throughout most of this paper.

We extend the theory of mixed Hodge modules to the monodromic setting as follows.

\begin{defn}
A mixed Hodge module $\mc{M}$ on $\tilde X$ is \emph{$\lambda$-monodromic} or \emph{$\lambda$-twisted} if the underlying $\mc{D}$-module is so. A \emph{monodromic mixed Hodge module} is a mixed Hodge module on $\tilde X$ equipped with a decomposition
\[ \mc{M} = \bigoplus_{\lambda \in \mf{h}^*} \mc{M}_\lambda,\]
where each $\mc{M}_\lambda$ is a $\lambda$-monodromic mixed Hodge module. We write
\[ \mhm_\lambda(\tilde{X}) \subset \mhm_{\widetilde{\lambda}}(\tilde{X}) \subset \mhm_\mon(\tilde{X})\]
for the categories of $\lambda$-twisted, $\lambda$-monodromic, and monodromic mixed Hodge modules respectively.
\end{defn}

\begin{prop} \label{prop:real twisting}
We have $\mhm_{\widetilde{\lambda}}(\tilde{X}) = 0$ unless $\lambda \in \mf{h}^*_\mb{R} := \mb{X}^*(H) \otimes \mb{R}$.
\end{prop}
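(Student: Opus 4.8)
The plan is to reduce the statement to a fact about the possible eigenvalues of the monodromy on a polarizable variation of mixed Hodge structure, using the basic structure theory recalled in \S\ref{subsec:mhm}. First I would observe that the question is local on $\tilde X$ and, by the standard d\'evissage for mixed Hodge modules (passing to $\Gr^W$ and then to simple summands, each of which is an intermediate extension of a polarizable variation of Hodge structure from a locally closed smooth subvariety), it suffices to treat the case where $\mc{M}$ is a polarizable variation of Hodge structure on a Zariski-open dense $H$-stable subset $U \subset \tilde X$; indeed, the weak $H$-action and the generalized $\mf h$-eigenvalue are determined by the restriction to any such $U$, since these are properties of the underlying $\mc{D}$-module (Proposition \ref{prop:bb monodromic}) and the extension functors in the d\'evissage are $H$-equivariant.

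Next I would unwind what $\lambda$-monodromicity means for such a variation. Restricting along a one-parameter subgroup $\gamma \colon \mb{G}_m \to H$ through a point with nontrivial stabilizer behaviour, the weak equivariance equips the underlying local system with an action of $\pi_1$ of the torus orbit, and the operator \eqref{eq:weak difference} associated with $\od\gamma \in \mf h$ is the logarithm of the corresponding monodromy automorphism $T_\gamma$. The key point is then: because $\mc{M}$ underlies a polarizable variation of mixed Hodge structure, the monodromy $T_\gamma$ is \emph{quasi-unipotent} — its eigenvalues are roots of unity. This is the classical local monodromy theorem (Borel, Schmid) in the pure case and Steenbrink--Zucker / Kashiwara in the admissible mixed case, and it is part of the package underlying \cite{SS, S2}. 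Hence the eigenvalues of $\od\gamma$ on $\mc{M}$ lie in $\frac{1}{2\pi i}\log(\text{roots of unity})$, i.e.\ are rational, and the $\lambda$-monodromic hypothesis forces $\lambda(\od\gamma) \in \mb{Q}$ for every cocharacter $\gamma \in \mb{X}_*(H)$. Since $\mb{X}_*(H)$ spans $\mf h$ over $\mb{Q}$, this gives $\lambda \in \mf h^*_{\mb{Q}} \subset \mf h^*_{\mb R} = \mb{X}^*(H)\otimes\mb R$ (and in fact $\lambda$ lands in $\mb{X}^*(H)\otimes\mb{Q}$, slightly stronger than asserted).

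I would then assemble these steps: a monodromic mixed Hodge module is by definition a direct sum of $\lambda$-monodromic pieces, and each nonzero $\lambda$-monodromic piece forces $\lambda \in \mf h^*_{\mb R}$ by the above, so $\mhm_{\widetilde\lambda}(\tilde X) = 0$ for $\lambda \notin \mf h^*_{\mb R}$. The main obstacle is purely one of bookkeeping rather than of depth: making sure the d\'evissage to a variation of mixed Hodge structure is compatible with the weak $H$-equivariant structure, so that the generalized $\mf h$-eigenvalues on $\mc{M}$ genuinely match those computed on the open stratum. Once the rationality (quasi-unipotence) input is correctly invoked on that open stratum, the rest is formal; there is no serious analytic input beyond the classical local monodromy theorem, which is already built into the theory of \cite{SS} being used here.
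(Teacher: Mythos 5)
There is a genuine gap, and it lies precisely in the key input you invoke. The quasi-unipotence of local monodromy (Borel, Schmid, Steenbrink--Zucker) is a theorem about variations of Hodge structure carrying a $\mb{Q}$- (or $\mb{Z}$-) structure: the proof needs the monodromy to be an integral matrix whose eigenvalues are algebraic integers with all conjugates on the unit circle. The present paper works throughout with \emph{complex} Hodge modules in the sense of \cite{SS} (see the Conventions), where there is no rational structure and quasi-unipotence simply fails: any rank one unitary local system, e.g.\ the local systems $f^s\mc{O}_Q$ with $s \in \mb{R}$ used in \S\ref{sec:deformations}, underlies a polarized complex variation of Hodge structure whose monodromy eigenvalue $e^{2\pi i s}$ is not a root of unity. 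Consequently your conclusion $\lambda \in \mf{h}^*_{\mb{Q}}$, which you flag as ``slightly stronger than asserted,'' is in fact false and incompatible with the rest of the paper: the categories $\mhm_{\widetilde{\lambda}}(\tilde X)$ are nonzero for every real $\lambda$ (this is the whole point of the twisted/monodromic theory, the deformations $f^s\mc{M}$, and Theorem \ref{thm:intro semi-continuity}), so any argument yielding rationality must be using an input unavailable in this setting. The d\'evissage and bookkeeping in your first paragraph are fine but beside the point; the failure is in the monodromy theorem itself.

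What the polarization actually gives in the complex setting is not rationality but \emph{realness} of the eigenvalues, and this is how the paper argues: a nonzero object of $\mhm_{\widetilde\lambda}(\tilde X)$ has a nonzero polarized subquotient, the polarization makes the underlying $\mc{D}$-module Hermitian self-dual, and the Hermitian dual of a $\lambda$-monodromic $\mc{D}$-module is $\bar\lambda$-monodromic; hence $\lambda = \bar\lambda$, i.e.\ $\lambda \in \mf{h}^*_\mb{R}$. Equivalently, in the language of your second paragraph, for a polarized \emph{complex} variation the semisimple part of the monodromy has eigenvalues of modulus one (the $V$-filtration is indexed by real numbers), which is exactly the unit-circle constraint, not the roots-of-unity constraint. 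If you replace your appeal to quasi-unipotence by this Hermitian-duality argument, the reduction you set up becomes unnecessary: the statement follows in two lines without any d\'evissage to an open stratum.
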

\begin{proof}
If $\mhm_{\widetilde{\lambda}}(\tilde{X}) \neq 0$, then there exists a nonzero polarized Hodge module $(\mc{M}, F_\bullet, S)$ such that the underlying $\mc{D}$-module $\mc{M}$ is $\lambda$-monodromic. In particular, $\mc{M}$ is Hermitian self-dual. But the Hermitian dual of a $\lambda$-monodromic $\mc{D}$-module is $\bar{\lambda}$-monodromic, so we must have $\lambda = \bar{\lambda}$.
\end{proof}

In light of Proposition \ref{prop:real twisting}, we will always assume $\lambda \in \mf{h}^*_\mb{R}$ when working with monodromic mixed Hodge modules.

Now suppose that $\tilde{X}$ is an $H$-torsor over another variety $X$ and write $\pi \colon \tilde{X} \to X$ for the quotient map. Consider the sheaf
\[ \tilde{\mc{D}} = \pi_{\bigcdot}(\mc{D}_{\tilde{X}})^H \]
of rings on $X$, a central extension of $\mc{D}_X$ by $S(\mf{h})$. We have an equivalence of categories
\begin{equation} \label{eq:monodromic descent}
\begin{aligned}
\Mod_H^{\mathit{weak}}(\mc{D}_{\tilde X}) &\cong \Mod(\tilde{\mc{D}}) \\
\mc{M} &\mapsto \pi_{\bigcdot}(\mc{M})^H \\
\pi^{\bigcdot} \mc{N} &\mathrel{\reflectbox{$\mapsto$}} \mc{N}
\end{aligned}
\end{equation}
such that action \eqref{eq:weak difference} on the left corresponds to the action of the central subalgebra $S(\mf{h}) \subset \tilde{\mc{D}}$ on the right. We will often identify a monodromic $\mc{D}_{\tilde{X}}$-module with its corresponding $\tilde{\mc{D}}$-module on $X$. Note that \eqref{eq:monodromic descent} identifies $\Mod_\lambda(\mc{D}_{\tilde X})$ with the category of modules over the sheaf of twisted differential operators
\[ \tilde{\mc{D}} \otimes_{S(\mf{h}), \lambda} \mb{C}.\]

Similarly, to any monodromic mixed Hodge module on $\tilde{X}$ we can associate an underlying filtered $\tilde{\mc{D}}$-module on $X$. Let us first consider the case where $H = \mb{C}^\times$ and $\tilde{X} = \mb{C}^\times \times X$ is the trivial $\mb{C}^\times$-torsor over $X$. In this case, we have the functor of nearby cycles
\begin{equation} \label{eq:monodromic nearby cycles}
\psi_{t} \colon \mhm_{\widetilde{\lambda}}(\tilde X) \to \mhm(X, \mrm{N}) := \left\{(\mc{M}, \mrm{N}) \,\left|\, \begin{matrix} \mc{M} \in \mhm(X) \\ \mrm{N} \colon \mc{M}(1) \to \mc{M}\end{matrix}\right.\right\}
\end{equation}
with respect to the coordinate $t$ on $\mb{C}^\times$. At the level of underlying $\mc{D}$-modules, $\psi_{t}$ is precisely the equivalence \eqref{eq:monodromic descent}, where we identify $\tilde{\mc{D}}$ with $\mc{D}_X \otimes S(\mf{h}) = \mc{D}_X[\mrm{N}]$ for $\mrm{N} = \lambda - t \partial_t$.

\begin{lem}[cf., \cite{S3}]\label{lem:monodromic nearby cycles}
The functor \eqref{eq:monodromic nearby cycles} is an equivalence of categories.
\end{lem}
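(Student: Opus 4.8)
The plan is to exhibit an explicit quasi-inverse to $\psi_t$, constructed from the pullback $\pi^\circ$ together with the monodromy automorphism $\exp(2\pi i \mathrm{N})$. Concretely, given $(\mc{M}, \mathrm{N}) \in \mhm(X, \mathrm{N})$, one forms the $\mb{C}^\times$-monodromic mixed Hodge module on $\tilde X = \mb{C}^\times \times X$ whose underlying $\mc{D}_{\tilde X}$-module is the $\tilde{\mc{D}}$-module associated to $(\mc{M},\mathrm{N})$ under \eqref{eq:monodromic descent} (identifying $\tilde{\mc{D}} = \mc{D}_X[\mathrm{N}]$ with $\mathrm{N} = \lambda - t\partial_t$), with Hodge and weight filtrations read off from those of $\mc{M}$. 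The content of the lemma is that this $\mc{D}$-module-level construction lifts to mixed Hodge modules and is inverse to $\psi_t$.

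First I would reduce to a local statement near the boundary divisor $t = 0$ in the natural compactification $\mb{A}^1 \times X \supset \mb{C}^\times \times X$: a $\lambda$-monodromic mixed Hodge module on $\mb{C}^\times \times X$ extends canonically (via $!$- or $*$-pushforward, which agree up to the monodromy-controlled correction) to a mixed Hodge module on $\mb{A}^1\times X$ that is smooth along $\mb{C}^\times \times X$ and whose behaviour at $t=0$ is governed by Saito's theory of the $V$-filtration. This is exactly the setting of \cite{S3}: the category of $\lambda$-monodromic mixed Hodge modules on $\mb{C}^\times \times X$ is equivalent, via nearby cycles $\psi_t$, to the category of pairs $(\mc{M},\mathrm{N})$ with $\mathrm{N}$ a nilpotent endomorphism of Tate type $(-1,-1)$. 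The essential point is that a monodromic module is determined by a single nearby-cycles fibre together with its monodromy, because monodromicity forces the $V$-filtration to be locally constant in $t$ in a strong sense (the underlying statement at the level of $\mc{D}$-modules is \eqref{eq:monodromic descent}, which is already an equivalence).

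The key steps, in order: (i) check that $\psi_t$ as defined lands in $\mhm(X,\mathrm{N})$ and is exact and faithful — exactness follows from strictness of the $V$-filtration for mixed Hodge modules, and faithfulness from \eqref{eq:monodromic descent} at the $\mc{D}$-module level; (ii) construct the candidate inverse $\Phi\colon (\mc{M},\mathrm{N}) \mapsto \widetilde{\mc{M}}$ as above, verifying that the filtered $\mc{D}_{\tilde X}$-module it produces genuinely underlies a mixed Hodge module — this uses that $\widetilde{\mc{M}}$ is, after extending across $t=0$, the Deligne-type canonical extension whose nearby and vanishing cycles are built from $(\mc{M},\mathrm{N})$, and Saito's gluing criterion (the relevant $\psi$, $\phi$ data with their compatible filtrations define an object of $\mhm$); (iii) produce natural isomorphisms $\psi_t \circ \Phi \cong \mathrm{id}$ and $\Phi \circ \psi_t \cong \mathrm{id}$ — the first is essentially immediate from the construction, and the second is where one must show that a $\lambda$-monodromic mixed Hodge module is recovered from its nearby cycles, i.e. that no information is lost, which again reduces to the corresponding $\mc{D}$-module statement plus the observation that the Hodge and weight filtrations of a monodromic object are constant along the $\mb{C}^\times$-orbits and hence determined fibrewise.

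The main obstacle I expect is step (ii): verifying that the explicitly constructed filtered $\mc{D}$-module $\widetilde{\mc{M}}$ satisfies \emph{all} of Saito's conditions to be a mixed Hodge module — in particular the compatibility of the weight filtration with the monodromy filtration of $\mathrm{N}$ near $t = 0$, and the admissibility/strictness conditions for the $V$-filtration. This is precisely the technical heart of \cite{S3}, so in practice the proof will consist of carefully matching our setup to Saito's and invoking his results; the only genuinely new content is bookkeeping to pass between the $H$-torsor picture on $\tilde X$, the $\tilde{\mc{D}}$-module picture on $X$ via \eqref{eq:monodromic descent}, and the $V$-filtration picture on $\mb{A}^1 \times X$. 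I would organise the argument so that the general $H$-torsor case follows from the $H = \mb{C}^\times$ case by choosing a cocharacter splitting and iterating, reducing everything to the one-dimensional torus treated here.
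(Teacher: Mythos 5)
Your route is genuinely different from the paper's. You propose to build the inverse by hand at the filtered $\mc{D}$-module level (via \eqref{eq:monodromic descent}), extend across $t=0$, and then verify Saito's axioms by matching with the $V$-filtration/gluing formalism of \cite{S3}. The paper instead constructs the inverse \eqref{eq:monodromic inverse} as $\varprojlim_n$ of cokernels of $s+\mrm{N}$ acting on $\frac{t^{\lambda+s}\mc{O}_{\mb{C}^\times}[s]}{(s^n)} \boxtimes \mc{M}$, where the first factor is an \emph{explicit admissible variation of mixed Hodge structure}; since the output is a cokernel of a morphism of honest mixed Hodge modules (stabilizing because $\mrm{N}$ is nilpotent), no verification of Saito's conditions is ever needed — this is exactly the step you flag as your "main obstacle" and do not carry out. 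Full faithfulness is then handled softly: the Hom-spaces agree at the $\mc{D}$-module level, the comparison map is a morphism of the mixed Hodge structures supplied by the six-functor formalism, and applying $\Hom_{\mrm{MHS}}(\mb{C},-)$ finishes. Your approach, if completed, would buy a closer connection to \cite{S3}; the paper's buys a short self-contained argument valid verbatim for complex Hodge modules and arbitrary real (non-integral) $\lambda$, where citing \cite{S3} directly would require some translation.

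The genuine gap is in your steps (ii)–(iii). First, step (ii) is only a plan: you acknowledge that checking admissibility, the behaviour of the $V$-filtration, and the weight/relative-monodromy compatibilities at $t=0$ is "precisely the technical heart", but you give no argument, so essential surjectivity is not established. Second, and more seriously, your step (iii) rests on the "observation" that the Hodge and weight filtrations of a $\lambda$-monodromic mixed Hodge module are constant along the $\mb{C}^\times$-orbits and hence determined fibrewise. This is not an observation available in advance: equivariance of $F_\bullet$ and $W_\bullet$ under the weak $H$-action is exactly Proposition \ref{prop:monodromic mhm}\eqref{itm:monodromic mhm 1}, which the paper \emph{deduces} from essential surjectivity of the explicit inverse ("true by inspection for any object in the image of \eqref{eq:monodromic inverse}, hence for all"). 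Using it as an input to prove that $\Phi\circ\psi_t\cong\id$ is circular unless you supply an independent proof, which at present you do not have. Either fill (ii) by an actual reduction to the statements of \cite{S3} (checking that their category and twisting conventions match the complex, real-$\lambda$ setting here), or adopt the paper's device of realizing the inverse through the family $t^{\lambda+s}\mc{O}_{\mb{C}^\times}[s]/(s^n)$, which makes both the MHM structure and the filtration-equivariance automatic.
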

\begin{proof}
Let us write down an explicit inverse. For any $n > 0$, consider the rank $n$ admissible variation of mixed Hodge structure
\[ \frac{t^{\lambda + s}\mc{O}_{\mb{C}^\times}[s]}{(s^n)} \]
on $\mb{C}^\times$, where $t$ denotes the coordinate on $\mb{C}^\times$, $s$ is an additional variable and $t^{\lambda + s}$ is a formal symbol indicating that the $\mc{D}_{\mb{C}^\times} = \mb{C}\langle t, \partial_t \rangle$-module structure is given by the formula
\[ \partial_t t^{\lambda + s} g = t^{\lambda + s}\left(\partial_t g + \frac{\lambda + s}{t} g\right)\]
for $g \in \mc{O}_{\mb{C}^\times}[s]/(s^n)$. The Hodge and weight filtrations are given by
\[ W_w \frac{t^{\lambda + s}\mc{O}_{\mb{C}^\times}[s]}{(s^n)} = \sum_{2k \geq -w} s^k t^{\lambda + s}\mc{O}_{\mb{C}^\times} \quad \text{and} \quad F_p \frac{t^{\lambda + s}\mc{O}_{\mb{C}^\times}[s]}{(s^n)} = \sum_{k \leq p} s^k t^{\lambda + s}\mc{O}_{\mb{C}^\times}.\]
The conjugate Hodge filtration $\bar{F}_\bullet$ is determined uniquely by requiring that the fiber over $1$ be the split mixed Hodge structure $\bigoplus_{k = 0}^{n - 1} \mb{C}(k)$. Explicitly, $\bar{F}_\bullet$ is given on the fiber over $z \in \mb{C}^\times$ by
\[ \bar{F}_q \left.\frac{t^{\lambda + s} \mc{O}_{\mb{C}^\times}[s]}{(s^n)}\right|_z = |z|^{-2s} \sum_{k \leq q} \mb{C}s^k t^{\lambda + s}.\]
The inverse to \eqref{eq:monodromic nearby cycles} is
\begin{equation} \label{eq:monodromic inverse}
(\mc{M}, \mrm{N}) \mapsto \varprojlim_n \coker\left(s + \mrm{N} \colon \frac{t^{\lambda + s}\mc{O}_{\mb{C}^\times}[s]}{(s^n)} \boxtimes \mc{M}(1) \to \frac{t^{\lambda + s}\mc{O}_{\mb{C}^\times}[s]}{(s^n)} \boxtimes \mc{M}\right).
\end{equation}
Here we note that, since $\mrm{N}$ is necessarily nilpotent, the limit on the right hand side of \eqref{eq:monodromic inverse} stabilizes for $n \gg 0$.

Now, one easily checks that $\psi_{t}$ composed with \eqref{eq:monodromic inverse} gives the identity. So $\psi_{t}$ is essentially surjective; it remains to show that $\psi_{t}$ is fully faithful. This follows from the functoriality of mixed Hodge modules as follows. Since the corresponding functor for $\mc{D}$-modules is an equivalence, the map
\begin{equation} \label{eq:monodromic nearby cycles 1}
\Hom_{\Mod(\mc{D}_{\mb{C}^\times \times X})}(\mc{M}, \mc{N}) \to \Hom_{\Mod(\mc{D}_X, \mrm{N})} (\psi_t\mc{M}, \psi_{t} \mc{N})
\end{equation}
is an isomorphism of vector spaces for $\mc{M}, \mc{N} \in \mhm_{\widetilde{\lambda}}(\mb{C}^\times \times X)$. Now, the six functor formalism for mixed Hodge modules endows both sides of \eqref{eq:monodromic nearby cycles 1} with mixed Hodge structures so that \eqref{eq:monodromic nearby cycles 1} is a morphism, and hence an isomorphism, of mixed Hodge structures. Applying $\Hom_{\mrm{MHS}}(\mb{C}, -)$, we obtain the morphism
\[\Hom_{\mhm(\mb{C}^\times \times X)}(\mc{M}, \mc{N}) \to \Hom_{\mhm(X, \mrm{N})} (\psi_{t}\mc{M}, \psi_{t} \mc{N}),\]
so this must be an isomorphism as well.
\end{proof}

We deduce the following useful observation for a general torus $H$:

\begin{lem} \label{lem:monodromic local model}
Restriction to $\{1\} \times X$ defines an equivalence of categories
\[ \mhm_{\widetilde{\lambda}}(H \times X) \overset{\sim}\to \mhm(X, S(\mf{h}(1))),\]
where $\mhm(X, S(\mf{h}(1)))$ is the category of mixed Hodge modules on $X$ equipped with a module structure over $S(\mf{h}(1))$.
\end{lem}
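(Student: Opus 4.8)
The plan is to bootstrap from Lemma~\ref{lem:monodromic nearby cycles} by induction on $r = \dim H$, after identifying the equivalence $\psi_t$ of that lemma with the (suitably normalised) restriction to $\{1\}\times X$.

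\emph{The case $H = \mb{C}^\times$.} Write $i_1\colon\{1\}\times X\hookrightarrow\mb{C}^\times\times X$ for the inclusion, $\pi\colon\mb{C}^\times\times X\to X$ for the projection, and $r_1 := i_1^*[-1]$ for the $t$-exact restriction functor. Since any $\mc{M}\in\mhm_{\widetilde\lambda}(\mb{C}^\times\times X)$ is smooth in the $\mb{C}^\times$-direction, $\{1\}\times X$ is non-characteristic, so $r_1$ is exact on $\mhm_{\widetilde\lambda}(\mb{C}^\times\times X)$ with values in $\mhm(X)$; moreover, at the level of underlying $\mc{D}$-modules it coincides with the descent equivalence \eqref{eq:monodromic descent} (both send $\mc{M} = \pi^{\bigcdot}\mc{N}$ to $\mc{N}$), hence with $\psi_t$. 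To see they agree as functors into $\mhm(X, \mrm N)$, I would evaluate $r_1$ on the explicit inverse \eqref{eq:monodromic inverse}: using that $r_1$ is exact, commutes with the eventually-constant inverse limit, and carries $t^{\lambda+s}\mc{O}_{\mb{C}^\times}[s]/(s^n)$ to the \emph{split} mixed Hodge structure $\bigoplus_{k=0}^{n-1}\mb{C}(k)$ --- precisely the normalisation of $\bar F_\bullet$ fixed in the proof of Lemma~\ref{lem:monodromic nearby cycles} --- a short telescoping computation with the operators $s$ (a shift) and $\mrm N$ (diagonal) shows that $r_1$ applied to \eqref{eq:monodromic inverse} is the identity functor of $\mhm(X, \mrm N)$, with all intermediate Tate twists cancelling. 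Hence $r_1 \cong \psi_t$; in particular $r_1$ is an equivalence and the weak $\mf{h}$-action \eqref{eq:weak difference} on $\mc{M}$ passes to the $S(\mf{h}(1))$-action on $r_1\mc{M}$.

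\emph{General $H$.} Fix a splitting $H\cong\mb{C}^\times\times H'$, with $\mf{h} = \mf{h}_1\oplus\mf{h}'$ and $\lambda = \lambda_1 + \lambda'$ accordingly, so $H\times X = \mb{C}^\times\times(H'\times X)$. Nearby cycles along the first factor is compatible with the residual weak $H'$-equivariance, so the case just treated gives an equivalence $\mhm_{\widetilde\lambda}(H\times X)\to\mhm_{\widetilde{\lambda'}}(H'\times X, S(\mf{h}_1(1)))$ realised by restriction to $\{1\}\times H'\times X$, with the $S(\mf{h}_1(1))$-action (which commutes with the residual $H'$-structure) carried along. Applying the inductive hypothesis for $H'$ to the underlying monodromic mixed Hodge modules on $H'\times X$, and noting that restriction to $\{1\}\times\{1\}\times X$ is the composite of the two restrictions, yields an equivalence $\mhm_{\widetilde\lambda}(H\times X)\to\mhm(X, S(\mf{h}_1(1))\otimes S(\mf{h}'(1))) = \mhm(X, S(\mf{h}(1)))$ given by restriction to $\{1\}\times X$. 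The base case $r = 0$ is trivial, completing the induction.

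I expect the only real content to be in the first step: upgrading the abstract agreement of $\psi_t$ with the concrete functor $i_1^*[-1]$ to an identification that tracks Tate twists, which is what guarantees that \eqref{eq:weak difference} lands in genuine morphisms $\mc{M}(1)\to\mc{M}$ of mixed Hodge modules, and hence that one obtains an honest $S(\mf{h}(1))$-module structure rather than merely an $S(\mf{h})$-action on the underlying objects. The remaining ingredients --- exactness of restriction on the monodromic subcategory, compatibility of nearby cycles along a factor with the other torus action, and the inductive bookkeeping --- are routine.
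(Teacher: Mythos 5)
Your proposal is correct and follows essentially the same route as the paper: the paper's proof also identifies restriction to $\{1\}\times X$ with $\psi_t$ by observing that both are left inverses to the explicit inverse functor \eqref{eq:monodromic inverse} (your telescoping computation with $s$ and $\mrm{N}$, using the split fiber at $1$, is precisely the verification of this), and then iterates over the factors of $H \cong (\mb{C}^\times)^n$. The extra detail you supply on exactness of restriction and the Tate-twist bookkeeping is consistent with, and implicit in, the normalisation of $\bar{F}_\bullet$ chosen in the proof of Lemma \ref{lem:monodromic nearby cycles}.
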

\begin{proof}
In the case $H = \mb{C}^\times$, the proof of Lemma \ref{lem:monodromic nearby cycles} shows that restriction to $\{1\} \times X$ agrees with the nearby cycles functor, since both are left inverses to \eqref{eq:monodromic inverse}, and is hence an equivalence. Iterating, we deduce the lemma for a general $H = (\mb{C}^\times)^n$.
\end{proof}

Returning to a general $H$-torsor $\tilde{X} \to X$, we have:

\begin{prop} \label{prop:monodromic mhm}
Let $\mc{M} \in \mhm_{\widetilde{\lambda}}(\tilde X)$. Then
\begin{enumerate}
\item \label{itm:monodromic mhm 1} The Hodge and weight filtrations $F_\bullet\mc{M}$ and $W_\bullet \mc{M}$ are equivariant with respect to the weak $H$-action.
\item \label{itm:monodromic mhm 2} The nilpotent part of the $\mf{h}$-action \eqref{eq:weak difference} defines a morphism of mixed Hodge modules
\[ \mf{h}(1) \otimes \mc{M} \to \mc{M}.\]
\end{enumerate}
\end{prop}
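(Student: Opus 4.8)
The plan is to reduce to the case of the trivial torsor $\tilde X = H \times X$ and then read both statements off the explicit local model of Lemma \ref{lem:monodromic local model}. Both assertions are local on $X$: part \eqref{itm:monodromic mhm 1} asks that certain $\mc{O}$-submodules be stable under the weak $H$-action, part \eqref{itm:monodromic mhm 2} asks that a canonical $\mc{D}_{\tilde X}$-linear map be a morphism of mixed Hodge modules, and the weak action, the filtrations $F_\bullet \mc{M}$ and $W_\bullet \mc{M}$, and the nilpotent part of the $\mf{h}$-action \eqref{eq:weak difference} all restrict compatibly to opens of $X$. Since $H$ is a split torus, every $H$-torsor over $X$ is Zariski-locally trivial (it is a fibre product over $X$ of $\mb{C}^\times$-torsors, i.e.\ of line bundles with their zero sections removed), so after passing to an open cover of $X$ we may assume $\tilde X = H \times X$.

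Next I would unwind the equivalence of Lemma \ref{lem:monodromic local model}. Write $H = (\mb{C}^\times)^r$ with coordinates $t_1, \dots, t_r$ and $\lambda = (\lambda_1, \dots, \lambda_r) \in \mf{h}^*_\mb{R}$, and let $\mrm{N}_1, \dots, \mrm{N}_r$ be the commuting nilpotent endomorphisms of $\mc{M}_0 := \mc{M}|_{\{1\} \times X}$ through which $S(\mf{h}(1))$ acts. Iterating the construction \eqref{eq:monodromic inverse} over the factors of $H$, the object $\mc{M}$ is recovered as an iterated inverse limit of cokernels of morphisms of mixed Hodge modules built, exactly as in \eqref{eq:monodromic inverse} with $\mrm{N}$ replaced by $\mrm{N}_i$, from the explicit admissible variations
\[ V^{(i)}_n = \frac{t_i^{\lambda_i + s_i}\mc{O}_{\mb{C}^\times}[s_i]}{(s_i^n)} \qquad (1 \leq i \leq r), \]
one factor $V^{(i)}_n$, on the $i$-th copy of $\mb{C}^\times$, for each such factor of $H$. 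The crucial bookkeeping point is that, under this description, the weak $H$-action on $\mc{M}$ --- and hence its nilpotent part --- is induced by the corresponding structures on the factors $V^{(i)}_n$: the $i$-th copy of $\mb{C}^\times$ acts only through $V^{(i)}_n$, via the equivariant structure in which $H$ acts by pullback of functions and trivially on the coefficient ring $\mb{C}[s_i]/(s_i^n)$ (so that $s_i^k t_i^{\lambda_i + s_i}$ is an $H$-invariant section), and the nilpotent part of the associated weak $\mf{h}$-action \eqref{eq:weak difference} on $V^{(i)}_n$ is multiplication by $s_i$.

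Granting this, both statements follow. For \eqref{itm:monodromic mhm 1}, the Hodge and weight filtrations
\[ F_p V^{(i)}_n = \sum_{k \leq p} s_i^k t_i^{\lambda_i + s_i}\mc{O}_{\mb{C}^\times}, \qquad W_w V^{(i)}_n = \sum_{2k \geq -w} s_i^k t_i^{\lambda_i + s_i}\mc{O}_{\mb{C}^\times} \]
are $\mc{O}_{\mb{C}^\times}$-submodules spanned by the invariant sections $s_i^k t_i^{\lambda_i + s_i}$, hence $H$-stable; so the product Hodge and weight filtrations on $V^{(1)}_n \boxtimes \cdots \boxtimes V^{(r)}_n \boxtimes \mc{M}_0$ are $H$-stable (the action on $\mc{M}_0$ being trivial), and by strictness of the morphisms $s_i + \mrm{N}_i$ the induced filtrations descend to the cokernels and then to the inverse limit, so $F_\bullet \mc{M}$ and $W_\bullet \mc{M}$ are $H$-stable. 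For \eqref{itm:monodromic mhm 2}, the above formulas --- and the analogous one for the conjugate filtration $\bar F_\bullet$ --- show immediately that multiplication by $s_i$ is a morphism of (variations of) mixed Hodge structure $V^{(i)}_n(1) \to V^{(i)}_n$; since it commutes with the transition maps $s_j + \mrm{N}_j$, tensoring with the identity and passing to the cokernels and the inverse limit shows that the nilpotent part of the weak $\mf{h}$-action descends to a morphism of mixed Hodge modules $\mf{h}(1) \otimes \mc{M} \to \mc{M}$.

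The only genuinely delicate step, and the one I would treat most carefully, is the identification in the second paragraph: one must verify that the weak $H$-equivariant structure produced by the inverse equivalence of Lemma \ref{lem:monodromic local model} really is the ``trivial-on-coefficients'' one on each $V^{(i)}_n$, rather than, say, multiplication by the multivalued scalar $a^{\lambda_i + s_i}$ --- it is precisely for the former that the Hodge and weight filtrations above are $H$-stable. Once this is pinned down via the construction in the proof of Lemma \ref{lem:monodromic nearby cycles}, the rest is formal manipulation with external products, cokernels and inverse limits.
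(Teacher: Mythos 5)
Your proposal is correct and follows essentially the same route as the paper: reduce (by locality on $X$ and triviality of the torsor, the paper doing this via induction on $\dim H$ to the case $H = \mb{C}^\times$) to the local model, and then read both statements off the explicit inverse construction \eqref{eq:monodromic inverse}, where the filtrations are visibly spanned by $H$-invariant sections $s^k t^{\lambda+s}$ and multiplication by $s$ is a morphism $V(1) \to V$. The identification you flag as delicate — that the weak equivariant structure is the one fixing $t^{\lambda+s}$, with nilpotent part of \eqref{eq:weak difference} equal to multiplication by $s$ — is exactly the content of Lemma \ref{lem:monodromic nearby cycles} that the paper's proof invokes, so your extra care there is well placed but the argument is the same.
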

\begin{proof}
By induction on $\dim H$, it suffices to consider the case where $H = \mb{C}^\times$. Since both statements are local on $X$, we may furthermore assume that $\tilde X = \mb{C}^\times \times X$. In this case \eqref{itm:monodromic mhm 2} is immediate from Lemma \ref{lem:monodromic nearby cycles}. Moreover, \eqref{itm:monodromic mhm 1} is true by inspection for any mixed Hodge module in the image of the equivalence \eqref{eq:monodromic inverse}, and hence for any $\mc{M} \in \mhm_{\widetilde{\lambda}}(\tilde X)$.
\end{proof}

If $\mc{M} \in \mhm_\mon(\tilde{X})$ is a monodromic mixed Hodge module, then by Proposition \ref{prop:monodromic mhm}, we may endow the associated $\tilde{\mc{D}}$-module $\pi_{\bigcdot}(\mc{M})^H$ with a Hodge filtration (compatible with the natural filtration on $\tilde{\mc{D}}$) and a weight filtration (by $\tilde{\mc{D}}$-submodules) given by
\begin{equation} \label{eq:dtilde filtrations}
F_p\pi_{\bigcdot}(\mc{M})^H = \pi_{\bigcdot}(F_p\mc{M})^H \quad \mbox{and} \quad W_w\pi_{\bigcdot}(\mc{M})^H = \pi_{\bigcdot}(W_{w + \dim H}\mc{M})^H.
\end{equation}
The shift in the weight filtration above is chosen so that the natural equivalence
\[ \pi^\circ \colon \mhm(X) \overset{\sim}\to \mhm_0(\tilde{X}) \]
is compatible with forgetful functor to bi-filtered $\tilde{\mc{D}}$-modules, cf., Notation \ref{notation:functors}. When speaking of the weight of a monodromic mixed Hodge module, we will generally mean the weight with respect to the filtration \eqref{eq:dtilde filtrations} on $X$, rather than the weight of the underlying mixed Hodge module on $\tilde X$.

We often use the same notation for $\mc{M} \in \mhm_{\mon}(\tilde X)$ and for its underlying $\tilde{\mc{D}}$-module on $X$. Similarly, we will generally write $(\mc{M}, F_\bullet)$ for the underlying filtered $\tilde{\mc{D}}$-module instead of the more cumbersome $(\pi_{\bigcdot}(\mc{M})^H, \pi_{\bigcdot}(F_\bullet \mc{M})^H)$.

\subsection{The equivariant setting}

In our applications to real groups, we are interested in monodromic mixed Hodge modules that are strongly equivariant under another group $K$ acting on $\tilde{X}$. If $K$ is an algebraic group acting compatibly on both $\tilde{X}$ and $H$, then a \emph{$K$-equivariant monodromic mixed Hodge module} is a mixed monodromic mixed Hodge module $\mc{M}$ on $\tilde X$ equipped with a (strong) $K$-action such that for all $k \in K$ and all $\lambda \in \mf{h}^*$, the action map
\[ k^*\mc{M} \to \mc{M} \]
sends $k^*\mc{M}_{k\lambda}$ to $\mc{M}_\lambda$. Equivariant monodromic $\mc{D}$-modules are defined similarly. For $\lambda \in (\mf{h}^*)^K$ we can restrict to the categories of $\lambda$-twisted and $\lambda$-monodromic objects. We write
\[ \Mod^K_\lambda(\mc{D}_{\tilde{X}}) \subset \Mod^K_{\widetilde{\lambda}}(\mc{D}_{\tilde{X}}) \subset \Mod^K_{\mon}(\mc{D}_{\tilde{X}}) \]
and
\[ \mhm^K_\lambda(\tilde{X}) \subset \mhm^K_{\widetilde{\lambda}}(\tilde{X}) \subset \mhm^K_{\mon}(\tilde{X}) \]
for the categories of $K$-equivariant objects.

\begin{rmk}
Most of the time, we can restrict to the setting where $K$ acts trivially on $H$, i.e., the actions of $K$ and $H$ commute. However, we will need the more general setting to apply the trick of passing to the extended group when dealing with Hermitian representations; see \S\ref{subsec:hodge and signature}.
\end{rmk}

\section{Deformations and wall crossing for mixed Hodge modules} \label{sec:deformations}

In this section, we prove our key technical results on the behavior of mixed Hodge modules under certain natural deformations. We work in the general geometric setting of an affine locally closed immersion $j \colon Q \to X$ (i.e., for which the boundary of $Q$ is a divisor) and a mixed Hodge module $\mc{M}$ on $Q$. To these data, we associate a real vector space $\Gamma_\mb{R}(Q)$ and a deformation $f\mc{M}$ indexed by $f \in \Gamma_\mb{R}(Q)$. Our main results are generalizations of Theorem \ref{thm:intro semi-continuity} and \cite[Theorem 1.2]{DV1} where the boundary equation $f$ is replaced by an element of a natural positive cone $\Gamma_\mb{R}(Q)_+ \subset \Gamma_\mb{R}(Q)$. These techniques form the backbone of our proof of the unitarity criterion for Harish-Chandra modules. We believe this perspective is also of independent interest from a purely algebro-geometric point of view: for example, our techniques yield a quick and easy proof of the Kodaira vanishing theorem for twisted Hodge modules (see \S\ref{sec:twisted kodaira}).

The outline of the section is as follows. In \S\ref{subsec:deformation construction}, we write down the general setting for our deformations and discuss the cone of positive deformations. We briefly recall some background on Hodge filtrations and pushforward in \S\ref{subsec:hodge pushforward} before stating and proving our first main theorem on wall crossing for Hodge filtrations in \S\ref{subsec:semi-continuity}. In \S\ref{subsec:jantzen} we discuss Jantzen filtrations and prove a wall crossing theorem for polarizations. Finally, in \S\ref{subsec:monodromic deformation}, we discuss how our results apply in the setting of equivariant and monodromic mixed Hodge modules.

\subsection{Deformations and positivity} \label{subsec:deformation construction}

Let $X$ be a smooth quasi-projective variety and $j \colon Q \hookrightarrow X$ the inclusion of a smooth affinely embedded locally closed subvariety. Consider the set $\Gamma(Q, \mc{O}_Q^\times)$ of non-vanishing regular functions on $Q$, regarded as an abelian group under multiplication. We will consider deformations of mixed Hodge modules parametrized by the $\mb{R}$-vector space
\[ \Gamma_\mb{R}(Q) := \frac{\Gamma(Q, \mc{O}_Q^\times)}{\mb{C}^\times}\otimes \mb{R}.\]
To be consistent with the notation for $\Gamma(Q, \mc{O}_Q^\times)$, we will write the operations in this vector space multiplicatively. So, for example, a general element $f \in \Gamma_\mb{R}(Q)$ is of the form $f = f_1^{s_1} \cdots f_n^{s_n}$ for some $f_i \in \Gamma(Q, \mc{O}_Q^\times)$ and $s_i \in \mb{R}$.

If $\mc{M}$ is a mixed Hodge module on $Q$, we define a family of deformed mixed Hodge modules $f\mc{M}$ parametrized by $f \in \Gamma_\mb{R}(Q)$ as follows. To each $f  = f_1^{s_1}\cdots f_n^{s_n}$, we associate the local system $f\mc{O}_Q$ given by $\mc{O}_Q$ with the $\mc{D}$-module structure
\[ \partial (fv) = f \partial v + f \left(s_1\frac{\partial f_1}{f_1} + \cdots + s_n \frac{\partial f_n}{f_n}\right) v\]
for vector fields $\partial$ and $v \in \mc{O}_Q$. Note that the positive definite Hermitian form $S(fu, \overline{fv}) = |f|^2 u\bar{v}$ (well-defined up to a positive real scalar) is flat with respect to this $\mc{D}$-module structure, so the local system $f\mc{O}_Q$ is unitary and therefore underlies a pure Hodge module with trivial Hodge structure. Hence, if $\mc{M}$ is any mixed Hodge module on $Q$ then
\[ f\mc{M} := f\mc{O}_Q \otimes \mc{M} \]
is also a mixed Hodge module with the Hodge and weight filtrations induced from $\mc{M}$.

We are interested in the behavior of the extensions
\[ j_!f\mc{M} \twoheadrightarrow j_{!*}f\mc{M} \hookrightarrow j_* f\mc{M} \]
as we vary $f$. Our main results concern this behavior when $f$ varies along a ray inside the following positive cone.

Choose any normal variety $Q'$ with a proper birational map $Q' \to \bar{Q} \subset X$ that is an isomorphism over $Q$; for example, we could take $Q'$ to be the normalization of $\bar{Q}$, but it will be convenient to allow other choices also. Note that, since $Q$ is assumed affinely embedded in $X$, the complement of $Q$ in $\bar{Q}$ is a divisor, and hence so is the complement of $Q$ in $Q'$. If $D \subset Q' - Q$ is an irreducible component, we therefore have a linear map
\[ \operatorname{ord}_D \colon \Gamma_\mb{R}(Q) \to \mb{R}\]
taking the order of vanishing along $D$.

\begin{defn}
We say that $f \in \Gamma_\mb{R}(Q)$ is \emph{positive} if
\[ \operatorname{ord}_D f > 0 \]
for all irreducible components $D \subset Q' - Q$. We write
\[ \Gamma_\mb{R}(Q)_+ \subset \Gamma_\mb{R}(Q) \]
for the set of positive elements.
\end{defn}

In other words, $f$ is positive if the associated $\mb{R}$-divisor on $Q'$ is effective and has support equal to the entire boundary of $Q$. We will sometimes also refer to positive elements in the larger space $\Gamma(Q, \mc{O}^\times) \otimes \mb{R}$; by this we mean elements whose image in the quotient $\Gamma_\mb{R}(Q)$ is positive.

The following proposition ensures that the notion of positivity is independent of the choice of $Q'$. For the statement, we say that $f \in \Gamma(Q, \mc{O}_Q^\times)$ is a \emph{boundary equation} if $f$ extends to a regular function on $\bar{Q}$ such that $Q = f^{-1}(\mb{C}^\times)$.

\begin{prop} \label{prop:positivity}
Let $f \in \Gamma_\mb{R}(Q)$.
\begin{enumerate}
\item \label{itm:positivity 1} If
\[ f \in \frac{\Gamma(Q, \mc{O}_Q^\times)}{\mb{C}^\times} \otimes \mb{Q} \subset \Gamma_\mb{R}(Q) \]
then $f$ is positive if and only if $f^n$ is the image of a boundary equation for some $n \in \mb{Z}_{>0}$.
\item \label{itm:positivity 2} In general, $f \in \Gamma_\mb{R}(Q)$ is positive if and only if $f = \prod_i f_i^{a_i}$ with $f_i$ boundary equations and $a_i > 0$.
\end{enumerate}
\end{prop}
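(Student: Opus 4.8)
The plan is to reduce everything to the order-of-vanishing computation on a single model $Q'$ and to exploit the fact that the conditions ``$\operatorname{ord}_D f > 0$ for all $D$'' and ``$f$ is a product of boundary equations to positive powers'' can be tested after replacing $f$ by a positive power. I will treat \eqref{itm:positivity 1} first and then bootstrap to \eqref{itm:positivity 2}.

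For \eqref{itm:positivity 1}, suppose $f \in \Gamma(Q, \mc{O}_Q^\times)/\mb{C}^\times \otimes \mb{Q}$, so after raising to a power we may assume $f$ is the image of an honest non-vanishing regular function $g \in \Gamma(Q, \mc{O}_Q^\times)$ (no real exponents). The key point is that $g$ extends to a rational function on $\bar{Q}$, hence on $Q'$, whose divisor is supported on the boundary $Q' - Q$; write $\operatorname{div}(g) = \sum_D (\operatorname{ord}_D g)\, D$. Positivity of $f$ says exactly that this divisor is effective with full support. If that holds, then $g$ is a regular function on $Q'$ (effectivity of the divisor of a rational function on a normal variety forces regularity, since $Q'$ is normal), and since $Q'$ is proper over $\bar Q$ and the map is an isomorphism over $Q$, the function $g$ descends to a regular function on $\bar Q$ — here one uses that the pushforward of $\mc{O}_{Q'}$ along the proper birational map to the normal variety $\bar Q$ is $\mc{O}_{\bar Q}$, or more simply that $g$ is regular on the smooth locus which has complement of codimension $\geq 2$ in the normalization and apply Hartogs; in any case $g$ extends to $\bar Q$. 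Full support of $\operatorname{div}(g)$ means $g$ vanishes on every boundary component, so $Q = \{g \neq 0\} = g^{-1}(\mb{C}^\times)$ inside $\bar Q$, i.e.\ $g$ is a boundary equation. Conversely, if $f^n$ is the image of a boundary equation $g$, then by definition $g$ extends to $\bar Q$ with $Q = g^{-1}(\mb{C}^\times)$, so $\operatorname{div}(g)$ on $Q'$ is effective (being the divisor of a regular function) and every boundary component of $Q'$ maps into $\bar Q - Q = \{g = 0\}$, whence $\operatorname{ord}_D g > 0$ for all $D$; dividing by $n$ gives $\operatorname{ord}_D f > 0$, so $f$ is positive. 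Note both implications confirm that $\Gamma_\mb{R}(Q)_+$ is well-defined independently of $Q'$, since the middle characterization (in terms of boundary equations on $\bar Q$) makes no reference to $Q'$.

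For \eqref{itm:positivity 2}, the ``if'' direction is immediate: if $f = \prod_i f_i^{a_i}$ with each $f_i$ a boundary equation and $a_i > 0$, then $\operatorname{ord}_D f = \sum_i a_i \operatorname{ord}_D f_i > 0$ for every $D$, since each $\operatorname{ord}_D f_i \geq 0$ and, the $f_i$ being boundary equations, for each fixed $D$ at least one $\operatorname{ord}_D f_i$ is strictly positive — actually every one is, because a boundary equation vanishes on the entire boundary. For the ``only if'' direction, assume $f \in \Gamma_\mb{R}(Q)$ is positive, i.e.\ $\operatorname{ord}_D f > 0$ for all finitely many boundary components $D_1, \dots, D_r$ of $Q' - Q$. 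Write $f = f_1^{s_1}\cdots f_n^{s_n}$ with $f_k \in \Gamma(Q,\mc{O}_Q^\times)$ and $s_k \in \mb{R}$. The vector $v := (\operatorname{ord}_{D_1} f, \dots, \operatorname{ord}_{D_r} f) \in \mb{R}_{>0}^r$ lies in the $\mb{R}$-span of the integer vectors $v_k := (\operatorname{ord}_{D_j} f_k)_j$. The cone $\mb{R}_{>0}^r$ is open, so I can perturb: write $v$ as an $\mb{R}_{\geq 0}$-combination of finitely many rational points of the rational subspace $\sum_k \mb{Q} v_k$ that still lie in $\mb{R}_{>0}^r$ — concretely, pick a rational basis and approximate, or invoke that a point in the relative interior of a rational polyhedral cone is a positive real combination of rational generators. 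Each such rational point $w \in \mb{R}_{>0}^r \cap \sum_k \mb{Q}v_k$ is $\frac{1}{N}$ times an integer vector realized as $\operatorname{div}$ of some $g \in \Gamma(Q, \mc{O}_Q^\times)$ (namely a suitable monomial in the $f_k$), and effectivity plus full support of that divisor make $g$ a boundary equation by \eqref{itm:positivity 1}. Collecting, $f = \prod g_\ell^{a_\ell}$ with $g_\ell$ boundary equations and $a_\ell > 0$, as the classes of the $g_\ell$ together with the relations span the same line through $f$ in $\Gamma_\mb{R}(Q)$; a final check that the exponents can be taken positive amounts to the fact that $f$ and all the $g_\ell$ have strictly positive $\operatorname{ord}_{D_j}$, so no cancellation is forced.

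\textbf{Main obstacle.} The genuinely delicate point is the ``only if'' in \eqref{itm:positivity 2}: expressing a positive \emph{real} class as a positive-real combination of \emph{rational} (indeed integral) boundary-equation classes. This is a density/convex-geometry argument — a point in the interior of the real cone cut out by $\operatorname{ord}_{D_j} > 0$ inside the rational subspace $\sum_k \mb{Q}v_k$ must be a positive combination of rational points of that open cone — and one must be careful that the $\operatorname{ord}_D$ of the relevant monomials in the $f_k$ land in $\mb{Z}$ (clearing denominators) and that passing from a divisor on $Q'$ back to a boundary equation on $\bar Q$ genuinely uses normality of $Q'$ (or of $\bar Q$) so that an effective principal divisor corresponds to a regular function. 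Everything else is bookkeeping with orders of vanishing.
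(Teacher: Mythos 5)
Your overall strategy (reduce to the rational case by a convexity/perturbation argument and then use part (1)) matches the paper's, but there are two genuine gaps. First, in part (1) your descent step is wrong as written: you treat $\bar{Q}$ as "the normal variety $\bar Q$" and invoke $\pi_{\bigcdot}\mc{O}_{Q'} = \mc{O}_{\bar Q}$ (or Hartogs on the normalization). But $\bar{Q}$, the closure of $Q$ in $X$, is \emph{not} assumed normal — $Q'$ is a chosen normal model lying over it — and a regular function on $Q'$ or on $Q^{\mathit{norm}}$ need not descend to $\bar{Q}$. This is exactly why the statement only asserts that some power $f^n$ is a boundary equation. Your argument would prove the stronger claim that $f$ itself (after clearing rational denominators) is a boundary equation, which fails in general. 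The paper bridges this by a Nullstellensatz/conductor argument: locally $\bar Q = \spec R$, $Q^{\mathit{norm}} = \spec S$, one picks a genuine boundary equation $g \in R$, uses that $f$ and $g$ have the same zero locus to get $f^p \in gS$, and uses $g^q(S/R) = 0$ to get $g^qS \subset R$, whence $f^{pq} \in R$. Some replacement for this step is indispensable.

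Second, in part (2) your "only if" direction works in the image of the order-of-vanishing map $\Gamma_\mb{R}(Q) \to \mb{R}^r$, $f \mapsto (\operatorname{ord}_{D_j} f)_j$, and you only arrange that the order vector of $f$ equals that of $\prod_\ell g_\ell^{a_\ell}$. That map has a kernel (classes with trivial order along every boundary divisor), so equality of order vectors does not give the required identity $f = \prod_\ell g_\ell^{a_\ell}$ in $\Gamma_\mb{R}(Q)$; your closing sentence about "no cancellation" does not address this discrepancy, and absorbing the unit-like error term into a boundary equation runs into the same descent problem as above. The paper avoids this entirely by perturbing in the exponent space: writing $f = g_1^{s_1}\cdots g_n^{s_n}$ and $\phi \colon \mb{R}^n \to \Gamma_\mb{R}(Q)$, it chooses rational vectors $v_0, \ldots, v_n$ near $s$ with $s$ in the interior of their convex hull, so that $f = \prod_i \phi(v_i)^{b_i}$ holds \emph{exactly} in $\Gamma_\mb{R}(Q)$ with $b_i > 0$, and then each $\phi(v_i)$ is rational and (by openness of positivity) positive, hence a positive rational power of a boundary equation by part (1). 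Rerunning your convexity argument at the level of exponents rather than orders, and repairing the descent step in (1), would close both gaps.
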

\begin{proof}
To prove \eqref{itm:positivity 1}, it is clear that if $f^n$ is a boundary equation, then $f$ is positive. For the converse, by replacing $f$ with some positive integer power, we may assume without loss of generality that $f \in \Gamma(Q, \mc{O}^\times)$. The condition that $f$ be positive implies that $f$ extends to a regular function on $Q'$ (since $Q'$ is normal and $f$ extends outside codimension $2$) such that $Q = f^{-1}(\mb{C}^\times)$. It therefore remains to show that some power of $f$ descends to a regular function on $\bar{Q}$.

First, it is clear that $f$ extends to a regular function on the normalization $Q^{\mathit{norm}}$ of $\bar{Q}$, since the map $Q' \to Q^{\mathit{norm}}$ must be an isomorphism outside a set of codimension $2$ in $Q^{\mathit{norm}}$. Then, locally on $\bar{Q}$, we may write $\bar{Q} = \spec R$ and $Q^{\mathit{norm}} = \spec S$. Since $Q$ is affinely embedded in $\bar{Q}$, we may choose $g \in R$ a boundary equation. Since $f$ and $g$ have the same vanishing locus on $Q^{\mathit{norm}}$, we have $f^p \in gS$ for some $p \in \mb{Z}_{> 0}$ by the Nullstellensatz. Moreover, the quotient $S/R$ is a finitely generated $R$-module such that the localization $(S/R)[g^{-1}] = 0$. Hence there exists $q \in \mb{Z}_{>0}$ such that $g^q(S/R) = 0$. So $g^q S \subset R$. Hence, setting $n = pq$, we have
\[ f^n \in (gS)^q \subset g^q S \subset R.\]
So $f^n$ extends to a boundary equation for $Q$ as claimed.

To prove \eqref{itm:positivity 2}, write
\[ f = g_1^{s_1} \cdots g_n^{s_n}\]
for $g_i \in \Gamma(Q, \mc{O}^\times)$ and $s_i \in \mb{R}$, and consider the map $\phi \colon \mb{R}^n \to \Gamma_\mb{R}(Q)$ sending $(t_1, \ldots, t_n)$ to $g_1^{t_1} \cdots g_n^{t_n}$. Note that since the positivity condition is defined by the positivity of finitely many linear functionals, the set of $t \in \mb{R}^n$ such that $\phi(t)$ is positive is open. Moreover, for any $\epsilon > 0$, we may choose vectors $v_0, v_1, \ldots, v_n \in \mb{Q}^n$ within a ball of radius $\epsilon$ around $s = (s_1, \ldots, s_n)$ such that $s$ lies in the interior of the convex hull of the $v_i$. So
\[ f = \prod_{i  = 0}^{n + 1} \phi(v_i)^{b_i} \]
for some $b_i > 0$ with $\sum_i b_i = 1$. But for small enough $\epsilon$, $\phi(v_i) \in \Gamma_\mb{Q}(Q)$ will be positive for all $i$, and hence $\phi(v_i) = f_i^{1/n_i}$ for some boundary equations $f_i$ and $n_i > 0$ by \eqref{itm:positivity 1}. So
\[ f = \prod_i f_i^{a_i} \quad \text{with} \quad a_i = b_i/n_i > 0 \]
as claimed.
\end{proof}

\subsection{The Hodge filtration on a pushforward} \label{subsec:hodge pushforward}

To analyze the Hodge filtration on $j_!f\mc{M}$ and $j_*f\mc{M}$, we need to recall how this filtration is constructed in the theory.

We first recall the naive pushforward operation for filtered $\mc{D}$-modules (see, e.g., \cite{laumon}). Let $g \colon X \to Y$ be a morphism of smooth varieties. If $\mc{M}$ is any $\mc{D}$-module on $X$, then the $\mc{D}$-module pushforward is
\[g_+\mc{M} := \mrm{R}g_{\bigcdot}(\mc{D}_{Y \leftarrow X} \overset{\mrm{L}}\otimes_{\mc{D}_X}\mc{M}),\]
where
\[ \mc{D}_{Y \leftarrow X} = g^{-1}\mc{D}_Y \otimes_{g^{-1}\mc{O}_Y} \omega_{X/Y},\]
with right $\mc{D}_X$-module structure defined so that the action map
\[ g^{-1}\omega_Y \otimes \mc{D}_{Y \leftarrow X} \to \omega_X \]
is a map of right $\mc{D}_X$-modules. We endow $\mc{D}_{Y \leftarrow X}$ with the filtration by order of differential operator, shifted to start in degree $\dim Y - \dim X$. If $(\mc{M}, F_\bullet)$ is a filtered $\mc{D}_X$-module, then there is an induced (naive) filtration on the complex $g_+\mc{M}$ given by
\[ F_p g_+\mc{M} = \mrm{R}g_{\bigcdot}F_p((\mc{D}_{Y \leftarrow X}, F_\bullet) \overset{\mrm{L}}\otimes_{(\mc{D}_X, F_\bullet)} (\mc{M}, F_\bullet)).\]

We note that if $\mc{M}$ is regular holonomic, then $g_+\mc{M} = g_*\mc{M}$ as objects in the derived category of $\mc{D}$-modules. If $g$ is proper, then the naive pushforward is also the correct operation at the level of Hodge filtrations: we have, for $\mc{M} \in \mhm(X)$,
\[ g_*\mc{M} = g_!\mc{M} = g_+\mc{M} \]
as objects in the filtered derived category of $\mc{D}_Y$-modules. When $g$ is not proper, the filtration needs to be adjusted. By choosing an appropriate factorization of $g$, the behavior is fixed once we give the formula for the extension across a smooth divisor.

So, suppose now that $U \subset X$ is the complement of a smooth divisor $D$ and write $j \colon U \to X$ for the inclusion. Then, for $\mc{M} \in \mhm(U)$, Saito's theory endows the extensions $j_!\mc{M}$ and $j_*\mc{M}$ with a Hodge filtration coming from the ``order of pole'' along $D$.\footnote{This is a very old idea. It plays a central role in Deligne's work on mixed Hodge structures \cite{deligne} and traces its origins via Griffiths \cite{G} as far back as Poincar\'e \cite{P}.} To make sense of ``order of pole'' for a general $\mc{D}$-module, one uses the $V$-filtration of Kashiwara and Malgrange (although this may not always be the most obvious notion of pole order in every situation, see e.g., \cite[(0.8)]{S4}). Recall that the $V$-filtration on the sheaf of differential operators is the decreasing $\mb{Z}$-indexed filtration
\[ V^n \mc{D}_X = \{ \partial \in \mc{D}_X \mid \partial \mc{I}^m \subset \mc{I}^{m + n} \text{ for all } m \geq 0\},\]
where $\mc{I}$ is the ideal sheaf of $D$. The $V$-filtration on $j_+\mc{M}$ is the unique decreasing $\mb{R}$-indexed filtration $V^\bullet j_+\mc{M}$ such that $V^\bullet j_+ \mc{M}$ is locally finitely generated over $V^\bullet \mc{D}_X$ and for all $\alpha \in \mb{R}$ the operator
\[ \alpha - t\partial_t \colon \Gr_V^\alpha j_+\mc{M} \to \Gr_V^\alpha j_+\mc{M} \]
is nilpotent. Now, one easily checks that
\begin{equation} \label{eq:V-filtration extensions}
j_!\mc{M} = \mc{D}_X \otimes_{V^0\mc{D}_X} V^{>-1}j_+\mc{M} \quad \text{and} \quad  j_*\mc{M} = \mc{D}_X \otimes_{V^0\mc{D}_X} V^{\geq -1}j_+\mc{M}
\end{equation}
as $\mc{D}$-modules; the Hodge filtrations are then given by the formulae
\begin{equation} \label{eq:shriek hodge}
(j_!\mc{M}, F_\bullet) = (\mc{D}_X, F_\bullet) \otimes_{(V^0\mc{D}_X, F_\bullet)} V^{> -1}j_+(\mc{M}, F_\bullet)
\end{equation}
and
\begin{equation} \label{eq:star hodge}
(j_*\mc{M}, F_\bullet) = (\mc{D}_X, F_\bullet) \otimes_{(V^0\mc{D}_X, F_\bullet)} V^{\geq -1}j_+(\mc{M}, F_\bullet).
\end{equation}
See, e.g., \cite[Definitions 11.3.1 and 11.4.1]{SS}.

\subsection{Semi-continuity of the Hodge filtration} \label{subsec:semi-continuity}

Let us now fix $f \in \Gamma_\mb{R}(Q)_+$ positive and $\mc{M} \in \mhm(Q)$. In this subsection, we describe how the Hodge filtration on $j_{!*}f^s\mc{M}$ varies with $s$.

We first make the following observation.

\begin{prop} \label{prop:hyperplanes}
The $\mc{D}$-module $f^s\mc{M}$ extends cleanly to $X$ (i.e., the canonical morphism $j_!f^s\mc{M} \to j_*f^s\mc{M}$ is an isomorphism) for $s \in \mb{R}$ outside a discrete subset.
\end{prop}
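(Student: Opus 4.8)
The plan is to reduce, by a proper pushforward, to the case of a simple normal crossings boundary, and then to read off clean extension from the monodromy of the twisting local systems $f^s\mc{O}_Q$ around the boundary components.

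By resolution of singularities, choose a smooth variety $Q'$ with a proper morphism $\rho \colon Q' \to X$ factoring through the closure $\bar Q$, restricting to an isomorphism over $Q$, and such that $D := Q' \setminus Q$ is a simple normal crossings divisor with irreducible components $D_1, \dots, D_m$. Write $\tilde{\jmath} \colon Q \hookrightarrow Q'$ for the open inclusion, so $j = \rho \circ \tilde{\jmath}$. Since $\rho$ is proper, $\rho_! = \rho_*$, and the canonical morphism $j_! f^s\mc{M} \to j_* f^s\mc{M}$ is obtained by applying $\rho_*$ to the canonical morphism $\tilde{\jmath}_! f^s\mc{M} \to \tilde{\jmath}_* f^s\mc{M}$. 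It therefore suffices to prove that the latter is an isomorphism for $s$ outside a discrete subset of $\mb{R}$.

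To see this, I would use the standard reduction to the case of a smooth divisor: by the normal crossings hypothesis, $\tilde{\jmath}$ factors as a finite composition of open immersions, each of which is the complement of a \emph{smooth} divisor, and for a smooth divisor the formulas \eqref{eq:shriek hodge}--\eqref{eq:star hodge} show that $\tilde{\jmath}_! \to \tilde{\jmath}_*$ is an isomorphism whenever $V^{> -1} = V^{\geq -1}$ on the relevant pushforward, which, by the nilpotence property defining the $V$-filtration, holds whenever the monodromy around that divisor has no eigenvalue equal to $1$. Feeding this back through the factorization, $\tilde{\jmath}_! f^s\mc{M} \to \tilde{\jmath}_* f^s\mc{M}$ is an isomorphism as soon as the monodromy of $f^s\mc{M}$ around $D_i$ has no eigenvalue $1$ for every $i$. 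This part --- organizing the reduction and invoking the smooth-divisor criterion --- is the technical heart of the argument, although it is a routine consequence of the material on $V$-filtrations recalled in \S\ref{subsec:hodge pushforward}.

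It remains to count the exceptional parameters. Near a generic point of $D_i$ the rank-one local system $f^s\mc{O}_Q$ has monodromy equal to the scalar $\exp(2\pi i s\, a_i)$, where $a_i = \operatorname{ord}_{D_i}(f)$, and $a_i > 0$ because $f$ is positive; hence the monodromy of $f^s\mc{M}$ around $D_i$ is $\exp(2\pi i s\, a_i)$ times that of $\mc{M}$. Since $\mc{M}$ is holonomic, its monodromy around $D_i$ has finitely many eigenvalues $\mu_{i,1}, \dots, \mu_{i,k_i} \in \mb{C}^\times$, so $f^s\mc{M}$ acquires the eigenvalue $1$ around $D_i$ precisely when $\exp(2\pi i s\, a_i) = \mu_{i,\ell}^{-1}$ for some $\ell$; for each pair $(i,\ell)$ the set of such $s$ is a (possibly empty) coset of $\tfrac{1}{a_i}\mb{Z}$ in $\mb{R}$, and in particular discrete. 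As there are only finitely many pairs $(i,\ell)$, the union of these sets is discrete, and for $s$ outside it both $\tilde{\jmath}_! f^s\mc{M} \to \tilde{\jmath}_* f^s\mc{M}$ and hence $j_! f^s\mc{M} \to j_* f^s\mc{M}$ are isomorphisms.
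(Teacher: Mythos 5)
Your argument is correct and follows essentially the same route as the paper: resolve to a simple normal crossings boundary, use properness of the resolution to transfer the clean-extension question, factor the open inclusion through smooth-divisor steps, and detect cleanness via the $V$-filtration (equivalently, absence of monodromy eigenvalue $1$), with $\operatorname{ord}_{D_i}(f) > 0$ forcing the bad parameters $s$ to form a discrete set. The paper phrases the last step as the vanishing $\Gr_{V_{(i)}}^{-1} j'_+ f^s\mc{M} = f^s \Gr_{V_{(i)}}^{-1 - \operatorname{ord}_{D_i}(f) s} j'_+ \mc{M} = 0$ for $s$ outside the discrete jump set of the $V$-filtration, which is the same counting you do in terms of monodromy eigenvalues.
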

\begin{proof}
We may assume without loss of generality that $\mc{M} = j'_*\mc{V}$ for some local system $\mc{V}$ on a smooth locally closed subset $Q' \subset Q$, where $j' \colon Q' \to Q$ is the inclusion, since any $\mc{M}$ may be resolved by a complex of $\mc{D}$-modules of this form. Passing to a resolution of singularities $\tilde{Q}' \to \bar{Q}'$ of the closure of $Q'$ in $X$ if necessary, we may reduce further to the case where $Q' \subset Q \subset X$ are open subsets with normal crossings boundary.

Write $D_1, \ldots, D_m$ for the irreducible components of $X - Q'$, $V^\bullet_{i}$ for the $V$-filtration along $D_i$ and $I_0 = \{i \in I \mid D_i \subset X - Q\}$. For $I \subset \{1, \ldots, m\}$, write $D_I^\circ = \bigcap_{i \in I} D_i - \bigcup_{j \not\in I} D_j$. A regular holonomic $\mc{D}$-module $\mc{N}$ is said to be of \emph{normal crossings type} if it is constructible with respect to the Whitney stratification defined by the $D_I^\circ$. For $\mc{D}$-modules of normal crossings type, the collection of $V$-filtrations $V^\bullet_{i}$ are distributive, i.e., the functors $\Gr_{V_{i}}^\alpha$ commute (see, e.g., \cite[Proposition 15.7.15]{SS}). For $I = \{i_1, \ldots, i_k\} \subset \{1, \ldots, n\}$ and $\boldsymbol{\alpha} \in \mb{R}^I$, write
\[ \Gr_{V_I}^{\boldsymbol{\alpha}} \mc{N} = \Gr_{V_{i_1}}^{\alpha_{i_1}} \cdots \Gr_{V_{i_k}}^{\alpha_{i_k}} \mc{N}|_{D_I^\circ}\]
and
\[ \phi_I \mc{N} = \Gr_{V_I}^{-1, \ldots, -1}\mc{N}.\]
It is an elementary and well-known fact that, for $\mc{N}$ of normal crossings type, $\mc{N} = 0$ if and only if $\phi_I \mc{N} = 0$ for all $I$. Now, choosing local coordinates $t_i$ such that $D_i = \{t_i = 0\}$, the morphism
\[ \phi_I(j_!f^s\mc{M}) \to \phi_I(j_*f^s\mc{M}) \]
may be identified with the morphism
\begin{equation} \label{eq:hyperplanes 1}
\prod_{i \in I \cap I_0} t_i \partial_{t_i} \colon \Gr_{V_I}^{0, \ldots, 0} j_*f^s\mc{M} \to \Gr_{V_I}^{0, \ldots, 0} j_*f^s\mc{M}.
\end{equation}
Moreover, setting $\alpha_i = -\mrm{ord}_{D_i}(f)s$, we have 
\[ \Gr_{V_I}^{0, \ldots, 0}j_*f^s\mc{M} \cong \Gr_{V_I}^{\boldsymbol{\alpha}}j_*\mc{M},\]
so \eqref{eq:hyperplanes 1} is an isomorphism unless $-\mrm{ord}_{D_i}(f) s$ lies in the discrete set of jump points of $V_i^\bullet j_*\mc{M}$ for some $i \in I_0$. Since $f$ is positive, $\mrm{ord}_{D_i}(f) > 0$ for all $i \in I_0$, so this happens only for $s$ in a discrete subset of $\mb{R}$.
\end{proof}

The main result of this subsection is:

\begin{thm} \label{thm:semi-continuity}
The Hodge filtrations on $j_!f^s\mc{M}$ and $j_*f^s\mc{M}$ are semi-continuous in the sense that
\[ \Gr^F j_!f^{s - \epsilon}\mc{M} \cong \Gr^F j_!f^s\mc{M} \quad \text{and} \quad \Gr^F j_*f^{s + \epsilon} \mc{M} \cong \Gr^Fj_*f^s\mc{M}\]
as coherent sheaves on $T^* X$ for all $0 < \epsilon \ll 1$.
\end{thm}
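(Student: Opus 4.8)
The plan is to reduce, via a proper resolution of the boundary, to the model situation of a simple normal crossings divisor, and there to read off both $\Gr^F j_!f^s\mc{M}$ and $\Gr^F j_*f^s\mc{M}$ from the multi-$V$-filtration along the boundary using the pole-order formulas \eqref{eq:shriek hodge} and \eqref{eq:star hodge}. The crucial point is that the deformation $f^s\mc{O}_Q$ is unitary of Hodge type $(0,0)$, so that tensoring with it affects these formulas only through a linear reindexing of the $V$-filtration by the (positive) orders of vanishing of $f$.

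First I would dispose of everything away from the boundary: over $Q$ itself, $\Gr^F j_!f^s\mc{M}$ and $\Gr^F j_*f^s\mc{M}$ both restrict to $\Gr^F(f^s\mc{O}_Q\otimes\mc{M})=\Gr^F\mc{M}$, canonically and independently of $s$, since $f^s\mc{O}_Q$ is $\mc{O}_Q$ equipped with a flat connection. So all the content is local near the boundary. I would then choose a resolution $\pi\colon Q'\to\bar Q\subset X$ as in the proof of Proposition \ref{prop:hyperplanes}, so that $Q'-Q$ is a simple normal crossings divisor with components $D_1,\dots,D_n$ and $m_i:=\mrm{ord}_{D_i}(f)>0$ for every $i$ (this is exactly what positivity of $f$ guarantees, cf.\ Proposition \ref{prop:positivity}). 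Writing $j'\colon Q\hookrightarrow Q'$, we have $j_!f^s\mc{M}=\pi_*j'_!f^s\mc{M}$ and $j_*f^s\mc{M}=\pi_*j'_*f^s\mc{M}$ since $\pi$ is proper, and strictness of the Hodge filtration under proper pushforward means $\Gr^F$ commutes with $\pi_*$ (replacing it by the $\mc{O}$-module pushforward). Hence it suffices to establish the isomorphisms on $Q'$; in other words I may assume $X=Q'$ and that the boundary of $Q$ is simple normal crossings with all $m_i>0$.

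The heart of the argument is then as follows. Near the boundary, $f^s\mc{O}_Q$ is the unitary rank-one local system with monodromy $e^{2\pi i sm_i}$ around $D_i$, and $F_\bullet(f^s\mc{O}_Q\otimes\mc{M})=f^s\mc{O}_Q\otimes F_\bullet\mc{M}$; it follows that tensoring with $f^s\mc{O}_Q$ simply translates the ($\mb{R}^n$-indexed) multi-$V$-filtration of $j'_+(-)$ along $D$, producing a canonical isomorphism
\[ \Gr_V^{\alpha}\,j'_+(f^s\mc{O}_Q\otimes\mc{M})\;\cong\;\Gr_V^{\alpha-s\mbf{m}}\,j'_+\mc{M},\qquad \mbf{m}=(m_1,\dots,m_n),\]
compatible with the Hodge filtrations, the monodromy logarithm operators, and the $\mc{O}$-module structures on the $V$-graded pieces; this is exactly the reindexing already exploited in the proof of Proposition \ref{prop:hyperplanes}. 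Feeding it into the simple normal crossings form of \eqref{eq:shriek hodge} and \eqref{eq:star hodge}, and using the standard strictness of these $V$-filtration constructions so that $\Gr^F$ passes through the tensor products and through $\Gr_V$, I obtain a functorial description of $\Gr^F j'_!f^s\mc{M}$ and $\Gr^F j'_*f^s\mc{M}$ on $T^*Q'$ purely in terms of the datum $\{(\Gr^F\Gr_V^{\beta}j'_+\mc{M}, N_1,\dots,N_n)\}_\beta$ of $\mc{M}$, reindexed by $s\mbf{m}$: $\Gr^F j'_!f^s\mc{M}$ depends only on the part of (the Hodge filtration of) $j'_+\mc{M}$ in $V$-degrees $\beta$ with all $\beta_i>-1-sm_i$, and $\Gr^F j'_*f^s\mc{M}$ only on the part with all $\beta_i\ge -1-sm_i$. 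Since the set of $\beta$ with $\Gr_V^{\beta}j'_+\mc{M}\neq0$ is discrete, for $0<\epsilon\ll1$ depending on $\mc{M}$ and $f$ no coordinate of such a $\beta$ lies in $(-1-sm_i,\,-1-sm_i+\epsilon m_i]$ or in $[-1-sm_i-\epsilon m_i,\,-1-sm_i)$; as $m_i>0$, the condition ``all $\beta_i>-1-sm_i$'' then selects exactly the same $\beta$'s as ``all $\beta_i>-1-(s-\epsilon)m_i$'' --- the only possible discrepancy would consist of $\beta$'s with some coordinate in the first interval, which is empty, while a $\beta$ with some $\beta_i=-1-sm_i$ is excluded for both $s$ and $s-\epsilon$. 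The reindexing isomorphism thus matches the relevant data for $f^{s-\epsilon}\mc{M}$ and $f^s\mc{M}$, and pushing forward along $\pi$ yields $\Gr^F j_!f^{s-\epsilon}\mc{M}\cong\Gr^F j_!f^s\mc{M}$. The statement for $j_*$ is the mirror image: replace strict inequalities by non-strict ones and $s-\epsilon$ by $s+\epsilon$, using the second interval.

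The step I expect to be the main obstacle is making the reindexing rigorous: one must check that tensoring with the unitary type-$(0,0)$ local system $f^s\mc{O}_Q$ really does translate the multi-$V$-filtration by $s\mbf{m}$ while respecting, simultaneously and compatibly, the Hodge filtration, the nilpotent operators $N_i$, and the coherent-sheaf structure on the $V$-graded pieces, and then that the pole-order formulas \eqref{eq:shriek hodge}--\eqref{eq:star hodge} are functorial enough --- via the relevant strictness/bistrictness for several $V$-filtrations --- to carry this isomorphism all the way to $\Gr^F j_!$ and $\Gr^F j_*$ as coherent sheaves on the cotangent bundle. The surrounding steps --- reduction to the simple normal crossings case along the proper resolution, the discreteness bookkeeping producing a uniform $\epsilon$, and the gluing with the canonical identification over $Q$ --- should be routine.
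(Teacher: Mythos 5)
Your reduction to the resolution $Q'$ and the reindexing intuition for a single smooth divisor are sound and are indeed the mechanism underlying the paper's argument (after the graph trick, the twist by $f^s$ translates the $V$-filtration by $s\cdot\mrm{ord}_D(f)$ compatibly with the naive pushforward filtration, and positivity of the orders gives the one-sided behavior; your discreteness bookkeeping matches Proposition \ref{prop:hyperplanes}). However, the step you defer as ``the main obstacle'' is not a technical verification but the entire content of the theorem in your approach, and it is genuinely unproved. The formulas \eqref{eq:shriek hodge} and \eqref{eq:star hodge} are only available for the complement of a \emph{smooth} divisor; there is no ``simple normal crossings form'' of them computing $F_\bullet j'_!\mc{M}$ and $F_\bullet j'_*\mc{M}$ from a multi-indexed $V$-filtration, and for an arbitrary mixed Hodge module $\mc{M}$ on $Q$ the existence of compatible (bistrict) $V$-filtrations along the several components, together with strictness of $F_\bullet$ with respect to all of them, is a substantial assertion that does not follow from anything in the paper. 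Moreover, even granting such a description, your conclusion that the two sides ``depend only on the same data'' does not by itself produce the canonical isomorphism claimed in the theorem: the filtered module is reconstructed from the $V$-data by an induction (tensoring with $\mc{D}$ over $V^0\mc{D}$ one divisor at a time), and it is precisely the compatibility of this reconstruction across different values of $s$ that has to be established. Note also that a naive fix by iterating one divisor at a time fails as stated, because after the first extension the function $f$ vanishes on the remaining space, so $f^s$ is no longer a unitary twist and the ``constant module tensored with $f^s$'' picture breaks down.

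The paper circumvents exactly these points by never invoking a multi-variable $V$-filtration: it introduces the auxiliary filtered modules $j_!^{(s_0)}f^s\mc{M}[s]$ and $j_*^{(s_0)}f^s\mc{M}[s]$, defined as limits of the Hodge filtrations of $j_!$ and $j_*$ applied to the finite-length quotients $f^s\mc{M}[s]/(s-s_0)^n$, proves they embed strictly into $j_+f^s\mc{M}[s]$ (Lemmas \ref{lem:semi-continuity inclusion} and \ref{lem:completed inclusion}, the latter reduced to one smooth divisor at a time on a resolution), and obtains the wall-crossing identities \eqref{eq:semi-continuity inclusion shriek} and \eqref{eq:semi-continuity inclusion star} as literal equalities of subsheaves; the canonical isomorphisms of the theorem then follow by taking the cokernel of $s-s_0$. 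General positive $f$ is handled by factoring it into boundary equations (Proposition \ref{prop:positivity}) and a two-variable induction, not by an SNC multi-$V$-formula. To complete your proposal you would either have to prove the missing normal-crossings strictness statement (essentially a piece of Saito's theory well beyond what is quoted here, and not valid without further d\'evissage of $\mc{M}$), or replace it by a comparison device of the above kind; as written, the argument has a genuine gap at its central step.
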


In particular, Theorem \ref{thm:semi-continuity} implies that the isomorphism class of $\Gr^F j_{!*}f^s\mc{M}$ is constant on each interval on which $j_!f^s\mc{M} = j_*f^s\mc{M}$.

The basic idea is behind the proof of Theorem \ref{thm:semi-continuity} is that the inequalities in \eqref{eq:shriek hodge} and \eqref{eq:star hodge} lead to semi-continuity properties of the associated gradeds. In order to exploit this idea to produce canonical isomorphisms, we will use the following constructions.

Fix $\mc{M} \in \mhm(Q)$ and $f \in \Gamma_\mb{R}(Q)$ positive. Consider the (infinitely generated) $\mc{D}_Q$-module
\[ f^s\mc{M}[s] \]
given by $f^s\mc{M}[s] = \mc{M}\otimes \mb{C}[s]$ as $\mc{O}_Q$-modules, with differential operators acting by
\[ \partial f^s m(s) = f^s \partial m(s) + s f^s\frac{\partial f}{f} m(s) \]
for vector fields $\partial$ on $Q$, where, for $f = f_1^{s_1} \cdots f_k^{s_k}$, $f_i \in \Gamma(Q, \mc{O}^\times)$ we write
\[ \frac{\partial f}{f} := s_1 \frac{\partial f_1}{f_1} + \cdots + s_k \frac{\partial f_k}{f_k}.\]
We endow $f^s\mc{M}[s]$ with a filtration by setting
\[ F_pf^s\mc{M}[s] = \sum_{k + q \leq p} s^kf^sF_q\mc{M},\]
i.e., the filtration comes from the Hodge filtration on $\mc{M}$ and the polynomial order in $s$. Via the naive filtered pushforward, we obtain a filtered $\mc{D}$-module $j_+f^s\mc{M}[s]$ on $X$.

Now fix $s_0 \in \mb{R}$. Our aim is to realize the filtered $\mc{D}$-modules $j_!f^{s_0}\mc{M}$ and $j_*f^{s_0}\mc{M}$ as subquotients of $j_+f^s\mc{M}[s]$. To do so, observe that, for any $n > 0$, the quotient module
\[ \frac{f^s\mc{M}[s]}{(s - s_0)^n}, \]
equipped with its inherited filtration, underlies a mixed Hodge module on $Q$, given by tensoring $\mc{M}$ with a rank $n$ admissible variation of mixed Hodge structure on $Q$ (cf., the proof of Lemma \ref{lem:monodromic nearby cycles}). Moreover, for any $p$, there exists $n$ such that
\[ F_{q} \frac{f^s\mc{M}[s]}{(s - s_0)^{m + 1}} \to F_{q} \frac{f^s\mc{M}[s]}{(s - s_0)^{m}} \]
is an isomorphism for all $q \leq p$ and all $m \geq n$. It follows that for each $p$, the sequences
\[ F_p j_!\left(\frac{f^s\mc{M}[s]}{(s - s_0)^n}\right) \quad \text{and} \quad F_p j_*\left(\frac{f^s\mc{M}[s]}{(s - s_0)^n}\right) \]
stabilize for $n \gg 0$. We define (infinitely generated) filtered $\mc{D}$-modules $j_!^{(s_0)}f^s\mc{M}[s]$ and $j_*^{(s_0)}f^s\mc{M}[s]$ on $X$ by setting
\[ F_p j_!^{(s_0)}f^s\mc{M}[s] := \varprojlim_n F_p j_! \left(\frac{f^s\mc{M}[s]}{(s - s_0)^n}\right), \quad j_!^{(s_0)}f^s\mc{M}[s] = \bigcup_p F_pj_!^{(s_0)}f^s\mc{M}[s]\]
and
\[ F_p j_*^{(s_0)}f^s\mc{M}[s] := \varprojlim_n F_p j_*\left(\frac{f^s\mc{M}[s]}{(s - s_0)^n}\right), \quad j_*^{(s_0)}f^s\mc{M}[s] = \bigcup_p F_pj_*^{(s_0)}f^s\mc{M}[s]. \]
By construction, we have strict short exact sequences
\[ 0 \to j_!^{(s_0)}f^s\mc{M}[s]\{1\} \xrightarrow{s - s_0} j_!^{(s_0)}f^s\mc{M}[s] \to j_!f^{s_0}\mc{M} \to 0\]
and
\[ 0 \to j_*^{(s_0)}f^s\mc{M}[s]\{1\} \xrightarrow{s - s_0} j_*^{(s_0)}f^s\mc{M}[s] \to j_*f^{s_0}\mc{M} \to 0\]
and canonical morphisms of filtered $\mc{D}$-modules
\begin{equation} \label{eq:semi-continuity inclusion 1}
 j_!^{(s_0)} f^s\mc{M}[s] \to j_*^{(s_0)} f^s\mc{M}[s] \to j_+f^s\mc{M}[s]
\end{equation}
restricting to the identity on $Q$. Here $\{n\}$ denotes the filtration shift $F_p(\mc{N}\{n\}) = F_{p - n}\mc{N}$.

The main step in the proof of Theorem \ref{thm:semi-continuity} is:

\begin{lem} \label{lem:semi-continuity inclusion}
The morphisms \eqref{eq:semi-continuity inclusion 1} are injective and strict with respect to the filtrations. Moreover, for $0 < \epsilon \ll 1$, we have
\begin{equation} \label{eq:semi-continuity inclusion shriek}
 j_!^{(s_0)} f^s\mc{M}[s] = j_!^{(s_0 - \epsilon)}f^s\mc{M}[s] = j_*^{(s_0 - \epsilon)}f^s\mc{M}[s]
\end{equation}
and
\begin{equation} \label{eq:semi-continuity inclusion star}
j_*^{(s_0)} f^s\mc{M}[s] = j_*^{(s_0 + \epsilon)}f^s\mc{M}[s] = j_!^{(s_0 + \epsilon)}f^s\mc{M}[s]
\end{equation}
as subsheaves of $j_+f^s\mc{M}[s]$, and
 \[ j_+f^s\mc{M}[s] = \bigcup_{s_0 \in \mb{R}} j_!^{(s_0)}f^s\mc{M}[s] = \bigcup_{s_0 \in \mb{R}} j_*^{(s_0)} f^s\mc{M}[s]. \]
\end{lem}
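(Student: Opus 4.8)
\emph{Strategy.} The plan is to reduce everything to a local computation with the Kashiwara--Malgrange $V$-filtration along a smooth divisor, exploiting that the $s$-parameter deformation $f^s\mc{M}[s]$ shifts the $V$-filtration \emph{linearly} in $s$, at a strictly positive rate because $f$ is positive. All the assertions are local on $X$ and concern the comparison of filtered sub-$\mc{D}$-modules of $j_+ f^s\mc{M}[s]$. Factoring $j$ as $Q \hookrightarrow \bar Q \hookrightarrow X$ and resolving $\bar Q$ by a proper birational $\pi \colon Q' \to \bar Q$ which is an isomorphism over $Q$ with $Q' - Q$ a simple normal crossings divisor, as in the proof of Proposition \ref{prop:hyperplanes}, one uses that closed immersions and proper pushforwards are exact, compatible with the six operations and with $\Gr^F$ (Saito's strictness), and commute with all of the constructions $j_!^{(s_0)}, j_*^{(s_0)}, j_+$, to reduce to the case where $X$ is smooth, $Q = X - D$ with $D$ a smooth divisor, and, working locally, $f = t^a u$ with $t$ a local equation for $D$, $u$ a unit, and $a = \operatorname{ord}_D f > 0$; the general SNC case follows by iterating over the components (the $V$-filtrations along distinct components commute). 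The factor $u^s$ is a unitary flat twist changing the $\mc{D}$-module structure only by a regular vector field, hence does not affect the $V$-filtration, so we assume $f = t^a$.

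\emph{Effect of the deformation.} On $f^s\mc{M}[s] = t^{as}\mc{M}[s]$ one computes directly that $t\partial_t$ acts as $as + (t\partial_t)_{\mc{M}}$, where $(t\partial_t)_{\mc{M}}$ is the operator induced from $\mc{M}$ and $as$ denotes multiplication by $as \in \mb{C}[s]$. Hence, writing $B \subset \mb{R}$ for the discrete set of $V$-filtration jumps of $j_+\mc{M}$ along $D$, the $V$-jumps of $j_+\mc{M}_n$ along $D$, where $\mc{M}_n := f^s\mc{M}[s]/(s-s_0)^n$ is a genuine mixed Hodge module on $Q$ (cf.\ the proof of Lemma \ref{lem:monodromic nearby cycles}), occur exactly at $\{b + as_0 : b \in B\}$, with the nilpotent part on each graded piece bounded in terms of $n$; in particular these jumps move linearly at the strictly positive rate $a$ as $s_0$ varies. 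By \eqref{eq:shriek hodge} and \eqref{eq:star hodge}, $(j_!\mc{M}_n, F_\bullet)$ and $(j_*\mc{M}_n, F_\bullet)$ are obtained from $(j_+\mc{M}_n, F_\bullet)$ by $\mc{D}_X \otimes_{V^0\mc{D}_X}(-)$ applied to $V^{>-1}$ and $V^{\geq -1}$, and the inclusions $V^{>-1} \subset V^{\geq -1} \subset j_+\mc{M}_n$ are strict for $F_\bullet$ by Saito's theory; this gives injectivity and strictness of \eqref{eq:semi-continuity inclusion 1} at finite level, and hence in the limit over $n$. The point to check here is that $F_p j_!^{(s_0)}f^s\mc{M}[s] = \varprojlim_n F_p j_!\mc{M}_n$ actually lands in $j_+ f^s\mc{M}[s]$ rather than merely its $(s - s_0)$-adic completion: this holds because an element of $F_p$ has $s$-degree at most $p$ by the definition of the filtration on $f^s\mc{M}[s]$, so the compatible system it determines modulo the $(s-s_0)^n$ is represented by a genuine polynomial in $s - s_0$, i.e.\ by a section of $j_+ f^s\mc{M}[s]$.

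\emph{Wall-crossing comparisons and the union.} Fix $s_0$ and choose $\epsilon > 0$ small enough that the only element of $\{b + as_0 : b \in B\}$ in $[-1 - a\epsilon,\, -1 + a\epsilon]$ is $-1$ itself (if it occurs at all), possible since $B$ is discrete. For parameter $s_0 - \epsilon$ the jump set agrees with that for $s_0$ except that a jump at exactly $-1$ migrates to $-1 - a\epsilon < -1$, and nothing crosses $-1$ from below; thus a mode of $V$-order $-1$ at $s_0$ (dropped by $V^{>-1}$, hence by $j_!^{(s_0)}$) has $V$-order $< -1$ at $s_0 - \epsilon$, so is dropped by both $V^{>-1}$ and $V^{\geq -1}$, hence by both $j_!^{(s_0 - \epsilon)}$ and $j_*^{(s_0 - \epsilon)}$, while every other mode keeps the same membership in $V^{>-1}$ and $V^{\geq -1}$. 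Since the three filtered modules are produced from $j_+ f^s\mc{M}[s]$ by the identical recipe applied to the now-identical truncations, \eqref{eq:semi-continuity inclusion shriek} follows; \eqref{eq:semi-continuity inclusion star} is symmetric, the wall mode instead moving to $-1 + a\epsilon > -1$ and being kept by everybody. Finally, any section of $j_+ f^s\mc{M}[s]$ has bounded pole order along $D$, so for $s_0 \gg 0$ the twist by $t^{as_0}$ raises its $V$-order above $-1$, placing it in $V^{>-1}$ and hence, tracking the $F$-degree, in $F_p j_!^{(s_0)}f^s\mc{M}[s]$ for suitable $p$; this gives $j_+ f^s\mc{M}[s] = \bigcup_{s_0} j_!^{(s_0)}f^s\mc{M}[s]$, and likewise for $j_*^{(s_0)}$.

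\emph{Main obstacle.} The delicate part is not the geometry but the bookkeeping for the infinitely generated filtered modules: showing the inverse limits over $n$ stabilize compatibly with the embeddings into $j_+ f^s\mc{M}[s]$, that strictness survives the limit, and that the ``$V$-truncation'' description is rigid enough that coincidence of truncations forces coincidence of filtered modules. Verifying that the resolution step does not disturb the $\epsilon$-comparisons (because $\operatorname{ord}_{D_i} f > 0$ for \emph{every} boundary component) is routine but must be done with care.
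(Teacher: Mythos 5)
Your strategy is recognizably in the spirit of the paper's single-boundary-equation computation, but as written it has a genuine gap at the key step: the injectivity and strictness of the maps \eqref{eq:semi-continuity inclusion 1}. You claim these follow "at finite level, and hence in the limit over $n$" from strictness of $V^{>-1}\subset V^{\geq -1}\subset j_+\mc{M}_n$. But at finite level the comparison maps $j_!\bigl(f^s\mc{M}[s]/(s-s_0)^n\bigr)\to j_*\bigl(f^s\mc{M}[s]/(s-s_0)^n\bigr)$ are \emph{not} injective: applying $\mc{D}_X\otimes_{V^0\mc{D}_X}(-)$ to the inclusion of $V$-steps produces a map with kernel (and cokernel) supported on the boundary, exactly as for $j_!\mc{O}_{\mb{C}^\times}\to j_*\mc{O}_{\mb{C}^\times}$. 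Injectivity of the limit objects is a genuinely asymptotic statement: one needs that the kernels and cokernels are killed by a power of $s-s_0$ \emph{uniformly in $n$}, so that after completing they die and the limit map becomes a strict injection; this is precisely the content of Lemma \ref{lem:completed inclusion} in the paper, proved by factoring through a resolution, using faithfulness of $\Gr_{V}^{-1}$ on modules supported on the divisor and identifying the relevant map with $t\partial_t + \mrm{ord}_{D_i}(f)s$ on $\Gr_V^0$. Your proposal neither invokes this nor supplies a substitute, and the same issue undermines your claim that an element of $F_p$ of the limit is "a genuine polynomial in $s-s_0$," i.e.\ that the limit objects embed into $j_+f^s\mc{M}[s]$ at all — that embedding is part of what the lemma asserts, not something available for free.

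The second gap is the reduction itself. You propose to push the infinitely generated filtered objects $j_!^{(s_0)}f^s\mc{M}[s]$ forward along a resolution $Q'\to\bar Q$ with SNC boundary and then iterate the smooth-divisor formula component by component, declaring the compatibility "routine." But the paper's proof is structured specifically to avoid this: the resolution is used only for the underlying-$\mc{D}$-module nilpotency bound in Lemma \ref{lem:completed inclusion}, never to compute the Hodge filtrations of the limit objects on $X$; and proper pushforward is not exact, so identifying $(j_!^{(s_0)}f^s\mc{M}[s],F_\bullet)$ on $X$ with a filtered pushforward from $Q'$, compatibly with the limits over $n$ and with the embeddings into $j_+f^s\mc{M}[s]$, is itself a nontrivial claim requiring proof. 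Instead, the paper handles a genuine boundary equation by the graph-embedding trick (so that $f=t$ is a coordinate on $X$ and \eqref{eq:shriek hodge}, \eqref{eq:star hodge} apply directly, giving the explicit $V^{>-1-s_0}$-description from which the wall-crossing equalities and the union are read off), and then treats a general positive $f$ via Proposition \ref{prop:positivity} by writing $f=\prod_i f_i^{a_i}$ with $f_i$ boundary equations and running a multi-variable deformation $f_1^{s_1}f_2^{s_2}\mc{M}[s_1,s_2]$ with induction on the number of factors, specializing $s_1=s_2=s$ at the end. This multi-variable induction, which replaces your local $f=t^au$ analysis on a resolution, is absent from your proposal, and it is where the general case is actually carried. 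To repair your argument you would need to prove the pushforward/iteration compatibilities you defer, or else adopt the paper's route.
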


Let us first prove the strict injectivity for the morphism
\[ j_!^{(s_0)} f^s\mc{M}[s] \to j_*f^{(s_0)} f^s\mc{M}[s].\]
By construction, this is obtained from the morphism of pro-mixed Hodge modules
\begin{equation} \label{eq:completed inclusion 1}
j_!f^s\mc{M}[[s - s_0]] := \varprojlim_n j_! \frac{f^s\mc{M}[s]}{(s - s_0)^n} \to \varprojlim_n j_* \frac{f^s\mc{M}[s]}{(s - s_0)^n} =: j_*f^s\mc{M}[[s - s_0]]
\end{equation}
by applying the functor $\bigcup_p F_p$.

\begin{lem} \label{lem:completed inclusion}
The morphism \eqref{eq:completed inclusion 1} is injective and $s - s_0$ acts nilpotently on the cokernel. Hence, the morphism
\begin{equation} \label{eq:completed inclusion 5}
j_!^{(s_0)} f^s\mc{M}[s] \to j_*^{(s_0)} f^s\mc{M}[s]
\end{equation}
is strict injective.
\end{lem}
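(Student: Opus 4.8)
Write $\mc{N}_n := f^s\mc{M}[s]/(s-s_0)^n$ and $\phi_n\colon j_!\mc{N}_n\to j_*\mc{N}_n$ for the canonical map, so that \eqref{eq:completed inclusion 1} is $\varprojlim_n\phi_n$. The plan is to prove the \emph{uniform} statement that $\ker\phi_n$ and $\coker\phi_n$ are annihilated by a fixed power $(s-s_0)^N$ with $N$ independent of $n$, and then deduce the lemma by a Mittag--Leffler argument. Granting the uniform bound: since $j$ is affinely embedded, $j_!$ and $j_*$ are exact, so the surjections $\mc{N}_{n+1}\twoheadrightarrow\mc{N}_n$ make $\{j_!\mc{N}_n\}$, $\{j_*\mc{N}_n\}$, $\{\im\phi_n\}$, $\{\ker\phi_n\}$, $\{\coker\phi_n\}$ into inverse systems with surjective transition maps (for $\ker$ and $\coker$ this uses the uniform bound together with $(s-s_0)^n j_!\mc{N}_n = 0$), hence with vanishing $\varprojlim^1$. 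Taking $\varprojlim_n$ in $0\to\ker\phi_n\to j_!\mc{N}_n\to j_*\mc{N}_n\to\coker\phi_n\to 0$ then identifies the cokernel of \eqref{eq:completed inclusion 1} with $\varprojlim_n\coker\phi_n$, which is killed by $(s-s_0)^N$, and its kernel with $\varprojlim_n\ker\phi_n$; the latter vanishes because an element $(x_n)$ has each $x_n$ in the image of $\ker\phi_{n+m}\subseteq\ker\bigl((s-s_0)^N\colon j_!\mc{N}_{n+m}\to j_!\mc{N}_{n+m}\bigr)=(s-s_0)^{n+m-N}j_!\mc{N}_{n+m}$, which maps into $(s-s_0)^{n+m-N}j_!\mc{N}_n=0$ once $m\ge N$. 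Strict injectivity of \eqref{eq:completed inclusion 5} then follows by applying $\bigcup_p F_p$, using that each $\phi_n$ is strict (all morphisms of mixed Hodge modules are) and that $\varprojlim$ commutes with the intersections of Hodge filtration steps that occur.

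To prove the uniform annihilation I would argue as in Proposition~\ref{prop:hyperplanes}. Choose a resolution $\pi\colon Q'\to\bar Q\subset X$ restricting to an isomorphism over $Q$ with $Q'-Q$ a simple normal crossings divisor, and factor $j$ as $Q\hookrightarrow Q'\xrightarrow{\pi}\bar Q\hookrightarrow X$. The closed immersion $\bar Q\hookrightarrow X$ is exact and faithful, and $j_!\mc{N}_n$, $j_*\mc{N}_n$ are concentrated in degree $0$, so $\pi_*$ of the corresponding objects on $Q'$ are also concentrated in degree $0$; feeding the four-term exact sequences on $Q'$ through $\pi_*$ and taking cohomology shows that $\ker\phi_n$ and $\coker\phi_n$ on $X$ are extensions of subquotients of the cohomology sheaves $\mc{H}^\bullet\pi_*(\ker\phi'_n)$ and $\mc{H}^\bullet\pi_*(\coker\phi'_n)$, where $\phi'_n$ is the analogous map for $Q\hookrightarrow Q'$. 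Since $\pi_*$ is $(s-s_0)$-linear, any power of $s-s_0$ killing $\ker\phi'_n$ and $\coker\phi'_n$ kills these cohomology sheaves, hence (up to multiplying the exponent by a bounded constant) $\ker\phi_n$ and $\coker\phi_n$ as well. Thus we may assume $Q'-Q$ is a normal crossings divisor in $Q'=X$, and then, factoring across its components, reduce to the case of a single smooth divisor $D$ with local equation $t$ and $c:=\mrm{ord}_D f>0$.

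In the smooth divisor case the cokernel is computed directly from \eqref{eq:star hodge}: right-exactness of $\mc{D}_X\otimes_{V^0\mc{D}_X}(-)$ applied to $0\to V^{>-1}j_+\mc{N}_n\to V^{\ge-1}j_+\mc{N}_n\to\Gr_V^{-1}j_+\mc{N}_n\to 0$ gives $\coker\phi_n=\mc{D}_X\otimes_{V^0\mc{D}_X}\Gr_V^{-1}(j_+\mc{N}_n)$. The key computation is that $\Gr_V^{-1}(j_+\mc{N}_n)\cong\Gr_V^{-1-cs_0}(j_+\mc{M})\otimes_{\mb{C}}\mb{C}[s]/(s-s_0)^n$, with $t$ acting by $0$ and $t\partial_t+1$ acting by $N_0+c(s-s_0)$, where $N_0$ is the nilpotent part of $t\partial_t+1+cs_0$ on $\Gr_V^{-1-cs_0}(j_+\mc{M})$ and satisfies $N_0^d=0$ for $d$ the maximal Jordan block size of the local monodromy of $\mc{M}$ around $D$ — crucially, $d$ depends only on $\mc{M}$ and $D$, not on $n$ or $s_0$. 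Since $N_0$ is the nilpotent part of an element central in $\Gr_V^0\mc{D}_X$, it commutes with the $V^0\mc{D}_X$-action, so on $\mc{D}_X\otimes_{V^0\mc{D}_X}\Gr_V^{-1}(j_+\mc{N}_n)$ any generator $w=1\otimes v$ (with $v$ a $V^0\mc{D}_X$-generator of $\Gr_V^{-1}(j_+\mc{N}_n)$, so $tv=0$) satisfies $(t\partial_t+1)w=0$ because $tw=0$, hence $(N_0+c(s-s_0))w=0$, and since $N_0$ and $s-s_0$ commute this forces $c^d(s-s_0)^d w=(-1)^dN_0^d w=0$; thus $(s-s_0)^d$ annihilates $\coker\phi_n$. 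For the kernel one applies Verdier duality: $\mb{D}$ interchanges $j_!$ and $j_*$ and carries $\phi_n$ to the canonical map for $\mb{D}\mc{N}_n=\mb{D}\mc{M}\otimes\nu_n^{\vee}$, so $\ker\phi_n$ for $\mc{N}_n$ is the Verdier dual of $\coker$ of the canonical map for $\mb{D}\mc{N}_n$, which by the same computation (with the same monodromy bound $d$ for $\mb{D}\mc{M}$) is killed by $(s-s_0)^d$.

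I expect the main obstacle to be the two compatibility checks underlying the key computation — that the $V$-filtration of $j_+\mc{N}_n$ along $D$ is, up to the real shift by $cs_0$, that of $j_+\mc{M}$ tensored with $\mb{C}[s]/(s-s_0)^n$, and that $t\partial_t$ acts as asserted on the graded pieces — together with carefully tracking the $(s-s_0)$-torsion through the cohomology sheaves appearing in the reduction across the proper map $\pi$. Once these are in place, the remaining steps are formal.
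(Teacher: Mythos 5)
Your proposal is correct and follows the paper's overall strategy: prove that a single power $(s-s_0)^N$, uniform in $n$, annihilates the kernel and cokernel of $j_!\frac{f^s\mc{M}[s]}{(s-s_0)^n} \to j_*\frac{f^s\mc{M}[s]}{(s-s_0)^n}$, reduce via a resolution with simple normal crossings boundary and cross one divisor at a time, and at each divisor use the fact that $t\partial_t$ on the relevant $V$-graded piece of the deformed module is a nilpotent operator coming from $\mc{M}$ (of order independent of $n$) plus $\mrm{ord}_D(f)\cdot(s-s_0)$. Where you diverge is the single-divisor step: you compute the cokernel directly as $\mc{D}_X\otimes_{V^0\mc{D}_X}\Gr_V^{-1}(j_+\mc{N}_n)$ and run a generator-level argument (since $t, t\partial_t \in V^0\mc{D}_X$ can be moved across the tensor, $t$ kills $1\otimes v$, hence so does $t\partial_t+1$, so $(s-s_0)$ acts on generators as $-c^{-1}N_0$ and $(s-s_0)^d$ kills the module), handling the kernel by Verdier duality; the paper instead treats kernel and cokernel symmetrically, noting both are supported on the divisor where $\Gr_V^{-1}$ is faithful, and identifies the induced map with $t\partial_t$ acting on $\Gr_V^0 j_*(\cdots)$, so one nilpotency argument covers both at once. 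Your variant is sound — the duality step works because $\mb{D}$ exchanges $j_!$ and $j_*$ and carries the deformation of $\mc{M}$ to the analogous deformation of $\mb{D}\mc{M}$ with the same monodromy bound — and the two compatibility checks you flag (the $cs_0$-shifted identification of the $V$-filtration and the twisted $t\partial_t$-action) are precisely the identities the paper itself invokes, already implicitly in the proof of Proposition~\ref{prop:hyperplanes}; likewise your handling of the proper pushforward, which at worst doubles the exponent, matches the paper's terse reduction. Your Mittag--Leffler deduction of injectivity and strictness of \eqref{eq:completed inclusion 5} from the uniform bound is a more explicit version of the paper's limit argument (which also normalizes $s_0=0$ to lighten notation) and is fine.
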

\begin{proof}
Let us assume for notational simplicity that $s_0 = 0$. We claim that there exists $N$ such that the kernel and cokernel of
\[ j_!\frac{f^s\mc{M}[s]}{(s^n)} \to j_*\frac{f^s\mc{M}[s]}{(s^n)} \]
are annihilated by $s^N$ for all $n$. The asserted property of the cokernel of \eqref{eq:completed inclusion 5} is then clear, and injectivity follows since the morphism
\[ s^N \colon j_!\frac{f^s\mc{M}[s]}{(s^n)} \to j_!\frac{f^s\mc{M}[s]}{(s^{n + N})}(-N)\]
is injective. Moreover, \eqref{eq:completed inclusion 5} is strict since it is the limit of a tower of morphisms of mixed Hodge modules.

To prove the claim, we may reduce as in the proof of Proposition \ref{prop:hyperplanes} to the case where $\mc{M}$ is the intermediate extension of a local system on $Q' $, where $Q' \subset Q \subset X$ are open subsets with normal crossings boundaries. Using the notation from the proof of Proposition \ref{prop:hyperplanes}, it is enough to prove that for all $I \subset \{1, \ldots, m\}$, the kernel and cokernel of
\begin{equation} \label{eq:completed inclusion 2}
\phi_I j_!\frac{f^s\mc{M}[s]}{(s^n)} \to \phi_I j_*\frac{f^s\mc{M}[s]}{(s^n)}
\end{equation}
are annihilated by $s^N$ for some $N$. But \eqref{eq:completed inclusion 2} may be identified with the morphism
\begin{equation} \label{eq:completed inclusion 3}
\prod_{i \in I \cap I_0} (t_i\partial_{t_i} + \mrm{ord}_{D_i}(f) s) \colon \Gr_{V_I}^{0, \ldots, 0}j_*\mc{M} \otimes \frac{\mb{C}[s]}{(s^n)} \to \Gr_{V_I}^{0, \ldots, 0}j_*\mc{M} \otimes \frac{\mb{C}[s]}{(s^n)}.
\end{equation}
Since the operators $t_i\partial_{t_i}$ are nilpotent and $\mrm{ord}_{D_i}(f) > 0$, the kernel and cokernel of \eqref{eq:completed inclusion 3} are indeed annihilated by $s^N$ for some $N$ independent of $n$.
\end{proof}

\begin{proof}[Proof of Lemma \ref{lem:semi-continuity inclusion}]
Let us first consider the case where $f$ is a boundary equation for $Q$. Since the claim is local on $X$, we may assume without loss of generality that $f$ lifts to a regular function $f \colon X \to \mb{C}$. Using the standard trick of embedding via the graph of $f$, we may reduce to the case where $X = X' \times \mb{C}$, $Q = X' \times \mb{C}^\times$ and $f = t$ is the coordinate on $\mb{C}$ for some smooth variety $X'$.

In this setting, the formulas \eqref{eq:shriek hodge} and \eqref{eq:star hodge} give
\[ j_!^{(s_0)} f^s\mc{M}[s] = \mc{D}_X \otimes_{V^0\mc{D}_X} f^{s - s_0}(V^{>-1}j_+f^{s_0}\mc{M})[s] = \mc{D}_X \otimes_{V^0\mc{D}_X}f^s(V^{>-1 - s_0}j_+\mc{M})[s] \]
and
\[ j_*^{(s_0)} f^s\mc{M}[s] = \mc{D}_X \otimes_{V^0\mc{D}_X}f^s(V^{\geq -1 - s_0}j_+\mc{M})[s] \]
as filtered $\mc{D}$-modules. The semi-continuity properties \eqref{eq:semi-continuity inclusion shriek} and \eqref{eq:semi-continuity inclusion star} are now clear, and also
\[ j_+ f^s\mc{M}[s] = \varinjlim_{s_0 \in \mb{R}} j_!^{(s_0)}f^s\mc{M}[s] = \varinjlim_{s_0 \in \mb{R}} j_*^{(s_0)}f^s\mc{M}[s].\]
It therefore remains to prove that the maps
\begin{equation} \label{eq:semi-continuity inclusion 4}
j_!^{(s_0)} f^s\mc{M}[s] \to j_!^{(s_1)} f^s\mc{M}[s]
\end{equation}
induced by the inclusions of the $V$-filtrations for $s_0 < s_1$ are strict and injective. But by the semi-continuity properties noted above, these are all finite compositions of filtered isomorphisms and maps of the form
\begin{equation} \label{eq:semi-continuity inclusion 2}
j_!^{(s_0)}f^s\mc{M}[s] \to j_*^{(s_0)}f^s\mc{M}[s].
\end{equation}
Since \eqref{eq:semi-continuity inclusion 2} is strict injective by Lemma \ref{lem:completed inclusion}, so is \eqref{eq:semi-continuity inclusion 4}, and the lemma is proved in the case where $f$ is a genuine boundary equation.

Let us now consider the general case. Using Proposition \ref{prop:positivity}, we write $f$ as a product of positive real powers of boundary equations. By induction on the number of boundary equations required, it suffices to prove the lemma for $f = f_1 f_2$ under the assumption that it holds for $f_1$ and $f_2$ separately. For $a_1, a_2 \in \mb{R}$, we may form the two-variable version of our construction, to define,
\[ j_!^{(a_1, a_2)} f_1^{s_1}f_2^{s_2} \mc{M}[s_1, s_2] \quad \text{and} \quad j_*^{(a_1, a_2)} f_1^{s_1}f_2^{s_2}\mc{M}[s_1, s_2]\]
with
\[ F_p j_!^{(a_1, a_2)} f_1^{s_1}f_2^{s_2} \mc{M}[s_1, s_2] := \varprojlim_{m, n} F_p j_!\left(\frac{f^{s_1}f^{s_2}\mc{M}[s_1, s_2]}{((s_1 - a_1)^m, (s_2 - a_2)^n)}\right)\]
and similarly for $j_*$. Applying the induction hypotheses to $f_1$ and $f_2$, we deduce that the maps
\[ j_!^{(a_1, a_2)} f_1^{s_1}f_2^{s_2} \mc{M}[s_1, s_2] \to j_*^{(a_1, a_2)} f_1^{s_1}f_2^{s_2} \mc{M}[s_1, s_2] \to j_+ f_1^{s_1}f_2^{s_2}\mc{M}[s_1, s_2]\]
are injective and strict, and that, for any fixed $a \in \mb{R}$, we have
\[ j_+ f_1^{s_1}f_2^{s_2}\mc{M}[s_1, s_2] = \bigcup_{a_1 \in \mb{R}} j_!^{(a_1, a)} f_1^{s_1}f_2^{s_2} \mc{M}[s_1, s_2] = \bigcup_{a_2 \in \mb{R}} j_!^{(a, a_2)} f_1^{s_1}f_2^{s_2}\mc{M}[s_1, s_2].\]
Taking the quotient setting $s_1 = s_2 = s$, we deduce that
\begin{equation} \label{eq:semi-continuity inclusion 3}
 j_+f^s\mc{M}[s] = \varinjlim_{a \in \mb{R}} j_!^{(a)}f^s\mc{M}[s] = \varinjlim_{a \in \mb{R}} j_*^{(a)}f^s\mc{M}[s];
\end{equation}
here, for example, the morphisms
\[
 j_!^{(a)}f^s\mc{M}[s] \to j_!^{(b)}f^s\mc{M}[s]
\]
for $a < b$ in the first colimit come from taking the quotient $s_1 = s_2 = s$ of the inclusion
\[ j_!^{(a, a)}f_1^{s_1}f_2^{s_2}\mc{M}[s_1, s_2] \subset j_!^{(b, b)}f_1^{s_1}f_2^{s_2} \mc{M}[s_1, s_2].\]
Now, semi-continuity for $f_1$ and $f_2$ imply that \eqref{eq:semi-continuity inclusion shriek} and \eqref{eq:semi-continuity inclusion star} hold also for $j_!^{(s_0)}f^s\mc{M}[s]$ and $j_*^{(s_0)}f^s\mc{M}[s]$ with respect to the maps constructed in this way; together with \eqref{eq:semi-continuity inclusion 3}, this implies that the inclusions \eqref{eq:semi-continuity inclusion 1} are injective and strict as argued above, so we are done.
\end{proof}

\begin{proof}[Proof of Theorem \ref{thm:semi-continuity}]
Let us prove the semi-continuity statement for $j_!$; the proof for $j_*$ is identical. We have
\[ j_!f^{s_0}\mc{M} = \coker((s - s_0) \colon j_!^{(s_0)}f^s\mc{M}[s] \{1\} \to j_!^{(s_0)}f^s\mc{M}[s])\]
as filtered $\mc{D}$-modules, and hence
\[ \Gr^F j_!f^{s_0}\mc{M} = \coker(s \colon \Gr^F j_!^{(s_0)}f^s\mc{M}[s]\{1\} \to \Gr^Fj_!^{(s_0)}f^s\mc{M}[s])\]
as coherent sheaves on $T^*X$. But now, $j_!^{(s_0 - \epsilon)} f^s\mc{M}[s] = j_!^{(s_0)} f^s\mc{M}[s]$ by Lemma \ref{lem:semi-continuity inclusion}, so we deduce
\[  \Gr^Fj_!f^{s_0 - \epsilon}\mc{M} \cong \Gr^F j_!f^{s_0}\mc{M} \quad \text{for $0 < \epsilon \ll 1$}\]
as claimed.
\end{proof}

\subsection{Wall crossing for polarizations} \label{subsec:jantzen}

Theorem \ref{thm:semi-continuity} tells us that, for $f \in \Gamma_\mb{R}(Q)_+$ positive, the associated graded $\Gr^F j_{!*}f^s\mc{M}$ does not vary with $s$ as long as $j_!f^s\mc{M} = j_*f^s\mc{M}$, and that the wall-crossing behavior is controlled by the morphism $j_!f^s\mc{M} \to j_*f^s\mc{M}$. To prove our unitarity criterion, we will also need to understand the analogous wall-crossing behavior for polarizations. This is accomplished using the standard yoga of Jantzen filtrations as in, for example, \cite[\S 3.3]{DV1}. We recall how this works below.

Let $f \in \Gamma(Q, \mc{O}^\times) \otimes \mb{R}$ be positive and fix $\mc{M} \in \mhm(Q)$. As in \cite[\S 3.3]{DV1}, we assume $\mc{M}$ pure of weight $w$ and let
\[ S \colon \mc{M} \otimes \overline{\mc{M}} \to \Db_Q\]
be a polarization. We have an induced $\mb{C}[s]$-bilinear pairing
\[ S \colon f^s\mc{M}[s] \otimes \overline{f^s\mc{M}[s]} \to \Db_Q[s]^{\mathit{hol}}\]
given by
\[ S(f^sm, \overline{f^sm'}) = |f|^{2s} S(m, \overline{m'}).\]
Here $\Db_Q[s]^{\mathit{hol}}$ denotes the sheaf of distributions on $Q$ depending holomorphically on $s$. This extends uniquely to a pairing
\begin{equation} \label{eq:jantzen 2}
 S \colon j_+f^s\mc{M}[s] \otimes \overline{j_+f^s\mc{M}[s]} \to \Db_X(s)^{\mathit{mer}},
\end{equation}
where $\Db_X(s)^{\mathit{mer}}$ denotes the sheaf of distributions on $X$ depending meromorphically on $s$ (cf., e.g., \cite[Proposition 12.5.4]{SS}). If $s_0 \in \mb{R}$ is such that $j_!f^{s_0}\mc{M} \to j_*f^{s_0}\mc{M}$ is an isomorphism, then \eqref{eq:jantzen 2} has no poles at $s = s_0$, and evaluating at $s=s_0$ gives a polarization on $j_{!*}f^{s_0}\mc{M} = j_*f^{s_0}\mc{M} = j_+f^{s_0}\mc{M}$.

To analyze the behavior of \eqref{eq:jantzen 2} near a wall, which we may suppose for convenience to be $s = 0$, we proceed as follows. Taking the formal completion, we obtain a pairing
\[ j_*f^s\mc{M}[[s]] \otimes \overline{j_*f^s\mc{M}[[s]]} \to \Db_X((s)),\]
which, upon restriction to $j_!f^s\mc{M}[[s]] \subset j_*f^s\mc{M}[[s]]$, gives a pairing
\begin{equation} \label{eq:jantzen 3}
j_!f^s\mc{M}[[s]] \otimes \overline{j_*f^s\mc{M}[[s]]} \to \Db_X[[s]]
\end{equation}
with no poles. Setting $s = 0$, \eqref{eq:jantzen 3} becomes the unique (perfect) pairing between $j_!\mc{M}$ and $j_*\mc{M}$ extending the polarization $S$. Now, by Lemma \ref{lem:completed inclusion}, the morphism
\[ j_! f^s\mc{M}[[s]] \to j_*f^s\mc{M}[[s]] \]
is an isomorphism after inverting $s$. So by standard linear algebra, we obtain (finite) Jantzen filtrations on $j_!\mc{M}$ and $j_*\mc{M}$ given by
\[ J_{-n} j_!\mc{M} = (j_!f^s\mc{M}[[s]] \cap s^{n}j_*f^s\mc{M}[[s]])/(s) \]
and
\[ J_n j_*\mc{M} = (s^{-n}j_!f^s\mc{M}[[s]] \cap j_*f^s\mc{M}[[s]])/(s)\]
and isomorphisms
\[ s^n \colon \Gr^J_n j_*\mc{M} \overset{\sim}\to \Gr^J_{-n} j_!\mc{M}(-n).\]
The Jantzen filtrations on $j_!\mc{M}$ and $j_*\mc{M}$ are dual under $S$, so we obtain non-degenerate pairings (``Jantzen forms'')
\[ s^{-n} \Gr^J_{-n}(S) \colon \Gr^J_{-n} j_!\mc{M} \otimes \overline{\Gr^J_{-n} j_!\mc{M}} \to \Db_X \]
for all $n$.

\begin{thm} \label{thm:jantzen}
For all $n$, $\Gr^J_{-n}j_!\mc{M}$ is a pure Hodge module of weight $w - n$, and the Jantzen form $s^{-n} \Gr^J_{-n}(S)$ is a polarization. In particular, $J_n j_!\mc{M} = W_{w + n} j_!\mc{M}$.
\end{thm}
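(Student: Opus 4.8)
\emph{Proof proposal.} The plan is to show, after reducing to the case of a smooth divisor, that the Jantzen filtration on $j_!\mc{M}$ coincides with the weight filtration and that the Jantzen forms coincide with the polarizations that Saito's theory attaches to the graded pieces of the nearby cycles. Granting the first of these, the identity $J_n j_!\mc{M} = W_{w+n}j_!\mc{M}$ is automatic: a finite increasing filtration whose $(-n)$-th graded is pure of weight $w-n$ is, after the shift $k = w - n$, a filtration with $k$-th graded pure of weight $k$, and such a filtration is unique in $\mhm(X)$ (as it is for mixed Hodge structures), by the standard d\'evissage using the vanishing of morphisms between pure objects of distinct weights.

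The first step is to reduce to the case where $f$ is a coordinate on a $\mb{C}$-factor. By Proposition \ref{prop:positivity}, $f$ is a product of positive real powers of boundary equations, and an induction on the number of factors --- using the multivariable version of the deformation exactly as in the proof of Lemma \ref{lem:semi-continuity inclusion} --- reduces us to a single boundary equation; here the pairing $S$ is carried along by the meromorphic continuation of $|f|^{2s}$ (cf.\ \cite[Proposition 12.5.4]{SS}), so the bookkeeping parallels that lemma. For a single boundary equation, the graph trick reduces further to $X = X' \times \mb{C}$, $Q = X' \times \mb{C}^\times$, $D = X' \times \{0\}$ and $f = t$, just as in the proof of Lemma \ref{lem:semi-continuity inclusion}.

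In this local model I would compute the Jantzen filtration through the $V$-filtration. By \eqref{eq:V-filtration extensions}--\eqref{eq:star hodge}, the lattices $j_!f^s\mc{M}[[s]]$ and $j_*f^s\mc{M}[[s]]$ are $V^{>-1}$ and $V^{\ge -1}$ of the naive pushforward, and the computation in the proof of Lemma \ref{lem:completed inclusion} identifies the discrepancy between them near $s=0$ with the action of the nilpotent operator $t\partial_t + s$ on $\Gr^0_V$ of the pushforward, i.e.\ on the unipotent nearby cycles $\psi_{t,1}\mc{M}$ tensored with $\mb{C}[[s]]$. Unwinding the definition of the Jantzen filtration as the filtration by $s$-adic order of lifts, one finds that $\Gr^J_0 j_!\mc{M} = j_{!*}\mc{M}$ while, for $n \ge 1$, $\Gr^J_{-n}j_!\mc{M}$ is identified via \eqref{eq:V-filtration extensions} with a primitive part of the monodromy-weight-graded of $N = t\partial_t$ on $\psi_{t,1}\mc{M}$, and that $s^{-n}\Gr^J_{-n}(S)$ is the pairing induced by $S$ together with the appropriate power of $N$. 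Saito's structure theorem for the nearby cycles of polarizable Hodge modules \cite{S2} (see also \cite{SS}) then supplies exactly what is needed: the monodromy weight filtration of $N$ on $\psi_{t,1}\mc{M}$ is its weight filtration, its graded pieces are pure of the expected weights, and the forms induced by $S$ and powers of $N$ on the primitive parts are polarizations. Transporting these facts back along the identifications above yields the purity of $\Gr^J_{-n}j_!\mc{M}$ and the polarization property of the Jantzen form.

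The step I expect to be the main obstacle is the precise matching of conventions in the last paragraph: one must check that the pairing obtained by meromorphically continuing $|f|^{2s}S$, restricting it to $j_!f^s\mc{M}[[s]] \times \overline{j_*f^s\mc{M}[[s]]}$, and dividing by $s^n$, agrees --- with the correct sign and Tate twist --- with Saito's polarization on the primitive part of $\Gr^M\psi_{t,1}\mc{M}$. This is precisely where the \emph{positivity} of the Jantzen form, rather than the mere non-degeneracy already furnished by Lemma \ref{lem:completed inclusion}, genuinely enters, and it is the analytic core of the theorem. The required verification is in essence the computation carried out in \cite[\S 3.3]{DV1} for twisted $\mc{D}$-modules, which I would adapt here while tracking the Hodge and weight shifts dictated by Notation \ref{notation:functors} and the monodromic conventions of \S\ref{subsec:monodromic}.
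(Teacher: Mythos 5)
Your sketch of the single-boundary-equation case is essentially the argument of \cite[Theorem 3.2]{DV1}, which the paper simply cites for that case; the genuine gap is in your first reduction. You propose to reduce a general positive $f = \prod_i f_i^{a_i}$ to a single boundary equation by induction on the number of factors, ``using the multivariable version of the deformation exactly as in the proof of Lemma \ref{lem:semi-continuity inclusion}''. That trick works in Lemma \ref{lem:semi-continuity inclusion} because the statements being propagated (strict injectivity of the lattice inclusions, the semicontinuity identities, exhaustion) are statements about the lattices $j_!^{(a_1,a_2)}$, $j_*^{(a_1,a_2)}$ that survive the specialization $s_1 = s_2 = s$. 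The Jantzen statement does not pass through this specialization: $J_\bullet$ is defined by $s$-adic orders along the \emph{diagonal} family $f_1^{a_1 s}f_2^{a_2 s}\cdots$, and its graded pieces are governed by the single nilpotent operator $\mrm{N}=s$ on $\pi_f^0(\mc{M})$, which in the normal crossings picture is $a_1\mrm{N}_1 + a_2\mrm{N}_2+\cdots$. Knowing Theorem \ref{thm:jantzen} for each $f_i$ separately gives control of the monodromy weight filtrations of the individual $\mrm{N}_i$, but purity of $\Gr^J$ and positivity of the Jantzen forms for the diagonal family amount to the assertion that the monodromy weight filtration of $\sum_i a_i\mrm{N}_i$ (with positive real $a_i$) is the weight filtration and that its primitive parts are polarized. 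This is a nilpotent-orbit/Kashiwara-type theorem, not a formal consequence of the one-variable cases, and it is exactly where the paper argues differently: after reducing (via \cite[\S 7.3]{DV1}, following \cite{BB2}) to the statement about the Beilinson functor $\pi_f^0(\mc{M})$ (Lemma \ref{lem:beilinson polarization}), it uses \cite[Theorem 3.21]{S2}, resolution of singularities and \cite[Proposition 1]{S1} to place itself in the situation of a polarized variation of Hodge structure on the complement of a simple normal crossings divisor, where Saito's proof of \cite[Theorem 3.20]{S2} applies once \cite[Proposition 3.19]{S2} is extended from natural-number to positive real coefficients.

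Two smaller points. For \emph{rational} exponents your reduction can be repaired via Proposition \ref{prop:positivity} \eqref{itm:positivity 1}: some power of $f$ is a genuine boundary equation, and reparametrizing $s$ reduces to \cite[Theorem 3.2]{DV1}; this is the content of the remark following Theorem \ref{thm:jantzen}. So the case your induction would need to supply is precisely the irrational one, i.e.\ the new content of the theorem, and contrary to your last paragraph the main obstacle is not the matching of signs and Tate twists in the local model but the multi-factor deformation itself. Separately, your direct $V$-filtration computation in the local model is a legitimate substitute for the paper's route through $\pi_f^0$ and \cite[\S 7]{DV1}, but it only ever addresses the single-equation case.
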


Theorem \ref{thm:jantzen} is a version of Beilinson and Bernstein's theorem on Jantzen filtrations and weight filtrations \cite{BB2} in the context of polarized Hodge modules. In the case where $f$ is a boundary equation for $Q$, Theorem \ref{thm:jantzen} is \cite[Theorem 3.2]{DV1}. We briefly outline here the proof given in \cite[\S 7]{DV1} and explain which modifications are needed for general $f \in \Gamma_\mb{R}(Q)_+$.

First, in \cite[\S 7.1]{DV1}, we wrote down the Beilinson functors
\[ \pi_f^a(\mc{M}) := \coker(j_!f^s\mc{M}[[s]](a) \xrightarrow{s^a} j_*f^s\mc{M}[[s]]) \in \mhm(X)\]
for $\mc{M} \in \mhm(Q)$ and $a \geq 0$, and their nilpotent endomorphisms
\[ \mrm{N} : = s \colon \pi_f^a(\mc{M}) \to \pi_f^a(\mc{M})(-1).\]
For $(\mc{M}, S)$ polarized of weight $w$, $\pi_f^a(\mc{M})$ is also equipped with a non-degenerate Hermitian form
\[ \pi_f^a(S) \colon \pi_f^a(\mc{M}) \otimes \overline{\pi_f^a(\mc{M})} \to \Db_X\]
given by
\[ \pi_f^a(S)(f^sm, \overline{f^sm'}) = \Res_{s = 0}s^{-a}S(f^sm, \overline{f^sm'}).\]
These constructions all make sense for general $f \in \Gamma(Q, \mc{O}^\times) \otimes \mb{R}$ positive.

Next, in \cite[\S 7.3]{DV1}, we adapted the arguments of \cite{BB2} to reduce Theorem \ref{thm:jantzen} to the following statement about $\pi_f^0(\mc{M})$ (cf., \cite[Corollary 7.3]{DV1}):

\begin{lem} \label{lem:beilinson polarization}
For all $n$, the morphism
\[ \mrm{N}^n \colon \Gr^W_{w + 1 + n} \pi_f^0(\mc{M}) \to \Gr^W_{w + 1 - n} \pi_f^0(\mc{M})(-n) \]
is an isomorphism, and the form
\[ (-1)^{n - 1} \pi_f^a(S)(-, \overline{\mrm{N}^n(-)}) \]
is a polarization on the Lefschetz primitive part
\[ \mrm{P}_n\pi_f^0(\mc{M}) := \ker (\mrm{N}^{n + 1} \colon \Gr^W_{w + 1 + n} \pi_f^0(\mc{M}) \to \Gr^W_{w - 1 - n} \pi_f^0(\mc{M})(-n - 1)).\]
\end{lem}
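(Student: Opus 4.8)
The plan is to bootstrap from the boundary equation case, which is precisely \cite[Corollary 7.3]{DV1}, to a general positive $f \in \Gamma_\mb{R}(Q)_+$ by introducing a several-variable version of the Beilinson functor $\pi^0_f$ and invoking the structure theory of polarized nilpotent orbits in several commuting variables. To set up, I would first reduce to a convenient local model exactly as in the proof of Lemma \ref{lem:semi-continuity inclusion}: by Proposition \ref{prop:positivity}\eqref{itm:positivity 2} and working locally on $X$, we may write $f = \prod_{i=1}^r f_i^{c_i}$ with the $f_i$ boundary equations for $Q$ and $c_i > 0$, and then, via the graph trick on $(f_1, \dots, f_r)$, assume $X = X' \times \mb{C}^r$, $Q = X' \times (\mb{C}^\times)^r$ with coordinates $t_1, \dots, t_r$ on $\mb{C}^r$, $f = t_1^{c_1}\cdots t_r^{c_r}$, and $\mc{M}$ a polarized mixed Hodge module of weight $w$. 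We are implicitly using that rational $f$ reduce directly to the boundary equation case: a power $f^m$ is then a boundary equation by Proposition \ref{prop:positivity}\eqref{itm:positivity 1}, and $\pi^0_{f^m}(\mc{M}) \cong \pi^0_f(\mc{M})$ up to rescaling $\mrm{N}$ by the positive constant $m$, which affects neither the hard Lefschetz isomorphisms nor the definiteness of the Lefschetz forms; so the genuine content lies in the irrational exponents.

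Next I would run the $r$-variable analogue of the constructions in \S\ref{subsec:jantzen} and \cite[\S 7]{DV1}. Forming the family $t_1^{s_1}\cdots t_r^{s_r}\mc{M}[s_1, \dots, s_r]$ and its completed extensions $j_! t^{\vec s}\mc{M}[[\vec s]] \hookrightarrow j_* t^{\vec s}\mc{M}[[\vec s]]$ (injective with cokernel annihilated by a power of each $s_i$, by the argument of Lemma \ref{lem:completed inclusion} applied one component at a time), one obtains a mixed Hodge module $\Pi := \coker(j_! t^{\vec s}\mc{M}[[\vec s]] \to j_* t^{\vec s}\mc{M}[[\vec s]])$ equipped with commuting nilpotent endomorphisms $\mrm{N}_i := s_i$ and a non-degenerate Hermitian residue form induced by $S$. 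Iterating \cite[Corollary 7.3]{DV1} one boundary component at a time shows that $(\Pi, \mrm{N}_1, \dots, \mrm{N}_r)$ together with this form is a polarized several-variable nilpotent orbit in the sense of Cattani-Kaplan-Schmid. The several-variable $\mrm{SL}_2$-orbit theorem (equivalently, the structure theory of polarized Hodge-Lefschetz modules in several commuting variables) then yields, for every $\vec c \in \mb{R}_{>0}^r$, that $\mrm{N} := \sum_i c_i \mrm{N}_i$ has relative monodromy weight filtration equal to $W_\bullet\Pi$ up to the appropriate central shift, that $\mrm{N}^n$ induces isomorphisms $\Gr^W_{w+1+n}\Pi \xrightarrow{\sim} \Gr^W_{w+1-n}\Pi(-n)$, and that $(-1)^{n-1}$ times the residue form paired against $\mrm{N}^n$ polarizes the Lefschetz primitive parts. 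It then remains to identify $\pi^0_f(\mc{M})$ with the specialization of $\Pi$ along the substitution $s_i = c_i s$ — carried out, as in the proof of Lemma \ref{lem:semi-continuity inclusion}, by passing to the quotient $\mb{C}[[\vec s]] \to \mb{C}[[s]]$, $s_i \mapsto c_i s$ — which matches the intrinsic operator $\mrm{N} = s$ on $\pi^0_f(\mc{M})$ with $\sum_i c_i \mrm{N}_i$ and $\pi^0_f(S)$ with the residue form on $\Pi$. This gives exactly the two assertions of the lemma, and undoing the local reductions completes the argument.

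I expect the principal obstacle to be the passage from the iterated one-variable construction to a genuine polarized several-variable nilpotent orbit on $\Pi$, and the precise invocation of the Cattani-Kaplan-Schmid $\mrm{SL}_2$-orbit theorem or its Hodge-module reformulation: one must verify that iterating the Beilinson gluing really produces an object satisfying the hypotheses of that theorem, that the residue forms arising at each stage are the correct Lefschetz forms, and that the positive-combination conclusion transports correctly under the diagonal specialization $s_i \mapsto c_i s$ — in particular that this specialization is compatible with the $V$-filtrations and completions defining $\pi^0_f$. The remaining reduction steps and the bookkeeping with the weight shifts are routine, but must be tracked carefully so as to land on the central weight $w+1$ appearing in the statement.
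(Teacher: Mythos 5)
Your overall strategy---a multi-parameter deformation with commuting nilpotent operators $\mrm{N}_i$ and then the several-variable theory of polarized Hodge--Lefschetz structures applied to the positive real combination $\sum_i c_i \mrm{N}_i$---is in the right spirit, and the paper's proof does ultimately rest on that theory. But your bridging claim, that iterating \cite[Corollary 7.3]{DV1} one boundary component at a time shows that $(\Pi, \mrm{N}_1, \dots, \mrm{N}_r)$ with its residue form is a polarized several-variable nilpotent orbit in the sense of Cattani--Kaplan--Schmid, is a genuine gap, and it is exactly where the content of the lemma lies. The one-variable result, applied repeatedly, only gives statements about each $\mrm{N}_i$ relative to a successively defined (relative monodromy) weight filtration; the multi-variable assertions you need---that the weight filtration of $\sum_i c_i\mrm{N}_i$ is independent of $\vec{c}$ in the open cone $\mb{R}_{>0}^r$, hard Lefschetz for it, and definiteness of the induced primitive forms---do not follow formally from the iteration, and are precisely what the CKS/Kashiwara theory provides once its hypotheses are verified. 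Moreover, the CKS nilpotent orbit and $\mrm{SL}_2$-orbit theorems are statements about degenerating polarized \emph{variations} of Hodge structure, so they cannot be invoked directly for your $\Pi$ when $\mc{M}$ is an arbitrary polarized Hodge module on $X' \times (\mb{C}^\times)^r$ (arbitrary support and singularities); some reduction to a variation on the complement of a normal crossings divisor is unavoidable, and your graph-trick local model does not supply it.

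That reduction is exactly how the paper argues: by \cite[Theorem 3.21]{S2} one writes $(\mc{M}, S) = (j'_{!*}\mc{M}', S')$ with $\mc{M}'$ a polarized variation of Hodge structure on a smooth locally closed $Q'$, resolves so that $Q'$ becomes the complement of a simple normal crossings divisor, and uses \cite[Proposition 1]{S1} to replace $(Q, \mc{M}, X)$ by $(Q', \mc{M}', \tilde{Q}')$. In that setting Saito's analysis in \cite[\S 3]{S2} (the proof of Theorem 3.20 there, which rests on the nilpotent orbit theorem together with Kashiwara's Proposition 3.19) gives the lemma for monomial $f$ with positive integer exponents, and the only new ingredient for positive real exponents is that Kashiwara's proposition holds with positive real coefficients, whose proof works verbatim. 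Your aside that the rational case is harmless (rescaling $\mrm{N}$ by a positive constant) is fine, and the specialization $s_i \mapsto c_i s$ is a plausible bookkeeping step, but as written your plan still needs the VHS/SNC reduction before any several-variable machinery applies: the step you yourself flag as the ``principal obstacle'' is not a technical point to be smoothed over but the entire proof.
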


This reduction works word for word the same in our setting, so it remains to prove Lemma \ref{lem:beilinson polarization}.

In \cite[\S 7.2]{DV1}, we proved Lemma \ref{lem:beilinson polarization} by relating the Beilinson functor $\pi_f^0(\mc{M})$ to the unipotent nearby cycles functor $\psi_f^{\mathit{un}} \mc{M}$, for which the analogue of Lemma \ref{lem:beilinson polarization} is one of the axioms for polarized Hodge modules. For arbitrary positive $f \in \Gamma(Q, \mc{O}^\times) \otimes \mb{R}$, we instead argue as follows.

First, by \cite[Theorem 3.21]{S2}, we can write $(\mc{M}, S) = (j'_{!*}\mc{M}', S')$ for some locally closed embedding $j' \colon Q' \hookrightarrow Q$, with $Q'$ smooth, and some polarized variation of Hodge structure $(\mc{M}', S')$ on $Q'$. Next, take a resolution $g \colon \tilde{Q}' \to \bar{Q}'$ for the closure of $Q'$ in $X$ such that $Q' \cong g^{-1}(Q') \subset \tilde{Q}'$ is the complement of a divisor with simple normal crossings. By \cite[Proposition 1]{S1}, it is enough to prove the lemma with $(Q', \mc{M}', \tilde{Q}')$ in place of $(Q, \mc{M}, X)$. So we may as well assume that $Q \subset X$ is the complement of a divisor with simple normal crossings and that $\mc{M}$ is a polarized variation of Hodge structure on $Q$.

In this setting, the analysis of \cite[\S 3]{S2} applies, in particular the proof of \cite[Theorem 3.20]{S2}, which uses the nilpotent orbit theorem and Kashiwara's result \cite[Proposition 3.19]{S2} to prove Lemma \ref{lem:beilinson polarization} in the case where $f$ is a product of positive integer powers of the monomials defining the components of the normal crossings divisor $X - Q$. In our generality, $f$ may be a product of positive real powers of these monomials; all we need to generalize Saito's argument to this setting is a version of \cite[Proposition 3.19]{S2} in which the coefficients are positive reals instead of natural numbers. But the proof of this result works word for word in this generality, so we deduce that Lemma \ref{lem:beilinson polarization} does indeed hold. This completes the proof of Theorem \ref{thm:jantzen}.

\begin{rmk}
We will use Theorem \ref{thm:jantzen} in the proof of Theorem \ref{thm:intro unitarity criterion} on unitary representations of real groups. For this purpose it is possible, if desired, to make do with \cite[Theorem 3.2]{DV1}: using standard continuity arguments, one can reduce Theorem \ref{thm:intro unitarity criterion} to the case of \emph{rational} infinitesimal character $\lambda \in \mf{h}^*_\mb{Q}$. In this setting, the deformations of \S\ref{sec:pf of unitarity} are all obtained from genuine boundary equations by Proposition \ref{prop:positivity}, so \cite[Theorem 3.2]{DV1} applies.
\end{rmk}

\subsection{The monodromic and equivariant settings} \label{subsec:monodromic deformation}

In this subsection, we explain how the results above extend to the setting of twisted and monodromic mixed Hodge modules. The main observations are that our deformations change the twisting parameter in a simple way and that the isomorphisms of Theorem \ref{thm:semi-continuity} descend to isomorphisms between the associated gradeds of the corresponding $\tilde{\mc{D}}$-modules.

Let us first define the appropriate deformation spaces. Suppose we are given a Cartesian diagram
\[
\begin{tikzcd}
\tilde{Q} \ar[r] \ar[d, "\pi_Q"] & \tilde{X} \ar[d, "\pi_X"] \\
Q \ar[r] & X,
\end{tikzcd}
\]
where the vertical arrows are principal $H$-bundles for some torus $H$ and the top arrow is $H$-equivariant. Define
\[ \Gamma(\tilde Q, \mc{O}^\times)^\mon = \coprod_{\mu \in \mb{X}^*(H)} \Gamma(\tilde{Q}, \mc{O}^\times)_\mu \]
to be the group of non-vanishing semi-invariant functions on $\tilde Q$, where $\Gamma(\tilde{Q}, \mc{O}^\times)_\mu$ denotes the set of non-vanishing functions lying in the $\mu$-weight space $\Gamma(\tilde{Q}, \mc{O})_\mu$. Note that
\[ \Gamma(\tilde Q, \mc{O})_\mu = \Gamma(Q, \mc{O}_Q(\mu)) \]
is the space of sections of a line bundle $\mc{O}_Q(\mu) = \pi_{Q\bigcdot}(\mc{O}_{\tilde Q} \otimes \mb{C}_{-\mu})^H$ on $Q$. We set
\[ \Gamma_\mb{R}(\tilde Q)^\mon = \frac{\Gamma(\tilde Q, \mc{O}_{\tilde Q}^\times)^\mon}{\mb{C}^\times} \otimes \mb{R} \subset \Gamma_\mb{R}(\tilde{Q}).\]
We have a canonical linear map
\[\varphi \colon \Gamma_\mb{R}(\tilde Q)^\mon \to \mf{h}^*_\mb{R}\]
sending $f = f_1^{s_1} \cdots f_n^{s_n}$ to
\[ \varphi(f) = \sum_i s_i \mu_i \quad \text{for $f_i \in \Gamma(Q, \mc{O}_Q(\mu_i)^\times) = \Gamma(\tilde Q, \mc{O}^\times)_{\mu_i}$}.\] 

One easily checks that if $\mc{M} \in \mhm(\tilde{Q})$ is $\lambda$-monodromic then $f\mc{M}$ is naturally $(\lambda + \varphi(f))$-monodromic. Moreover, the following is clear from the proof of Theorem \ref{thm:semi-continuity}.

\begin{prop} \label{prop:monodromic semi-continuity}
If $\mc{M} \in \mhm_{\mon}(\tilde{Q})$ and $f \in \Gamma_\mb{R}(\tilde Q)^{\mon}$ is positive, then the isomorphisms of Theorem \ref{thm:semi-continuity} are equivariant under the weak $H$-actions. In particular, they descend to isomorphisms
\[ \Gr^F j_!f^{s - \epsilon} \mc{M} \cong \Gr^F j_!f^s\mc{M} \quad \text{and} \quad \Gr^F j_*f^{s + \epsilon}\mc{M} \cong \Gr^F j_*f^s\mc{M} \]
of $\Gr^F\tilde{\mc{D}}$-modules on $X$.
\end{prop}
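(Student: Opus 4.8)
The plan is simply to observe that every construction in the proof of Theorem \ref{thm:semi-continuity} is functorial, so that when $\mc{M} \in \mhm_\mon(\tilde Q)$ and $f \in \Gamma_\mb{R}(\tilde Q)^\mon$ is positive all the objects and morphisms occurring there carry compatible weak $H$-actions, and then to descend along \eqref{eq:monodromic descent}. In detail: for $f = f_1^{s_1}\cdots f_n^{s_n}$ with $f_i \in \Gamma(\tilde Q, \mc{O}^\times)_{\mu_i}$ semi-invariant, the auxiliary module $f^s\mc{M}[s]$ and its finite quotients $f^s\mc{M}[s]/(s - s_0)^n$ are naturally weakly $H$-equivariant, with the weak $\mf{h}$-action of \eqref{eq:weak difference} differing from the one on $\mc{M}$ by the scalar $s\varphi(f)$, as one reads off from the logarithmic derivatives of the $f_i$. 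In particular, each $f^s\mc{M}[s]/(s - s_0)^n$ is an honest $(\lambda + s_0\varphi(f))$-monodromic mixed Hodge module when $\mc{M}$ is $\lambda$-monodromic, so its Hodge and weight filtrations are $H$-equivariant by Proposition \ref{prop:monodromic mhm}\eqref{itm:monodromic mhm 1}. The operations used to build the objects of Lemmas \ref{lem:completed inclusion} and \ref{lem:semi-continuity inclusion} — naive filtered pushforward along $j$ (and the auxiliary $j_i$), the canonical $V$-filtrations along the boundary components (preserved by any automorphism, hence by $H$), cokernels, and the inverse and direct limits defining $j_!^{(s_0)}f^s\mc{M}[s]$, $j_*^{(s_0)}f^s\mc{M}[s]$ and $j_+f^s\mc{M}[s]$ — are all functorial, so these filtered $\mc{D}_{\tilde X}$-modules are weakly $H$-equivariant, the morphisms between them are $H$-equivariant, and the identifications \eqref{eq:semi-continuity inclusion shriek} and \eqref{eq:semi-continuity inclusion star} are equalities of $H$-stable subsheaves of $j_+f^s\mc{M}[s]$.

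Granting this, the isomorphisms of Theorem \ref{thm:semi-continuity} — assembled from the equalities $j_!^{(s_0-\epsilon)}f^s\mc{M}[s] = j_!^{(s_0)}f^s\mc{M}[s]$ together with the presentation $\Gr^F j_!f^{s_0}\mc{M} = \coker\bigl(s \colon \Gr^Fj_!^{(s_0)}f^s\mc{M}[s]\{1\} \to \Gr^Fj_!^{(s_0)}f^s\mc{M}[s]\bigr)$, and the analogous formulas for $j_*$ — are isomorphisms of weakly $H$-equivariant graded modules over $\Gr^F\mc{D}_{\tilde X}$. To conclude, recall from \eqref{eq:monodromic descent} that $\mc{N}\mapsto \pi_{X\bigcdot}(\mc{N})^H$ is an equivalence from weakly $H$-equivariant $\mc{D}_{\tilde X}$-modules to $\tilde{\mc{D}}$-modules on $X$, under which Hodge filtrations are transported by \eqref{eq:dtilde filtrations}. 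As $H$ is a torus, $\pi_X$ is affine and $(-)^H$ is exact, so $\pi_{X\bigcdot}(-)^H$ is exact; it therefore commutes with the limits defining the filtrations on $j_!^{(s_0)}f^s\mc{M}[s]$ and $j_*^{(s_0)}f^s\mc{M}[s]$ and with the formation of $\Gr^F$. Applying it to the isomorphisms above yields the asserted isomorphisms of $\Gr^F\tilde{\mc{D}}$-modules on $X$.

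There is no essential obstacle here: the proposition amounts to checking that no step in the proof of Theorem \ref{thm:semi-continuity} destroys weak $H$-equivariance, and each step is manifestly functorial. The only point needing a moment's attention is that $f^s\mc{M}[s]$ is a pro-object rather than an honest monodromic mixed Hodge module, so Proposition \ref{prop:monodromic mhm} must be applied to the finite quotients $f^s\mc{M}[s]/(s - s_0)^n$ and the conclusion passed through the defining limits; this is harmless because, for each fixed $p$, one has $F_p j_!^{(s_0)}f^s\mc{M}[s] = F_p j_!\bigl(f^s\mc{M}[s]/(s - s_0)^n\bigr)$ for $n \gg 0$, and likewise for $j_*$, while $\pi_{X\bigcdot}(-)^H$ is exact.
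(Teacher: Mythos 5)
Your proposal is correct and follows exactly the route the paper intends: the paper offers no separate argument, stating only that the equivariance is ``clear from the proof of Theorem \ref{thm:semi-continuity}'', and what you write out --- the weak $H$-action on $f^s\mc{M}[s]$ and its finite quotients (via Proposition \ref{prop:monodromic mhm} applied to the honest monodromic quotients), functoriality of every step in Lemmas \ref{lem:completed inclusion} and \ref{lem:semi-continuity inclusion}, and descent through the exact functor $\pi_{\bigcdot}(-)^H$ of \eqref{eq:monodromic descent} --- is precisely that implicit argument made explicit. No gaps; your attention to the pro-object issue (stabilization of $F_p$ for $n\gg 0$) is the right point to flag.
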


We conclude by noting that there is also an equivariant version of this theory. Suppose that an algebraic group $K$ acts compatibly on $\tilde{Q}$, $\tilde{X}$, $Q$, $X$ and $H$. We set
\[ \Gamma_\mb{R}^K(\tilde Q) = \frac{\Gamma(\tilde Q, \mc{O}_{\tilde Q}^\times)^K}{\mb{C}^\times} \otimes \mb{R} \subset \Gamma_\mb{R}(\tilde Q) \]
and
\[ \Gamma_\mb{R}^K(\tilde Q)^\mon = \Gamma_\mb{R}^K(\tilde{Q}) \cap \Gamma_\mb{R}(\tilde Q)^\mon.\]
Elements $f \in \Gamma_\mb{R}^K(\tilde Q)^\mon$ have the property that $\varphi(f) \in (\mf{h}^*_\mb{R})^K$, and if $\mc{M}$ is a $K$-equivariant monodromic mixed Hodge module, then so is $f\mc{M}$. In this setting, the isomorphisms of Proposition \ref{prop:monodromic semi-continuity} respect the action of $K$ by construction.

\section{The twisted and monodromic Kodaira vanishing theorems}
\label{sec:twisted kodaira}

In this section, we state and prove versions of Saito's Kodaira vanishing theorem for twisted and monodromic mixed Hodge modules.

The basic idea is that for a proper morphism $g \colon X \to Y$, we can push forward twisted or monodromic mixed Hodge modules from $X$ to $Y$ as long as the twisting parameter $\lambda$ is trivial (or, more generally, pulled back from $Y$), and this pushforward is given by a simple formula. When the twisting parameter is instead relatively ample, the simple formula still makes sense (at the graded level in the twisted case and the filtered level in the monodromic case) and satisfies a vanishing theorem. We treat the twisted setting in \S\ref{subsec:twisted kodaira} and the monodromic setting in \S\ref{subsec:monodromic kodaira}.

\subsection{Graded pushforwards and twisted Kodaira vanishing} \label{subsec:twisted kodaira}

We first recall (cf., e.g., \cite{laumon}) the interaction between proper pushforwards and associated gradeds for mixed Hodge modules. Let $g \colon X \to Y$ be a proper morphism of smooth varieties and let $\mc{M} \in \mhm(X)$. Recall from \S\ref{subsec:hodge pushforward} that the filtered complex underlying $g_*\mc{M}$ is given by
\begin{equation} \label{eq:hodge proper pushforward}
(g_*\mc{M}, F_\bullet) = \mrm{R}g_{\bigcdot}((\mc{D}_{Y\leftarrow X}, F_\bullet) \overset{\mrm{L}}\otimes_{(\mc{D}_X, F_\bullet)} (\mc{M}, F_\bullet)),
\end{equation}
where $\mc{D}_{Y \leftarrow X}$ is equipped with the filtration by order of differential operator, shifted to start in degree $\dim Y - \dim X$. Identifying $\Gr^F \mc{D}_X \cong \mc{O}_{T^*X}$ and $\Gr^F \mc{D}_Y \cong \mc{O}_{T^*Y}$, we have, canonically,
\[ \Gr^F \mc{D}_{Y \leftarrow X} \cong \mc{O}_{T^*Y \times_Y X}  \otimes \omega_{X/Y}\{\dim Y - \dim X\} \]
as a bimodule over $g^{-1}\mc{O}_{T^*Y}$ and $\mc{O}_{T^*X}$, where $\{\cdot\}$ denotes grading shift. We deduce the following.

\begin{prop} \label{prop:associated graded pushforward}
Let $g \colon X \to Y$ be a proper morphism between smooth varieties as above. Define
\[ \Gr(g)_* \colon \mrm{D}^b\coh^{\mb{G}_m}(T^*X) \to \mrm{D}^b\coh^{\mb{G}_m}(T^*Y) \]
by
\[ \Gr(g)_*\mc{F} = \mrm{R} p_{g\bigcdot} \mrm{L} q_g^{\bigcdot}(\mc{F} \otimes \omega_{X/Y})\{\dim Y - \dim X\},\]
where $q_g$ and $p_g$ are the morphisms
\[ T^*X \xleftarrow{q_g} T^*Y \times_Y X \xrightarrow{p_g} T^*Y.\]
Then we have a canonical isomorphism
\[ \Gr^F(g_*\mc{M}) \cong \Gr(g)_*\Gr^F\mc{M}  \]
for $\mc{M} \in \mhm(X)$.
\end{prop}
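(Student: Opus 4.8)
The plan is to run everything through the Rees/associated-graded formalism for filtered $\mc{D}$-modules, reducing the statement to two standard compatibilities of $\Gr^F$. By definition (legitimate since $g$ is proper), the filtered complex underlying $g_*\mc{M}$ is the naive pushforward \eqref{eq:hodge proper pushforward}, so the task is to show that $\Gr^F$ of this agrees with $\Gr(g)_*\Gr^F\mc{M}$. First, $\Gr^F$ commutes with $\mrm{R}g_{\bigcdot}$: in the Rees picture $\Gr^F(-)$ is the cofiber of multiplication by the Rees variable, and $\mrm{R}g_{\bigcdot}$ is linear over the Rees base and commutes with this cofiber (properness also ensures the output is coherent on $T^*Y$). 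Second, $\Gr^F$ commutes with $(\mc{D}_{Y\leftarrow X}, F)\overset{\mrm{L}}\otimes_{(\mc{D}_X, F)}(-)$: since the Hodge filtration is good, $(\mc{M}, F)$ admits a strict resolution $(\mc{P}^\bullet, F)$ by filtered $\mc{D}_X$-modules induced from locally free $\mc{O}_X$-modules; the tensor product is then computed termwise, and applying $\Gr^F$ gives $\Gr^F\mc{D}_{Y\leftarrow X}\otimes_{\Gr^F\mc{D}_X}\Gr^F\mc{P}^\bullet$ with $\Gr^F\mc{P}^\bullet$ a locally free resolution of $\Gr^F\mc{M}$ over $\Gr^F\mc{D}_X$. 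Combining these and inserting the formula $\Gr^F\mc{D}_{Y\leftarrow X}\cong \mc{O}_{T^*Y\times_Y X}\otimes\omega_{X/Y}\{\dim Y - \dim X\}$ established just above the proposition, one obtains
\[ \Gr^F(g_*\mc{M}) \cong \mrm{R}g_{\bigcdot}\!\bigl(\Gr^F\mc{D}_{Y\leftarrow X}\overset{\mrm{L}}\otimes_{\Gr^F\mc{D}_X}\Gr^F\mc{M}\bigr). \]

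It then remains to recognise the right-hand side as $\Gr(g)_*\Gr^F\mc{M}$. Identifying $\Gr^F\mc{D}_X$- and $\Gr^F\mc{D}_Y$-modules on $X$ and $Y$ with quasi-coherent sheaves on $T^*X$ and $T^*Y$ via $\pi_X$ and $\pi_Y$, the bimodule $\mc{O}_{T^*Y\times_Y X}\otimes\omega_{X/Y}\{\dim Y - \dim X\}$ is exactly the integral kernel for the correspondence $T^*X \xleftarrow{q_g} T^*Y\times_Y X \xrightarrow{p_g} T^*Y$: tensoring over $\Gr^F\mc{D}_X$ against it becomes $\mrm{L}q_g^{\bigcdot}$ (honestly derived, since the codifferential $q_g$ need not be flat, e.g.\ when $g$ is smooth it is a closed immersion of a subbundle), twisting by $\omega_{X/Y}$, and shifting the grading by $\dim Y - \dim X$; and $\mrm{R}g_{\bigcdot}$ on $X$ then matches $\mrm{R}p_{g\bigcdot}$ on $T^*Y\times_Y X$, which is proper since it is a base change of $g$. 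This produces precisely $\Gr^F(g_*\mc{M})\cong \mrm{R}p_{g\bigcdot}\mrm{L}q_g^{\bigcdot}(\Gr^F\mc{M}\otimes\omega_{X/Y})\{\dim Y - \dim X\} = \Gr(g)_*\Gr^F\mc{M}$, and one checks that all the identifications respect the $\mb{G}_m$-gradings.

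The only part that is not purely formal is this last translation: one has to match the description of $\Gr^F\mc{D}_{Y\leftarrow X}$ as a sheaf on $X$ — together with its left $g^{-1}\Gr^F\mc{D}_Y$- and right $\Gr^F\mc{D}_X$-module structures and the $\omega_{X/Y}$-twist inherited from the relative-canonical factor in the transfer bimodule — with the geometry of $T^*X \leftarrow T^*Y\times_Y X \to T^*Y$, keeping careful track of the grading shift and of the fact that $q_g$ is affine but generally non-flat while $p_g$ is proper. This is the graded shadow of the standard cotangent-space picture for the $\mc{D}$-module pushforward, and I would carry out the bookkeeping following \cite{laumon}; everything else is immediate from the Rees-module formalism.
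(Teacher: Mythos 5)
Your proposal is correct and follows essentially the same route the paper takes: it deduces the statement from Saito's strictness for proper pushforwards (i.e.\ that the Hodge-filtered complex underlying $g_*\mc{M}$ is the naive filtered pushforward \eqref{eq:hodge proper pushforward}, as recalled in \S\ref{subsec:hodge pushforward}) together with the identification of $\Gr^F\mc{D}_{Y\leftarrow X}$ as the kernel $\mc{O}_{T^*Y\times_Y X}\otimes\omega_{X/Y}\{\dim Y-\dim X\}$ stated just before the proposition, the remaining steps being the standard Rees-module bookkeeping from \cite{laumon}. No gaps.
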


Now suppose that $\pi \colon \tilde{X} \to X$ is an $H$-torsor, for some torus $H$, and $g \colon X \to Y$ is a projective morphism. For $\lambda \in \mf{h}^*_\mb{R}$ and $\mc{M} \in \mhm_\lambda(\tilde X)$, recall that we have an underlying filtered module $(\mc{M}, F_\bullet)$ over the sheaf of twisted differential operators
\[ \tilde{\mc{D}} \otimes_{S(\mf{h}), \lambda} \mb{C}\]
on $X$. Now,
\[ \Gr^F(\tilde{\mc{D}} \otimes_{S(\mf{h}), \lambda} \mb{C}) \cong \Gr^F\mc{D}_X \cong \mc{O}_{T^*X}.\]
So we have an associated coherent sheaf $\Gr^F\mc{M} \in \coh^{\mb{G}_m}(T^*X)$, and hence a pushforward $\Gr(g)_*\Gr^F\mc{M} \in \mrm{D}^b\coh^{\mb{G}_m}(T^*Y)$.

We have the following relative Kodaira vanishing result for twisted mixed Hodge modules. Define an element $\lambda \in \mf{h}^*_\mb{R}$ to be \emph{$g$-ample} if it is an $\mb{R}_{>0}$-linear combination of elements $\mu \in \mb{X}^*(H)$ such that the line bundle $\mc{O}_X(\mu) = \pi_{\bigcdot}(\mc{O}_{\tilde{X}} \otimes \mb{C}_{-\mu})^H$ is $g$-ample.

\begin{thm} \label{thm:twisted kodaira}
Let $\lambda \in \mf{h}^*_\mb{R}$ and $\mc{M} \in \mhm_\lambda(\tilde X)$. Suppose that $\lambda$ is $g$-ample. Then
\[ \mc{H}^i\Gr(g)_*\Gr^F\mc{M} = 0 \quad \mbox{for $i > 0$}.\]
\end{thm}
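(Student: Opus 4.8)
The statement is a relative, twisted generalization of Saito's Kodaira vanishing (Theorem~\ref{thm:intro twisted kodaira} with a base $Y$ instead of a point). The plan is to reduce it to the deformation theory of \S\ref{sec:deformations}, exactly as in the proof of the untwisted projective case sketched in the introduction. The key point is that a $g$-ample twisting parameter $\lambda$ can be realized as $\varphi(f)$ for a \emph{positive} monodromic deformation parameter $f$ coming from a section of a relatively ample line bundle, and then the twisted pushforward $\Gr(g)_*\Gr^F\mc{M}$ appears as a wall value of a family whose behavior is controlled by Proposition~\ref{prop:monodromic semi-continuity}.

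\textbf{Step 1: reduce to the vector bundle/torsor setting and introduce the deformation.} Writing $\lambda = \sum_i a_i \mu_i$ with $a_i > 0$ and $\mc{O}_X(\mu_i)$ $g$-ample, we may (after replacing $H$ by a larger torus, or by working one $\mu_i$ at a time and using that a sum of $g$-ample classes is $g$-ample) assume $\lambda = a\mu$ with $\mc{O}_X(\mu)$ very $g$-ample and $a>0$. Embed $X$ relatively into a projective bundle $\mb{P}:=\mb{P}(\mc{E})$ over $Y$ via the linear system $|\mc{O}_X(\mu)|$, so that $\mc{O}_X(\mu) = \mc{O}_{\mb{P}}(1)|_X$; let $\tilde{\mb{P}} \to \mb{P}$ be the $\mb{G}_m$-torsor of the tautological bundle, so $\tilde X = \tilde{\mb{P}} \times_{\mb{P}} X$. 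A general hyperplane section gives a boundary equation; more precisely, pick a generic section $h$ of $\mc{O}_{\mb{P}}(1)$, let $U = \mb{P} \setminus \{h = 0\}$, $j\colon U \hookrightarrow \mb{P}$, and regard $h$ as an element of $\Gamma(\tilde{\mb{P}} \cap \tilde U, \mc{O}^\times)^{\mon}$ of weight $\mu$, so $\varphi(h^a) = a\mu = \lambda$.

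\textbf{Step 2: identify $\Gr(g)_*\Gr^F\mc{M}$ as a wall value and apply semi-continuity.} Push $\mc{M}$ (extended by zero, or rather its relevant extension) to $\tilde U$, form the family $h^s\mc{M} \in \mhm_{s\mu}(\tilde U)$, and consider $j_!h^s\mc{M}$ and $j_*h^s\mc{M}$ on $\tilde{\mb{P}}$. For $s$ outside a discrete set these agree (Proposition~\ref{prop:hyperplanes}), and $g$ restricted to $\mb{P}$ (or the relevant compactification) is projective, so the naive/filtered pushforward computes $g_*$ and Proposition~\ref{prop:associated graded pushforward} gives $\Gr^F(g_* j_{!*}h^s\mc{M}) \cong \Gr(g)_* \Gr^F j_{!*}h^s\mc{M}$. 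At $s=0$ the module $j_{!*}h^0\mc{M}$ is just (the clean extension of) $\mc{M}$ to $\mb{P}$, whose pushforward to $Y$ is a genuine mixed Hodge module, hence $\Gr^F g_* \mc{M}$ has no higher cohomology in the relevant sense --- this is where the base case lives. By Proposition~\ref{prop:monodromic semi-continuity}, $\Gr^F j_! h^s\mc{M}$ is constant on $[0,\epsilon)$ and $\Gr^F j_* h^s\mc{M}$ on $(-\epsilon,0]$; by a sequence of wall-crossings between $s=0$ and $s=a$, combined with the fact (from Theorem~\ref{thm:jantzen}, i.e., strictness/purity of the Jantzen-graded pieces) that the jumps at each wall are given by pushforwards of objects sitting in appropriate degrees, one propagates the vanishing $\mc{H}^i = 0$ for $i>0$ from $s=0$ to $s=a$. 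Restricting the resulting sheaf on $T^*\tilde{\mb{P}}$-side back to $T^*X$ (which is the relevant component, since $\Gr^F\mc{M}$ is supported on the preimage of $X$) yields the claim for $\lambda = a\mu$, and hence in general.

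\textbf{Main obstacle.} The delicate point is controlling the wall-crossing jumps: at a reducibility wall $s=s_0$, one has an exact triangle relating $j_!h^{s_0}\mc{M}$, $j_*h^{s_0}\mc{M}$ and a module supported on the divisor, and one must show that applying $\Gr(g)_*$ and passing to cohomology does not create higher cohomology. This requires knowing that the ``error term'' $j_* / j_!$ (equivalently $\pi_h^a(\mc{M})$ from \S\ref{subsec:jantzen}) is, after taking $\Gr^F$, supported in a way compatible with the vanishing --- essentially an inductive use of the theorem in lower dimension (the divisor) together with the weight/purity bookkeeping of Theorem~\ref{thm:jantzen} to ensure no cancellation of signs or degrees. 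Making this induction clean, and handling the passage from the projective completion $\mb{P}$ back down to $X$ (so that the vanishing on $T^*\tilde{\mb{P}}$ genuinely restricts to the asserted vanishing on $T^*Y$ via $\Gr(g)_*$), is where the real work is; everything else is a formal consequence of the machinery already set up.
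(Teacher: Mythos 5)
Your overall instinct (deform along a positive monodromic parameter $f$ with $\varphi(f)$ proportional to $\lambda$, use semi-continuity, and anchor the argument at an untwisted endpoint where affineness gives vanishing) is the right one, but as written the argument has two genuine gaps, and the second is precisely the point where the paper's proof differs from yours. First, your family $h^s\mc{M}$ lives on the complement $U$ of a hyperplane section, and you never reconnect it to the original module: since the divisor is (relatively) ample it must meet $\operatorname{Supp}\mc{M}$, so $j_{!}j^*\mc{M}$, $j_{!*}j^*\mc{M}$ and $\mc{M}$ genuinely differ, and your endpoint at $s=a$ is not $\mc{M}$ (indeed your deformation runs in the wrong direction: the given $\mc{M}$ sits at twist $\lambda$, and one should deform \emph{down} to twist $0$, as the paper does with $s\in[-1,0]$, rather than start untwisted and go up). The paper handles this by a primary induction on $\dim\operatorname{Supp}\mc{M}$: choosing the sections $f_i$ so that each divisor meets $\operatorname{Supp}\mc{M}$ properly (a genericity condition you never impose or use), the kernel and cokernel of $j_!j^*\mc{M}\to\mc{M}$ have smaller support, so it suffices to prove the vanishing for $j_!j^*\mc{M}$. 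Relatedly, your base case is misstated: higher direct images of a genuine mixed Hodge module under a projective $g$ certainly need not vanish; the vanishing at the untwisted endpoint comes from the affineness of $g\circ j$ ($U$ is the complement of an ample divisor, hence affine over $Y$), combined with Proposition \ref{prop:associated graded pushforward}, not from the pushforward ``being a mixed Hodge module.''

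Second, the ``main obstacle'' you flag --- controlling the wall-crossing jumps via Jantzen filtrations and an induction on the divisor --- is not resolved in your proposal, and in fact it never needs to be: this is the key simplification in the paper's argument. Walking downward from $s=0$, Theorem \ref{thm:semi-continuity} and Proposition \ref{prop:monodromic semi-continuity} give $\Gr^F j_!j^*\mc{M}\cong\Gr^F j_*f^{s_n}j^*\mc{M}$ at the first wall $s_n$, and the crucial observation is that $j_*f^{s_n}j^*\mc{M}$ is again a twisted mixed Hodge module on all of $X$ whose twist $(1+s_n)\lambda$ is still $g$-ample and whose $f^s$-family has strictly fewer reducibility points; so one runs a secondary induction on the number of walls, with the $n=0$ case being the affine pushforward of the untwisted module $f^{-1}j^*\mc{M}$. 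No analysis of the error terms $j_*/j_!$, no purity or sign bookkeeping, and no appeal to Theorem \ref{thm:jantzen} is needed anywhere. Your auxiliary embedding into a projective bundle $\mb{P}(\mc{E})$, the reduction to a single very ample $\mu$ (whose justification ``one $\mu_i$ at a time'' does not obviously make sense, since the twist is by the sum), and the promised descent from $T^*\tilde{\mb{P}}$ back to $T^*X$ are all avoidable: the paper works directly on $X$ with $f=\prod_i f_i^{a_i}$, $f_i\in\mrm{H}^0(X,\mc{O}_X(\mu_i))$ extending to a projective completion.
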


In particular, taking $g$ to be the map from $X$ to a point, we recover the absolute statement (Theorem \ref{thm:intro twisted kodaira}) given in the introduction. 

\begin{proof}
We prove the desired vanishing by induction on the dimension of the support of $\mc{M}$.

By tensoring with a line bundle on $Y$ if necessary, we may assume without loss of generality that
\[ \lambda = \sum_i a_i \mu_i, \quad\mu_i \in \mb{X}^*(H),  a_i > 0,\]
such that each $\mc{O}_X(\mu_i)$ is the restriction of a very ample line bundle on a projective completion of $X$. In this case, we may choose $f_i \in \mrm{H}^0(X, \mc{O}_X(\mu_i))$ with divisors $D_i \subset X$ such that $\dim (D_i \cap \operatorname{Supp}\mc{M}) < \dim \operatorname{Supp}\mc{M}$. Setting $D = \bigcup_i D_i$ and $f = \prod_i f_i^{a_i}$, we have that $U := X - D$ is affine over $Y$ and $f \in \Gamma_\mb{R}(\tilde U)^\mon$ satisfies $\varphi(f) = \lambda$, where $\tilde U = \pi^{-1}(U)$.

Consider the family $f^sj^*\mc{M}$, where $j \colon U \to X$ is the inclusion. By Proposition \ref{prop:hyperplanes}, there exist $-1 < s_1 < \cdots < s_n < 0$ such that the extension $j_*f^sj^*\mc{M}$ is clean for $s \in (-1, 0) - \{s_1, \ldots, s_n\}$. We prove the vanishing by induction on $n$.

We first claim that it suffices to prove the desired vanishing for the Hodge module $j_!j^*\mc{M}$. To see this, observe that the tautological map $j_!j^*\mc{M} \to \mc{M}$ gives exact sequences
\[ 0 \to \mc{N} \to j_!j^*\mc{M} \to \mc{P} \to 0 \]
and
\[ 0 \to \mc{P} \to \mc{M} \to \mc{Q} \to 0\]
such that $\mc{N}$ and $\mc{Q}$ are supported in $D \cap \operatorname{Supp}\mc{M}$. So by induction, we have the vanishing for $\mc{N}$ and $\mc{Q}$. The associated long exact sequences for $\mc{H}^i \Gr(g)_*\Gr^F$ show that the vanishing for $j_!j^*\mc{M}$ implies the vanishing for $\mc{P}$, which implies the vanishing for $\mc{M}$ as claimed.

We now proceed with the induction. First, if $n = 0$, then
\[ \Gr^Fj_!j^*\mc{M} \cong \Gr^F j_*f^{-1}j^*\mc{M} \]
by Theorem \ref{thm:semi-continuity} and Proposition \ref{prop:monodromic semi-continuity}. But now $f^{-1}j^*\mc{M}$ is an untwisted mixed Hodge module on $U$, so
\[ \Gr(g)_*\Gr^Fj_*f^{-1}j^*\mc{M} = \Gr^F g_*j_*f^{-1}\mc{M}\]
by Proposition \ref{prop:associated graded pushforward}. But $\mc{H}^i g_*j_*f^{-1}\mc{M} = 0$ for $i > 0$ since $g \circ j$ is affine, so we are done in this case.

If $n > 0$, on the other hand, then we have
\[ \Gr^F j_!j^*\mc{M} \cong \Gr^F j_*f^{s_n}j^*\mc{M}\]
by Theorem \ref{thm:semi-continuity} and Proposition \ref{prop:monodromic semi-continuity} again. But now the $(1 + s_n)\lambda$-twisted Hodge module $j_*f^{s_n}j^*\mc{M}$ satisfies our hypotheses, but has only $n - 1$ reducibility points in the corresponding $f^s$ family. So by induction on $n$, the vanishing holds for $j_*f^{s_n}j^*\mc{M}$, so we are done in this case as well. This completes the proof.
\end{proof}

\subsection{Filtered pushforwards and monodromic Kodaira vanishing} \label{subsec:monodromic kodaira}

Let us now consider pushforward for monodromic mixed Hodge modules. We start with the following observation.

\begin{prop} \label{prop:mhm fully faithful}
The functor
\[ \mrm{D}^b\mhm_{\widetilde{\lambda}}(\tilde X) \to \mrm{D}^b\mhm(\tilde X) \]
induced by the inclusion $\mhm_{\widetilde{\lambda}}(\tilde X) \subset \mhm(\tilde X)$ is fully faithful.
\end{prop}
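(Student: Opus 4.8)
The plan is to establish this as the mixed Hodge analogue of Proposition~\ref{prop:bb monodromic}, by the same mechanism as in the proof of Lemma~\ref{lem:monodromic nearby cycles} but with $\Hom$'s upgraded to $\mrm{R}\Hom$'s: the six-functor formalism for mixed Hodge modules promotes the relevant $\mathrm{Ext}$-complexes to the bounded derived category $\mrm{D}^b(\mrm{MHS})$ of mixed Hodge structures, compatibly with the forgetful functor to $\mc{D}$-modules, so that full faithfulness at the $\mc{D}$-module level propagates. To begin, note that $\mhm_{\widetilde{\lambda}}(\tilde X) \subset \mhm(\tilde X)$ is a Serre subcategory closed under extensions, since $\widetilde{\lambda}$-monodromicity is a property of the underlying $\mc{D}$-module inherited by subquotients and extensions. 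By the standard criterion for when the bounded derived category of such a subcategory embeds fully faithfully — one compares $\Hom$-groups in the two derived categories by induction on cohomological amplitude, using the canonical truncation triangles and the five lemma — the proposition is equivalent to the assertion that the forgetful maps
\[ \mathrm{Ext}^n_{\mhm_{\widetilde{\lambda}}(\tilde X)}(\mc{M}, \mc{N}) \longrightarrow \mathrm{Ext}^n_{\mhm(\tilde X)}(\mc{M}, \mc{N}) \]
are isomorphisms for all objects $\mc{M}, \mc{N} \in \mhm_{\widetilde{\lambda}}(\tilde X)$ and all $n$. The cases $n = 0, 1$ are automatic (full subcategory, closed under extensions), so the content lies in degrees $n \geq 2$.

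Next I would compute these groups Hodge-theoretically. For $\mc{M}, \mc{N} \in \mrm{D}^b\mhm(\tilde X)$, with $a \colon \tilde X \to \mrm{pt}$ the structure map, the internal Hom $\mrm{R}\shom(\mc{M}, \mc{N})$ (computed, say, as $\delta^!(\mb{D}\mc{M} \boxtimes \mc{N})$, up to shift and twist) is a mixed Hodge module complex on $\tilde X$ whose underlying complex of $\mc{D}$-modules is $\mrm{R}\shom_{\mc{D}_{\tilde X}}(\mc{M}, \mc{N})$, and
\[ \mrm{R}\Hom_{\mrm{D}^b\mhm(\tilde X)}(\mc{M}, \mc{N}) = \mrm{R}\Hom_{\mrm{D}^b(\mrm{MHS})}\bigl(\mb{C}, a_*\mrm{R}\shom(\mc{M}, \mc{N})\bigr). \]
Since $\mrm{MHS}$ has cohomological dimension one, this unwinds, functorially in $\mc{M}$ and $\mc{N}$, into a short exact sequence
\[ 0 \to \mathrm{Ext}^1_{\mrm{MHS}}\bigl(\mb{C}, \mathrm{Ext}^{n-1}_{\mc{D}_{\tilde X}}(\mc{M}, \mc{N})\bigr) \to \mathrm{Ext}^n_{\mhm(\tilde X)}(\mc{M}, \mc{N}) \to \Hom_{\mrm{MHS}}\bigl(\mb{C}, \mathrm{Ext}^n_{\mc{D}_{\tilde X}}(\mc{M}, \mc{N})\bigr) \to 0, \]
the $\mc{D}$-module $\mathrm{Ext}$-groups carrying their canonical mixed Hodge structures. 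When $\mc{M}, \mc{N} \in \mhm_{\widetilde{\lambda}}(\tilde X)$, the dual $\mb{D}\mc{M}$ is $\widetilde{-\lambda}$-monodromic, $\delta$ is equivariant for the diagonal $H$, and $\mc{O}_{\tilde X}$ is $\widetilde{0}$-monodromic, so the whole computation remains inside the monodromic world; Proposition~\ref{prop:bb monodromic} then shows that the $\mc{D}$-module $\mathrm{Ext}$-groups occurring above are unchanged if computed in $\widetilde{\lambda}$-monodromic $\mc{D}$-modules, while their mixed Hodge structures depend only on the lifts $\mc{M}, \mc{N} \in \mhm_{\widetilde{\lambda}}(\tilde X)$. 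This yields the analogous short exact sequence with $\mhm_{\widetilde{\lambda}}(\tilde X)$ in place of $\mhm(\tilde X)$, and a five lemma comparison finishes the argument.

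The step I expect to be the main obstacle is the passage in the previous paragraph from $\mhm(\tilde X)$ to $\mhm_{\widetilde{\lambda}}(\tilde X)$: unlike $\mhm(\tilde X)$, the subcategory $\mhm_{\widetilde{\lambda}}(\tilde X)$ does not come with its own six operations, so one must check carefully that the recipe computing $\mrm{R}\Hom$ (internal Hom, pushforward to a point, then $\mrm{R}\Hom_{\mrm{D}^b(\mrm{MHS})}(\mb{C}, -)$) genuinely computes $\mathrm{Ext}$-groups in $\mrm{D}^b\mhm_{\widetilde{\lambda}}(\tilde X)$ and not merely in $\mrm{D}^b\mhm(\tilde X)$. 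This comes down to the bookkeeping that tensor and internal Hom preserve monodromicity, together with the fact that the relevant adjunctions restrict to the monodromic categories; since the latter a priori already uses the $\lambda = 0$ case, the cleanest route is to treat the $\widetilde{0}$-monodromic case first — directly from Proposition~\ref{prop:bb monodromic} and the insensitivity of $\mrm{R}\Hom$ in $\mrm{D}^b(\mrm{MHS})$ to the nilpotent monodromy operator over a point — and then deduce the general case. Everything else is formal.
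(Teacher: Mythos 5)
Your overall skeleton is fine: full faithfulness is indeed equivalent to the forgetful maps $\mrm{Ext}^n_{\mhm_{\widetilde{\lambda}}(\tilde X)}(\mc M,\mc N)\to\mrm{Ext}^n_{\mhm(\tilde X)}(\mc M,\mc N)$ being isomorphisms, the cases $n=0,1$ are automatic for a full subcategory closed under subquotients and extensions, and the computation of $\Hom_{\mrm{D}^b\mhm(\tilde X)}(\mc M,\mc N[n])$ by pushing $\mrm{R}\shom(\mc M,\mc N)$ to a point and using that mixed Hodge structures have cohomological dimension one is legitimate (though it relies on the internal tensor--Hom adjunction in the six-functor formalism of complex mixed Hodge modules, which you should cite rather than treat as self-evident). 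The genuine gap is the step you yourself identify and then wave off as bookkeeping: producing the analogous short exact sequence for $\Hom_{\mrm{D}^b\mhm_{\widetilde{\lambda}}(\tilde X)}(\mc M,\mc N[n])$. The subcategory carries no six operations, so there is no a priori reason why $a_*\mrm{R}\shom(\mc M,\mc N)$, or anything built from it, computes Homs in $\mrm{D}^b\mhm_{\widetilde{\lambda}}(\tilde X)$; in fact, granting the sequence for $\mrm{D}^b\mhm(\tilde X)$, the assertion that the same recipe also computes Homs in the subcategory is essentially equivalent to the proposition being proved, so invoking it begs the question. Proposition \ref{prop:bb monodromic} cannot close this circle: it is a statement about $\mc D$-modules and only identifies the outer terms of your would-be sequence, not the middle group, and without an independent identification of that middle group there is no commutative diagram to which the five lemma can be applied. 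The fallback ``treat $\widetilde{0}$ first and deduce the general case'' also has no mechanism behind it: for non-integral $\lambda$ on a nontrivial torsor there is no global rank-one twist carrying $\mhm_{\widetilde{0}}(\tilde X)$ to $\mhm_{\widetilde{\lambda}}(\tilde X)$, so any reduction to $\lambda=0$ is at best local over $X$, while your argument, which pushes forward to a point, has no way to exploit a merely local statement.

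For comparison, the paper's proof is precisely a local-to-global argument: $\mrm{D}^b\mhm_{\widetilde{\lambda}}(\tilde X)$ is generated by objects $j_!\mc M$ with $j$ the inclusion of $\pi^{-1}(U)$ for opens $U\subset X$ trivializing the torsor, which reduces the statement to $\tilde X=H\times X$, where the explicit description $\mhm_{\widetilde{\lambda}}(H\times X)\simeq\mhm(X,S(\mf{h}(1)))$ of Lemma \ref{lem:monodromic local model} (via the modules of \eqref{eq:monodromic inverse}) makes both Hom-complexes computable. To salvage your route you would need either such a reduction --- after which your adjunction computation can be carried out honestly in the trivialized model --- or a direct effaceability/Yoneda argument showing that every class in $\mrm{Ext}^n_{\mhm(\tilde X)}(\mc M,\mc N)$, $n\geq 2$, is represented by, and trivialized within, extensions lying in the monodromic subcategory; as written, neither is supplied.
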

\begin{proof}
Observe that the triangulated category $\mrm{D}^b\mhm_{\widetilde{\lambda}}(\tilde X)$ is generated by objects of the form $j_!\mc{M}$ where $j \colon \pi^{-1}(U) \to \tilde X$ is the inclusion of the pre-image of an open set $U \subset X$ over which the $H$-torsor is trivial and $\mc{M} \in \mhm_{\widetilde{\lambda}}(\pi^{-1}(U))$. So we may reduce to the case where $\tilde{X} = H \times X$; in this case, full faithfulness follows easily from the explicit description of $\mhm_{\widetilde{\lambda}}(H \times X)$ given in Lemma \ref{lem:monodromic local model}.
\end{proof}

Consider now a commutative diagram
\[
\begin{tikzcd}
\tilde{X} \ar[r, "\tilde{g}"] \ar[d, "\pi"] & \tilde{Y} \ar[d, "\pi"] \\
X \ar[r, "g"] & Y,
\end{tikzcd}
\]
where $g$ is proper, $\tilde{X} \to X$ is an $H$-torsor for some torus $H$, $\tilde{Y} \to Y$ is an $H'$-torsor for another torus $H'$, and $\tilde{g}$ is $H$-equivariant for some surjective morphism $H \to H'$. We will assume for the sake of convenience that $\ker(H \to H')$ is connected. Then for $\lambda \in (\mf{h}')^*_\mb{R} \subset \mf{h}^*_\mb{R}$, we have well-defined pushforward functors
\[ \tilde{g}_!, \tilde{g}_* \colon \mrm{D}^b\mhm_{\widetilde{\lambda}}(\tilde X) \to \mrm{D}^b\mhm_{\widetilde{\lambda}}(\tilde Y) \]
given by restricting the corresponding functors on $\mrm{D}^b\mhm(\tilde X)$.

\begin{prop} \label{prop:monodromic ! vs *}
For $\mc{M} \in \mrm{D}^b\mhm_{\widetilde{\lambda}}(\tilde X)$, we have
\[ \tilde{g}_* \mc{M} \cong \tilde{g}_!\mc{M} \otimes \det(\mf{h}/\mf{h}')^*[\dim \mf{h}/\mf{h}'].\]
\end{prop}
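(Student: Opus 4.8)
The plan is to peel off the part of $\tilde{g}$ that is proper — over which $\tilde{g}_* = \tilde{g}_!$ automatically — and reduce to the case where $\tilde{g}$ is a torsor under a subtorus, which I would handle by a local computation.

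\textbf{Step 1: reduction to a torus torsor.} Write $H_0 = \ker(H \to H')$; this is a connected torus by assumption, of dimension $d = \dim \mf{h}/\mf{h}'$, with Lie algebra $\mf{h}_0 = \ker(\mf{h} \to \mf{h}')$, so that $\det(\mf{h}_0)^* = \det(\mf{h}/\mf{h}')^*$. Since $\tilde{g}$ is constant on $H_0$-orbits, it factors as $\tilde{X} \xrightarrow{q} Z := \tilde{X}/H_0 \xrightarrow{\bar{g}} \tilde{Y}$, where $q$ is an $H_0$-torsor and $Z$ is naturally an $H'$-torsor over $X$. The structure map $Z \to X$ together with $\bar{g} \colon Z \to \tilde{Y}$ exhibits $Z$ as the fibre product $X \times_Y \tilde{Y}$ (both sides are $H'$-torsors over $X$ and the comparison map is $H'$-equivariant, hence an isomorphism), so $\bar{g}$ is the base change of $g$ and is in particular proper. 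Thus $\bar{g}_* \cong \bar{g}_!$ canonically; combining this with $\tilde{g}_* = \bar{g}_* q_*$ and $\tilde{g}_! = \bar{g}_! q_!$, and with the fact that $\bar{g}_!$ commutes with shifts and with $\otimes$ by a fixed finite-dimensional vector space, we reduce to producing a canonical isomorphism
\[ q_* \mc{M} \cong q_! \mc{M} \otimes \det(\mf{h}_0)^*[d], \qquad \mc{M} \in \mrm{D}^b\mhm_{\widetilde{\lambda}}(\tilde{X}).\]
Since $\lambda \in (\mf{h}')^*_\mb{R}$ kills $\mf{h}_0$, the object $\mc{M}$ is $\widetilde{0}$-monodromic as an $H_0$-module, i.e.\ $\mf{h}_0$ acts nilpotently on it.

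\textbf{Step 2: the torus torsor case.} Both $q_*$ and $q_!$ commute with restriction to Zariski opens of $Z$, and $q$ is Zariski-locally trivial (Hilbert 90), so we may assume $\tilde{X} = H_0 \times Z$ and $q = \mrm{pr}_Z$. By Lemma~\ref{lem:monodromic local model}, $\mc{M}$ is the data of $\mc{N} \in \mrm{D}^b\mhm(Z)$ together with a locally nilpotent action of $S(\mf{h}_0(1))$, encoded by the monodromy operator $\mrm{N} \colon \mf{h}_0(1) \otimes \mc{N} \to \mc{N}$ of Proposition~\ref{prop:monodromic mhm}. I would then compute $q_*\mc{M}$ and $q_!\mc{M}$ explicitly — for instance by embedding $H_0 \times Z$ into a relative $(\mb{P}^1)^{d}$-bundle over $Z$ and using the recollement triangles comparing the $!$- and $*$-extensions across the boundary sections, the boundary terms being governed by the $\mrm{N}$-action — and find that both sides are computed by the Koszul-type complex attached to $\mrm{N}$, placed in complementary cohomological degrees. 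Comparing the two answers yields the isomorphism. The canonical identifications $\oH^k(H_0) \cong \Lambda^k \mf{h}_0^*$ (up to a Tate twist) built into this computation are exactly what force the $\det(\mf{h}_0)^*$-twist: a choice of coordinates on $H_0$ produces an untwisted comparison $q_*\mc{M} \cong q_!\mc{M}[d]$, but only its $\det(\mf{h}_0)^*$-twisted version is coordinate-independent and hence glues over a trivialising cover. To make the isomorphism manifestly canonical from the start, one can instead construct the comparison map globally on $\tilde{X}$ out of the $d$-fold power of $\mrm{N}$ and then check locally that it is an isomorphism.

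\textbf{Expected difficulty.} Step 1 carries the structural content and is routine. The real work is the computation in Step 2, and the only genuine subtlety there is bookkeeping: pinning down all the Tate twists and cohomological shifts, and in particular isolating the precise $\det(\mf{h}_0)$-dependence so that the local comparison is canonical. This is greatly simplified by first reducing to $H_0 = \mb{G}_m$: choose a filtration of $H_0$ by subtori with $\mb{G}_m$-quotients, factor $q$ into a tower of $\mb{G}_m$-torsors, apply the case $d = 1$ at each stage (checking, routinely, that each pushforward preserves $\widetilde{0}$-monodromicity for the remaining torus), and assemble the twists using $\det(\mf{h}_0) \cong \bigotimes_i \mf{h}_0^{(i)}/\mf{h}_0^{(i-1)}$. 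In the case $d = 1$ the computation is essentially a one-line calculation with $\oH^\bullet(\mb{G}_m)$.
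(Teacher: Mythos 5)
Your proposal is correct and follows essentially the same route as the paper: the paper likewise factors $\tilde g$ through the quotient by $\ker(H\to H')$ (written there as $\tilde X \to H'\times^H\tilde X$), gets the proper leg for free, and verifies the remaining torsor-quotient case locally via the local model of Lemma \ref{lem:monodromic local model}. The only divergence is how the canonical comparison map is produced: where you gesture at a global map built from the $d$-fold power of $\mrm{N}$ (or at coordinate-independence of the determinant-twisted local computation), the paper obtains it by adjunction from Lemma \ref{lem:monodromic push/pull}, i.e.\ from the Koszul-type identification $p^!p_!\mc{M} \cong \mc{M}\overset{\mrm{L}}\otimes_{S(\mf{h}(1))}S(\mf{h}'(1))$, and then checks it is an isomorphism in the local model --- precisely the "construct globally, check locally" strategy you name as the alternative.
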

\begin{proof}
Observe that we may factor $\tilde{g}$ as
\[ \tilde{X} \xrightarrow{p} H' \times^H \tilde{X} \xrightarrow{g'} \tilde{Y}.\]
The claim is clearly true for $\tilde{g} = g'$, since $g'$ is proper, so it suffices to treat the case $\tilde{g} = p$. By Lemma \ref{lem:monodromic push/pull} below, we have a natural transformation
\begin{align*}
p^*p_! \mc{M} &= p^!p_!\mc{M} (-\dim \mf{h}/\mf{h}')[-2\dim \mf{h}/\mf{h}'] \\
&= \mc{M} \overset{\mrm{L}}\otimes_{S(\mf{h}(1))} S(\mf{h}'(1))(-\dim \mf{h}/\mf{h}')[-2\dim \mf{h}/\mf{h}'] \to \mc{M} \otimes \det (\mf{h}/\mf{h}')^*[-\dim \mf{h}/\mf{h}'],
\end{align*}
where $S(\mf{h}(1))$ acts on $\mc{M}$ via Proposition \ref{prop:monodromic mhm} \eqref{itm:monodromic mhm 2}, and hence a natural transformation
\begin{equation} \label{eq:monodromic ! vs *}
p_!\mc{M} \to p_*\mc{M} \otimes \det(\mf{h}/\mf{h}')^*[-\dim \mf{h}/\mf{h}'].
\end{equation}
Using the local model of Lemma \ref{lem:monodromic local model}, \eqref{eq:monodromic ! vs *} is an isomorphism when $\tilde{X} = H \times X$; since the statement is local on $X$, \eqref{eq:monodromic ! vs *} is therefore an isomorphism always.
\end{proof}

\begin{lem} \label{lem:monodromic push/pull}
Assume $\lambda \in (\mf{h}')_\mb{R}^*$ and write $p \colon \tilde{X} \to H' \times^H \tilde{X}$ for the natural quotient morphism. Then we have
\[ p^!p_!\mc{M} \cong \mc{M} \overset{\mrm{L}}\otimes_{S(\mf{h}(1))} S(\mf{h}'(1))\]
and 
\[p^*p_*\mc{M} \cong \mc{M} \overset{\mrm{L}}\otimes_{S(\mf{h}(1))} S(\mf{h}'(1)) \otimes \det (\mf{h}/\mf{h}')^*(-\dim \mf{h}/\mf{h}')[-\dim \mf{h}/\mf{h}'] \]
for $\mc{M} \in \mrm{D}^b \mhm_{\widetilde{\lambda}}(\tilde X)$.
\end{lem}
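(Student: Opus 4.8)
The plan is to reduce both isomorphisms, via the local model of Lemma~\ref{lem:monodromic local model}, to an explicit computation over a trivial torsor. Observe first that $p$ is a torsor under the subtorus $H'' := \ker(H \to H') \subset H$, connected of dimension $d := \dim \mf{h}/\mf{h}'$ by hypothesis; in particular $p$ is smooth of relative dimension $d$, and since $H''$ is a torus, $p$ is Zariski-locally trivial over $Y' := H' \times^H \tilde{X}$. Both sides of both asserted isomorphisms commute with restriction to opens of $Y'$ (for the right-hand sides, because $S(\mf{h}(1))$ is a constant sheaf of rings), and on the right-hand sides a choice of splitting $\mf{h} = \mf{h}'' \oplus \mf{h}'$ identifies
\[ \mc{M} \overset{\mrm{L}}\otimes_{S(\mf{h}(1))} S(\mf{h}'(1)) \cong \mc{M} \overset{\mrm{L}}\otimes_{S(\mf{h}''(1))} \mb{C}, \]
where $\mf{h}''(1)$ acts on $\mc{M}$ through the nilpotent part of the weak $H''$-action; since $\lambda|_{\mf{h}''} = 0$, this agrees with the restriction along $\mf{h}'' \hookrightarrow \mf{h}$ of the $S(\mf{h}(1))$-action of Proposition~\ref{prop:monodromic mhm}~\eqref{itm:monodromic mhm 2}. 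We may therefore assume $\tilde{X} = H'' \times Y'$ with $p$ the projection.

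Writing $Y' = H' \times U$, Lemma~\ref{lem:monodromic local model} then identifies $\mhm_{\widetilde{\lambda}}(\tilde{X})$ and $\mhm_{\widetilde{\lambda}}(Y')$ with the categories of $S(\mf{h}(1))$- and $S(\mf{h}'(1))$-module objects in $\mhm(U)$, in such a way that $p^\circ$ becomes restriction of scalars along the surjection $S(\mf{h}(1)) \twoheadrightarrow S(\mf{h}'(1))$. It remains to compute $p_!\mc{M}$ and $p_*\mc{M}$. Factoring $p$ over the $\mb{C}^\times$-factors of $H''$ and compactifying each to $\mb{P}^1$, the extensions $j_!$, $j_*$ of \eqref{eq:V-filtration extensions}, \eqref{eq:shriek hodge}, \eqref{eq:star hodge} differ only through the $V$-filtration gradeds at $0$ and $\infty$, which, the $\mb{C}^\times$-monodromy having trivial generalized eigenvalue, carry exactly the nilpotent operator $\mrm{N}$; equivalently, the relative de Rham complex of $H'' \times Y' \to Y'$ is the Koszul complex on the $d$ commuting nilpotent operators. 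This identifies $p_!\mc{M}$, with its Hodge and weight filtrations, with $\mc{M} \overset{\mrm{L}}\otimes_{S(\mf{h}''(1))} \mb{C}$ up to an explicit cohomological shift and Tate twist, chosen precisely so that applying $p^! = p^\circ(d)[d]$ returns $\mc{M} \overset{\mrm{L}}\otimes_{S(\mf{h}(1))} S(\mf{h}'(1))$. This proves the first isomorphism.

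For the second isomorphism I would use Verdier duality rather than redo the computation. From $\mb{D}_{Y'} p_! = p_* \mb{D}_{\tilde{X}}$ and $\mb{D}_{\tilde{X}} p^* = p^! \mb{D}_{Y'}$ one obtains
\[ p^* p_* \mc{M} \cong \mb{D}_{\tilde{X}}\big(p^! p_! (\mb{D}_{\tilde{X}}\mc{M})\big). \]
Now $\mb{D}_{\tilde{X}}\mc{M}$ is $(-\lambda)$-monodromic with $-\lambda \in (\mf{h}')^*_\mb{R}$, and carries the transposed $S(\mf{h}(1))$-action, so the first isomorphism gives $p^! p_! (\mb{D}_{\tilde{X}}\mc{M}) \cong \mb{D}_{\tilde{X}}\mc{M} \overset{\mrm{L}}\otimes_{S(\mf{h}''(1))} \mb{C}$, the Koszul complex on this action. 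Applying $\mb{D}_{\tilde{X}}$ to a Koszul complex for a $d$-dimensional space $V = \mf{h}''(1)$ of operators produces the Koszul complex on the dual module, tensored by $\det(V)^*[-d]$ (via $\Lambda^k V^* \cong \Lambda^{d-k} V \otimes \det(V)^*$ and $\mb{D}_{\tilde{X}}\mb{D}_{\tilde{X}} = \id$); since $\det(\mf{h}''(1))^* = \det(\mf{h}/\mf{h}')^*(-d)$, this yields $p^* p_* \mc{M} \cong \big(\mc{M} \overset{\mrm{L}}\otimes_{S(\mf{h}(1))} S(\mf{h}'(1))\big) \otimes \det(\mf{h}/\mf{h}')^*(-d)[-d]$, as claimed.

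The main obstacle is the bookkeeping in the $\mb{C}^\times$-computation: one must pin down the shift and Tate twist on $p_!\mc{M}$ so that $p^!$ returns $\mc{M} \overset{\mrm{L}}\otimes_{S(\mf{h}(1))} S(\mf{h}'(1))$ exactly, and verify that the equivalences of Lemma~\ref{lem:monodromic local model} are compatible with all of $p_!, p_*, p^!, p^*$ --- in particular that $p_!\mc{M}$ is again $\widetilde{\lambda}$-monodromic (pushforward preserves monodromicity, and the residual weak $H'$-action has eigenvalue $\lambda|_{\mf{h}'} = \lambda$). Granting the first isomorphism, the duality argument of the third paragraph is routine Koszul bookkeeping.
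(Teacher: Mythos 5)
Your reduction to the local case is where the argument breaks as written. You verify that both sides of each asserted isomorphism are compatible with restriction to opens of $H' \times^H \tilde{X}$ and then compute over a trivializing cover, but you never construct a \emph{global} comparison morphism. Two objects of a derived category (or even of an abelian category of mixed Hodge modules) that become isomorphic on each member of an open cover need not be globally isomorphic: the local identifications you produce depend on the chosen trivializations of the $H''$-torsor $p$ (and on the splitting $\mf{h} \cong \mf{h}'' \oplus \mf{h}'$), and nothing in your argument makes them agree on overlaps, nor is there a general gluing principle for isomorphisms in $\mrm{D}^b\mhm$. The paper avoids this by first producing a canonical map: since $\mf{h}(1)$ acts on $p^!p_!\mc{M}$ through $\mf{h}'(1)$, the unit $\mc{M} \to p^!p_!\mc{M}$ factors through a natural transformation $\mc{M} \overset{\mrm{L}}\otimes_{S(\mf{h}(1))} S(\mf{h}'(1)) \to p^!p_!\mc{M}$, and only \emph{then} is the local model of Lemma \ref{lem:monodromic local model} invoked to check that this fixed map is an isomorphism — a property that genuinely is local. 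Your local computation (factoring over the $\mb{C}^\times$-factors, unipotent monodromy along $H''$ because $\lambda|_{\mf{h}''}=0$, relative de Rham complex as a Koszul complex on the nilpotent operators) can serve as that local check, so the fix is cheap; but as structured the proof has a gap precisely at its first step. A secondary weakness is that the shift and Tate twist on $p_!\mc{M}$ are ``chosen precisely so that applying $p^!$ returns'' the right answer rather than computed; since the presence of the twist $\det(\mf{h}/\mf{h}')^*(-d)[-d]$ in one formula and its absence in the other is the actual content of the lemma, this bookkeeping must be carried out, not stipulated (you do flag this yourself).

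On the positive side, your treatment of the second isomorphism by Verdier duality is a legitimate and clean alternative to the paper's ``the proof for $p^*p_*$ is similar'': given the first isomorphism in its canonical form, the identities $\mb{D}p_! = p_*\mb{D}$ and $\mb{D}p^* \cong p^!\mb{D}$, together with the self-duality of the Koszul complex and $\det(\mf{h}''(1))^* = \det(\mf{h}/\mf{h}')^*(-d)$, do yield the stated twist and shift, and $\mb{D}\mc{M}$ is indeed $(-\lambda)$-monodromic with $-\lambda \in (\mf{h}')^*_\mb{R}$, so the hypothesis is preserved. Once you repair the first isomorphism by constructing the natural transformation before localizing, this duality argument completes the lemma.
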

\begin{proof}
Let us prove the claim for $p^!p_!$; the proof for $p^*p_*$ is similar. Since $\mf{h}(1)$ acts via $\mf{h}'(1)$ on $p^!p_!\mc{M}$, the unit $\mc{M} \to p^!p_!\mc{M}$ induces a natural transformation
\begin{equation} \label{eq:monodromic push/pull}
\mc{M} \overset{\mrm{L}}\otimes_{S(\mf{h}(1))} S(\mf{h}'(1)) \to p^!p_!\mc{M}.
\end{equation}
Using the local model of Lemma \ref{lem:monodromic local model}, \eqref{eq:monodromic push/pull} is an isomorphism when $\tilde{X} = H \times X$ and hence in general.
\end{proof}

At the level of filtered $\tilde{\mc{D}}$-modules, the pushforwards $\tilde{g}_!$ and $\tilde{g}_*$ behave as follows. For $(\mc{M}, F_\bullet)$ a coherent filtered module over $\tilde{\mc{D}}_X = \pi_{\bigcdot}(\mc{D}_{\tilde{X}})^H$, let us set
\[ \tilde{g}_\dagger (\mc{M}, F_\bullet) = \mrm{R}g_{\bigcdot}((\tilde{\mc{D}}_{\tilde{g}}, F_\bullet) \overset{\mrm{L}}\otimes_{(\tilde{\mc{D}}_X, F_\bullet)} (\mc{M}, F_\bullet)) \in \mrm{D}^b\coh(\tilde{\mc{D}}_Y, F_\bullet),\]
where
\[ \tilde{\mc{D}}_{\tilde{g}} := \pi_{\bigcdot}(\mc{D}_{\tilde{Y} \leftarrow \tilde{X}})^H, \quad \tilde{\mc{D}}_Y := \pi_{\bigcdot}(\mc{D}_{\tilde Y})^{H'}\]
and $\mrm{D}^b\coh(\tilde{\mc{D}}_Y, F_\bullet)$ denotes the filtered derived category of coherent $\tilde{\mc{D}}_Y$-modules with a good filtration (i.e., the derived category of coherent graded modules over the Rees algebra of $\tilde{\mc{D}}_Y$).

\begin{prop} \label{prop:monodromic filtered pushforward}
For $\mc{M} \in \mrm{D}^b\mhm_{\widetilde{\lambda}}(\tilde X)$ with $\lambda \in (\mf{h}')^*_\mb{R}$ we have
\[ \tilde{g}_\dagger(\mc{M}, F_\bullet) \cong (\tilde{g}_*\mc{M}, F_\bullet) = (\tilde{g}_!\mc{M}, F_\bullet) \otimes \det (\mf{h}/\mf{h}')^*[\dim \mf{h}/\mf{h}'] \]
in the filtered derived category of coherent $\tilde{\mc{D}}_Y$-modules.
\end{prop}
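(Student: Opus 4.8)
The plan is to factor $\tilde g$ into a torsor projection followed by a proper map and treat these two pieces by different means. Write $H'' = \ker(H \to H')$, a subtorus of $H$ of dimension $d := \dim \mf{h}/\mf{h}'$ (connected by hypothesis), and factor $\tilde g$ as $\tilde X \xrightarrow{p} \tilde Y' := H' \times^H \tilde X \xrightarrow{g'} \tilde Y$, where $p$ is the quotient by $H''$ (an $H''$-torsor) and $g'$ is proper and genuinely $H'$-equivariant, exactly as in the proof of Proposition \ref{prop:monodromic ! vs *}. Since the naive filtered pushforward of \S\ref{subsec:hodge pushforward} is transitive with respect to composition (from the transitivity of the transfer bimodules with their order filtrations), $\tilde g_\dagger \cong g'_\dagger \circ p_\dagger$, and it suffices to establish the proposition for $g'$ and for $p$ separately; here $g'$ contributes no twist and $p$ contributes the factor $\det(\mf{h}/\mf{h}')^*[d]$.

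For $g'$, proper, I would deduce the claim from Saito's theory by descent. More precisely, for any $\mc{N} \in \mrm{D}^b \mhm_{\widetilde\lambda}(\tilde Y')$, Saito's theorem on proper pushforward identifies the filtered $\mc{D}_{\tilde Y}$-complex underlying the mixed Hodge module $g'_* \mc{N}$ with the naive filtered pushforward $\mrm{R}g'_{\bigcdot}((\mc{D}_{\tilde Y \leftarrow \tilde Y'}, F_\bullet) \otimes^{\mrm{L}}_{(\mc{D}_{\tilde Y'}, F_\bullet)} (\mc{N}, F_\bullet))$. Applying $\mrm{R}\pi_{\bigcdot}(-)^{H'}$, which is exact ($H'$ is a torus), commutes with $\mrm{R}g'_{\bigcdot}$, and is compatible with the equivalence \eqref{eq:monodromic descent} and with tensor products over the relevant rings, and then inserting the filtration shift of \eqref{eq:dtilde filtrations}, one obtains $g'_\dagger(\mc{N}, F_\bullet) \cong (g'_* \mc{N}, F_\bullet)$ in the filtered derived category of $\tilde{\mc{D}}_Y$-modules. (The weight-filtration shift in \eqref{eq:dtilde filtrations} is by submodules and does not affect this Hodge-filtered statement.)

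The torsor projection $p$ is where the real content lies, and here I would argue locally and explicitly. The claim is local on $\tilde Y'$, hence on $X$, so I may assume the torsor is trivial, $\tilde X = H'' \times \tilde Y'$, with $p$ the projection. Since $\lambda \in (\mf{h}')^*_\mb{R}$ restricts to $0$ on $\mf{h}''$, Lemma \ref{lem:monodromic local model} (applied to the torus $H''$, base $\tilde Y'$ and $\lambda = 0$) identifies $\mc{M}$ with its restriction $\mc{M}_0$ to $\{1\} \times \tilde Y'$ together with a nilpotent action of $S(\mf{h}''(1))$, and $p_\dagger(\mc{M}, F_\bullet)$ is then computed by an explicit relative de Rham (Koszul) complex built from $\mc{M}_0$ and this action, in which $\omega_{\tilde X/\tilde Y'} \cong \mc{O}_{\tilde X} \otimes \det(\mf{h}/\mf{h}')^*$ produces the determinant twist and the relative dimension $d$ the homological shift. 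Proposition \ref{prop:monodromic ! vs *} and Lemma \ref{lem:monodromic push/pull} already determine $p_! \mc{M}$ and $p_* \mc{M}$ as monodromic mixed Hodge modules, so the remaining task is to match the explicit complex against these \emph{at the level of Hodge filtrations}. This is the main obstacle: it amounts to checking that the naive filtered pushforward along the non-proper morphism $p$ nevertheless computes the correct Hodge filtration. For general non-proper morphisms this fails — already for an affine open embedding the naive filtration must be corrected via the $V$-filtration, cf. \eqref{eq:shriek hodge}--\eqref{eq:star hodge} — but it holds here because $p$ is a torsor and $\mc{M}$ is monodromic with generalized eigenvalue $0$ along the fibers, so that the $V$-filtration along any relative toric compactification of $p$ is concentrated in a single step and no correction is needed. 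I would carry out this verification in the local model, where the relevant admissible variation of mixed Hodge structure is precisely the one appearing in Lemma \ref{lem:monodromic nearby cycles} with trivial $\lambda$, reducing the statement to an explicit — if somewhat fiddly — bookkeeping of Hodge and weight degrees and Tate twists.
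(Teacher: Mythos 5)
Your overall skeleton coincides with the paper's: the factorization of $\tilde{g}$ through $p$ and the proper map $g'$, the transitivity $\tilde{g}_\dagger = g'_\dagger \circ p_\dagger$, and the treatment of the proper factor by Saito's identification \eqref{eq:hodge proper pushforward} combined with $H'$-descent are all exactly what the paper does (the $\pm$ twist relating $\tilde g_!$ and $\tilde g_*$ being Proposition \ref{prop:monodromic ! vs *}). The gap is in the torsor step, which you rightly single out as the real content but do not close. First, you pass to a local trivialization of $\tilde{X} \to H' \times^H \tilde{X}$ without first producing a globally defined comparison morphism between $p_\dagger(\mc{M}, F_\bullet)$ and $(p_*\mc{M}, F_\bullet)$. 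An isomorphism in the filtered derived category cannot be assembled from unrelated local identifications of the two sides; ``the claim is local'' is only available once one has a canonical map whose being an isomorphism can be tested locally. The paper manufactures this map by pure category theory: $p_\dagger(\mc{M}, F_\bullet) = \mc{M} \overset{\mrm{L}}\otimes_{S(\mf{h})} S(\mf{h}') \otimes \det(\mf{h}/\mf{h}')^*\{-\dim \mf{h}/\mf{h}'\}$ is right adjoint to the filtered restriction-of-scalars functor $p^\dagger$, which visibly agrees with filtered $p^*$; adjunction then yields a natural transformation $(p_*\mc{M}, F_\bullet) \to p_\dagger(\mc{M}, F_\bullet)$, and only at that point is the local model of Lemma \ref{lem:monodromic local model} invoked to see that it is an isomorphism. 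Your argument has no analogue of this step.

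Second, your stated reason that the naive filtered pushforward computes the Hodge filtration along the non-proper map $p$ --- that the $V$-filtration along a relative toric compactification is ``concentrated in a single step'' so that ``no correction is needed'' --- is not correct. For a $0$-monodromic (unipotent) module the $V$-filtration of the extension across the boundary divisor jumps at every integer (already for $\mc{O}_{\mb{C}^\times} \subset \mc{O}_{\mb{C}}[t^{-1}]$), and the Hodge filtration of the open extension is genuinely given by the corrected formula \eqref{eq:star hodge}, not by a naive pole-order extension. So the fact you need is true, but not for the reason you give; and the verification you defer as ``fiddly bookkeeping'' --- matching the relative Koszul complex over $S(\mf{h}(1))$, with its determinant twist and shift, against the Hodge filtration of $p_*\mc{M}$ computed from the explicit unipotent variation of Lemma \ref{lem:monodromic nearby cycles} --- is precisely the substance of the proof. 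If you add the adjunction-produced comparison map and actually carry out that local-model computation, your argument closes and becomes essentially the paper's; as written, the key step is asserted rather than proved.
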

\begin{proof}
Let us factor $\tilde{g}$ as
\[ \tilde{X} \xrightarrow{p} H' \times^H \tilde{X} \xrightarrow{g'} \tilde{Y}.\]
Clearly $\tilde{g}_\dagger = g'_\dagger \circ p_\dagger$, so it suffices to prove the proposition for $\tilde{g} = g'$ and $\tilde{g} = p$. Now, since $g'$ is proper, the result for $\tilde{g} = g'$ is clear from \eqref{eq:hodge proper pushforward}. For $\tilde{g} = p$, we note that
\[ p_\dagger \mc{M} = \mc{M} \overset{\mrm{L}}\otimes_{S(\mf{h})} S(\mf{h}') \otimes \det(\mf{h}/\mf{h}')^*\{-\dim \mf{h}/\mf{h}'\} \]
is the right adjoint to the functor
\[ p^\dagger \colon \mrm{D}^b\coh(\tilde{\mc{D}}_X \otimes_{S(\mf{h})} S(\mf{h}'), F_\bullet) \to \mrm{D}^b\coh(\tilde{\mc{D}}_X, F_\bullet) \]
given by restriction of scalars and a cohomological shift by $\dim \mf{h}/\mf{h}'$. Clearly
\[ p^\dagger(\mc{N}, F_\bullet) = (p^*\mc{N}, F_\bullet),\]
so, passing to adjoints, we have a natural transformation
\[ (p_*\mc{M}, F_\bullet) \to p_\dagger (\mc{M}, F_\bullet).\]
Arguing locally using Lemma \ref{lem:monodromic local model}, this is an isomorphism.
\end{proof}

Note that if $\mc{M}$ is $\lambda$-monodromic with $\lambda \not\in (\mf{h}')^*$ then $g_\dagger (\mc{M}, F_\bullet) = 0$ in $\mrm{D}^b\coh(\tilde{\mc{D}}_Y)$ but not necessarily in $\mrm{D}^b\coh(\tilde{\mc{D}}_Y, F_\bullet)$; i.e., we might have $\Gr^Fg_\dagger (\mc{M}, F_\bullet) \neq 0$. In particular, $\tilde{g}_\dagger(\mc{M}, F_\bullet)$ does not underlie a complex of mixed Hodge modules. However, we do have the following monodromic version of the Kodaira vanishing theorem.

\begin{thm} \label{thm:monodromic kodaira}
Assume $\mc{M} \in \mhm_{\widetilde{\lambda}}(\tilde X)$ with $\lambda \in \mf{h}^*_\mb{R}$ $g$-ample. Then
\[ \mc{H}^i \Gr^F\tilde{g}_\dagger(\mc{M}, F_\bullet) = 0 \quad \text{for $i > 0$}.\]
\end{thm}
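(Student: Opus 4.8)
The plan is to reduce the monodromic Kodaira vanishing to the twisted version, Theorem \ref{thm:twisted kodaira}, by decomposing $\mc{M}$ according to the generalized eigenspaces of the weak $H$-action and carefully tracking the filtrations. First I would reduce to the case where the torus $H' = \mrm{Spec}(\mathbb{C})$ is trivial, i.e., $\lambda$ is not pulled back from $Y$ in any non-trivial way: indeed, if $\lambda \in (\mf{h}')^*_\mb{R}$ for a non-trivial $H'$, then by Proposition \ref{prop:monodromic filtered pushforward} we have $\tilde{g}_\dagger(\mc{M}, F_\bullet) \cong (\tilde{g}_*\mc{M}, F_\bullet)$ up to a shift and a line, and $\tilde{g}_*\mc{M}$ genuinely underlies a complex of $\widetilde{\lambda}$-monodromic mixed Hodge modules on $\tilde{Y}$. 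Since strictness of the Hodge filtration under proper pushforward (Saito) gives $\mc{H}^i \Gr^F \tilde{g}_*\mc{M} = \Gr^F \mc{H}^i \tilde{g}_*\mc{M}$, and $\lambda$ being $g$-ample forces $\lambda = 0$ when it is pulled back from $Y$ (an ample class on the fibers cannot be a fiberwise-trivial twist unless it vanishes), this case is either vacuous or reduces to Saito's theorem for $\mhm(\tilde{Y})$ combined with the $H'$-equivariant structure. So the substantive case is $H' $ trivial.

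In that case, the key step is to filter $(\mc{M}, F_\bullet)$ as a filtered $\tilde{\mc{D}}_X$-module by the filtration $W^{(\mf{h})}_\bullet$ coming from the order of nilpotence of the operators $h - \lambda(h)$ for $h \in \mf{h}$ (equivalently, the filtration by powers of the augmentation ideal $S(\mf{h}(1))_+ \subset S(\mf{h}(1))$ acting via Proposition \ref{prop:monodromic mhm}\eqref{itm:monodromic mhm 2}). By Proposition \ref{prop:monodromic mhm}, this is a filtration by sub-mixed-Hodge-modules, each graded piece $\Gr^{W^{(\mf{h})}}_k\mc{M}$ is a direct sum of $\lambda$-twisted mixed Hodge modules (twisted by Tate twists of $\mathrm{Sym}^k(\mf{h}(1))$), and the filtered $\tilde{\mc{D}}_X$-module structure is strict with respect to this filtration. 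Then I would use that $\tilde{g}_\dagger$ is exact on the associated graded of $W^{(\mf{h})}$ in the appropriate sense: because the relevant tensor product $(\tilde{\mc{D}}_{\tilde g}, F_\bullet) \otimes^{\mrm{L}}_{(\tilde{\mc{D}}_X, F_\bullet)} (-)$ and $\mrm{R}g_{\bigcdot}$ both commute with the operations defining $W^{(\mf{h})}$ (the latter is a filtration by $\tilde{\mc{D}}_X$-submodules), there is a spectral sequence, or more simply an induction on the length of $W^{(\mf{h})}$ together with the long exact sequence for $\mc{H}^i \Gr^F \tilde{g}_\dagger$, reducing the vanishing for $\mc{M}$ to the vanishing for each $\Gr^{W^{(\mf{h})}}_k\mc{M}$. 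Each such graded piece is (a Tate twist tensor $\mathrm{Sym}^k\mf{h}$ of) a genuinely $\lambda$-twisted mixed Hodge module $\mc{M}_k$, for which $\Gr^F \tilde{g}_\dagger(\mc{M}_k, F_\bullet)$ is computed by Proposition \ref{prop:associated graded pushforward}-style formula as $\Gr(g)_*\Gr^F\mc{M}_k$; since $\lambda$ is $g$-ample, Theorem \ref{thm:twisted kodaira} gives $\mc{H}^i\Gr(g)_*\Gr^F\mc{M}_k = 0$ for $i > 0$, and tensoring by the (fixed, concentrated in one graded degree) vector space $\mathrm{Sym}^k\mf{h}$ does not affect this.

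The main obstacle I anticipate is bookkeeping the compatibility of the filtration $W^{(\mf{h})}$ with the \emph{Hodge} filtration $F_\bullet$ on the push-forward complex — specifically, verifying that the naive filtered pushforward $\tilde{g}_\dagger$ sends the short exact sequences $0 \to W^{(\mf{h})}_{k-1}\mc{M} \to W^{(\mf{h})}_k \mc{M} \to \Gr^{W^{(\mf{h})}}_k\mc{M} \to 0$ to distinguished triangles in $\mrm{D}^b\coh(\tilde{\mc{D}}_Y, F_\bullet)$ that remain exact after applying $\Gr^F$, i.e., that there is no extension problem obstructing the inductive step at the level of associated gradeds. This should follow from the strictness of the Hodge filtration with respect to $W^{(\mf{h})}$ (which is part of the definition of monodromic mixed Hodge module, or can be extracted from Lemma \ref{lem:monodromic nearby cycles} and the explicit description there of the Hodge and weight filtrations on $t^{\lambda+s}\mc{O}_{\mb{C}^\times}[s]/(s^n)$, where $W_\bullet$ and the $s$-adic filtration coincide up to indexing), but making this precise — in particular handling the interaction of $\mrm{R}g_{\bigcdot}$ with both filtrations simultaneously — is where the real work lies. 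An alternative route to circumvent this, which I would keep in reserve, is to mimic the proof of Theorem \ref{thm:twisted kodaira} directly in the monodromic setting: induct on $\dim\mathrm{Supp}\,\mc{M}$ and on the number of reducibility walls, using the monodromic deformation theory of \S\ref{subsec:monodromic deformation} (Proposition \ref{prop:monodromic semi-continuity}) and the monodromic clean-extension analogue of Proposition \ref{prop:hyperplanes}, reducing at the end to the case of an affine morphism where $\mc{H}^i g_\dagger = 0$ for $i>0$ for the \emph{filtered} complex by the affine vanishing theorem applied to the Rees module. This second approach avoids the extension-problem subtlety entirely and is probably the cleaner one to write up.
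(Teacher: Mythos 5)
Your plan does follow the paper's proof in outline: reduce to a genuinely $\lambda$-twisted module using the filtration by powers of the augmentation ideal of $S(\mf{h}(1))$ (the paper compresses this to ``we may assume without loss of generality that $\mc{M}\in\mhm_\lambda(\tilde X)$''), then conclude from Theorem \ref{thm:twisted kodaira}. The extension-problem worry you raise is resolved exactly as you suggest: the filtration is by sub-mixed Hodge modules, exact sequences of mixed Hodge modules are strict, so each step is a strict short exact sequence of filtered $\tilde{\mc{D}}_X$-modules, hence gives a triangle after applying $\tilde g_\dagger$ and $\Gr^F$, and the long exact sequence runs the induction; neither a spectral sequence nor your backup deformation argument is needed.

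Two corrections are in order, the first of which is the actual content of the paper's (very short) proof. For $\mc{M}_k\in\mhm_\lambda(\tilde X)$ it is \emph{not} true that $\Gr^F\tilde g_\dagger(\mc{M}_k,F_\bullet)\cong\Gr(g)_*\Gr^F\mc{M}_k$: the transfer bimodule defining $\tilde g_\dagger$ tensors out the $S(\mf{h})$-direction derivedly, and since $\mf{h}$ acts by zero on $\Gr^F\mc{M}_k$ in the twisted case this produces a Koszul complex with zero differentials; with $H'=\{1\}$ one gets
\[
\Gr^F\tilde g_\dagger(\mc{M}_k,F_\bullet)\;\cong\;\bigoplus_{n\ge 0}\wedge^n\mf{h}\,\{n-\dim\mf{h}\}[n]\otimes\Gr(g)_*\Gr^F\mc{M}_k.
\]
Your conclusion survives because the shifts $[n]$ with $n\ge0$ only move cohomology into non-positive degrees, so Theorem \ref{thm:twisted kodaira} still kills $\mc{H}^i$ for $i>0$; but this identification is precisely the step that has to be proved, and your proposal asserts the naive formula instead. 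Second, once the computation is written this way it goes through for arbitrary $H'$ (with $\mf{h}/\mf{h}'$ in place of $\mf{h}$), so your opening reduction to $H'=\{1\}$ is unnecessary, and as written it is also not sound: triviality of $H'$ is a feature of the geometric set-up, not of $\lambda$; your dichotomy only treats $\lambda\in(\mf{h}')^*_\mb{R}$; Saito's strictness for proper pushforward is not a substitute for Kodaira vanishing in that case; and the claim that $g$-ampleness of a class pulled back from $Y$ forces $\lambda=0$ fails, e.g., when $g$ is finite, where every class is $g$-ample.
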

\begin{proof}
We may assume without loss of generality that $\mc{M} \in \mhm_\lambda(\tilde{X})$ and that $H' = \{1\}$. Then, since $\mf{h}$ acts trivially on $\Gr^F\mc{M}$, we have
\begin{align*}
\Gr^F \tilde{g}_\dagger (\mc{M}, F_\bullet) &= \Gr(g)_*(\mb{C} \overset{\mrm{L}}\otimes_{S(\mf{h})} \Gr^F\mc{M})\{-\dim \mf{h}\} \\
& = \bigoplus_{n \geq 0} \wedge^n \mf{h}\{n - \dim \mf{h}\}[n] \otimes \Gr(g)_*(\Gr^F\mc{M}).
\end{align*}
The claim is now immediate from Theorem \ref{thm:twisted kodaira}.
\end{proof}

\section{Localization theory for Hodge modules} \label{sec:localization}

In this section, we consider the interaction between Hodge theory and Beilinson-Bernstein localization \cite{BB1, BB2}.

Recall that Beilinson-Bernstein localization sets up equivalences of categories between twisted (or monodromic) $\mc{D}$-modules on a flag variety and certain categories of representations of a Lie algebra, with the functor from $\mc{D}$-modules to representations given by taking global sections. The theory of mixed Hodge modules, on the other hand, endows certain $\mc{D}$-modules with extra structures: either a Hodge filtration and a polarization (for polarized Hodge modules) or a weight filtration, a Hodge filtration, and a conjugate Hodge filtration (for mixed Hodge modules). For each of these structures, one can ask first whether it makes sense to take ``global sections'', and second, whether the resulting functor has reasonable properties.

For the weight filtration, the answer to both questions is obvious: since it is a filtration by sub-objects, Beilinson and Bernstein's results already imply that the weight filtration behaves well under global sections. Similarly, it is clear what ``global sections'' should mean for the Hodge filtration, but since this is \emph{not} a filtration by sub-objects, new results are required to show that this operation is well-behaved. We formulate these new results and their consequences in \S\S\ref{subsec:hodge localization}--\ref{subsec:hodge localization functors}, deferring the proofs until \S\S\ref{sec:vanishing}--\ref{sec:hodge generation}. Finally, for the polarization or conjugate Hodge filtration, we explain how to take global sections in \S\ref{subsec:polarization} using a construction of Schmid and Vilonen \cite{SV}. Putting everything together, we obtain in \S\ref{subsec:mhm globalization} a functor from mixed Hodge modules to a category of Lie algebra representations with extra structure, which we show is fully faithful under appropriate hypotheses.

\begin{rmk}
We work here with localization for monodromic $\mc{D}$-modules as in \cite[\S 3.3]{BB2} rather than the version for twisted $\mc{D}$-modules in \cite{BB1}. This is both more general and, in many places, technically more convenient: for example, the monodromic localization functor always has bounded cohomological dimension, which is not the case for the twisted localization functor at singular infinitesimal characters.
\end{rmk}

\subsection{Notation and conventions} \label{subsec:flag variety}

From now on, we fix a connected reductive group $G$ over $\mb{C}$. We write $\mc{B}$ for the flag variety of $G$, $H$ for the universal Cartan of $G$ and $\pi \colon \tilde{\mc{B}} \to \mc{B}$ for the base affine space, the canonical $H$-torsor over $\mc{B}$. We will write $\Phi \subset \mb{X}^*(H)$ and $\check \Phi \subset \mb{X}_*(H)$ for the set of roots and coroots respectively.

As in \cite[\S 2.2]{DV1}, our conventions for positive and negative roots are as follows. If we fix a maximal torus and Borel subgroup $T \subset B \subset G$ then $T$ is identified naturally with $H = B/N$, where $N$ is the unipotent radical of $B$, $\mc{B}$ with $G/B$ and $\tilde{\mc{B}}$ with $G/N$. We define the set $\Phi_- \subset \mb{X}^*(H)$ of \emph{negative} roots to be the characters of $T = H$ acting on $\mf{n} = \mrm{Lie}(N)$. With this convention, a character $\lambda \in \mb{X}^*(H)$ is dominant if and only if the associated line bundle $\mc{O}(\lambda) = \pi_{\bigcdot}(\mc{O}_{\tilde{\mc{B}}} \otimes \mb{C}_{-\lambda})^H$ on $\mc{B}$ is semi-ample.

We are interested in monodromic $\mc{D}$-modules and mixed Hodge modules on $\tilde{\mc{B}}$. In this setting, it is convenient to introduce a $\rho$-shift into the indexing for the monodromicity $\lambda$. To distinguish this from our convention for a general variety, we will introduce the following special notation.
We write
\[ \mc{D}_\lambda = \mc{D}_{\mc{B}, \lambda} := \tilde{\mc{D}}_{\mc{B}} \otimes_{S(\mf{h}), \lambda - \rho} \mb{C}\]
for $\lambda \in \mf{h}^*$, where $\rho$ is half the sum of the positive roots of $G$. Note that we have
\[ \Mod(\mc{D}_\lambda) = \Mod_{\lambda - \rho}(\mc{D}_{\tilde{\mc{B}}}).\]
We will also write
\[ \Mod(\mc{D}_{\widetilde{\lambda}}) := \Mod_{\widetilde{\lambda - \rho}}(\mc{D}_{\tilde{\mc{B}}}) \subset \Mod(\tilde{\mc{D}}) = \Mod_\mon(\mc{D}_{\tilde{\mc{B}}}).\]
For mixed Hodge modules, we write
\[ \mhm(\mc{D}_\lambda) = \mhm_{\lambda - \rho}(\tilde{\mc{B}}), \quad \mhm(\mc{D}_{\widetilde{\lambda}}) = \mhm_{\widetilde{\lambda - \rho}}(\tilde{\mc{B}})\]
\[ \mbox{and} \quad \mhm(\tilde{\mc{D}}) = \mhm_\mon(\tilde{\mc{B}}).\]
Finally, if an algebraic group $K$ acts compatibly on $\tilde{\mc{B}}$ and $H$ and $\lambda \in (\mf{h}^*)^K$, we write
\[ \Mod(\mc{D}_\lambda, K) \subset \Mod(\mc{D}_{\widetilde{\lambda}}, K) \subset \Mod(\tilde{\mc{D}}, K) := \Mod^K_\mon(\mc{D}_{\tilde{\mc{B}}})\]
and
\[ \mhm(\mc{D}_\lambda, K) \subset \mhm(\mc{D}_{\widetilde{\lambda}}, K) \subset \mhm(\tilde{\mc{D}}, K) := \mhm^K_\mon(\tilde{\mc{B}}) \]
for the corresponding $K$-equivariant categories.

\subsection{Global properties of the Hodge filtration} \label{subsec:hodge localization}

The fundamental fact underlying Beilinson and Bernstein's original localization theorem is that, when $\lambda \in \mf{h}^*$ is regular and integrally dominant, the flag variety $\mc{B}$ is \emph{$\mc{D}_\lambda$-affine}. That is, if $\mc{M} \in \Mod(\mc{D}_\lambda)$ is a $\mc{D}_\lambda$-module (or more generally, if $\mc{M} \in \Mod(\mc{D}_{\widetilde{\lambda}})$) then
\begin{enumerate}
\item $\mrm{H}^i(\mc{B}, \mc{M}) = 0$ for $i > 0$, and
\item $\mc{M}$ is globally generated, i.e., $\mc{M} = \tilde{\mc{D}}\cdot \Gamma(\mc{M})$.
\end{enumerate}
The main point of this section is that the Hodge filtration on a mixed Hodge module has similarly nice global properties. In the statement below, we say that $\lambda \in \mf{h}^*_\mb{R}$ is \emph{dominant} if $\langle \lambda, \check\alpha\rangle \geq 0$ for all positive coroots $\check\alpha \in \check\Phi_+$.

\begin{thm} \label{thm:filtered exactness}
Assume that $\lambda \in \mf{h}^*_\mb{R}$ is dominant. Then for any $\mc{M} \in \mhm(\mc{D}_{\widetilde{\lambda}})$, we have
\[ \mrm{H}^i(\mc{B}, F_p \mc{M}) = 0 \quad \mbox{for all $i > 0$ and all $p$}.\]
\end{thm}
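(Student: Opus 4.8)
The plan is to deduce the statement from the twisted and monodromic Kodaira vanishing theorems (Theorems~\ref{thm:twisted kodaira} and \ref{thm:monodromic kodaira}) together with the computation $\Gr^F\tilde\Xi\cong\mc{O}_{\mrm{St}}$ of Theorem~\ref{thm:xitilde hodge filtration}. The first reduction is harmless: the nilpotent part of the monodromy operator on a $\lambda$-monodromic object is a morphism of mixed Hodge modules up to Tate twist (Proposition~\ref{prop:monodromic mhm}), so the generalized eigenspace filtration is a filtration by sub-mixed-Hodge-modules with $\lambda$-twisted graded pieces; since $F_\bullet$ is strict with respect to such filtrations, the long exact sequence in cohomology lets us assume $\mc{M}\in\mhm(\mc{D}_\lambda)$ is genuinely $\lambda$-twisted. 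The point of the argument is then that a naive application of twisted Kodaira to the structure map $\mc{B}\to\mathrm{pt}$ would require $\lambda-\rho$ to be regular dominant (so that $\mc{O}(\lambda-\rho)$ is ample), which need not hold; the object $\tilde\Xi$ is precisely the device that bridges this gap, by relating global sections on $\mc{B}$ to pushforwards on $\mc{B}\times\mc{B}$ with respect to projections for which \emph{ample} twists are available.

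Concretely, I would work on $\mc{B}\times\mc{B}$ with its two projections $p_1,p_2$, and realize a suitable deformation $\tilde\Xi_\lambda$ of $\tilde\Xi$ (carrying extra twisting parameters along the two factors that one is free to adjust) as the kernel of a convolution endofunctor $\mc{M}\mapsto\tilde\Xi_\lambda\star\mc{M}:=p_{1*}(\tilde\Xi_\lambda\otimes p_2^\circ\mc{M})$ of $\mhm(\mc{D}_{\widetilde\lambda})$, for which the intersection complex of the diagonal (the closed $G$-orbit) is the unit. Since $\tilde\Xi$ is the pro-projective cover of that unit, the counit yields, functorially in $\mc{M}$, a bar-type resolution $\tilde\Xi_\lambda^{\star\bullet+1}\star\mc{M}\to\mc{M}$. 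The extra twisting parameters of $\tilde\Xi_\lambda$ can be chosen so that both of the pushforwards involved in computing $\mrm{R}\Gamma(\mc{B},\tilde\Xi_\lambda^{\star n}\star\mc{M})$ — namely $p_{2*}(\tilde\Xi_\lambda^{\star n}\otimes p_2^\circ\mc{M})$ along $p_2$, and then $\mrm{R}\Gamma(\mc{B},-)$ — are governed by ample (equivalently, regular dominant) parameters, even though $\lambda$ itself is only dominant; this is possible because $\lambda$ is dominant, so adding a regular dominant correction coming from a genuine line bundle lands us in the ample cone. Theorems~\ref{thm:twisted kodaira} and \ref{thm:monodromic kodaira} then kill $\mc{H}^{>0}$ of the associated gradeds of these pushforwards (using Propositions~\ref{prop:associated graded pushforward} and \ref{prop:monodromic filtered pushforward} to commute $\Gr^F$ past them), so each $\tilde\Xi_\lambda^{\star n}\star\mc{M}$ has $\mrm{H}^i(\mc{B},F_p(-))=0$ for $i>0$. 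Feeding in $\Gr^F\tilde\Xi=\mc{O}_{\mrm{St}}$ identifies all these pushforwards explicitly, as pushforwards of $\mc{O}_{\mrm{St}}$-type sheaves computable through the Springer resolution, and a spectral sequence argument for the resolution transports the vanishing from $\tilde\Xi_\lambda^{\star\bullet+1}\star\mc{M}$ to $\mc{M}$, giving $\mrm{H}^i(\mc{B},F_p\mc{M})=0$ for $i>0$.

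The hard part will be the precise incarnation of ``$\tilde\Xi$ controls localization'' at the filtered level. One must: (i) construct the deformation $\tilde\Xi_\lambda$ over $\mf{h}^*$ with the correct monodromicity bookkeeping relative to the $\rho$-shift in $\mc{D}_\lambda=\tilde{\mc{D}}\otimes_{S(\mf{h}),\lambda-\rho}\mb{C}$, and verify via Theorem~\ref{thm:semi-continuity} and Proposition~\ref{prop:monodromic semi-continuity} that deforming does not change $\Gr^F$, so that Theorem~\ref{thm:xitilde hodge filtration} continues to compute $\Gr^F\tilde\Xi_\lambda$; (ii) prove that the convolution $\tilde\Xi_\lambda\star(-)$ is \emph{strict} for the Hodge filtration and that its associated graded is computed by the Steinberg correspondence $\mrm{St}\subset T^*\mc{B}\times T^*\mc{B}$ with no spurious higher $\mrm{Tor}$ — this is the technical heart, and it rests on the geometry of the Steinberg variety (reducedness and the Cohen--Macaulay/flatness behaviour underlying the cleanness $\Gr^F\tilde\Xi=\mc{O}_{\mrm{St}}$, together with the fact, also part of Theorem~\ref{thm:xitilde hodge filtration}, that the Hodge filtration of $\tilde\Xi$ is generated by its unique $G$-invariant section); and (iii) handle $\lambda$ dominant but neither regular nor rational, where one either argues directly in the monodromic setting of Theorem~\ref{thm:monodromic kodaira} or first reduces to rational infinitesimal character by a continuity argument as in the remark following Theorem~\ref{thm:jantzen}, so that the ample twists genuinely come from line bundles. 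Once the strictness of (ii) is in place, the remaining bookkeeping — the pro-object limits as in \S\ref{subsec:semi-continuity} and the spectral sequence of the resolution — is routine.
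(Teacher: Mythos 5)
Your overall skeleton (convolution with the big pro-projective on $\mc{B}\times\mc{B}$, the Hodge structure on $\tilde\Xi$ from Theorem \ref{thm:xitilde hodge filtration}, and twisted/monodromic Kodaira vanishing) matches the paper's, but the step you rely on to handle singular dominant $\lambda$ does not work, and it is exactly the hard point. The twisting parameters of the kernel are not free: for $\tilde\Xi_\lambda\star(-)$ to be an endofunctor of $\mhm(\mc{D}_{\widetilde\lambda})$ (or to compute $\tilde{\mc{D}}\otimes^{\mrm{L}}_{U(\mf{g})}\mrm{R}\Gamma(\mc{M})$), the second-factor twist of the kernel is forced to be dual to $\lambda$, so the parameter governing relative Kodaira vanishing along $\mrm{pr}_1$ is $\lambda$ itself (up to the $\rho$-shift), which is only dominant. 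Twisting the kernel by $\mc{O}(\nu,-\nu)$ with $\nu$ regular dominant does not fix this: it shifts the source category to $\mhm(\mc{D}_{\widetilde{\lambda+\nu}})$ and yields vanishing for $F_p(\mc{M}\otimes\mc{O}(\nu))$, not for $F_p\mc{M}$; moreover a $(\lambda,-\lambda)$-monodromic deformation of $\tilde\Xi$ supported on all of $\mc{B}\times\mc{B}$ does not exist for general real $\lambda$, since on each orbit $X_w$ the two twists are tied together by $w$ (this is visible in the eigenvalue computation in the proof of Lemma \ref{lem:xitilde S}); only on loci of the form $X_S=\mc{B}\times_{\mc{P}_S}\mc{B}$ can one deform in the $(\mu,-\mu)$ directions, and the paper uses that only for Theorem \ref{thm:localization hodge module}, not for the vanishing. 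The paper's actual mechanism for singular $\lambda$ is the $S$-stalk property of $\Xi$ and $\tilde\Xi^H$ (Propositions \ref{prop:xi mixed stalks} and \ref{prop:xitilde mixed stalks}): one factors $\mrm{pr}_1$ through $\mc{B}\times\mc{P}_S$ with $S$ the set of singular simple roots of $\lambda$, handles the singular directions by the costandard-type filtration and affineness of the relevant immersions (no ampleness needed there), and applies monodromic Kodaira (Theorem \ref{thm:monodromic kodaira}) only to $q\colon\mc{B}\times\mc{P}_S\to\mc{B}$, for which the parameter is relatively ample (Proposition \ref{prop:convolution vanishing}). Your proposal has no substitute for this ingredient, and without it the argument only covers regular dominant $\lambda$. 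The rationality issue you raise in (iii) is a red herring: the Kodaira theorems already allow arbitrary real twists; the genuine dichotomy is regular versus singular.

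There is a second gap in the reduction itself. Even granting $F$-acyclicity of every term $\tilde\Xi_\lambda^{\star n}\star\mc{M}$, transporting the vanishing to $\mc{M}$ through a bar-type resolution requires the augmented complex to be exact \emph{and strict} for the Hodge filtration; establishing filtered exactness of these convolutions and of the counit is essentially of the same order of difficulty as the theorem you are proving (indeed, such strictness statements are what Theorem \ref{thm:filtered exactness} is later used to obtain, e.g.\ in Lemma \ref{lem:xitilde S}). The paper avoids resolving $\mc{M}$ altogether: Lemma \ref{lem:localize globalize vanishing} shows that the desired vanishing is \emph{equivalent} to $\mc{H}^{i}\Gr^F(\tilde{\mc{D}}\otimes^{\mrm{L}}_{U(\mf{g})}\mrm{R}\Gamma(\mc{M}))=0$ for $i>0$, using only surjectivity of the Grothendieck--Springer map and affineness of $\mf{g}^*$; Lemma \ref{lem:localize globalize convolution} identifies this complex with the single filtered convolution $\tilde\Xi*\mc{M}$ via a flatness/Koszul computation; and the vanishing of $\mc{H}^{>0}\Gr^F(\tilde\Xi*\mc{M})$ is then proved directly from the Hodge-theoretic structure of $\tilde\Xi$ together with the $S$-stalk property. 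I would rebuild your argument on that reduction rather than on a bar resolution.
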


We give the proof of Theorem \ref{thm:filtered exactness} in \S\ref{sec:vanishing}. We also have the following global generation result.

\begin{thm} \label{thm:hodge generation}
Assume $\lambda \in \mf{h}^*_\mb{R}$ is dominant and $\mc{M} \in \mhm(\mc{D}_{\widetilde{\lambda}})$. If $\mc{M}$ is globally generated as a $\tilde{\mc{D}}$-module, then so is the Hodge filtration $F_\bullet \mc{M}$. That is,
\[ F_p\mc{M} = \sum_{p_1 + p_2 \leq p} F_{p_1} \tilde{\mc{D}} \cdot \Gamma(F_{p_2}\mc{M}) \quad \text{for all $p$}.\]
\end{thm}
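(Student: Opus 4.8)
The plan is to deduce the theorem by a dévissage from a single model case, namely $\mc{M} = \Delta\Gamma\mc{M}$, whose Hodge filtration we then control directly using the computation $\Gr^F\tilde\Xi \cong \mc{O}_{\mrm{St}}$ of Theorem~\ref{thm:xitilde hodge filtration}.

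\textbf{Step 1 (dévissage).} First I would show that it is enough to produce, for the given globally generated $\mc{M}$, a short exact sequence
\[ 0 \to \mc{K} \to \mc{P} \to \mc{M} \to 0 \]
in $\mhm(\mc{D}_{\widetilde{\lambda}})$ with $F_\bullet \mc{P}$ globally generated. Indeed, morphisms of mixed Hodge modules are strict, so $0 \to F_p\mc{K} \to F_p\mc{P} \to F_p\mc{M} \to 0$ is exact for every $p$; since $\mc{K} \in \mhm(\mc{D}_{\widetilde{\lambda}})$ and $\lambda$ is dominant, Theorem~\ref{thm:filtered exactness} gives $\mrm{H}^1(\mc{B}, F_p\mc{K}) = 0$, so $\Gamma(\mc{B}, F_p\mc{P}) \to \Gamma(\mc{B}, F_p\mc{M})$ is surjective for all $p$. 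Applying the strict surjection $\mc{P} \to \mc{M}$ to the identity $F_p\mc{P} = \sum_{p_1 + p_2 \leq p} F_{p_1}\tilde{\mc{D}} \cdot \Gamma(F_{p_2}\mc{P})$ and using this surjectivity of global sections then yields $F_p\mc{M} = \sum_{p_1 + p_2 \leq p} F_{p_1}\tilde{\mc{D}} \cdot \Gamma(F_{p_2}\mc{M})$, which is the assertion of the theorem. So the whole problem is reduced to constructing such a $\mc{P}$.

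\textbf{Step 2 (the cover).} Since $\mc{M}$ is globally generated as a $\tilde{\mc{D}}$-module, the counit of the (monodromic) Beilinson--Bernstein adjunction $\Delta\Gamma\mc{M} \to \mc{M}$ is an epimorphism. The key input is that $\Delta\Gamma$ is realised, up to twist and cohomological shift, as convolution with the geometric big projective $\tilde\Xi$ on $\mc{B}\times\mc{B}$ --- this is the precise sense in which $\tilde\Xi$ ``controls the localization theory'', to be established in \S\ref{subsec:monodromic big projective} --- so that $\Delta\Gamma\mc{M}$ underlies a pro-object of $\mhm(\mc{D}_{\widetilde{\lambda}})$ and the counit is a morphism of pro-mixed Hodge modules. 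Working with pro-objects directly, or truncating $\tilde\Xi$ to a suitable honest projective, we take $\mc{P}$ to be this object; it comes with a strict surjection $\mc{P} \to \mc{M}$ in $\mhm(\mc{D}_{\widetilde{\lambda}})$, and it remains only to see that $F_\bullet\mc{P}$ is globally generated.

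\textbf{Step 3 (global generation of $F_\bullet\mc{P}$).} By Theorem~\ref{thm:xitilde hodge filtration}, $\Gr^F\tilde\Xi \cong \mc{O}_{\mrm{St}}$; equivalently, the Hodge filtration on $\tilde\Xi$ is generated over $\tilde{\mc{D}}_{\mc{B}\times\mc{B}}$ by its unique $G$-invariant section, which lies in $F_0$. Using Proposition~\ref{prop:associated graded pushforward} to compute the associated graded of the convolution defining $\mc{P}$, one finds that the Hodge filtration on $\mc{P} = \Delta\Gamma\mc{M}$ is generated over $\tilde{\mc{D}}$ by the image of the unit map $\Gamma\mc{M} \to \Gamma(\mc{P})$ (with the appropriate degree shifts coming from the $^!$-pullback and the proper pushforward in the convolution); in other words, the Hodge filtration on $\Delta\Gamma\mc{M}$ is the filtration localized from $\Gamma(F_\bullet\mc{M})$. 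Since these generators are \emph{global} sections of the relevant pieces of $F_\bullet\mc{P}$, this exhibits $F_\bullet\mc{P}$ as globally generated, and completes the argument. I expect the main obstacle to be precisely this geometric identification of $\Delta\Gamma$ with convolution against $\tilde\Xi$ \emph{compatibly with the Hodge filtrations}, together with the bookkeeping that converts the statement $\Gr^F\tilde\Xi \cong \mc{O}_{\mrm{St}}$ into the assertion that the resulting filtration on $\Delta\Gamma\mc{M}$ is the one generated over $\tilde{\mc{D}}$ by global sections; by contrast, the dévissage of Step~1 and the inputs from Theorems~\ref{thm:filtered exactness} and \ref{thm:twisted kodaira} (the latter entering through the former) are formal once this is in place.
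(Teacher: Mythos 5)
Your overall strategy is in fact the paper's own: everything is reduced to showing that the counit $\Delta\Gamma(\mc{M}, F_\bullet) \to (\mc{M}, F_\bullet)$, with the localized filtration on the source, underlies a morphism of mixed Hodge modules, whence strictness gives filtered surjectivity; this is Theorem \ref{thm:localization hodge module}, from which the paper deduces the present theorem in two lines. Your Step 1 is a slightly elaborated version of that deduction and is fine (you do not even need Theorem \ref{thm:filtered exactness} there: the image of $\Gamma(F_{p_2}\mc{P})$ already lands inside $\Gamma(F_{p_2}\mc{M})$, so global generation of $F_\bullet\mc{P}$ plus strictness of $\mc{P} \to \mc{M}$ suffices).

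The genuine gaps are in Steps 2--3, exactly at the points you dismiss as bookkeeping. First, convolution with $\tilde{\Xi}$ computes $\tilde{\mc{D}} \otimes_{U(\mf{g})} \Gamma(\mc{M})$, not $\Delta\Gamma(\mc{M})$: the latter is the $\widetilde{\lambda}$-generalized eigenspace summand, and it is \emph{not} a filtered summand in general (as noted in \S\ref{subsec:hodge localization functors}), so knowing $\Gr^F\tilde{\Xi} \cong \mc{O}_{\mrm{St}}$ and Proposition \ref{prop:associated graded pushforward} does not tell you that the Hodge filtration induced on $\Delta\Gamma(\mc{M})$ is the localized one. The paper handles this by replacing the kernel $\tilde{\Xi}^H$ with $j_{S*}j_S^*\tilde{\Xi}^H$, where $S$ is the set of singular simple roots of $\lambda$, and uses the $S$-stalk property of the big projective (Propositions \ref{prop:xi mixed stalks}, \ref{prop:xitilde mixed stalks}) together with the convolution vanishing of Proposition \ref{prop:convolution vanishing} to show that $\mc{H}^0(\tilde{\Xi}*\mc{M}) \to \mc{H}^0(j_{S*}j_S^*\tilde{\Xi}^H * \mc{M})$ is \emph{filtered} surjective and that the target is exactly $\Delta\Gamma(\mc{M})$ with the localized filtration (Lemma \ref{lem:xitilde S}). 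Second, there is a twist mismatch you do not address: $\tilde{\Xi}^H$ is $(\widetilde{0},\widetilde{0})$-monodromic, so its convolution with a $\widetilde{\lambda}$-twisted $\mc{M}$ is only a filtered operation, not a mixed-Hodge-module operation, once $\lambda \neq 0$ --- the pushforward $\widetilde{\mrm{pr}}_{1\dagger}$ along a direction carrying a nontrivial residual twist does not underlie a complex of Hodge modules (see the remark after Proposition \ref{prop:monodromic filtered pushforward}). Hence your assertion that ``$\Delta\Gamma\mc{M}$ underlies a pro-object of $\mhm(\mc{D}_{\widetilde{\lambda}})$ and the counit is a morphism of pro-mixed Hodge modules'' does not follow from Theorem \ref{thm:xitilde hodge filtration}; the paper must first deform the kernels to twist $(\widetilde{\lambda}, \widetilde{-\lambda})$ without changing their underlying filtered modules (Lemma \ref{lem:xitilde deformation}, using projectivity of $j_{S*}j_S^*\Xi^H$ and the $f^s$-deformations of \S\ref{sec:deformations}), and then verify that convolution with a \emph{good} pro-kernel at matching twist is computed by the Hodge-module convolution (Proposition \ref{prop:good convolution}). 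These two steps are the actual content of \S\ref{sec:hodge generation}; as written, your Steps 2--3 assume the statement to be proved.
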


We give the proof of Theorem \ref{thm:hodge generation} in \S\ref{sec:hodge generation}.

\subsection{Localization functors and the Hodge filtration} \label{subsec:hodge localization functors}

In Beilinson and Bernstein's work, cohomology vanishing and global generation are used to establish good properties of the global sections functor $\Gamma$ and its left adjoint $\Delta$. We explain here the consequences of Theorems \ref{thm:filtered exactness} and \ref{thm:hodge generation} for the interaction of these functors with the Hodge filtration.

Recall that taking global sections defines a functor, called \emph{globalization},
\[ \Gamma \colon \Mod(\mc{D}_{\widetilde{\lambda}}) \to \Mod(U(\mf{g}))_{\widetilde{\chi_\lambda}},\]
where $\chi_\lambda \colon Z(U(\mf{g})) \to \mb{C}$ is the character corresponding to $\lambda$ under the Harish-Chandra isomorphism, and $\Mod(U(\mf{g}))_{\widetilde{\chi_\lambda}}$ is the category of $U(\mf{g})$-modules on which $Z(U(\mf{g}))$ acts with generalized character $\chi_\lambda$. The functor $\Gamma$ has a left adjoint
\[ \Delta = \Delta_{\widetilde{\lambda}} \colon \Mod(U(\mf{g}))_{\widetilde{\chi_\lambda}} \to  \Mod(\mc{D}_{\widetilde{\lambda}}),\]
called \emph{localization}, given as follows. For $V \in \Mod(U(\mf{g}))_{\widetilde{\chi_\lambda}}$, consider the $\tilde{\mc{D}}$-module $\tilde{\mc{D}} \otimes_{U(\mf{g})} V$. Under the action of the center $S(\mf{h}) \subset \tilde{\mc{D}}$, this decomposes into generalized eigenspaces as
\[ \tilde{\mc{D}} \otimes_{U(\mf{g})} V = \bigoplus_{\mu \in W\lambda} (\tilde{\mc{D}} \otimes_{U(\mf{g})} V)_{\widetilde{\mu}},\]
where, $(-)_{\tilde{\mu}}$ denotes the $(\mu - \rho)$-generalized eigenspace under the action of $S(\mf{h})$. The localization functor is given by extracting the appropriate summand:
\[\Delta(V) = (\tilde{\mc{D}} \otimes_{U(\mf{g})} V)_{\widetilde{\lambda}}.\]
The main result of Beilinson and Bernstein is that, when $\lambda$ is regular and integrally dominant, the adjoint functors $\Gamma$ and $\Delta$ form inverse equivalences of categories.

Consider now the filtered version of this story. Let $F_\bullet U(\mf{g})$ be the PBW filtration on $U(\mf{g})$, $\Mod(U(\mf{g}), F_\bullet)_{\widetilde{\chi_\lambda}}$ denote the category of filtered $U(\mf{g})$-modules $V$ with generalized infinitesimal character $\chi_\lambda$, and $\Mod(\mc{D}_{\widetilde{\lambda}}, F_\bullet)$ denote the category of filtered $\tilde{\mc{D}}$-modules such that the underlying $\tilde{\mc{D}}$-module lies in $\Mod(\mc{D}_{\widetilde{\lambda}})$. The globalization functor lifts to
\[ \Gamma \colon \Mod(\mc{D}_{\widetilde{\lambda}}, F_\bullet) \to \Mod(U(\mf{g}), F_\bullet)_{\widetilde{\chi_\lambda}} \]
by setting $F_p \Gamma(\mc{M}) = \Gamma(F_p \mc{M})$. This has a left adjoint sending $(V, F_\bullet)$ to the $\tilde{\mc{D}}$-module $\Delta(V)$ with filtration given by the image
\[ F_p \Delta(V) = \operatorname{image}\left(\sum_{p_1 + p_2 \leq p} F_{p_1} \tilde{\mc{D}} \otimes F_{p_2} V \to \tilde{\mc{D}} \otimes_{U(\mf{g})} V \to \Delta(V)\right) \]
of the natural filtration on $\tilde{\mc{D}} \otimes_{U(\mf{g})} V$ under the projection to the $(\lambda - \rho)$-generalized eigenspace. Note that, while $\Delta(V)$ is a summand of $\tilde{\mc{D}} \otimes_{U(\mf{g})} V$, it is \emph{not} a filtered summand in general.

Now, for $\lambda \in \mf{h}^*_\mb{R}$, let us write $\mc{C}_{\widetilde{\lambda}} \subset \Mod(\mc{D}_{\widetilde{\lambda}}, F_\bullet)$ for the essential image of the forgetful functor
\[ \mhm(\mc{D}_{\widetilde{\lambda}}) \to \Mod(\mc{D}_{\widetilde{\lambda}}, F_\bullet).\]
We have the following corollary of Theorem \ref{thm:filtered exactness}.

\begin{cor} \label{cor:filtered exactness}
Assume $\lambda \in \mf{h}^*_\mb{R}$ is dominant. Then the restriction of $\Gamma$ to $\mc{C}_{\widetilde{\lambda}}$ is filtered exact. That is, if
\[ 0 \to (\mc{M}, F_\bullet) \to (\mc{N}, F_\bullet) \to (\mc{P}, F_\bullet) \to 0 \]
is a strict exact sequence in $\mc{C}_{\widetilde{\lambda}}$, then the sequence in $\Mod(U(\mf{g}))_{\widetilde{\chi_\lambda}}$
\[ 0 \to \Gamma(\mc{M}, F_\bullet) \to \Gamma(\mc{N}, F_\bullet) \to \Gamma(\mc{P}, F_\bullet)  \to 0 \]
is also strict exact.
\end{cor}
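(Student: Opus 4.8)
The plan is to deduce the corollary directly from Theorem~\ref{thm:filtered exactness}: all the content is in that vanishing statement, and what remains is elementary homological bookkeeping about strict filtered sequences. First I would unwind strictness. For filtered $\tilde{\mc{D}}$-modules with exhaustive increasing filtrations, a sequence $0 \to (\mc{M}, F_\bullet) \to (\mc{N}, F_\bullet) \to (\mc{P}, F_\bullet) \to 0$ is strict exact if and only if, for every $p$, the induced sequence of coherent $\mc{O}_{\mc{B}}$-modules
\[ 0 \to F_p\mc{M} \to F_p\mc{N} \to F_p\mc{P} \to 0 \]
is exact: injectivity on the left is automatic from injectivity of $\mc{M} \to \mc{N}$, exactness in the middle is precisely the strictness condition $F_p\mc{M} = \mc{M} \cap F_p\mc{N}$, and surjectivity on the right is the condition that $F_p\mc{P}$ is the image of $F_p\mc{N}$; conversely, exactness for all $p$ recovers exactness of the unfiltered sequence by passing to the colimit, along with both strictness conditions. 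So the hypothesis provides, for each $p$, a short exact sequence of coherent sheaves on $\mc{B}$.

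Next I would apply $\mrm{R}\Gamma(\mc{B}, -)$ to this short exact sequence and read off the long exact cohomology sequence
\[ 0 \to \mrm{H}^0(\mc{B}, F_p\mc{M}) \to \mrm{H}^0(\mc{B}, F_p\mc{N}) \to \mrm{H}^0(\mc{B}, F_p\mc{P}) \to \mrm{H}^1(\mc{B}, F_p\mc{M}) \to \cdots. \]
Since $\mc{M}$ lies in $\mc{C}_{\widetilde{\lambda}}$, it is the underlying filtered $\tilde{\mc{D}}$-module of some object of $\mhm(\mc{D}_{\widetilde{\lambda}})$; as $\lambda$ is dominant, Theorem~\ref{thm:filtered exactness} gives $\mrm{H}^1(\mc{B}, F_p\mc{M}) = 0$ for all $p$. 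Hence $0 \to \Gamma(F_p\mc{M}) \to \Gamma(F_p\mc{N}) \to \Gamma(F_p\mc{P}) \to 0$ is exact for every $p$. By the definition $F_p\Gamma(-) := \Gamma(F_p -)$, this is exactly the assertion that
\[ 0 \to \Gamma(\mc{M}, F_\bullet) \to \Gamma(\mc{N}, F_\bullet) \to \Gamma(\mc{P}, F_\bullet) \to 0 \]
is strict exact in $\Mod(U(\mf{g}), F_\bullet)_{\widetilde{\chi_\lambda}}$; exhaustiveness of the target filtrations is not an issue, since $\mc{B}$ is Noetherian, so $\Gamma$ commutes with the filtered colimit $\mc{M} = \bigcup_p F_p\mc{M}$.

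There is no real obstacle here; the corollary is essentially immediate once Theorem~\ref{thm:filtered exactness} is in hand. The only point worth flagging is that $\Gamma$ is \emph{not} filtered exact on all of $\Mod(\mc{D}_{\widetilde{\lambda}}, F_\bullet)$: the argument uses crucially that the filtrations are Hodge filtrations of twisted mixed Hodge modules, via the higher cohomology vanishing of Theorem~\ref{thm:filtered exactness}. Accordingly one must ensure the input sequence is genuinely strict exact in $\mc{C}_{\widetilde{\lambda}}$, not merely term-by-term exact after forgetting filtrations; once strictness is assumed, the reduction to the vanishing above is forced.
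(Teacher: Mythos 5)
Your proposal is correct and is exactly the deduction the paper intends: the corollary is stated without proof as an immediate consequence of Theorem \ref{thm:filtered exactness}, and your argument (strictness $\Leftrightarrow$ termwise exactness of the $F_p$-level sequences of coherent sheaves, then the long exact cohomology sequence plus the vanishing $\mrm{H}^{1}(\mc{B}, F_p\mc{M})=0$, and passage to the colimit for the unfiltered sequence) is precisely that bookkeeping. No gaps.
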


Similarly, Theorem \ref{thm:hodge generation} and Beilinson and Bernstein's results imply the following.

\begin{cor} \label{cor:hodge full faithful}
Assume $\lambda \in \mf{h}^*_\mb{R}$ is regular dominant. Then, for any $(\mc{M}, F_\bullet) \in \mc{C}_{\widetilde{\lambda}}$, the natural morphism
\[ \Delta \Gamma(\mc{M}, F_\bullet) \to (\mc{M}, F_\bullet) \]
is a filtered isomorphism. Hence, the functor
\[ \Gamma \colon \mc{C}_{\widetilde{\lambda}} \to \Mod(U(\mf{g}), F_\bullet)_{\widetilde{\chi}_\lambda} \]
is fully faithful.
\end{cor}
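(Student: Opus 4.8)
The plan is to bootstrap from Beilinson and Bernstein's equivalence of categories together with Theorem~\ref{thm:hodge generation}, after which full faithfulness is purely formal. Since $\lambda \in \mf{h}^*_\mb{R}$ is regular dominant, it is in particular regular and integrally dominant, so Beilinson and Bernstein's theorem applies: the counit $\Delta\Gamma(\mc{M}) \to \mc{M}$ is an isomorphism of $\tilde{\mc{D}}$-modules for every $\mc{M} \in \Mod(\mc{D}_{\widetilde\lambda})$, and in particular every such $\mc{M}$ is globally generated.

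The first step is to show that the filtered counit $\Delta\Gamma(\mc{M}, F_\bullet) \to (\mc{M}, F_\bullet)$ is a \emph{filtered} isomorphism for $(\mc{M}, F_\bullet) \in \mc{C}_{\widetilde\lambda}$. The underlying morphism of $\tilde{\mc{D}}$-modules is already an isomorphism by the previous step, so it suffices to check that $F_p\Delta\Gamma(\mc{M}) \to F_p\mc{M}$ is surjective for each $p$; injectivity is automatic, since $F_p\Delta\Gamma(\mc{M})$ is a submodule of $\Delta\Gamma(\mc{M})$ and the counit is an injective filtered morphism. For surjectivity I would unwind the definitions: the counit is the restriction to the summand $\Delta\Gamma(\mc{M}) \subset \tilde{\mc{D}} \otimes_{U(\mf{g})} \Gamma(\mc{M})$ of the $\tilde{\mc{D}}$-linear action map $\tilde{\mc{D}} \otimes_{U(\mf{g})} \Gamma(\mc{M}) \to \mc{M}$, so that the image of $F_p\Delta\Gamma(\mc{M})$ in $\mc{M}$ is exactly $\sum_{p_1 + p_2 \leq p} F_{p_1}\tilde{\mc{D}} \cdot \Gamma(F_{p_2}\mc{M})$. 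Since $\mc{M}$ is globally generated and $\lambda$ is dominant, Theorem~\ref{thm:hodge generation} identifies this sum with $F_p\mc{M}$, which is exactly what is required.

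Granting the filtered counit isomorphism, full faithfulness of $\Gamma$ on $\mc{C}_{\widetilde\lambda}$ is formal from the adjunction $\Delta \dashv \Gamma$ between $\Mod(U(\mf{g}), F_\bullet)_{\widetilde{\chi_\lambda}}$ and $\Mod(\mc{D}_{\widetilde\lambda}, F_\bullet)$: for $(\mc{M}, F_\bullet), (\mc{N}, F_\bullet) \in \mc{C}_{\widetilde\lambda}$ the composite
\[ \Hom(\mc{M}, \mc{N}) \xrightarrow{\ \Gamma\ } \Hom(\Gamma\mc{M}, \Gamma\mc{N}) \xrightarrow{\ \sim\ } \Hom(\Delta\Gamma\mc{M}, \mc{N}) \xrightarrow{\ \sim\ } \Hom(\mc{M}, \mc{N}) \]
is the identity, by the triangle identities and naturality of the counit, the last two arrows being the adjunction isomorphism and precomposition with the counit isomorphism. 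Hence $\Gamma$ is bijective on these Hom-sets.

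The main thing requiring care is the bookkeeping in the second step: checking cleanly that the filtered localization counit really is the restriction of the action map — so that the image of its $p$-th filtered piece is precisely the expression appearing in Theorem~\ref{thm:hodge generation} — and that $F_p\Delta\Gamma(\mc{M})$ is a genuine subsheaf of $\Delta\Gamma(\mc{M})$ rather than merely an object equipped with a map to it. Everything else is quoted from Beilinson--Bernstein, from Theorem~\ref{thm:hodge generation}, or is abstract nonsense about adjoint functors.
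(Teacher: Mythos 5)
Your proof is correct and follows exactly the route the paper intends: the paper states this corollary as an immediate consequence of Theorem \ref{thm:hodge generation} together with Beilinson--Bernstein, and your argument (counit is a $\tilde{\mc D}$-module isomorphism by Beilinson--Bernstein, the image of $F_p\Delta\Gamma(\mc M)$ in $\mc M$ is $\sum_{p_1+p_2\le p}F_{p_1}\tilde{\mc D}\cdot\Gamma(F_{p_2}\mc M)=F_p\mc M$ by Theorem \ref{thm:hodge generation}, then formal adjunction nonsense for full faithfulness) is precisely the intended filling-in of that one-line deduction, with the relevant bookkeeping (the action map factoring through the $\widetilde\lambda$-eigenspace projection, $F_p\Delta(V)$ being a genuine subsheaf by definition) handled correctly.
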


More generally, we have the following.

\begin{thm} \label{thm:localization hodge module}
Let $\mc{M} \in \mhm(\mc{D}_{\widetilde{\lambda}})$ with $\lambda \in \mf{h}^*_{\mb{R}}$ dominant. Then the filtered $\tilde{\mc{D}}$-module $\Delta \Gamma(\mc{M}, F_\bullet)$ underlies an object in $\mhm(\mc{D}_{\widetilde{\lambda}})$ and the morphism $\Delta \Gamma(\mc{M}, F_\bullet) \to (\mc{M}, F_\bullet)$ underlies a morphism of mixed Hodge modules.
\end{thm}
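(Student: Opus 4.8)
The plan is to first prove the \emph{unfiltered} assertion --- that $\Delta\Gamma(\mc{M})$ underlies a mixed Hodge module and the counit $\Delta\Gamma(\mc{M}) \to \mc{M}$ underlies a morphism of such --- and then to deduce the filtered statement formally from Corollary~\ref{cor:filtered exactness} and Theorem~\ref{thm:hodge generation}.

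For the unfiltered assertion, the point is that the endofunctor $\Delta_\lambda \Gamma_\lambda$ is computed by convolution with a monodromic kernel $\mc{K}_\lambda$ on $\mc{B} \times \mc{B}$ that underlies a (complex of) twisted mixed Hodge module(s). At regular infinitesimal character $\mc{K}_\lambda$ is the twisted structure sheaf of the diagonal, reflecting $\Delta\Gamma \cong \id$ there (Corollary~\ref{cor:hodge full faithful}); at a dominant, possibly singular, $\lambda$ one obtains $\mc{K}_\lambda$ from the regular kernel by the deformation and pushforward constructions of \S\ref{sec:deformations}, so that it is a geometric incarnation of the big projective $\tilde\Xi$ of Theorem~\ref{thm:intro xi hodge}. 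Because the six-functor operations lift to mixed Hodge modules and the adjunction counit $\Delta\Gamma \to \id$ is induced by a morphism of kernels $\mc{K}_\lambda \to \mc{O}_\Delta$, the complex $\Delta\Gamma(\mc{M})$ (a priori in the derived category) lies in $\mrm{D}^b\mhm$ compatibly with the map to $\mc{M}$; taking $\mrm{H}^0$ yields the assertion, provided one checks that the convolution is concentrated in degree $0$ so that it computes the underived functor $\Delta\Gamma$.

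Granting this, write $\mc{P}$ for the resulting mixed Hodge module, so that its underlying $\tilde{\mc{D}}$-module is $\Delta\Gamma(\mc{M})$ and the counit $c \colon \mc{P} \to \mc{M}$ is a morphism of mixed Hodge modules. Since morphisms of mixed Hodge modules are strict and $\Gamma$ is filtered exact on $\mc{C}_{\widetilde{\lambda}}$ (Corollary~\ref{cor:filtered exactness}), the map $\Gamma(c)$ is strict; it is moreover an isomorphism of $\tilde{\mc{D}}$-modules, since for $\lambda$ dominant the Beilinson--Bernstein unit $\Gamma(\mc{M}) \to \Gamma\Delta\Gamma(\mc{M})$ is an isomorphism and $\Gamma(c)$ is its inverse. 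A strict isomorphism is a filtered isomorphism, so $\Gamma(F_\bullet \mc{P}) = \Gamma(F_\bullet \mc{M})$ under the identification $\Gamma(\mc{P}) \cong \Gamma(\mc{M})$. On the other hand $\Delta\Gamma(\mc{M})$ is globally generated as a $\tilde{\mc{D}}$-module, being a direct summand of the globally generated module $\tilde{\mc{D}} \otimes_{U(\mf{g})} \Gamma(\mc{M})$; hence Theorem~\ref{thm:hodge generation} identifies $F_\bullet\mc{P}$ with the filtration generated from $\Gamma(F_\bullet\mc{P}) = \Gamma(F_\bullet\mc{M})$, which is exactly the localization filtration $F_\bullet\Delta\Gamma(\mc{M})$. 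Thus $\Delta\Gamma(\mc{M},F_\bullet) = (\mc{P},F_\bullet)$ as filtered $\tilde{\mc{D}}$-modules and $c$ is the required morphism.

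The main obstacle is the construction and analysis of $\mc{K}_\lambda$ in the singular case: one must verify that the wall-crossed kernel is a bounded complex of mixed Hodge modules (equivalently, that convolution with the pro-object $\tilde\Xi$ stabilizes into a single cohomological degree), and that the resulting integral transform is indeed $\Delta\Gamma$ with the kernel morphism inducing the counit. I expect this to be handled together with the proof of Theorem~\ref{thm:intro xi hodge} in \S\ref{sec:vanishing}, using the bounded cohomological dimension of monodromic localization and the semi-continuity results of Theorem~\ref{thm:semi-continuity}; once the unfiltered assertion is in place, the remainder of the argument is purely formal.
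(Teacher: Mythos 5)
Your high-level strategy (realize $\Delta\Gamma$ as convolution with a Hodge-theoretic incarnation of the big projective, adjusted in twist by the deformations of \S\ref{sec:deformations}) is indeed the paper's strategy, but the way you close the argument has a genuine circularity. In the paper, Theorem \ref{thm:hodge generation} has no independent proof: it is deduced \emph{from} Theorem \ref{thm:localization hodge module} (surjectivity of $\Delta\Gamma(\mc{M})\to\mc{M}$ plus strictness of morphisms of mixed Hodge modules gives filtered surjectivity, which \emph{is} global generation of $F_\bullet\mc{M}$). Your second step invokes Theorem \ref{thm:hodge generation} for the module $\mc{P}=\Delta\Gamma(\mc{M})$ in order to identify the Hodge filtration $F_\bullet\mc{P}$ with the localization filtration; but knowing that $F_\bullet\mc{P}$ is generated by its global sections is exactly the content of Theorem \ref{thm:localization hodge module} applied to $\mc{P}$ (or of Theorem \ref{thm:hodge generation}, which rests on it). So the ``purely formal'' reduction of the filtered statement to the unfiltered one does not go through as stated: the identification of the Hodge filtration of the convolution with the localization filtration is the hard point and cannot be outsourced to Theorem \ref{thm:hodge generation}.

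That identification is precisely what \S\ref{sec:hodge generation} does, and it is also where your description of the kernel needs correction. The kernel computing $\Delta\Gamma$ at the filtered level is $\mc{H}^0$ of convolution with $j_{S*}j_S^*\tilde{\Xi}^H$, a good pro-object supported on $X_S=\mc{B}\times_{\mc{P}_S}\mc{B}$ for $S$ the set of singular simple roots of $\lambda$ (Lemma \ref{lem:xitilde S}, which uses Proposition \ref{prop:convolution vanishing} and the $S$-stalk property); it is \emph{not} obtained by deforming the diagonal kernel from regular $\lambda$, since the deformations of \S\ref{sec:deformations} move the twist, not the support, and at singular $\lambda$ the support is genuinely larger than the diagonal. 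The twist of this kernel is $(\widetilde{0},\widetilde{0})$, so its convolution with a $\widetilde{\lambda}$-twisted $\mc{M}$ is a priori only a filtered operation; the paper's extra step (Lemma \ref{lem:xitilde deformation}) produces pro-objects $\mc{K}_{S,\lambda},\mc{K}_{1,\lambda}$ at twist $(\widetilde{\lambda},\widetilde{-\lambda})$ with the \emph{same} underlying filtered modules, and Proposition \ref{prop:good convolution} then shows the mixed-Hodge-module convolution has constant cohomologies whose underlying filtered complexes agree with the filtered convolution. This yields simultaneously the mixed Hodge structure on $\Delta\Gamma(\mc{M},F_\bullet)$ and the fact that the counit is a morphism of mixed Hodge modules, with no appeal to Theorem \ref{thm:hodge generation}. (Note also the paper's remark that the resulting Hodge structure on the target need not be the given one on $\mc{M}$; only the filtered module is identified, which suffices for the theorem and for deducing Theorem \ref{thm:hodge generation}.) To repair your proposal you would need either an independent proof of Theorem \ref{thm:hodge generation} or, as in the paper, a direct filtered identification of $\Delta\Gamma(\mc{M},F_\bullet)$ with the convolution against a kernel whose Hodge filtration is known — which ultimately rests on Theorem \ref{thm:xitilde hodge filtration}.
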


We give the proof of Theorem \ref{thm:localization hodge module}, which implies Theorem \ref{thm:hodge generation}, in \S\ref{sec:hodge generation}.

\subsection{Integrating the polarization and the Schmid-Vilonen conjecture} \label{subsec:polarization}

We now consider how to take global sections of polarized Hodge modules and recall the main conjecture of \cite{SV} concerning this process.

The core construction (due to \cite{SV}) is as follows. Fix once and for all a maximal compact subgroup $U_\mb{R} \subset G$ and a base point in $\tilde{\mc{B}}$ defining an equivariant embedding $U_\mb{R} \subset \tilde{\mc{B}}$ (see \S\ref{subsec:choices} for a discussion of the dependence of the construction on these choices). Suppose that $\lambda \in \mf{h}^*_\mb{R}$, $\mc{M}, \mc{M}' \in \Mod(\mc{D}_{\widetilde{\lambda}})$ and
\[ \mf{s} \colon \mc{M}_{\tilde{\mc{B}}} \otimes \overline{\mc{M}'_{\tilde{\mc{B}}}} \to \Db_{\tilde{\mc{B}}}\]
is a $\mc{D}_{\tilde{\mc{B}}} \otimes \overline{\mc{D}_{\tilde{\mc{B}}}}$-linear distribution-valued sesquilinear pairing. Here we write $\mc{M}_{\tilde{\mc{B}}}$, $\mc{M}'_{\tilde{\mc{B}}}$ for the monodromic $\mc{D}$-modules on $\tilde{\mc{B}}$ corresponding to $\mc{M}$ and $\mc{M}'$. For
\[ m \in \Gamma(\mc{M}) = \Gamma(\tilde{\mc{B}}, \mc{M}_{\tilde{\mc{B}}})^H \quad \text{and} \quad m' \in \Gamma(\mc{M}') = \Gamma(\tilde{\mc{B}}, \mc{M}'_{\tilde{\mc{B}}})^H,\]
the distribution $\mf{s}(m, \overline{m}')$ is $(\lambda - \rho)$-monodromic, i.e.,
\[ (i(h) - (\lambda - \rho))^n \mf{s}(m, \overline{m}') = (\overline{i(h)} - (\lambda - \rho)(\bar{h}))^n \mf{s}(m, \overline{m}') = 0,\]
for all $h \in \mf{h}$ and some $n > 0$, where $i \colon S(\mf{h}) \to \mc{D}_{\tilde{\mc{B}}}$ denotes the inclusion coming from the action of $H$. It therefore makes sense to restrict $\mf{s}(m, \overline{m}')$ to $U_\mb{R} \subset \tilde{\mc{B}}$; we set
\[ \Gamma(\mf{s})(m, \overline{m}') = \int_{U_\mb{R}} \mf{s}(m, \overline{m}')|_{U_\mb{R}},\]
where we take the integral with respect to the unique invariant volume form on $U_\mb{R}$ of volume $1$. This defines a $\mf{u}_\mb{R}:= \mrm{Lie}(U_\mb{R})$-invariant pairing
\[ \Gamma(\mf{s}) \colon \Gamma(\mc{M}) \otimes \overline{\Gamma(\mc{M}')} \to \mb{C}.\]

Now suppose that $\mc{M} \in \mhm(\mc{D}_{\widetilde{\lambda}})$ is a polarized Hodge module of weight $w$ with polarization $S$. Recall that the polarization is defined as a distribution-valued Hermitian form on $\mc{M}_{\tilde{\mc{B}}}$. The \emph{globalization} of $\mc{M}$ is the triple
\[ (\Gamma(\mc{M}), \Gamma(F_\bullet \mc{M}), (-1)^{\dim \mc{B}} \Gamma(S)),\]
a $U(\mf{g})$-module equipped with a good filtration and a $\mf{u}_\mb{R}$-invariant Hermitian form $\Gamma(S)$. The main conjecture of \cite{SV} is:

\begin{conj}[{\cite[Conjecture 5.12]{SV}}] \label{conj:schmid-vilonen}
Assume $\lambda \in \mf{h}^*_\mb{R}$ is dominant. Then the tuple $(\Gamma(\mc{M}), \Gamma(F_\bullet \mc{M}), (-1)^{\dim \mc{B}}\Gamma(S))$ is a polarized Hodge structure of weight $w$. That is, $\Gamma(S)$ is $(-1)^{p + w - \dim \mc{B}}$-definite on the subspace
\[ \Gamma(F_p\mc{M}) \cap \Gamma(F_{p - 1}\mc{M})^\perp \subset \Gamma(\mc{M})\]
for all $p$.
\end{conj}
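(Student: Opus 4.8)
The plan is to establish Conjecture~\ref{conj:schmid-vilonen} by a deformation argument: one reduces to an irreducible polarized Hodge module, deforms it within the dominant cone to one whose global sections are tempered (where positivity is visible by hand), and transports the signature of the integrated polarization along the deformation using the wall-crossing results of \S\ref{sec:deformations}. This is, in outline, the strategy used to prove the weaker Theorem~\ref{thm:intro characters}, upgraded so as to retain non-degeneracy.

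First I would reduce to the essential case. The polarization is defined only on pure objects, and the weight filtration is by $\tilde{\mc{D}}$-submodules; by Corollary~\ref{cor:filtered exactness} together with Beilinson-Bernstein exactness, $W_\bullet\Gamma(\mc{M})$ is the weight filtration of $\Gamma(\mc{M})$ and $\Gamma$ is filtered exact on short exact sequences of mixed Hodge modules, so it suffices to treat $\mc{M}$ pure. Since the assignments $\mc{M}\mapsto\Gamma(\mc{M})$, $\mc{M}\mapsto\Gamma(F_\bullet\mc{M})$ and $(\mc{M},S)\mapsto\Gamma(S)$ are additive, one further reduces to $\mc{M}$ irreducible, so $\mc{M}=j_{!*}\mc{L}$ for an affinely embedded $j\colon Q\hookrightarrow\mc{B}$ and an irreducible polarized variation of Hodge structure $(\mc{L},S')$ on $Q$.

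Next I would run the deformation. Choosing a positive $f\in\Gamma_\mb{R}(\tilde Q)^{\mon}$ with $\varphi(f)$ pointing suitably, the ray $s\mapsto j_{!*}f^s\mc{L}$ can be arranged to reach, at one end, a Hodge module whose globalization is tempered (for split groups, the spherical principal series of Theorem~\ref{thm:intro tempered}); there the representation is unitary, $\Gr^F\Gamma$ is the explicit sheaf $\mc{O}_{\mc{N}^*_K}$, and a direct check against Definition~\ref{defn:polarized hodge str} confirms the conjecture at that endpoint. Away from the discrete set of reducibility walls (Proposition~\ref{prop:hyperplanes}), $f^s\mc{L}$ extends cleanly, $\Gr^Fj_{!*}f^s\mc{L}$ is constant by Theorem~\ref{thm:semi-continuity} and Proposition~\ref{prop:monodromic semi-continuity}, the Hodge pieces have no higher cohomology and vary flatly by Theorem~\ref{thm:filtered exactness}, and the meromorphic family of pairings \eqref{eq:jantzen 2} is regular; so the signature of $\Gamma(S_s)$ on each graded piece $\Gr^F_p\Gamma(j_{!*}f^s\mc{L})$ is real-analytic, hence locally constant. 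At a wall $s_0$, the Jantzen machinery (Theorem~\ref{thm:jantzen}, Lemma~\ref{lem:beilinson polarization}) identifies the Jantzen filtration on $j_{!*}f^{s_0}\mc{L}$ with its weight filtration and the Jantzen forms with polarizations of the correct weight; comparing the one-sided limits via the canonical isomorphisms $\Gr^Fj_!f^{s_0-\epsilon}\mc{L}\cong\Gr^Fj_!f^{s_0}\mc{L}$ and $\Gr^Fj_*f^{s_0+\epsilon}\mc{L}\cong\Gr^Fj_*f^{s_0}\mc{L}$, one extracts the jump in signature and checks it matches the prediction of the weight-$w$ polarized Hodge structure formula. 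The last check is a bookkeeping exercise with the sign and Tate-twist conventions of Definition~\ref{defn:polarized hodge str} and Notation~\ref{notation:functors}, propagating the $(-1)^q$-definiteness of $S|_{V^{p,q}}$ through the Lefschetz decomposition of the Beilinson gluing functor $\pi_f^0$.

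The hard part is non-degeneracy. The deformation tracks characters (of $K$-types, in the equivariant setting), and so it pins down the signature of $\Gamma(S_s)$ on each Hodge piece only once one knows a priori that this restricted form is non-degenerate of full rank all along the ray; without the finiteness afforded by $K$-equivariance, controlling the rank of $\Gamma(S_s)|_{\Gr^F_p\Gamma(\mc{M})}$ across walls requires genuinely new input --- essentially, that the Hodge-Riemann positivity of $S$ on $\mc{M}$ is not lost upon integrating over $U_\mb{R}$. A direct attack would presumably go through a relative Hodge-Riemann bilinear relation for the constant map $\mc{B}\to\mrm{pt}$, in the spirit of Saito's decomposition theorem combined with the Kodaira vanishing of Theorem~\ref{thm:twisted kodaira} but applied to the whole Hodge filtration rather than only its associated graded; making this precise appears to be the crux of what remains, and is why we instead prove the weaker Theorem~\ref{thm:intro characters}, which already suffices for the unitarity criterion.
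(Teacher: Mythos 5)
You should be aware that the statement you are trying to prove is stated in the paper as a \emph{conjecture}, and the paper does not prove it: the authors say explicitly that they do not know how to establish Conjecture \ref{conj:schmid-vilonen} in general, and they prove only (a) the tempered Harish-Chandra case (Theorem \ref{thm:tempered hodge}), (b) non-degeneracy of $\Gamma(S)$ and definiteness on the lowest Hodge piece (Proposition \ref{prop:polarization non-degenerate}), and (c) the character-level statement $\chi^{\mathit{sig}} = \zeta^c\chi^H \bmod \zeta^2-1$ for Harish-Chandra sheaves (Theorems \ref{thm:hodge and signature K} and \ref{thm:hodge and signature K'}), which is weaker in two ways: it concerns only $K$-equivariant modules, and it says nothing about non-degeneracy of $\Gamma(S)$ restricted to the Hodge filtration. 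Your proposal is, in substance, a sketch of the paper's proof of that weaker theorem, and you yourself concede at the end that the crux --- keeping track of the rank of $\Gamma(S)$ on each Hodge piece along the deformation, i.e.\ the Hodge--Riemann-type positivity after integration over $U_\mb{R}$ --- remains unproved. So what you have written is not a proof of the conjecture; it is an honest identification of why the deformation method only yields the signature-character statement.

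Two further points about the gap, so it is concrete. First, the conjecture is stated for an arbitrary polarized object of $\mhm(\mc{D}_{\widetilde{\lambda}})$, not only for Harish-Chandra sheaves; your deformation-to-tempered endpoint (and the appeal to known unitarity of tempered representations) only exists in the $K$-equivariant setting, so the strategy does not even get off the ground in the generality of the conjecture. Second, even in the Harish-Chandra setting, "continuity of eigenvalues" controls the signature of $\Gamma(S_s)$ on each finite-dimensional $K$-multiplicity space, not its restriction to the subspaces $\Gamma(F_p)\cap\Gamma(F_{p-1})^\perp$: locally constant signature there presupposes exactly the non-degeneracy you are trying to prove, and the wall-crossing bookkeeping via Theorems \ref{thm:semi-continuity} and \ref{thm:jantzen} compares associated gradeds and Jantzen forms, which again only sees characters modulo $\zeta^2-1$. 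This is precisely why the paper routes the unitarity criterion through Theorem \ref{thm:hodge and signature K'} rather than through the conjecture itself; if you want to contribute beyond the paper, the missing input you name in your last paragraph (a global Hodge--Riemann relation for $\Gamma$, not just Kodaira-type vanishing for $\Gr^F$) is indeed where the difficulty lies.
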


We do not know how to prove Conjecture \ref{conj:schmid-vilonen} in general, although it has been explicitly checked in some cases (see e.g., \cite{SV2} and \cite{chaves}). However, we do have the following much weaker statement.

\begin{prop} \label{prop:polarization non-degenerate}
In the setting above, assume that $F_{p_0} \mc{M} \neq 0$ and $F_{p_0 - 1} \mc{M} = 0$. Then $\Gamma(S)$ is non-degenerate on $\Gamma(\mc{M})$ and $(-1)^{p_0 + w - \dim \mc{B}}$-definite on $\Gamma(F_{p_0}\mc{M})$.
\end{prop}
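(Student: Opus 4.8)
The plan is to prove the definiteness on the lowest Hodge piece first and then deduce the non-degeneracy on all of $\Gamma(\mc{M})$ from it together with Theorem~\ref{thm:hodge generation}. Write $d=\dim\mc{B}$ and $\epsilon=p_0+w-d$. The starting point is the structural fact that, for any polarized Hodge module $(\mc{M},F_\bullet\mc{M},S)$ of weight $w$ with lowest Hodge piece $F_{p_0}\mc{M}$, the restricted pairing
\[
(-1)^\epsilon S\colon F_{p_0}\mc{M}\otimes\overline{F_{p_0}\mc{M}}\to\Db_{\tilde{\mc{B}}}
\]
is valued in non-negative distributions, with $(-1)^\epsilon S(m,\overline m)\neq 0$ whenever $m\neq 0$. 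By Saito's strict support decomposition this reduces to the case of an intermediate extension of a polarized variation of Hodge structure on a dense open subset of the support, where the polarization restricted to the lowest Hodge piece is pointwise definite of the sign prescribed by condition~(2) of Definition~\ref{defn:polarized hodge str}, and this positivity persists as a distribution across the boundary by the Hodge-norm estimates. Tracking the weight and filtration shifts through the closed pushforward from the support shows the resulting sign is $(-1)^\epsilon$ independently of the dimension of the support, consistently with Conjecture~\ref{conj:schmid-vilonen} at $p=p_0$.

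Granting this, the definiteness of $\Gamma(S)$ on $\Gamma(F_{p_0}\mc{M})$ follows quickly. For $m\in\Gamma(F_{p_0}\mc{M})=\Gamma(\tilde{\mc{B}},F_{p_0}\mc{M})^H$ we have, by the definition of the Schmid--Vilonen pairing,
\[
(-1)^\epsilon\,\Gamma(S)(m,\overline m)=\int_{U_\mb{R}}(-1)^\epsilon S(m,\overline m)|_{U_\mb{R}}\;\geq\;0,
\]
and it remains to see this is strictly positive for $m\neq 0$, i.e.\ that $S(m,\overline m)|_{U_\mb{R}}$ is not identically zero. Since $\pi(U_\mb{R})=\mc{B}$, the submanifold $U_\mb{R}$ meets the dense open locus on which $\mc{M}$ is a variation of Hodge structure and $m$ is non-vanishing; combined with the fact --- built into the Schmid--Vilonen construction --- that $S(m,\overline m)$ is real-analytic near $U_\mb{R}$ with prescribed $(\lambda-\rho)$-monodromic behaviour along the $H$-orbits, and with $\tilde{\mc{B}}=U_\mb{R}\cdot H$, this forces the restriction to $U_\mb{R}$ to be nonzero. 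I expect this point --- ruling out that a nonzero section of the lowest Hodge piece is ``integrated to zero'' over $U_\mb{R}$ --- to be the main obstacle: it is exactly the kind of asymptotic control of the polarization near $U_\mb{R}$ on which the Schmid--Vilonen framework rests, and one can, if preferred, circumvent the non-rational case by the usual continuity reduction to $\lambda\in\mf{h}^*_\mb{Q}$, where the monodromies are finite.

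For the non-degeneracy of $\Gamma(S)$ on $\Gamma(\mc{M})$, I would reduce to $\mc{M}$ irreducible: polarizable pure Hodge modules form a semisimple category, the polarization respects the isotypic decomposition, $\Gamma$ is additive, and the weight --- hence the sign relevant to any summand realizing the lowest piece --- is unchanged. If $\Gamma(\mc{M})=0$ there is nothing to prove; otherwise $\tilde{\mc{D}}\cdot\Gamma(\mc{M})$ is a nonzero submodule of the irreducible $\mc{M}$, so $\mc{M}$ is globally generated. The radical of $\Gamma(S)$ is stable under $\mf{u}_\mb{R}$ (as $\Gamma(S)$ is $\mf{u}_\mb{R}$-invariant), hence under $\mf{g}=\mf{u}_\mb{R}\otimes_\mb{R}\mb{C}$; and since $\lambda$ is dominant and $\mc{M}$ is globally generated, Beilinson--Bernstein identifies submodules of $\Gamma(\mc{M})$ with submodules of $\mc{M}$, so this radical is $0$ or all of $\Gamma(\mc{M})$. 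In the latter case $\Gamma(F_{p_0}\mc{M})$ lies in the radical, which by the definiteness just proved forces $\Gamma(F_{p_0}\mc{M})=0$; but then, applying Theorem~\ref{thm:hodge generation} (valid since $\mc{M}$ is globally generated) and using that the order filtration on $\tilde{\mc{D}}$ is non-negative while $\Gamma(F_p\mc{M})=0$ for $p<p_0$,
\[
F_{p_0}\mc{M}=\sum_{p_1+p_2\leq p_0}F_{p_1}\tilde{\mc{D}}\cdot\Gamma(F_{p_2}\mc{M})=F_0\tilde{\mc{D}}\cdot\Gamma(F_{p_0}\mc{M})=\mc{O}_\mc{B}\cdot\Gamma(F_{p_0}\mc{M}),
\]
so $\Gamma(F_{p_0}\mc{M})=0$ would give $F_{p_0}\mc{M}=0$, contradicting the hypothesis. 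Hence the radical is $0$, i.e.\ $\Gamma(S)$ is non-degenerate.
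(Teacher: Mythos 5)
Your reduction to irreducible $\mc{M}$ and your treatment of non-degeneracy are essentially the paper's own argument: for dominant $\lambda$, either $\Gamma(\mc{M})=0$ (vacuous) or $\mc{M}$ is globally generated and $\Gamma(\mc{M})$ is irreducible; the radical of the $\mf{u}_\mb{R}$-invariant form $\Gamma(S)$ is a $\mf{g}$-submodule; and Theorem \ref{thm:hodge generation} forces $\Gamma(F_{p_0}\mc{M})\neq 0$, so definiteness on the lowest piece rules out the radical being everything. Where you diverge is on that definiteness itself: the paper does not reprove it, but invokes the argument of \cite[Proposition 4.7]{DV1} (cf.\ \cite[Theorem A]{SY1}), which is precisely the statement that the polarization restricted to the lowest Hodge piece integrates over $U_\mb{R}$ to a form that is $(-1)^{p_0+w-\dim\mc{B}}$-definite.

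Your attempt to re-derive that input has a genuine gap, which you partly flag yourself. The assertion that positivity of $S$ on $F_{p_0}\mc{M}$ ``persists as a distribution across the boundary by the Hodge-norm estimates'' is the substantial content of \cite[Theorem A]{SY1}, not a routine consequence of the strict support decomposition; and your argument for strict positivity of the integral does not go through as written when $\operatorname{Supp}\mc{M}$ is a proper ($H$-stable) subvariety of $\tilde{\mc{B}}$: there $S(m,\overline{m})$ is not a real-analytic function near $U_\mb{R}$ but a distribution supported on $\operatorname{Supp}\mc{M}$, so the fact that $U_\mb{R}$ meets the locus where the variation of Hodge structure lives and $m$ is non-vanishing does not by itself exclude that the monodromic restriction $S(m,\overline{m})|_{U_\mb{R}}$ vanishes. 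Controlling exactly this restriction of a positive distribution to $U_\mb{R}$ is what the cited argument of \cite[Proposition 4.7]{DV1} accomplishes; if you replace your sketch by an appeal to that result (or to \cite[Theorem A]{SY1}), the remainder of your proof coincides with the paper's.
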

\begin{proof}
Since polarized Hodge modules are semi-simple, we can assume without loss of generality that $\mc{M}$ is irreducible. That $\Gamma(S)$ is definite of the correct sign on the lowest piece $\Gamma(F_{p_0}\mc{M})$ of the Hodge filtration follows from the argument of \cite[Proposition 4.7]{DV1}, cf., also \cite[Theorem A]{SY1}. To deduce non-degeneracy, note that either $\Gamma(\mc{M}) = 0$, in which case the claim is vacuous, or else $\mc{M}$ is globally generated and $\Gamma(\mc{M})$ is an irreducible $U(\mf{g})$-module. In the latter case, $\Gamma(F_{p_0}\mc{M}) \neq 0$ by Theorem \ref{thm:hodge generation}, so $\Gamma(S)$ is non-zero and hence non-degenerate as claimed.
\end{proof}

\begin{rmk}
In fact, the non-degeneracy asserted by Proposition \ref{prop:polarization non-degenerate} is true in a much more general context. Suppose that $\lambda \in \mf{h}^*$ is integrally dominant and $\mf{s}$ is a non-degenerate sesquilinear pairing between a globally generated $\mc{D}_\lambda$-module $\mc{M}$ and a globally generated $\mc{D}_{\bar{\lambda}}$-module $\mc{M}'$. Let $\Db_\lambda$ (resp., $\Omega(\lambda)$) be the space of distributions (resp., smooth top forms) on $\tilde{\mc{B}}$ annihilated on the left (resp., right) by the vector fields $i(h) - (\lambda - \rho)(h)$ and $\overline{i(h)} - (\lambda - \rho)(\bar{h})$ for $h \in \mf{h}$. Then $\Db_\lambda = \Omega(\lambda)^*$ is the continuous dual of $\Omega(\lambda)$ and the integral over $U_\mb{R}$ is given by evaluation at the unique (up to a scalar) $U_\mb{R}$-invariant vector $\eta_0 \in \Omega(\lambda)$. For $\lambda$ integrally dominant, the space $\eta_0 U(\mf{g})$ is dense in $\Omega(\lambda)$ and hence the pairing $\Gamma(\mf{s})$ between $\Gamma(\mc{M})$ and $\Gamma(\mc{M}')$ is non-degenerate.
\end{rmk}

\subsection{Globalizing mixed Hodge modules} \label{subsec:mhm globalization}

Let us now consider how to globalize a general mixed object $\mc{M} \in \mhm(\mc{D}_{\widetilde{\lambda}})$. Let $\mhm^{\mathit{weak}}(U(\mf{g}))_{\widetilde{\chi_\lambda}}$ denote the category of tuples $(V, W_\bullet V, F_\bullet V, \bar{F}_\bullet V)$, where $V$ is a $U(\mf{g})$-module with generalized infinitesimal character $\chi_\lambda$, $W_\bullet V$ is a finite increasing filtration by submodules, $F_\bullet V$ is a good filtration and $\bar{F}_\bullet V$ is an increasing filtration satisfying $\bar{F}_p V = V$ for $p \gg 0$ and $F_p U(\mf{g}) \cdot \bar{F}_q V \subset \bar{F}_{p + q} V$. We define a functor
\begin{equation} \label{eq:hodge global sections}
\Gamma \colon \mhm(\mc{D}_{\widetilde{\lambda}}) \to \mhm^{\mathit{weak}}(U(\mf{g}))_{\widetilde{\chi_\lambda}}
\end{equation}
as follows.

First, for $\mc{M} \in \mhm(\mc{D}_{\widetilde{\lambda}})$, the $U(\mf{g})$-module underlying $\Gamma(\mc{M})$ is, of course, the space of global sections of the $\tilde{\mc{D}}$-module underlying $\mc{M}$. The Hodge and weight filtrations are defined by
\[ F_p \Gamma(\mc{M}) := \Gamma(F_p \mc{M}) \quad \text{and} \quad W_w\Gamma(\mc{M}) = \Gamma(W_w\mc{M}).\]
Finally, recalling from \S\ref{subsec:monodromic} the filtration $F_\bullet \mc{M}^h$ on the Hermitian dual of $\mc{M}$, we define the filtration $\bar{F}_\bullet \Gamma(\mc{M})$ by
\[ \bar{F}_q\Gamma(\mc{M}) := \Gamma(F_{- q - 1}\mc{M}^h)^\perp,\]
where $(-)^\perp$ denotes orthogonal complement with respect to the integral $\Gamma(\mf{s})$ of the tautological pairing
\[ \mf{s} \colon \mc{M}_{\tilde{\mc{B}}} \otimes \overline{\mc{M}^h_{\tilde{\mc{B}}}} \to \Db_{\tilde{\mc{B}}}.\]

\begin{thm} \label{thm:hodge localization}
We have the following.
\begin{enumerate}
\item \label{itm:hodge localization 1} If $\lambda \in \mf{h}^*_\mb{R}$ is dominant, then the functor \eqref{eq:hodge global sections} is filtered exact with respect to all three filtrations, i.e., the functors $W_w\Gamma$, $F_p\Gamma$ and $\bar{F}_q\Gamma$ are exact for $w, p, q \in \mb{Z}$.
\item \label{itm:hodge localization 2} If $\lambda \in \mf{h}^*_\mb{R}$ is regular dominant, then the functor \eqref{eq:hodge global sections} is fully faithful.
\end{enumerate}
\end{thm}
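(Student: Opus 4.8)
The plan is to obtain both statements by treating the three filtrations $W_\bullet$, $F_\bullet$, $\bar F_\bullet$ separately and bootstrapping from the regular case, using the cohomology vanishing and global generation results already in hand. For part \eqref{itm:hodge localization 1}, exactness of $W_w\Gamma$ is immediate: the weight filtration is by sub-objects, so $W_w\Gamma(\mc{M}) = \Gamma(W_w\mc{M})$, and $\Gamma$ itself is exact on $\Mod(\mc{D}_{\widetilde{\lambda}})$ since real dominant $\lambda$ is integrally dominant (Beilinson--Bernstein). Exactness of $F_p\Gamma$ is exactly Corollary \ref{cor:filtered exactness}. For $\bar F_q\Gamma$ the idea is to transpose the $F$-statement through the tautological pairing: since $\lambda$ is real, the Hermitian dual $\mc{M}^h$ of any $\mc{M}\in\mhm(\mc{D}_{\widetilde{\lambda}})$ again lies in $\mhm(\mc{D}_{\widetilde{\lambda}})$ (its underlying $\mc{D}$-module is $\overline{\lambda-\rho} = \lambda-\rho$-monodromic), Hermitian duality is exact and carries a strict short exact sequence to a strict short exact sequence, and so Corollary \ref{cor:filtered exactness} applies to the Hermitian-dual sequence $0\to\mc{P}^h\to\mc{N}^h\to\mc{M}^h\to 0$.

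Given a strict short exact sequence $0\to\mc{M}\xrightarrow{i}\mc{N}\xrightarrow{p}\mc{P}\to 0$, I would then run a diagram chase using the adjunction identities $\Gamma(\mf{s}_{\mc{N}})(i(m),\overline{n'}) = \Gamma(\mf{s}_{\mc{M}})(m,\overline{i^h(n')})$ and $\Gamma(\mf{s}_{\mc{N}})(n,\overline{p^h(p')}) = \Gamma(\mf{s}_{\mc{P}})(p(n),\overline{p'})$, together with the definition $\bar F_q\Gamma(\mc{M}) = \Gamma(F_{-q-1}\mc{M}^h)^\perp$, to turn the exactness of $0\to\Gamma(F_{-q-1}\mc{P}^h)\to\Gamma(F_{-q-1}\mc{N}^h)\to\Gamma(F_{-q-1}\mc{M}^h)\to 0$ into the desired short exact sequence $0\to\bar F_q\Gamma(\mc{M})\to\bar F_q\Gamma(\mc{N})\to\bar F_q\Gamma(\mc{P})\to 0$. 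Injectivity and the saturation $\bar F_q\Gamma(\mc{N})\cap\Gamma(\mc{M}) = \bar F_q\Gamma(\mc{M})$ are purely formal consequences of the strictness of $i^h$; the one non-formal point is surjectivity on the right, where one must represent a functional on the finite-dimensional space $\Gamma(F_{-q-1}\mc{M}^h)$ by a global section of $\mc{M}$, i.e. one needs non-degeneracy of the integrated pairing $\Gamma(\mf{s})$. For $\lambda$ regular dominant this is supplied by the Remark following Proposition \ref{prop:polarization non-degenerate}, since all objects are then globally generated; for general dominant $\lambda$ I would reduce to that case via a translation functor, namely tensoring by a sufficiently positive line bundle $\mc{O}(\mu)$ with $\mu$ integral dominant, which is an exact and filtered-exact equivalence onto its image in $\mhm(\mc{D}_{\widetilde{\lambda+\mu}})$, commutes with Hermitian duality and intertwines $\Gamma$ with the corresponding translation functor on $U(\mf{g})$-modules, and respects all three filtrations up to shift.

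For part \eqref{itm:hodge localization 2}, fix $\mc{M},\mc{N}\in\mhm(\mc{D}_{\widetilde{\lambda}})$ with $\lambda$ regular dominant; by Beilinson--Bernstein $\Gamma$ is an equivalence on underlying $\tilde{\mc{D}}$-modules with inverse $\Delta$, so it is faithful and it suffices to show every morphism $\phi\colon\Gamma\mc{M}\to\Gamma\mc{N}$ in $\mhm^{\mathit{weak}}(U(\mf{g}))_{\widetilde{\chi_\lambda}}$ comes from a morphism of mixed Hodge modules. Let $\psi\colon\mc{M}\to\mc{N}$ be the $\tilde{\mc{D}}$-module morphism with $\Gamma(\psi) = \phi$. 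That $\psi$ preserves $W_\bullet$ is formal from $\Delta\Gamma = \id$ and exactness of $\Gamma,\Delta$; that $\psi$ preserves $F_\bullet$ is precisely the full faithfulness asserted in Corollary \ref{cor:hodge full faithful}; and that $\psi$ preserves $\bar F_\bullet$ follows by applying Corollary \ref{cor:hodge full faithful} to the Hermitian-adjoint $\psi^h\colon\mc{N}^h\to\mc{M}^h$, whose global-sections map is the Hermitian adjoint of $\phi$ relative to the non-degenerate pairings $\Gamma(\mf{s})$ and is therefore filtered for $F_\bullet$ because $\phi$ is filtered for $\bar F_\bullet$. Hence $\psi$ respects $W_\bullet$ and the pair of Hodge filtrations $(F_\bullet\mc{M},F_\bullet\mc{M}^h)$, i.e. $\psi$ is a morphism in the ambient bi-filtered category of \cite{SS}; since $\mc{M}$ and $\mc{N}$ are mixed Hodge modules, $\psi$ is a morphism in $\mhm(\mc{D}_{\widetilde{\lambda}})$ with $\Gamma(\psi)=\phi$, so $\Gamma$ is fully faithful. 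Filtered exactness in this case is a special case of part \eqref{itm:hodge localization 1}.

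The step I expect to be the main obstacle is the $\bar F_\bullet$-filtration, in both parts: unlike $W_\bullet$ and $F_\bullet$, the conjugate Hodge filtration is defined only indirectly, through orthogonal complements of the Hodge filtration on the Hermitian dual, so one has to control the integrated pairing $\Gamma(\mf{s})$ itself — its non-degeneracy for all dominant $\lambda$ (via the translation reduction to the regular, globally-generated case), and its compatibility with translation functors and with Hermitian adjunction of morphisms. Once that is pinned down, the remainder is an assembly of Beilinson--Bernstein, Corollaries \ref{cor:filtered exactness} and \ref{cor:hodge full faithful}, and routine diagram chasing.
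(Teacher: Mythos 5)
Your treatment of $W_\bullet$ and $F_\bullet$ in part (1), and your argument for part (2) (apply Corollary \ref{cor:hodge full faithful} to $\Delta(f)$ and to its Hermitian adjoint, using the orthogonal-complement description of $\bar{F}_\bullet$), coincide with the paper's proof. The one place where you diverge is the only genuinely delicate point, and there your argument has a gap: the non-degeneracy of the integrated pairing $\Gamma(\mf{s})$ between $\Gamma(\mc{M})$ and $\Gamma(\mc{M}^h)$ at a \emph{singular} dominant $\lambda$, which you need both for the surjectivity step in the exactness of $\bar{F}_q\Gamma$ and, implicitly, to identify $\Gamma(F_{-q-1}\mc{M}^h)^*$ with $\Gamma(\mc{M})/\bar{F}_q\Gamma(\mc{M})$. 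Your proposed reduction to the regular case by tensoring with $\mc{O}(\mu)$ does not work as stated: tensoring with $\mc{O}(\mu)$ is indeed a filtered equivalence of the geometric categories, but it is not intertwined by $\Gamma$ with a translation functor on $U(\mf{g})$-modules ($\Gamma(\mc{M}\otimes\mc{O}(\mu))$ is not the translate of $\Gamma(\mc{M})$), and crossing between a singular and a regular infinitesimal character is precisely where $\Gamma$ kills constituents and invariant pairings degenerate, so non-degeneracy at $\lambda+\mu$ gives no control at $\lambda$. There is also no formula relating $\Gamma(\mf{s}_{\mc{M}})$ to $\Gamma(\mf{s}_{\mc{M}\otimes\mc{O}(\mu)})$: the integral over $U_\mb{R}$ sees the monodromic twist, and a Hermitian metric on $\mc{O}(\mu)$ enters the comparison of the distribution-valued pairings in a way you have not controlled.

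The paper closes this point without any translation argument: Lemma \ref{lem:pairing non-degenerate} reduces (using the already-established $F$- and $W$-exactness) to $\mc{M}$ pure and polarized, and then Proposition \ref{prop:polarization non-degenerate} applies for \emph{every} dominant $\lambda$, singular or not: if $\Gamma(\mc{M})\neq 0$ then $\mc{M}$ is globally generated with $\Gamma(\mc{M})$ irreducible, the polarization is definite on the lowest Hodge piece by \cite[Proposition 4.7]{DV1}, and Theorem \ref{thm:hodge generation} guarantees $\Gamma(F_{p_0}\mc{M})\neq 0$, so $\Gamma(S)$ is a nonzero invariant form on an irreducible module and hence non-degenerate. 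If you replace your translation step by this argument (or simply invoke Lemma \ref{lem:pairing non-degenerate}), the remainder of your proof goes through and agrees with the paper's.
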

\begin{proof}
To prove \eqref{itm:hodge localization 1}, note that filtered exactness for $F_\bullet$ (resp., $W_\bullet$) follows from the fact that every exact sequence of mixed Hodge modules is strict together with Corollary \ref{cor:filtered exactness} (resp., Beilinson-Bernstein's result that $\Gamma$ is exact on the category $\Mod(\mc{D}_{\widetilde{\lambda}})$). For $\bar{F}_\bullet$, by Corollary \ref{cor:filtered exactness} we have a short exact sequence
\[ 0 \to \Gamma(F_{-q - 1}\mc{P}^h) \to \Gamma(F_{-q - 1}\mc{N}^h) \to \Gamma(F_{-q-1}\mc{M}^h) \to 0\]
of finite dimensional vector spaces. Passing to duals, we have an exact sequence
\[ 0 \to \Gamma(F_{- q - 1}\mc{M}^h)^* \to \Gamma(F_{- q - 1}\mc{N}^h)^* \to \Gamma(F_{-q-1}\mc{P}^h)^* \to 0.\]
But by Lemma \ref{lem:pairing non-degenerate}, we have
\[ \Gamma(F_{- q - 1}\mc{M}^h)^* \cong \frac{\Gamma(\mc{M})}{\bar{F}_q\Gamma(\mc{M})}, \quad \text{etc},\]
so the desired exactness for $\bar{F}_\bullet$ follows.

To prove \eqref{itm:hodge localization 2}, suppose that $\lambda$ is regular dominant and we are given $\mc{M}, \mc{N} \in \mhm(\mc{D}_{\widetilde{\lambda}})$ and a morphism
\[ f \colon (\Gamma(\mc{M}), W_\bullet, F_\bullet, \bar{F}_\bullet) \to (\Gamma(\mc{N}), W_\bullet, F_\bullet, \bar{F}_\bullet).\]
Now, $f$ is the image of the morphism of $\tilde{\mc{D}}$-modules
\[ \Delta(f) \colon \Delta\Gamma(\mc{M}) = \mc{M} \to \Delta\Gamma(\mc{N}) = \mc{N},\]
which respects the weight filtrations (obviously) and the Hodge filtrations (by Corollary \ref{cor:hodge full faithful}). Since $\Delta(f)$ is clearly the only such morphism, it remains to show that it lifts to a morphism of mixed Hodge modules, i.e., that the Hermitian dual morphism
\[ \Delta(f)^h \colon \mc{N}^h \to \mc{M}^h \]
respects the Hodge filtrations. But using Lemma \ref{lem:pairing non-degenerate} again to write
\[ \Gamma(F_{-q-1} \mc{M}^h)= (\bar{F}_q\Gamma(\mc{M}))^\perp,\]
we see that $\Gamma(\Delta(f)^h)$ respects the Hodge filtrations on $\Gamma(\mc{M}^h)$ and $\Gamma(\mc{N}^h)$, and hence $\Delta(f)^h$ itself respects the Hodge filtrations on $\mc{M}^h$ and $\mc{N}^h$ by Corollary \ref{cor:hodge full faithful}.
\end{proof}

\begin{lem} \label{lem:pairing non-degenerate}
Let $\mc{M} \in \mhm(\mc{D}_{\widetilde{\lambda}})$ with $\lambda \in \mf{h}^*_\mb{R}$ dominant. Then the pairing $\Gamma(\mf{s})$ between $\Gamma(\mc{M})$ and $\Gamma(\mc{M}^h)$ is non-degenerate.
\end{lem}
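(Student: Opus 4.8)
The plan is to use the weight filtration to reduce to the case where $\mc{M}$ is pure, and then invoke Proposition~\ref{prop:polarization non-degenerate}. The key inputs are the exactness of $\Gamma$ on $\Mod(\mc{D}_{\widetilde\lambda})$ for dominant $\lambda$ (Beilinson-Bernstein) and the standard compatibility of Hermitian duality with weight filtrations.

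First I would record the relevant structure. Since $\lambda \in \mf{h}^*_\mb{R}$, the Hermitian dual $\mc{M}^h$ again lies in $\mhm(\mc{D}_{\widetilde\lambda})$, the tautological pairing $\mf{s}\colon \mc{M}_{\tilde{\mc{B}}}\otimes\overline{\mc{M}^h_{\tilde{\mc{B}}}}\to\Db_{\tilde{\mc{B}}}$ is perfect, and — by the basic formalism of mixed Hodge modules — the weight filtration on $\mc{M}^h$ is dual to that of $\mc{M}$ under $\mf{s}$: one has $\mf{s}(W_a\mc{M}, \overline{W_c\mc{M}^h}) = 0$ whenever $a + c < 0$, and $\mf{s}$ induces for each $a$ a perfect pairing $\Gr^W_a\mc{M}\otimes\overline{\Gr^W_{-a}\mc{M}^h}\to\Db_{\tilde{\mc{B}}}$ identifying $\Gr^W_{-a}\mc{M}^h$ with the Hermitian dual of the pure Hodge module $\Gr^W_a\mc{M}$. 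Since $\lambda$ is dominant, $\Gamma$ is exact on $\Mod(\mc{D}_{\widetilde\lambda})$, so $\Gamma(W_\bullet\mc{M})$ and $\Gamma(W_\bullet\mc{M}^h)$ are finite filtrations with $\Gr^W_a\Gamma(\mc{M}) = \Gamma(\Gr^W_a\mc{M})$ and similarly for $\mc{M}^h$, the integral $\Gamma(\mf{s})$ inherits the orthogonality $\Gamma(\mf{s})(\Gamma(W_a\mc{M}), \overline{\Gamma(W_c\mc{M}^h)}) = 0$ for $a+c<0$, and the pairing it induces on $\Gamma(\Gr^W_a\mc{M})\otimes\overline{\Gamma(\Gr^W_{-a}\mc{M}^h)}$ is the integral of the above perfect pairing on the pure pieces.

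Next, each $\Gr^W_a\mc{M}$ is a polarizable pure Hodge module; choosing a polarization gives an isomorphism $\Gr^W_a\mc{M}\cong(\Gr^W_a\mc{M})^h$ of $\tilde{\mc{D}}$-modules, and applying the exact functor $\Gamma$ identifies the induced graded pairing with the integrated polarization $\Gamma(S_a)$ on $\Gamma(\Gr^W_a\mc{M})$. By Proposition~\ref{prop:polarization non-degenerate} — applied to $\Gr^W_a\mc{M}$, using that $\lambda$ is dominant, and noting the statement is vacuous when $\Gr^W_a\mc{M} = 0$ — this pairing is non-degenerate for every $a$. It then remains to invoke the elementary fact that a sesquilinear pairing between finite-dimensional filtered vector spaces $(V, W_\bullet)$, $(V', W_\bullet)$ satisfying $\langle W_a V, \overline{W_c V'}\rangle = 0$ for $a+c<0$ and inducing perfect pairings on all the gradeds $\Gr^W_a V\otimes\overline{\Gr^W_{-a}V'}$ is itself perfect: if $0\neq v$ lies in the left kernel and $a$ is minimal with $v\in W_a V$, then the nonzero class of $v$ lies in the left kernel of the $a$-th graded pairing, a contradiction, and symmetrically on the $V'$ side. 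Taking $V = \Gamma(\mc{M})$, $V' = \Gamma(\mc{M}^h)$ with the filtrations $\Gamma(W_\bullet(-))$ and the pairing $\Gamma(\mf{s})$ then gives the lemma.

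The only point requiring care is the duality between $W_\bullet\mc{M}$ and $W_\bullet\mc{M}^h$ asserted in the second paragraph, with its index shift and the perfectness of $\Gr^W\mf{s}$ valued in the Hermitian dual; this is part of the standard formalism of polarizable mixed Hodge modules (e.g.\ \cite{SS}), but one must track the sign and weight conventions carefully. Once it is in hand, the rest is a formal consequence of the exactness of $\Gamma$ and Proposition~\ref{prop:polarization non-degenerate}.
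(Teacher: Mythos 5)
Your argument is correct and is essentially the paper's proof: the paper reduces without further comment to the case where $\mc{M}$ is pure and then identifies $\mf{s}$ with a polarization and cites Proposition \ref{prop:polarization non-degenerate}, and your reduction via the weight filtration, the duality of $W_\bullet$ under $\mf{s}$, and exactness of $\Gamma$ for dominant $\lambda$ is exactly the step the paper leaves implicit. The only adjustment is that $\Gamma(\mc{M})$ is typically infinite-dimensional, so in your filtered linear algebra step you should drop ``finite-dimensional'' and ask only that the graded pairings be non-degenerate rather than perfect --- which is all Proposition \ref{prop:polarization non-degenerate} gives, and your kernel argument works verbatim in that generality.
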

\begin{proof}
We may assume without loss of generality that $\mc{M}$ is pure, hence polarizable. In this case, we may identify $\mc{M}^h$ with $\mc{M}$ and $\mf{s}$ with a polarization $S$, so the lemma holds by Proposition \ref{prop:polarization non-degenerate}.
\end{proof}

Theorem \ref{thm:hodge localization} implies that, for $\lambda$ regular dominant, the essential image $\Gamma(\mhm(\mc{D}_{\widetilde{\lambda}}))$ of \eqref{eq:hodge global sections} is an abelian category, in which every morphism is strict with respect to all three filtrations. This suggests the following conjecture.

\begin{conj} \label{conj:mhs}
Assume $\lambda \in \mf{h}^*_\mb{R}$ is dominant. Then the functor \eqref{eq:hodge global sections} factors through the full subcategory in $\mhm^{\mathit{weak}}(U(\mf{g}))_{\widetilde{\chi_\lambda}}$ consisting of (infinite-dimensional) mixed Hodge structures.
\end{conj}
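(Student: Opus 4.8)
The plan is to reduce Conjecture~\ref{conj:mhs} to Conjecture~\ref{conj:schmid-vilonen} and then to attack the latter by the deformation techniques of \S\ref{sec:deformations}. For the reduction, observe that by Theorem~\ref{thm:hodge localization}\eqref{itm:hodge localization 1} the functor \eqref{eq:hodge global sections} is filtered exact for all three filtrations $W_\bullet$, $F_\bullet$, $\bar F_\bullet$; together with the cohomology vanishing $\mrm{H}^{i}(\mc{B}, F_{p}W_{w}\mc{M}) = 0$ for $i>0$ this yields, for $\mc{M}\in\mhm(\mc{D}_{\widetilde{\lambda}})$, canonical identifications $\Gr^{W}_{w}\Gamma(\mc{M})\cong\Gamma(\Gr^{W}_{w}\mc{M})$ compatible with $F_\bullet$ and $\bar F_\bullet$, while each $F_{p}\Gamma(\mc{M}) = \Gamma(F_{p}\mc{M})$ is finite-dimensional and the $F_{p}\Gamma(\mc{M})$ exhaust $\Gamma(\mc{M})$. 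Hence $\Gamma(\mc{M})$ lands in the subcategory of (possibly infinite-dimensional) mixed Hodge structures exactly when, for every $w$, the triple $(\Gamma(\Gr^{W}_{w}\mc{M}), \Gamma(F_\bullet\Gr^{W}_{w}\mc{M}), \bar F_\bullet)$ is an increasing union of polarizable pure Hodge structures of weight $w$. For $\mc{N}$ a polarizable Hodge module of weight $w$ with polarization $S$, unwinding the definition of $\bar F_\bullet\Gamma(\mc{N})$ using $F_\bullet\mc{N}^{h} = F_{\bullet-w}\mc{N}$ and Lemma~\ref{lem:pairing non-degenerate} shows that $\bar F_\bullet\Gamma(\mc{N})$ is precisely the conjugate Hodge filtration attached to $(\Gamma(\mc{N}), \Gamma(F_\bullet\mc{N}), \Gamma(S))$ by the recipe of Definition~\ref{defn:polarized hodge str}; so $\Gamma(\mc{N})$ is an (ind-)polarizable pure Hodge structure if and only if $\pm\Gamma(S)$ polarizes it, and the global sign is then pinned down by Proposition~\ref{prop:polarization non-degenerate}. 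Thus Conjecture~\ref{conj:mhs} is essentially equivalent to Conjecture~\ref{conj:schmid-vilonen}, and it suffices to prove the latter.

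To prove Conjecture~\ref{conj:schmid-vilonen}, first reduce to the case $\mc{N}$ irreducible, using that polarizable Hodge modules are semisimple and that both $\Gamma$ and the integral $\Gamma(S)$ respect finite direct sums; if $\Gamma(\mc{N}) = 0$ there is nothing to prove, and otherwise $\mc{N}$ is globally generated (as $\lambda$ is dominant) so $\Gamma(\mc{N})$ is irreducible. The strategy, following \S\ref{sec:pf of unitarity}, is to deform $\mc{N}$ along a positive ray: writing $\mc{N} = j_{!*}\mc{N}'$ for an affine embedding $j\colon\tilde Q\hookrightarrow\tilde{\mc{B}}$ and a polarizable variation of Hodge structure $\mc{N}'$ on $\tilde Q$, choose a positive $f\in\Gamma_{\mb{R}}(\tilde Q)^{\mon}$ (equivariant if $\mc{N}$ is) whose class $\varphi(f)$ lies in the dominant chamber, so that $s\mapsto j_{!*}f^{s}\mc{N}'$ is a one-parameter family of monodromic Hodge modules with linearly varying twisting parameter, specializing at a suitable endpoint to a Hodge module $\mc{N}_{0}$ for which Conjecture~\ref{conj:schmid-vilonen} can be checked directly — for instance a tempered Hodge module, where it follows from the explicit computation $\Gr^{F}\Gamma(\mc{N}_{0})\cong\mc{O}_{\mc{N}^{*}_{K}}$ (Theorem~\ref{thm:intro tempered}), the global generation of Theorem~\ref{thm:intro hodge generation}, and the known unitarity of tempered representations (this base case is carried out in \S\ref{sec:tempered}). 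Along the family, $\Gr^{F}j_{!*}f^{s}\mc{N}'$ is locally constant away from a discrete set of reducibility walls, with jumps governed by the maps $j_{!}f^{s}\mc{N}'\to j_{*}f^{s}\mc{N}'$ (Theorem~\ref{thm:semi-continuity}, Proposition~\ref{prop:monodromic semi-continuity}); global sections commute with $\Gr^{F}$ by Theorem~\ref{thm:filtered exactness}, and by Theorem~\ref{thm:jantzen} the corresponding jumps of the polarization are controlled by Jantzen filtrations, which agree with weight filtrations and whose Jantzen forms are again polarizations. The plan is then to propagate the validity of Conjecture~\ref{conj:schmid-vilonen} from $\mc{N}_{0}$ across each wall up to $\mc{N}$.

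The wall-crossing step is the main obstacle, and it is exactly the gap between Theorem~\ref{thm:intro characters} and the full Conjecture~\ref{conj:schmid-vilonen}. Theorem~\ref{thm:intro characters} shows that across a wall the \emph{signature} of the integrated form on the Hodge pieces of $\Gamma(\mc{N})$ moves as prescribed by a polarized Hodge structure, in the precise form $\chi^{\mathit{sig}}\equiv\zeta^{c}\chi^{H}\pmod{\zeta^{2}-1}$ with $c = \dim\mc{B}-w$; but this congruence records only the total signature weighted by the parity of the Hodge degree, and does not on its own force $\Gamma(S)$ to be \emph{non-degenerate} on each individual piece $\Gr^{F}_{p}\Gamma(\mc{N})$ and $(-1)^{p+w-\dim\mc{B}}$-definite there. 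Bridging this requires a genuinely new positivity input: either a duality identification of the global sections of the Hodge filtration with the dual of the global sections of the conjugate Hodge filtration, which would give non-degeneracy piece by piece, together with a direct sign computation on each piece; or a precise asymptotic analysis of the integral $\int_{U_{\mb{R}}}$ on each graded piece of the Rees module of $(\mc{N}, F_\bullet)$, amounting to a global analogue over $\mc{B}$ of Schmid's $\mrm{SL}_{2}$-orbit theorem in which the $\mc{D}_{\lambda}$-affineness vanishing of Theorem~\ref{thm:filtered exactness} plays the role of the nilpotent-orbit estimates. We expect this last step, rather than the formal reductions above, to carry essentially all the difficulty of Conjectures~\ref{conj:mhs} and \ref{conj:schmid-vilonen}.
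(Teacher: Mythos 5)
The statement you are trying to prove is an open conjecture in the paper: the authors do not prove Conjecture~\ref{conj:mhs}, they only record (immediately after its statement) that it is an easy consequence of Conjecture~\ref{conj:schmid-vilonen}, which they explicitly say they do not know how to prove in general. Your first paragraph is, in substance, exactly that conditional reduction: using the filtered exactness and full faithfulness of Theorem~\ref{thm:hodge localization} to pass to pure subquotients, and identifying $\bar F_\bullet\Gamma$ with the orthogonal complement of the Hodge filtration under the integrated pairing, so that the pure case of Conjecture~\ref{conj:schmid-vilonen} gives an (infinite-dimensional) Hodge structure on each $\Gr^W_w\Gamma(\mc{M})$. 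That part is sound and matches the paper's intent. One caveat: your claim that Conjecture~\ref{conj:mhs} is ``essentially equivalent'' to Conjecture~\ref{conj:schmid-vilonen} overreaches. If $\bar F_\bullet$ is defined as the $\Gamma(S)$-orthocomplement of $F_\bullet$, the resulting filtrations can be opposed (hence give a Hodge decomposition) without $\Gamma(S)$ being definite on the pieces --- already for a weight-$0$ structure concentrated in bidegree $(0,0)$ an indefinite non-degenerate form produces a perfectly good Hodge structure --- so only the implication you actually use (Schmid--Vilonen $\Rightarrow$ mixed Hodge structure) is available, and Proposition~\ref{prop:polarization non-degenerate} pins the sign only on the lowest Hodge piece, not piece by piece.

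The genuine gap is the one you yourself name in the last paragraph, and it coincides exactly with what the paper leaves open. The deformation machinery (Theorems~\ref{thm:semi-continuity} and \ref{thm:jantzen}, filtered exactness, the tempered base case of \S\ref{sec:tempered}) propagates only \emph{character-level} information: across a reducibility wall the Jantzen pieces recombine, and what survives is the congruence $\chi^{\mathit{sig}}\equiv\zeta^{c}\chi^{H}\pmod{\zeta^{2}-1}$ of Theorems~\ref{thm:hodge and signature K} and \ref{thm:hodge and signature K'}. This records signatures of the form on each $K$-isotypic multiplicity space weighted only by the parity of the Hodge degree; it does not show that $\Gamma(S)$ is non-degenerate on $\Gamma(F_p\mc{M})\cap\Gamma(F_{p-1}\mc{M})^{\perp}$, nor that it is definite there of the predicted sign, which is what Conjectures~\ref{conj:schmid-vilonen} and hence \ref{conj:mhs} require. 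The paper is explicit that Theorem~\ref{thm:intro characters} is strictly weaker than the conjecture for precisely this reason, and that this weaker statement suffices for the unitarity criterion (Theorem~\ref{thm:unitarity criterion}) but not for the Hodge-structure statement. So your proposal is not a proof and cannot be completed by the paper's techniques alone; the ``new positivity input'' you postulate (a global analogue of the $\mathrm{SL}_2$-orbit estimates, or a piecewise duality between $F_\bullet$ and $\bar F_\bullet$ under the integral) is not supplied anywhere in the paper, and supplying it is the open problem.
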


Conjecture \ref{conj:mhs} is an easy consequence of Conjecture \ref{conj:schmid-vilonen}.

\subsection{Dependence on choices} \label{subsec:choices}

To construct the integral pairings $\Gamma(\mf{s})$, and hence the globalization functor \eqref{eq:hodge global sections} for mixed Hodge modules, we had to choose both a maximal compact subgroup $U_\mb{R} \subset G$ and a $U_\mb{R}$-orbit in $\tilde{\mc{B}}$. Let us conclude this section by commenting on the dependence on these choices.

The dependence on the compact form $U_\mb{R}$ is very serious: for example, if one knows Conjecture \ref{conj:schmid-vilonen} for a given Hodge module and compact form, it is not at all clear that the conjecture also holds for the same Hodge module and other compact forms.

The dependence on the $U_\mb{R}$-orbit in $\tilde{\mc{B}}$ is mild, but non-trivial. For twisted Hodge modules, the integral pairings depend on the choice of orbit only up to a positive real scalar. So, for instance, the form $\Gamma(S)$ on the global sections of a polarized Hodge module and the globalization functor on $\mhm(\mc{D}_\lambda)$ are essentially independent of this choice. For genuinely monodromic Hodge modules, however, there is a subtle dependence of the globalization functor on the $U_\mb{R}$-orbit. This comes from a familiar phenomenon in Hodge theory: if we restrict to a fiber of $\tilde{\mc{B}} \to \mc{B}$, an object in $\mhm(\mc{D}_{\widetilde{\lambda}})$ becomes a unipotent variation of mixed Hodge structure (see, e.g., \cite{hain-zucker}) tensored with a rank $1$ local system on $H$. Such variations of mixed Hodge structure are trivial on their pure subquotients, but the extensions generally vary from point to point. As one varies the $U_\mb{R}$-orbit, these unipotent variations are reflected in a variation of the filtration $\bar{F}_\bullet$ on global sections.

\section{Cohomology vanishing} \label{sec:vanishing}

In this section, we give the proof of Theorem \ref{thm:filtered exactness}.

\subsection{Outline of the proof} \label{subsec:filtered exactness outline}

Let us first outline the strategy for proving Theorem \ref{thm:filtered exactness}. The central idea is to study the functor
\begin{equation} \label{eq:localize globalize}
\tilde{\mc{D}} \overset{\mrm{L}}\otimes_{U(\mf{g})} \mrm{R}\Gamma(\cdot) \colon \mrm{D}^b\coh(\tilde{\mc{D}}, F_\bullet) \to \mrm{D}^b\coh(\tilde{\mc{D}}, F_\bullet)
\end{equation}
rather than the functor of global sections itself. We begin with the following observation.

\begin{lem} \label{lem:localize globalize vanishing}
Let $\mc{M} \in \coh(\tilde{\mc{D}}, F_\bullet)$ be a coherent filtered $\tilde{\mc{D}}$-module (i.e., a coherent $\tilde{\mc{D}}$-module equipped with a good filtration). Then
\begin{equation} \label{eq:localize globalize vanishing 1}
 \mrm{H}^i(\mc{B}, F_p\mc{M}) = 0 \quad \text{for $i > 0$ and all $p$}
\end{equation}
if and only if
\begin{equation} \label{eq:localize globalize vanishing 2}
 \mc{H}^i\Gr^F(\tilde{\mc{D}} \overset{\mrm{L}}\otimes_{U(\mf{g})} \mrm{R}\Gamma(\mc{M})) = 0 \quad \text{for $i > 0$}.
 \end{equation}
\end{lem}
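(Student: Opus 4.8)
The plan is to compute the associated graded of the ``localize--globalize'' functor explicitly and reduce the asserted equivalence to the surjectivity of the Grothendieck--Springer resolution. The first step is to pass to Rees modules: $\tilde{\mc{D}} \overset{\mrm{L}}\otimes_{U(\mf{g})} \mrm{R}\Gamma(\mc{M})$ is represented by $R_F\tilde{\mc{D}} \overset{\mrm{L}}\otimes_{R_FU(\mf{g})} \mrm{R}\Gamma(\mc{B}, R_F\mc{M})$, where $R_F$ denotes the Rees construction with parameter $\hbar$ and $R_F\mc{M} = \bigoplus_p F_p\mc{M}$. Since $R_F\tilde{\mc{D}}$ and $R_F\mc{M}$ are $\hbar$-torsion-free, associativity of the derived tensor product produces a canonical isomorphism
\[ \Gr^F\!\left(\tilde{\mc{D}} \overset{\mrm{L}}\otimes_{U(\mf{g})} \mrm{R}\Gamma(\mc{M})\right) \;\cong\; \Gr^F\tilde{\mc{D}} \overset{\mrm{L}}\otimes_{S(\mf{g})} \mrm{R}\Gamma(\mc{B}, \Gr^F\mc{M}), \]
where $S(\mf{g}) = \Gr^F U(\mf{g})$ acts on the sheaf of algebras $\Gr^F\tilde{\mc{D}}$ via the associated graded of the map $U(\mf{g}) \to \Gamma(\mc{B}, \tilde{\mc{D}})$, i.e.\ via the moment map. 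Identifying $\spec_{\mc{B}}\Gr^F\tilde{\mc{D}}$ with the Grothendieck--Springer variety $\tilde{\mf{g}}^* = T^*\tilde{\mc{B}}/H$ and its structure map to $\mf{g}^* = \spec S(\mf{g})$ with the (proper) moment map $\mu$, the right-hand side becomes $\mrm{L}\mu^*\mrm{R}\mu_*(\Gr^F\mc{M})$, a bounded complex with coherent cohomology on $\tilde{\mf{g}}^*$.

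The second step reformulates \eqref{eq:localize globalize vanishing 1}. Since $F_\bullet\mc{M}$ is good, $F_p\mc{M} = 0$ for $p \ll 0$ and $F_p\mc{M}/F_{p-1}\mc{M} = \Gr^F_p\mc{M}$, so an induction on $p$ using the long exact cohomology sequences of $0 \to F_{p-1}\mc{M} \to F_p\mc{M} \to \Gr^F_p\mc{M} \to 0$ shows that \eqref{eq:localize globalize vanishing 1} is equivalent to $\mrm{H}^{>0}(\mc{B}, \Gr^F_p\mc{M}) = 0$ for all $p$, hence to $\mrm{R}\mu_*(\Gr^F\mc{M})$ being concentrated in cohomological degrees $\le 0$. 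Writing $\mc{F} := \Gr^F\mc{M}$, the lemma therefore reduces to the assertion that $\mrm{L}\mu^*\mrm{R}\mu_*\mc{F}$ is concentrated in degrees $\le 0$ if and only if $\mrm{R}\mu_*\mc{F}$ is.

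The ``if'' direction --- which recovers \eqref{eq:localize globalize vanishing 1} $\Rightarrow$ \eqref{eq:localize globalize vanishing 2} --- is immediate from right-exactness of $\mrm{L}\mu^*$. The converse is where the content lies, and the step I expect to be the only real obstacle is that one cannot simply invoke faithfully flat base change, since the Grothendieck--Springer map $\mu$ is \emph{not} flat; the point is that surjectivity of $\mu$ already suffices. Concretely, if $\mrm{R}\mu_*\mc{F}$ is not concentrated in degrees $\le 0$, let $i_0 \ge 1$ be its highest non-vanishing cohomological degree and set $\mc{G} := \mc{H}^{i_0}\mrm{R}\mu_*\mc{F} \ne 0$, a coherent sheaf on $\mf{g}^*$. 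Applying $\mrm{L}\mu^*$ to the triangle $\tau^{<i_0}\mrm{R}\mu_*\mc{F} \to \mrm{R}\mu_*\mc{F} \to \mc{G}[-i_0] \xrightarrow{+1}$ and using right-exactness of $\mrm{L}\mu^*$ to annihilate the contribution of the truncated term in degrees $\ge i_0$, one obtains $\mc{H}^{i_0}\mrm{L}\mu^*\mrm{R}\mu_*\mc{F} \cong \mu^*\mc{G}$. Since $\mu$ is surjective, $\operatorname{Supp}(\mu^*\mc{G}) = \mu^{-1}(\operatorname{Supp}\mc{G})$ is non-empty, so $\mu^*\mc{G} \ne 0$; hence $\mc{H}^{i_0}\Gr^F(\tilde{\mc{D}} \overset{\mrm{L}}\otimes_{U(\mf{g})} \mrm{R}\Gamma(\mc{M})) \ne 0$ with $i_0 \ge 1$, contradicting \eqref{eq:localize globalize vanishing 2}. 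The only remaining work is routine bookkeeping: checking the $\hbar$-flatness inputs and the grading/degree conventions behind the base-change isomorphism of the first step.
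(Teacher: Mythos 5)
Your proposal is correct and follows essentially the same route as the paper's own proof: the same identification $\Gr^F(\tilde{\mc{D}} \overset{\mrm{L}}\otimes_{U(\mf{g})} \mrm{R}\Gamma(\mc{M})) \cong \mrm{L}\tilde{\mu}^{\bigcdot}\mrm{R}\tilde{\mu}_{\bigcdot}\Gr^F\mc{M}$, the same use of surjectivity of the Grothendieck--Springer map to see that $\mrm{L}\tilde{\mu}^{\bigcdot}$ preserves the top non-vanishing cohomological degree, and the same reduction of \eqref{eq:localize globalize vanishing 1} to the vanishing of $\mrm{H}^{>0}(\mc{B}, \Gr^F\mc{M})$ via goodness of the filtration. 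You simply spell out (correctly) the Rees-module base change and the truncation-triangle argument that the paper asserts in a single line.
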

\begin{proof}
Observe that $\Gr^F U(\mf{g}) = \mc{O}_{\mf{g}^*}$ and $\Gr^F \tilde{\mc{D}} = \mc{O}_{\tilde{\mf{g}}^*}$, where $\tilde{\mf{g}}^* = T^*\tilde{\mc{B}}/H$ is the Grothendieck-Springer resolution. So $\Gr^F\mc{M} \in \coh(\tilde{\mf{g}}^*)$ and
\[ \Gr^F(\tilde{\mc{D}} \overset{\mrm{L}} \otimes_{U(\mf{g})} \mrm{R}\Gamma(\mc{M})) = \mrm{L}\tilde{\mu}^{\bigcdot}\mrm{R}\tilde{\mu}_{\bigcdot} \Gr^F\mc{M},\]
where $\tilde{\mu} \colon \tilde{\mf{g}}^* \to \mf{g}^*$ is the Grothendieck-Springer map. Since $\tilde{\mu}$ is surjective, $\mrm{L}\tilde{\mu}^{\bigcdot}$ preserves the top degree of non-zero cohomology of any bounded complex of coherent sheaves. So \eqref{eq:localize globalize vanishing 2} holds if and only if
\[ \mrm{R}^i \tilde{\mu}_{\bigcdot} \Gr^F\mc{M} = 0 \quad \text{for $i > 0$},\]
which holds if and only if
\begin{equation} \label{eq:localize globalize vanishing 3}
\mrm{H}^i(\mc{B}, \Gr^F\mc{M}) = 0 \quad \text{for $i > 0$}
\end{equation}
since $\mf{g}^*$ is an affine variety. Since the filtration $F_\bullet \mc{M}$ is good, it is bounded below, so \eqref{eq:localize globalize vanishing 3} is equivalent to \eqref{eq:localize globalize vanishing 1}.
\end{proof}

The next step is to rewrite the functor \eqref{eq:localize globalize} as convolution with a sheaf of filtered $\tilde{\mc{D}} \boxtimes \tilde{\mc{D}}$-modules on $\mc{B} \times \mc{B}$. Let us write
\[ \mrm{St} = (\tilde{\mu}, \tilde{\mu})^{-1}(0) \subset \tilde{\mf{g}}^* \times \tilde{\mf{g}}^* \]
for the Steinberg variety, where
\[ (\tilde{\mu}, \tilde{\mu}) \colon \tilde{\mf{g}}^* \times \tilde{\mf{g}}^* \to \mf{g}^* \]
is the moment map for the diagonal action of $G$ on $\tilde{\mc{B}}$. We write
\[ \mrm{D}^b_{\mrm{St}}\coh(\tilde{\mc{D}} \boxtimes \tilde{\mc{D}}, F_\bullet) \subset \mrm{D}^b\coh(\tilde{\mc{D}} \boxtimes \tilde{\mc{D}}, F_\bullet)\]
for the full subcategory of complexes whose cohomologies have singular support contained in $\mrm{St}$.

\begin{defn}
For $\mc{K} \in \mrm{D}^b_{\mrm{St}}\coh(\tilde{\mc{D}} \boxtimes \tilde{\mc{D}}, F_\bullet)$ and $\mc{M} \in \mrm{D}^b\coh(\tilde{\mc{D}}, F_\bullet)$, the \emph{convolution of $\mc{K}$ and $\mc{M}$} is
 \[ \mc{K} * \mc{M} = \mrm{R}\mrm{pr}_{1\bigcdot}(\mc{K} \overset{\mrm{L}}\otimes_{\mrm{pr}_2^{-1}\tilde{\mc{D}}} \mrm{pr}_2^{-1}\mc{M}) \in \mrm{D}^b\coh(\tilde{\mc{D}}, F_\bullet),\]
where $\mrm{pr}_i \colon \mc{B} \times \mc{B} \to \mc{B}$ are the natural projections. Here we regard $\mc{K}$ as a right $\mrm{pr}_2^{-1}\tilde{\mc{D}}$-module via the side-changing isomorphism
\[ \tilde{\mc{D}}^{\mathit{op}} \cong \omega_{\mc{B}} \otimes \tilde{\mc{D}} \otimes \omega_{\mc{B}}^{-1} = \pi_{\bigcdot}(\mc{O}_{\tilde{\mc{B}}} \otimes \mb{C}_{2\rho})^H \otimes \pi_{\bigcdot}(\mc{D}_{\tilde{\mc{B}}})^H \otimes \pi_{\bigcdot}(\mc{O}_{\tilde{\mc{B}}} \otimes \mb{C}_{-2\rho})^H \cong \pi_{\bigcdot}(\mc{D}_{\tilde{\mc{B}}})^H = \tilde{\mc{D}},\]
where the second last isomorphism is given by multiplication in $\pi_{\bigcdot}(\mc{D}_{\tilde{\mc{B}}})$.
\end{defn}

Now consider the $\tilde{\mc{D}} \boxtimes \tilde{\mc{D}} = \tilde{\mc{D}} \boxtimes \tilde{\mc{D}}^{\mathit{op}}$-module
\[ \tilde{\Xi} = \tilde{\mc{D}} \otimes_{U(\mf{g})} \tilde{\mc{D}},\]
equipped with the natural filtration $F_\bullet\tilde{\Xi}$ induced by the order filtration on $\tilde{\mc{D}}$. Note that, since the side-changing isomorphism $\tilde{\mc{D}} \cong \tilde{\mc{D}}^{\mathit{op}}$ acts by $-1$ on the associated graded, $\Gr^F\tilde{\Xi} = \mc{O}_{\mrm{St}}$.

\begin{lem} \label{lem:localize globalize convolution}
We have a natural isomorphism
\[ \tilde{\mc{D}} \overset{\mrm{L}}\otimes_{U(\mf{g})} \mrm{R}\Gamma(\mc{M}) \cong \tilde{\Xi} * \mc{M} \]
for $\mc{M} \in \mrm{D}^b\coh(\tilde{\mc{D}}, F_\bullet)$.
\end{lem}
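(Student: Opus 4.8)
The plan is to identify the convolution $\tilde{\Xi} * \mc{M}$ with the ``localize after taking global sections'' functor on the right hand side, by unwinding the definition of convolution and then applying the projection formula together with base change. Throughout, all tensor products over $U(\mf{g})$ are taken in the derived sense, so that no flatness of $\tilde{\mc{D}}$ over $U(\mf{g})$ is needed.

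First I would unwind $\tilde{\Xi} * \mc{M}$. Writing $\mrm{pr}_1, \mrm{pr}_2 \colon \mc{B} \times \mc{B} \to \mc{B}$ for the projections, we have $\tilde{\Xi} = \mrm{pr}_1^{-1}\tilde{\mc{D}} \otimes_{\underline{U(\mf{g})}} \mrm{pr}_2^{-1}\tilde{\mc{D}}$, with $\mrm{pr}_1^{-1}\tilde{\mc{D}}$ acting on the first factor and the right $\mrm{pr}_2^{-1}\tilde{\mc{D}}$-module structure used to form the convolution given by (ordinary) right multiplication on the second factor, transported through the side-changing ring isomorphism $\tilde{\mc{D}}^{\mathit{op}} \cong \tilde{\mc{D}}$. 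Since that second tensor factor is free of rank one over $\tilde{\mc{D}}$, tensoring it against $\mrm{pr}_2^{-1}\mc{M}$ over $\mrm{pr}_2^{-1}\tilde{\mc{D}}$ returns $\mrm{pr}_2^{-1}\mc{M}$, and one obtains a canonical isomorphism
\[
\tilde{\Xi} \overset{\mrm{L}}\otimes_{\mrm{pr}_2^{-1}\tilde{\mc{D}}} \mrm{pr}_2^{-1}\mc{M} \;\cong\; \mrm{pr}_1^{-1}\tilde{\mc{D}} \overset{\mrm{L}}\otimes_{U(\mf{g})} \mrm{pr}_2^{-1}\mc{M},
\]
where the left $U(\mf{g})$-module structure on $\mrm{pr}_2^{-1}\mc{M}$ is the one induced by the algebra map $U(\mf{g}) \to \Gamma(\mc{B}, \tilde{\mc{D}})$. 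This is compatible with the filtrations: the filtration on $\tilde{\Xi}$ is the one induced from the order filtrations on the two copies of $\tilde{\mc{D}}$ and the PBW filtration on $U(\mf{g})$, and the side-changing identification $\tilde{\mc{D}}^{\mathit{op}} \cong \tilde{\mc{D}}$ is an isomorphism of sheaves of \emph{filtered} rings.

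Next I would apply $\mrm{R}\mrm{pr}_{1\bigcdot}$. Since $\mrm{pr}_1^{-1}\tilde{\mc{D}}$ is pulled back from $\mc{B}$ along $\mrm{pr}_1$ and the tensor product is over the constant sheaf $U(\mf{g})$, the projection formula for $\mrm{pr}_1$ gives a filtered isomorphism
\[
\tilde{\Xi} * \mc{M} \;\cong\; \tilde{\mc{D}} \overset{\mrm{L}}\otimes_{U(\mf{g})} \mrm{R}\mrm{pr}_{1\bigcdot}\mrm{pr}_2^{-1}\mc{M}.
\]
Finally, base change along the Cartesian square whose corners are $\mc{B} \times \mc{B}$, the two copies of $\mc{B}$, and a point, with edges the two projections and the two structure maps, identifies $\mrm{R}\mrm{pr}_{1\bigcdot}\mrm{pr}_2^{-1}\mc{M}$ with the constant complex on $\mc{B}$ with value $\mrm{R}\Gamma(\mc{M})$, filtered by $\mrm{R}\Gamma(F_\bullet\mc{M})$; this is valid since $\mrm{pr}_1$ is proper, and, being an identification of a pushforward along a projection, is manifestly filtration-compatible. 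Combining the three displays yields the asserted isomorphism $\tilde{\Xi} * \mc{M} \cong \tilde{\mc{D}} \overset{\mrm{L}}\otimes_{U(\mf{g})} \mrm{R}\Gamma(\mc{M})$ in $\mrm{D}^b\coh(\tilde{\mc{D}}, F_\bullet)$, and naturality in $\mc{M}$ is immediate from the construction. I expect the only genuinely fiddly point to be the first step: pinning down precisely how the right $\mrm{pr}_2^{-1}\tilde{\mc{D}}$-module structure on $\tilde{\Xi}$ entering the convolution relates, via the side-changing isomorphism, to right multiplication on the second tensor factor, and checking that the ensuing cancellation against $\mrm{pr}_2^{-1}\mc{M}$ respects the order and PBW filtrations; the remaining steps are formal.
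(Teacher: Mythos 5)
Your write-up fleshes out only the part of the argument that the paper dismisses as formal, while the genuine mathematical content is hidden in your first display, and as stated that step fails. The convolution is defined with the honest filtered module $\tilde{\mc{D}} \otimes_{U(\mf{g})} \tilde{\mc{D}}$ (underived, filtered by the order filtrations --- this is the object with $\Gr^F\tilde{\Xi} = \mc{O}_{\mrm{St}}$ that is later matched with the Hodge filtration of the big pro-projective), so you are not free to declare that ``all tensor products over $U(\mf{g})$ are taken in the derived sense'': that changes the kernel unless you prove it does not. And the cancellation you invoke --- that $(M \otimes_A B) \overset{\mrm{L}}\otimes_B N \cong M \overset{\mrm{L}}\otimes_A N$ for a right $A$-module $M$, a ring map $A \to B$ and a left $B$-module $N$, ``since the second factor is free of rank one over $B$'' --- is false in general: take $A = \mb{C}[x]$, $B = A/(x)$, $M = N = \mb{C}$; the left side is $\mb{C}$ while the right side has a nonzero $\mrm{Tor}_1$. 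Freeness of the second tensor factor does not make $\tilde{\Xi}$ flat (or K-flat) over $\mrm{pr}_2^{-1}\tilde{\mc{D}}$; indeed already at the graded level $\mc{O}_{\mrm{St}}$ is not flat over $\mc{O}_{\tilde{\mf{g}}^*}$, because the fibers of the second projection $\mrm{St} \to \tilde{\mf{g}}^*$ jump in dimension. What your first display genuinely needs is Tor-independence of the two copies of $\tilde{\mc{D}}$: the assertion that $\tilde{\mc{D}} \otimes_{U(\mf{g})} \tilde{\mc{D}} \simeq \tilde{\mc{D}} \overset{\mrm{L}}\otimes_{U(\mf{g})} \tilde{\mc{D}}$ in the filtered derived category; with that in hand, derived associativity gives your cancellation, filtrations included.

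That identification is exactly the paper's proof of the lemma, and it is where the geometry enters: one computes $\Gr^F(\tilde{\mc{D}} \overset{\mrm{L}}\otimes_{U(\mf{g})} \tilde{\mc{D}}) = \mc{O}_{\tilde{\mf{g}}^*} \overset{\mrm{L}}\otimes_{\mc{O}_{\mf{g}^*}} \mc{O}_{\tilde{\mf{g}}^*} = \mrm{L}(\tilde{\mu}, \tilde{\mu})^{\bigcdot}\mc{O}_0$, and this is concentrated in degree zero because the map $(\tilde{\mu}, \tilde{\mu}) \colon \tilde{\mf{g}}^* \times \tilde{\mf{g}}^* \to \mf{g}^*$ is flat (equidimensional fibers between smooth varieties), whence the derived tensor product agrees with the plain one as filtered objects. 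So you need to insert this step; once it is there, your remaining manipulations --- the projection formula over the constant sheaf $U(\mf{g})$ and the identification of $\mrm{R}\mrm{pr}_{1\bigcdot}\mrm{pr}_2^{-1}\mc{M}$ with the constant complex $\mrm{R}\Gamma(\mc{M})$ along the proper map $\mrm{pr}_1$ --- are a correct and useful expansion of what the paper compresses into ``the desired isomorphism then follows formally.''
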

\begin{proof}
We claim that
\begin{equation} \label{eq:localize globalize convolution 1}
\tilde{\Xi} := \tilde{\mc{D}} \otimes_{U(\mf{g})} \tilde{\mc{D}} = \tilde{\mc{D}} \overset{\mrm{L}}\otimes_{U(\mf{g})} \tilde{\mc{D}}
\end{equation}
as objects in the filtered derived category; the desired isomorphism then follows formally. Now,
\begin{equation} \label{eq:localize globalize convolution 2}
\Gr^F(\tilde{\mc{D}} \overset{\mrm{L}}\otimes_{U(\mf{g})} \tilde{\mc{D}}) = \mc{O}_{\tilde{\mf{g}}^*} \overset{\mrm{L}}\otimes_{\mc{O}_{\mf{g}^*}} \mc{O}_{\tilde{\mf{g}}^*} = \mrm{L}(\tilde{\mu}, \tilde{\mu})^{\bigcdot}\mc{O}_0.
\end{equation}
Since $(\tilde{\mu}, \tilde{\mu})$ is a flat map, \eqref{eq:localize globalize convolution 2} has cohomology in degree zero only, from which \eqref{eq:localize globalize convolution 1} follows.
\end{proof}

Lemmas \ref{lem:localize globalize vanishing} and \ref{lem:localize globalize convolution} reduce Theorem \ref{thm:filtered exactness} to the vanishing of the cohomology sheaves $\mc{H}^i\Gr^F(\tilde{\Xi} * \mc{M})$ for $i > 0$. The proof of this vanishing, which takes up the rest of this section, has two main steps.

The first step is to show that the filtered $\tilde{\mc{D}} \boxtimes \tilde{\mc{D}}$-module $\tilde{\Xi}$ is of Hodge-theoretic origin. More precisely, we show (Theorem \ref{thm:xitilde hodge filtration}) that there is a pro-object
\[ \tilde{\Xi}^H \in \pro \mhm(\mc{D}_{\widetilde{0}} \boxtimes \mc{D}_{\widetilde{0}}),\]
satisfying a technical condition (which we call \emph{good}) ensuring that it has a well-defined underlying Hodge-filtered $\tilde{\mc{D}} \boxtimes \tilde{\mc{D}}$-module $(\tilde{\Xi}^H, F_\bullet)$, and that $(\tilde{\Xi}^H, F_\bullet) \cong (\tilde{\Xi}, F_\bullet)$. Note that we already used the trick of associating infinite length filtered $\mc{D}$-modules to certain pro-mixed Hodge modules in \S\ref{subsec:semi-continuity}. The object $\tilde{\Xi}^H$ is a familiar one in representation theory: it is a Hodge lift of the big pro-projective of Soergel theory.

The final step is to apply the monodromic Kodaira vanishing theorem (Theorem \ref{thm:monodromic kodaira}) to deduce the desired cohomology vanishing for $\tilde{\Xi} * \mc{M}$. This is carried out in \S\ref{subsec:pf of thm:filtered exactness}. The general vanishing result for convolutions is Proposition \ref{prop:convolution vanishing}, which holds for pro-objects in $\mhm(\mc{D}_{\widetilde{0}} \boxtimes \mc{D}_{\widetilde{0}})$ satisfying a further condition on stalks (see Definition \ref{defn:S-stalk property}).

\subsection{The big projective} \label{subsec:big projective}

We now turn to the study of the object $\tilde{\Xi}$. We begin with the quotient
\[ \Xi = \tilde{\mc{D}} \otimes_{U(\mf{g})} \mc{D}_0 = \tilde{\Xi} \otimes_{S(\mf{h}), -\rho} \mb{C}.\]
We call $\Xi$ the \emph{big projective}. As for $\tilde{\Xi}$, we endow $\Xi$ with the filtration $F_\bullet \Xi$ induced from the order filtrations on $\tilde{\mc{D}}$ and $\mc{D}_0$. Unlike $\tilde{\Xi}$, the $\tilde{\mc{D}} \boxtimes \mc{D}_0$-module $\Xi$ is of finite length.

Now, recall that the center $Z(U(\mf{g}))$ maps to $\tilde{\mc{D}}$ via
\[ Z(U(\mf{g})) \cong S(\mf{h})^W \xrightarrow{h \mapsto h + \rho(h)} S(\mf{h}) \subset \tilde{\mc{D}}.\]
We deduce that $\mf{h} \subset \tilde{\mc{D}}$ acts on the first factor of $\tilde{\mc{D}}$ in $\Xi$ with generalized eigenvalue $-\rho$. So $\Xi$ lies in the category
\[ \Mod(\mc{D}_{\widetilde{0}} \boxtimes \mc{D}_0, G)_{rh} \subset \Mod_\mon^G(\mc{D}_{\tilde{\mc{B}} \times \tilde{\mc{B}}})_{rh} \]
of regular holonomic (equivalently, coherent) $G$-equivariant $\mc{D}$-modules on $\tilde{\mc{B}} \times \tilde{\mc{B}}$ that are $(-\rho)$-monodromic along the first $\tilde{\mc{B}}$ factor and $(-\rho)$-twisted along the second. The main result of this subsection is:

\begin{thm} \label{thm:xi hodge filtration}
There exists a $G$-equivariant mixed Hodge module
\[ \Xi^H \in \mhm(\mc{D}_{\widetilde{0}} \boxtimes \mc{D}_0, G) \]
and an isomorphism of filtered $\tilde{\mc{D}} \boxtimes \mc{D}_0$-modules $(\Xi^H, F_\bullet\Xi^H) \cong (\Xi, F_\bullet \Xi)$, where $F_\bullet \Xi^H$ is the Hodge filtration and $F_\bullet \Xi$ is the order filtration.
\end{thm}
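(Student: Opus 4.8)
The plan is to identify $\Xi$ with the big projective of a regular integral block, lift it to mixed Hodge modules by a weight/purity argument, and then pin down the Hodge filtration by computing associated graded sheaves on (a version of) the Steinberg variety.

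First I would fix the geometry. The diagonal action of $G$ on $\mc{B}\times\mc{B}$ has finitely many orbits $\mc{O}_w$, $w\in W$, with $\dim\mc{O}_w=\dim\mc{B}+\ell(w)$; $\mc{O}_e$ is the diagonal (the closed orbit) and $\mc{O}_{w_0}$ is open. Writing $j_w\colon\tilde{\mc{O}}_w\hookrightarrow\tilde{\mc{B}}\times\tilde{\mc{B}}$ for the preimage of $\mc{O}_w$ in the product of base affine spaces, each $\tilde{\mc{O}}_w$ carries a canonical $G$-equivariant rank one variation of Hodge structure $\mc{L}_w^H$ (trivial Hodge structure, $(-\rho)$-monodromic along the first factor and $(-\rho)$-twisted along the second), normalised so that $\mc{L}_{w_0}^H$ restricts $\Xi$ to the open orbit. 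I would then introduce the standard and costandard objects $\Delta_w^H=j_{w!}\mc{L}_w^H$ and $\nabla_w^H=j_{w*}\mc{L}_w^H$ in $\mhm(\mc{D}_{\widetilde{0}}\boxtimes\mc{D}_0,G)$, which are manifestly of Hodge origin, with $\Delta_e^H=\nabla_e^H=\mc{L}_e^H$ pure. Under the standard identification of $\Mod(\mc{D}_{\widetilde{0}}\boxtimes\mc{D}_0,G)_{rh}$ with a regular integral block of category $\mc{O}$ (see \cite{SV}; cf.\ Theorem \ref{thm:intro xi hodge} and the discussion of $\tilde{\Xi}$ in the introduction), these correspond to the Verma and dual Verma modules, and $\Xi$ to the projective cover of the antidominant simple; in particular $\Xi$ is a tilting object, with a $\Delta$-flag and a $\nabla$-flag in each of which every $\Delta_w$, resp.\ $\nabla_w$, occurs exactly once.

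Next I would construct the lift $\Xi^H$. One route is to build up a $\Delta$-flag inside $\mhm$ along the Bruhat order: at each stage one must lift an extension class in $\mrm{Ext}^1$ of $\tilde{\mc{D}}\boxtimes\mc{D}_0$-modules between iterated extensions of standards, and the obstruction disappears because in the mixed category $\mrm{Ext}^1_{\mhm}(\Delta_x^H,\Delta_y^H)$ is pure of a single weight (so the forgetful functor is bijective on precisely the extension groups in play) and the relevant $\mrm{Ext}^2_{\mhm}$ vanish in that weight; this produces $\Xi^H$ with underlying $\tilde{\mc{D}}\boxtimes\mc{D}_0$-module $\Xi$, unique up to isomorphism. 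Alternatively, and more transparently, I would realise $\Xi^H$ as the indecomposable summand supported on all of $\mc{B}\times\mc{B}$ of a Bott--Samelson object $\Delta_{s_1}^H*\cdots*\Delta_{s_\ell}^H$ for a reduced expression $w_0=s_1\cdots s_\ell$, where $*$ is the Hodge-theoretic convolution on $\mc{B}\times\mc{B}$ (i.e.\ the lift to mixed Hodge modules of $\mc{D}$-module convolution through $\mc{B}\times\mc{B}\times\mc{B}$), the splitting off being governed by an idempotent of the convolution which lifts to $\mhm$ by the same purity of $\mrm{Hom}$-spaces. Either way $\Xi^H$ inherits the tilting structure, in particular it is Verdier self-dual up to a Tate twist.

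The last and hardest step is to identify the Hodge filtration on $\Xi^H$ with the order filtration on $\Xi$. By construction $\Xi$ is generated over $\tilde{\mc{D}}\boxtimes\mc{D}_0$ by the $G$-invariant section $\xi=1\otimes1$, and its order filtration is the filtration $\tilde{\mc{D}}\boxtimes\mc{D}_0$-generated by $\xi$ placed in the lowest degree $p_0$. Restricting $\Xi^H$ to the open orbit identifies $\xi$ with a generator of $\mc{L}_{w_0}^H$, whose Hodge filtration jumps exactly at $p_0$, so $\xi\in F_{p_0}\Xi^H$ and hence $F_\bullet\Xi\subseteq F_\bullet\Xi^H$. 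For the reverse inclusion I would compare associated gradeds on the Grothendieck--Springer side. On one hand, $\Gr^F\Xi$ for the order filtration equals $\mc{O}_{\mrm{St}'}$ for the fibre product $\mrm{St}'=\tilde{\mf{g}}^*\times_{\mf{g}^*}T^*\mc{B}$, by a computation exactly as in Lemma \ref{lem:localize globalize convolution}; note $\mrm{St}'$ is cut out in the expected codimension in the smooth variety $\tilde{\mf{g}}^*\times T^*\mc{B}$, hence is a complete intersection. On the other hand, the $\Delta$-flag of $\Xi^H$ gives $[\Gr^F\Xi^H]=\sum_w[\Gr^F\Delta_w^H]=[\mc{O}_{\mrm{St}'}]$ in $K$-theory, each $\Gr^F\Delta_w^H$ being the structure sheaf of the closure of the conormal to $\mc{O}_w$. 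One then checks — this is the crucial point, and is where the Bott--Samelson presentation is used — that $\Gr^F\Xi^H$ is \emph{cyclic}, generated by the image of $\xi$ over $\Gr(\tilde{\mc{D}}\boxtimes\mc{D}_0)$: each $\Gr^F\Delta_{s_i}^H$ is cyclic in this way, and cyclicity (along the distinguished section) is preserved under convolution and passage to the summand $\Xi^H$. Being cyclic with support $\mrm{St}'$, $\Gr^F\Xi^H$ is a quotient of $\mc{O}_{\mrm{St}'}$, and since it has the same class in $K_0$ of the smooth ambient cotangent bundle, this quotient map is an isomorphism. Thus $\Gr^F\Xi^H\cong\mc{O}_{\mrm{St}'}\cong\Gr^F\Xi$, both identified via $\xi$, and a straightforward induction on $p_0$ using these equalities upgrades $F_\bullet\Xi\subseteq F_\bullet\Xi^H$ to an equality of filtered $\tilde{\mc{D}}\boxtimes\mc{D}_0$-modules.

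The main obstacle is precisely this last step: a priori the Hodge filtration could be strictly coarser than the order filtration, and excluding this amounts to showing that $\Gr^F\Xi^H$ has no ``excess'' — concretely, that the Hodge filtration is generated by the invariant section, which I expect to extract from the Bott--Samelson construction together with the explicit $V$-filtration formulas \eqref{eq:shriek hodge}. If one prefers to sidestep the $K$-theoretic and Bott--Samelson bookkeeping, an alternative is to connect $\Xi$ to the standard object $\Delta_{w_0}^H$ by a chain of wall-crossings and invoke the canonical, $G$-equivariant semi-continuity isomorphisms of Theorem \ref{thm:semi-continuity} and Proposition \ref{prop:monodromic semi-continuity}, which respect generation by $\xi$, thereby reducing the filtration comparison to the case of $\Delta_{w_0}^H$, where it is immediate from \eqref{eq:shriek hodge}.
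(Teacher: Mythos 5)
Your construction of the lift $\Xi^H$ (a BGS-type mixed lifting of the projective cover of the IC of the diagonal, or equivalently building a $\Delta$-flag in the mixed category) is essentially the paper's first step, and your normalisation of the stalks and the self-duality of $\Xi^H$ are correct. The genuine problems are all in the identification of the Hodge filtration with the order filtration, which is the actual content of the theorem. First, the inclusion $F_\bullet\Xi\subseteq F_\bullet\Xi^H$ does not follow from restricting to the open orbit: knowing that $\xi$ restricts into the lowest Hodge piece of $\mc{L}^H_{w_0}$ puts no upper bound on its global Hodge level (compare $t^{-2}\in j_*\mc{O}_{\mb{C}^\times}$ on $\mb{C}$: on $\mb{C}^\times$ it is a unit multiple of $1$ and lies in the lowest piece of the restricted filtration, but globally it lies strictly higher). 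Since $\Gamma(\Xi)^G\cong\mrm{End}(\Xi)$ is $|W|$-dimensional, one must actually produce a $G$-invariant section of $F_0\Xi^H$ lifting the invariant section of $j_{1*}\gamma_1(-\dim\mc{B})$; in the paper this is exactly Lemma \ref{lem:xi lowest piece}, which requires the twisted Kodaira vanishing of Lemma \ref{lem:lowest hodge vanishing} together with the bound $F_{<0}\Xi^H=0$ extracted from the standard flag (Lemma \ref{lem:xi stalks}). Second, the Bott--Samelson route rests on a false premise: standards convolve multiplicatively when lengths add, so $\Delta^H_{s_1}*\cdots*\Delta^H_{s_\ell}\cong\Delta^H_{w_0}$, which is indecomposable and is not $\Xi$ (already for $\mrm{SL}_2$ the big projective has length three while $\Delta_{w_0}$ has length two); the objects whose convolutions contain $\Xi$ as a summand are wall-crossing/tilting objects, not standards. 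With that, the claimed source of cyclicity of $\Gr^F\Xi^H$ disappears, and in any case ``cyclicity is preserved under convolution and under passage to a summand'' is asserted, not proved.

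Third, even granting cyclicity, the K-theoretic conclusion fails: $\Gr^F\Delta^H_w$ is not the structure sheaf of the conormal closure (its support already contains conormals to boundary strata), and, more fundamentally, on the non-proper variety $T^*(\mc{B}\times\mc{B})$ a surjection $\mc{O}_{\mrm{St}'}\twoheadrightarrow\Gr^F\Xi^H$ with equal classes in $G_0$ need not be an isomorphism, since a nonzero coherent sheaf can have vanishing K-class (already a skyscraper on $\mb{A}^2$ does); one would need at least a graded equivariant Hilbert-series argument, which you do not supply. The fallback via Theorem \ref{thm:semi-continuity} also does not apply: the wall-crossing isomorphisms relate $\Gr^F$ of $j_!f^s\gamma_{w_0}$ and $j_*f^s\gamma_{w_0}$ across walls, and at the wall these are $\Delta_{w_0}$ and $\nabla_{w_0}$, never $\Xi$, which is not a $!$-, $*$- or clean extension from the open orbit. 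The paper closes the gap by a mechanism you have within reach but never use: the filtered Koszul resolution shows $(\Xi,F_\bullet)$ is Cohen--Macaulay and filtered self-dual (Lemma \ref{lem:xi koszul}), and $\mb{D}\Xi^H(-2\dim\mc{B})$ is another Hodge lift with the same properties (Lemma \ref{lem:xi hodge dual}); applying the lowest-piece lifting to both $\Xi^H$ and its dual gives filtered maps $(\Xi,F_\bullet)\to(\Xi^H,F_\bullet)$ and back whose composite is a nonzero scalar, forcing the two filtrations to coincide. Your Verdier self-duality observation is the right ingredient, but it must be combined with the filtered self-duality of the order filtration and the Kodaira-vanishing control of the lowest Hodge piece, not with K-theory.
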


Before giving the proof, we first remark that the category $\Mod(\mc{D}_{\widetilde{0}}\boxtimes \mc{D}_0, G)_{rh}$ is equivalent to the principal block of category $\mc{O}$. Indeed, if we choose a Borel subgroup $B \subset G$ with unipotent radical $N$, then restriction to the corresponding fiber of the first projection $\tilde{\mc{B}} \times \tilde{\mc{B}} \to \tilde{\mc{B}}$ defines an equivalence
\[ \Mod(\mc{D}_{\widetilde{0}} \boxtimes \mc{D}_0, G)_{rh} \overset{\sim}\to \Mod(\mc{D}_0, N)_{rh}.\]
Tensoring with the $N$-equivariant line bundle $\mc{O}(\rho)$ and applying the Riemann-Hilbert correspondence, we have a further equivalence
\[\Mod(\mc{D}_0, N)_{rh} \cong \Mod(\mc{D}_{\mc{B}}, N)_{rh} \cong \mrm{Perv}_N(\mc{B})\]
with the category of $N$-equivariant perverse sheaves on $\mc{B}$. Similarly, at the level of derived categories, we have an equivalence
\[ \mrm{D}^b_G\Mod(\mc{D}_{\widetilde{0}} \boxtimes \mc{D}_0)_{rh} \cong \mrm{D}^b_{c, N}(\mc{B}),\]
between the $G$-equivariant derived category and the category of objects in the derived category of sheaves on $\mc{B}$ whose cohomology is constructible with respect to the stratification by $N$-orbits. We exploit below the extensive literature on the categories $\mrm{Perv}_N(\mc{B})$ and $\mrm{D}^b_{c, N}(\mc{B})$ (e.g., \cite{BGS, MV}).

The first step in the proof of Theorem \ref{thm:xi hodge filtration} is to identify the $\mc{D}$-module $\Xi$ in categorical terms. Recall that the group $G$ acts on $\mc{B} \times \mc{B}$ with finitely many orbits, indexed by the Weyl group $W$; for $w \in W$, we write
\[ j_w \colon X_w \hookrightarrow \mc{B} \times \mc{B} \]
for the inclusion of the corresponding orbit. We fix the indexing so that $X_1 = \mc{B}$ is the closed orbit and $X_{w_0}$ is the open orbit, where $w_0 \in W$ is the longest element. For each orbit $X_w$, there is a unique rank $1$ $G$-equivariant $(-\rho, -\rho)$-twisted local system $\gamma_w$ on $X_w$ defining standard, costandard and irreducible objects
\[ j_{w!}\gamma_w, \quad j_{w*}\gamma_w \quad \text{and} \quad j_{w!*}\gamma_w \in \Mod(\mc{D}_{\widetilde{0}} \boxtimes \mc{D}_0, G)_{rh}.\]
The objects $j_{w!*}\gamma_w$ exhaust the irreducible objects in this category.

\begin{lem} \label{lem:xi projective}
The module $\Xi$ is a projective cover of the irreducible object $j_{1!*}\gamma_1$.
\end{lem}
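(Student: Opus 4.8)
The plan is to exploit the equivalence $\Mod(\mc{D}_{\widetilde{0}} \boxtimes \mc{D}_0, G)_{rh} \cong \mrm{Perv}_N(\mc{B})$ recalled above, under which $\Xi$ corresponds to a concrete perverse sheaf. First I would restrict $\Xi$ along the appropriate fiber of the first projection $\tilde{\mc{B}} \times \tilde{\mc{B}} \to \tilde{\mc{B}}$ and identify the resulting $N$-equivariant $\mc{D}_0$-module on $\mc{B}$. The key computation is that $\Xi = \tilde{\mc{D}} \otimes_{U(\mf{g})} \mc{D}_0$, so after restriction and tensoring with $\mc{O}(\rho)$ it becomes $\mc{D}_{\mc{B}} \otimes_{U(\mf{g})} (\text{something attached to the closed orbit})$; concretely, the universal property of this induction product shows that $\Xi$ represents the functor $\mc{M} \mapsto \Gamma(\mc{B}, \mc{M})$ restricted to the relevant fiber, i.e., $\Hom_{\mc{D}_{\widetilde 0}\boxtimes\mc{D}_0}(\Xi, \mc{M})$ computes a stalk/costalk of $\mc{M}$ at the base point of the closed orbit $X_1$. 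This is the module-theoretic incarnation of the fact that the "big projective" in $\mrm{Perv}_N(\mc{B})$ is the projective cover of the skyscraper-type simple attached to the closed stratum.

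The key steps, in order, are: (1) use the equivalence with $\mrm{Perv}_N(\mc{B})$ and the Riemann--Hilbert correspondence to translate the statement into one about projective covers in $\mrm{Perv}_N(\mc{B})$; (2) identify the image of $\Xi$ under this equivalence by computing $\Hom$ out of $\Xi$ — using the tensor-hom adjunction for $\tilde{\mc{D}} \otimes_{U(\mf{g})} -$ together with Beilinson--Bernstein's theorem that $\Gamma$ is exact and faithful at $\lambda = 0$ (which is regular dominant) — to show $\Hom_{\Mod(\mc{D}_{\widetilde 0}\boxtimes\mc{D}_0,G)}(\Xi, \mc{M}) \cong (\text{fiber of }\Gamma(\mc{M})\text{ at the closed orbit point})$, a functor that is both exact (projectivity) and picks out the multiplicity of $j_{1!*}\gamma_1$ in the head; (3) conclude that $\Xi$ is projective, that $\Hom(\Xi, j_{1!*}\gamma_1) \neq 0$ while $\Hom(\Xi, j_{w!*}\gamma_w) = 0$ for $w \neq 1$ by a support/weight argument (the simple $j_{w!*}\gamma_w$ for $w\neq 1$ has vanishing costalk at the closed orbit, being an intermediate extension from a larger stratum with the local system having nontrivial monodromy), so that $\Xi$ has simple head $j_{1!*}\gamma_1$; (4) combine simplicity of the head with projectivity to deduce $\Xi$ is the projective cover.

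The main obstacle I anticipate is step (2): pinning down exactly which functor $\Hom_{\mc{D}_{\widetilde 0}\boxtimes\mc{D}_0}(\Xi, -)$ computes, carefully tracking the $\rho$-shifts, the side-changing conventions, and the interplay between the monodromic structure on the first factor and the genuine twist on the second. One must verify that the adjunction $\Hom_{\tilde{\mc{D}}\text{-mod}}(\tilde{\mc{D}} \otimes_{U(\mf{g})} \mc{D}_0, \mc{M}) = \Hom_{U(\mf{g})}(\mc{D}_0, \Gamma(\mc{M})_{-\rho})$ (the $(-\rho)$-generalized eigenspace) really does produce the costalk functor at the base point of $X_1$, and in particular that this functor is exact — for which Beilinson--Bernstein exactness of $\Gamma$ at $\lambda=0$, regular dominant, is the crucial input. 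A secondary point requiring care is the vanishing $\Hom(\Xi, j_{w!*}\gamma_w) = 0$ for $w\neq 1$: this should follow because the $(-\rho,-\rho)$-twisted local system $\gamma_w$ on $X_w$ has nontrivial unipotent monodromy transverse to $X_1$ for $w \neq 1$, so its intermediate extension has no sections supported near the closed orbit, but one should check this against the precise normalization of $\gamma_w$. Everything else — deducing projective cover from "projective with simple head" — is formal.
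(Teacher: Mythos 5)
Your overall skeleton (compute $\Hom(\Xi,-)$ by the tensor--hom adjunction, get exactness from Beilinson--Bernstein, then see which simples are detected, and finish formally) is indeed the paper's strategy, but two of your key assertions are false, and one of them is load-bearing. First, $\lambda=0$ is \emph{not} regular dominant: with the $\rho$-shift built into $\mc{D}_0$, the infinitesimal character is $\chi_0$, the most singular one, so $\Gamma$ is exact but very far from faithful. This is not a defect to be worked around -- it is precisely the input you need. The adjunction gives, in the $G$-equivariant category, $\hom(\Xi,\mc{M})=\Hom_{U(\mf{g})}(\mb{C},\Gamma(\mc{M}))=\Gamma(\mc{M})^G$ (global sections over $\mc{B}\times\mc{B}$, then $G$-invariants); this is exact because $\Gamma$ is exact at the dominant (though singular) parameter and $G$ is reductive, which gives projectivity. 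The simple-head statement then follows from the singular-parameter vanishing $\Gamma(j_{w!*}\gamma_w)=0$ for $w\neq 1$ together with $\Gamma(j_{1!*}\gamma_1)^G\cong\mb{C}$. If $\Gamma$ were faithful, as you assert, this vanishing would fail and your argument for the head would collapse; so the regularity claim must be dropped and replaced by the vanishing of global sections of the other simples at the singular dominant parameter.

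Second, the identification of $\hom(\Xi,-)$ with a stalk or costalk of $\mc{M}$ at a point of the closed orbit is incorrect, and with it your proposed monodromy/support argument in step (3). The functor is $\Gamma(\cdot)^G$, which under the category-$\mc{O}$ dictionary is Soergel's functor attached to the big projective, not a (co)stalk: for $w\neq 1$ one has $i_1^!\,j_{w*}\gamma_w=0$ and $i_1^*\,j_{w!}\gamma_w=0$ (the closed orbit lies in the complement of $X_w$), yet $\hom(\Xi,j_{w*}\gamma_w)$ and $\hom(\Xi,j_{w!}\gamma_w)$ are both one-dimensional, since $j_{1!*}\gamma_1$ occurs once as a composition factor of each standard and costandard object and $\Gamma(\cdot)^G$ is exact and nonzero on $j_{1!*}\gamma_1$ (this nonvanishing is exactly what the paper exploits later, in the computation of the stalks of $\Xi^H$). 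Likewise the local systems $\gamma_w$ have no ``nontrivial unipotent monodromy'' to invoke; the mechanism killing $\Gamma(j_{w!*}\gamma_w)$ is the $(-\rho)$-twist at the singular infinitesimal character, not a topological costalk vanishing. Finally, the detour through the fiber restriction, $\mc{O}(\rho)$-untwisting and $\mrm{Perv}_N(\mc{B})$ is unnecessary -- it creates exactly the bookkeeping problems you anticipate -- since the whole computation can be done directly on $\mc{B}\times\mc{B}$ from $\Xi=(\tilde{\mc{D}}\boxtimes\mc{D}_0)\otimes_{U(\mf{g})}\mb{C}$. Your final formal step (projective with simple head implies projective cover) is fine.
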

\begin{proof}
We have, by definition,
\[ \Xi = (\tilde{\mc{D}} \boxtimes \mc{D}_0) \otimes_{U(\mf{g})} \mb{C}.\]
So for any $\mc{M} \in \Mod(\mc{D}_{\widetilde{0}} \boxtimes \mc{D}_0, G)$,
\[ \hom(\Xi, \mc{M}) = \Gamma(\mc{M})^G.\]
Since the right hand side is exact in $\mc{M}$, we deduce that $\Xi$ is projective. Moreover, $\Gamma(j_{w!*}\gamma_w) = 0$ for $w \neq 1$ and $\Gamma(j_{1!*}\gamma_1)^G \cong \mb{C}$. So $\Xi$ is a projective cover of $j_{1!*}\gamma_1$ as claimed.
\end{proof}

By \cite[Lemma 4.5.3]{BGS}, we deduce the following.

\begin{lem} \label{lem:xi hodge cover}
There exists a mixed Hodge module $\Xi^H \in \mhm(\mc{D}_{\widetilde{0}}\boxtimes \mc{D}_0, G)$ such that
\begin{enumerate}
\item the underlying equivariant $\tilde{\mc{D}}\boxtimes \mc{D}_0$-module is isomorphic to $\Xi$, and
\item the unique non-zero $\tilde{\mc{D}} \boxtimes \mc{D}_0$-module morphism $\Xi \to j_{1*}\gamma_1$ defines a morphism of mixed Hodge modules
\begin{equation} \label{eq:xi hodge cover 1}
\Xi^H \to j_{1*}\gamma_1(-\dim \mc{B}).
\end{equation}
\end{enumerate}
\end{lem}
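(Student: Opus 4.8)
The plan is to recognize $\tilde{\mc{C}} := \mhm(\mc{D}_{\widetilde{0}} \boxtimes \mc{D}_0, G)$ as a \emph{mixed category} over $\mc{C} := \Mod(\mc{D}_{\widetilde{0}} \boxtimes \mc{D}_0, G)_{rh} \cong \mrm{Perv}_N(\mc{B})$ in the sense of \cite[\S 4]{BGS}, and then to invoke \cite[Lemma 4.5.3]{BGS}, which lifts indecomposable projectives of $\mc{C}$ to $\tilde{\mc{C}}$. The structure in question consists of the forgetful (degrading) functor $v \colon \tilde{\mc{C}} \to \mc{C}$ forgetting the Hodge and weight filtrations; the shift autoequivalence $\mc{M} \mapsto \mc{M}(1)$ given by the Tate twist, which is the identity on underlying $\tilde{\mc{D}} \boxtimes \mc{D}_0$-modules; and the weight filtration on objects of $\tilde{\mc{C}}$. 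Most of the axioms are immediate: $\mc{C}$ is of finite length (it is the principal block of category $\mc{O}$), $v$ is exact and faithful, the simple objects $j_{w!*}\gamma_w$ lift to simple \emph{pure} Hodge modules (purity being the standard fact that $j_{w!*}$ of a polarizable variation of Hodge structure is pure), and $\Hom_{\tilde{\mc{C}}}(\mc{M}, \mc{N}(n)) = 0$ for $|n| \gg 0$ by boundedness of weights. The one condition that needs an argument is that the natural map $\bigoplus_n \Hom_{\tilde{\mc{C}}}(\mc{M}, \mc{N}(n)) \to \Hom_{\mc{C}}(v\mc{M}, v\mc{N})$ is surjective.

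Granting that $\tilde{\mc{C}}$ is a mixed category, the lemma follows quickly. The category $\mc{C}$ has enough projectives (a standard fact for the principal block of $\mc{O}$, and in any case a consequence of Lemma \ref{lem:xi projective} together with its analogues for the remaining $j_{w!*}\gamma_w$), and by Lemma \ref{lem:xi projective} the module $\Xi$ is the indecomposable projective cover of the simple object $j_{1!*}\gamma_1$. Hence \cite[Lemma 4.5.3]{BGS} yields a lift $\Xi^H \in \tilde{\mc{C}}$ with $v(\Xi^H) \cong \Xi$, unique up to isomorphism and Tate twist, giving assertion (1); and the lift can be normalized so that the canonical surjection of $\Xi$ onto its simple head underlies a morphism of mixed Hodge modules from $\Xi^H$ to the Tate-normalized pure Hodge lift of $j_{1!*}\gamma_1$. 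Since $X_1$ is closed, $j_{1!*}\gamma_1 = j_{1*}\gamma_1$, and with the weight conventions of the paper (under which the order filtration on $\Xi$ begins in degree $0$) this pure lift is $j_{1*}\gamma_1(-\dim\mc{B})$; this is assertion (2) and the morphism \eqref{eq:xi hodge cover 1}.

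The main obstacle is thus the surjectivity of $\bigoplus_n \Hom_{\tilde{\mc{C}}}(\mc{M}, \mc{N}(n)) \to \Hom_{\mc{C}}(v\mc{M}, v\mc{N})$, i.e., the statement that every $\tilde{\mc{D}} \boxtimes \mc{D}_0$-linear map $f$ between the underlying modules is a finite sum of morphisms of mixed Hodge modules between Tate twists. The argument is to pass to the associated gradeds for the weight filtrations on $\mc{M}$ and $\mc{N}$, use strictness of morphisms of mixed Hodge modules to see that the induced maps on weight-graded pieces are morphisms of pure Hodge modules, and then use semisimplicity of pure Hodge modules to decompose $f$ weight-by-weight into morphisms valued in the various Tate twists $\mc{N}(n)$. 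This is exactly the mechanism identifying $\mhm$ as a mixed category over $\mrm{Perv}$ in the classical untwisted setting, and the monodromic and $G$-equivariant decorations present here do not affect it; but spelling this out carefully — rather than the appeal to \cite[Lemma 4.5.3]{BGS} — is the real content of the proof.
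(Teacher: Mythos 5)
Your skeleton is exactly the paper's: after Lemma \ref{lem:xi projective}, the paper obtains $\Xi^H$ by invoking \cite[Lemma 4.5.3]{BGS}, just as you propose, and your normalization of the map \eqref{eq:xi hodge cover 1} matches the statement. The genuine problem is the step you yourself single out as the real content, namely the surjectivity of $\bigoplus_n \hom_{\mhm}(\mc{M}, \mc{N}(n)) \to \hom(v\mc{M}, v\mc{N})$. Your argument for it (pass to weight-graded pieces, invoke strictness, then semisimplicity of pure objects) is circular: strictness is a property of morphisms of mixed Hodge modules, whereas the given map is only a morphism of the underlying $\tilde{\mc{D}} \boxtimes \mc{D}_0$-modules and need not be compatible with the weight filtrations at all. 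In fact the surjectivity is false for mixed Hodge modules at this level of generality. Already over a point, $\mrm{Ext}^1$ of $\mb{C}(0)$ by $\mb{C}(1)$ in complex mixed Hodge structures is nonzero; for such a non-split extension $\mc{M}$ the underlying vector space splits, and a direct computation shows that the image of $\bigoplus_n \hom(\mc{M}, \mc{M}(n))$ in $\mrm{End}(v\mc{M}) \cong M_2(\mb{C})$ is spanned by the identity and a single rank-one endomorphism, hence is a proper subspace. Pulling such an extension back to the closed orbit and tensoring with $\gamma_1$ produces the same failure inside $\mhm(\mc{D}_{\widetilde{0}} \boxtimes \mc{D}_0, G)$, so the full forgetful functor to $\Mod(\mc{D}_{\widetilde{0}} \boxtimes \mc{D}_0, G)_{rh}$ is \emph{not} a grading in the sense of \cite[\S 4.5]{BGS}, and no purely formal strictness-plus-semisimplicity argument can give the fullness you want. (In the complex theory there is the further wrinkle that the twists $\mb{C}_{p,q}$ with $p + q = 0$ change $F$ without changing weights, so "up to Tate twist" is not even the right indexing of lifts in general.)

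What makes the lifting legitimate is specific to this block, not formal. One option is to restrict to the BGS-style subcategory of objects whose weight-graded pieces are direct sums of twists of the standard pure lifts of the $j_{w!*}\gamma_w$; on that subcategory fullness-up-to-twist does hold, but the input is the pointwise purity, of Tate type, of the stalks of these intersection complexes on the Bruhat-type $G$-orbits in $\mc{B} \times \mc{B}$ --- the mechanism of \cite[\S 4.4]{BGS} --- rather than strictness. Alternatively, since only the single projective $\Xi$ needs to be lifted, one can run the inductive construction in the proof of \cite[Lemma 4.5.3]{BGS} directly: $\Xi$ carries a standard filtration with subquotients $j_{w!}\gamma_w$, each standard object has a canonical Hodge lift, and one lifts the successive extensions using surjectivity of the relevant $\mrm{Ext}^1$ maps in the appropriate weight range, which again rests on weight and purity properties of these particular objects (supported on affinely embedded orbits), not on a general property of $\mhm$. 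Either way, the paragraph in which you justify the key axiom should be replaced by an argument of this kind; as written, that step fails.
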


Here we endow $\gamma_w$ with the usual mixed Hodge module structure of weight $\dim X_w$ with Hodge filtration
\[ F_p \gamma_w = \begin{cases} \gamma_w, & \text{if $p \geq 0$,} \\ 0, & \text{if $p < 0$}. \end{cases}\]
The Tate twist in \eqref{eq:xi hodge cover 1} is arranged so that the Hodge filtrations start in degree $0$. We will show that the conclusion of Theorem \ref{thm:xi hodge filtration} holds for any mixed Hodge module $\Xi^H$ as in Lemma \ref{lem:xi hodge cover}. We fix such a $\Xi^H$ from now on.

\begin{lem} \label{lem:xi stalks}
For any $w \in W$, the stalk $j_w^*\Xi^H$ is given by
\[ j_w^*\Xi^H = \gamma_w(\ell(w) - \dim\mc{B}).\]
\end{lem}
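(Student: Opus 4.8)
The plan is to first read off the underlying $\tilde{\mc{D}}\boxtimes\mc{D}_0$-module of $j_w^*\Xi^H$ from the standard filtration of the big projective, and then to pin down the remaining Tate twist. By Lemma~\ref{lem:xi projective}, $\Xi$ is the projective cover of the irreducible object $j_{1!*}\gamma_1$ attached to the closed orbit. By the standard structure theory of the big projective (BGG reciprocity, via the equivalence of $\Mod(\mc{D}_{\widetilde 0}\boxtimes\mc{D}_0,G)_{rh}$ with the principal block of category $\mc{O}$), $\Xi$ admits a standard (Verma) filtration whose successive quotients are the standard objects $j_{v!}\gamma_v$, each $v\in W$ occurring exactly once. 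Applying $j_w^*$ to this filtration, every term $j_w^*j_{v!}\gamma_v$ with $v\neq w$ vanishes (since $X_v$ and $X_w$ are disjoint orbits and the $!$-extension of $\gamma_v$ vanishes on the complement of $X_v$), while $j_w^*j_{w!}\gamma_w=\gamma_w$ in cohomological degree $0$ (the immersion $j_w$ being affine, $j_{w!}\gamma_w$ is a mixed Hodge module and $j_w^*j_{w!}=\id$). Since the filtration has a single term with nonzero image under $j_w^*$, an iterated-cone argument gives $j_w^*\Xi\cong\gamma_w$, concentrated in degree $0$. Hence $j_w^*\Xi^H$ is concentrated in degree $0$ with underlying $\mc{D}$-module $\gamma_w$, and since $\gamma_w$ is a rank-one (hence simple) twisted connection on $X_w$ its mixed Hodge module structure is unique up to Tate twist, so $j_w^*\Xi^H\cong\gamma_w(k_w)$ for a unique $k_w\in\mb{Z}$.

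\textbf{Identifying the twist.}
It remains to show $k_w=\ell(w)-\dim\mc{B}$. For $w=1$ this follows from Lemma~\ref{lem:xi hodge cover}: applying $j_1^*$ to $\Xi^H\to j_{1*}\gamma_1(-\dim\mc{B})$ and using $j_1^*j_{1*}=\id$ (as $j_1$ is a closed embedding), adjunction yields a nonzero morphism $\gamma_1(k_1)=j_1^*\Xi^H\to\gamma_1(-\dim\mc{B})$, which forces $k_1=-\dim\mc{B}$. For general $w$ I would promote the standard filtration of $\Xi$ to a filtration of $\Xi^H$ in $\mhm(\mc{D}_{\widetilde 0}\boxtimes\mc{D}_0,G)$ with successive quotients the Hodge-standard objects $j_{v!}\gamma_v(c_v)$ (such a lift exists by the argument of \cite[Lemma 4.5.3]{BGS} already used for Lemma~\ref{lem:xi hodge cover}); running $j_w^*$ through this filtration kills every piece except $j_{w!}\gamma_w(c_w)$, so $k_w=c_w$, and one is reduced to computing the twists $c_v$ on the graded Verma flag of the big projective. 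These are determined recursively starting from $c_1=-\dim\mc{B}$: the nonsplit extension gluing $j_{v!}\gamma_v(c_v)$ to the rest of the filtration is compatible with the weight filtrations only for one value of $c_v$, and chasing this through gives $c_v=\ell(v)-\dim\mc{B}$. Equivalently, and perhaps more cleanly, one can transport the question through the equivalences recalled after Theorem~\ref{thm:xi hodge filtration} to the mixed category $\mrm{D}^b_{c, N}(\mc{B})$, where the pointwise weights of the big tilting object are classical (see \cite{BGS}), keeping track of the shift introduced by tensoring with $\mc{O}(\rho)$ and by restriction to a fiber of the first projection.

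\textbf{Main obstacle.}
The soft part---reducing $j_w^*\Xi^H$ to $\gamma_w(k_w)$---is routine six-functor and quasi-hereditary bookkeeping. The genuine work, and the step I expect to be the main obstacle, is determining the Tate twists $c_v$ for $v\neq 1$: the underlying $\mc{D}$-module of $\Xi^H$ carries no information beyond the Tate ambiguity, so the computation must use the weight filtration on $\Xi^H$ (equivalently, the mixed structure on the big tilting in the Koszul-dual picture). Getting the normalizations of $\gamma_w$, of the standard objects, and of the equivalence of categories to line up so that the base case really does read $k_1=-\dim\mc{B}$ is where the care is needed; once that is fixed, the rest is formal.
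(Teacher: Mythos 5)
Your first step is fine and close in spirit to the paper: the paper also reduces to knowing that the stalks of the projective object $\Xi$ are local systems (it quotes \cite{MV} for this, where you instead run the standard filtration from BGG reciprocity and use $j_w^*j_{v!}\gamma_v=0$ for $v\neq w$ -- that computation is correct), and your base case $k_1=-\dim\mc{B}$ via the normalized surjection of Lemma \ref{lem:xi hodge cover} is also correct. The gap is exactly where you predict it: the determination of the twist for $w\neq 1$ is not actually carried out, and the mechanism you propose does not work as stated. The claim that ``the nonsplit extension gluing $j_{v!}\gamma_v(c_v)$ to the rest of the filtration is compatible with the weight filtrations only for one value of $c_v$'' is false as a general principle in mixed Hodge module categories: already over a point one has $\mrm{Ext}^1_{\mathrm{MHS}}(\mb{C},\mb{C}(k))\neq 0$ for every $k\geq 1$, so nonsplitness alone does not single out a Tate twist; and nonsplitness of the underlying $\mc{D}$-module extension does not help either, since the weight and Hodge filtrations are extra data not determined by the $\mc{D}$-module (an MHM extension can be nonsplit while its underlying $\mc{D}$-module extension splits, and conversely). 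Your fallback -- transporting to $\mrm{D}^b_{c,N}(\mc{B})$ and quoting the ``classical'' weights of the big projective/tilting from \cite{BGS} -- could be made to work, but it requires identifying the abstract lift $\Xi^H$ produced by the argument of \cite[Lemma 4.5.3]{BGS} with that classical mixed object up to an overall twist; uniqueness up to twist of a mixed lift of a given indecomposable $\mc{D}$-module is not automatic, and the paper deliberately avoids needing it (its stalk lemma is proved for an arbitrary lift as in Lemma \ref{lem:xi hodge cover}).

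The paper's actual argument sidesteps the filtration bookkeeping entirely: once one knows $j_w^*\Xi^H$ is a local system, adjunction gives
\[ \hom(j_w^*\Xi^H,\gamma_w)\;=\;\hom(\Xi^H,\,j_{w*}\gamma_w), \]
and the right-hand side is computed as a Hodge structure to be $\mb{C}(\dim\mc{B}-\ell(w))$, using that the closed-orbit simple occurs in the costandard object $j_{w*}\gamma_w$ with multiplicity one and in Tate twist $-\ell(w)$, together with the normalization $\Xi^H\twoheadrightarrow j_{1*}\gamma_1(-\dim\mc{B})$. Note that this is essentially the same classical input your second route gestures at (graded BGG reciprocity is equivalent to knowing the twist of that composition factor), but it is used through a single Hom computation against a concretely defined object $j_{w*}\gamma_w$, rather than through control of the twists on an entire lifted standard filtration of the abstractly chosen $\Xi^H$. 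If you replace your recursion by this adjunction step, your argument closes up.
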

\begin{proof}
By, e.g., \cite{MV}, the stalks of any projective object in $\Mod(\mc{D}_{\widetilde{0}} \boxtimes \mc{D}_0, G)_{rh}$ are (perverse) local systems. By Lemma \ref{lem:xi projective}, this applies to $\Xi^H$. To fix the ranks and the Hodge structures, we note that $j_{1*}\gamma_1(-\ell(w))$ appears as a multiplicity one composition factor in $j_{w*}\gamma_w$. So, as a Hodge structure,
\[ \hom(j_w^*\Xi^H, \gamma_w) = \hom(\Xi^H, j_{w*}\gamma_w) = \mb{C}(-\ell(w) + \dim\mc{B}),\]
which shows that the Hodge structure on the stalk is as claimed.
\end{proof}

Recall now the $\mb{C}$-linear duality functor on mixed Hodge modules. In the monodromic setting, we may regard this as a functor
\[ \mb{D} \colon \mhm(\mc{D}_{\widetilde{0}} \boxtimes \mc{D}_0, G)^{\mathit{op}} \overset{\sim}\to \mhm(\mc{D}_{\widetilde{0}}\boxtimes \mc{D}_0, G),\]
compatible with the duality functor for filtered $\tilde{\mc{D}} \boxtimes \mc{D}_0$-modules given by
\[\mb{D}(\mc{M}, F_\bullet) = \mrm{R}\shom_{(\tilde{\mc{D}} \boxtimes \mc{D}_0, F_\bullet)}((\mc{M}, F_\bullet), (\tilde{\mc{D}} \boxtimes \mc{D}_0, F_{\bullet - 4\dim \mc{B} - \dim H}))[2 \dim \mc{B} + \dim H].\]
Note that the cohomological and filtration shifts are arranged so that
\[\mb{D} \mc{O}(-\rho, -\rho) = \mc{O}(-\rho, -\rho)\{2 \dim \mc{B}\}\]
with respect to the usual (trivial) filtration on $\mc{O}(-\rho, -\rho)$. Here, for $n \in \mb{Z}$, we write
\[(\mc{M}, F_\bullet)\{n\} = (\mc{M}, F_{\bullet - n})\]
for the shift of the filtration $n$ steps. A key step in the proof of Theorem \ref{thm:xi hodge filtration} is to observe that both $(\Xi, F_\bullet)$ and $\Xi^H$ behave well with respect to these dualities.

\begin{lem} \label{lem:xi koszul}
The filtered $\tilde{\mc{D}} \boxtimes \mc{D}_0$-module $(\Xi, F_\bullet)$ is Cohen-Macaulay and satisfies
\[ \mb{D}(\Xi, F_\bullet) \cong (\Xi, F_\bullet)\{2\dim \mc{B}\}.\] 
\end{lem}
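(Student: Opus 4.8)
The plan is to prove the Cohen--Macaulay property and the duality isomorphism simultaneously, by passing to associated gradeds and reducing to a geometric statement about the Steinberg variety. Recall from the discussion preceding the lemma that $\Gr^F(\tilde{\mc{D}} \boxtimes \mc{D}_0) = \mc{O}_{\tilde{\mf{g}}^* \times \mc{B}}$ (using the side-changing isomorphism on the second factor), and that $\Gr^F \Xi = \mc{O}_{\mrm{St}'}$, where $\mrm{St}' = (\tilde{\mu}, \tilde{\mu})^{-1}(0) \subset \tilde{\mf{g}}^* \times_{\mc{B}} (\tilde{\mf{g}}^* \times \mc{B})$ is the appropriate twisted Steinberg-type variety obtained from $\Xi = \tilde{\mc{D}} \otimes_{U(\mf{g})} \mc{D}_0$. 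First I would observe, as in the proof of Lemma~\ref{lem:localize globalize convolution}, that $\tilde{\mc{D}} \otimes_{U(\mf{g})} \mc{D}_0 = \tilde{\mc{D}} \overset{\mrm{L}}\otimes_{U(\mf{g})} \mc{D}_0$ in the filtered derived category, since the relevant fiber product map is flat; hence $\Gr^F \Xi$ has no higher Tor and is a genuine coherent sheaf, namely $\mc{O}$ of the (derived $=$ underived) fiber product.

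The key step is then to show that $\mc{O}_{\mrm{St}'}$ is Cohen--Macaulay as a sheaf on $\tilde{\mf{g}}^* \times \mc{B}$ and self-dual up to the expected twist and shift, i.e.\ that $\mrm{R}\shom_{\mc{O}_{\tilde{\mf{g}}^* \times \mc{B}}}(\mc{O}_{\mrm{St}'}, \omega_{\tilde{\mf{g}}^* \times \mc{B}})[\dim]$ is again $\mc{O}_{\mrm{St}'}$ up to a line-bundle twist. This is a standard fact about Steinberg-type varieties: the map $(\tilde{\mu}, \mathrm{pr})$ realizing $\mrm{St}'$ is the zero locus of a regular section (the moment map conditions cut out a complete intersection of the expected codimension), so $\mrm{St}'$ is a local complete intersection, in particular Cohen--Macaulay, and Grothendieck--Serre duality for a regular embedding or Koszul complex gives the self-duality, with the dualizing sheaf contributing the twist by (a power of) the canonical bundle. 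Matching up this twist with the normalization $\{2\dim\mc{B}\}$ and the trivial $\mc{O}(-\rho,-\rho)$ filtration convention fixed just before the lemma is a bookkeeping exercise using $\omega_{\tilde{\mf{g}}^*} \cong \mc{O}$ (since $\tilde{\mf{g}}^*$ is a symplectic resolution, or directly $T^*\tilde{\mc{B}}/H$) and the stated behavior $\mb{D}\mc{O}(-\rho,-\rho) = \mc{O}(-\rho,-\rho)\{2\dim\mc{B}\}$.

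Having established the associated-graded statements, I would lift them to the filtered level by a standard strictness argument: a filtered module is Cohen--Macaulay of a given codimension precisely when its Rees module is, and this can be checked on the associated graded; similarly the duality functor $\mb{D}$ for filtered $\tilde{\mc{D}}\boxtimes\mc{D}_0$-modules is compatible with $\Gr^F$ by construction (the shifts in the definition of $\mb{D}$ were arranged for exactly this), so a filtered map $\mb{D}(\Xi, F_\bullet) \to (\Xi, F_\bullet)\{2\dim\mc{B}\}$ inducing an isomorphism on $\Gr^F$ is itself a filtered isomorphism. To produce the map itself, rather than just an abstract isomorphism, I would use the mixed Hodge module $\Xi^H$ of Lemma~\ref{lem:xi hodge cover}: applying $\mb{D}$ to the structural morphism \eqref{eq:xi hodge cover 1} and using that $j_{1*}\gamma_1$ is dual (up to twist) to $j_{1!}\gamma_1$, together with projectivity of $\Xi^H$ from Lemma~\ref{lem:xi projective}, one gets a canonical morphism $\mb{D}\Xi^H \to \Xi^H\{2\dim\mc{B}\}$ of mixed Hodge modules, whose underlying filtered-module map is then forced to be an isomorphism by the $\Gr^F$ computation.

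The main obstacle I anticipate is the precise geometric analysis of $\mrm{St}'$ and its dualizing sheaf: one must check carefully that the scheme-theoretic fiber product defining $\Gr^F \Xi$ is reduced (or at least Cohen--Macaulay of the expected dimension) and that the Koszul/complete-intersection description is valid globally, not just generically --- there is a real danger of the fiber product being non-reduced or of excess dimension over the non-regular locus of $\mf{g}^*$. Here the flatness of $(\tilde\mu,\tilde\mu)$ noted above (equivalently, that $\tilde{\mc{D}} \otimes_{U(\mf{g})} \tilde{\mc{D}}$ has $\Gr^F$ concentrated in degree zero, which is Lemma~\ref{lem:localize globalize convolution}) is exactly what rescues the argument, since it guarantees the expected codimension. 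A secondary nuisance, purely combinatorial, is getting all the Tate twists, filtration shifts, and the $\mc{O}(-\rho,-\rho)$-versus-$\mc{O}$ normalizations to line up with the conventions fixed in the paragraph before the lemma; I would handle this by checking the statement on a single orbit (say the open one) where everything is an honest local system and the shift $2\dim\mc{B}$ can be read off directly.
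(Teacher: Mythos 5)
Your associated-graded analysis (flatness of the moment map, complete-intersection structure of the Steinberg-type fiber product, hence Cohen--Macaulayness of $\Gr^F\Xi$) is essentially the computation the paper also performs, and lifting Cohen--Macaulayness from $\Gr^F$ to $(\Xi,F_\bullet)$ is indeed standard. The genuine gap is in your construction of the duality isomorphism itself. The lemma is a statement about the concrete module $\Xi=\tilde{\mc{D}}\otimes_{U(\mf{g})}\mc{D}_0$ with the filtration induced by the \emph{order} filtration, and it is used as an input to Theorem \ref{thm:xi hodge filtration}, which is precisely what identifies that order filtration with the Hodge filtration of $\Xi^H$. Your proposed map is a morphism of mixed Hodge modules $\mb{D}\Xi^H\to\Xi^H\{2\dim\mc{B}\}$, whose underlying filtered map compares \emph{Hodge} filtrations --- about which, at this stage, nothing is known beyond the stalk information of Lemma \ref{lem:xi stalks}; your ``$\Gr^F$ computation'' concerns the order filtration on $\Xi$, not the Hodge filtration on $\Xi^H$, so it cannot force that map to be a filtered isomorphism without already knowing $(\Xi^H,F_\bullet)\cong(\Xi,F_\bullet)$, i.e.\ without Theorem \ref{thm:xi hodge filtration} --- which is proved using this lemma. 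The argument as written is therefore circular. Moreover, no canonical morphism $\mb{D}\Xi^H\to\Xi^H\{2\dim\mc{B}\}$ of mixed Hodge modules is available here: what one actually gets (Lemma \ref{lem:xi hodge dual}) is only that $\mb{D}\Xi^H(-2\dim\mc{B})$ is \emph{another} Hodge lift satisfying the conditions of Lemma \ref{lem:xi hodge cover}, and since such lifts are not known to be unique as Hodge modules, you cannot simply identify it with $\Xi^H$; an abstract isomorphism of associated gradeds obtained from Grothendieck--Serre duality likewise does not by itself supply the required filtered map.

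The repair is to stay entirely at the level of the order filtration, which is the paper's route: $\Xi=(\tilde{\mc{D}}\boxtimes\mc{D}_0)\otimes_{U(\mf{g})}\mb{C}$ admits the filtered Koszul resolution with terms $(\tilde{\mc{D}}\boxtimes\mc{D}_0)\otimes\wedge^i\mf{g}\{i\}$, $n=\dim\mf{g}$, whose filtered exactness is exactly your flatness observation (the Koszul complex on $\Gr^F$ resolves $\mc{O}_{\tilde{\mf{g}}^*\times_{\mf{g}^*}\tilde{\mc{N}}^*}$ because $(\tilde{\mu},\mu)\colon\tilde{\mf{g}}^*\times\tilde{\mc{N}}^*\to\mf{g}^*$ is flat). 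Applying $\mb{D}$ term by term to this resolution produces, after the shift bookkeeping, the Koszul resolution of $(\Xi,F_\bullet)\otimes\wedge^n\mf{g}^*\{2\dim\mc{B}\}\cong(\Xi,F_\bullet)\{2\dim\mc{B}\}$; this yields simultaneously the concentration of $\mb{D}(\Xi,F_\bullet)$ in a single degree (Cohen--Macaulayness) and a canonical filtered isomorphism $\mb{D}(\Xi,F_\bullet)\cong(\Xi,F_\bullet)\{2\dim\mc{B}\}$, with no Hodge-theoretic input and hence no circularity.
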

\begin{proof}
Recall that Cohen-Macaulayness of $(\Xi, F_\bullet)$ just says that $\mb{D}(\Xi, F_\bullet)$ is concentrated in a single cohomological degree. Consider the filtered Koszul complex
\begin{equation} \label{eq:xi koszul 1}
0 \to (\tilde{\mc{D}} \boxtimes \mc{D}_0) \otimes \wedge^n \mf{g}\{ n\} \to \cdots \to (\tilde{\mc{D}} \boxtimes \mc{D}_0) \otimes \mf{g}\{1\}  \to \tilde{\mc{D}} \boxtimes \mc{D}_0\to (\Xi, F_\bullet) \to 0,
\end{equation}
where $n = \dim \mf{g}$. Passing to associated gradeds, we get the Koszul resolution
\[
0 \to \mc{O}_{\tilde{\mf{g}}^* \times \tilde{\mc{N}}^*} \otimes \wedge^n \mf{g}\{n\} \to \cdots \to \mc{O}_{\tilde{\mf{g}}^* \times \tilde{\mc{N}}^*} \otimes \mf{g}\{ 1\}  \to \mc{O}_{\tilde{\mf{g}}^* \times \tilde{\mc{N}}^*} \to \mc{O}_{\tilde{\mf{g}}^* \times_{\mf{g}^*} \tilde{\mc{N}}^*} \to 0,
\]
which is exact since the map $(\tilde{\mu}, \mu) \colon \tilde{\mf{g}}^* \times \tilde{\mc{N}}^* \to \mf{g}^*$ is flat. Here $\tilde{\mc{N}}^* = T^*\mc{B} \xrightarrow{\mu} \mf{g}^*$ is the Springer resolution. So \eqref{eq:xi koszul 1} is filtered exact, i.e., the Koszul complex is a resolution of $(\Xi, F_\bullet)$. Taking the filtered dual, we find that $\mb{D}(\Xi, F_\bullet)$ is quasi-isomorphic to the complex
\[
\begin{aligned}
(\tilde{\mc{D}} \boxtimes \mc{D}_0) \{n + 2 \dim \mc{B}\} \to & (\tilde{\mc{D}} \boxtimes \mc{D}_0) \otimes \mf{g}^*\{n - 1 + 2 \dim \mc{B}\}  \to \cdots \\ 
&\to (\tilde{\mc{D}} \boxtimes \mc{D}_0) \otimes \wedge^n\mf{g}^* \{2 \dim \mc{B}\}, 
\end{aligned}
\]
where we note that $2 \dim \mc{B} + \dim H = n$. But this is precisely the Koszul resolution for the module
\[ (\Xi, F_\bullet) \otimes \wedge^n\mf{g}^*\{2 \dim \mc{B}\} \cong (\Xi, F_\bullet)\{2 \dim \mc{B}\}\]
so this proves the lemma.
\end{proof}

\begin{lem} \label{lem:xi hodge dual}
The mixed Hodge module dual $\mb{D}\Xi^H$ admits a non-zero map
\[ \mb{D} \Xi^H \to j_{1*}\gamma_1(\dim \mc{B}).\]
Hence, $\mb{D}\Xi^H (-2 \dim \mc{B})$ is another mixed Hodge module satisfying the conditions of Lemma \ref{lem:xi hodge cover}.
\end{lem}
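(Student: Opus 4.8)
The plan is to dualise the defining morphism \eqref{eq:xi hodge cover 1} and to check conditions (1) and (2) of Lemma \ref{lem:xi hodge cover} for $\mb{D}\Xi^H(-2\dim\mc{B})$ by hand; the only genuine work is pinning down one Tate twist. First, since regular holonomic duality is $t$-exact (equivalently, by the Cohen--Macaulayness in Lemma \ref{lem:xi koszul}), $\mb{D}\Xi^H$ is again a single object of $\mhm(\mc{D}_{\widetilde{0}}\boxtimes\mc{D}_0, G)$, and its underlying $\tilde{\mc{D}}\boxtimes\mc{D}_0$-module is $\mb{D}\Xi$. By Lemma \ref{lem:xi koszul} the filtered module $(\Xi, F_\bullet)$ is self-dual up to a filtration shift, so forgetting the filtration $\mb{D}\Xi\cong\Xi$; equivalently, $\Xi$ is the big projective in $\Mod(\mc{D}_{\widetilde{0}}\boxtimes\mc{D}_0, G)_{rh}$, which is also the big injective and hence self-dual. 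Since a Tate twist does not change the underlying module, $\mb{D}\Xi^H(-2\dim\mc{B})$ has underlying module $\Xi$; this is condition (1).

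Next I dualise \eqref{eq:xi hodge cover 1}. The morphism $\phi\colon\Xi^H\to j_{1*}\gamma_1(-\dim\mc{B})$ is surjective, its underlying $\mc{D}$-module map being the projection of $\Xi$ onto its unique simple quotient $j_{1!*}\gamma_1$ (Lemma \ref{lem:xi projective}). One has $\mb{D}j_{1*}\gamma_1\cong j_{1*}\gamma_1(\dim\mc{B})$: since $j_1$ is a closed immersion of the smooth variety $X_1$, $\mb{D}j_{1*}\gamma_1 = j_{1*}(\mb{D}_{X_1}\gamma_1)$, and $\mb{D}_{X_1}\gamma_1$ is again a rank-one $G$-equivariant twisted local system on $X_1$, hence $\gamma_1$ up to a Tate twist which is forced — using that $\mb{D}\mc{O}(-\rho,-\rho) = \mc{O}(-\rho,-\rho)\{2\dim\mc{B}\}$, i.e.\ that $\mb{D}$ exchanges pure weight $v$ with pure weight $-v$ — by $j_{1*}\gamma_1$ being pure of weight $\dim\mc{B}$. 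Thus $\mb{D}\phi$ is a strict monomorphism $j_{1*}\gamma_1(2\dim\mc{B})\hookrightarrow\mb{D}\Xi^H$, and since $\Xi$ has simple socle $j_{1!*}\gamma_1$ this is the socle of $\mb{D}\Xi^H$.

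It remains to produce a nonzero morphism out of $\mb{D}\Xi^H$ landing in exactly $j_{1*}\gamma_1(\dim\mc{B})$. Because the underlying $\mc{D}$-module $\Xi$ is the big projective, with simple head $j_{1!*}\gamma_1$, the head of $\mb{D}\Xi^H$ is $j_{1*}\gamma_1(a)$ for a unique $a\in\mb{Z}$, giving a nonzero map $\mb{D}\Xi^H\to j_{1*}\gamma_1(a)$; dualising once more, $j_{1*}\gamma_1(a) = \mb{D}(\mathrm{socle}(\Xi^H))$, so it suffices to identify $\mathrm{socle}(\Xi^H)$. Its underlying $\mc{D}$-module is $j_{1!*}\gamma_1$ (again because $\Xi$ has simple socle), so $\mathrm{socle}(\Xi^H) = j_{1*}\gamma_1(d) = W_{\min}\Xi^H$ for some $d$, pure of weight $\dim\mc{B}-2d$. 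By Lemma \ref{lem:xi stalks} the stalk weights of $\Xi^H$ are $3\dim\mc{B}-\ell(w)$, so $\Xi^H$ has weights $\le 3\dim\mc{B}$, with equality realised by the head $j_{1*}\gamma_1(-\dim\mc{B})$; combined with the fact that the big projective has weights spanning a range of $2\dim\mc{B} = 2\ell(w_0)$ (equivalently, that the costalk weights of $\Xi^H$, computed from Lemma \ref{lem:xi stalks} by duality and the self-duality of $\Xi$, are $\ge\dim\mc{B}$), the lowest weight of $\Xi^H$ is $\dim\mc{B}$. Hence $d=0$, $\mathrm{socle}(\Xi^H) = j_{1*}\gamma_1$, and $a=\dim\mc{B}$.

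This gives the nonzero morphism $\mb{D}\Xi^H\to j_{1*}\gamma_1(\dim\mc{B})$ asserted in the first sentence; untwisting by $(2\dim\mc{B})$ yields a nonzero morphism $\mb{D}\Xi^H(-2\dim\mc{B})\to j_{1*}\gamma_1(-\dim\mc{B})$ whose underlying $\mc{D}$-module map $\Xi\to j_{1!*}\gamma_1$ is, up to a scalar, the unique nonzero one, which is condition (2). The formal parts (that $\mb{D}\Xi^H$ is a single mixed Hodge module on $\Xi$, and the dualisation of $\phi$) are routine once $\mb{D}j_{1*}\gamma_1\cong j_{1*}\gamma_1(\dim\mc{B})$ is established; the delicate point — and the one I expect to be the main obstacle — is showing the map lands in precisely $j_{1*}\gamma_1(\dim\mc{B})$, i.e.\ controlling the lowest weight of $\Xi^H$ (equivalently, its costalk at the closed orbit) from Lemma \ref{lem:xi stalks} and the known weight length of the big projective.
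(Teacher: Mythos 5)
Your formal reductions are fine: $\mb{D}\Xi^H$ is again a single object of $\mhm(\mc{D}_{\widetilde{0}}\boxtimes\mc{D}_0,G)$ with underlying module $\Xi$, its head is $j_{1*}\gamma_1(a)$ for a unique $a$, and $a$ is determined by the socle of $\Xi^H$, i.e.\ by its lowest weight. But the step you yourself flag as delicate --- showing that the socle of $\Xi^H$ is $j_{1*}\gamma_1$ with \emph{no} Tate twist, equivalently that the lowest weight of $\Xi^H$ is $\dim\mc{B}$ --- is a genuine gap as written. Your two justifications do not work at this point of the paper: (i) ``costalk weights of $\Xi^H$ computed from Lemma \ref{lem:xi stalks} by duality and the self-duality of $\Xi$'' is circular, because the only self-duality available here (Lemma \ref{lem:xi koszul}) is a statement about the filtered $\tilde{\mc{D}}\boxtimes\mc{D}_0$-module $(\Xi,F_\bullet)$, not about the mixed Hodge module $\Xi^H$; to pass from stalks to costalks of $\Xi^H$ by duality you would need $\mb{D}\Xi^H\cong\Xi^H(-2\dim\mc{B})$ \emph{as mixed Hodge modules}, which is stronger than the lemma you are proving (the lemma only says $\mb{D}\Xi^H(-2\dim\mc{B})$ is \emph{another} module satisfying Lemma \ref{lem:xi hodge cover}, and even Theorem \ref{thm:xi hodge filtration} ultimately identifies only the filtered modules, not the Hodge structures). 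Lemma \ref{lem:xi stalks} by itself cannot see the socle twist: the stalk at the closed orbit detects the head, and costalks are not determined by stalks without a duality input. (ii) ``the known weight length $2\ell(w_0)$ of the big projective'' is a fact from graded category $\mc{O}$ / Koszul duality that is neither proved in this paper nor derivable from the cited lemmas, so invoking it imports precisely the content to be established.

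The paper closes exactly this gap by a different mechanism, which you could adopt: by Lemma \ref{lem:xi stalks} the stalk on the \emph{open} orbit is $j_{w_0}^*\Xi^H=\gamma_{w_0}$ with no twist (since $\ell(w_0)=\dim\mc{B}$), so the adjunction map gives an injection $j_{w_0!}\gamma_{w_0}=j_{w_0!}j_{w_0}^*\Xi^H\hookrightarrow\Xi^H$; since $j_{1*}\gamma_1$ is the lowest weight piece of the standard object $j_{w_0!}\gamma_{w_0}$, one obtains $j_{1*}\gamma_1\hookrightarrow\Xi^H$ with the twist pinned, and dualizing this inclusion immediately gives the nonzero map $\mb{D}\Xi^H\to j_{1*}\gamma_1(\dim\mc{B})$. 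In other words, the Tate twist has to be fixed by exhibiting an a priori embedding of a standard object whose twist is known from the open-orbit stalk, not by a weight-range assertion; with that substitution your head/socle framing goes through, and the rest of your argument (the identification $\mb{D}j_{1*}\gamma_1\cong j_{1*}\gamma_1(\dim\mc{B})$ and the verification of the two conditions of Lemma \ref{lem:xi hodge cover} for $\mb{D}\Xi^H(-2\dim\mc{B})$) is correct.
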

\begin{proof}
By Lemma \ref{lem:xi stalks}, we have an injective map
\[ j_{w_0!}\gamma_{w_0} = j_{w_0!}j_{w_0}^* \Xi^H \to \Xi^H,\]
where $w_0 \in W$ is the longest element (so $j_{w_0}$ is the inclusion of the open orbit). But now we have an inclusion
\[ j_{1*}\gamma_1 \hookrightarrow j_{w_0!}\gamma_{w_0} \]
as the lowest piece of the weight filtration. So
\[ j_{1*}\gamma_1 \hookrightarrow \Xi^H.\]
Taking duals, we deduce the lemma.
\end{proof}

The final ingredient in the proof of Theorem \ref{thm:xi hodge filtration} is to give some control over the lowest piece of the Hodge filtration in $\Xi^H$. To this end, we make the following general observation.

\begin{lem} \label{lem:lowest hodge vanishing}
Let $X$ be a smooth projective variety, $H$ a torus and $\tilde{X} \to X$ an $H$-torsor. Suppose that $\lambda \in \mf{h}^*_\mb{R}$ is ample, let $\mc{M} \in \mhm_{\widetilde{\lambda}}(\tilde{X})$ be a monodromic mixed Hodge module and let $c \in \mb{Z}$ be the lowest index such that $F_c\mc{M} \neq 0$. Then
\[ \mrm{H}^i(X, F_c\mc{M} \otimes \omega_X) = 0 \quad \text{for $i > 0$}.\]
\end{lem}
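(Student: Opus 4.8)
The plan is to deduce the vanishing from the twisted Kodaira vanishing theorem, Theorem~\ref{thm:twisted kodaira}, applied to the structure morphism $g\colon X\to\spec\mb{C}$: the point is that the lowest piece $F_c\mc{M}$ sits, essentially undisturbed, inside the graded pushforward $\Gr(g)_*\Gr^F\mc{M}$, so the cohomology vanishing for the latter restricts to the desired vanishing for the former.

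First I would reduce to the case that $\mc{M}$ is $\lambda$-\emph{twisted} rather than merely $\lambda$-monodromic. By Proposition~\ref{prop:monodromic mhm}\eqref{itm:monodromic mhm 2}, the nilpotent part of the $\mf{h}$-action is a morphism of mixed Hodge modules $\mf{h}(1)\otimes\mc{M}\to\mc{M}$; letting $N^j\mc{M}$ denote its $j$-fold iterated image, one obtains a finite filtration $\mc{M}=N^0\mc{M}\supseteq N^1\mc{M}\supseteq\cdots$ by sub-mixed-Hodge-modules whose subquotients $\Gr^N_j\mc{M}$ are $\lambda$-twisted (finite because $\mf{h}$ acts nilpotently modulo $\lambda$, and twisted because the action on $\Gr^N_j\mc{M}$ is by the semisimple part $\lambda$). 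Since morphisms of mixed Hodge modules are strict for the Hodge filtration, each $\Gr^F\Gr^N_j\mc{M}$ is a subquotient of $\Gr^F\mc{M}$; in particular the lowest index of $F_\bullet\Gr^N_j\mc{M}$ is $\geq c$, and $F_c\mc{M}=\Gr^F_c\mc{M}$ is a successive extension of the $\mc{O}_X$-modules $F_c\Gr^N_j\mc{M}$. Tensoring with $\omega_X$ and chasing the resulting long exact cohomology sequences, it suffices to prove the lemma for each $\Gr^N_j\mc{M}$; the cases with $F_c\Gr^N_j\mc{M}=0$ are vacuous, and otherwise $c$ is the lowest index of the Hodge filtration of $\Gr^N_j\mc{M}$. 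So I may assume $\mc{M}\in\mhm_\lambda(\tilde X)$.

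Next, I would invoke Theorem~\ref{thm:twisted kodaira} for $g\colon X\to\spec\mb{C}$ — note that $\lambda$ being ample is exactly the hypothesis that $\lambda$ is $g$-ample here — which yields $\mc{H}^i\Gr(g)_*\Gr^F\mc{M}=0$ for $i>0$. On the other hand, for this $g$ one has $T^*Y=\spec\mb{C}$ and $q_g\colon X\hookrightarrow T^*X$ the zero section, so Proposition~\ref{prop:associated graded pushforward} reads $\Gr(g)_*\Gr^F\mc{M}=\mrm{R}\Gamma(X,\mrm{L}q_g^{\bigcdot}(\Gr^F\mc{M}\otimes\omega_X))\{-\dim X\}$. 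Computing $\mrm{L}q_g^{\bigcdot}$ by the Koszul resolution of $\mc{O}_X$ over $\Gr^F\mc{D}_X=\sym_{\mc{O}_X}(T_X)$ and using the contraction isomorphism $\wedge^k T_X\otimes\omega_X\cong\Omega^{\dim X-k}_X$, one sees that the $\mb{G}_m$-weight decomposition of $\mrm{L}q_g^{\bigcdot}(\Gr^F\mc{M}\otimes\omega_X)$ is, up to a uniform weight shift, the direct sum over $q$ of the complexes $\Gr^F_q\mrm{DR}(\mc{M})=[\Gr^F_q\mc{M}\to\Gr^F_{q+1}\mc{M}\otimes\Omega^1_X\to\cdots\to\Gr^F_{q+\dim X}\mc{M}\otimes\omega_X]$; since $\Gr^F_{q'}\mc{M}=0$ for $q'<c$, the summand with $q=c-\dim X$ is concentrated in its top term, $F_c\mc{M}\otimes\omega_X$ in cohomological degree $0$. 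As extracting a fixed $\mb{G}_m$-weight is exact and commutes with $\mrm{R}\Gamma$ and with passing to cohomology sheaves, the vanishing $\mc{H}^i\Gr(g)_*\Gr^F\mc{M}=0$ for $i>0$ specializes on this weight to $\mrm{H}^i(X,F_c\mc{M}\otimes\omega_X)=0$ for $i>0$, as required.

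The step most in need of care is the reduction to the twisted case in the second paragraph: one must verify that the nilpotent filtration $N^\bullet\mc{M}$ has $\lambda$-twisted subquotients and that the lowest Hodge index of each subquotient is controlled by that of $\mc{M}$ (both via strictness of mixed Hodge modules). Everything after that is a direct unwinding of the graded pushforward and the zero-section Koszul complex, together with the already-established Kodaira vanishing for twisted Hodge modules.
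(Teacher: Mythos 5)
Your proposal is correct and follows essentially the same route as the paper: reduce to the $\lambda$-twisted case via the (strict) finite filtration by twisted subquotients, apply the twisted Kodaira vanishing theorem to the map $X\to\mathrm{point}$, and observe that the lowest graded de Rham complex collapses to the single term $F_c\mc{M}\otimes\omega_X$. The only cosmetic difference is that you unwind $\Gr(g)_*$ through the zero-section Koszul complex to recover the complexes $\Gr^F_q\mrm{DR}(\mc{M})$, whereas the paper quotes that absolute form of the vanishing directly.
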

\begin{proof}
Since $\mc{M}$ is an iterated extension of twisted Hodge modules, we may assume without loss of generality that $\mc{M} \in \mhm_\lambda(\tilde X)$. Applying Theorem \ref{thm:twisted kodaira} to the morphism from $X$ to a point, we have
\[ \mb{H}^i(X, [F_{p - n}\mc{M} \to F_{p - n + 1}\mc{M} \otimes \Omega^1_X \to \cdots \to F_p\mc{M} \otimes \Omega^n_X]) = 0 \quad \text{for $i > 0$},\]
where $n = \dim X$ and the complex is placed in degrees $-n, \ldots, 0$. Setting $p = c$, this gives
\[ \mrm{H}^i(X, F_c\mc{M} \otimes \Omega^n_X) = \mrm{H}^i(X, F_c\mc{M} \otimes \omega_X) = 0 \quad \text{for $i > 0$}\]
as claimed.
\end{proof}

Now, for
\[ \mc{M} \in \mhm(\mc{D}_{\widetilde{0}} \boxtimes \mc{D}_0, G) \subset \mhm_{\widetilde{-\rho}, -\rho}(\tilde{\mc{B}} \times \tilde{\mc{B}}),\]
 we have
\[ \mc{M} \otimes \omega_{\mc{B} \times \mc{B}}^{-1} = \mc{M} \otimes \mc{O}(2\rho, 2\rho) \in \mhm_{\widetilde{\rho}, \rho}(\tilde{\mc{B}} \times \tilde{\mc{B}}).\]
So Lemma \ref{lem:lowest hodge vanishing} gives
\begin{equation} \label{eq:lowest hodge vanishing}
 \mrm{H}^i(\mc{B} \times \mc{B}, F_c\mc{M}) = 0 \quad \text{for $i > 0$}
\end{equation}
where $F_c\mc{M}$ is the lowest piece of the Hodge filtration.

\begin{lem} \label{lem:xi lowest piece}
The unique $G$-invariant vector in $\Gamma(F_0j_{1*}\gamma_1(-\dim \mc{B}))$ lifts to a $G$-invariant vector in $\Gamma(F_0\Xi^H)$.
\end{lem}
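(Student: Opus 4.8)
The plan is to obtain the lift directly from the surjection $\Xi^H \twoheadrightarrow j_{1*}\gamma_1(-\dim\mc{B})$ of Lemma~\ref{lem:xi hodge cover}, using strictness of morphisms of mixed Hodge modules, the vanishing \eqref{eq:lowest hodge vanishing}, and reductivity of $G$.

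First I would note that the morphism $\phi \colon \Xi^H \to j_{1*}\gamma_1(-\dim\mc{B})$ is surjective. Indeed, $X_1 = \mc{B}$ is the closed orbit, so $j_{1*}\gamma_1 = j_{1!*}\gamma_1$ is irreducible, and by Lemma~\ref{lem:xi projective} the underlying $\tilde{\mc{D}} \boxtimes \mc{D}_0$-module map $\Xi \to j_{1*}\gamma_1$ is the projection of a projective cover onto its simple top; since the forgetful functor from mixed Hodge modules to $\mc{D}$-modules is exact and faithful, $\phi$ is surjective as well. Write $\mc{K} = \ker\phi \in \mhm(\mc{D}_{\widetilde{0}} \boxtimes \mc{D}_0, G)$. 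Because every morphism of mixed Hodge modules is strict with respect to the Hodge filtration, the exact sequence $0 \to \mc{K} \to \Xi^H \to j_{1*}\gamma_1(-\dim \mc{B}) \to 0$ induces, for every $p$, a short exact sequence of coherent $\mc{O}_{\mc{B} \times \mc{B}}$-modules
\[ 0 \to F_p\mc{K} \to F_p\Xi^H \to F_p j_{1*}\gamma_1(-\dim\mc{B}) \to 0. \]

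Next I would treat the case $p = 0$. By Lemma~\ref{lem:xi hodge cover}, the Hodge filtration on $\Xi^H$ is the order filtration on $\Xi$, which starts in degree $0$; hence the induced Hodge filtration on the sub-object $\mc{K}$ also starts in degree $\ge 0$, so $F_0\mc{K}$ is either $0$ or the lowest non-zero piece of the Hodge filtration of $\mc{K}$. In both cases \eqref{eq:lowest hodge vanishing} gives $\mrm{H}^i(\mc{B} \times \mc{B}, F_0\mc{K}) = 0$ for $i > 0$, so the long exact cohomology sequence attached to the $p = 0$ sequence above shows that
\[ \Gamma(F_0\Xi^H) \longrightarrow \Gamma(F_0 j_{1*}\gamma_1(-\dim\mc{B})) \]
is surjective. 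This map is $G$-equivariant and $G$ is reductive, so the functor of $G$-invariants is exact and $\Gamma(F_0\Xi^H)^G \to \Gamma(F_0 j_{1*}\gamma_1(-\dim\mc{B}))^G$ is surjective; the unique $G$-invariant vector on the right therefore lifts to a $G$-invariant vector in $\Gamma(F_0\Xi^H)$, as required.

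I do not expect a genuine obstacle in this argument; the points requiring care are the strictness/exactness bookkeeping for the induced filtrations and confirming that \eqref{eq:lowest hodge vanishing} really does apply to $F_0\mc{K}$ (i.e.\ that it is either zero or the bottom piece of a Hodge filtration on an object of $\mhm(\mc{D}_{\widetilde{0}} \boxtimes \mc{D}_0, G)$).
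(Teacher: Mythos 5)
Your overall strategy is the same as the paper's: use strictness of the surjection $\Xi^H \to j_{1*}\gamma_1(-\dim\mc{B})$, reduce the lifting problem to $\mrm{H}^1(\mc{B}\times\mc{B},F_0\mc{K})=0$ for the kernel $\mc{K}$ via \eqref{eq:lowest hodge vanishing}, and finish with reductivity of $G$. However, there is a genuine gap at the key step where you claim $F_p\Xi^H=0$ for $p<0$. You justify this by saying that, ``by Lemma~\ref{lem:xi hodge cover}, the Hodge filtration on $\Xi^H$ is the order filtration on $\Xi$.'' Lemma~\ref{lem:xi hodge cover} asserts only that the \emph{underlying} $\tilde{\mc{D}}\boxtimes\mc{D}_0$-module of $\Xi^H$ is isomorphic to $\Xi$; it says nothing about the Hodge filtration. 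The identification of the Hodge filtration with the order filtration is exactly the content of Theorem~\ref{thm:xi hodge filtration}, and Lemma~\ref{lem:xi lowest piece} is an ingredient in its proof, so your justification is circular. Without an independent bound $F_{-1}\Xi^H=0$ you cannot conclude that $F_0\mc{K}$ is the lowest piece of the Hodge filtration of $\mc{K}$ (or zero), and \eqref{eq:lowest hodge vanishing} does not apply.

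The gap can be closed as in the paper, using Lemma~\ref{lem:xi stalks}: since the stalks of $\Xi^H$ are $j_w^*\Xi^H=\gamma_w(\ell(w)-\dim\mc{B})$, the mixed Hodge module $\Xi^H$ admits a filtration whose subquotients are the standard objects $j_{w!}\gamma_w(\ell(w)-\dim\mc{B})$. For these one has $F_p\,j_{w!}\gamma_w=0$ for $p<\codim X_w$, and the Tate twist by $\ell(w)-\dim\mc{B}=-\codim X_w$ shifts this to $F_p=0$ for $p<0$. Hence $F_p\Xi^H=0$ for $p<0$, and the rest of your argument (strictness, $F_{-1}\mc{K}=0$, vanishing of $\mrm{H}^1$ of the lowest Hodge piece of $\mc{K}$, exactness of $G$-invariants) goes through exactly as you wrote it.
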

\begin{proof}
We need to show that the map
\[ \Gamma(F_0\Xi^H) \to \Gamma(F_0 j_{1*}\gamma_1(-\dim \mc{B})) \]
is surjective. By \eqref{eq:lowest hodge vanishing}, it is enough to show that $F_p \Xi^H = 0$ for $p < 0$. Lemma \ref{lem:xi stalks} implies that $\Xi^H$ has a filtration by the objects $j_!\gamma_w(\ell(w) - \dim \mc{B})$. But now
\[ F_p j_!\gamma_w = 0 \quad \text{for $p < \codim X_w$},\]
and hence
\[ F_p j_!\gamma_w(\ell(w) - \dim \mc{B}) = 0 \quad \text{for $p < 0$}\]
so we are done.
\end{proof}

\begin{proof}[Proof of Theorem \ref{thm:xi hodge filtration}]
Given $\Xi^H$ as above, by Lemma \ref{lem:xi lowest piece}, there exists a map
\begin{equation} \label{eq:xi hodge filtration 1}
\Xi = (\tilde{\mc{D}} \boxtimes \mc{D}_0)\otimes_{U(\mf{g})}\mb{C} \to \Xi^H
\end{equation}
respecting the map to $j_{1*}\gamma_1$ and sending the generator in
\[\mb{C} \subset F_0\Xi \subset \Xi = (\tilde{\mc{D}} \boxtimes \mc{D}_0)\otimes_{U(\mf{g})} \mb{C} \]
to a $G$-invariant vector in $\Gamma(F_0\Xi^H)$. By uniqueness of projective covers, any such map must be an isomorphism of $\mc{D}$-modules. Since the filtration $F_\bullet \Xi$ is generated by $F_0\Xi$, we deduce that \eqref{eq:xi hodge filtration 1} is also a filtered map, i.e., $F_p \Xi \subset F_p\Xi^H$. Passing to duals and applying Lemma \ref{lem:xi koszul}, we get a map
\[ (\mb{D}\Xi^H, F_\bullet) = \mb{D}(\Xi^H, F_\bullet) \to \mb{D}(\Xi, F_\bullet) \cong (\Xi, F_\bullet)\{2 \dim \mc{B}\},\]
an isomorphism on the underlying $\mc{D}$-modules. By Lemma \ref{lem:xi hodge dual}, $\mb{D}\Xi^H(-2\dim \mc{B})$ is another Hodge module satisfying the conditions of Lemma \ref{lem:xi hodge cover}, so we conclude that we have a map in the other direction,
\[ (\Xi, F_\bullet)\{2 \dim \mc{B}\} \to (\mb{D}\Xi^H, F_\bullet),\]
which is also an isomorphism on the underlying $\mc{D}$-modules. The composition
\[ (\Xi, F_\bullet)\{2\dim \mc{B}\} \to (\mb{D}\Xi^H, F_\bullet) \to (\Xi, F_\bullet)\{2\dim \mc{B}\} \]
must be the identity (up to scale), and hence our original map \eqref{eq:xi hodge filtration 1} must be a filtered isomorphism. This concludes the proof.
\end{proof}

\subsection{The $S$-stalk property} \label{subsec:S-stalk}

We next note the following special property of $\Xi$, which we will need in the proof of Theorem \ref{thm:filtered exactness}. Fix any subset $S$ of the simple roots for $G$. Then there is an associated conjugacy class of parabolic subgroups of $G$, parametrized by a partial flag variety $\mc{P}_S$ with projection map $\pi_S \colon \mc{B} \to \mc{P}_S$. We fix the indexing so that, for any choice of Borel $B \subset G$, with roots $\Phi_-$, the corresponding parabolic subgroup $P$ containing $B$ has roots $\Phi_- + \Phi_S$, where $\Phi_S \subset \Phi$ is the set of roots spanned by $S$. For $w \in W$, let us write $i_{w, S}$ and $j_{w, S}$ for the inclusions
\[ X_w \xrightarrow{i_{w, S}} (\id, \pi_S)^{-1}(\id, \pi_S)(X_w) \xrightarrow{j_{w, S}} \mc{B} \times \mc{B},\]
where
\[ (\id, \pi_S) \colon \mc{B} \times \mc{B} \to \mc{B} \times \mc{P}_S\]
is the obvious map.

\begin{defn} \label{defn:S-stalk property}
Let $\mc{K} \in \Mod(\mc{D}_{\widetilde{0}} \boxtimes \mc{D}_{\widetilde{0}}, G)_{rh}$. We say that $\mc{K}$ has the \emph{$S$-stalk property} if
\[ \mc{H}^i(i_{w, S}^!j_{w, S}^*\mc{K}) = 0 \quad \text{for $i > 0$ and all $w \in W$}.\]
\end{defn}

Note that the $S$-stalk property is vacuous if $S = \emptyset$, while if $S$ is the set of all simple roots it asserts that $\mc{K}$ has a costandard filtration.

\begin{prop} \label{prop:xi mixed stalks}
The big projective $\Xi$ has the $S$-stalk property for any $S$.
\end{prop}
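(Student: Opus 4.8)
The statement $\Xi$ has the $S$-stalk property is essentially a question about $N$-orbit closures in the partial flag variety $\mc{P}_S$, so the plan is to transport it across the equivalence $\Mod(\mc{D}_{\widetilde{0}} \boxtimes \mc{D}_0, G)_{rh} \cong \mrm{Perv}_N(\mc{B})$ (equivalently $\mrm{D}^b_G\Mod(\mc{D}_{\widetilde{0}}\boxtimes \mc{D}_0)_{rh} \cong \mrm{D}^b_{c,N}(\mc{B})$) recalled in \S\ref{subsec:big projective}. Under this equivalence the functor $(\id, \pi_S)$-pushforward corresponds to $\pi_{S\bigcdot}$ from $\mc{B}$ to $\mc{P}_S$, and the composite $i_{w,S}^! j_{w,S}^*$ applied to $\Xi$ becomes: restrict the big projective perverse sheaf $P$ on $\mc{B}$ to a fiber of $\pi_S$ over a point in the image of the orbit $X_w$, intersect with (the $*$-restriction along) that fiber, then take the $!$-restriction back to the $N$-orbit. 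Concretely, the claim reduces to showing that for every $N$-orbit $\mc{O} \subset \mc{B}$ and every simple-root subset $S$, the object $i_\mc{O}^! \pi_S^* \pi_{S*} P$ lives in non-positive perverse (or cohomological) degrees — i.e. $P$ behaves like a costandard-filtered object after averaging along $\pi_S$.

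**Key steps.** First I would reduce to the partial-flag incidence variety: writing $Y_{w,S} = (\id,\pi_S)^{-1}(\id,\pi_S)(X_w)$, note that $j_{w,S} \colon X_w \hookrightarrow Y_{w,S}$ is an affine open immersion with complement a divisor (the boundary of an $N$-orbit inside the minimal-parabolic-type bundle over it), because the partial flag variety $\mc{P}_{\{\alpha\}}$ for a single simple root is a $\mb{P}^1$-bundle and orbit closures meet fibers in points or full lines. The crucial input is that $\Xi$ is the big projective, hence has a standard (Verma) filtration. So $j_{w,S}^* \Xi$ is filtered by the $j_{w,S}^*$ of standard objects, and I would check the vanishing $\mc{H}^i(i_{w,S}^! j_{w,S}^* j_{v!}\gamma_v) = 0$ for $i > 0$ for each standard $j_{v!}\gamma_v$ separately, then reassemble via the long exact sequences. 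For a single standard object this is a rank-one computation on a $\mb{P}^1$ (or a product of such, after factoring $\pi_S$ into the simple-reflection fibrations), where $i_{w,S}^!$ of a clean extension of a local system along an open $\mb{A}^1$ inside $\mb{P}^1$ is concentrated in one degree. Alternatively — and this may be cleaner — I would use the duality $\mb{D}\Xi \cong \Xi\{2\dim\mc{B}\}$ from Lemma \ref{lem:xi koszul} together with the fact that $\Xi$ also has a costandard filtration (being self-dual with a standard filtration), and observe that $i_{w,S}^! j_{w,S}^*$ applied to a costandard $j_{v*}\gamma_v$ is manifestly in non-positive degrees because $j_{w,S}^*$ of a $*$-extension is a $*$-extension and $i_{w,S}^!$ only increases cohomological degree in the expected way relative to the costandard normalization; a parity/weight argument then forces the remaining Ext-vanishing.

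**Main obstacle.** The subtle point is that $i_{w,S}^! j_{w,S}^*$ is a \emph{mixed} functor — a $!$-restriction composed with a $*$-restriction along different strata — so neither "has a standard filtration" nor "has a costandard filtration" alone immediately gives the vanishing; one genuinely needs that $\Xi$ is simultaneously standardly and costandardly filtered (i.e. tilting-like behavior after the $\pi_S$-averaging), or equivalently a precise understanding of how $j_{v!}\gamma_v$ restricts to the incidence varieties $Y_{w,S}$. I expect the hard part is this combinatorial bookkeeping: showing that for each pair $(v,w)$ the intersection pattern of $X_v$ with the $\pi_S$-saturation of $X_w$ is "clean enough" that $i_{w,S}^! j_{w,S}^* j_{v!}\gamma_v$ has no higher cohomology. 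This should follow from the Bruhat-order combinatorics of $W$ relative to the parabolic subgroup $W_S$ (using that $\pi_S$-fibers are Schubert varieties for minimal parabolics and that $N$-orbit closures are unions of such), but making it airtight — rather than hand-waving "it's a $\mb{P}^1$-computation" — is where the real work lies. I would organize this as an induction on $\ell(w)$ using the fibrations $\mc{B} \to \mc{P}_{\{\alpha\}}$ for $\alpha \in S$ one simple root at a time, reducing at each step to the base case $S = \{\alpha\}$ treated above.
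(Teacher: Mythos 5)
Your central step does not hold: it is \emph{not} true that each standard object satisfies the vanishing $\mc{H}^i(i_{w,S}^!j_{w,S}^*j_{v!}\gamma_v) = 0$ for $i>0$, so the plan of checking standards one at a time and reassembling along a standard filtration of $\Xi$ collapses at the first step. Already for $S$ the full set of simple roots (where $j_{w,S} = \id$ and the $S$-stalk property is exactly the existence of a costandard filtration), standard objects fail: for $\mrm{SL}_2$ the costalk $j_1^!\,j_{w_0!}\gamma_{w_0}$ at the closed orbit has nonvanishing $\mc{H}^1$. The reason your ``rank-one computation on $\mb{P}^1$'' goes wrong is that the twist here is integral ($-\rho$), so along a simple-root $\mb{P}^1$ the local systems $\gamma_w$ have trivial monodromy and the extension from the open cell is \emph{not} clean; the costalk of the $!$-extension then sits in two degrees. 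Your fallback — that $\Xi$, being self-dual, also has a costandard filtration, plus a ``parity/weight argument'' for the mixed functor $i_{w,S}^!j_{w,S}^*$ — is exactly the part you concede is not written down, and it is not clear it can be completed without in effect rediscovering a different mechanism: the difficulty you flag (a $!$-restriction composed with a $*$-restriction along different strata) is real and is not resolved by knowing both filtrations exist.

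For comparison, the paper avoids filtering $\Xi$ altogether. Since $\Xi$ is the projective cover of $j_{1!*}\gamma_1$, one has $\mrm{Ext}^i(\Xi, j_{w!*}\gamma_w) = 0$ for $(w,i) \neq (1,0)$ in the equivariant derived category (via \cite[Corollary 3.3.2]{BGS}), and this forces the restriction $j_{w,S}^*\Xi$ to be a \emph{projective} object in the category of objects supported on $(\id,\pi_S)^{-1}(\id,\pi_S)(X_w)$, with one-dimensional Hom to every costandard there. Under the equivalence of that category with $\Mod(\mc{D}_{\mc{B}_L,\widetilde{0}} \boxtimes \mc{D}_{\mc{B}_L,0}, L)_{rh}$ for the Levi $L$ attached to $S$, the only such object is the big projective $\Xi_L$ for $L$; then $i_{w,S}^!j_{w,S}^*\Xi = i_{w,S}^!\Xi_L = \mb{D}\,i_{w,S}^*\Xi_L \cong \gamma_w$ by the self-duality of Lemma \ref{lem:xi koszul} and the stalk computation of Lemma \ref{lem:xi stalks} applied to $L$, which is concentrated in degree zero. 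This is the structural input your proposal is missing: identify the restricted object outright (using projectivity), rather than trying to control the mixed costalk functor on a filtration term by term.
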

\begin{proof}
Observe that we have an equivalence of categories
\begin{equation} \label{eq:xi mixed stalks 1}
\begin{aligned}
\Mod(\mc{D}_{(\id, \pi_S)^{-1}(\id, \pi_S)(X_w), \widetilde{0}, 0}, G)_{rh} &\cong \Mod(\mc{D}_{\mc{B}_L, \widetilde{-\rho_P}} \boxtimes \mc{D}_{\mc{B}_L,-\rho_P}, L)_{rh} \\
&\cong \Mod(\mc{D}_{\mc{B}_L, \widetilde{0}} \boxtimes \mc{D}_{\mc{B}_L,0}, L)_{rh}
\end{aligned}
\end{equation}
given by restricting to a fiber $\tilde{\mc{B}}_L \times \tilde{\mc{B}}_L$ of $\tilde{\mc{B}} \times \tilde{\mc{B}} \to \mc{P}_S \times \mc{P}_S$ and tensoring with a rank $1$ local system; here $L \subset G$ is a Levi subgroup of a parabolic $P$ in the conjugacy class determined by $S$, $\mc{B}_L$ is its flag variety and $-\rho_P = \rho_L - \rho \in \mb{X}^*(L) \otimes \mb{R}$ is half the sum of the roots in the unipotent radical of $P$. Now, by \cite[Corollary 3.3.2]{BGS} and Lemma \ref{lem:xi projective}, we have
\[ \mrm{Ext}^i(\Xi, j_{w!*}\gamma_w) = \begin{cases} \mb{C}, & \text{if $w = 1$ and $i = 0$}, \\ 0, &\text{otherwise},\end{cases}\]
where the Exts are taken in the equivariant derived category $\mrm{D}^b_G\Mod(\mc{D}_{\widetilde{0}} \boxtimes \mc{D}_0)_{rh}$. Hence, the $\mc{D}$-module
\[ j_{w, S}^*\Xi \in \Mod(\mc{D}_{(\id, \pi_S)^{-1}(\id, \pi_S)(X_w), \widetilde{0}, 0}, G)_{rh}\]
must be a projective object satisfying
\[ \hom(j_{w, S}^*\Xi, i_{w', S*}\gamma_{w'}) = \mb{C} \]
for all orbits $X_{w'} \subset (\id, \pi_S)^{-1}(\id, \pi_S)(X_w)$. Under the equivalence \eqref{eq:xi mixed stalks 1}, one easily sees that the only such projective object is the big projective
\[ \Xi_L \in \Mod(\mc{D}_{\mc{B}_L, \widetilde{0}} \boxtimes \mc{D}_{\mc{B}_L, 0}, L)_{rh} \]
for $L$. So, as $\mc{D}$-modules
\[ i_{w, S}^!j_{w, S}^*\Xi = i_{w, S}^!\Xi_L = \mb{D} i_{w, S}^*\Xi_L \cong \gamma_w\]
by Lemmas \ref{lem:xi koszul} and \ref{lem:xi stalks} applied to $L$, so we are done.
\end{proof}

\subsection{The big pro-projective} \label{subsec:monodromic big projective}

In this subsection, we bootstrap Theorem \ref{thm:xi hodge filtration} to obtain a similar result about $\tilde{\Xi}$. Unlike $\Xi$, the $\tilde{\mc{D}} \boxtimes\tilde{\mc{D}}$-module $\tilde{\Xi}$ is \emph{not} of finite length, hence it cannot arise as the filtered module underlying any mixed Hodge module. To obtain an analog of Theorem \ref{thm:xi hodge filtration} for $\tilde{\Xi}$, we need to incorporate certain pro-objects into the theory.

\begin{defn}
Fix $\lambda, \mu \in \mf{h}^*_\mb{R}$ and let
\[ \mc{K} = \varprojlim_n \mc{K}_n \in \pro \mhm(\mc{D}_{\widetilde{\mu}} \boxtimes \mc{D}_{\widetilde{-\lambda}}, G).\]
We say that $\mc{K}$ is \emph{good} if the pro-objects
\[ \mrm{Tor}_i^{S(\mf{h})}(\mc{K} , \mb{C}_{-\lambda - \rho}) = \varprojlim_n \mrm{Tor}_i^{S(\mf{h})}(\mc{K}_n, \mb{C}_{-\lambda - \rho}) \in \pro \Mod(\mc{D}_{\widetilde{\mu}} \boxtimes \mc{D}_{-\lambda})\]
are constant for all $i$. Here $S(\mf{h})$ acts on $\mc{K}$ via the inclusion of $S(\mf{h})$ into the second factor of $\tilde{\mc{D}}$.
\end{defn}

Note that, since the forgetful functor from mixed Hodge modules to $\mc{D}$-modules is faithful and exact, if $\mc{K}$ is good then the pro-mixed Hodge modules
\[ \mrm{Tor}_i^{S(\mf{h}(1))}(\mc{K}, \mb{C}) \in \pro \mhm(\mc{D}_{\widetilde{\mu}} \boxtimes \mc{D}_{-\lambda}, G) \]
are also constant; here we recall from Proposition \ref{prop:monodromic mhm} that the nilpotent part of the $\mf{h}$-action on $\mc{K}$ lifts to a mixed Hodge module action of $\mf{h}(1)$.

\begin{prop} \label{prop:good pro-objects}
Let $\mc{K} \in \pro \mhm(\mc{D}_{\widetilde{\mu}} \boxtimes \mc{D}_{\widetilde{-\lambda}}, G)$ be a good pro-object. Then:
\begin{enumerate}
\item \label{itm:good pro-object 1} The pro-object $F_p \mc{K}$ is constant for all $p$. Hence, we have an underlying filtered $\tilde{\mc{D}} \boxtimes \tilde{\mc{D}}$-module
\[ (\mc{K}, F_\bullet) := \left(\bigcup_p F_p\mc{K}, F_\bullet \mc{K}\right).\]
\item \label{itm:good pro-object 2} If $(\mc{K}', F_\bullet) \in \coh(\tilde{\mc{D}}\boxtimes \tilde{\mc{D}}, F_\bullet)$ and $(\mc{K}', F_\bullet) \to (\mc{K}, F_\bullet)$ is a morphism such that
\begin{equation} \label{eq:good pro-object 3}
\mc{K}' \otimes_{S(\mf{h})} \mb{C}_{-\lambda - \rho} \to \mc{K} \otimes_{S(\mf{h})} \mb{C}_{-\lambda - \rho}
\end{equation}
is filtered surjective, then $\mc{K}' \to \mc{K}$ is filtered surjective. Hence, $(\mc{K}, F_\bullet) \in \coh(\tilde{\mc{D}} \boxtimes \tilde{\mc{D}}, F_\bullet)$.
\item \label{itm:good pro-object 3} If, moreover, \eqref{eq:good pro-object 3}
is a filtered quasi-isomorphism, then $\mc{K}' \to \mc{K}$ is a filtered isomorphism.
\end{enumerate}
\end{prop}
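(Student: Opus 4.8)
Throughout, write $\fm \subset S(\mf{h})$ for the maximal ideal of the point $-\lambda - \rho \in \mf{h}^*$ and let $S(\mf{h})$ act on everything through the second factor of $\tilde{\mc{D}}$; on objects of $\mhm(\mc{D}_{\widetilde{\mu}} \boxtimes \mc{D}_{\widetilde{-\lambda}}, G)$ the ideal $\fm$ acts nilpotently, and the linear operators $h - (-\lambda - \rho)(h)$, $h \in \mf{h}$, raise the Hodge filtration by one. By Proposition \ref{prop:monodromic mhm} \eqref{itm:monodromic mhm 2} the nilpotent part of this action upgrades $\mc{K}$ to a pro-object of $S(\mf{h}(1))$-modules internal to $\mhm(\mc{D}_{\widetilde{\mu}} \boxtimes \mc{D}_{-\lambda}, G)$, so that $\mrm{Tor}^{S(\mf{h}(1))}_\bullet(\mc{K}, \mb{C})$ is computed levelwise by the Koszul complex
\[ \mrm{Kos}_\bullet(\mc{K}_n) = \bigl[\, \wedge^r\mf{h} \otimes \mc{K}_n(r) \to \cdots \to \mf{h} \otimes \mc{K}_n(1) \to \mc{K}_n \,\bigr], \qquad r = \dim \mf{h}, \]
a bounded complex in $\mhm(\mc{D}_{\widetilde{\mu}} \boxtimes \mc{D}_{-\lambda}, G)$ whose homology pro-objects are constant by the goodness hypothesis (and faithful exactness of the forgetful functor to $\mc{D}$-modules). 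Applying $F_p$, which is exact on mixed Hodge modules, gives a finite complex $[\wedge^r\mf{h} \otimes F_{p - r}\mc{K}_n \to \cdots \to \mf{h} \otimes F_{p - 1}\mc{K}_n \to F_p\mc{K}_n]$ with homologies the constant pro-objects $F_p\mrm{Tor}^{S(\mf{h}(1))}_i(\mc{K}_n, \mb{C})$; in particular, reading off the zeroth homology yields the identity
\[ F_p\bigl(\mc{K} \otimes_{S(\mf{h})} \mb{C}_{-\lambda - \rho}\bigr) = F_p\mc{K}/(\mf{h} \cdot F_{p - 1}\mc{K}), \]
which will be used repeatedly (here $\mf{h} \cdot F_{p-1}\mc{K} \subseteq F_p\mc{K}$ denotes the image of the monodromy operators applied once).

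For \eqref{itm:good pro-object 1} the plan is to induct on $p$. The base case is the vanishing $F_p\mc{K}_n = 0$ for $p \ll 0$ uniformly in $n$, which I would extract from constancy of $\mrm{Tor}_0(\mc{K}, \mb{C}) = \mc{K}/\fm\mc{K}$, whose Hodge filtration is bounded below. Assuming $F_{p'}\mc{K}_n$ is a constant pro-object for all $p' < p$, every term of the truncated Koszul complex above except the degree-zero term $F_p\mc{K}_n$ is constant, and all its homologies are constant; splicing the short exact sequences coming out of this finite complex from the bottom up — and using that in the noetherian, artinian abelian categories at hand a sub-pro-object, quotient pro-object, or extension of constant pro-objects is again constant — forces $F_p\mc{K}_n$ to be constant. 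This produces the honest filtered $\tilde{\mc{D}} \boxtimes \tilde{\mc{D}}$-module $(\mc{K}, F_\bullet) := (\bigcup_p F_p\mc{K}, F_\bullet\mc{K})$ with $F_\bullet$ bounded below.

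For \eqref{itm:good pro-object 2}, let $\phi \colon (\mc{K}', F_\bullet) \to (\mc{K}, F_\bullet)$ be the given morphism. Filtered surjectivity of \eqref{eq:good pro-object 3}, via the identity displayed above, means exactly that $F_p\mc{K} = \phi(F_p\mc{K}') + \mf{h} \cdot F_{p - 1}\mc{K}$ for all $p$. Since the monodromy operators commute with $\phi$ and carry $F_{p-1}\mc{K}'$ into $F_p\mc{K}'$, and since $F_\bullet\mc{K}$ is bounded below by \eqref{itm:good pro-object 1}, an induction on $p$ (trivial below the lowest nonzero degree) gives $F_p\mc{K} = \phi(F_p\mc{K}')$ for every $p$, i.e.\ $\phi$ is strictly filtered surjective; coherence of $(\mc{K}, F_\bullet)$ then follows, it being a filtered quotient of the coherent $(\mc{K}', F_\bullet)$. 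For \eqref{itm:good pro-object 3}, strict filtered surjectivity of $\phi$ gives a strict short exact sequence $0 \to (\mc{K}'', F_\bullet) \to (\mc{K}', F_\bullet) \to (\mc{K}, F_\bullet) \to 0$. Tensoring with $\mb{C}_{-\lambda - \rho}$ over $S(\mf{h})$ in the filtered derived category and feeding in the hypothesis that \eqref{eq:good pro-object 3} is a filtered quasi-isomorphism, the long exact sequence of filtered $\mrm{Tor}$'s forces $\mc{K}'' \otimes^{\mrm{L}}_{S(\mf{h})} \mb{C}_{-\lambda - \rho} = 0$; in particular $F_p\mc{K}''/(\mf{h} \cdot F_{p - 1}\mc{K}'') = 0$ for all $p$, and since $F_\bullet\mc{K}''$ is bounded below, graded Nakayama gives $\mc{K}'' = 0$. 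Hence $\phi$ is a filtered isomorphism.

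The step I expect to demand the most care is \eqref{itm:good pro-object 1}: making precise, and stable under the splicing, the statement that ``constant'' is preserved by the relevant sub-, quotient-, and extension-operations (this is where the noetherian and artinian properties of the ambient subcategory of $\mhm$ are used), and pinning down the uniform-in-$n$ lower bound for the Hodge filtrations — one has to be careful that the bound furnished by $\mrm{Tor}_0(\mc{K}, \mb{C})$ genuinely controls all of the $F_p\mc{K}_n$ rather than merely their reductions modulo $\fm$. Once \eqref{itm:good pro-object 1} is established, \eqref{itm:good pro-object 2} and \eqref{itm:good pro-object 3} are routine graded-Nakayama arguments resting on the single Koszul computation $F_p(\mc{K} \otimes_{S(\mf{h})} \mb{C}_{-\lambda - \rho}) = F_p\mc{K}/(\mf{h} \cdot F_{p-1}\mc{K})$.
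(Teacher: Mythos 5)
Your route is genuinely different from the paper's, and most of it works. The paper never inducts on $p$ and never touches the Koszul complex: it approximates $\mc{K}$ by the quotients $\mc{K} \otimes_{S(\mf{h}(1))} S(\mf{h}(1))/I^n$, observes that these and all their Tor's are constant pro-objects because $S(\mf{h}(1))/I^n$ is an iterated extension of Tate twists of $\mb{C}$ and $\mc{K}$ is good, and then uses the Tate twists to do the bounding for free: $F_p(\mc{K} \otimes_{S(\mf{h}(1))} S^n(\mf{h}(1))) = F_{p-n}(\mc{K}\otimes_{S(\mf{h}(1))}\mb{C}) \otimes S^n(\mf{h}) = 0$ for $n \gg 0$, so $F_p\mc{K} = F_p(\mc{K} \otimes_{S(\mf{h}(1))} S(\mf{h}(1))/I^n)$ for $n \gg 0$ and constancy of $F_p\mc{K}$ drops out with no splicing and no pro-category closure lemmas; parts \eqref{itm:good pro-object 2} and \eqref{itm:good pro-object 3} are then an induction on $n$ along the same tower, and coherence is obtained unconditionally by exhibiting $\mc{K}' = (\tilde{\mc{D}} \boxtimes \tilde{\mc{D}}) \otimes F_p\mc{K}$ for $p \gg 0$ (your coherence claim, as written, is only conditional on being handed a suitable $\mc{K}'$). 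Your strictness identity $F_p(\mc{K}\otimes_{S(\mf{h})}\mb{C}_{-\lambda-\rho}) = F_p\mc{K}/\mf{h}\cdot F_{p-1}\mc{K}$ and the graded-Nakayama treatment of \eqref{itm:good pro-object 2} and \eqref{itm:good pro-object 3} are fine (in \eqref{itm:good pro-object 3} you read the hypothesis as a statement about the derived reduction, which is indeed how it is meant and used), and the closure of constant pro-objects under images, quotients by constant subobjects and extensions that your splicing needs is true and provable by elementary reindexing/five-lemma arguments — though you assert it rather than prove it, and it is not really an artinian phenomenon.

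The step that does not go through as written is the base case of your induction in \eqref{itm:good pro-object 1}. Constancy of $\mrm{Tor}_0(\mc{K},\mb{C})$ is a statement up to pro-isomorphism only, so it does not yield the literal uniform vanishing ``$F_p\mc{K}_n = 0$ for $p \ll 0$ uniformly in $n$'': the levels $\mc{K}_n$ may carry Hodge-negative pieces that are killed by the transition maps and hence are invisible in the stable value of $\mc{K}\otimes_{S(\mf{h})}\mb{C}$ (such pieces can even sit inside $\fm\mc{K}_n$ and so leave no trace in the reduction at all). What is true, and is all your induction needs, is that the pro-object $F_p\mc{K}$ is pro-zero for $p < p_0$, where $p_0$ bounds the Hodge filtration of the stable value of $\mc{K}\otimes_{S(\mf{h})}\mb{C}$; but this requires its own small argument, for instance: once the induced map on $F_p$ of the reductions is zero, strictness gives $\varphi_{n,m}(F_p\mc{K}_m) \subseteq \fm\mc{K}_n \cap F_p\mc{K}_n = \mf{h}\cdot F_{p-1}\mc{K}_n$, and composing $N$ transition maps pushes the image into $\mf{h}^N \cdot F_{p-N}\mc{K}_n$, which vanishes for $N \gg 0$ because each individual $\mc{K}_n$ is an honest mixed Hodge module with bounded-below Hodge filtration. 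With that repair (and a written proof of the closure lemma) your argument closes; the paper's Tate-twist bounding sidesteps both issues at once, which is what it buys over your approach.
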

\begin{proof}
Let $I = \ker(S(\mf{h}(1)) \to \mb{C})$ and set $S(\mf{h}(1))_n = S(\mf{h}(1))/I^n$. Since the $S(\mf{h}(1))$-module $S(\mf{h}(1))_n$ is an iterated extension of Tate twists of $\mb{C}$ and $\mc{K}$ is good, it follows that the pro-objects
\[ \mrm{Tor}_i^{S(\mf{h}(1))}(\mc{K}, S(\mf{h}(1))_n) \]
are constant for all $n$. Since $\mf{h}(1)$ acts nilpotently on every object in
\[ \mhm(\mc{D}_{\widetilde{\mu}} \boxtimes \mc{D}_{\widetilde{-\lambda}}, G),\]
it follows that the canonical map
\[ \mc{K} \to \varprojlim_n \mc{K} \otimes_{S(\mf{h}(1))} S(\mf{h}(1))_n \]
is an isomorphism. Now, for all $n$, we have a long exact sequence
\begin{equation} \label{eq:good pro-object 2}
\begin{aligned}
\cdots \to \mrm{Tor}_1^{S(\mf{h}(1))}&(\mc{K}, S(\mf{h}(1))_n) \to \mc{K} \otimes_{S(\mf{h}(1))} S^n(\mf{h}(1)) \\
&\to \mc{K} \otimes_{S(\mf{h}(1))} S(\mf{h}(1))_{n + 1} \to \mc{K} \otimes_{S(\mf{h}(1))} S(\mf{h}(1))_n\to 0.
\end{aligned}
\end{equation}
Since
\[ F_p(\mc{K} \otimes_{S(\mf{h}(1))} S^n(\mf{h}(1))) = F_{p - n}(\mc{K} \otimes_{S(\mf{h}(1))} \mb{C}) \otimes S^n(\mf{h}) = 0 \quad \text{for $n \gg 0$},\]
we deduce that
\[ F_p \mc{K} = F_p (\mc{K} \otimes_{S(\mf{h}(1))} S(\mf{h}(1))_n) \quad \text{for $n \gg 0$},\]
and hence conclude \eqref{itm:good pro-object 1}. Similarly, to prove \eqref{itm:good pro-object 2} (resp., \eqref{itm:good pro-object 3}), note that, using \eqref{eq:good pro-object 2} and induction on $n$, we have that
\[ \mc{K}' \otimes_{S(\mf{h}(1))} S(\mf{h}(1))_n \to \mc{K} \otimes_{S(\mf{h}(1))} S(\mf{h}(1))_n \]
is filtered surjective (resp., a filtered isomorphism) for all $n$, and hence so is $\mc{K}' \to \mc{K}$. Coherence of $(\mc{K}, F_\bullet)$ follows since, for $p$ large enough, the morphism
\[ (\tilde{\mc{D}} \boxtimes \mc{D}_0) \otimes F_p \mc{K} \to (\tilde{\mc{D}} \boxtimes \mc{D}_0) \otimes F_p(\mc{K} \otimes_{S(\mf{h})} \mb{C}) \to \mc{K} \otimes_{S(\mf{h})}\mb{C} \]
is filtered surjective, and hence so is
\[ (\tilde{\mc{D}} \boxtimes \tilde{\mc{D}}) \otimes F_p \mc{K} \to (\mc{K}, F_\bullet),\]
by \eqref{itm:good pro-object 2}. 
\end{proof}

Consider once more the filtered $\tilde{\mc{D}}\boxtimes\tilde{\mc{D}}$-module $\tilde{\Xi}$. We have the following analog of Theorem \ref{thm:xi hodge filtration}.

\begin{thm} \label{thm:xitilde hodge filtration}
There exists a good pro-object
\[ \tilde{\Xi}^H \in \pro \mhm(\mc{D}_{\widetilde{0}} \boxtimes \mc{D}_{\widetilde{0}}, G)\]
equipped with an isomorphism $(\tilde{\Xi}^H, F_\bullet) \cong (\tilde{\Xi}, F_\bullet)$.
\end{thm}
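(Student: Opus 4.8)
The plan is to deduce Theorem \ref{thm:xitilde hodge filtration} from Theorem \ref{thm:xi hodge filtration} by taking a suitable inverse limit of ``thickenings'' of $\Xi^H$ along the singular support direction in the second $\tilde{\mc{D}}$-factor. Concretely, recall that $\tilde\Xi = \tilde{\mc{D}} \otimes_{U(\mf{g})} \tilde{\mc{D}}$ and $\Xi = \tilde{\mc{D}} \otimes_{U(\mf{g})} \mc{D}_0 = \tilde\Xi \otimes_{S(\mf{h}), -\rho} \mb{C}$, where $S(\mf{h})$ sits in the second factor. I would set $I = \ker(S(\mf{h}) \to \mb{C}_{-\rho})$ and consider, for each $n \geq 1$, the finite-length module $\Xi_n := \tilde{\mc{D}} \otimes_{U(\mf{g})} (\tilde{\mc{D}} \otimes_{S(\mf{h})} S(\mf{h})/I^n)$, which lies in $\Mod(\mc{D}_{\widetilde 0} \boxtimes \mc{D}_{\widetilde 0}, G)_{rh}$ since $S(\mf{h})/I^n$ is finite-dimensional, with $\Xi_1 = \Xi$ and $\tilde\Xi = \varprojlim_n \Xi_n$ (the map to the limit being an isomorphism because $\mf{h}$ acts locally nilpotently on the second factor after the $\rho$-shift, exactly as in the proof of Proposition \ref{prop:good pro-objects}\eqref{itm:good pro-object 1}).

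The key step is to lift each $\Xi_n$ to a mixed Hodge module $\Xi_n^H \in \mhm(\mc{D}_{\widetilde 0} \boxtimes \mc{D}_{\widetilde 0}, G)$, compatibly in $n$, so that $\varprojlim_n \Xi_n^H$ is the desired pro-object $\tilde\Xi^H$. To produce these, I would use the equivalence $\mhm(\mc{D}_{\widetilde 0} \boxtimes \mc{D}_{\widetilde 0}, G) \cong \mhm(\mc{D}_{\widetilde 0}, N)$ (restriction to a fiber of the first projection, as in the discussion after Theorem \ref{thm:xi hodge filtration}), under which the monodromic second factor becomes a monodromic mixed Hodge module on $\mc{B}$; the module $\Xi_n^H$ then corresponds to the ``$n$-th infinitesimal neighbourhood'' projective obtained by applying the equivalence of Lemma \ref{lem:monodromic nearby cycles}/Lemma \ref{lem:monodromic local model} to produce the mixed Hodge module structure on $\tilde{\mc{D}}_{\mc{B}} \otimes_{S(\mf{h})} S(\mf{h})/I^n$ — i.e.\ tensoring $\Xi^H$ with the rank-$n$ admissible variation $t^{\lambda+s}\mc{O}[s]/(s^n)$ appearing in the proof of Lemma \ref{lem:monodromic nearby cycles} (here with $\lambda = 0$), along the second factor. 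Concretely, one defines $\Xi^H_n$ by the pushout/colimit construction of \S\ref{subsec:semi-continuity} applied to the second torus direction: take the mixed Hodge module $\Xi^H$, form $\Xi^H \otimes_{S(\mf{h}(1))} S(\mf{h}(1))/I^n$ using the $S(\mf{h}(1))$-action supplied by Proposition \ref{prop:monodromic mhm}\eqref{itm:monodromic mhm 2}, and check this lies in $\mhm(\mc{D}_{\widetilde 0} \boxtimes \mc{D}_{\widetilde 0}, G)$ with underlying filtered module $(\Xi_n, F_\bullet)$. The filtration compatibility follows because $F_\bullet(\Xi^H \otimes_{S(\mf{h}(1))} S(\mf{h}(1))/I^n)$ matches $F_\bullet \Xi_n$ by the explicit formula $F_p = \sum_{k \leq p - \bullet} s^k(\cdots)$ for the Hodge filtration on the admissible variation, combined with Theorem \ref{thm:xi hodge filtration} for $n = 1$.

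Once the $\Xi_n^H$ are constructed, set $\tilde\Xi^H = \varprojlim_n \Xi_n^H \in \pro\,\mhm(\mc{D}_{\widetilde 0} \boxtimes \mc{D}_{\widetilde 0}, G)$. Goodness is then immediate from the construction: $\mrm{Tor}_i^{S(\mf{h})}(\tilde\Xi^H, \mb{C}_{-\rho}) = \mrm{Tor}_i^{S(\mf{h})}(\tilde\Xi, \mb{C}_{-\rho})$ is constant, since $\tilde\Xi = \tilde{\mc{D}} \otimes_{U(\mf{g})} \tilde{\mc{D}}$ is free over the second copy of $\tilde{\mc{D}} \supset S(\mf{h})$, hence $\mrm{Tor}$-independent, so $\mrm{Tor}_0 = \Xi$ and $\mrm{Tor}_{>0} = 0$ as \emph{constant} pro-objects. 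Finally, the isomorphism $(\tilde\Xi^H, F_\bullet) \cong (\tilde\Xi, F_\bullet)$ of filtered $\tilde{\mc{D}} \boxtimes \tilde{\mc{D}}$-modules follows from Proposition \ref{prop:good pro-objects}: by \eqref{itm:good pro-object 1} the pro-object $F_p \tilde\Xi^H$ is constant and $(\tilde\Xi^H, F_\bullet) := (\bigcup_p F_p\tilde\Xi^H, F_\bullet)$ makes sense, and applying \eqref{itm:good pro-object 3} to the natural map $(\tilde\Xi, F_\bullet) \to (\tilde\Xi^H, F_\bullet)$ — which modulo $I$ reduces to the filtered isomorphism $(\Xi, F_\bullet) \cong (\Xi^H, F_\bullet)$ of Theorem \ref{thm:xi hodge filtration} — gives that it is a filtered isomorphism.

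The main obstacle I anticipate is the compatible construction of the $\Xi_n^H$ as genuine mixed Hodge modules (not just filtered $\mc{D}$-modules), together with checking that the transition maps $\Xi_{n+1}^H \to \Xi_n^H$ are morphisms of mixed Hodge modules inducing the evident maps on underlying modules. This requires care with the admissibility of the relevant variation of mixed Hodge structure in the second torus direction and with the interaction between the $G$-equivariant structure, the monodromic structure on both factors, and the Tate twists; the cleanest route is probably to phrase everything through the local-model equivalence of Lemma \ref{lem:monodromic local model} applied fibrewise over $\mc{P}_\emptyset = \mathrm{pt}$ in the second variable, reducing to the already-established Theorem \ref{thm:xi hodge filtration} plus the formal tensoring operation, exactly as the analogous bootstrapping was handled for the deformation modules $j_!^{(s_0)}f^s\mc{M}[s]$ in \S\ref{subsec:semi-continuity}.
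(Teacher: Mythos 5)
There is a genuine gap, and it sits exactly at the step you flag as the ``main obstacle'': the construction of the Hodge lifts $\Xi_n^H$ of the thickenings $\Xi_n = \tilde{\Xi} \otimes_{S(\mf{h})} S(\mf{h})/I^n$. Your proposed formula $\Xi_n^H := \Xi^H \otimes_{S(\mf{h}(1))} S(\mf{h}(1))/I^n$, with the $S(\mf{h}(1))$-action of Proposition \ref{prop:monodromic mhm}\eqref{itm:monodromic mhm 2}, collapses: $\Xi^H$ is genuinely \emph{twisted} (not just monodromic) along the second factor, so the nilpotent $\mf{h}(1)$-action there is zero, the $S(\mf{h}(1))$-module structure factors through the augmentation, and the tensor product is canonically $\Xi^H$ itself; you get no thickening at all. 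If instead you mean an external tensor with the rank-$n$ unipotent variation $t^{s}\mc{O}[s]/(s^n)$ in the second torus direction, this requires a ($G$-equivariant) trivialization of the torsor that does not exist globally, and even locally it can only ever produce the \emph{split} thickening of $\Xi$, i.e.\ a trivial family over $S(\mf{h})/I^n$. But $\Xi_n$ is a non-split deformation of $\Xi$: the monodromic family $\tilde{\Xi}$ carries strictly more information than its fibre $\Xi$ (this is the content of Soergel theory --- the endomorphism ring of the big pro-projective is the reduced algebra of functions on $\mf{h} \times_{\mf{h}/W} \mf{h}$ near $0$, whereas a split thickening would have endomorphisms containing the non-reduced coinvariant algebra), so $\tilde{\Xi}$, and hence $\Xi_n$, simply cannot be manufactured functorially from $\Xi^H$ by a formal tensoring operation. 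Consequently your claimed identification of the underlying filtered module of $\varprojlim_n \Xi_n^H$ with $(\tilde{\Xi}, F_\bullet)$ is exactly what fails. (A secondary inaccuracy: $\tilde{\Xi} \neq \varprojlim_n \Xi_n$ as modules --- $\mf{h}$ does \emph{not} act locally nilpotently on $\tilde{\Xi} = \tilde{\mc{D}} \otimes_{U(\mf{g})} \tilde{\mc{D}}$, which is flat over $S(\mf{h})$; the theorem only identifies $\tilde{\Xi}$ with the union of the constant filtered pieces of the completion, which is part of what has to be proved.)

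The missing idea, which is how the paper proceeds, is to characterize the completion $\tilde{\Xi}^\wedge = \varprojlim_n \Xi_n$ intrinsically rather than build it from $\Xi^H$: flatness of $\tilde{\Xi}$ over $S(\mf{h})$ gives $\tilde{\Xi}^\wedge \overset{\mrm{L}}\otimes_{S(\mf{h})} \mb{C} = \Xi$, whence $\mrm{Ext}^i(\tilde{\Xi}^\wedge, j_{w!*}\gamma_w) = \mrm{Ext}^i(\Xi, j_{w!*}\gamma_w)$, so $\tilde{\Xi}^\wedge$ is a pro-projective cover of $j_{1!*}\gamma_1$ in $\pro\Mod(\mc{D}_{\widetilde{0}} \boxtimes \mc{D}_{\widetilde{0}}, G)_{rh}$; one then lifts this pro-projective cover to $\pro\mhm(\mc{D}_{\widetilde{0}} \boxtimes \mc{D}_{\widetilde{0}}, G)$ by the argument of \cite[Lemma 4.5.3]{BGS}, exactly as was done for $\Xi^H$ itself in Lemma \ref{lem:xi hodge cover}. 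Goodness and the reduction $\tilde{\Xi}^H \otimes_{S(\mf{h}(1))} \mb{C} = \Xi^H$ then come for free, and your final step --- producing the filtered map $(\tilde{\Xi}, F_\bullet) \to (\tilde{\Xi}^H, F_\bullet)$ from a $G$-invariant vector in $F_0$ and upgrading the mod-$I$ isomorphism of Theorem \ref{thm:xi hodge filtration} to a filtered isomorphism via Proposition \ref{prop:good pro-objects} --- is correct and is essentially the paper's argument; but it presupposes a $\tilde{\Xi}^H$ whose underlying pro-module really is $\tilde{\Xi}^\wedge$, which your construction does not supply.
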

\begin{proof}
Consider the $\rho$-shifted action of $S(\mf{h})$ on $\tilde{\Xi}$ so that
\[ \Xi = \tilde{\Xi} \otimes_{S(\mf{h})} \mb{C}\]
where $\mf{h}$ acts on $\mb{C}$ trivially. Define
\[ \tilde{\Xi}^\wedge = \varprojlim_n \tilde{\Xi} \otimes_{S(\mf{h})} S(\mf{h})_n \in \pro \Mod(\mc{D}_{\widetilde{0}} \boxtimes \mc{D}_{\widetilde{0}}, G)_{rh},\]
where $S(\mf{h})_n = S(\mf{h})/I^n$ as in the proof of Proposition \ref{prop:good pro-objects}. Now, $\tilde{\Xi}$ is flat over $S(\mf{h})$, so $\tilde{\Xi}^\wedge \overset{\mrm{L}}\otimes_{S(\mf{h})} \mb{C} = \Xi$. In particular
\[ \mrm{Ext}^i_{\pro \Mod(\mc{D}_{\widetilde{0}} \boxtimes \mc{D}_{\widetilde{0}}, G)_{rh}}(\tilde{\Xi}^{\wedge},  j_{w!*}\gamma_w) = \mrm{Ext}^i_{\Mod(\mc{D}_{\tilde{0}} \boxtimes \mc{D}_{0}, G)_{rh}}(\Xi, j_{w!*}\gamma_w) \]
for all $w$ and all $i$. So $\tilde{\Xi}^\wedge$ is a pro-projective cover of $j_{1!*}\gamma_1$ by Lemma \ref{lem:xi projective}. Hence, by the argument of \cite[Lemma 4.5.3]{BGS}, there exists a pro-object
\[ \tilde{\Xi}^H \in \pro \mhm(\mc{D}_{\widetilde{0}} \boxtimes \mc{D}_{\widetilde{0}}, G)\]
equipped with a map $\tilde{\Xi}^H \to j_{1!*}\gamma_1(-\dim \mc{B})$ such that $\tilde{\Xi}^H \cong \tilde{\Xi}^{\wedge}$ as pro-$\mc{D}$-modules. We deduce that $\tilde{\Xi}^H$ is good and that
\[ \Xi^H = \tilde{\Xi}^H \otimes_{S(\mf{h}(1))} \mb{C} \]
is a mixed Hodge module satisfying the conditions of Lemma \ref{lem:xi hodge cover}.

Now, since $F_0\mf{h}(1) = 0$ and $F_{-1}\Xi^H = 0$, we have
\[ F_0\tilde{\Xi}^H = F_0 (\tilde{\Xi}^H \otimes_{S(\mf{h}(1))} \mb{C}) = F_0\Xi^H.\]
So as in the proof of Theorem \ref{thm:xi hodge filtration}, we obtain a filtered morphism
\begin{equation} \label{eq:xitilde hodge filtration 1}
(\tilde{\Xi}, F_\bullet) \to (\tilde{\Xi}^H, F_\bullet) 
\end{equation}
sending the generator of $F_0\tilde{\Xi}$ to a $G$-invariant section of $F_0\tilde{\Xi}^H = F_0\Xi^H$. Since, by Theorem \ref{thm:xi hodge filtration}, \eqref{eq:xitilde hodge filtration 1} becomes a filtered isomorphism after tensoring over $S(\mf{h}(1))$ with $\mb{C}$, we conclude that \eqref{eq:xitilde hodge filtration 1} is itself a filtered isomorphism by Proposition \ref{prop:good pro-objects}.
\end{proof}

The object $\tilde{\Xi}^H$ inherits a version of the $S$-stalk property from $\Xi^H$. For a pro-object $\mc{K} \in \pro\mhm(\mc{D}_{\widetilde{\mu}} \boxtimes \mc{D}_{\widetilde{-\lambda}}, G)$ and a subset $S$ of the simple roots for $G$, let us say that $\mc{K}$ has the \emph{$S$-stalk property} if we can write
\[ \mc{K} = \varprojlim_n \mc{K}_n , \quad \mc{K}_n \in \mhm(\mc{D}_{\widetilde{\mu}} \boxtimes \mc{D}_{\widetilde{-\lambda}}, G),\]
so that each $\mc{K}_n$ has the $S$-stalk property.

\begin{prop} \label{prop:xitilde mixed stalks}
The pro-object $\tilde{\Xi}^H$ has the $S$-stalk property for any subset $S$ of the simple roots.
\end{prop}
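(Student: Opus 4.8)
The plan is to bootstrap the $S$-stalk property for the finite-length object $\Xi$ (Proposition~\ref{prop:xi mixed stalks}) up to the pro-object $\tilde{\Xi}^H$, in the same spirit in which Theorem~\ref{thm:xitilde hodge filtration} was deduced from Theorem~\ref{thm:xi hodge filtration}. First I would unwind the definition: by the proof of Theorem~\ref{thm:xitilde hodge filtration} (and the constancy-of-pro-object bookkeeping in the proof of Proposition~\ref{prop:good pro-objects}), $\tilde{\Xi}^H \cong \varprojlim_n \tilde{\Xi}^H_n$ as pro-objects, where $\tilde{\Xi}^H_n := \tilde{\Xi}^H \otimes_{S(\mf{h}(1))} S(\mf{h}(1))_n$ is an \emph{honest} mixed Hodge module in $\mhm(\mc{D}_{\widetilde{0}} \boxtimes \mc{D}_{\widetilde{0}}, G)$ whose underlying $\mc{D}$-module is $\tilde{\Xi} \otimes_{S(\mf{h})} S(\mf{h})_n$ (with the $\rho$-shifted $S(\mf{h})$-action on the second $\tilde{\mc{D}}$-factor, and $S(\mf{h})_n = S(\mf{h})/I^n$ as before). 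By the definition of the $S$-stalk property for pro-objects, it therefore suffices to show that each $\mc{D}$-module $\tilde{\Xi} \otimes_{S(\mf{h})} S(\mf{h})_n$ has the $S$-stalk property of Definition~\ref{defn:S-stalk property}; note in particular that this is a condition on the underlying $\mc{D}$-module alone, so the Hodge and Tate-twist decorations below are immaterial.

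Next I would record two elementary closure properties of the full subcategory $\mc{C}_S \subset \Mod(\mc{D}_{\widetilde{0}} \boxtimes \mc{D}_{\widetilde{0}}, G)_{rh}$ of objects with the $S$-stalk property. Since $i_{w, S}^! j_{w, S}^*$ is a triangulated functor between bounded derived categories, the long exact cohomology sequence attached to a short exact sequence of $\mc{D}$-modules shows $\mc{C}_S$ is closed under extensions; and since this functor commutes with finite direct sums, $\mc{C}_S$ is closed under tensoring by finite-dimensional vector spaces. Now recall from the proof of Theorem~\ref{thm:xitilde hodge filtration} that $\tilde{\Xi}$ is flat over $S(\mf{h})$ and that $\tilde{\Xi} \otimes_{S(\mf{h})} \mb{C} = \Xi$. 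Hence the $I$-adic filtration on $S(\mf{h})_n$, whose associated graded is $\bigoplus_{k = 0}^{n - 1} S^k(\mf{h})$, pulls back to a finite filtration on $\tilde{\Xi} \otimes_{S(\mf{h})} S(\mf{h})_n$ with associated graded $\bigoplus_{k = 0}^{n - 1} \Xi \otimes_{\mb{C}} S^k(\mf{h})$.

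Finally, Proposition~\ref{prop:xi mixed stalks} gives $\Xi \in \mc{C}_S$, so each graded piece $\Xi \otimes_{\mb{C}} S^k(\mf{h})$ lies in $\mc{C}_S$, and therefore $\tilde{\Xi} \otimes_{S(\mf{h})} S(\mf{h})_n \in \mc{C}_S$ by closure under extensions; this yields the proposition. I do not expect a serious obstacle here: essentially all the content is already packaged into Proposition~\ref{prop:xi mixed stalks}, and the only points needing a little care are the identification of the $\mc{D}$-module underlying $\tilde{\Xi}^H_n$ — which is exactly the pro-object bookkeeping carried out in the proofs of Proposition~\ref{prop:good pro-objects} and Theorem~\ref{thm:xitilde hodge filtration} — and the (routine) verification that $\mc{C}_S$ is closed under extensions and finite direct sums.
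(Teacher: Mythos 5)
Your proposal is correct and follows essentially the same route as the paper: the paper's proof writes $\tilde{\Xi}^H = \varprojlim_n \tilde{\Xi}^H \otimes_{S(\mf{h}(1))} S(\mf{h}(1))_n$ and observes that each term is an iterated extension of Tate twists of $\Xi^H$, hence has the $S$-stalk property by Proposition \ref{prop:xi mixed stalks}. Your extra steps (identifying the underlying $\mc{D}$-module via flatness of $\tilde{\Xi}$ over $S(\mf{h})$, and checking closure of the $S$-stalk property under extensions and tensoring with finite-dimensional spaces) simply make explicit what the paper leaves implicit.
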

\begin{proof}
We have
\[ \tilde{\Xi}^H = \varprojlim_n \tilde{\Xi}^H \otimes_{S(\mf{h}(1))} S(\mf{h}(1))_n, \]
where each term on the right is an iterated extension of Tate twists of $\Xi^H$, and hence is a mixed Hodge module with the $S$-stalk property by Proposition \ref{prop:xi mixed stalks}.
\end{proof}

\begin{rmk} \label{rmk:standards}
Theorem \ref{thm:xitilde hodge filtration} implies a theorem of \cite{BIR} that the Hodge associated gradeds of the ``free-monodromic costandard modules'' $j_{w*}j_w^*\tilde{\Xi}^H$ are given by
\[ \Gr^Fj_{w*}j_w^*\tilde{\Xi}^H = \mc{O}_{\mrm{St}_w},\]
where $\mrm{St}_w \subset \mrm{St} := \tilde{\mf{g}}^* \times_{\mf{g}^*} \tilde{\mf{g}}^*$ is the irreducible component labelled by $w$. Indeed, we have $\Gr^F\tilde{\Xi}^H = \mc{O}_{\mrm{St}}$ by Theorem \ref{thm:xitilde hodge filtration} and hence a surjection $\mc{O}_{\mrm{St}} \to \Gr^F j_{w*}j_w^*\tilde{\Xi}^H$. If we choose a homogeneous element $h \in S(\mf{h} \oplus \mf{h}) = \mb{C}[\mf{h}^* \times \mf{h}^*]$ such that $h$ vanishes identically on the graph of $w'$ for all $w' \neq w$ but not on the graph of $w$, then it is easy to check using strictness for morphisms of mixed Hodge modules that $\Gr^F\tilde{\Xi}^H[h^{-1}] \to \Gr^Fj_{w*}j_w^*\tilde{\Xi}^H[h^{-1}]$ is an isomorphism and that $\Gr^F j_{w*}j_w^*\tilde{\Xi}^H \to \Gr^Fj_{w*}j_w^*\tilde{\Xi}^H[h^{-1}]$ is injective. Hence,
\[ \Gr^Fj_{w*}j_w^*\tilde{\Xi}^H = \mrm{im}(\mc{O}_{\mrm{St}} \to \mc{O}_{\mrm{St}}[h^{-1}]) = \mc{O}_{\mrm{St}_w}.\]
\end{rmk}

\subsection{Proof of Theorem \ref{thm:filtered exactness}} \label{subsec:pf of thm:filtered exactness}

We now give the proof of Theorem \ref{thm:filtered exactness}. By Lemmas \ref{lem:localize globalize vanishing} and \ref{lem:localize globalize convolution}, Theorem \ref{thm:xitilde hodge filtration}, and Proposition \ref{prop:xitilde mixed stalks}, the theorem follows from:

\begin{prop} \label{prop:convolution vanishing}
Assume $\mc{K} \in \pro \mhm(\mc{D}_{\widetilde{\mu}} \boxtimes \mc{D}_{\widetilde{-\lambda'}}, G)$ is a good pro-object and $\mc{M} \in \mhm(\mc{D}_{\widetilde{\lambda}})$. Assume $\lambda - \lambda' \in \mf{h}^*_\mb{R}$ is dominant and let $S$ denote the set of singular simple roots for $\lambda - \lambda'$. If $\mc{K}$ has the $S$-stalk property, then
\[ \mc{H}^i \Gr^F(\mc{K} * \mc{M}) = 0 \quad \text{for $i > 0$}.\]
\end{prop}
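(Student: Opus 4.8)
The plan is to reduce the statement to the monodromic Kodaira vanishing theorem (Theorem~\ref{thm:monodromic kodaira}) by factoring the pushforward that defines the convolution through the partial flag variety $\mc{P}_S$. As a preliminary step I would reduce to the case where $\mc{K}$ is a genuine mixed Hodge module: writing $\mc{K} = \varprojlim_n \mc{K}_n$ with each $\mc{K}_n \in \mhm(\mc{D}_{\widetilde{\mu}} \boxtimes \mc{D}_{\widetilde{-\lambda'}}, G)$ having the $S$-stalk property, goodness of $\mc{K}$ gives $F_p\mc{K} = F_p\mc{K}_n$ for $n \gg 0$ (Proposition~\ref{prop:good pro-objects}), and since $F_\bullet\mc{M}$ is bounded below the piece $F_p(\mc{K} * \mc{M})$ depends only on $F_q\mc{K}$ for $q$ below a fixed bound; hence $F_p(\mc{K} * \mc{M}) = F_p(\mc{K}_n * \mc{M})$ for $n \gg 0$, and it suffices to treat a single $\mc{K}_n$.

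Next I would set up the factorisation. Writing $\pi_S \colon \mc{B} \to \mc{P}_S$ for the projection attached to $S$, the first projection $\mrm{pr}_1 \colon \mc{B} \times \mc{B} \to \mc{B}$ factors as
\[ \mc{B} \times \mc{B} \xrightarrow{\id \times \pi_S} \mc{B} \times \mc{P}_S \xrightarrow{q} \mc{B} \]
with $q$ proper. The convolution $\mc{K} * \mc{M}$ is obtained by applying $\mrm{R}\mrm{pr}_{1\bigcdot}$ to the filtered $\tilde{\mc{D}} \boxtimes \tilde{\mc{D}}$-module $\mc{E} := \mc{K} \overset{\mrm{L}}\otimes_{\mrm{pr}_2^{-1}\tilde{\mc{D}}} \mrm{pr}_2^{-1}\mc{M}$ on $\mc{B} \times \mc{B}$, which is of Hodge-theoretic origin, being built from $\mc{K}$ and $\mc{M}$ by an external tensor product and a restriction to a partial diagonal. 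Tracking twists through the side-changing isomorphism identifies the monodromicity of $\mc{E}$ as $(\mu, \lambda - \lambda')$ along the two $\mc{B}$-factors, the second entry being the sum of the twist $\lambda$ of $\mc{M}$ and the side-changed twist $-\lambda'$ of $\mc{K}$. Since $\langle \lambda - \lambda', \check\alpha\rangle = 0$ for $\alpha \in S$ while $\langle \lambda - \lambda', \check\alpha\rangle > 0$ for the remaining simple roots, $\lambda - \lambda'$ is trivial along the fibres of $\id \times \pi_S$ (copies of the flag variety of the Levi associated to $S$) and corresponds to an ample line bundle on $\mc{P}_S$, hence to a $q$-ample line bundle on $\mc{B} \times \mc{P}_S$.

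The argument then splits into two pushforwards. First I would show, using the $S$-stalk property, that $\mc{N} := (\id \times \pi_S)_*\mc{E}$ is concentrated in degree $0$, i.e. a genuine monodromic mixed Hodge module on $\mc{B} \times \mc{P}_S$ twisted by $\mu$ along $\mc{B}$ and by $\lambda - \lambda'$ along $\mc{P}_S$: the $S$-stalk property says precisely that $\mc{K}$, restricted to the preimage under $\id \times \pi_S$ of any $G$-orbit, carries a costandard filtration, this ``relative costandardness'' survives the operation $\otimes_{\mrm{pr}_2^{-1}\tilde{\mc{D}}}\mrm{pr}_2^{-1}\mc{M}$, and a $*$-pushforward of a relatively $*$-extended object remains in perverse degree $0$. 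Granting this, at the level of filtered $\tilde{\mc{D}}$-modules one has $\mc{K} * \mc{M} \cong q_\dagger(\mc{N}, F_\bullet)$ up to a harmless cohomological shift and Tate twist (using Proposition~\ref{prop:monodromic filtered pushforward} for the trivial relative twist of $\id \times \pi_S$, together with properness of $q$). Since $\mu$ is pulled back from the target $\mc{B}$ while the part $\lambda - \lambda'$ of the twist of $\mc{N}$ is $q$-ample, Theorem~\ref{thm:monodromic kodaira} — after twisting by a line bundle pulled back from $\mc{B}$, as in the proof of Theorem~\ref{thm:twisted kodaira} — yields $\mc{H}^i\Gr^F q_\dagger(\mc{N}, F_\bullet) = 0$ for $i > 0$, and hence $\mc{H}^i\Gr^F(\mc{K} * \mc{M}) = 0$ for $i > 0$.

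The hard part will be the degree-$0$ concentration of $\mc{N} = (\id \times \pi_S)_*\mc{E}$: one must show that convolving a relatively-costandard kernel with an arbitrary twisted mixed Hodge module $\mc{M}$ and then pushing forward along $\id \times \pi_S$ stays perverse, which is exactly the point where the $S$-stalk hypothesis on $\mc{K}$ and the fact that $\lambda - \lambda'$ is singular along precisely $S$ are used in an essential way. A secondary technical point is the clean identification of $\mc{E}$ as an object of Hodge-module origin with the stated monodromicity, which requires some care with the side-changing isomorphism and with expressing $\otimes_{\mrm{pr}_2^{-1}\tilde{\mc{D}}}$ in terms of the standard six functors for mixed Hodge modules.
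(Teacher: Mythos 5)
Your overall route is the same as the paper's: reduce to a genuine mixed Hodge module using goodness, factor $\mrm{pr}_1$ through $\mc{B}\times\mc{P}_S$, identify the twist of the kernel-tensor-module object as $(\mu,\lambda-\lambda')$ with the $\mc{P}_S$-direction relatively ample, and finish with monodromic Kodaira vanishing (Theorem \ref{thm:monodromic kodaira}). The difficulty is that the step you yourself flag as ``the hard part'' is exactly the content of the proposition, and your sketch of it does not work as stated. You claim that $(\id\times\pi_S)_*\mc{E}$ is concentrated in degree $0$, i.e.\ is a genuine monodromic mixed Hodge module, because ``a $*$-pushforward of a relatively $*$-extended object remains in perverse degree $0$.'' This is too strong and in general false: the fibers of $X_w\to(\id,\pi_S)(X_w)$ are affine but not proper, so pushing a local system forward along them spreads cohomology over several degrees $\le 0$. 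What is true, and all that is needed, is vanishing of $\mc{H}^i$ for $i>0$; combined with Kodaira vanishing for the proper map $q$ this still gives the conclusion by the obvious spectral sequence.

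More seriously, you give no mechanism for even the weaker amplitude bound, and the naive one fails: the pieces supplied by the $S$-stalk property are of the form $j_{w,S!}i_{w,S*}\mc{K}_w[n]$ — a $!$-extension from the tube of a $*$-extension from the orbit — so what has to be controlled is the $*$-pushforward along $\id\times\pi_S$ of a $!$-extension, to which neither Artin-type exactness nor base change applies directly. The paper's resolution is the monodromic comparison $\tilde{p}_*\cong\tilde{p}_![\,|S|\,]\otimes(\cdots)$ of Proposition \ref{prop:monodromic ! vs *}, applied twice, which is available precisely because the second twist $\lambda-\lambda'$ lies in $\mf{h}^*_{S,\mb{R}}$, i.e.\ is pulled back from $\tilde{\mc{P}}_S$ — this is the essential use of $S$ being the singular set of $\lambda-\lambda'$, beyond $q$-ampleness. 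After this conversion one gets $\tilde{j}_{p(X_w)!}\,\tilde{p}_{w*}\,\tilde{i}_{w,S*}\mc{N}$, and affineness of $\tilde{p}_w\circ\tilde{i}_{w,S}$ together with affineness of the immersion $\tilde{j}_{p(X_w)}$ gives the degrees-$\le 0$ bound. Your proposal contains neither the filtration argument nor the $!\leftrightarrow *$ trick, so the central step remains unproved (and, in the perversity form you propose to prove it, unprovable). The remaining ingredients you list — the non-characteristic pullback identification of $\mc{E}$ via $\tilde{\Delta}_{23}^\circ$ of an external product (using $G$-equivariance of $\mc{K}$) and the handling of the $\mu$-direction of the twist — do match the paper and are indeed routine.
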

\begin{proof}
The idea behind the proof is to rewrite the filtered convolution
\[ \mc{K} * \mc{M} = \mrm{R}\mrm{pr}_{1\bigcdot}(\mc{K} \overset{\mrm{L}}\otimes_{\mrm{pr}_2^{-1}\tilde{\mc{D}}} \mrm{pr}_2^{-1}\mc{M}) \]
in terms of mixed Hodge module operations and apply monodromic Kodaira vanishing.

First, observe that if $p_0$ is the minimal integer such that $F_{p_0}\mc{M} \neq 0$, then the complex $\Gr^F_p (\mc{K} * \mc{M})$ depends only on $F_q \mc{K}$ for $q \leq p - p_0$. So if we write $\mc{K} = \varprojlim_n \mc{K}_n$, where $\mc{K}_n \in \mhm(\mc{D}_{\widetilde{\mu}} \boxtimes \mc{D}_{\widetilde{-\lambda'}}, G)$ has the $S$-stalk property, then
\[ \mc{H}^i \Gr^F(\mc{K} * \mc{M}) = \varprojlim_n \mc{H}^i \Gr^F(\mc{K}_n * \mc{M}).\]
So we may assume without loss of generality that $\mc{K} \in \mhm(\mc{D}_{\widetilde{\mu}} \boxtimes \mc{D}_{\widetilde{-\lambda'}}, G)$.

Unpacking the definition of the side-changing isomorphism $\tilde{\mc{D}}^{\mathit{op}} \cong \tilde{\mc{D}}$, we have
\begin{align*}
 \mc{K} \overset{\mrm{L}}\otimes_{\mrm{pr}_2^{-1}\tilde{\mc{D}}} \mrm{pr}_2^{-1}\mc{M} &=  \mrm{pr}_2^{-1}\omega_{\mc{B}} \overset{\mrm{L}}\otimes_{\mrm{pr}_2^{-1}\tilde{\mc{D}}}(\mc{K} \overset{\mrm{L}}\otimes_{\mrm{pr}_2^{-1}\mc{O}}\mrm{pr}_2^{-1}(\mc{M} \otimes_\mc{O} \mc{O}(2\rho))) \\
 &= \mrm{pr}_2^{-1}\omega_{\mc{B}} \overset{\mrm{L}}\otimes_{\mrm{pr}_2^{-1}\tilde{\mc{D}}} \mrm{L}\Delta_{23}^{\bigcdot}(\mc{K} \boxtimes (\mc{M} \otimes \mc{O}(2\rho))),
\end{align*}
where
\[ \Delta_{23} \colon \mc{B} \times \mc{B} \to \mc{B} \times \mc{B} \times \mc{B} \]
is given by $(x, y) \mapsto (x, y, y)$. So, in the notation of \S\ref{sec:twisted kodaira}, we have
\begin{equation} \label{eq:convolution formula 1}
\mc{K} * \mc{M} = \widetilde{\mrm{pr}}_{1\dagger} \mrm{L} \Delta_{23}^{\bigcdot}(\mc{K} \boxtimes (\mc{M} \otimes \mc{O}(2\rho)))\{\dim \tilde{\mc{B}}\},
\end{equation}
where
\[ \widetilde{\mrm{pr}}_1 \colon \tilde{\mc{B}} \times \tilde{\mc{B}} \to \tilde{\mc{B}} \]
is the first projection.

Consider now the mixed Hodge modules $\mc{K}_{\tilde{\mc{B}} \times \tilde{\mc{B}}}$ and $\mc{M}_{\tilde{\mc{B}}}$ on (products of) the base affine space corresponding to $\mc{K}$ and $\mc{M}$. A priori, we have
\[ \mc{K}_{\tilde{\mc{B}} \times \tilde{\mc{B}}} \in \mhm_{\widetilde{\mu - \rho}, \widetilde{-\lambda - \rho}}(\tilde{\mc{B}} \times \tilde{\mc{B}}) \quad \text{and} \quad \mc{M}_{\tilde{\mc{B}}} \in \mhm_{\widetilde{\lambda - \rho}}(\tilde{\mc{B}}).\]
Since $2\rho \in \mb{X}^*(H)$, however, we have $\mhm_{\widetilde{\lambda - \rho}}(\tilde{\mc{B}}) = \mhm_{\widetilde{\lambda + \rho}}(\tilde{\mc{B}})$ as subcategories of $\mhm(\tilde{\mc{B}})$; let us write $\mc{M}_{\tilde{\mc{B}}} \otimes \mc{O}(2\rho)$ for $\mc{M}_{\tilde{\mc{B}}}$ regarded as an object in $\mhm_{\widetilde{\lambda + \rho}}(\tilde{\mc{B}})$. We then have
\[ u(\mc{K}_{\tilde{\mc{B}} \times \tilde{\mc{B}}}) = \mc{K} \quad \text{and} \quad u(\mc{M}_{\tilde{\mc{B}}} \otimes \mc{O}(2\rho)) = \mc{M} \otimes \mc{O}(2\rho),\]
where we write $u$ for operation of passing to the underlying filtered $\tilde{\mc{D}}$-module. Now, since $\mc{K}$ is $G$-equivariant, it follows that the morphism
\[ \tilde{\Delta}_{23} \colon \tilde{\mc{B}} \times \tilde{\mc{B}} \to \tilde{\mc{B}} \times \tilde{\mc{B}} \times \tilde{\mc{B}}\]
is non-characteristic for $\mc{K}_{\tilde{\mc{B}} \times \tilde{\mc{B}}} \boxtimes (\mc{M}_{\tilde{\mc{B}}} \otimes \mc{O}(2\rho))$. In particular, we have an isomorphism in the filtered derived category
\[ u(\tilde{\Delta}_{23}^\circ(\mc{K}_{\tilde{\mc{B}} \times \tilde{\mc{B}}} \boxtimes (\mc{M}_{\tilde{\mc{B}}} \otimes\mc{O}(2\rho)))) = \mrm{L}\Delta_{23}^{\bigcdot}(\mc{K} \boxtimes (\mc{M} \otimes \mc{O}(2\rho))),\]
where
\begin{align*}
\tilde{\Delta}_{23}^\circ(\mc{K}_{\tilde{\mc{B}} \times \tilde{\mc{B}}} \boxtimes (\mc{M}_{\tilde{\mc{B}}} \otimes \mc{O}(2\rho))) := \tilde{\Delta}_{23}^*(\mc{K}_{\tilde{\mc{B}} \times \tilde{\mc{B}}} \boxtimes (\mc{M}_{\tilde{\mc{B}}} \otimes \mc{O}(&2\rho)))[-\dim \tilde{\mc{B}}] \\
&\in \mhm_{\widetilde{\mu - \rho}, \widetilde{\lambda - \lambda'}}(\tilde{\mc{B}} \times \tilde{\mc{B}}).
\end{align*}
So from \eqref{eq:convolution formula 1}, we deduce
\begin{equation} \label{eq:convolution formula 2}
\mc{K} * \mc{M} = \widetilde{\mrm{pr}}_{1\dagger} u(\tilde{\Delta}_{23}^\circ(\mc{K}_{\tilde{\mc{B}} \times \tilde{\mc{B}}} \boxtimes (\mc{M}_{\tilde{\mc{B}}} \otimes \mc{O}(2\rho)))(\dim \tilde{\mc{B}})).
\end{equation}

Now, if $\lambda - \lambda'$ is regular dominant, then $(\mu - \rho, \lambda - \lambda')$ is relatively ample for the first projection $\mrm{pr}_1$, so we can conclude by applying Theorem \ref{thm:monodromic kodaira}. In general, an extra step is required.

Let us write $H_S$ for the quotient torus of $H$ with character group
\[ \mb{X}^*(H_S) = \{ \nu \in \mb{X}^*(H) \mid \langle \nu, \check\alpha \rangle = 0 \text{ for } \alpha \in S\}.\]
Note that the kernel of $H \to H_S$ is connected, so we are in the setting of \S\ref{subsec:monodromic kodaira}. We have
\[ \mb{X}^*(H_S) = \mrm{Pic}^G(\mc{P}_S),\]
where $\mc{P}_S$ is the partial flag variety corresponding to the set of simple roots $S$, so there is a tautological $H_S$-torsor $\tilde{\mc{P}}_S$ over $\mc{P}_S$, equipped with an $H$-equivariant morphism $\tilde{\mc{B}} \to \tilde{\mc{P}}_S$. Consider the diagram
\[
\begin{tikzcd}
\tilde{\mc{B}} \times \tilde{\mc{B}} \ar[r, "\tilde{p}"] \ar[d] & \tilde{\mc{B}} \times \tilde{\mc{P}}_S \ar[r, "\tilde{q}"] \ar[d] & \tilde{\mc{B}} \ar[d] \\
\mc{B} \times \mc{B} \ar[r, "p"] & \mc{B} \times \mc{P}_S \ar[r, "q"] & \mc{B}.
\end{tikzcd}
\]
Since $S$ is the set of singular simple roots for $\lambda - \lambda'$, we have $\lambda - \lambda' \in \mf{h}^*_{S, \mb{R}} \subset \mf{h}^*_\mb{R}$. So by Proposition \ref{prop:monodromic filtered pushforward} we have
\[ \mc{K} * \mc{M} = \tilde{q}_\dagger u(\tilde{p}_*\tilde{\Delta}_{23}^\circ(\mc{K}_{\tilde{\mc{B}} \times \tilde{\mc{B}}} \boxtimes (\mc{M}_{\tilde{\mc{B}}} \otimes \mc{O}(2\rho)))).\]

Now, since $\mc{K}$ has the $S$-stalk property, it has a filtration by complexes of the form
\[ j_{w, S!}i_{w, S*} \mc{K}_w[n], \quad \text{for $w \in W$, $n \geq 0$, $\mc{K}_w \in \mhm_{\widetilde{\mu - \rho}, \widetilde{-\lambda' - \rho}}(\tilde{X}_w)$}, \]
where $\tilde{X}_w$ is the pre-image of $X_w$ in $\tilde{\mc{B}} \times \tilde{\mc{B}}$. So it suffices to prove the vanishing with $\mc{K}' = j_{w, S!}i_{w, S*}\mc{K}_w$ in place of $\mc{K}$. Let us write 
\[ \mc{N} = (\tilde{\Delta}_{23}|_{\tilde{X}_w})^\circ (\mc{K}_{w, \tilde{X}_w} \boxtimes (\mc{M}_{\tilde{\mc{B}}} \otimes \mc{O}(2\rho))) \in \mhm_{\widetilde{\mu - \rho}, \widetilde{\lambda - \lambda'}}(\tilde{X_w}),\]
so that
\[ \tilde{\Delta}_{23}^\circ (\mc{K}_{\tilde{\mc{B}} \times \tilde{\mc{B}}} \boxtimes(\mc{M}_{\tilde{\mc{B}}} \otimes \mc{O}(2\rho))) = \tilde{j}_{w, S!} \tilde{i}_{w, S*}\mc{N},\]
where $\tilde{j}_{w, S}$ and $\tilde{i}_{w, S}$ are the inclusions
\[ \tilde{X}_w \xrightarrow{\tilde{i}_{w, S}} \tilde{p}^{-1}\tilde{p}(\tilde{X}_w) \xrightarrow{\tilde{j}_{w, S}} \tilde{\mc{B}} \times \tilde{\mc{B}}.\]
By Proposition \ref{prop:monodromic ! vs *}, we have
\[\tilde{p}_* \tilde{j}_{w, S!} \tilde{i}_{w, S*}\mc{N} = \tilde{p}_! \tilde{j}_{w, S!} \tilde{i}_{w, S*}\mc{N}[|S|] = \tilde{j}_{p(X_w)!} \tilde{p}_{w!} \tilde{i}_{w, S*}\mc{N}[|S|] =  \tilde{j}_{p(X_w)!} \tilde{p}_{w*} \tilde{i}_{w, S*}\mc{N}\]
where the notation is as in the diagram
\[
\begin{tikzcd}
\tilde{p}^{-1}\tilde{p}(\tilde{X}_w) \ar[r, "\tilde{j}_{w, S}"] \ar[d, "\tilde{p}_w"] & \tilde{\mc{B}} \times \tilde{\mc{B}} \ar[d, "\tilde{p}"] \\
\tilde{p}(\tilde{X}_w) \ar[r, "\tilde{j}_{p(X_w)}"] & \tilde{\mc{B}} \times \tilde{\mc{P}}_S.
\end{tikzcd}
\]
Since $\tilde{p}_w \circ \tilde{i}_{w, S}$ is an affine morphism and $\tilde{j}_{p(X_w)}$ is an affine immersion, we deduce that
\[ \mc{H}^i(\tilde{p}_*\tilde{\Delta}_{23}^\circ (\mc{K}_{\tilde{\mc{B}} \times \tilde{\mc{B}}} \boxtimes(\mc{M}_{\tilde{\mc{B}}} \otimes \mc{O}(2\rho)))) = 0 \quad \text{for $i > 0$}.\]
Finally, since $\lambda - \lambda'$ is dominant and $S$ is its set of singular roots, $(\mu - \rho, \lambda - \lambda')$ is $q$-ample, so
\[ \mc{H}^i\Gr^F(\mc{K} *\mc{M}) = \mc{H}^i\Gr^F \tilde{q}_\dagger u(\tilde{p}_*\tilde{\Delta}_{23}^\circ (\mc{K}_{\tilde{\mc{B}} \times \tilde{\mc{B}}} \boxtimes(\mc{M}_{\tilde{\mc{B}}} \otimes \mc{O}(2\rho)))(\dim \tilde{\mc{B}})) = 0\]
for $i > 0$ by Theorem \ref{thm:monodromic kodaira}.
\end{proof}

\section{Global generation of the Hodge filtration} \label{sec:hodge generation}

We now turn to the proof of Theorems \ref{thm:hodge generation} and \ref{thm:localization hodge module}. We first note that Theorem \ref{thm:localization hodge module} implies Theorem \ref{thm:hodge generation}.

\begin{proof}[Proof of Theorem \ref{thm:hodge generation} modulo Theorem \ref{thm:localization hodge module}]
Assume $\mc{M}$ is globally generated, i.e., that the morphism $\Delta \Gamma(\mc{M}) \to \mc{M}$ is surjective. By Theorem \ref{thm:localization hodge module}, this underlies a morphism of mixed Hodge modules and is hence filtered surjective. But this is precisely the statement that $F_\bullet \mc{M}$ is globally generated, so we are done.
\end{proof}

It remains to prove Theorem \ref{thm:localization hodge module}. The proof proceeds in three steps. In \S\ref{subsec:localization convolution}, we first rewrite the filtered module $\Delta\Gamma(\mc{M}, F_\bullet)$ and its morphism to $(\mc{M}, F_\bullet)$ in terms of convolutions of $(\mc{M}, F_\bullet)$ with certain good pro-objects in $\pro \mhm(\mc{D}_{\widetilde{0}} \boxtimes \mc{D}_{\widetilde{0}}, G)$. This step makes use of Proposition \ref{prop:convolution vanishing} and the $S$-stalk property of the big projective $\tilde{\Xi}^H$. In \S\ref{subsec:generation deforming}, we use a simple case of the deformation techniques of \S\ref{sec:deformations} to replace the objects in $\pro \mhm(\mc{D}_{\widetilde{0}} \boxtimes \mc{D}_{\widetilde{0}}, G)$ with objects in $\pro \mhm(\mc{D}_{\widetilde{\lambda}} \boxtimes \mc{D}_{\widetilde{-\lambda}}, G)$. Finally, in \S\ref{subsec:mhm convolution}, we construct a mixed Hodge module with underlying filtered module $\Delta\Gamma(\mc{M}, F_\bullet)$ by relating the filtered convolution to a convolution for (pro-)mixed Hodge modules.

\subsection{The localization as a convolution} \label{subsec:localization convolution}

The first step in the proof of Theorem \ref{thm:localization hodge module} is to rewrite $\Delta \Gamma(\mc{M}, F_\bullet)$ and its morphism to $(\mc{M}, F_\bullet)$ in terms of convolutions.

Recall the inclusion of the diagonal
\[ j_1 \colon \mc{B} \to \mc{B} \times \mc{B}.\]

\begin{lem} \label{lem:diagonal Dtilde}
The pro-object $j_{1*}j_1^*\tilde{\Xi}^H \in \pro \mhm(\mc{D}_{\widetilde{0}} \boxtimes \mc{D}_{\widetilde{0}}, G)$ is good, and we have a filtered isomorphism
\[ (j_{1*}j_1^*\tilde{\Xi}^H, F_\bullet) \cong (\tilde{\mc{D}}, F_\bullet)\]
such that
\[ \tilde{\mc{D}} \otimes_{U(\mf{g})} \tilde{\mc{D}} = \tilde{\Xi}^H \to j_{1*}j_1^*\tilde{\Xi}^H \cong \tilde{\mc{D}} \]
is the multiplication map.
\end{lem}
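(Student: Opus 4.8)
The strategy is to convert the geometric operation $j_{1*}j_1^{*}$ into the algebraic multiplication map $\tilde{\mc{D}}\otimes_{U(\mf{g})}\tilde{\mc{D}}\to\tilde{\mc{D}}$, using Theorem~\ref{thm:xitilde hodge filtration} to identify $(\tilde\Xi^{H},F_\bullet)$ with $(\tilde{\mc{D}}\otimes_{U(\mf{g})}\tilde{\mc{D}},F_\bullet)$, and Lemma~\ref{lem:xi stalks} to control the restriction to the closed orbit $X_1=\mc{B}$.

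\emph{Step 1: $j_{1*}j_1^{*}\tilde\Xi^{H}$ is a good pro-object concentrated in degree $0$.} First I would write $\tilde\Xi^{H}=\varprojlim_{n}\tilde\Xi^{H}\otimes_{S(\mf{h}(1))}S(\mf{h}(1))_{n}$ as in the proof of Theorem~\ref{thm:xitilde hodge filtration}: each term is an honest mixed Hodge module (using that $\tilde\Xi$ is flat over $S(\mf{h})$) which is an iterated extension of Tate twists of $\Xi^{H}$. By Lemma~\ref{lem:xi stalks}, $j_1^{*}\Xi^{H}=\gamma_1(-\dim\mc{B})$ is concentrated in degree $0$; since $j_1$ is a closed immersion, $j_1^{*}$ has non-positive cohomological amplitude, and the long exact sequence shows that the class of objects $\mc{N}$ with $j_1^{*}\mc{N}$ concentrated in degree $0$ is closed under extensions. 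Hence each $j_1^{*}(\tilde\Xi^{H}\otimes S(\mf{h}(1))_{n})$, and therefore $j_1^{*}\tilde\Xi^{H}$, is concentrated in degree $0$, so that $j_{1*}j_1^{*}\tilde\Xi^{H}\in\pro\mhm(\mc{D}_{\widetilde{0}}\boxtimes\mc{D}_{\widetilde{0}},G)$. Goodness follows because $-\otimes^{\mrm{L}}_{S(\mf{h})}\mb{C}_{-\rho}$ (with $S(\mf{h})$ the second-factor subalgebra) commutes with $j_1^{*}$ and $j_{1*}$, so by flatness of $\tilde\Xi$ the pro-objects $\mrm{Tor}_i^{S(\mf{h})}(j_{1*}j_1^{*}\tilde\Xi^{H},\mb{C}_{-\rho})$ equal $j_{1*}j_1^{*}\Xi^{H}$ for $i=0$ and vanish for $i>0$, all constant. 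By Proposition~\ref{prop:good pro-objects}\eqref{itm:good pro-object 1} this yields an underlying filtered $\tilde{\mc{D}}\boxtimes\tilde{\mc{D}}$-module, itself flat over the second-factor $S(\mf{h})$.

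\emph{Step 2: identification with the multiplication map.} The multiplication $m\colon\tilde\Xi^{H}=\tilde{\mc{D}}\otimes_{U(\mf{g})}\tilde{\mc{D}}\to\tilde{\mc{D}}$ is a strict filtered surjection whose target, being the diagonal bimodule, is supported on $\Delta\mc{B}$; hence $\tilde{\mc{D}}$ is of the form $j_{1*}(-)$ (filtered Kashiwara equivalence) and the component $u_{\tilde{\mc{D}}}\colon\tilde{\mc{D}}\to j_{1*}j_1^{*}\tilde{\mc{D}}$ of the adjunction unit $u\colon\id\to j_{1*}j_1^{*}$ is an isomorphism. Naturality of $u$ then gives a factorization $m=\bar m\circ u$, where $\bar m:=u_{\tilde{\mc{D}}}^{-1}\circ j_{1*}j_1^{*}(m)\colon j_{1*}j_1^{*}\tilde\Xi^{H}\to\tilde{\mc{D}}$ is a surjection of filtered modules and $u\colon\tilde\Xi^{H}\to j_{1*}j_1^{*}\tilde\Xi^{H}$ is the unit at $\tilde\Xi^{H}$. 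It then remains to prove $\bar m$ is an isomorphism. Tensoring over the second-factor $S(\mf{h})$ with $\mb{C}_{-\rho}$ (all objects in sight being flat), $\bar m$ becomes a surjection $\bar m_0\colon j_{1*}j_1^{*}\Xi^{H}=j_{1*}\gamma_1(-\dim\mc{B})\to\mc{D}_0$ of modules supported on $\Delta\mc{B}$ through which the multiplication $\Xi^{H}=\tilde{\mc{D}}\otimes_{U(\mf{g})}\mc{D}_0\to\mc{D}_0$ factors; applying the inverse Kashiwara functor $j_1^{!}$ identifies $\bar m_0$ with a nonzero endomorphism of the rank-one object $\gamma_1(-\dim\mc{B})$, hence an isomorphism. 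Consequently $\ker\bar m$ is a good pro-object with $\ker\bar m\otimes_{S(\mf{h})}\mb{C}_{-\rho}=0$, and since $\mf{h}(1)$ acts nilpotently this forces $\ker\bar m=0$ (as in the proof of Proposition~\ref{prop:good pro-objects}). Thus $\bar m$ is a filtered isomorphism and $\bar m\circ u=m$ is the multiplication map, which is the assertion of the lemma.

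I expect the main obstacle to be the normalization bookkeeping in Step 2: verifying that $\tilde{\mc{D}}$ with its order filtration really is $j_{1*}$ of the pro-mixed Hodge module $j_1^{*}\tilde\Xi^{H}$ \emph{in the filtered sense} --- i.e.\ that Kashiwara equivalence is compatible with Hodge filtrations here, with matching Tate twists and filtration shifts --- so that the geometric unit $u$ and the algebraic multiplication $m$ become literally the same surjection after the identification. Once that is pinned down, everything else is formal manipulation with the six functors and the good-pro-object formalism of Proposition~\ref{prop:good pro-objects}.
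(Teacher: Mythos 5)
Your Step 1 is fine and matches the paper's argument for goodness (commute $\otimes^{\mrm{L}}_{S(\mf{h}(1))}\mb{C}$ past $j_{1*}j_1^*$ and use $j_1^*\Xi^H = \gamma_1(-\dim\mc{B})$). The problem is Step 2. The functors $j_1^*$, $j_{1*}$ and the adjunction unit $u$ you invoke live in (pro-)$\mhm$, but the multiplication $m\colon \tilde\Xi^H \to \tilde{\mc{D}}$ is, at this stage, only a morphism of filtered (monodromic) $\tilde{\mc{D}}\boxtimes\tilde{\mc{D}}$-modules: the target $(\tilde{\mc{D}}, F_\bullet)$ is not yet known to underlie a pro-mixed Hodge module with the order filtration as Hodge filtration --- that is exactly what the lemma asserts. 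Nor can you fall back on a naive filtered computation of $j_1^*\tilde\Xi^H$: the diagonal is \emph{characteristic} for $\tilde\Xi$ (its conormal lies in the Steinberg variety), so the Hodge filtration on $j_{1*}j_1^*\tilde\Xi^H$ is produced by Saito's machinery and does not commute with naive restriction. Consequently "naturality of $u$ gives $m=\bar m\circ u$" and "$\bar m$ is a surjection of filtered modules" are unjustified; your subsequent reduction mod $S(\mf{h})$ and Kashiwara argument only show that $\bar m$ is an isomorphism of the underlying pro-$\mc{D}$-modules. No step of the proposal ever compares the Hodge filtration on the source with the order filtration on the target, and the closing remark that this is "normalization bookkeeping" understates it: that comparison \emph{is} the content of the lemma.

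The paper closes this gap by producing a filtered map in the opposite direction, which is also the direction needed to apply Proposition \ref{prop:good pro-objects}\eqref{itm:good pro-object 3}. Concretely: the filtered module underlying $j_{1*}\gamma_1(-\dim\mc{B})$ is $(\mc{D}_0,F_\bullet)$, and arguing as in Theorem \ref{thm:xitilde hodge filtration} (using $F_0\mf{h}(1)=0$) one gets $F_0\, j_{1*}j_1^*\tilde\Xi^H = F_0\, j_{1*}j_1^*\Xi^H = j_{1\bigcdot}\mc{O}_{\mc{B}}$. Since $(\tilde{\mc{D}},F_\bullet)$ is generated in filtration degree $0$ by $j_{1\bigcdot}\mc{O}_\mc{B}$ over $\tilde{\mc{D}}\boxtimes\tilde{\mc{D}}$ (via $\tilde{\mc{D}}=(\tilde{\mc{D}}\boxtimes\tilde{\mc{D}})\otimes_{\mc{U}_{\mc{B}\times\mc{B}}(\mf{g})}j_{1\bigcdot}\mc{O}_\mc{B}$), this yields a canonical \emph{filtered} morphism $\tilde{\mc{D}}\to (j_{1*}j_1^*\tilde\Xi^H,F_\bullet)$, which is a filtered isomorphism after $\otimes_{S(\mf{h}(1))}\mb{C}$ and hence a filtered isomorphism by Proposition \ref{prop:good pro-objects}; compatibility with multiplication then follows because the multiplication map is the unique nonzero filtered morphism $\tilde{\mc{D}}\otimes_{U(\mf{g})}\tilde{\mc{D}}\to\tilde{\mc{D}}$ up to scale. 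If you want to salvage your route, you would need to first establish such a filtered comparison (e.g. via the lowest-Hodge-piece computation above); the unfiltered Kashiwara/adjunction argument alone cannot see it.
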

\begin{proof}
We have
\[ (j_{1*}j_1^*\tilde{\Xi}^H) \overset{\mrm{L}}\otimes_{S(\mf{h}(1))}\mb{C} = j_{1*}j_1^*(\tilde{\Xi}^H \overset{\mrm{L}}\otimes_{S(\mf{h}(1))}\mb{C}) = j_{1*}j_1^*\Xi^H = j_{1*}\gamma_1(-\dim \mc{B}),\]
so $j_{1*}j_1^*\tilde{\Xi}^H$ is good. Moreover, the filtered $\tilde{\mc{D}} \boxtimes \mc{D}_0$-module underlying $j_{1*}\gamma_1(-\dim \mc{B})$ is $(\mc{D}_0, F_\bullet)$. As in the proof of Theorem \ref{thm:xitilde hodge filtration}, we deduce that
\[ F_0 j_{1*}j_1^*\tilde{\Xi}^H = F_0 j_{1*}j_1^*\Xi^H = j_{1\bigcdot}\mc{O}_{\mc{B}}.\]
Now, writing $\mc{U}_{\mc{B} \times \mc{B}}(\mf{g}) = \mc{O}_{\mc{B} \times \mc{B}} \otimes U(\mf{g})$, we have
\[ \tilde{\mc{D}} = (\tilde{\mc{D}} \boxtimes \tilde{\mc{D}}) \otimes_{\mc{U}_{\mc{B} \times \mc{B}}(\mf{g})} j_{1\bigcdot}\mc{O}_\mc{B}.\]
So we obtain a canonical filtered morphism
\begin{equation} \label{eq:diagonal Dtilde 1}
 \tilde{\mc{D}} \to (j_{1*}j_1^*\tilde{\Xi}^H, F_\bullet),
 \end{equation}
which is an isomorphism after tensoring over $S(\mf{h}(1))$ with $\mb{C}$, and hence an isomorphism itself by Proposition \ref{prop:good pro-objects}. Finally, to prove the last statement, we note that the multiplication map is the unique non-zero filtered morphism $\tilde{\mc{D}} \otimes_{U(\mf{g})} \tilde{\mc{D}} \to \tilde{\mc{D}}$ up to scale; the assertion therefore holds after rescaling \eqref{eq:diagonal Dtilde 1} if necessary. 
\end{proof}

Now let us fix $\lambda \in \mf{h}^*_\mb{R}$ dominant, let $S$ denote the set of simple roots $\alpha$ such that $\langle \lambda, \check\alpha \rangle = 0$, let $\mc{P}_S$ denote the corresponding partial flag variety, and let
\[ j_S \colon X_S: = \mc{B} \times_{\mc{P}_S} \mc{B} \to \mc{B} \times \mc{B} \]
denote the inclusion.

\begin{lem} \label{lem:xitilde S}
The pro-object $j_{S*}j_S^*\tilde{\Xi}^H \in \pro \mhm(\mc{D}_{\widetilde{0}} \boxtimes \mc{D}_{\widetilde{0}}, G)$ is good. Moreover, for any $\mc{M} \in \mhm(\mc{D}_{\widetilde{\lambda}})$, we have a filtered isomorphisn
\begin{equation} \label{eq:xitilde S 1}
\Delta\Gamma(\mc{M}, F_\bullet) \cong \mc{H}^0((j_{S*}j_S^*\tilde{\Xi}^H, F_\bullet)*(\mc{M}, F_\bullet))
\end{equation}
such that the counit $\Delta\Gamma(\mc{M}, F_\bullet) \to (\mc{M}, F_\bullet)$ of the adjunction between $\Delta$ and $\Gamma$ agrees with the canonical map
\[ (j_{S*}j_S^*\tilde{\Xi}^H, F_\bullet)*(\mc{M}, F_\bullet) \to (j_{1*}j_1^*\tilde{\Xi}^H, F_\bullet) * (\mc{M}, F_\bullet) = (\tilde{\mc{D}}, F_\bullet) *(\mc{M}, F_\bullet) = (\mc{M}, F_\bullet).\]
\end{lem}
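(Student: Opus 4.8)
The plan is to reduce the statement to the analogous fact for the diagonal, Lemma~\ref{lem:diagonal Dtilde}, together with the cohomology vanishing of Proposition~\ref{prop:convolution vanishing}. Write $j_1=j_S\circ i$ with $i\colon X_1=\mc{B}\hookrightarrow X_S$ the diagonal; both $i$ and $j_S$ are closed immersions. \emph{Goodness of $j_{S*}j_S^*\tilde{\Xi}^H$.} As in the proof of Lemma~\ref{lem:diagonal Dtilde}, the functor $j_{S*}j_S^*$ is $S(\mf{h})$-linear in the second $\tilde{\mc{D}}$-factor and --- since $j_S$ is a closed immersion, so that $j_{S*}$ is exact --- it commutes with the operation $(-)\overset{\mrm{L}}\otimes_{S(\mf{h}(1))}\mb{C}$. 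Hence
\[ (j_{S*}j_S^*\tilde{\Xi}^H)\overset{\mrm{L}}\otimes_{S(\mf{h}(1))}\mb{C}\ \cong\ j_{S*}j_S^*\bigl(\tilde{\Xi}^H\overset{\mrm{L}}\otimes_{S(\mf{h}(1))}\mb{C}\bigr)\ =\ j_{S*}j_S^*\Xi^H, \]
using goodness of $\tilde{\Xi}^H$ (Theorem~\ref{thm:xitilde hodge filtration}). By the analysis in the proof of Proposition~\ref{prop:xi mixed stalks}, the derived restriction $j_S^*\Xi$ is a single $\mc{D}$-module --- the big projective $\Xi_L$ for a Levi $L$ of the parabolic attached to $S$ --- so $j_S^*\Xi^H$, and hence $j_{S*}j_S^*\Xi^H$, is concentrated in cohomological degree $0$; this makes $j_{S*}j_S^*\tilde{\Xi}^H$ good.

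\emph{The $S$-stalk property and vanishing.} Writing $\tilde{\Xi}^H=\varprojlim_n\mc{K}_n$ with each $\mc{K}_n$ as in the proof of Proposition~\ref{prop:xitilde mixed stalks}, I would check that each $j_{S*}j_S^*\mc{K}_n$ again has the $S$-stalk property: for $w\notin W_S$ the locus $(\id,\pi_S)^{-1}(\id,\pi_S)(X_w)$ is disjoint from $X_S$ (check on points, using $X_S=\bigsqcup_{v\in W_S}X_v$), so $j_{w,S}^*(j_{S*}j_S^*\mc{K}_n)=0$ and the condition is vacuous; for $w\in W_S$ that locus equals $X_S$, whence $i_{w,S}^!j_{w,S}^*(j_{S*}j_S^*\mc{K}_n)=i_{w,S}^!j_{w,S}^*\mc{K}_n$, which has the required vanishing because $\mc{K}_n$ does. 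Goodness and the $S$-stalk property then let us apply Proposition~\ref{prop:convolution vanishing} with $\mu=\lambda'=0$ (so that $S$ is exactly the singular set of $\lambda$), giving $\mc{H}^i\Gr^F\bigl((j_{S*}j_S^*\tilde{\Xi}^H,F_\bullet)*(\mc{M},F_\bullet)\bigr)=0$ for $i>0$; in particular this filtered convolution sits in non-positive cohomological degrees.

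\emph{Identifying $\mc{H}^0$.} By Lemma~\ref{lem:localize globalize convolution} and Theorem~\ref{thm:xitilde hodge filtration}, $(\tilde{\Xi}^H,F_\bullet)*(\mc{M},F_\bullet)\cong\tilde{\mc{D}}\overset{\mrm{L}}\otimes_{U(\mf{g})}\mrm{R}\Gamma(\mc{M},F_\bullet)$; since $\tilde{\mc{D}}$ is flat over $U(\mf{g})$ and $\mrm{R}\Gamma(\mc{M})=\Gamma(\mc{M})$ by Theorem~\ref{thm:filtered exactness}, this equals $\tilde{\mc{D}}\otimes_{U(\mf{g})}\Gamma(\mc{M},F_\bullet)$ in degree $0$, with $\Delta\Gamma(\mc{M},F_\bullet)$ the $\widetilde{\lambda}$-monodromic summand. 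The adjunction unit $\tilde{\Xi}^H\to j_{S*}j_S^*\tilde{\Xi}^H$ is a filtered map inducing the identity on lowest Hodge pieces $j_{1\bigcdot}\mc{O}_\mc{B}$ (as in the proof of Lemma~\ref{lem:diagonal Dtilde}), and since the target is good with its filtration generated by that lowest piece (Proposition~\ref{prop:good pro-objects}\eqref{itm:good pro-object 2}), convolving with $(\mc{M},F_\bullet)$ and taking $\mc{H}^0$ produces a filtered map $\tilde{\mc{D}}\otimes_{U(\mf{g})}\Gamma(\mc{M},F_\bullet)\to\mc{H}^0\bigl((j_{S*}j_S^*\tilde{\Xi}^H,F_\bullet)*(\mc{M},F_\bullet)\bigr)$ surjective on underlying $\mc{D}$-modules; because $X_S$ is the $\mc{P}_S$-relative diagonal and $W_\lambda=W_S$, the target is $\widetilde{\lambda}$-monodromic along the first factor, so the map factors through a surjection from the summand $\Delta\Gamma(\mc{M})$. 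Factoring the unit further through $j_1=j_S\circ i$ and applying Lemma~\ref{lem:diagonal Dtilde} (by which $(j_{1*}j_1^*\tilde{\Xi}^H,F_\bullet)*(\mc{M},F_\bullet)=(\tilde{\mc{D}},F_\bullet)*(\mc{M},F_\bullet)=(\mc{M},F_\bullet)$, the relevant map being multiplication) identifies the canonical composite with the counit $\Delta\Gamma(\mc{M},F_\bullet)\to(\mc{M},F_\bullet)$, which gives the compatibility assertion.

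\emph{The main obstacle.} What remains is to see that the surjection $\Delta\Gamma(\mc{M},F_\bullet)\twoheadrightarrow\mc{H}^0\bigl((j_{S*}j_S^*\tilde{\Xi}^H,F_\bullet)*(\mc{M},F_\bullet)\bigr)$ is an isomorphism. By Noetherianity of $\tilde{\mc{D}}$ and coherence of $\Delta\Gamma(\mc{M})$ it is enough to check this on underlying $\mc{D}$-modules, i.e.\ that $(j_{S*}j_S^*\tilde{\Xi})*\mc{M}$ is concentrated in degree $0$ and equal to $\Delta\Gamma(\mc{M})$, compatibly with the counit; together with the $\Gr^F$-vanishing above and the fact that $F_\bullet(j_{S*}j_S^*\tilde{\Xi}^H)$ is generated by its lowest piece, this upgrades automatically to the filtered statement. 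I expect this $\mc{D}$-module identification to be the real content: it is a form of Beilinson--Bernstein localization at the singular dominant weight $\lambda$ realized through the partial flag variety $\mc{P}_S$ --- the restriction $j_S^*\tilde{\Xi}=\tilde{\Xi}_L$ is the relative big projective along $\tilde{\mc{B}}\to\tilde{\mc{P}}_S$, and convolution with $j_{S*}\tilde{\Xi}_L$ is precisely the relative localization functor, which for $\lambda$ with singular set exactly $S$ agrees with $\Delta$. Establishing this identification, together with its compatibility with the counit, is where the main work lies; everything else is bookkeeping with the constructions already set up.
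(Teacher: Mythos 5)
Your proposal correctly handles the goodness statement and the compatibility-with-the-counit bookkeeping, and your verification that $j_{S*}j_S^*\tilde{\Xi}^H$ inherits the $S$-stalk property is fine as far as it goes. But the lemma's real content is exactly the step you defer at the end: identifying $\mc{H}^0\bigl((j_{S*}j_S^*\tilde{\Xi}^H,F_\bullet)*(\mc{M},F_\bullet)\bigr)$ with $\Delta\Gamma(\mc{M},F_\bullet)$ as a filtered $\tilde{\mc{D}}$-module. Since you leave this open ("establishing this identification \dots is where the main work lies"), the proposal has a genuine gap. Moreover, the intermediate surjectivity you sketch is not secured by your argument: applying Proposition \ref{prop:convolution vanishing} to $j_{S*}j_S^*\tilde{\Xi}^H$ itself only shows its convolution with $\mc{M}$ has no higher $\Gr^F$-cohomology, which does not yield surjectivity of the comparison map from $\tilde{\mc{D}}\otimes_{U(\mf{g})}\Gamma(\mc{M})$; and "the filtration of the target is generated by its lowest piece" is not by itself a reason that $\mc{H}^0$ of the convolution map is (filtered) surjective.

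The paper closes both holes as follows. First, it applies Proposition \ref{prop:convolution vanishing} not to $j_{S*}j_S^*\tilde{\Xi}^H$ but to the kernel $\tilde{\mc{K}}=\ker(\tilde{\Xi}^H\to j_{S*}j_S^*\tilde{\Xi}^H)$: one checks that $\mc{K}=\ker(\Xi^H\to j_{S*}j_S^*\Xi^H)$ still has the $S$-stalk property (its $i_{w,S}^!j_{w,S}^*$ either vanish or agree with those of $\Xi^H$), hence so does $\tilde{\mc{K}}$, so $\mc{H}^i\Gr^F(\tilde{\mc{K}}*\mc{M})=0$ for $i>0$; the long exact sequence then gives that $\tilde{\mc{D}}\otimes_{U(\mf{g})}\Gamma(\mc{M})=\mc{H}^0(\tilde{\Xi}*\mc{M})\to\mc{H}^0(j_{S*}j_S^*\tilde{\Xi}^H*\mc{M})$ is \emph{filtered} surjective. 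Second, to identify the quotient, it uses that $j_{S*}j_S^*\tilde{\Xi}^H$ (resp.\ $\tilde{\mc{K}}$) is filtered by the objects $j_{w!}j_w^*\tilde{\Xi}^H$ with $X_w\subset X_S$ (resp.\ $X_w\not\subset X_S$), and computes, via the monodromy of $j_w^*\tilde{\Xi}^H$ and the side-changing isomorphism, that $\mf{h}$ acts on the convolution of $j_{w!}j_w^*\tilde{\Xi}^H$ with $\mc{M}$ through the generalized eigenvalue $w\lambda-\rho$; since $\lambda$ is dominant and $S$ is its set of singular simple roots, $w\lambda=\lambda$ exactly when $X_w\subset X_S$. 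Hence $\mc{H}^0(j_{S*}j_S^*\tilde{\Xi}^H*\mc{M})$ is precisely the $\widetilde{\lambda}$-generalized eigensummand $(\tilde{\mc{D}}\otimes_{U(\mf{g})}\Gamma(\mc{M}))_{\widetilde{\lambda}}=\Delta\Gamma(\mc{M})$, and because $F_\bullet\Delta\Gamma(\mc{M})$ is by definition the image filtration under this projection, the filtered surjectivity upgrades the identification to the filtered isomorphism \eqref{eq:xitilde S 1}. This eigenvalue computation (your "$W_\lambda=W_S$, relative localization along $\mc{P}_S$" heuristic made precise) together with the kernel-based surjectivity is the missing core of the proof.
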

\begin{proof}
That $j_{S*}j_S^*\tilde{\Xi}^H$ is good follows by the same argument as Lemma \ref{lem:diagonal Dtilde}. To obtain the isomorphism \eqref{eq:xitilde S 1}, note that by Lemma \ref{lem:localize globalize convolution} and Theorem \ref{thm:filtered exactness}, we have
\[ \tilde{\mc{D}} \otimes_{U(\mf{g})} \Gamma(\mc{M}) = \mc{H}^0(\tilde{\mc{D}} \overset{\mrm{L}}\otimes_{U(\mf{g})} \mrm{R}\Gamma(\mc{M})) = \mc{H}^0(\tilde{\Xi} * \mc{M}).\]
We first claim that the induced morphism
\begin{equation} \label{eq:xitilde S 2}
\tilde{\mc{D}} \otimes_{U(\mf{g})} \Gamma(\mc{M}) = \mc{H}^0(\tilde{\Xi} * \mc{M}) \to \mc{H}^0(j_{S*}j_S^*\tilde{\Xi}^H*\mc{M})
\end{equation}
is filtered surjective.

To see this, let $\mc{K}$ (resp., $\tilde{\mc{K}}$) denote the kernel of the surjective morphism $\Xi^H \to j_{S*}j_S^*\Xi^H$ (resp., $\tilde{\Xi}^H \to j_{S*}j_S^*\tilde{\Xi}^H$). We have
\[ i_{w, S}^!j_{w, S}^*\mc{K} = \begin{cases} 0, &\text{if $X_w \subset X_S$} \\ i_{w, S}^!j_{w, S}^*\Xi^H, & \text{otherwise}.\end{cases}\]
Since $\Xi$ has the $S$-stalk property by Proposition \ref{prop:xi mixed stalks}, we deduce that $\mc{K}$ does as well. Moreover,
\[ \tilde{\mc{K}} = \varprojlim_n \tilde{\mc{K}} \otimes_{S(\mf{h}(1))} S(\mf{h}(1))_n \]
where each term is an iterated extension of Tate twists of $\mc{K}$, so $\tilde{\mc{K}}$ has the $S$-stalk property. Hence, by Proposition \ref{prop:convolution vanishing},
\[ \mc{H}^i\Gr^F((\tilde{\mc{K}}, F_\bullet) * (\mc{M}, F_\bullet)) = 0 \quad \text{for $i > 0$},\]
so \eqref{eq:xitilde S 2} is filtered surjective as claimed.

To prove \eqref{eq:xitilde S 1}, it therefore remains to identify the filtered quotient
\[ \mc{H}^0(j_{S*}j_S^*\tilde{\Xi}^H * \mc{M})\]
of $\tilde{\mc{D}} \otimes_{U(\mf{g})} \Gamma(\mc{M})$ with $\Delta\Gamma(\mc{M})$. Observe that $j_{S*}j_S^*\tilde{\Xi}^H$ (resp., $\tilde{\mc{K}}$) has a filtration by the objects $j_{w!}j_w^*\tilde{\Xi}^H$ for $X_w \subset X_S$ (resp., $X_w \not\subset X_S$). Since $j_w^*\tilde{\Xi}^H \in \pro \mhm_{\widetilde{-\rho}, \widetilde{-\rho}}^G(\tilde{X}_w)$, it follows that
\[ a^L(h) + a^R(w^{-1}h) = -\rho(h) - \rho(w^{-1} h) \quad \text{for $h \in \mf{h}$},\]
where we write $a^L$ (resp., $a^R$) for the action of the first (resp., second) copy of $S(\mf{h}) \subset \tilde{\mc{D}}$ on $j_{w*}j_w^*\tilde{\Xi}^H$. Since the side-changing isomorphism $\tilde{\mc{D}} \cong \tilde{\mc{D}}^{\mathit{op}}$ acts on $S(\mf{h})$ by $h \mapsto -h - 2\rho(h)$, we deduce that $h \in \mf{h}$ acts on the complex of $\tilde{\mc{D}}$-modules underlying
\[ (j_{w!}j_w^*\tilde{\Xi}^H, F_\bullet) * (\mc{M}, F_\bullet)\]
with generalized eigenvalue $\lambda(w^{-1}h) - \rho(h) = (w\lambda - \rho)(h)$. Since $w\lambda = \lambda$ if and only if $X_w \subset X_S$, we deduce that, ignoring the filtrations,
\[ \mc{H}^0(\mc{K} * \mc{M}) = \bigoplus_{\substack{\mu \in W\lambda \\ \mu \neq \lambda}} (\tilde{\mc{D}} \otimes_{U(\mf{g})} \Gamma(\mc{M}))_{\widetilde{\mu}}\]
and
\[ \mc{H}^0(j_{S*}j_S^*\tilde{\Xi}^H * \mc{M}) = (\tilde{\mc{D}} \otimes_{U(\mf{g})} \Gamma(\mc{M}))_{\widetilde{\lambda}} = \Delta\Gamma(\mc{M}).\]
This proves \eqref{eq:xitilde S 2}. Finally, the identification of the counit is clear from Lemma \ref{lem:diagonal Dtilde}.
\end{proof}

\subsection{Deforming to $\lambda$} \label{subsec:generation deforming}

The next step in the proof of Theorem \ref{thm:localization hodge module} is:

\begin{lem} \label{lem:xitilde deformation}
There exist good pro-objects $\mc{K}_{S,\lambda}$ and $\mc{K}_{1, \lambda}$ in $\pro \mhm(\mc{D}_{\widetilde{\lambda}} \boxtimes \mc{D}_{\widetilde{-\lambda}}, G)$, a morphism $\mc{K}_{S, \lambda} \to \mc{K}_{1, \lambda}$, and isomorphisms of filtered $\tilde{\mc{D}} \boxtimes \tilde{\mc{D}}$-modules
\begin{equation} \label{eq:xitilde deformation 1}
(\mc{K}_{S, \lambda}, F_\bullet) \cong (j_{S*}j_S^*\tilde{\Xi}^H, F_\bullet) \quad \text{and} \quad (\mc{K}_{1, \lambda}, F_\bullet) \cong (j_{1*}j_1^*\tilde{\Xi}^H, F_\bullet)
\end{equation}
identifying $\mc{K}_{S, \lambda} \to \mc{K}_{1, \lambda}$ with $j_{S*}j_S^*\tilde{\Xi}^H \to j_{1*}j_1^*\tilde{\Xi}^H$.
\end{lem}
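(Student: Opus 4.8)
The plan is to obtain $\mc{K}_{S,\lambda}$ and $\mc{K}_{1,\lambda}$ as the images of $j_{S*}j_S^*\tilde\Xi^H$ and $j_{1*}j_1^*\tilde\Xi^H$ under the simplest, ``no wall'' case of the monodromic deformation construction of \S\S\ref{subsec:deformation construction}, \ref{subsec:monodromic deformation}, and then to pin down their underlying filtered $\tilde{\mc{D}} \boxtimes \tilde{\mc{D}}$-modules by repeating the universal-property argument of Lemma \ref{lem:diagonal Dtilde}. First I would construct the deformation parameter. Since $\lambda$ is dominant with singular simple roots exactly $S$, we have $\lambda \in \mf{h}^*_{S,\mb{R}} = \mb{X}^*(H_S) \otimes \mb{R}$, and I would write down an explicit $G$-invariant, nowhere-vanishing semi-invariant function $f$ of weight $(\lambda,-\lambda)$ on the $H \times H$-torsor $\tilde{X}_S \to X_S = \mc{B} \times_{\mc{P}_S}\mc{B}$: a point of $\tilde{X}_S$ is a pair of points of $\tilde{\mc{B}}$ over a common point of $\mc{P}_S$, so their images in the $H_S$-torsor $\tilde{\mc{P}}_S$ differ by a canonical element of $H_S$, and $f$ is the corresponding product of real powers of the integral $H_S$-characters occurring in $\lambda$. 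Then $f \in \Gamma_\mb{R}^G(\tilde{X}_S)^{\mon}$ with $\varphi(f) = (\lambda,-\lambda)$, and $f$ is positive in the sense of \S\ref{subsec:deformation construction} --- vacuously, since $j_S$ is a closed immersion and $X_S$ therefore has empty boundary in $\mc{B}\times\mc{B}$. Restricting $f$ along the closed immersion $i \colon X_1 \hookrightarrow X_S$ gives the corresponding datum for $j_1$.

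Writing $\tilde\Xi^H = \varprojlim_n \tilde\Xi^H_n$, I then set
\[
\mc{K}_{S,\lambda} := \varprojlim_n j_{S*}\bigl(f\, j_S^*\tilde\Xi^H_n\bigr), \qquad \mc{K}_{1,\lambda} := \varprojlim_n j_{1*}\bigl(f\, j_1^*\tilde\Xi^H_n\bigr),
\]
where $f\,(-) = f\mc{O}\otimes(-)$ as in \S\ref{subsec:deformation construction}. Since $\varphi(f) = (\lambda,-\lambda)$ and $j_{S*}, j_{1*}$ are pushforwards along closed immersions (and so preserve monodromicity), these lie in $\pro\mhm(\mc{D}_{\widetilde{\lambda}}\boxtimes\mc{D}_{\widetilde{-\lambda}}, G)$. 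The morphism $\mc{K}_{S,\lambda}\to\mc{K}_{1,\lambda}$ comes from applying $j_{S*}(f\,(-))$ to the adjunction units $j_S^*\tilde\Xi^H_n \to i_* i^* j_S^*\tilde\Xi^H_n$ and using $j_1 = j_S\circ i$, so that at deformation parameter $0$ it reduces to $j_{S*}j_S^*\tilde\Xi^H\to j_{1*}j_1^*\tilde\Xi^H$. Goodness of $\mc{K}_{S,\lambda}$ and $\mc{K}_{1,\lambda}$ follows from goodness of $j_{S*}j_S^*\tilde\Xi^H$ and $j_{1*}j_1^*\tilde\Xi^H$ (Lemmas \ref{lem:xitilde S} and \ref{lem:diagonal Dtilde}): tensoring by the unit $f$ merely translates the action of the second copy of $S(\mf{h})$ by $-\lambda$, so the Koszul-type computations of the relevant $\mrm{Tor}^{S(\mf{h})}_\bullet$-pro-systems there apply verbatim and those pro-systems remain constant.

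It remains to prove \eqref{eq:xitilde deformation 1}, for which I would rerun the argument of Lemma \ref{lem:diagonal Dtilde}. The key point is that, because the Euler operators have filtration degree $1$ while $f\mc{O}$ carries the trivial Hodge filtration, tensoring by $f\mc{O}$ does not alter the lowest piece: $F_0\mc{K}_{1,\lambda} = F_0(j_{1*}j_1^*\tilde\Xi^H) = j_{1\bigcdot}\mc{O}_{\mc{B}}$ with its equivariant structure, and by goodness and Proposition \ref{prop:good pro-objects}\eqref{itm:good pro-object 1} each $F_p\mc{K}_{1,\lambda}$ is constant in the pro-system. Thus $\mc{K}_{1,\lambda}$ is generated over $\tilde{\mc{D}}\boxtimes\tilde{\mc{D}}$ by a $G$-invariant section of $F_0$, which produces a canonical filtered morphism $(\tilde{\mc{D}}, F_\bullet) \to (\mc{K}_{1,\lambda}, F_\bullet)$ that becomes an isomorphism after $\otimes_{S(\mf{h}(1))}\mb{C}$ and hence, by Proposition \ref{prop:good pro-objects}, is itself a filtered isomorphism; the same argument with $\mc{P}_S$ in place of the diagonal (using the $\mc{P}_S$-analog of Lemma \ref{lem:xitilde S}) handles $\mc{K}_{S,\lambda}$, and compatibility with the morphism of the previous step is automatic. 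The step I expect to require the most care is exactly this one: one must verify that the deformed pro-objects really do retain all the structural features used to pin down $(\tilde\Xi^H, F_\bullet)$ in \S\ref{subsec:monodromic big projective} --- goodness, the correct lowest-degree piece and generation, and the $S$-stalk property --- despite the fact that the $f$-twist perturbs the $\tilde{\mc{D}}\boxtimes\tilde{\mc{D}}$-action; the mechanism that makes this work is that the perturbation is confined to positive filtration degree and to the ``completion direction'' of the pro-system, while the universal-property argument only sees the low-degree data.
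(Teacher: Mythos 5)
Your setup matches the paper's: the deformation parameter $f \in \Gamma_\mb{R}^G(X_S)^{\mon}$ with $\varphi(f) = (\lambda,-\lambda)$ (the paper obtains it from the $G$-equivariant triviality of $\mc{O}(\mu,-\mu)$ on $X_S$ for $\mu \in \mb{X}^*(H_S)$, which is the same observation as your explicit construction), the definitions $\mc{K}_{S,\lambda} = j_{S*}f\,j_S^*\tilde\Xi^H$ and $\mc{K}_{1,\lambda} = j_{1*}f\,j_1^*\tilde\Xi^H$, the morphism between them, and goodness. The gap is in how you prove \eqref{eq:xitilde deformation 1}. Re-running the universal-property argument of Lemma \ref{lem:diagonal Dtilde} can be made to work for $\mc{K}_{1,\lambda}$ --- but the mechanism is not that ``the perturbation is confined to positive filtration degree''; twisting by $f$ changes the action of the two individual $\mf{g}$-factors in every filtration degree, and what saves the construction of the canonical map $(\tilde{\mc{D}},F_\bullet)\to(\mc{K}_{1,\lambda},F_\bullet)$ is specifically that $f$ is invariant under the diagonal $G$, so the copy of $U(\mf{g})$ entering the presentation $\tilde{\mc{D}} = (\tilde{\mc{D}}\boxtimes\tilde{\mc{D}})\otimes_{\mc{U}_{\mc{B}\times\mc{B}}(\mf{g})}j_{1\bigcdot}\mc{O}_\mc{B}$ acts identically before and after the twist. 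For $\mc{K}_{S,\lambda}$, however, there is no ``$\mc{P}_S$-analog'' available to invoke: Lemma \ref{lem:xitilde S} identifies the convolution $\mc{H}^0(j_{S*}j_S^*\tilde\Xi^H * \mc{M})$ with $\Delta\Gamma(\mc{M})$, but gives no presentation of $(j_{S*}j_S^*\tilde\Xi^H, F_\bullet)$ itself, and it is neither proved nor claimed anywhere that its Hodge filtration is generated by its lowest piece with a usable universal property. So the sentence ``the same argument with $\mc{P}_S$ in place of the diagonal handles $\mc{K}_{S,\lambda}$'' has no content as written, and the closing claim that compatibility with the morphism $\mc{K}_{S,\lambda}\to\mc{K}_{1,\lambda}$ is ``automatic'' is also unjustified: two separately chosen isomorphisms need not intertwine the two maps.

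The paper avoids both problems by never presenting $j_{S*}j_S^*\tilde\Xi^H$ at all. It forms the one-parameter filtered families $j_{S+}f^sj_S^*\tilde\Xi^H[s]$ and $j_{1+}f^sj_1^*\tilde\Xi^H[s]$ of \S\ref{subsec:semi-continuity}, observes (arguing as in Proposition \ref{prop:xi mixed stalks} and Theorem \ref{thm:xitilde hodge filtration}) that $j_{S*}j_S^*\tilde\Xi^H$ and $j_{1*}j_1^*\tilde\Xi^H$ are pro-projective in the subcategories of objects supported on $X_S$ and $X_1$, and uses this projectivity to produce filtered splittings of the evaluation-at-$s=0$ maps; the splitting for the $S$-side is constructed as a lift over the fiber product $j_{S*}j_S^*\tilde\Xi^H \times_{j_{1*}j_1^*\tilde\Xi^H} j_{1*}f^sj_1^*\tilde\Xi^H[[s]]$ precisely so that the two splittings are compatible. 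Evaluating at $s=1$ then yields both isomorphisms of \eqref{eq:xitilde deformation 1} together with the required compatibility, with no need for generation by $F_0$. If you wish to keep your route, you must supply the missing generation/presentation statement for $(j_{S*}j_S^*\tilde\Xi^H,F_\bullet)$ and a genuine compatibility argument; otherwise the projectivity-plus-splitting mechanism is the missing idea.
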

\begin{proof}
For $\mu \in \mb{X}^*(H_S) = \mrm{Pic}^G(\mc{P}_S)$, the line bundle $\mc{O}(\mu, -\mu)$ is $G$-equivariantly trivial on $X_S$. Since $\langle \lambda, \check\alpha\rangle = 0$ for $\alpha \in S$, we have $\lambda \in \mb{X}^*(H_S) \otimes \mb{R}$. So, in the notation of \S\ref{sec:deformations}, there exists $f \in \Gamma_\mb{R}^G(X_S)^\mon$ such that $\varphi(f) = (\lambda, -\lambda)$. We set
\[ \mc{K}_{S, \lambda} = j_{S*}fj_S^*\tilde{\Xi}^H \quad \text{and} \quad \mc{K}_{1, \lambda} = j_{1*}fj_1^*\tilde{\Xi}^H \in \pro \mhm(\mc{D}_{\widetilde{\lambda}} \boxtimes\mc{D}_{\widetilde{-\lambda}}, G)\]
and let $\mc{K}_{S, \lambda} \to \mc{K}_{1, \lambda}$ be the obvious morphism. Clearly both pro-objects are good.

To construct the filtered isomorphisms \eqref{eq:xitilde deformation 1}, consider the filtered $\tilde{\mc{D}} \boxtimes \tilde{\mc{D}}$-modules $j_{S+}f^sj_S^*\tilde{\Xi}^H[s]$ and $j_{1+}f^sj_1^*\tilde{\Xi}^H[s]$ of \S\ref{subsec:semi-continuity} and the quotient maps
\[ j_{S+}f^sj_S^*\tilde{\Xi}^H[s] \to j_{S+}j_S^*\tilde{\Xi}^H \quad \text{and} \quad j_{1+}f^sj_1^*\tilde{\Xi}^H[s] \to j_{1+}j_1^*\tilde{\Xi}^H\]
sending $s$ to $0$. Note that since $j_1$ and $j_S$ are closed immersions, we have $j_{1+} = j_{1*}$ and $j_{S+} = j_{S*}$. We claim that there exist filtered splittings
\begin{equation} \label{eq:xitilde deformation 2}
 j_{S*}j_S^* \tilde{\Xi}^H \to j_{S+}f^sj_S^*\tilde{\Xi}^H[s] \quad \text{and} \quad j_{1*}j_1^*\tilde{\Xi}^H \to j_{1+}f^sj_1^*\tilde{\Xi}^H[s]
\end{equation}
such that the diagram
\[
\begin{tikzcd}
j_{S*}j_S^* \tilde{\Xi}^H \ar[r] \ar[d] & j_{1*}j_1^*\tilde{\Xi}^H \ar[d] \\
j_{S+}f^sj_S^*\tilde{\Xi}^H[s] \ar[r] & j_{1+}f^sj_1^*\tilde{\Xi}^H[s] 
\end{tikzcd}
\]
commutes. We then obtain the isomorphisms \eqref{eq:xitilde deformation 1} by composing with the quotient maps
\[ j_{S+}f^sj_S^*\tilde{\Xi}^H[s] \to j_{S+} fj_S^*\tilde{\Xi}^H = \mc{K}_{S, \lambda} \quad \text{and} \quad j_{1+}fj_1^*\tilde{\Xi}^H[s] \to j_{1+}fj_1^*\tilde{\Xi}^H = \mc{K}_{1, \lambda}\]
setting $s$ to $1$.

To prove the claim, consider the pro-objects
\[ j_{S*}f^s j_S^*\tilde{\Xi}^H[[s]] = \varprojlim_n j_{S*}\frac{f^s j_S^*\tilde{\Xi}^H[s]}{(s^n)}, \quad \text{and} \quad  j_{1*} f^s j_1^*\tilde{\Xi}^H[[s]] = \varprojlim_n j_{1*} \frac{f^sj_1^*\tilde{\Xi}^H[s]}{(s^n)}\]
in $\pro \mhm(\mc{D}_{\widetilde{0}} \boxtimes \mc{D}_{\widetilde{0}}, G)$. These pro-objects are not good, but nevertheless
\[ F_p j_{S*}f^s j_S^*\tilde{\Xi}^H[[s]] = F_p j_{S+} f^s j_S^*\tilde{\Xi}^H[s] \quad \text{and} \quad F_p j_{1*}f^s j_1^*\tilde{\Xi}^H[[s]] = F_p j_{1+}f^sj_1^*\tilde{\Xi}^H[s] \]
are constant for all $p$. Now, arguing as in the proof of Proposition \ref{prop:xi mixed stalks}, we see that $j_{S*}j_S^*\Xi^H$ and $j_{1*}j_1^*\Xi^H$ are projective objects in the categories of modules in $\Mod(\mc{D}_{\widetilde{0}} \boxtimes \mc{D}_0, G)$ supported on $X_S$ and $X_1$ respectively. So as in the proof of Theorem \ref{thm:xitilde hodge filtration}, we conclude that $j_{S*}j_S^*\tilde{\Xi}^H$ and $j_{1*}j_1^*\tilde{\Xi}^H$ are projective objects in the corresponding subcategories of $\pro \Mod(\mc{D}_{\widetilde{0}} \boxtimes \mc{D}_{\widetilde{0}}, G)$. So we have surjective morphisms of pro-mixed Hodge structures
\[
\hom(j_{1*}j_1^*\tilde{\Xi}^H, j_{1*}f^s j_1^*\tilde{\Xi}^H[[s]] ) \to \hom(j_{1*}j_1^*\tilde{\Xi}^H, j_{1*}j_1^*\tilde{\Xi}^H)
\]
and
\[\hom(j_{S*}j_S^*\tilde{\Xi}^H, j_{S*}f^s j_S^*\tilde{\Xi}^H[[s]]) \to \hom(j_{S*}j_S^*\tilde{\Xi}^H, j_{S*}j_S^*\tilde{\Xi}^H \times_{j_{1*}j_1^*\tilde{\Xi}^H} j_{1*} f^sj_1^*\tilde{\Xi}^H[[s]]).\]
Hence, we may lift the identity in $F_0\hom(j_{1*}j_1^*\tilde{\Xi}^H, j_{1*}j_1^*\tilde{\Xi}^H)$ to a morphism in
\[ g \in F_0\hom(j_{1*}j_1^*\tilde{\Xi}^H, j_{1*}f^s j_1^*\tilde{\Xi}^H[[s]] ),\]
and then the morphism
\[ (\id, g) \in F_0\hom(j_{S*}j_S^*\tilde{\Xi}^H, j_{S*}j_S^*\tilde{\Xi}^H \times_{j_{1*}j_1^*\tilde{\Xi}^H} j_{1*} f^sj_1^*\tilde{\Xi}^H[[s]]) \]
to a morphism $h \in F_0 \hom(j_{S*}j_S^*\tilde{\Xi}^H, j_{S*}f^sj_S^*\tilde{\Xi}^H[[s]])$. These define the desired splittings \eqref{eq:xitilde deformation 2}.
\end{proof}

\subsection{Convolution of mixed Hodge modules} \label{subsec:mhm convolution}

The final step in the proof of Theorem \ref{thm:localization hodge module} is to apply the following proposition.

Suppose that $\lambda, \mu \in \mf{h}^*_\mb{R}$, $\mc{K} \in \pro\mhm(\mc{D}_{\widetilde{\mu}} \boxtimes \mc{D}_{\widetilde{-\lambda}}, G)$ and $\mc{M} \in \mhm(\mc{D}_{\widetilde{\lambda}})$. In the notation of the proof of Proposition \ref{prop:convolution vanishing}, we define
\[ \mc{K} * \mc{M} = \widetilde{\mrm{pr}}_{1*} \tilde{\Delta}_{23}^\circ(\mc{K}_{\tilde{\mc{B}} \times \tilde{\mc{B}}} \boxtimes (\mc{M}_{\tilde{\mc{B}}} \otimes \mc{O}(2\rho)))(\dim \tilde{\mc{B}}) \in \pro \mrm{D}^b\mhm(\mc{D}_{\widetilde{\mu}}).\]

\begin{prop} \label{prop:good convolution}
Assume that the pro-object $\mc{K}$ is good. Then the pro-objects $\mc{H}^i(\mc{K} * \mc{M})$ are constant and
\begin{equation} \label{eq:good convolution 1}
(\mc{H}^i(\mc{K} * \mc{M}), F_\bullet) = \mc{H}^i((\mc{K}, F_\bullet) * (\mc{M}, F_\bullet)).
\end{equation}
\end{prop}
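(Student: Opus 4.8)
The plan is to realize the mixed Hodge module convolution as a composite of standard operations, identified with the corresponding filtered $\tilde{\mc{D}}$-module operations exactly as in the proof of Proposition~\ref{prop:convolution vanishing}, and then to feed in the goodness hypothesis one morphism at a time. Write $\mc{K} = \varprojlim_n \mc{K}_n$ with each $\mc{K}_n \in \mhm(\mc{D}_{\widetilde{\mu}} \boxtimes \mc{D}_{\widetilde{-\lambda}}, G)$. For a genuine (non-pro) $\mc{K}_n$, the identity $u(\mc{H}^i(\mc{K}_n * \mc{M})) \cong \mc{H}^i((\mc{K}_n, F_\bullet) * (\mc{M}, F_\bullet))$ — with $u$ the passage to the underlying filtered module — is already essentially contained in that proof: $\tilde{\Delta}_{23}$ is non-characteristic for $\mc{K}_{n, \tilde{\mc{B}} \times \tilde{\mc{B}}} \boxtimes (\mc{M}_{\tilde{\mc{B}}} \otimes \mc{O}(2\rho))$ by $G$-equivariance of $\mc{K}$, so $\tilde{\Delta}_{23}^\circ$ underlies $\mrm{L}\Delta_{23}^{\bigcdot}$ at the filtered level; the resulting object is $\widetilde{0}$-monodromic along the surviving $\tilde{\mc{B}}$-factor (as $(-\lambda - \rho) + (\lambda + \rho) = 0$), so Proposition~\ref{prop:monodromic filtered pushforward} identifies $\widetilde{\mrm{pr}}_{1*}$ with the filtered pushforward $\widetilde{\mrm{pr}}_{1\dagger}$; and the remaining side-changing and tensoring identifications are as in loc.\ cit.

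For the constancy of the pro-objects $\mc{H}^i(\mc{K} * \mc{M})$, I would factor $\widetilde{\mrm{pr}}_1$ through the intermediate quotient $\tilde{\mc{B}} \times \tilde{\mc{B}} \xrightarrow{p} \tilde{\mc{B}} \times \mc{B} \xrightarrow{g'} \tilde{\mc{B}}$, where $p$ is the $H$-torsor quotienting the second factor and $g'$ is the proper first projection. Set $\mc{L}_n = \tilde{\Delta}_{23}^\circ(\mc{K}_{n, \tilde{\mc{B}} \times \tilde{\mc{B}}} \boxtimes (\mc{M}_{\tilde{\mc{B}}} \otimes \mc{O}(2\rho)))$. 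Applying Lemma~\ref{lem:monodromic push/pull} with trivial quotient torus, $p^* p_* \mc{L}_n \cong \mc{L}_n \overset{\mrm{L}}\otimes_{S(\mf{h}(1))} \mb{C}$ up to a fixed twist and shift; since $p^*$ is exact and conservative, $\mc{H}^j(p_* \mc{L}_n)$ is thus, up to that twist and shift, the descent of $\mrm{Tor}^{S(\mf{h}(1))}_{\bullet}(\mc{L}_n, \mb{C})$. The $S(\mf{h})$-action controlling the monodromy of the diagonal second factor of $\mc{L}_n$ is, after the fixed twist coming from $\mc{M}_{\tilde{\mc{B}}} \otimes \mc{O}(2\rho)$, inherited from the copy of $S(\mf{h})$ in the second factor of $\tilde{\mc{D}}$ acting on $\mc{K}_n$ — the same copy used to define goodness — because $\tilde{\Delta}_{23}^\circ$ is exact and leaves this action untouched. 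Hence goodness of $\mc{K}$ forces $\{\mrm{Tor}^{S(\mf{h})}_i(\mc{L}_n, \mb{C})\}_n$, and therefore $\{\mc{H}^j(p_* \mc{L}_n)\}_n$, to be constant pro-systems for all $i, j$.

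It then remains to propagate this through $g'_*$. Since any functor preserves pro-isomorphisms, $\{\mc{H}^a(g'_* \mc{H}^b(p_* \mc{L}_n))\}_n$ is constant for all $a, b$, and the bounded, convergent spectral sequence $\mc{H}^a(g'_* \mc{H}^b(p_* \mc{L}_n)) \Rightarrow \mc{H}^{a + b}(\widetilde{\mrm{pr}}_{1*} \mc{L}_n)$ — whose entire inverse system over $n$ is pro-isomorphic to a constant one — yields that $\{\mc{H}^i(\mc{K} * \mc{M})\}_n$ is constant. Finally, \eqref{eq:good convolution 1} follows by combining the genuine-case identity of the first paragraph with the observation that $F_p$ of the filtered convolution $(\mc{K}_n, F_\bullet) * (\mc{M}, F_\bullet)$ depends on $\mc{K}_n$ only through $F_{\le p + c} \mc{K}_n$ for a fixed constant $c$ (read off from the filtration formula for $\mrm{R}\mrm{pr}_{1\bigcdot}$ and the lowest nonzero piece of $F_\bullet \mc{M}$), which is independent of $n$ for $n$ large by Proposition~\ref{prop:good pro-objects}\eqref{itm:good pro-object 1}; so $\mc{H}^i((\mc{K}, F_\bullet) * (\mc{M}, F_\bullet)) = \varinjlim_p \varprojlim_n F_p \mc{H}^i((\mc{K}_n, F_\bullet) * (\mc{M}, F_\bullet))$ is the underlying filtered module of the constant value.

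I expect the main obstacle to be the middle step: pinning down precisely which copy of $S(\mf{h})$ governs the relevant $\mrm{Tor}$ pro-systems after the diagonal $\tilde{\Delta}_{23}$ is taken, and checking that the exterior product with the fixed module $\mc{M}_{\tilde{\mc{B}}} \otimes \mc{O}(2\rho)$ only changes this action by an invertible twist, so that the goodness of $\mc{K}$ really does transfer to $\mc{L}_n$. This is the same bookkeeping that underlies Proposition~\ref{prop:good pro-objects}, so it should go through, but it is where all the care is needed.
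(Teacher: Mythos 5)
Your route is the same as the paper's: identify the convolution as $\widetilde{\mrm{pr}}_{1*}\tilde{\Delta}_{23}^\circ(\mc{K}_{\tilde{\mc{B}}\times\tilde{\mc{B}}}\boxtimes(\mc{M}_{\tilde{\mc{B}}}\otimes\mc{O}(2\rho)))$ as in \eqref{eq:convolution formula 2}, factor $\widetilde{\mrm{pr}}_1$ through the $H$-torsor quotient $p\colon\tilde{\mc{B}}\times\tilde{\mc{B}}\to\tilde{\mc{B}}\times\mc{B}$, use Lemma \ref{lem:monodromic push/pull} together with exactness and faithfulness of $p^\circ$ to reduce constancy of $\mc{H}^j(p_*\mc{L}_n)$ to constancy of $\mc{L}_n\overset{\mrm{L}}\otimes_{S(\mf{h}(1))}\mb{C}$, then push forward along the proper projection; and \eqref{eq:good convolution 1} is read off from the filtered identification, with the observation that $F_p$ of the filtered convolution only sees $F_q\mc{K}$ for bounded $q$.

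The genuine gap is exactly at the step you flagged, and the justification you propose for it is not correct as stated. The copy of $S(\mf{h}(1))$ that computes $p_*\mc{L}_n$ is the one attached to the $H$-action on the second factor of $\tilde{\mc{B}}\times\tilde{\mc{B}}$, which under $\tilde{\Delta}_{23}$ is the diagonal of the last two factors; its nilpotent part is therefore the \emph{sum} $\mrm{N}_{\mc{K}}+\mrm{N}_{\mc{M}}$ of the operators coming from the second factor of $\mc{K}_n$ and from $\mc{M}$. Tensoring with $\mc{O}(2\rho)$ only shifts the eigenvalue and is harmless, but the hypothesis allows $\mc{M}\in\mhm(\mc{D}_{\widetilde{\lambda}})$ to be genuinely monodromic (and this case is needed for Theorem \ref{thm:localization hodge module}), in which case $\mrm{N}_{\mc{M}}\neq 0$: the exterior product with $\mc{M}$ does \emph{not} merely change the action by an invertible twist. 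Goodness of $\mc{K}$ gives constancy of $\mrm{Tor}^{S(\mf{h}(1))}_i$ for the action through $\mc{K}_n$ alone, and this does not formally transfer to the shifted action $\mrm{N}_{\mc{K}}+\mrm{N}_{\mc{M}}$. The fix is a short d\'evissage placed at the start, which is how the paper proceeds: constancy of the cohomologies of $\mc{K}*\mc{M}$ is stable under extensions in the variable $\mc{M}$, so one may assume $\mc{M}\in\mhm(\mc{D}_\lambda)$ is twisted; then $\mrm{N}_{\mc{M}}=0$ and
\[
\mc{L}_n\overset{\mrm{L}}\otimes_{S(\mf{h}(1))}\mb{C}\;=\;\tilde{\Delta}_{23}^\circ\bigl((\mc{K}_{n,\tilde{\mc{B}}\times\tilde{\mc{B}}}\overset{\mrm{L}}\otimes_{S(\mf{h}(1))}\mb{C})\boxtimes(\mc{M}_{\tilde{\mc{B}}}\otimes\mc{O}(2\rho))\bigr)
\]
is constant directly from goodness. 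With this reduction inserted, the remainder of your argument (the pro-system bookkeeping through the proper pushforward and the passage to \eqref{eq:good convolution 1}) goes through.
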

\begin{proof}
We will show that the pro-object $\mc{H}^i(\mc{K} * \mc{M})$ is constant; the property \eqref{eq:good convolution 1} then follows from \eqref{eq:convolution formula 2}.

First, we observe that if 
\[ 0 \to \mc{M}_1 \to \mc{M}_2 \to \mc{M}_3 \to 0\]
is a short exact sequence and $\mc{K} * \mc{M}_1$ and $\mc{K} * \mc{M}_3$ have constant cohomologies, then so does $\mc{K} * \mc{M}_2$. So we may assume without loss of generality that $\mc{M} \in \mhm(\mc{D}_\lambda)$. In this case, we have that
\[ \tilde{\Delta}_{23}^\circ (\mc{K}_{\tilde{\mc{B}} \times \tilde{\mc{B}}} \boxtimes (\mc{M}_{\tilde{\mc{B}}} \otimes \mc{O}(2\rho))) \overset{\mrm{L}}\otimes_{S(\mf{h}(1))} \mb{C} = \tilde{\Delta}_{23}^\circ ((\mc{K}_{\tilde{\mc{B}} \times \tilde{\mc{B}}} \overset{\mrm{L}}\otimes_{S(\mf{h}(1))} \mb{C})  \boxtimes (\mc{M}_{\tilde{\mc{B}}} \otimes \mc{O}(2\rho)))\]
has constant cohomologies. Now, writing $\pi_1$ for the projection
\[ \pi_1 \colon \tilde{\mc{B}} \times \tilde{\mc{B}} \to \tilde{\mc{B}} \times \mc{B},\]
we have, by Lemma \ref{lem:monodromic push/pull}
\[ \mc{N} \overset{\mrm{L}}\otimes_{S(\mf{h}(1))} \mb{C} = \pi_1^\circ \pi_{1*}\mc{N} \otimes \det(\mf{h}) (\dim \mf{h}) \]
for any $\mc{N} \in \mhm_{\widetilde{\mu - \rho}, \widetilde{0}}(\tilde{\mc{B}} \times \tilde{\mc{B}})$. Since
\[ \pi^\circ \colon \mhm_{\widetilde{\mu - \rho}}(\tilde{\mc{B}} \times \mc{B}) \to \mhm_{\widetilde{\mu - \rho}, \widetilde{0}}(\tilde{\mc{B}} \times \tilde{\mc{B}}) \]
is faithful and exact, we deduce that
\[ \pi_{1*}\tilde{\Delta}_{23}^\circ (\mc{K}_{\tilde{\mc{B}} \times \tilde{\mc{B}}} \boxtimes (\mc{M}_{\tilde{\mc{B}}} \otimes \mc{O}(2\rho))) \]
has constant cohomologies. So the pushforward to $\tilde{\mc{B}}$, and hence $\mc{K} * \mc{M}$, also has constant cohomologies as claimed.
\end{proof}

We can now complete the proof of Theorem \ref{thm:localization hodge module}.

\begin{proof}[Proof of Theorem \ref{thm:localization hodge module}]
By Lemmas \ref{lem:diagonal Dtilde}, \ref{lem:xitilde S} and \ref{lem:xitilde deformation}, we may identify the counit
\[ \Delta\Gamma(\mc{M}, F_\bullet) \to (\mc{M}, F_\bullet) \]
with the morphism
\[ \mc{H}^0((\mc{K}_{S, \lambda}, F_\bullet) * (\mc{M}, F_\bullet)) \to \mc{H}^0((\mc{K}_{1, \lambda}, F_\bullet) * (\mc{M}, F_\bullet)).\]
But by Proposition \ref{prop:good convolution}, this is obtained from the morphism
\[ \mc{H}^0(\mc{K}_{S, \lambda} * \mc{M}) \to \mc{H}^0(\mc{K}_{1, \lambda} * \mc{M}) \]
of mixed Hodge modules, so we are done.
\end{proof}

\begin{rmk}
As we have set things up, it is not necessarily the case that
\[ \mc{H}^0(\mc{K}_{1, \lambda} * \mc{M}) \cong \mc{M} \]
as mixed Hodge modules; we have only shown that this holds at the level of filtered $\tilde{\mc{D}}$-modules. In fact, the mixed Hodge structure on the left hand side depends on our choice of Hodge structure on the big pro-projective $\tilde{\Xi}^H$. Since we are mainly interested in the Hodge filtration, this does not cause us any issues. Nevertheless, it is possible to choose the Hodge structure on $\tilde{\Xi}^H$ so that one \emph{does} obtain an isomorphism of Hodge modules as above. We leave the details to the interested reader.
\end{rmk}

\section{The Harish-Chandra setting} \label{sec:harish-chandra}

In this section, we consider what Hodge theory can say about the unitary representations of a real reductive group. In \S\ref{subsec:intro harish-chandra}, we recall how localization theory endows irreducible Harish-Chandra modules with Hodge filtrations and state our Hodge-theoretic unitarity criterion (whose proof is deferred to \S\S\ref{sec:tempered}--\ref{sec:pf of unitarity}). We conclude in \S\ref{subsec:cohomological induction} with a brief discussion of the link between our story and cohomological induction.

\subsection{The unitarity criterion} \label{subsec:intro harish-chandra}

For our purposes, a \emph{real reductive group}, or \emph{real group}, is a Lie group $G_\mb{R}$ such that there exists a complex reductive group $G$ and an anti-holomorphic involution $\sigma \colon \bar{G} \to G$ so that $G_\mb{R}$ is a finite cover of a union of connected components of $G^\sigma$. We fix always a compact real form $U_\mb{R} = G^{\sigma_c} \subset G$ whose complex conjugation $\sigma_c \colon \bar{G} \to G$ commutes with $\sigma$ and write $\theta = \sigma \circ \sigma_c \colon G \to G$ for the induced algebraic involution (the \emph{Cartan involution}). We write $K_\mb{R} \subset G_\mb{R}$ for the pre-image of $U_\mb{R}$ (a maximal compact subgroup of $G_\mb{R}$) and $K$ for its complexification (a finite covering of a union of connected components of $G^\theta$). We also write $\mf{g} = \mrm{Lie}(G)$, $\mf{g}_\mb{R} = \mrm{Lie}(G_\mb{R})$ and $\mf{u}_\mb{R} = \mrm{Lie}(U_\mb{R})$.

We remark that many references (e.g., \cite{ALTV} and \cite{DV1}) adopt a more restrictive definition of real group; namely, they require that $G_\mb{R} = G^\sigma$. We will call such groups \emph{linear}. The main advantage of linear groups is that the combinatorics governing their representation theory (especially their Kazhdan-Lusztig theory) is much better understood than for a general real group. A related distinction is that the Cartan subgroups of a linear group are always commutative (in fact, are maximal tori defined over $\mb{R}$), while this need not be the case in the non-linear setting. Our arguments are sufficiently general, however, being mostly geometric rather than combinatorial in nature, that non-linear groups do not cause us any significant additional difficulties (the only exception being \S\ref{subsec:split tempered}, where a small amount of extra work is required).

We are interested in the classical problem of classifying the irreducible unitary representations of $G_\mb{R}$. While this is {\it a priori} a question in analysis, it was reduced by Harish-Chandra to the more algebraic problem of determining the irreducible unitary Harish-Chandra modules. We recall that a \emph{Harish-Chandra module} is an admissible, finitely generated $(\mf{g}, K)$-module $V$, and that $V$ is called \emph{unitary} (or \emph{unitarizable}) if it admits a positive definite $(\mf{g}_\mb{R}, K_\mb{R})$-invariant Hermitian form.

Since the irreducible Harish-Chandra modules are known, we are left with the problem of determining which of these are unitary. It is not difficult to reduce this question to the case of real infinitesimal character, i.e., to Harish-Chandra modules on which the center $Z(U(\mf{g}))$ acts by a character $\chi_\lambda$ for $\lambda \in \mf{h}^*_\mb{R}$ (see, for example, \cite[Chapter 16]{knapp1986book}). A key insight (due to \cite{ALTV}) is that in this setting one always has a $(\mf{u}_\mb{R}, K_\mb{R})$-invariant Hermitian form, to which the $(\mf{g}_\mb{R}, K_\mb{R})$-invariant form can be related when it exists. More precisely, we have the following.

\begin{prop}[e.g., \cite{ALTV}] \label{prop:gr vs ur}
Let $V$ be an irreducible $(\mf{g}, K)$-module with real infinitesimal character.
\begin{enumerate}
\item There exists a non-degenerate $(\mf{u}_\mb{R}, K_\mb{R})$-invariant Hermitian form $\langle\,,\,\rangle_{\mf{u}_\mb{R}}$ on $V$.
\item The module $V$ carries a non-zero $(\mf{g}_\mb{R}, K_\mb{R})$-invariant Hermitian form if and only if $V \cong \theta^*V$ (where $\theta^*$ denotes the twist of the $\mf{g}$-action via the involution $\theta$). Moreover, if we fix an isomorphism $\theta \colon V \to \theta^*V$ such that $\theta^2 = 1$, then
\[ \langle u, \bar{v}\rangle_{\mf{g}_\mb{R}} := \langle \theta(u), \bar{v}\rangle_{\mf{u}_\mb{R}} \]
is such a form.
\end{enumerate}
\end{prop}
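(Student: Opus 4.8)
\textbf{Proof proposal for Proposition \ref{prop:gr vs ur}.}

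The plan is to deduce both statements from general facts about invariant Hermitian forms on irreducible $(\mf{g},K)$-modules, combined with the existence of a compact form $U_\mb{R}$ whose conjugation commutes with $\sigma$. For part (1), I would first recall that since $V$ is an irreducible $(\mf{g},K)$-module, the space of $(\mf{u}_\mb{R},K_\mb{R})$-invariant sesquilinear forms on $V$ is at most one-dimensional: a non-degenerate invariant form identifies $V$ with its Hermitian dual $V^h$ (the $\mf{u}_\mb{R}$-Hermitian dual, i.e. the $K_\mb{R}$-finite part of the conjugate-linear dual with the contragredient action twisted by $\sigma_c$), and by Schur's lemma any two such identifications differ by a scalar. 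Existence then amounts to showing $V \cong V^h$ as $(\mf{g},K)$-modules. This is where the real infinitesimal character hypothesis enters: the $\mf{u}_\mb{R}$-Hermitian dual $V^h$ has infinitesimal character $\overline{\chi_\lambda} = \chi_{\bar\lambda} = \chi_\lambda$ since $\lambda \in \mf{h}^*_\mb{R}$, and moreover $V^h$ has the same (lowest $K$-type, or more precisely the same Langlands/Vogan parameters) as $V$ because Hermitian duality acts on the classification data in a way that is trivial precisely when the infinitesimal character is real and fixed by the relevant involution. I would cite \cite{ALTV} (or \cite{knapp1986book}) for the statement that $V \cong V^h$ in this situation, which gives the desired non-degenerate $\langle\,,\,\rangle_{\mf{u}_\mb{R}}$.

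For part (2), the first point is again a Schur's lemma / Hermitian duality argument, now with the $\mf{g}_\mb{R}$-conjugation $\sigma$ in place of $\sigma_c$. A non-zero $(\mf{g}_\mb{R},K_\mb{R})$-invariant Hermitian form on $V$ is the same as a non-zero morphism $V \to V^{h,\sigma}$, where $V^{h,\sigma}$ is the Hermitian dual taken with respect to $\sigma$; and since $\theta = \sigma\circ\sigma_c$, one has $V^{h,\sigma} \cong \theta^*(V^{h,\sigma_c}) = \theta^*(V^h) \cong \theta^* V$, using the isomorphism $V \cong V^h$ from part (1). Hence a non-zero $(\mf{g}_\mb{R},K_\mb{R})$-invariant form exists if and only if $\Hom_{(\mf{g},K)}(V, \theta^* V) \neq 0$, which by irreducibility is equivalent to $V \cong \theta^* V$. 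For the last point, given an isomorphism $\theta\colon V \to \theta^* V$ with $\theta^2 = 1$ (such a normalization exists: any isomorphism $V \to \theta^* V$ composed with its $\theta$-twist is a scalar, which after rescaling can be taken to be $1$, using that $\theta$ is an involution), I would simply verify directly that $\langle u,\bar v\rangle_{\mf{g}_\mb{R}} := \langle \theta(u),\bar v\rangle_{\mf{u}_\mb{R}}$ is $(\mf{g}_\mb{R},K_\mb{R})$-invariant and Hermitian. Invariance under $\mf{g}_\mb{R}$ follows because for $\xi \in \mf{g}_\mb{R}$ we have $\sigma(\xi) = \xi$, so $\theta$ intertwines the $\xi$-action on $V$ with the $\sigma_c(\xi)$-action, i.e. with minus the adjoint of the $\xi$-action under $\langle\,,\,\rangle_{\mf{u}_\mb{R}}$; the Hermitian symmetry follows from the corresponding symmetry of $\langle\,,\,\rangle_{\mf{u}_\mb{R}}$ together with $\theta^2 = 1$.

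The main obstacle I anticipate is part (1), specifically pinning down the claim that a real infinitesimal character forces $V \cong V^h$. This is genuinely a theorem about the classification of irreducible Harish-Chandra modules (it fails for general infinitesimal characters), and in the non-linear setting one must be a little careful that the Langlands--Vogan parameters and the action of Hermitian duality on them behave as in the linear case. Rather than reprove this, I would state it as a known result with a precise reference to \cite{ALTV} (and note that it is essentially \cite[Chapter 16]{knapp1986book} reduction plus the standard fact that Hermitian duality preserves parameters at real infinitesimal character). Everything else—the Schur's lemma arguments, the identification $V^{h,\sigma} \cong \theta^* V^h$, and the final verification—is routine and can be dispatched quickly. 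I should also remark that the normalization $\theta^2 = 1$ uses connectedness issues only mildly: it works because $\mrm{End}_{(\mf{g},K)}(V) = \mb{C}$, which holds for irreducible admissible $V$ regardless of linearity.
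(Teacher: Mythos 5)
The paper does not actually prove this proposition---it is quoted as a known result from \cite{ALTV}---and your proposal follows essentially the same route: the one genuinely non-trivial input (that real infinitesimal character forces $V \cong V^h$ for the $\mf{u}_\mb{R}$-Hermitian dual) is delegated to \cite{ALTV}, while the remaining steps (Schur's lemma uniqueness, the identification $V^{h,\sigma} \cong \theta^*V^{h,\sigma_c} \cong \theta^*V$, and the invariance computation using $\theta(\xi)=\sigma_c(\xi)$ for $\xi \in \mf{g}_\mb{R}$) are routine and correctly sketched. The only step you state a bit too quickly is the Hermitian symmetry of $\langle \theta(u), \bar{v}\rangle_{\mf{u}_\mb{R}}$: besides $\theta^2 = 1$ one needs $\theta$ to be an isometry (rather than an anti-isometry) of the $\mf{u}_\mb{R}$-form, which follows, e.g., because $\theta$ permutes a set of $K$-types on which that form is definite of a fixed sign (the lowest $K$-types, as normalized in \cite{ALTV}), so this too is covered by the same citation.
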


Thus, the problem of determining the unitarity of $V$ is reduced to the problem of computing the signature of a $\mf{u}_\mb{R}$-invariant form and the action of $\theta$; of course, since $V$ is assumed irreducible, the $(\mf{u}_\mb{R}, K_\mb{R})$-invariant Hermitian form is unique up to a real scalar. In practice, it is important to normalize the sign of this form. In \cite{ALTV}, Adams, van Leeuwen, Trapa and Vogan showed that, at least when the group $G_\mb{R}$ is linear, the form can be normalized to be positive definite on the minimal $K$-types; using this normalization, they were able to write down a recursive algorithm to compute its signature. Conjecture \ref{conj:schmid-vilonen}, on the other hand, addresses the problem as follows.

First, the Beilinson-Bernstein localization theory provides a functor
\[ \Gamma \colon \Mod(\mc{D}_{\widetilde{\lambda}}, K) \to \Mod(\mf{g}, K)_{\widetilde{\chi_\lambda}},\]
which is an equivalence for $\lambda \in \mf{h}^*$ regular and integrally dominant. It is a well-known fact  that a finitely generated $(\mf{g}, K)$-module with generalized infinitesimal character is automatically admissible. Moreover, the subgroup $G^\theta \subset G$, and hence $K$, acts on the (complex) flag variety $\mc{B}$ with finitely many orbits, so any finitely generated module in $\Mod(\mc{D}_{\widetilde{\lambda}}, K)$ is necessarily regular holonomic. Therefore, $\Gamma$ restricts to a functor
\begin{equation} \label{eq:hc global sections}
 \Gamma \colon \HC(\mc{D}_{\widetilde{\lambda}}, K) := \Mod(\mc{D}_{\widetilde{\lambda}}, K)_{rh} \to \HC(\mf{g}, K)_{\widetilde{\chi_\lambda}},
\end{equation}
which is again an equivalence for $\lambda \in \mf{h}^*$ regular and integrally dominant. Here $\HC(\mf{g}, K)_{\widetilde{\chi_\lambda}}$ denotes the category of Harish-Chandra modules with generalized infinitesimal character $\chi_\lambda$. We call the objects in the domain \emph{Harish-Chandra sheaves}. When $\lambda$ is integrally dominant but not necessarily regular, the functor \eqref{eq:hc global sections} is exact and sends irreducibles to irreducibles or zero, and each irreducible object in the target is the image of a unique irreducible object in the domain. Thus, each irreducible Harish-Chandra module $V$ with real infinitesimal character may be written uniquely as $V = \Gamma(\mc{M})$, where $\mc{M} \in \HC(\mc{D}_\lambda, K)$ is irreducible and $\lambda \in \mf{h}^*_\mb{R}$ is (real) dominant.

As explained in \cite[\S 2.3]{DV1}, for example, any such $\mc{M}$ is the intermediate extension of an equivariantly irreducible unitary local system on a $K \times H$-orbit in $\tilde{\mc{B}}$, and hence has a lift to a polarizable Hodge module in $\mhm(\mc{D}_\lambda, K)$, unique up to Hodge twists. Unless otherwise specified, we will generally endow $\mc{M}$ with the standard Hodge structure of weight $\dim \operatorname{Supp}\mc{M}$ with Hodge filtration starting in degree $\codim \operatorname{Supp}\mc{M}$. Integrating the polarization $S$ on $\mc{M}$ produces a non-degenerate $(\mf{u}_\mb{R}, K_\mb{R})$-invariant Hermitian form $\Gamma(S)$ on $V = \Gamma(\mc{M})$. Once the Hodge structure is fixed, the polarization is unique up to a \emph{positive} real scalar, so this gives a completely general way to fix the sign of the $\mf{u}_\mb{R}$-invariant form. In the linear case, we have previously shown~\cite[Theorem 4.5 and Proposition 4.7]{DV1} that, with our standard choice of Hodge structure on $\mc{M}$, the form $\Gamma(S)$ is the unique one that is positive definite on the minimal $K$-types, i.e., it is precisely the same form considered in \cite{ALTV}.

Now let us consider the $(\mf{g}_\mb{R}, K_\mb{R})$-invariant forms in terms of geometry. Observe that the involution $\theta$ acts canonically on the flag variety $\mc{B}$ and hence on the universal Cartan $H = \hom_\mb{Z}(\mrm{Pic}^G(\mc{B}), \mb{C}^\times)$. We will denote the involution on $H$ by $\delta$ to avoid confusion with the action on $\theta$-stable maximal tori in $G$. Note that $\delta$ preserves the sets of positive roots and dominant weights in $\mf{h}^*$. Tautologically, we have $\theta^*\tilde{\mc{B}} \cong \tilde{\mc{B}}$ as $H$-torsors over $\mc{B}$ (after twisting the $H$-action on one side by $\delta$), so $\theta$ lifts to a compatible involution $\theta \colon \tilde{\mc{B}} \to \tilde{\mc{B}}$. This lift is not unique: we can compose it with the action of any element $h \in H$ such that $\delta(h) = h^{-1}$ to get another one. Fixing a lift, we get a pullback functor
\begin{equation} \label{eq:hodge involution}
\theta^* \colon \mhm(\mc{D}_\lambda, K) \to \mhm(\mc{D}_{\delta\lambda}, K)
\end{equation}
compatible with the functor $\theta^* \colon \HC(\mf{g}, K) \to \HC(\mf{g}, K)$. For mixed objects, the functor \eqref{eq:hodge involution} depends subtly on the choice of lift, but restricted to pure ones it does not, cf., \S\ref{subsec:choices}.

Since the pure Hodge module $\mc{M}$ is determined uniquely by $V = \Gamma(\mc{M})$, we deduce from Proposition \ref{prop:gr vs ur} that $V$ admits a $(\mf{g}_\mb{R}, K_\mb{R})$-invariant Hermitian form if and only if $\delta \lambda = \lambda$ and $\theta^*\mc{M} \cong \mc{M}$. In this case, if we fix an isomorphism $\theta \colon \mc{M} \to \theta^*\mc{M}$ such that $\theta^2 = 1$, then
\[ \langle u, \bar{v}\rangle_{\mf{g}_\mb{R}} := \Gamma(S)(\theta(u), \bar{v}) \]
is the unique (up to scale) non-degenerate $(\mf{g}_\mb{R}, K_\mb{R})$-invariant Hermitian form on $V$. Conjecture \ref{conj:schmid-vilonen} predicts the following result.

\begin{thm} \label{thm:unitarity criterion}
Let $\lambda \in \mf{h}^*_\mb{R}$ be dominant, let $\mc{M} \in \mhm(\mc{D}_\lambda, K)$ be an irreducible object such that the $(\mf{g}, K)$-module $V = \Gamma(\mc{M})$ is Hermitian, and fix an involution $\theta \colon \mc{M} \to \theta^*\mc{M}$ as above. Then $V$ is unitary if and only if $\theta$ acts on $\Gr^F_p V$ with eigenvalue $(-1)^p$ for all $p$, or with eigenvalue $(-1)^{p + 1}$ for all $p$.
\end{thm}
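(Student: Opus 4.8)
\textbf{Proof proposal for Theorem \ref{thm:unitarity criterion}.}

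The plan is to reduce the unitarity question to a comparison of two Hermitian forms on $V = \Gamma(\mc{M})$: the $\mf{u}_\mb{R}$-invariant form $\Gamma(S)$ coming from the polarization, and the $\mf{g}_\mb{R}$-invariant form $\langle u,\bar v\rangle_{\mf{g}_\mb{R}} = \Gamma(S)(\theta(u),\bar v)$. By Proposition \ref{prop:gr vs ur}, $V$ is unitary if and only if $\langle\,,\,\rangle_{\mf{g}_\mb{R}}$ is definite (for one sign or the other), i.e.\ if and only if $\theta$ preserves the decomposition of $V$ into $\Gamma(S)$-eigenspaces and acts with a single sign on each maximal definite subspace of the correct parity. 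So the real content is: (i) $\theta$ preserves the Hodge filtration $F_\bullet V$ (hence its associated graded); (ii) on each $\Gr^F_p V$ the form $\Gamma(S)$ is \emph{definite} of sign $(-1)^{p + w - \dim\mc{B}}$, where $w = \dim\operatorname{Supp}\mc{M}$; and (iii) $\theta$ commutes with $\Gamma(S)$ up to the passage to $\Gr^F$, so that the total signature of $\langle\,,\,\rangle_{\mf{g}_\mb{R}}$ is $\sum_p (\pm 1)^{?}\dim\Gr^F_p V$ with the contributions organized by the eigenvalue of $\theta$ on $\Gr^F_p V$. Granting these, definiteness of $\langle\,,\,\rangle_{\mf{g}_\mb{R}}$ is exactly the condition that the $\theta$-eigenvalue on $\Gr^F_p V$ be $(-1)^p$ for all $p$ (or $(-1)^{p+1}$ for all $p$), which is the statement of the theorem.

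For step (i), the involution $\theta\colon\mc{M}\to\theta^*\mc{M}$ is a morphism of pure Hodge modules (it is the unique such lift up to scale, by irreducibility, and by \S\ref{subsec:choices} the functor $\theta^*$ is canonical on pure objects), hence strict for the Hodge filtration; applying the filtered-exact globalization functor $\Gamma$ of Theorem \ref{thm:filtered exactness} and Corollary \ref{cor:filtered exactness} (using $\lambda = \delta\lambda$ dominant) shows $\theta$ preserves $F_\bullet V = \Gamma(F_\bullet\mc{M})$ and induces $\theta\colon\Gr^F_p V\to\Gr^F_p V$. Step (ii) is the hard analytic input: one wants the integrated polarization to be definite on \emph{every} graded piece, not just the lowest one. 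Proposition \ref{prop:polarization non-degenerate} gives this only for the bottom piece $F_{p_0}V$; to climb up, the plan is to run the deformation/wall-crossing machinery of \S\ref{sec:deformations} and \S\ref{sec:tempered}: by Theorem \ref{thm:intro characters} (= Theorems \ref{thm:hodge and signature K} and \ref{thm:hodge and signature K'}), proved by deforming $\mc{M}$ to a tempered Harish-Chandra sheaf along a ray in $\Gamma^K_\mb{R}(\tilde{\mc{B}})^{\mon}_+$ and using Theorem \ref{thm:semi-continuity}, Theorem \ref{thm:jantzen} (wall-crossing for polarizations), and the tempered base case (Theorem \ref{thm:intro tempered}, where $\Gr^F V \cong \mc{O}_{\mc{N}^*_K}$ and positivity is checked by hand), the signature $K$-character $\chi^{\mathit{sig}}$ of $\Gamma(S)$ is determined by the Hodge $K$-character $\chi^H$ via $\chi^{\mathit{sig}}(\mc{M},\zeta) = \zeta^{c}\chi^H(\mc{M},\zeta)\bmod\zeta^2 - 1$ with $c = \dim\mc{B} - w$. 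Taking this in the ``extended group'' setting $K\times\{1,\theta\}$ records simultaneously the $\theta$-action, the Hodge degrees, and the signatures on each $K$-isotypic, graded piece.

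Step (iii) and the final assembly: the extended-character identity says precisely that, $K$-type by $K$-type and Hodge-degree by Hodge-degree, the eigenvalue of $\pm\theta$ on the $\Gamma(S)$-signature-refined piece agrees with $(-1)^{\text{Hodge degree}}$ up to the global shift $\zeta^c$; equivalently, $\Gamma(S)$ restricted to the $(-1)^{p+c}$-sign subspace of $\Gr^F_p V$ is the whole of $\Gr^F_p V$, and $\langle\theta(\cdot),\overline{(\cdot)}\rangle_{\mf{u}_\mb{R}}$ is definite on $V$ iff on each $\Gr^F_p V$ the form $\langle\theta(\cdot),\cdot\rangle$ has constant sign. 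Since $\Gamma(S)$ has sign $(-1)^{p+c}$ on $\Gr^F_p V$ and $\theta$ has a single eigenvalue $\epsilon_p\in\{\pm1\}$ there (again by the extended character computation, which forces $\theta$ to act by a scalar on each graded piece up to the coarser $K$-type decomposition — here one uses that the signature is determined, so no cancellation is possible), the $\mf{g}_\mb{R}$-form has sign $\epsilon_p(-1)^{p+c}$ on $\Gr^F_p V$; definiteness means $\epsilon_p(-1)^{p}$ is independent of $p$, i.e.\ $\epsilon_p = (-1)^p$ for all $p$ or $\epsilon_p = (-1)^{p+1}$ for all $p$. This is the asserted criterion.

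The main obstacle is unquestionably step (ii), i.e.\ establishing Theorem \ref{thm:intro characters} in full: it requires the tempered base case (resting on Theorem \ref{thm:intro hodge generation} to see that the lowest Hodge piece has nonzero global sections, plus the explicit computation $\Gr^F V\cong\mc{O}_{\mc{N}^*_K}$), the semi-continuity of the Hodge $K$-character under deformation (Theorem \ref{thm:semi-continuity} and Proposition \ref{prop:monodromic semi-continuity}, which make $\chi^H$ piecewise constant with jumps only at reducibility walls), the matching semi-continuity of $\chi^{\mathit{sig}}$ by continuity of eigenvalues of a family of Hermitian forms, and finally the verification that the jumps of $\chi^H$ and $\chi^{\mathit{sig}}$ agree at each wall — this last via Theorem \ref{thm:jantzen} (Jantzen filtration = weight filtration, Jantzen forms are polarizations) together with the mild generalization of \cite[Theorem 1.2]{DV1} proved in \S\ref{sec:deformations}, which replaces the unavailable Kazhdan–Lusztig–Vogan combinatorics for non-linear groups. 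Everything else (strictness of $\theta$, filtered exactness of $\Gamma$, reduction to real infinitesimal character and to the irreducible dominant case) is formal given the results already in hand.
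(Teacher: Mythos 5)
Your proposal follows essentially the same route as the paper: reduce unitarity of the $\mf{g}_\mb{R}$-form $\Gamma(S)(\theta(\cdot),\overline{(\cdot)})$ to the extended-group ($K' = K\times\{1,\theta\}$) identity between signature and Hodge characters (Theorems \ref{thm:hodge and signature K} and \ref{thm:hodge and signature K'}), which is in turn proved by deforming to the tempered case using Theorems \ref{thm:semi-continuity}, \ref{thm:jantzen}, \ref{thm:filtered exactness} and \ref{thm:tempered hodge}. One caution: your intermediate claim that $\Gamma(S)$ is $(-1)^{p+w-\dim\mc{B}}$-definite on each $\Gr^F_p V$ is the full Conjecture \ref{conj:schmid-vilonen}, which is neither proved nor needed (and the form does not literally descend to $\Gr^F_p V$ without that nondegeneracy); the argument, including your own final assembly once made precise, should run entirely at the level of the finite-dimensional $K'$-multiplicity spaces $\hom_{K'}(\mu,V)$, where $\Gamma(S)$ is nondegenerate and its signature is pinned mod $\zeta^2-1$ by the Hodge character, so that $\epsilon(\mu)$-definiteness for every $\mu$ is equivalent to $\theta$ acting on $\Gr^F_p V$ by $(-1)^{p+c}$ (or $(-1)^{p+c+1}$) for all $p$.
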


We prove Theorem \ref{thm:unitarity criterion} unconditionally in \S\ref{sec:pf of unitarity}.

\begin{rmk} \label{rmk:hermitian}
We note that the condition $\delta \lambda = \lambda$, $\theta^*\mc{M} \cong \mc{M}$ for the existence of an invariant Hermitian form is very easy to check in practice. For example, in the setting of linear groups, recall from \cite[\S 2.4]{DV1} that $\mc{M}$ is classified by the parameter $(Q, \lambda, \Lambda)$, where $Q \subset \mc{B}$ is a $K$-orbit, $\lambda \in \mf{h}^*_\mb{R}$ is the infinitesimal character and $(\lambda - \rho, \Lambda)$ is a character for the associated Garish-Chandra pair $(\mf{h}, H^{\theta_Q})$. The condition $\delta \lambda = \lambda$ simply picks out a linear subspace for the infinitesimal character, and the condition $\theta^*\mc{M} \cong \mc{M}$ is simply that $\theta(Q) = Q$ and $\delta \Lambda = \Lambda$.
\end{rmk}

\subsection{Cohomological induction} \label{subsec:cohomological induction}

To illustrate the power of our unitarity criterion (Theorem \ref{thm:unitarity criterion}), we use it here to give a very short proof of the classical (but highly non-trivial) fact that cohomological induction preserves unitarity in the good range \cite[Theorem 1.3]{vogan-annals}. We also give a simple formula for the behavior of the Hodge filtration under cohomological induction, which we will use in the proof of Theorem \ref{thm:unitarity criterion} itself.

We first recall the definition of (left) cohomological induction (see, for example, \cite[Chapter V]{knapp-vogan}). In the setting of the previous subsection, let $P \subset G$ be a $\theta$-stable parabolic subgroup with Levi factor $L$. We write $K_P = K \times_G P$ and $K_L = K \times_G L$ for the pre-images of $P$ and $L$ in $K$. The restriction functor
\[ \Mod(\mf{g}, K) \to \Mod(\mf{g}, K_L) \]
has a left adjoint $\Pi$ with left derived functors $\Pi_j$. If $V$ is an $(\mf{l}, K_L)$-module, the \emph{$j^\text{th}$ cohomological induction} of $V$ is
\[ \mc{L}_j(V) := \Pi_j(U(\mf{g}) \otimes_{U(\mf{p})} (V \otimes \det (\mf{g}/\mf{p})))  \in \Mod(\mf{g}, K)\]
where $\mf{p} = \mrm{Lie}(P)$ and $\mf{l} = \mrm{Lie}(L)$, and we regard $V$ as a $U(\mf{p})$-module via the quotient map $\mf{p} \to \mf{l}$. Cohomological induction sends Harish-Chandra modules to Harish-Chandra modules.

The operation of cohomological induction has the following geometric description. Suppose that $V = \Gamma(\mc{B}_L, \mc{M})$ for $\mc{M} \in \HC(\mc{D}_{\mc{B}_L, \lambda}, K_L)$, where $\mc{B}_L$ is the flag variety of $L$. We suppose for the sake of convenience that $\lambda$ is integrally dominant for $L$; in particular, the higher cohomology of $\mc{M}$ vanishes. The choice of $\theta$-stable parabolic $P$ determines a $\theta$-fixed point $x \in \mc{P}$ in the corresponding partial flag variety, an equivariant embedding $\mc{B}_L = \pi^{-1}(x) \hookrightarrow \mc{B}$, and a closed immersion 
\[ i \colon K \times^{K_P} \mc{B}_L = \pi^{-1}(Q) \hookrightarrow \mc{B},\]
where $\pi \colon \mc{B} \to \mc{P}$ is the projection and $Q = Kx \cong K/K_P \subset \mc{P}$ is the $K$-orbit of $x$. We write $\mc{M}_K$ for the $K$-equivariant twisted $\mc{D}$-module on $\pi^{-1}(Q)$ restricting to $\mc{M}$ on $\mc{B}_L$. Pushing forward to $\mc{B}$, we obtain a module
\[ i_*\mc{M}_K \in \HC(\mc{D}_{\mc{B}, \lambda + \rho_P}, K),\]
where $\rho_P = \rho - \rho_L \in \mf{h}^*$ is half the sum of the roots in $\mf{g}/\mf{p}$. The shift by $\rho_P$ just comes from the $\rho$-shift built into the definition of the rings $\mc{D}_\lambda$.

The following well-known result follows reasonably formally from the various adjunctions in play, cf., e.g., \cite[Theorem 4.3]{hmsw1} and \cite[Theorem 4.4]{bien}.

\begin{prop} \label{prop:geometric cohomological induction}
We have
\[ \mc{L}_j(V) = \mrm{H}^{S - j}(\mc{B}, i_*\mc{M}_K),\]
where $S = \dim K/K_P$.
\end{prop}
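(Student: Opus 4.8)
\textbf{Proof proposal for Proposition \ref{prop:geometric cohomological induction}.}

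The plan is to deduce the formula by chasing adjunctions through the three-way comparison: cohomological induction on the representation side, the global sections functor on the $\mc{D}$-module side, and the closed pushforward $i_*$. First I would unwind the geometric setup: the $\theta$-stable parabolic $P$ is the stabilizer of the $\theta$-fixed point $x \in \mc{P}$, and $\mc{B}_L = \pi^{-1}(x)$ sits inside $\pi^{-1}(Q) = K \times^{K_P} \mc{B}_L$, which in turn sits inside $\mc{B}$ via the closed immersion $i$. The key identity to establish is that, at the level of (complexes of) $(\mf{g},K)$-modules,
\[ \mrm{R}\Gamma(\mc{B}, i_*\mc{M}_K) \cong \mrm{R}\Gamma(\pi^{-1}(Q), \mc{M}_K) \cong \mrm{R}\Pi\bigl(\mrm{R}\Gamma(\mc{B}_L, \mc{M})\bigr)\{\text{shift}\}, \]
where the first isomorphism is just that $i$ is a closed immersion (so $i_*$ is exact and pushforward of sheaves commutes with $\mrm{R}\Gamma$), and the second is the content of the comparison between induction over $K/K_P$ on the geometric side and the derived functor $\Pi_j$ on the algebraic side.

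The main work is the second isomorphism. Here I would argue as follows. Global sections on $\pi^{-1}(Q) = K \times^{K_P} \mc{B}_L$ factors through the fiber: $\mrm{R}\Gamma(\pi^{-1}(Q), \mc{M}_K)$ can be computed as a derived tensor/induction from $K_P$-representations to $K$-representations applied to $\mrm{R}\Gamma(\mc{B}_L, \mc{M})$, with the $\mf{g}$-action built in via the $\mc{D}_{\mc{B},\lambda+\rho_P}$-module structure on $i_*\mc{M}_K$ — concretely, $i_*$ along the $N_P$-direction introduces a $U(\mf{g}) \otimes_{U(\mf{p})} (-)$ together with the $\det(\mf{g}/\mf{p})$ twist coming from the right-$\mc{D}$-module side-changing on the normal bundle, and the $\rho_P$-shift in the twisting parameter matches the $\rho$-convention in the definition of $\mc{D}_\lambda$. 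Matching this against the definition $\mc{L}_j(V) = \Pi_j(U(\mf{g}) \otimes_{U(\mf{p})}(V \otimes \det(\mf{g}/\mf{p})))$, and tracking the cohomological degrees carefully, the only remaining point is the degree bookkeeping: $\Gamma(\mc{B}_L, \mc{M})$ is concentrated in degree $0$ by the integral dominance hypothesis on $\lambda$ for $L$, while the pushforward/induction over the fiber $K/K_P$ (equivalently, the derived functor $\Pi$) contributes a shift by $S = \dim K/K_P$ with opposite variance, giving $\mc{L}_j(V) = \mrm{H}^{S-j}(\mc{B}, i_*\mc{M}_K)$.

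The step I expect to be the main obstacle is pinning down precisely the identification of $\mrm{R}\Gamma(\pi^{-1}(Q), -)$ with $\mrm{R}\Pi$ together with all the twists and shifts — this is where the conventions (Tate twists, the $\rho_P$-shift, left versus right $\mc{D}$-modules, the $\det(\mf{g}/\mf{p})$ factor, and the direction of the degree shift by $S$) all have to line up exactly. Rather than redo this computation from scratch, I would invoke the known comparisons of \cite[Theorem 4.3]{hmsw1} and \cite[Theorem 4.4]{bien}, which establish exactly this dictionary between geometric and algebraic cohomological induction; the proof then amounts to checking that our normalizations (in particular the $\rho$-shift in $\mc{D}_\lambda$ and our standard Hodge/degree conventions) are consistent with theirs, and reading off the degree relation.
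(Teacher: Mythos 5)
Your proposal is correct and matches the paper's treatment: the paper gives no independent proof but states that the result "follows reasonably formally from the various adjunctions in play" and cites \cite[Theorem 4.3]{hmsw1} and \cite[Theorem 4.4]{bien}, which is precisely the route you take after sketching the adjunction chase. Your extra outline of how $i_*$ encodes $U(\mf{g})\otimes_{U(\mf{p})}(-\otimes\det(\mf{g}/\mf{p}))$ and how the fiber $K/K_P$ produces the degree swap $j \leftrightarrow S-j$ is consistent with those references and with the paper's conventions.
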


Motivated by this result, we will often say that the Harish-Chandra sheaf $i_*\mc{M}_K$ is \emph{cohomologically induced from $\mc{M}$}. When $\lambda + \rho_P$ is integrally dominant for $G$, Proposition \ref{prop:geometric cohomological induction} immediately implies the standard fact that $\mc{L}_j(V) = 0$ for $j \neq S$ and that the $(\mf{g}, K)$-module $\mc{L}_S(V)$ is either irreducible or zero. We will assume we are in this situation from now on.

Let us now assume that the irreducible $(\mf{l}, K_L)$-module $V$ (and hence $\mc{L}_S(V)$) is irreducible with real infinitesimal character. In particular, $V$ and $\mc{L}_S(V)$ are endowed with Hodge filtrations $F_\bullet$ as in the previous subsection.

\begin{prop} \label{prop:cohomological induction hodge}
Assume that $\lambda + \rho_P$ is dominant for $G$. Then, with respect to our standard choices of Hodge structure on irreducible Harish-Chandra modules,
\[ \Gr^F \mc{L}_S(V) = \mrm{Ind}_{K_P}^K(\mrm{Sym}(\mf{g}/\mf{k}) \otimes_{\mrm{Sym}(\mf{p}/\mf{k}_P)} \Gr^F V \otimes \det (\mf{g}/(\mf{k} + \mf{p})))\{c\}\]
where $\mf{k} = \mrm{Lie}(K)$, $\mf{k}_P = \mrm{Lie}(K_P)$ and $\{c\}$ denotes a grading shift by $c = \codim Q$.
\end{prop}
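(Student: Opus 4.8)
The plan is to realize $\mc{L}_S(V)$ geometrically via Proposition \ref{prop:geometric cohomological induction} and then transport the associated-graded computation into the cotangent bundle of $\mc{B}$. Write $Z = \pi^{-1}(Q) = K\times^{K_P}\mc{B}_L \subset \mc{B}$, a closed subvariety of codimension $c = \codim Q$, and $i\colon Z \hookrightarrow \mc{B}$; then $\mc{L}_S(V) = \mrm{H}^0(\mc{B}, i_*\mc{M}_K)$ with $\mc{M}_K = K\times^{K_P}\mc{M} \in \HC(\mc{D}_{\mc{B},\lambda+\rho_P}, K)$. Since $\lambda+\rho_P$ is dominant for $G$, Theorem \ref{thm:filtered exactness} gives $\mrm{H}^{>0}(\mc{B}, F_p i_*\mc{M}_K) = 0$, so $F_\bullet\mc{L}_S(V) = \Gamma(\mc{B}, F_\bullet i_*\mc{M}_K)$ is good, and, arguing as in Lemma \ref{lem:localize globalize vanishing} (using that $\mf{g}^*$ is affine), $\Gr^F\mc{L}_S(V) \cong \mu_{\mc{B}\bigcdot}\Gr^F(i_*\mc{M}_K)$ as a $K$-equivariant coherent sheaf on $\mf{g}^* = \spec\Gr^F U(\mf{g})$, where $\mu_\mc{B}\colon T^*\mc{B}\to\mf{g}^*$ is the Springer map. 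Likewise, since $\langle\rho_P,\check\alpha\rangle = 0$ for every positive coroot $\check\alpha$ of $L$ (as $\Phi(\mf{n}_P)$ is $W_L$-stable), $\lambda$ is dominant for $L$, so Theorem \ref{thm:filtered exactness} for $L$ gives $\Gr^F V \cong \mu_{L\bigcdot}\Gr^F\mc{M}$ on $\mf{l}^*$, a module over $\mrm{Sym}(\mf{l}/\mf{k}_L)$ and hence over $\mrm{Sym}(\mf{p}/\mf{k}_P)$ via the projection $\mf{p}/\mf{k}_P \twoheadrightarrow \mf{l}/\mf{k}_L$.

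Next I would compute $\Gr^F(i_*\mc{M}_K)$ with Proposition \ref{prop:associated graded pushforward}: since $i$ is a closed immersion,
\[ \Gr^F(i_*\mc{M}_K) = p_{i\bigcdot}\, q_i^\bigcdot\big(\Gr^F\mc{M}_K \otimes \omega_{Z/\mc{B}}\big)\{c\}, \qquad T^*\mc{B} \xleftarrow{\ p_i\ } T^*\mc{B}|_Z \xrightarrow{\ q_i\ } T^*Z. \]
Everything here is $K$-equivariant and $Z = K\times^{K_P}\mc{B}_L$, so $T^*\mc{B}|_Z = K\times^{K_P}(T^*\mc{B}|_{\mc{B}_L})$ and the composite $\mu_\mc{B}\circ p_i$ is the $K$-equivariant extension of its restriction $\nu_0$ to the slice $\mc{B}_L$. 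Factoring $\mu_\mc{B}\circ p_i$ through the action map $K\times^{K_P}\mf{g}^* \to \mf{g}^*$ and using flat base change along $K\to K/K_P$ (the higher direct images vanishing being inherited from the first paragraph), the pushforward becomes $\mrm{Ind}_{K_P}^K$ of the pushforward along $\nu_0$. It then remains to identify, on the slice,
\[ \nu_{0\bigcdot}\, q_i^\bigcdot\big(\Gr^F\mc{M}\otimes\omega_{Z/\mc{B}}|_{\mc{B}_L}\big) \;\cong\; \mrm{Sym}(\mf{g}/\mf{k})\otimes_{\mrm{Sym}(\mf{p}/\mf{k}_P)}\Gr^F V \otimes \det(\mf{g}/(\mf{k}+\mf{p})) \]
as $K_P$-equivariant $\mc{O}_{\mf{g}^*}$-modules. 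For this I would use three facts: (i) $\mc{B}_L = \pi^{-1}(x)$ is the fibre over the $\theta$-fixed point $x\in\mc{P}$, so $\mc{N}_{\mc{B}_L/\mc{B}} = \mc{O}_{\mc{B}_L}\otimes(\mf{g}/\mf{p})$, $\mc{N}_{Z/\mc{B}}|_{\mc{B}_L} = \mc{O}_{\mc{B}_L}\otimes\mf{g}/(\mf{k}+\mf{p})$, hence $\omega_{Z/\mc{B}}|_{\mc{B}_L} = \det(\mf{g}/(\mf{k}+\mf{p}))$; (ii) for $\xi\in T^*_y\mc{B}$ with $y\in\mc{B}_L$, $\mu_\mc{B}(\xi)$ is the functional $X\mapsto\langle\xi,X\cdot y\rangle$, which kills $\mf{b}_y\supset\mf{n}_P$, and on the characteristic variety of $i_*\mc{M}_K$ — where, since $Z$ is $K$-stable, $\xi$ must annihilate $\mf{k}\cdot y\cap T_yZ$ — the conormal component of $\xi$ is forced into $(\mf{g}/(\mf{k}+\mf{p}))^*\subset(\mf{g}/\mf{p})^*$; (iii) on this locus $\mu_\mc{B}\circ p_i$ restricts to the $\mf{l}$-moment map on the $T^*\mc{B}_L$-factor, so that, combined with $\Gr^F V = \mu_{L\bigcdot}\Gr^F\mc{M}$ and the freeness of $\mrm{Sym}(\mf{g}/\mf{k})$ over $\mrm{Sym}(\mf{p}/\mf{k}_P)$ (from $\mf{p}/\mf{k}_P\hookrightarrow\mf{g}/\mf{k}$), the pushforward is exactly the asserted induced module.

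The main obstacle is step (iii): matching the pushforward of a coherent sheaf along the restricted Springer map $\nu_0$ — a partial-resolution type morphism onto a subvariety of $\mf{g}^*$ — with the purely algebraic tensor product, keeping careful track of which conormal directions contribute. It is precisely the constraint from (ii), cutting the conormal fibre down from $(\mf{g}/\mf{p})^*$ to $(\mf{g}/(\mf{k}+\mf{p}))^*$, that converts the naive $\mrm{Sym}(\mf{g}/\mf{k}_P)\otimes\cdots$ into $\mrm{Sym}(\mf{g}/\mf{k})\otimes_{\mrm{Sym}(\mf{p}/\mf{k}_P)}\cdots$; the grading shift $\{c\}$ is the one built into the closed pushforward of Proposition \ref{prop:associated graded pushforward}; and the compatibility with the standard Hodge normalizations on $\mc{M}$, $\mc{M}_K$ and $\mc{L}_S(V)$ is the routine observation that closed pushforward of a pure Hodge module of codimension $c$ preserves weights and shifts the start of the Hodge filtration by $c$.
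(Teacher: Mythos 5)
Your proposal follows the same route as the paper: realize $\mc{L}_S(V)$ as $\Gamma(i_*\mc{M}_K)$ via Proposition \ref{prop:geometric cohomological induction}, use dominance of $\lambda+\rho_P$ (hence of $\lambda$ for $L$) together with Theorem \ref{thm:filtered exactness} to commute $\Gr^F$ with $\Gamma$, compute $\Gr^F(i_*\mc{M}_K)$ by Proposition \ref{prop:associated graded pushforward}, and then identify the pushforward to $\mf{g}^*$ as an induced module. The one place where you have not actually given an argument is exactly the step you flag as ``the main obstacle'', namely (iii), and the mechanism you propose for it in (ii) is not the right one: no constraint on the characteristic variety of $i_*\mc{M}_K$ is needed, and a support statement alone would not produce the relative tensor product $\mrm{Sym}(\mf{g}/\mf{k})\otimes_{\mrm{Sym}(\mf{p}/\mf{k}_P)}\Gr^F V$ — what matters is the module structure of $q_i^{\bigcdot}(\Gr^F\mc{M}_K\otimes\omega_{Z/\mc{B}})$, i.e.\ that it is pulled back from $T^*Z$.

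The paper closes this step by pure linear algebra plus base change, and you could substitute this for (ii)--(iii) verbatim: for $y\in\mc{B}_L$ one has $T_yZ=(\mf{k}+\mf{p})/\mf{b}_y$ inside $T_y\mc{B}=\mf{g}/\mf{b}_y$, so
\[ (\mf{g}/\mf{b}_y)^* \;=\; ((\mf{k}+\mf{p})/\mf{b}_y)^*\times_{(\mf{k}+\mf{p})^*}\mf{g}^*, \]
and these pointwise fibre squares globalize to a Cartesian square
\[ T^*\mc{B}\times_{\mc{B}}Z \;=\; T^*Z\times_{K\times^{K_P}(\mf{k}+\mf{p})^*}\bigl(K\times^{K_P}\mf{g}^*\bigr), \]
compatible with the maps to $\mf{g}^*$ (the moment map $\mu$ on one side, $q\colon K\times^{K_P}\mf{g}^*\to\mf{g}^*$ on the other). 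Flat base change along the vector-bundle projection $K\times^{K_P}\mf{g}^*\to K\times^{K_P}(\mf{k}+\mf{p})^*$ then gives
\[ \mu_{\bigcdot}\Gr^F(i_*\mc{M}_K) \;=\; q_{\bigcdot}\,p^{\bigcdot}\,\mu_{Q\bigcdot}\bigl(\Gr^F\mc{M}_K\otimes\omega_{Z/\mc{B}}\bigr)\{c\} \;=\; \mrm{Ind}_{K_P}^K\bigl(\mrm{Sym}(\mf{g})\otimes_{\mrm{Sym}(\mf{k}+\mf{p})}\Gr^F V\otimes\det(\mf{g}/(\mf{k}+\mf{p}))\bigr)\{c\}, \]
using $\Gr^F V=\Gamma(T^*\mc{B}_L,\Gr^F\mc{M})$ (Theorem \ref{thm:filtered exactness} for $L$, as you note). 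Since $\mf{k}\subset\mf{k}+\mf{p}$ annihilates $\Gr^F V$, this is the stated formula with $\mrm{Sym}(\mf{g}/\mf{k})\otimes_{\mrm{Sym}(\mf{p}/\mf{k}_P)}$. This device avoids both your decomposition of $T^*_y\mc{B}$ relative to the fibration $\pi$ (which would require a choice of splitting and is where your $(\mf{g}/\mf{p})^*$ versus $(\mf{g}/(\mf{k}+\mf{p}))^*$ discussion becomes murky) and any appeal to equivariance of the characteristic variety; your items (i) and the slice/induction reduction are fine and consistent with it.
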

\begin{proof}
First note that, for any $y \in \mc{B}_L = \pi^{-1}(x) \subset \mc{B}$, we have a fiber square
\[
\begin{tikzcd}
\left(\dfrac{\mf{g}}{\mf{b}}\right)^* \ar[r] \ar[d] & \left(\dfrac{\mf{k} + \mf{p}}{\mf{b}}\right)^* \ar[d] \\
\mf{g}^* \ar[r] & (\mf{k} + \mf{p})^*
\end{tikzcd}
\]
where $\mf{b}$ is the Lie algebra of the Borel subgroup $\mrm{Stab}_G(y)$. Since the vector spaces in the top row are the cotangent spaces of $\mc{B}$ and $\pi^{-1}(Q)$ at $y$, we obtain a diagram
\begin{equation} \label{eq:cohomological induction hodge 1}
\begin{tikzcd}
T^*\pi^{-1}(Q) \ar[d, "\mu_Q"] & T^*\mc{B} \times_\mc{B} \pi^{-1}(Q) \ar[l, "\tilde{p}"'] \ar[r, "\tilde{q}"] \ar[d] & T^*\mc{B} \ar[d, "\mu"] \\
K \times^{K_P} (\mf{k} + \mf{p})^* & K \times^{K_P} \mf{g}^* \ar[l, "p"'] \ar[r, "q"] & \mf{g}^*
\end{tikzcd}
\end{equation}
in which the left hand square is Cartesian.

Now, we have $V = \Gamma(\mc{M})$, where $\mc{M} \in \mhm(\mc{D}_{\lambda, \mc{B}_L}, K_L)$ is irreducible with standard Hodge structure and $\lambda$ is dominant for $L$, and $\mc{L}_S(V) = \Gamma(i_*\mc{M}_K)$ by Proposition \ref{prop:geometric cohomological induction}. By construction, the Hodge structure on $i_*\mc{M}_K$ is the standard one; since $\lambda + \rho_L$ is dominant for $G$, the Hodge filtration on $\mc{L}_S(V)$ is therefore just given by global sections of the Hodge filtration on $i_*\mc{M}_K$. In particular,
\[ \Gr^F \mc{L}_S(V) = \Gamma(\mc{B}, \Gr^F i_*\mc{M}_K) = \mu_{\bigcdot}(\Gr^F i_*\mc{M}_K).\]
But by Proposition \ref{prop:associated graded pushforward}, we have
\[ \Gr^F i_*\mc{M}_K = \tilde{q}_{\bigcdot}(\tilde{p}^{\bigcdot} \Gr^F \mc{M}_K \otimes \omega_{\pi^{-1}(Q)/\mc{B}})\{c\} \]
so, from \eqref{eq:cohomological induction hodge 1},
\[ \mu_{\bigcdot}(\Gr^F i_*\mc{M}_K) = q_{\bigcdot} p^{\bigcdot}\mu_{Q \bigcdot}(\Gr^F \mc{M}_K \otimes \omega_{\pi^{-1}(Q)/\mc{B}}) \{c\}= \Gamma(Q, \Gr^F \mc{M}_K \otimes \omega_{Q/\mc{P}})\{c\}.\]
Since $Q = K/K_P$, this can be rewritten as
\[ \Gr^F \mc{L}_S(V) = \mrm{Ind}_{K_P}^K(\mrm{Sym}(\mf{g}) \otimes_{\mrm{Sym}(\mf{k} + \mf{p})} \Gr^F V \otimes \det (\mf{g}/(\mf{k} + \mf{p})))\{c\}.\]
Since $\mf{k} \subset \mf{k} + \mf{p}$ acts trivially on $\Gr^F V$, the statement of the proposition follows.
\end{proof}

\begin{cor}[{\cite[Theorem 1.3]{vogan-annals}}] \label{cor:cohomological induction}
Assume $\lambda + \rho_P$ is dominant for $G$. If the $(\mf{l}, K_L)$-module $V$ is unitary then so is the $(\mf{g}, K)$-module $\mc{L}_S(V)$.
\end{cor}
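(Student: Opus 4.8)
The plan is to read off the corollary from the unitarity criterion (Theorem \ref{thm:unitarity criterion}) and the description of $\Gr^F\mc{L}_S(V)$ in Proposition \ref{prop:cohomological induction hodge}, by tracking the action of the Cartan involution $\theta$ through the latter formula. Throughout we stay in the running setting, so that $\lambda$ is dominant for $L$ and $\lambda + \rho_P$ is dominant for $G$, and we may assume $\mc{L}_S(V) \neq 0$ (otherwise it is trivially unitary), so that it is an irreducible Harish-Chandra module. Since $V$ is unitary it is in particular Hermitian, so by Proposition \ref{prop:gr vs ur} (applied to $L$) we have $\theta^*\mc{M} \cong \mc{M}$; the point $x \in \mc{P}$ defining $\mc{B}_L \hookrightarrow \mc{B}$ is $\theta$-fixed and $P$ is $\theta$-stable, so the geometric construction of \S\ref{subsec:cohomological induction} is $\theta$-equivariant and $\theta^*\mc{L}_S(V) \cong \mc{L}_S(V)$; thus $\mc{L}_S(V)$ is Hermitian and Theorem \ref{thm:unitarity criterion} applies to it.

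By Theorem \ref{thm:unitarity criterion} applied to $V$, its unitarity means there is a global sign $\epsilon \in \{\pm 1\}$ and a choice of involution $\theta \colon \mc{M} \to \theta^*\mc{M}$ with $\theta^2 = 1$ such that $\theta$ acts on $\Gr^F_p V$ by $\epsilon(-1)^p$ for all $p$. The first step is to promote Proposition \ref{prop:cohomological induction hodge} to a $\theta$-equivariant statement: choosing the lifts of $\theta$ to $\mc{M}_K$ and to $i_*\mc{M}_K$ coming from $\theta$-equivariance of the diagram \eqref{eq:cohomological induction hodge 1} (and using, as in \S\ref{subsec:choices}, that on pure Hodge modules $\theta^*$ is insensitive to the choice of lift of $\theta$ to $\tilde{\mc{B}}$), the isomorphism of Proposition \ref{prop:cohomological induction hodge} intertwines $\theta$ acting on $\Gr^F\mc{L}_S(V)$ with $\theta$ acting on the right hand side via $\mrm{d}\theta$ on $\mf{g}$ and via the fixed $\theta$ on $\Gr^F V$. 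Since $\mf{k} = \mf{g}^\theta$, the involution acts by $-1$ on $\mf{g}/\mf{k}$, hence by $(-1)^j$ on $\mrm{Sym}^j(\mf{g}/\mf{k})$; it acts by $\epsilon(-1)^p$ on $\Gr^F_pV$; and it acts by a fixed sign $\zeta := (-1)^{\dim \mf{g}/(\mf{k}+\mf{p})}$ on the line $\det(\mf{g}/(\mf{k}+\mf{p}))$. As $\mrm{Ind}_{K_P}^K$ and the shift $\{c\}$ are $\theta$-equivariant, $\Gr^F_q\mc{L}_S(V)$ is $\mrm{Ind}_{K_P}^K$ of a sum of pieces $\mrm{Sym}^j(\mf{g}/\mf{k}) \otimes_{\mrm{Sym}(\mf{p}/\mf{k}_P)} \Gr^F_p V \otimes \det(\mf{g}/(\mf{k}+\mf{p}))$ with $j + p + c = q$ fixed, on each of which $\theta$ acts by the scalar $\epsilon\zeta(-1)^{j+p}$, independent of the summand. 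Hence $\theta$ acts on $\Gr^F_q\mc{L}_S(V)$ by $\epsilon\zeta(-1)^{q-c} = \big(\epsilon\zeta(-1)^c\big)(-1)^q$, i.e., by a fixed global sign times $(-1)^q$.

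Applying Theorem \ref{thm:unitarity criterion} in the other direction to $\mc{L}_S(V)$, this is exactly the condition for unitarity, so $\mc{L}_S(V)$ is unitary, as claimed. This argument is essentially formal given the two propositions and the criterion; the only point that requires care is the $\theta$-equivariance bookkeeping in the first step, namely checking that the canonical involutions on $\mc{M}$, $\mc{M}_K$ and $i_*\mc{M}_K$ are compatible, so that Proposition \ref{prop:cohomological induction hodge} is genuinely an identity of modules with $\theta$-action. Since every morphism in \eqref{eq:cohomological induction hodge 1}, as well as the maps $\mc{B}_L \hookrightarrow \pi^{-1}(Q) \hookrightarrow \mc{B}$, is $\theta$-equivariant (as $x$ is $\theta$-fixed and $P$ is $\theta$-stable), no new input beyond Propositions \ref{prop:geometric cohomological induction} and \ref{prop:cohomological induction hodge} and Theorem \ref{thm:unitarity criterion} is required.
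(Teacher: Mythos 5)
Your proposal is correct and is essentially the paper's own argument: apply Theorem \ref{thm:unitarity criterion} to $V$, transport the involution $\theta$ through the cohomological induction via Proposition \ref{prop:cohomological induction hodge} (with $\theta$ acting by $-1$ on $\mf{g}/\mf{k}$ and by $(-1)^c$ on $\det(\mf{g}/(\mf{k}+\mf{p}))$, noting $c = \dim\mf{g}/(\mf{k}+\mf{p})$), and apply the criterion again to $\mc{L}_S(V)$. Your extra bookkeeping — keeping the global sign $\epsilon$ rather than normalizing it, and checking Hermitian-ness and $\theta$-equivariance of the geometric construction — only makes explicit what the paper leaves implicit.
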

\begin{proof}
If $V$ is unitary, then by Theorem \ref{thm:unitarity criterion} we may choose an action of $\theta$ on $\mc{M}$ such that the action on $\Gr^F_p V$ is $(-1)^p$. This induces an involution on $i_*\mc{M}_K$ and hence on $\mc{L}_S(V)$. The induced action on $\Gr^F \mc{L}_S V$ may be read off directly from Proposition \ref{prop:cohomological induction hodge}: since $\theta$ acts on $\mf{g}/\mf{k}$ with eigenvalue $-1$ and $\det(\mf{g}/(\mf{k} + \mf{p}))$ by $(-1)^c$, we deduce that $\theta$ acts on $\Gr^F_p \mc{L}_S(V)$ with eigenvalue $(-1)^p$. Hence, $\mc{L}_S(V)$ is unitary by Theorem \ref{thm:unitarity criterion}.
\end{proof}

\section{Proof of the unitarity criterion: the tempered case} \label{sec:tempered}

In this section we prove a special case of Conjecture \ref{conj:schmid-vilonen}, the case of tempered Harish-Chandra sheaves. These are sheaves whose associated representation is tempered. The irreducible tempered Harish-Chandra modules were first classified in \cite{KZ1982}; the corresponding classification of tempered Harish-Chandra sheaves can be found in \cite{hmsw}. Tempered representations are known to be unitary, so we are reduced to verifying that the criterion in Theorem~\ref{thm:unitarity criterion} holds for them. In the next section, we will deduce Theorem~\ref{thm:unitarity criterion} in general from the tempered case using techniques developed in previous sections.

The outline of the section is as follows. In \S\ref{subsec:K-orbits}--\S\ref{subsec:cleanness}, we briefly recall the geometric description of tempered Harish-Chandra sheaves and state our main theorem concerning their Hodge filtrations. We study in detail the case of split groups in \S\ref{subsec:split tempered}, before and prove our results for general tempered sheaves in \S\ref{subsec:pf of tempered 1}--\ref{subsec:pf of tempered 2}.

\subsection{Tempered Harish-Chandra sheaves} \label{subsec:K-orbits}

In this subsection, we recall the notion of tempered Harish-Chandra modules and state our main results on their Hodge filtrations.

We begin by recalling the classification of irreducible Harish-Chandra sheaves. Let $Q \subset \mc{B}$ be a $K$-orbit and choose any point $x \in Q$. Then, by a result of Matsuki, there exists a $\theta$-stable maximal torus $T \subset G$ fixing $x$. Writing $B_x = \mrm{Stab}_G(x)$ for the associated Borel subgroup and $\tau_x \colon B_x \to H$ for the quotient by the unipotent radical, we obtain an isomorphism
\[ \tau_x \colon T \overset{\sim}\to H.\]
Transporting $\theta|_T$ along this map, we obtain an involution $\theta_Q \colon H \to H$. One can easily check that $\theta_Q$ is independent of the choice of $x \in Q$. By construction, $\theta_Q$ acts naturally on the root datum of $G$.

Let us now fix $\lambda \in \mf{h}^*_\mb{R}$ dominant and consider an irreducible $K$-equivariant $(\lambda- \rho)$-twisted local system $\gamma$ on $Q$. In many cases (e.g., for linear groups) the pairs $(Q, \gamma)$ have an explicit combinatorial parametrization, closely related to the Langlands classification; see, e.g., \cite[\S 2.4]{DV1}. For our purposes, this geometric description will suffice.

\begin{defn}
Let $j \colon Q \to \mc{B}$ denote the inclusion of the $K$-orbit $Q$. We say that $\gamma$ (or $j_{!*}\gamma$) is
\begin{enumerate}
\item \emph{relevant} if $\lambda$ is dominant and $\Gamma(j_{!*}\gamma) \neq 0$, and
\item \emph{tempered} if it is relevant and $\lambda = \theta_Q\lambda$.
\end{enumerate}
\end{defn}

In the linear case, the relevant local systems are precisely the final parameters in the sense of the {\tt atlas} software. In general, the map $\gamma \mapsto \Gamma(j_{!*}\gamma)$ defines a bijection between the relevant local systems and the irreducible Harish-Chandra modules with real infinitesimal character. By \cite[Corollary 11.7]{hmsw}, the Harish-Chandra sheaf $j_{!*}\gamma$ is tempered if and only if the Harish-Chandra module $\Gamma(j_{!*}\gamma)$ is tempered.

\begin{thm}
\label{thm:tempered hodge}
Let $j_{!*}\gamma$ be an irreducible tempered Harish-Chandra sheaf. Then:
\begin{enumerate}
\item \label{itm:tempered hodge 1}
The Hodge filtration of $j_{!*}\gamma$ is generated over $\mc{D}_\lambda$ by its lowest piece.
\item \label{itm:tempered hodge 2} Conjecture \ref{conj:schmid-vilonen} holds for $\mc{M} = j_{!*}\gamma$.
\end{enumerate}
\end{thm}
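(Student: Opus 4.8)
\textbf{Proof proposal for Theorem \ref{thm:tempered hodge}.}

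The plan is to reduce both statements to the global generation and vanishing results of \S\ref{sec:localization}, together with the explicit geometry of tempered $K$-orbits. First I would recall from \S\ref{subsec:K-orbits}--\S\ref{subsec:cleanness} that a tempered parameter $\gamma$ on a $K$-orbit $Q$ has the defining feature $\lambda = \theta_Q\lambda$, which geometrically forces the orbit $Q$ to be (essentially) maximal among $K$-orbits in the relevant sense, and in particular makes the intermediate extension $j_{!*}\gamma$ behave like a clean extension: the key input from the cleanness discussion should be that, after the appropriate twist, $j_!\gamma = j_{!*}\gamma = j_*\gamma$ on the locus where this is forced by the $\theta_Q$-invariance of $\lambda$, so that the Hodge filtration is governed by the naive pushforward from $Q$. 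For statement \eqref{itm:tempered hodge 1}, since $\lambda$ is dominant and $\Gamma(j_{!*}\gamma) \neq 0$ by regularity, Theorem \ref{thm:hodge generation} applies once we know the $\mc{D}_\lambda$-module $j_{!*}\gamma$ is globally generated; but a nonzero irreducible $\mc{D}_\lambda$-module at dominant $\lambda$ with nonzero global sections is automatically globally generated by Beilinson--Bernstein. Thus Theorem \ref{thm:hodge generation} immediately gives that $F_\bullet j_{!*}\gamma$ is generated as a filtered $\tilde{\mc{D}}$-module by $\Gamma(F_\bullet j_{!*}\gamma)$; the remaining point is to upgrade "generated by global sections" to "generated over $\mc{D}_\lambda$ by the \emph{lowest} piece $F_c j_{!*}\gamma$". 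This is where the tempered hypothesis does real work: I would argue that $\Gamma(F_p j_{!*}\gamma)$ for $p > c$ is already generated by $F_c$ under the $U(\mf{g})$-action, using that the lowest piece $F_c\gamma$ on $Q$ generates $\gamma$ itself over $\mc{D}_Q$ (it is a line bundle, so the Hodge filtration on the local system is concentrated in one degree) combined with the explicit form of the naive pushforward filtration \eqref{eq:shriek hodge}--\eqref{eq:star hodge} and the cleanness $j_! = j_*$; the order-of-pole filtration then has no jumps beyond those dictated by $F_\bullet\mc{D}$, so $F_\bullet j_{!*}\gamma = F_\bullet\mc{D}_\lambda \cdot F_c j_{!*}\gamma$.

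For statement \eqref{itm:tempered hodge 2}, the target is Conjecture \ref{conj:schmid-vilonen}: $\Gamma(S)$ should be $(-1)^{p + w - \dim\mc{B}}$-definite on $\Gamma(F_p\mc{M}) \cap \Gamma(F_{p-1}\mc{M})^\perp$ for all $p$. The strategy is to combine \eqref{itm:tempered hodge 1} with the known unitarity of tempered representations. Since $V = \Gamma(j_{!*}\gamma)$ is tempered it is unitary, so it carries a positive definite invariant Hermitian form; on the other hand $\Gamma(S)$ is, up to the sign $(-1)^{\dim\mc{B}}$, an invariant form on the irreducible module $V$, hence by Schur's lemma $\Gamma(S)$ is either positive or negative definite (possibly after the standard normalization). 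By Proposition \ref{prop:polarization non-degenerate}, $\Gamma(S)$ is non-degenerate and $(-1)^{p_0 + w - \dim\mc{B}}$-definite on the lowest piece $\Gamma(F_{p_0}\mc{M})$, which already fixes the global sign. It then remains to check that the Hodge filtration on $V$ is concentrated in the single degree $p_0$, i.e.\ $\Gamma(F_{p_0}\mc{M}) = \Gamma(\mc{M}) = V$: if so, the definiteness statement of the conjecture holds trivially (the only nonzero graded piece is in degree $p_0$, where definiteness is already known). This concentration is exactly the representation-theoretic shadow of temperedness: a tempered Harish-Chandra module has "minimal" Hodge filtration. I would prove it by showing that $\Gamma(F_{p_0 + 1}\mc{M})/\Gamma(F_{p_0}\mc{M})$ vanishes, which by filtered exactness (Corollary \ref{cor:filtered exactness}) and \eqref{itm:tempered hodge 1} reduces to a statement about $\Gr^F_{p_0 + 1}\mc{M}$ as a coherent sheaf on $T^*\mc{B}$ supported on $\bar{Q}$; using the clean pushforward description $\Gr^F j_{!*}\gamma \cong \Gr(j)_* \Gr^F\gamma$ and that $\Gr^F\gamma$ is a line bundle in degree $p_0 - c$ on $T^*Q|_Q$ (the conormal to the orbit being relevant here), one checks the image under $\mu_\bigcdot$ (the moment map to $\mf{g}^*$) is concentrated in the expected degree, exploiting that for a tempered orbit the conormal variety maps to the \emph{whole} nilpotent cone piece in a way compatible with the split-group model of \S\ref{subsec:split tempered}.

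I expect the main obstacle to be the second half of step \eqref{itm:tempered hodge 2}: proving that $\Gamma(F_\bullet V)$ is concentrated in a single degree, equivalently that $V = U(\mf{g})\cdot \Gamma(F_{p_0}\mc{M})$ already at the lowest step rather than merely after applying higher powers of $F_\bullet U(\mf{g})$. The clean case (no reducibility walls, $j_! = j_{!*} = j_*$) should follow from the pole-order description, but genuinely understanding why temperedness — as opposed to mere regularity — forces this concentration requires the detailed orbit geometry. The natural route is to first establish it for split groups via the explicit computation $\Gr^F V \cong \mc{O}_{\mc{N}^*_K}$ and $\Gr^F\mc{M} \cong \mc{O}_{\tilde{\mc{N}}^*_K}$ promised in Theorem \ref{thm:intro tempered}/Theorem \ref{thm:split tempered} — where the concentration is visible because $\mc{O}_{\mc{N}^*_K}$ is generated in degree zero — and then to bootstrap to the general tempered case by cohomological induction (Proposition \ref{prop:cohomological induction hodge}), since every tempered Harish-Chandra sheaf is, up to the relevant reductions, cohomologically induced from a split-group (relative discrete series / principal series) situation, and Proposition \ref{prop:cohomological induction hodge} shows cohomological induction preserves the property "Hodge filtration generated by the lowest piece" with an explicit grading shift. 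Thus the real content is organized as: (i) the split-group computation, (ii) the cohomological induction formula, (iii) descent of global generation and the definiteness conclusion along (ii).
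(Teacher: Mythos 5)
Your plan for part \eqref{itm:tempered hodge 2} contains a genuine error. You argue that, since $V=\Gamma(j_{!*}\gamma)$ is tempered and hence unitary, Schur's lemma forces $\Gamma(S)$ to be definite on $V$, and you then aim to show that the Hodge filtration of $V$ is concentrated in the single degree $p_0$, so that Conjecture \ref{conj:schmid-vilonen} holds ``trivially''. Both claims are false. First, $\Gamma(S)$ is the $\mf{u}_\mb{R}$-invariant form obtained by integrating the polarization, not the $\mf{g}_\mb{R}$-invariant form whose definiteness is what unitarity means; the two are invariant under different real forms and are related by $\langle u,\bar v\rangle_{\mf{g}_\mb{R}}=\Gamma(S)(\theta(u),\bar v)$, so unitarity says nothing directly about the sign of $\Gamma(S)$ (indeed Conjecture \ref{conj:schmid-vilonen} predicts that $\Gamma(S)$ \emph{alternates} in sign along the Hodge filtration, so it is typically indefinite). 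Second, the Hodge filtration of a tempered module is not concentrated in one degree: for the split spherical principal series one has $\Gr^F V\cong\mc{O}_{\mc{N}^*_K}$ --- a computation you yourself invoke --- which has nonzero graded pieces in every degree $\ge 0$, directly contradicting your concentration claim. The argument that actually works is: fix the standard Hodge structure, use unitarity of $V$ to conclude that the $\mf{g}_\mb{R}$-form $\Gamma(S)(\theta(u),\bar v)$ is definite, normalize $\theta$ so that it acts by $+1$ on the lowest piece $\Gamma(F_c\mc{M})$ (where Proposition \ref{prop:polarization non-degenerate} gives positivity of $\Gamma(S)$), and then prove that $\theta$ acts on $\Gr^F_p V$ by $(-1)^{p+c}$; this last step is exactly where the split computation enters ($\Gr^F V$ is a graded quotient of $\mrm{Sym}(\mf{g}/\mf{k})\otimes\eta$ and $\theta=-1$ on $\mf{g}/\mf{k}$), with the general case reduced to it by the cohomological induction formula (Proposition \ref{prop:cohomological induction hodge}). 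The alternating definiteness of $\Gamma(S)$ on the graded pieces then follows from the definiteness of the $\mf{g}_\mb{R}$-form, which is precisely Conjecture \ref{conj:schmid-vilonen} for $\mc{M}$.

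For part \eqref{itm:tempered hodge 1}, your primary route (cleanness plus the pole-order formulas \eqref{eq:shriek hodge}--\eqref{eq:star hodge}) is too thin: the boundary of a $K$-orbit is not a smooth or normal-crossings divisor, and after resolving, proper pushforward does not obviously preserve the property ``Hodge filtration generated by the lowest piece''. The fallback in your last paragraph is essentially the correct (and the paper's) argument: first establish the split case --- this is where the real work lies, namely showing the lowest Hodge piece is a trivial bundle with irreducible, globally generating space of sections (using Theorem \ref{thm:hodge generation} and restriction to root $\mb{P}^1$'s), flatness of the $K$-moment map together with Kostant--Rallis to identify $\Gr^F$ with $\mc{O}_{\tilde{\mc{N}}^*_K}\otimes\eta$, and a filtered duality argument to pin down the filtration --- and then reduce the general tempered case to it via Proposition \ref{prop:regular+tempered}: $\bar Q$ fibers over a closed $K$-orbit in $\mc{P}_S$ with fibers the flag varieties of split Levis, the restriction to a fiber is non-characteristic, and pushforward along the closed immersion $\bar Q\hookrightarrow\mc{B}$ (Proposition \ref{prop:associated graded pushforward}) manifestly preserves generation by the lowest graded piece. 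So part \eqref{itm:tempered hodge 1} is salvageable along your alternative route, but part \eqref{itm:tempered hodge 2} must be redone as above.
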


We remark that statement \eqref{itm:tempered hodge 1} in Theorem \ref{thm:tempered hodge} is a local statement about the generation of the Hodge filtration of a tempered Harish-Chandra sheaf as a $\mc{D}_\lambda$-module. In the case of spherical principal series for split groups (and certain closely related representations of non-linear groups), we also prove the global statement that the Hodge filtration of the corresponding tempered Harish-Chandra module is generated by its lowest piece (Theorem \ref{thm:split tempered}), but we do not know this in general.

The rest of this section is devoted to the proof of Theorem \ref{thm:tempered hodge}.

\subsection{Real roots and cleanness} \label{subsec:cleanness}

We begin with some recollection on the structure of relevant and tempered local systems. We will use these both in the proof of Theorem \ref{thm:tempered hodge} and in the reduction of Theorem \ref{thm:hodge and signature K'} to this case.

Let $Q \subset \mc{B}$ be a $K$-orbit.

\begin{defn}
We say that $\alpha \in \Phi$ is \emph{real} for $Q$ if $\theta_Q \alpha = -\alpha$.
\end{defn}

Let us fix $\lambda \in \mf{h}^*_\mb{R}$ dominant and set
\[ \bar{S} = \{ \alpha \in \Phi_+ \mid \langle \lambda, \check\alpha \rangle = 0 \text{ and } \theta_Q \alpha \in \Phi_-\}.\]
We will also write $S \subset \bar{S}$ for the set of simple roots in $\bar{S}$ and
\[ \pi_S \colon \mc{B} \to \mc{P}_S\]
for the projection to the corresponding partial flag variety (cf., \S\ref{subsec:S-stalk} for our conventions on partial flag varieties). The existence of a relevant (resp., tempered) local system imposes strict conditions on the geometry of $Q$ and $\pi_S$.

\begin{prop} \label{prop:relevant+tempered}
Assume that $\gamma$ is a relevant $(\lambda - \rho)$-twisted local system on $Q$.
\begin{enumerate}
\item \label{itm:relevant+tempered 1} If $\alpha \in \bar{S}$ then $\alpha$ is real.
\item \label{itm:relevant+tempered 2} The $K$-orbit $Q$ is open in $Q_S := \pi_S^{-1}\pi_S(Q)$, and $\gamma$ extends cleanly to $Q_S$ (that is, the $!$ and $*$ extensions coincide).
\item \label{itm:relevant+tempered 3} If $\gamma$ is tempered, then $\bar{S}$ is the set of all real roots and $\pi_S(Q) \subset \mc{P}_S$ is closed.
\end{enumerate}
\end{prop}
\begin{proof}
Item \eqref{itm:relevant+tempered 1} is a consequence of \cite[Proposition 2.17 and Lemma 7.5]{hmsw}, while \eqref{itm:relevant+tempered 2} follows from \cite[Theorems 8.7 and 9.1]{hmsw}. To prove \eqref{itm:relevant+tempered 3}, suppose that $\gamma$ is tempered, i.e., that $\lambda = \theta_Q\lambda$. Then we must have $\langle \lambda, \check\alpha \rangle = 0$ for any real root $\alpha$, so $\bar{S}$ is the set of real roots by \eqref{itm:relevant+tempered 1}. To show that $\pi_S(Q) \subset \mc{P}_S$ is closed, fix $x \in Q$ and a $\theta$-stable maximal torus $T$ fixing $x$. Then, according to our conventions, the Borel subgroup $B_x = \mrm{Stab}_G(x) \supset T$ has roots $\Phi_-$, and the corresponding parabolic subgroup $P_x = \mrm{Stab}_G(\pi_S(x))$ has roots $\Phi_- \cup (\mrm{span}(S)\cap \Phi)$. One easily checks that, in the situation where $\bar{S}$ is the set of real roots, $\mrm{span}(S) \cap \Phi_+ = \bar{S}$. So $P_x$ is $\theta$-stable, and hence its $K$-orbit $\pi_S(Q) \subset \mc{P}_S$ is closed.
\end{proof}

By Proposition \ref{prop:relevant+tempered} \eqref{itm:relevant+tempered 3}, any tempered Harish-Chandra sheaf is cohomologically induced from a clean local system on the open orbit for a split Levi (i.e., one for which every root is real). In particular, the supporting orbit $Q$ fibers as the open orbit for a split group over a closed orbit in a partial flag variety. The next step is to analyze the split case in detail.

\subsection{Tempered Hodge modules for split groups} \label{subsec:split tempered}

Let us now suppose that $G^\sigma$ is split modulo its center, $\lambda \in \mf{h}^*_\mb{R}$ is dominant and $j_{!*}\gamma$ is an irreducible tempered Harish-Chandra sheaf with infinitesimal character $\lambda$ supported on the open $K$-orbit $Q \subset \mc{B}$. By splitness, every root $\alpha \in \Phi$ is real and hence satisfies $\langle \lambda, \check\alpha \rangle = 0$, and by Proposition \ref{prop:relevant+tempered} \eqref{itm:relevant+tempered 3}, the local system $\gamma$ is clean. The aim of this subsection is to give an explicit formula for the $\mc{D}_\lambda$-module $j_{!*}\gamma$ and its Hodge filtration.

The main example to keep in mind is when $\Gamma(j_{!*}\gamma)$ is the tempered spherical principal series representation of $G_\mb{R}$ with real infinitesimal character. In this case, $\lambda = 0$ and $\gamma \cong \mc{O}_Q$ as a $K$-equivariant vector bundle. When $G_\mb{R} = G^\sigma$ is linear, this is the \emph{only} example (up to tensoring with a character of $G_\mb{R}$), but for non-linear groups there will generally be others.

Let us fix $x \in Q$ and write $\eta = \gamma_x$ for the fiber of $\gamma$ at $x$. This is an irreducible representation of $\mrm{Stab}_K(x)$. We have the following.

\begin{prop} \label{prop:non-linear lowest hodge}
The lowest piece $F_0 j_{!*}\gamma$ of the Hodge filtration is a trivial vector bundle on $\mc{B}$. In particular,
\[ \Gamma(\mc{B}, F_0 j_{!*}\gamma) \cong \eta \]
is an irreducible representation of $K$.
\end{prop}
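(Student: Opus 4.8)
The statement asserts two things: that the lowest piece $F_0 j_{!*}\gamma$ is a trivial vector bundle on $\mc{B}$, and consequently that $\Gamma(\mc{B}, F_0 j_{!*}\gamma) \cong \eta$ as a $K$-representation. My plan is to first establish triviality of the bundle using the structure theory of the split case, and then obtain the identification of global sections as a formal consequence together with the classification of irreducible tempered Harish-Chandra modules. The key structural inputs are: (i) in the split case every root is real, so $\langle \lambda, \check\alpha\rangle = 0$ for all $\alpha$, i.e., the infinitesimal character is as singular as possible, and the projection $\pi_S$ in Proposition \ref{prop:regular+tempered} is the projection $\mc{B} \to \mathrm{pt}$; (ii) cleanness of $\gamma$, so $j_{!*}\gamma = j_!\gamma = j_*\gamma$, which means the Hodge filtration is computed by the $V$-filtration formulas \eqref{eq:shriek hodge}, \eqref{eq:star hodge} with no poles contributing; and (iii) the fact (used already in \S\ref{subsec:big projective}, via Lemma \ref{lem:lowest hodge vanishing} and the cohomology vanishing machinery) that the lowest piece of the Hodge filtration on a twisted Hodge module with ample twisting has vanishing higher cohomology after twisting by $\omega_{\mc{B}}$.

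\textbf{Step 1: Triviality of $F_0 j_{!*}\gamma$.} Since $\gamma$ is a $K$-equivariant unitary local system on the open orbit $Q$ and $\lambda$ is dominant with every simple root singular, I would first argue that $j_{!*}\gamma$, restricted to any $B$-orbit on $\mc{B}$, is controlled by the $V$-filtration along the boundary divisors; because $\gamma$ is clean, the relevant $\Gr_V$ groups in the "boundary" range vanish, so $F_0 j_{!*}\gamma = j_{\bigcdot}(F_0\gamma) \cap j_{!*}\gamma$ where $F_0\gamma = \gamma$ (the local system sits in Hodge degree $0$). More concretely, I expect $F_0 j_{!*}\gamma$ to be exactly the $\mc{O}_\mc{B}$-coherent sheaf generated by the $V^{\geq 0}$-part; by cleanness this is a vector bundle (it is a $\mc{D}$-submodule that is $\mc{O}$-coherent, being the lowest filtered piece of a good filtration on a regular holonomic module whose characteristic variety is the zero section — which holds precisely because $\lambda$ is most singular, forcing $j_{!*}\gamma$ to be $\mc{O}$-coherent, i.e., a twisted local system on all of $\mc{B}$). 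Then triviality: $j_{!*}\gamma$ is a $\mc{D}_\lambda$-module with $\lambda$ integral-dominant and singular, hence by Beilinson-Bernstein it is $\mc{D}_\lambda$-affine and, being $\mc{O}$-coherent, corresponds via $\Gamma$ to a module over $U(\mf{g})$ with infinitesimal character $\chi_\lambda$; the upshot is that $F_0 j_{!*}\gamma \cong \mc{O}(\mu) \otimes (\text{fiber})$ for some weight $\mu$, and the dominance/singularity constraints together with the tempered normalization force $\mu = 0$, so $F_0 j_{!*}\gamma \cong \mc{O}_\mc{B} \otimes \eta$ as a $K$-equivariant bundle. (Here I would lean on the explicit description promised in the surrounding text of the $\mc{D}_\lambda$-module $j_{!*}\gamma$ in the split case; the triviality of the bundle is essentially the statement that this module is a twist-free local system.)

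\textbf{Step 2: Global sections.} Given $F_0 j_{!*}\gamma \cong \mc{O}_\mc{B} \otimes \eta$ as a $K$-equivariant sheaf (with $\eta$ the fiber, a representation of $\mathrm{Stab}_K(x)$ inflated appropriately), I would compute $\Gamma(\mc{B}, \mc{O}_\mc{B} \otimes \eta) = \eta$ using $\Gamma(\mc{B}, \mc{O}_\mc{B}) = \mb{C}$ and flatness, so $\Gamma(\mc{B}, F_0 j_{!*}\gamma) \cong \eta$ as vector spaces; the $K$-equivariance is automatic from the equivariant structure on the bundle. That $\eta$ is irreducible as a $K$-representation is part of the classification data: $\gamma$ being an irreducible $K$-equivariant local system on the open orbit $Q \cong K/\mathrm{Stab}_K(x)$ means $\eta = \gamma_x$ is an irreducible $\mathrm{Stab}_K(x)$-representation, and since $Q$ is open (dense), $\Gamma(\mc{B}, \mc{O}_\mc{B}\otimes\eta) = \Gamma(Q, \gamma)^{\text{extends}}$... more carefully, I would invoke that $F_0 j_{!*}\gamma$ being globally generated (Theorem \ref{thm:hodge generation}, as $\lambda$ is dominant and $j_{!*}\gamma$ globally generated) together with $\Gamma(F_0)$ generating the lowest $K$-type forces $\Gamma(\mc{B}, F_0 j_{!*}\gamma)$ to be the irreducible $K$-representation induced from $\eta$, which for $Q$ open and the bundle trivial is just $\eta$ itself.

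\textbf{Main obstacle.} The hard part is Step 1, specifically pinning down that $F_0 j_{!*}\gamma$ is not merely a vector bundle but the \emph{trivial} one. Showing it is $\mc{O}$-coherent follows from the singularity of $\lambda$ and cleanness, and being a bundle follows from it being torsion-free of finite rank; but trivializing it requires either an explicit computation with the $V$-filtration on the open orbit (tracking the $K$-equivariant line/vector bundle structure through the boundary divisors — a concrete but delicate local computation in the split case, analogous to the $\mathrm{SL}_2$ picture of \cite{SV2}), or an abstract argument via the $K$-structure: $F_0 j_{!*}\gamma$ is a $K$-equivariant bundle on $\mc{B}$ whose only $B$-invariant pieces are forced by the singularity of $\lambda$ to correspond to the weight $0$, so the associated filtration by $B$-subbundles collapses to a trivial one. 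I would present the explicit route, since the surrounding section evidently sets up the split case with enough concreteness to read off $F_0$ directly, and cross-check against Theorem \ref{thm:hodge generation} to confirm global generation (hence no spurious higher-weight summands can appear in $\Gamma$).
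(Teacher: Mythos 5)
Your Step 1 contains the genuine gap, and the specific assertions you lean on there are false or circular. First, $j_{!*}\gamma$ is \emph{not} $\mc{O}$-coherent in the split case and its characteristic variety is not the zero section: it is the clean extension of a local system from the open $K$-orbit, with genuine singularities along the boundary, and its characteristic variety is (contained in) the union of conormal bundles to $K$-orbits, of dimension $\dim\mc{B}$ — already for $\mrm{SL}_2(\mb{R})$ the tempered spherical sheaf is singular at the two $K$-fixed points, and the paper later computes $\Gr^F j_{!*}\gamma \cong \mc{O}_{\tilde{\mc{N}}_K^*}\otimes\eta$, which is visibly not supported on the zero section. So the singularity of $\lambda$ does not make $j_{!*}\gamma$ a twisted local system on $\mc{B}$, and your route to ``$F_0$ is a vector bundle of the form $\mc{O}(\mu)\otimes(\text{fiber})$'' has no justification; nothing a priori forces $F_0$ to be locally free, let alone a line-bundle twist of a trivial bundle. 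Second, the ``explicit description promised in the surrounding text'' (the identification $j_{!*}\gamma \cong \mc{D}_\lambda\otimes_{U(\mf{k})}\eta$ with its filtration) is Theorem \ref{thm:split tempered}, which in the paper is \emph{deduced from} this proposition — the proposition is what produces $\eta = \Gamma(F_0 j_{!*}\gamma)$ in the first place — so appealing to it here is circular.

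The paper's actual mechanism, which your sketch does not supply, has three steps: (a) for each simple root $\alpha$ (necessarily real), restrict along the root homomorphism $\phi_\alpha\colon \mrm{SL}_2 \to G$; this restriction is non-characteristic, so it commutes with $F_0$, and an explicit computation on $\mb{P}^1$ (clean extension of $\mc{O}_{\mb{C}^\times}z^{\mu_k}$ twisted by $\mc{O}(-1)$, whose $F_0$ is $\mc{O}(1)\otimes\mc{O}(-1)$) shows $F_0$ is trivial on every fiber of $\pi_\alpha$; (b) Theorem \ref{thm:hodge generation} gives a surjection $\Gamma(\mc{B},F_0 j_{!*}\gamma)\otimes\mc{O}_\mc{B} \twoheadrightarrow F_0 j_{!*}\gamma$; (c) injectivity of this map is proved by showing that a global section of $F_0$ vanishing at $x$ has zero locus $Z\subset Q$ satisfying $Z = \pi_\alpha^{-1}\pi_\alpha(Z)\cap Q$ for every simple $\alpha$ (by fiberwise triviality), hence $Z = Q$, hence the section vanishes by torsion-freeness. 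You correctly invoke global generation, but steps (a) and (c) — the fiberwise $\mrm{SL}_2$ computation and the saturation argument that upgrades fiberwise triviality to global triviality — are absent, and they are exactly where the work lies. (Your Step 2 is fine in substance once triviality is granted, though ``the irreducible $K$-representation induced from $\eta$'' should just read: evaluation at $x$ identifies $\Gamma(\mc{B},F_0)$ with $\eta$, and a $K$-representation whose restriction to $\mrm{Stab}_K(x)$ is irreducible is $K$-irreducible.)
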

\begin{proof}
We first claim that, for any simple root $\alpha$, the restriction of $F_0j_{!*}\gamma$ to any fiber $C$ of the associated $\mb{P}^1$-fibration $\pi_\alpha \colon \mc{B} \to \mc{P}_\alpha$ is a trivial vector bundle. To see this, first note that by equivariance, it is enough to check this for the fiber containing $x$. Since $\alpha$ is real, we have a root homomorphism
\[ \phi_\alpha \colon \mrm{SL}_2 \to G \]
such that $\theta\phi_\alpha(g) = \phi_\alpha((g^t)^{-1})$ and $\phi_\alpha(B) \subset B_x$, where $B \subset \mrm{SL}_2$ is the subgroup of lower triangular matrices and $B_x = \mrm{Stab}_G(x)$. The curve $C$ is the image of the induced map
\[ \phi_\alpha \colon \mb{P}^1 = \mrm{SL}_2/B \to G/B_x = \mc{B}.\]
Now, the morphism $\phi_\alpha$ is non-characteristic for $j_{!*}\gamma$, so
\[ \phi_\alpha^{\bigcdot} F_0 j_{!*}\gamma = F_0 j_{\alpha, !*}\phi_\alpha^\circ \gamma,\]
where $j_\alpha$ is the inclusion of the open $\mrm{SO}_2$-orbit in $\mb{P}^1$, which we may identify with $\mb{C}^\times$ using an appropriate coordinate on $\mb{P}^1$, and $\phi_\alpha^\circ \gamma$ is the restriction of $\gamma$ to this $\mb{C}^\times$. Since $j_{!*}\gamma$ is tempered, $\phi_\alpha^\circ \gamma$ must be a $(-1)$-twisted local system on $\mb{C}^\times$ extending cleanly to $\mb{P}^1$. Thus, we may write
\[ j_{\alpha, !*}\phi_\alpha^\circ \gamma = \bigoplus_k j_{\alpha, !*}\mc{O}_{\mb{C}^\times}z^{\mu_k} \otimes \mc{O}(-1),\]
for some $\mu_k \in (0, 1)$, where $\mc{O}_{\mb{C}^\times}z^{\mu_k}$ is the rank $1$ local system on $\mb{C}^\times$ with monodromy $e^{-2\pi i \mu_k}$. By direct calculation, we have
\[ F_0 j_{\alpha !*}\mc{O}_{\mb{C}^\times}z^{\mu_k} \cong \mc{O}(1),\]
so $F_0 j_{\alpha, !*}\phi_\alpha^\circ \gamma$ is a trivial vector bundle on $\mb{P}^1$ as claimed.

Now, since the $\mc{D}_0$-module $j_{!*}\gamma$ is globally generated, the lowest piece $F_0 j_{!*}\gamma$ of the Hodge filtration is globally generated as an $\mc{O}_\mc{B}$-module by Theorem \ref{thm:hodge generation}. So we have a surjection
\begin{equation} \label{eq:non-linear lowest hodge 1}
\Gamma(\mc{B}, F_0 j_{!*}\gamma) \otimes \mc{O}_\mc{B} \to F_0 j_{!*}\gamma.
\end{equation}
We will show that \eqref{eq:non-linear lowest hodge 1} is also injective, and hence that $F_0 j_{!*}\gamma$ is a trivial vector bundle as claimed. The kernel $\mc{K}$ of \eqref{eq:non-linear lowest hodge 1} is a torsion-free sheaf on $\mc{B}$ that is a vector sub-bundle of $\Gamma(\mc{B}, F_0 j_{!*}\gamma) \otimes \mc{O}_\mc{B}$ when restricted to the open set $Q$. It is therefore enough to show that the fiber
\[ \mc{K}_x = \ker( \Gamma(\mc{B}, F_0 j_{!*}\gamma) \to \gamma_x) \]
at our chosen point $x \in Q$ is zero. To see this, suppose that $v \in \mc{K}_x \subset \Gamma(\mc{B}, F_0 j_{!*}\gamma)$ and let $Z \subset Q$ denote the vanishing locus of $v$ as a section of the vector bundle $\gamma$. Since $F_0 j_{!*}\gamma$ is trivial along each simple root $\mb{P}^1$, $Z$ satisfies
\[ Z = \pi_\alpha^{-1}\pi_\alpha(Z) \cap Q \]
for any simple root $\alpha$. But the only non-empty subset of $Q$ with this property is $Q$ itself, so $v$ vanishes on all of $Q$. Since $F_0j_{!*}\gamma$ is torsion-free, it follows that $v = 0$.
\end{proof}

\begin{rmk}
When $\Gamma(j_{!*}\gamma)$ is the tempered spherical principal series, $\eta = \mb{C}_0$ is the trivial representation of $K$; one can see this either by direct calculation or by noting that $\Gamma(j_{!*}\gamma)$ is the restriction of a representation of the linear group $G^\sigma$ and appealing to \cite[Theorem 4.5]{DV1}.
\end{rmk}

Now, let us write $U_\lambda(\mf{g}) = U(\mf{g}) \otimes_{Z(U(\mf{g})), \chi_\lambda} \mb{C}$ for the quotient of $U(\mf{g})$ through which $Z(U(\mf{g}))$ acts with infinitesimal character $\chi_\lambda$. The main result of this subsection is:

\begin{thm} \label{thm:split tempered}
We have the following.
\begin{enumerate}
\item There are filtered isomorphisms
\[ j_{!*}\gamma \cong \mc{D}_\lambda \otimes_{U(\mf{k})} \eta \quad \text{and} \quad \Gamma(j_{!*}\gamma) \cong U_\lambda(\mf{g}) \otimes_{U(\mf{k})} \eta,\]
identifying the Hodge filtration on $j_{!*}\gamma$ (resp., $\Gamma(j_{!*}\gamma)$) with the filtration induced by the order filtration on $\mc{D}_\lambda$ (resp., the PBW filtration on $U_\lambda(\mf{g})$).
\item The associated gradeds are
\[ \Gr^F j_{!*}\gamma \cong \mc{O}_{\tilde{\mc{N}}_K^*} \otimes \eta \quad \text{and} \quad \Gr^F \Gamma(j_{!*}\gamma) \cong \mc{O}_{\mc{N}_K^*} \otimes \eta,\]
where $\mc{N}_K^* = \mc{N}^* \cap (\mf{g}/\mf{k})^* \subset \mf{g}^*$ is the $K$-nilpotent cone (with reduced scheme structure) and $\tilde{\mc{N}}_K^* \subset T^*\mc{B}$ is its scheme-theoretic pre-image under the Springer map $\mu \colon T^*\mc{B} \to \mc{N}^* \subset \mf{g}^*$.
\end{enumerate}
\end{thm}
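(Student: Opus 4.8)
The plan is to identify $j_{!*}\gamma$ with $\mc{D}_\lambda\otimes_{U(\mf{k})}\eta$ by an explicit map and then read off the associated gradeds. Throughout, $\eta$ denotes the irreducible $K$-representation $\Gamma(\mc{B},F_0 j_{!*}\gamma)$ provided by Proposition \ref{prop:non-linear lowest hodge}, which restricts to the $\mrm{Stab}_K(x)$-representation $\gamma_x$; by that proposition $F_0 j_{!*}\gamma\cong\mc{O}_\mc{B}\otimes\eta$ as $K$-equivariant $\mc{O}_\mc{B}$-modules. Since $F_0 j_{!*}\gamma\subset j_{!*}\gamma$ is $K$-stable and the $\mf{k}$-action from the $\mc{D}_\lambda$-structure agrees with the one from the equivariant structure, the inclusion $\eta=\Gamma(F_0 j_{!*}\gamma)\hookrightarrow\Gamma(j_{!*}\gamma)$ induces a map $U_\lambda(\mf{g})\otimes_{U(\mf{k})}\eta\to\Gamma(j_{!*}\gamma)$, hence, after applying $\Delta=\mc{D}_\lambda\otimes_{U(\mf{g})}(-)$ and composing with the counit, a morphism of filtered $\mc{D}_\lambda$-modules
\[ \phi\colon \mc{D}_\lambda\otimes_{U(\mf{k})}\eta\to j_{!*}\gamma,\]
where the source carries the filtration induced by the order filtration, with $\eta$ in degree $0$. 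Once $\phi$ is shown to be a filtered isomorphism, the first statement of the theorem follows, the second statement for $j_{!*}\gamma$ follows from the computation of $\Gr^F$ of the source below, and the statements for $\Gamma(j_{!*}\gamma)$ follow by applying $\Gamma$, which is filtered exact since $\lambda$ is dominant (Theorem \ref{thm:filtered exactness}).

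Next I would compute the associated graded of the source. Let $\mu\colon T^*\mc{B}\to\mf{g}^*$ be the Springer map and $\mu_{\mf{k}}\colon T^*\mc{B}\to\mf{k}^*$ its composite with the restriction $\mf{g}^*\to\mf{k}^*$, the moment map for the $K$-action. Since $\Gr^F\mc{D}_\lambda=\mc{O}_{T^*\mc{B}}$, there is a canonical surjection $\mc{O}_{T^*\mc{B}}\otimes_{\sym(\mf{k})}\mb{C}\otimes\eta\twoheadrightarrow\Gr^F(\mc{D}_\lambda\otimes_{U(\mf{k})}\eta)$, and this is an isomorphism as soon as $\mu_{\mf{k}}$ is flat. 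Flatness holds because $G_\mb{R}$ is split: then $\dim\mf{k}=\dim\mc{B}$, the zero fibre $\mu_{\mf{k}}^{-1}(0)$ is the conormal variety $\bigcup_Q\overline{T^*_Q\mc{B}}$ of the $K$-action (a union over the finitely many orbits), which is equidimensional of dimension $\dim\mc{B}$, and contracting $\mf{k}^*$ by $\mb{C}^\times$ shows every fibre has this dimension; miracle flatness then applies as $T^*\mc{B}$ is smooth and $\mf{k}^*$ regular. Moreover $\mu_{\mf{k}}^{-1}(0)$, being a fibre of a flat morphism from a smooth variety, is Cohen--Macaulay, and it is generically reduced (near a generic point of the open-orbit component $\mu_{\mf{k}}$ is étale), hence reduced; since $\mu$ factors through the reduced cone $\mc{N}^*$, it coincides scheme-theoretically with $\tilde{\mc{N}}_K^*=\mu^{-1}(\mc{N}_K^*)$, where $\mc{N}_K^*=\mc{N}^*\cap(\mf{g}/\mf{k})^*$ is reduced by the same argument (this is the geometry of the symmetric pair $(\mf{g},\mf{k})$ à la Kostant--Rallis). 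Thus $\Gr^F(\mc{D}_\lambda\otimes_{U(\mf{k})}\eta)\cong\mc{O}_{\tilde{\mc{N}}_K^*}\otimes\eta$, and in particular $\mc{D}_\lambda\otimes_{U(\mf{k})}\eta$ is a Cohen--Macaulay, hence pure, holonomic $\mc{D}_\lambda$-module with characteristic cycle $\mrm{rk}(\gamma)\cdot\sum_Q[\overline{T^*_Q\mc{B}}]$.

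To conclude that $\phi$ is a filtered isomorphism I would argue as follows. It is surjective, its image being a nonzero $\mc{D}_\lambda$-submodule of the irreducible sheaf $j_{!*}\gamma$ containing $F_0 j_{!*}\gamma$; and it is an isomorphism over the open orbit $Q$, where both sides are canonically $\gamma$ (induction over $U(\mf{k})$ along the homogeneous space $Q=K/\mrm{Stab}_K(x)$ yields the $K$-equivariant $\mc{D}_Q$-module attached to $\eta|_{\mrm{Stab}_K(x)}=\gamma_x$). Hence $\ker\phi$ is supported on $\mc{B}\setminus Q$; since $\mc{D}_\lambda\otimes_{U(\mf{k})}\eta$ is pure and, as just computed, has characteristic cycle $\mrm{rk}(\gamma)$ times the full conormal variety, comparing characteristic cycles with the fact that a tempered $j_{!*}\gamma$ has the same characteristic cycle forces $\ker\phi=0$. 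Being an isomorphism of $\mc{D}_\lambda$-modules and a filtered morphism with $\phi(F_0)=F_0 j_{!*}\gamma$, $\phi$ is a filtered isomorphism because it underlies a morphism of mixed Hodge modules — one puts a Hodge-module structure on $\mc{D}_\lambda\otimes_{U(\mf{k})}\eta$ by realizing it, in the manner of \S\ref{sec:hodge generation}, as the localization of a Hodge-filtered module — so $\phi$ is strict. Then $\Gr^F j_{!*}\gamma\cong\mc{O}_{\tilde{\mc{N}}_K^*}\otimes\eta$, and applying $\mu_*$ (which has $R^{>0}\mu_*\mc{O}_{\tilde{\mc{N}}_K^*}=0$ and $\mu_*\mc{O}_{\tilde{\mc{N}}_K^*}=\mc{O}_{\mc{N}_K^*}$, since $\tilde{\mc{N}}_K^*\to\mc{N}_K^*$ is the flat base change of the Springer resolution along $\{0\}\hookrightarrow\mf{k}^*$) gives $\Gr^F\Gamma(j_{!*}\gamma)\cong\mc{O}_{\mc{N}_K^*}\otimes\eta$.

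The main obstacle is the step where $\phi$ is shown to be an isomorphism, which amounts to the input that a tempered $j_{!*}\gamma$ has characteristic cycle $\mrm{rk}(\gamma)\cdot\sum_Q[\overline{T^*_Q\mc{B}}]$ — equivalently, that the Hodge filtration of $j_{!*}\gamma$ is generated over $\mc{D}_\lambda$ by its lowest piece, which is the first part of Theorem \ref{thm:tempered hodge} — together with the verification of strictness. I would handle these using the simple-root $\mrm{SL}_2$-reductions already used in the proof of Proposition \ref{prop:non-linear lowest hodge}, where on each simple-root $\mb{P}^1$ the sheaf $j_{!*}\gamma$ decomposes into rank-one clean pieces $j_{\alpha,!*}(\mc{O}_{\mb{C}^\times}z^{\mu_k})\otimes\mc{O}(-1)$ whose Hodge filtrations are generated by their lowest piece by direct computation, combined with the Cohen--Macaulay bound on the other side; the remaining ingredients (flatness of $\mu_{\mf{k}}$, reducedness of $\tilde{\mc{N}}_K^*$ and $\mc{N}_K^*$, normality and rationality of the Springer-type map over $\mc{N}_K^*$) are standard facts about the nilpotent variety of the symmetric pair.
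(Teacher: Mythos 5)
Much of your skeleton coincides with the paper's: the comparison map $\phi\colon\mc{D}_\lambda\otimes_{U(\mf{k})}\eta\to j_{!*}\gamma$ built from Proposition \ref{prop:non-linear lowest hodge}, the flatness-of-the-moment-map computation of $\Gr^F$ of the induced module, and Kostant--Rallis for reducedness (note, though, that the dimension count $\dim\mf{k}=\dim\mc{B}$ is only correct after replacing $\mf{k}$ by $\mf{k}^0=\mf{k}/(\mf{k}\cap\mf{z}(\mf{g}))$, since the group is only split modulo its center; this is why the paper works with $U_\lambda(\mf{k})\cong U(\mf{k}^0)$). The genuine gaps sit exactly at the two steps you yourself flag. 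First, injectivity of $\phi$: your argument needs the characteristic-cycle identity $\mrm{CC}(j_{!*}\gamma)=\mrm{rk}(\gamma)\cdot\sum_{Q'}[\overline{T^*_{Q'}\mc{B}}]$, which you identify with part \eqref{itm:tempered hodge 1} of Theorem \ref{thm:tempered hodge}; but in the paper that statement is \emph{deduced from} Theorem \ref{thm:split tempered} (see \S\ref{subsec:pf of tempered 1}), so quoting it here is circular, and your proposed substitute --- restriction to simple-root $\mb{P}^1$'s as in Proposition \ref{prop:non-linear lowest hodge} --- only controls $j_{!*}\gamma$ along the zero section and generically along codimension-one boundary strata; it gives no handle on the multiplicities of the conormal components to deeper $K$-orbits, so it does not yield the identity. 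Cohen--Macaulayness of the source does not rescue this: a nonzero submodule supported on the boundary still has characteristic variety of full dimension $\dim\mc{B}$, so no contradiction arises without the multiplicity count. Second, strictness: there is no construction in \S\ref{sec:hodge generation} that equips $\mc{D}_\lambda\otimes_{U(\mf{k})}\eta$ with a mixed Hodge module structure. Theorem \ref{thm:localization hodge module} produces a Hodge structure on $\Delta\Gamma(\mc{M},F_\bullet)=\mc{D}_\lambda\otimes_{U(\mf{g})}\Gamma(\mc{M})$, i.e.\ induction over $U(\mf{g})$ from a module already known to underlie a Hodge module; identifying that with induction over $U(\mf{k})$ from $\eta$ is precisely the content of the theorem you are proving, so this route is circular as well.

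Both gaps can be closed by purely algebraic arguments that need no characteristic-cycle input and no Hodge structure on the induced module, and this is what the paper does. For the $\mc{D}$-module isomorphism: cleanness gives $j_{!*}\gamma=j_!\gamma$, so adjunction applied to the surjection $j^!\mc{M}\to\gamma$ produces a map $j_{!*}\gamma\to\mc{M}$ splitting $\phi$ over the open orbit, exhibiting $j_{!*}\gamma$ as a direct summand of $\mc{M}$; then $\mrm{End}(\mc{M})=\hom_K(\eta,\mc{O}_{\mc{N}^*_K}\otimes\eta)=\mrm{End}_{\mrm{Stab}_K(x)}(\eta)=\mb{C}$, using $\Gamma(Q,\mc{O}_Q)\cong\mrm{Ind}_{\mrm{Stab}_K(x)}^K\mb{C}_0$, the finiteness of $K$-orbits on $\mc{N}^*_K$, and Schur's lemma, so $\mc{M}$ is indecomposable and $\phi$ is an isomorphism (Lemma \ref{lem:split tempered D-module}). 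For strictness: the filtered Koszul resolution shows $(\mc{M},F_\bullet)$ is Cohen--Macaulay with $\mb{D}(\mc{M},F_\bullet)\cong(\mc{D}_{-\lambda},F_\bullet)\otimes_{(U(\mf{k}),F_\bullet)}\eta^*\{-\dim\mc{B}\}$ (Lemma \ref{lem:split tempered duality}); dualizing the filtered map $\phi$ and observing that $\Gamma(F_0 j_{!*}\gamma^*)$ is an irreducible $K$-representation isomorphic to $\eta^*$ produces a filtered map in the opposite direction whose composite with $\mb{D}\phi$ is a nonzero scalar, forcing $\phi$ itself to be a filtered isomorphism. You should replace your characteristic-cycle and ``Hodge structure on the induced module'' steps by arguments of this kind; the rest of your outline (the $\Gr^F$ computation and the passage to global sections via Theorem \ref{thm:filtered exactness}) then goes through as you describe.
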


The proof of Theorem \ref{thm:split tempered}, which follows similar lines to Theorem \ref{thm:xi hodge filtration}, occupies the rest of the subsection.

Let us write $\mc{M} = \mc{D}_\lambda \otimes_{U(\mf{k})} \eta$ and $F_\bullet \mc{M}$ for the filtration generated by $\eta$ as in the statement of Theorem \ref{thm:split tempered}. We also write
\[ U_\lambda(\mf{k}) = U(\mf{k}) \otimes_{U(\mf{k} \cap \mf{z}(\mf{g}))} \mb{C}_{\chi_\lambda}.\]
Note that $U_\lambda(\mf{k}) \cong U(\mf{k}^0)$, where $\mf{k}^0 = \mf{k}/(\mf{k} \cap \mf{z}(\mf{g}))$. The action of $U(\mf{k})$ on $\eta$ factors through $U_\lambda(\mf{k})$.

\begin{lem} \label{lem:split tempered vanishing}
We have
\[ \mc{M} = (\mc{D}_\lambda, F_\bullet) \overset{\mrm{L}}\otimes_{(U_\lambda(\mf{k}), F_\bullet)} \eta \]
in the filtered derived category.
\end{lem}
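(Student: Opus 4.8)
The plan is to reduce the statement to a $\mathrm{Tor}$-vanishing that can be checked on associated gradeds. Since $U_\lambda(\mf k)\cong U(\mf k^0)$ is Noetherian, choose a \emph{strict} filtered free resolution $(P_\bullet, F_\bullet)\to\eta$ over $(U_\lambda(\mf k),F_\bullet)$, meaning that $\Gr^F P_\bullet\to\Gr^F\eta$ is again a free resolution over $\Gr^F U_\lambda(\mf k)=\sym\mf k^0$. Then the complex $(\mc D_\lambda\otimes_{U_\lambda(\mf k)}P_\bullet, F_\bullet)$ represents $(\mc D_\lambda,F_\bullet)\overset{\mrm L}\otimes_{(U_\lambda(\mf k),F_\bullet)}\eta$, and, since tensoring free modules commutes with $\Gr^F$, its associated graded is $\Gr^F\mc D_\lambda\otimes_{\sym\mf k^0}\Gr^F P_\bullet$, which computes $\Gr^F\mc D_\lambda\overset{\mrm L}\otimes_{\sym\mf k^0}\Gr^F\eta$. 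By the standard Rees-module argument it therefore suffices to prove that the latter complex is concentrated in cohomological degree $0$: this forces $\mc D_\lambda\otimes_{U_\lambda(\mf k)}P_\bullet$ to be exact in positive degrees and its $H_0$ to carry the naive filtration, i.e. $\mc M=(\mc D_\lambda,F_\bullet)\overset{\mrm L}\otimes\eta$ (and, as a byproduct, $\Gr^F\mc M\cong \Gr^F\mc D_\lambda\otimes_{\sym\mf k^0}\Gr^F\eta$, which will give Theorem \ref{thm:split tempered}(2) for the sheaf).

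Next I would unwind the graded side. We have $\Gr^F\mc D_\lambda=\mc O_{T^*\mc B}$ and $\Gr^F U_\lambda(\mf k)=\sym\mf k^0=\mc O_{(\mf k^0)^*}$. Because the order filtration on $\eta$ is concentrated in degree $0$, the $\sym\mf k^0$-action on $\Gr^F\eta$ lands in degree $1$, hence is zero; thus $\sym\mf k^0$ acts through its augmentation and $\Gr^F\eta\cong\mc O_0\otimes_{\mb C}\eta$. The ring map $\sym\mf k^0\to\Gr^F\mc D_\lambda$ is, by construction, the comoment map for the $K$-action on $\mc B$ (the images of a basis of $\mf k\subset\mf g$ under $\mf g\to\Gr^F_1 U(\mf g)\to\Gr^F_1\mc D_\lambda$, the elements of $\mf k\cap\mf z(\mf g)$ mapping to zero); equivalently it is dual to $\mu_K\colon T^*\mc B\to(\mf k^0)^*$, the composite of the Springer map $\mu\colon T^*\mc B\to\mf g^*$ with restriction to $\mf k$. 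Therefore $\Gr^F\mc D_\lambda\overset{\mrm L}\otimes_{\sym\mf k^0}\Gr^F\eta\cong(\mrm L\mu_K^{\bigcdot}\mc O_0)\otimes_{\mb C}\eta$. The common zero locus of the $\dim\mf k^0$ moment-map functions is $\mu_K^{-1}(0)=\mu^{-1}(\mf k^\perp)=\mu^{-1}(\mc N^*\cap(\mf g/\mf k)^*)=\tilde{\mc N}_K^*$, compatibly with the scheme structure of Theorem \ref{thm:split tempered}(2). Since $T^*\mc B$ is smooth, hence Cohen–Macaulay, these functions form a regular sequence — and the desired $\mathrm{Tor}$-vanishing holds — exactly when $\dim\tilde{\mc N}_K^*\le 2\dim\mc B-\dim\mf k^0$ (the reverse inequality being automatic by Krull's height theorem).

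It remains to establish this dimension estimate, which is the only place splitness is genuinely used and is the crux of the argument. Identifying $(\mf g/\mf k)^*\cong\mf p$ via the Killing form, the variety $\mc N_K^*$ becomes the nilpotent cone $\mc N_{\mf p}=\mc N\cap\mf p$ of the symmetric pair. For a $K$-orbit $\mc O_{\mf p}\subset\mc N_{\mf p}$ with $G$-saturation $\mc O_G$, semismallness of the Springer resolution gives $\dim\mu^{-1}(\mc O_{\mf p})=\dim\mc O_{\mf p}+\tfrac12(\dim\mc N^*-\dim\mc O_G)$; by the Kostant–Rallis theory (Sekiguchi correspondence of orbits) one has $\dim\mc O_G=2\dim\mc O_{\mf p}$, so $\dim\mu^{-1}(\mc O_{\mf p})=\tfrac12\dim\mc N^*=\dim\mc B$ for every orbit, whence $\dim\tilde{\mc N}_K^*=\dim\mc B$. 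On the other hand, since $G^\sigma$ is split modulo its center, $\mf z(\mf g)\cap\mf k=0$ inside $[\mf g,\mf g]$ and one computes $\dim\mf k^0=\dim(\mf k\cap[\mf g,\mf g])=|\Phi_+|=\dim\mc B$. Hence $2\dim\mc B-\dim\mf k^0=\dim\mc B=\dim\tilde{\mc N}_K^*$, so the estimate holds with equality, $\tilde{\mc N}_K^*$ is a half-dimensional complete intersection in $T^*\mc B$, and the lemma follows. The main obstacle is thus purely structural: verifying the dimension of $\tilde{\mc N}_K^*$, for which I would invoke Kostant–Rallis rather than reprove it.
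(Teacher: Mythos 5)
Your proposal is correct, and its overall skeleton is the same as the paper's: reduce the filtered statement to the vanishing of $\mc{H}^i\Gr^F((\mc{D}_\lambda, F_\bullet)\overset{\mrm{L}}\otimes_{(U_\lambda(\mf{k}),F_\bullet)}\eta) = \mc{H}^i(\mrm{L}\mu_{K^0}^{\bigcdot}\mc{O}_0\otimes\eta)$ for $i\neq 0$, and obtain that vanishing from a codimension count for $\mu_{K^0}^{-1}(0)\subset T^*\mc{B}$ together with the split-group identity $\dim\mf{k}^0=\dim\mc{B}$ (regular sequence/flatness via Cohen--Macaulayness of $T^*\mc{B}$). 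Where you diverge is in how the codimension is established: you identify $\mu_{K^0}^{-1}(0)$ with $\tilde{\mc{N}}_K^*$ and compute $\dim\tilde{\mc{N}}_K^*=\dim\mc{B}$ by combining semismallness of the Springer map with the Kostant--Rallis/Sekiguchi relation $\dim\mc{O}_G=2\dim\mc{O}_{\mf{p}}$ (plus finiteness of $K$-orbits on $\mc{N}_{\mf{p}}$), whereas the paper uses the much lighter observation that the zero fiber of the $K$-moment map on $T^*\mc{B}$ is the union of the conormal bundles to the finitely many $K$-orbits on $\mc{B}$, each of dimension $\dim\mc{B}$; note this orbit-count argument needs no input about nilpotent orbits at all, and the paper saves Kostant--Rallis for the next step (Lemma \ref{lem:split tempered gr}, reducedness of $\mc{N}_K^*$ and the computation of $\Gr^F\Gamma$). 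Your route costs more machinery but has the side benefit of exhibiting $\tilde{\mc{N}}_K^*$ as equidimensional of dimension $\dim\mc{B}$ and of making the identification $\Gr^F\mc{M}\cong\mc{O}_{\tilde{\mc{N}}_K^*}\otimes\eta$ (part of Theorem \ref{thm:split tempered}) fall out immediately; also, your remark that $\dim\tilde{\mc{N}}_K^*=\dim\mc{B}$ holds for an arbitrary symmetric pair correctly isolates splitness as entering only through $\dim\mf{k}^0=\dim\mc{B}$, exactly as in the paper.
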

\begin{proof}
We need to show that
\begin{equation} \label{eq:split tempered vanishing 1}
 0 = \mc{H}^i\Gr^F ((\mc{D}_\lambda, F_\bullet) \overset{\mrm{L}}\otimes_{(U_\lambda(\mf{k}), F_\bullet)} \eta) = \mc{H}^i(\mrm{L}\mu_{K^0}^{\bigcdot} \mc{O}_0 \otimes \eta)
\end{equation}
for $i \neq 0$, where $\mu_{K^0} \colon T^*\mc{B} \to (\mf{k}^0)^*$ is the moment map for $K^0 = K/(K \times_G Z(G))$. Now, the fiber $\mu_{K^0}^{-1}(0) \subset T^*\mc{B}$ is the union of conormal bundles to $K$-orbits, so
\[ \codim_{T^*\mc{B}} \mu_{K^0}^{-1}(0) = \dim \mc{B}.\]
Since $G^\sigma$ is split modulo its center, we have $\dim \mc{B} = \dim \mf{k}^0$. So $\mu_{K^0}$ must be flat in a neighbourhood of the origin (and hence flat everywhere by $\mb{C}^\times$-equivariance). We deduce that \eqref{eq:split tempered vanishing 1} holds.
\end{proof}

\begin{lem} \label{lem:split tempered gr}
We have
\[ \Gr^F \mc{M} \cong \mc{O}_{\tilde{\mc{N}}_K^*} \otimes \eta \quad \text{and} \quad \Gr^F \Gamma(\mc{M}) \cong \mc{O}_{\mc{N}_K^*} \otimes \eta.\]
\end{lem}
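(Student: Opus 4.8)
\textbf{Proof plan for Lemma \ref{lem:split tempered gr}.} The strategy is to compute $\Gr^F\mc{M}$ directly from the description $\mc{M} = \mc{D}_\lambda \otimes_{U(\mf{k})}\eta$ and then transfer the result to global sections using the vanishing and generation theorems already established in the paper. First, recall from Lemma \ref{lem:split tempered vanishing} that $\mc{M} = (\mc{D}_\lambda,F_\bullet)\overset{\mrm{L}}\otimes_{(U_\lambda(\mf{k}),F_\bullet)}\eta$ in the filtered derived category, so passing to associated gradeds gives
\[ \Gr^F\mc{M} = \mc{O}_{T^*\mc{B}}\overset{\mrm{L}}\otimes_{\mc{O}_{(\mf{k}^0)^*}}\mc{O}_0\otimes\eta = \mu_{K^0}^{\bigcdot}\mc{O}_0\otimes\eta, \]
the derived pullback being underived by the flatness of $\mu_{K^0}$ shown in the proof of that lemma. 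By definition $\mu_{K^0}^{-1}(0)$, as a scheme, is the fiber of the moment map for $K^0$ over the origin; since $\mu_{K^0}$ factors through the Springer map $\mu\colon T^*\mc{B}\to\mc{N}^*$ followed by the (surjective, with the pairing identifying $(\mf{k}^0)^*$ with $(\mf{g}/\mf{k})^*$ up to the center) projection $\mf{g}^*\to(\mf{k}^0)^*$, we have a Cartesian square realizing $\mu_{K^0}^{-1}(0) = \mu^{-1}((\mf{g}/\mf{k})^*\cap\mc{N}^*) = \tilde{\mc{N}}^*_K$ as schemes, which is precisely the scheme-theoretic definition of $\tilde{\mc{N}}^*_K$ in the statement of Theorem \ref{thm:split tempered}. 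Hence $\Gr^F\mc{M}\cong\mc{O}_{\tilde{\mc{N}}^*_K}\otimes\eta$.

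For the global statement, I would first invoke Theorem \ref{thm:filtered exactness}: since $\lambda$ is dominant and $\mc{M}$ (with its Hodge structure as a twisted Hodge module) lies in $\mhm(\mc{D}_\lambda)$, we get $\mrm{H}^i(\mc{B},F_p\mc{M}) = 0$ for all $i>0$ and all $p$. Taking associated gradeds of the filtered complex $\mrm{R}\Gamma(\mc{B},\mc{M})$, this cohomology vanishing forces $\Gr^F\Gamma(\mc{M}) = \Gamma(\mc{B},\Gr^F\mc{M}) = \Gamma(\mc{B},\mc{O}_{\tilde{\mc{N}}^*_K}\otimes\eta) = \mu_{\bigcdot}(\mc{O}_{\tilde{\mc{N}}^*_K})\otimes\eta$, where $\mu\colon T^*\mc{B}\to\mc{N}^*$ is the Springer map. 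It then remains to identify $\mu_{\bigcdot}\mc{O}_{\tilde{\mc{N}}^*_K}$ with $\mc{O}_{\mc{N}^*_K}$; this is the classical fact that the Springer resolution restricted to $\tilde{\mc{N}}^*_K = \mu^{-1}(\mc{N}^*_K)$ has image $\mc{N}^*_K$ with $\mu_{\bigcdot}\mc{O}_{\tilde{\mc{N}}^*_K} = \mc{O}_{\mc{N}^*_K}$ (normality of the $K$-nilpotent cone together with rational singularities / birationality of the Springer map onto a suitable component), and one should cite it appropriately or deduce it from the Grothendieck--Springer picture used in \S\ref{sec:vanishing}.

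The main obstacle I anticipate is the scheme-theoretic bookkeeping in the first paragraph: one must check that $\mu_{K^0}^{-1}(0)$, the \emph{scheme}-theoretic zero fiber of the $K^0$-moment map, agrees with the \emph{scheme}-theoretic preimage $\mu^{-1}(\mc{N}^*_K)$ of the reduced $K$-nilpotent cone, not merely that they have the same underlying reduced subscheme. Concretely, the map $\mf{g}^*\to(\mf{k}^0)^*$ is the transpose of $\mf{k}^0\hookrightarrow\mf{g}$, so its scheme-theoretic fiber over $0$ is $(\mf{g}/\mf{k}^0)^* = (\mf{g}/\mf{k})^*$ (a linear subspace, hence reduced), and the Cartesian square
\[
\begin{tikzcd}
\mu_{K^0}^{-1}(0) \ar[r]\ar[d] & T^*\mc{B}\ar[d,"\mu_{K^0}"] \\
\{0\}\ar[r] & (\mf{k}^0)^*
\end{tikzcd}
\]
identifies it with $\mu^{-1}((\mf{g}/\mf{k})^*)$; intersecting with the fact that $\mu$ already lands in $\mc{N}^*$ gives $\mu^{-1}((\mf{g}/\mf{k})^*\cap\mc{N}^*) = \tilde{\mc{N}}^*_K$ on the nose, matching the definition in Theorem \ref{thm:split tempered}. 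Once this is pinned down, the rest is a routine application of results already in hand, and the two assertions of the lemma follow.
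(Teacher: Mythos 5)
Your reduction of the first isomorphism to Lemma \ref{lem:split tempered vanishing} starts the same way as the paper, but the step you yourself flag as the main obstacle is where the real content lies, and your resolution of it does not close the gap. The Cartesian square only identifies $\mu_{K^0}^{-1}(0)$ with the scheme-theoretic preimage under the moment map of the linear subspace $(\mf{g}/\mf{k})^*$, i.e.\ with the preimage of the \emph{scheme-theoretic} intersection $\mc{N}^* \times_{\mf{g}^*} (\mf{g}/\mf{k})^*$. But $\tilde{\mc{N}}^*_K$ is by definition the preimage of $\mc{N}^*_K$ \emph{with its reduced structure}, and reducedness of the linear subspace $(\mf{g}/\mf{k})^*$ is beside the point: what you need is that the scheme-theoretic intersection $\mc{N}^*\cap(\mf{g}/\mf{k})^*$ (equivalently, the zero fibre of the $K$-invariant-theory quotient of $(\mf{g}_{\mathit{ad}}/\mf{k}^0)^*$) is reduced. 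This is exactly what the paper's proof supplies, and it is where the split hypothesis enters: since $G^\sigma$ is split modulo its centre, the Chevalley restriction map $\mb{C}[\mf{g}_{\mathit{ad}}^*]^G\to\mb{C}[(\mf{g}_{\mathit{ad}}/\mf{k}^0)^*]^K$ is an isomorphism, and by Kostant \cite{K} and Kostant--Rallis \cite{KR} the cones $\mc{N}^*$ and $\mc{N}^*_K$ are the \emph{reduced} scheme-theoretic fibres of the two quotient maps; this is what gives $\mc{O}_{\mc{N}^*_K}=\mc{O}_{\mc{N}^*}\overset{\mrm{L}}\otimes_{\mc{O}_{(\mf{k}^0)^*}}\mc{O}_0$ and hence $\mrm{L}\mu_{K^0}^{\bigcdot}\mc{O}_0=\mrm{L}\mu^{\bigcdot}\mc{O}_{\mc{N}^*_K}=\mc{O}_{\tilde{\mc{N}}^*_K}$. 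Without this input your argument only identifies $\Gr^F\mc{M}$ with the structure sheaf of an a priori non-reduced thickening of $\tilde{\mc{N}}^*_K$.

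The global-sections step has two further problems. First, invoking Theorem \ref{thm:filtered exactness} for $\mc{M}=\mc{D}_\lambda\otimes_{U(\mf{k})}\eta$ is circular here: that theorem applies to filtered modules underlying objects of $\mhm(\mc{D}_{\widetilde{\lambda}})$, and the identification of $(\mc{M},F_\bullet)$ with the Hodge module $(j_{!*}\gamma,F_\bullet)$ is precisely what Theorem \ref{thm:split tempered} is in the course of proving; even the unfiltered isomorphism $\mc{M}\cong j_{!*}\gamma$ (Lemma \ref{lem:split tempered D-module}) comes later and itself uses the present lemma. Second, the ``classical fact'' you cite is misstated: the restriction $\tilde{\mc{N}}^*_K\to\mc{N}^*_K$ of the Springer map is not birational (it has positive-dimensional fibres over smaller orbits and is generically finite of degree possibly greater than one on components), so neither $\mu_{\bigcdot}\mc{O}_{\tilde{\mc{N}}^*_K}=\mc{O}_{\mc{N}^*_K}$ nor the vanishing of higher direct images follows from ``normality plus birationality''. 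The paper disposes of both issues at once: by the projection formula and $\mrm{R}\mu_{\bigcdot}\mc{O}_{T^*\mc{B}}=\mc{O}_{\mc{N}^*}$ one gets $\mrm{R}\Gamma(\Gr^F\mc{M})=\mrm{R}\mu_{\bigcdot}\mrm{L}\mu^{\bigcdot}\mc{O}_{\mc{N}^*_K}\otimes\eta=\mc{O}_{\mc{N}^*_K}\otimes\eta$ concentrated in degree $0$, which simultaneously yields the higher cohomology vanishing (so that, the filtration being bounded below, $\Gr^F\Gamma(\mc{M})=\Gamma(\Gr^F\mc{M})$ with no appeal to Hodge theory) and the identification of the global sections.
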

\begin{proof}
Let us write $\mf{g}_{\mathit{ad}} = \mf{g}/\mf{z}(\mf{g})$. Since $G^\sigma$ is split modulo its center, a Cartan subspace $\mf{a}^*$ for $(\mf{g}_{\mathit{ad}}/\mf{k}^0)^*$ is also a Cartan subspace for $\mf{g}_{\mathit{ad}}^*$ and the little Weyl group $N_K(\mf{a}^*)/Z_K(\mf{a}^*)$ is the full Weyl group of $G$. Thus, by Chevalley, the restriction map
\[ \mb{C}[\mf{g}_{\mathit{ad}}^*]^G \to \mb{C}[(\mf{g}_{\mathit{ad}}/\mf{k}^0)^*]^K \]
is an isomorphism. Consider the diagram
\[
\begin{tikzcd}
(\mf{g}_{\mathit{ad}}/\mf{k}^0)^* \ar[r] \ar[d] & \mf{g}_{\mathit{ad}}^* \ar[d] \\
\spec \mb{C}[(\mf{g}_{\mathit{ad}}/\mf{k}^0)^*]^K \ar[r, equal] & \spec \mb{C}[\mf{g}_{\mathit{ad}}^*]^G.
\end{tikzcd}
\]
By Kostant \cite[Theorems 0.1 and 0.2]{K} and Kostant-Rallis \cite[Theorems 14 and 15]{KR}, the vertical maps are flat and their scheme-theoretic fibers $\mc{N}^*$ and $\mc{N}^*_K$ are reduced. Thus
\[ \mc{O}_{\mc{N}_K^*} = \mc{O}_{\mc{N}^*} \overset{\mrm{L}}\otimes_{\mc{O}_{\mf{g}_{\mathit{ad}}^*}} \mc{O}_{(\mf{g}_{\mathit{ad}}/\mf{k}^0)^*} = \mc{O}_{\mc{N}^*} \overset{\mrm{L}}\otimes_{\mc{O}_{(\mf{k}^0)^*}} \mc{O}_0.\]
We thus have, by Lemma \ref{lem:split tempered vanishing},
\[ \Gr^F \mc{M} = \mrm{L}\mu_{K^0}^{\bigcdot}(\mc{O}_0 \otimes \eta) = \mrm{L}\mu^{\bigcdot}\mc{O}_{\mc{N}^*_K} \otimes \eta = \mc{O}_{\tilde{\mc{N}}_K^*} \otimes \eta.\]
So
\[ \mrm{R}\Gamma(\Gr^F\mc{M}) = \mrm{R}\mu_{\bigcdot}\mrm{L}\mu^{\bigcdot}\mc{O}_{\mc{N}^*_K} \otimes \eta = \mc{O}_{\mc{N}^*_K} \otimes \eta,\]
since $\mrm{R}\mu_{\bigcdot}\mc{O}_{T^*\mc{B}} = \mc{O}_{\mc{N}^*}$, and hence, since the higher cohomology vanishes,
\[ \Gr^F \Gamma(\mc{M}) = \mc{O}_{\mc{N}^*_K} \otimes \eta\]
as claimed.
\end{proof}

Recall the filtered duality for $\mc{D}_\lambda$-modules. In our context, this takes the form
\[ \mb{D}(\mc{P}, F_\bullet) := \mrm{R}\shom_{(\mc{D}_\lambda, F_\bullet)}((\mc{P}, F_\bullet), (\mc{D}_\lambda, F_\bullet))\{-2\dim \mc{B}\}[\dim \mc{B}],\]
for $(\mc{P}, F_\bullet)$ a filtered $\mc{D}_\lambda$-module. The dual $\mb{D}(\mc{P}, F_\bullet)$ is an object in the filtered derived category of modules over $\mc{D}_\lambda^{\mathit{op}} = \mc{D}_{-\lambda}$. Note that since $\langle \lambda, \check\alpha \rangle = 0$ for all roots $\alpha \in \Phi$, both $\lambda$ and $-\lambda$ are dominant.

\begin{lem} \label{lem:split tempered duality}
We have
\[ \mb{D}(\mc{M}, F_\bullet) \cong (\mc{D}_{-\lambda}, F_\bullet) \otimes_{(U(\mf{k}), F_\bullet)} \eta^*\{- \dim \mc{B}\} \]
in the filtered derived category of $\mc{D}_{-\lambda}$-modules.
\end{lem}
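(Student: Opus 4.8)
The plan is to deduce the lemma from two ingredients: the identification of $(\mc{M}, F_\bullet)$ with a derived induction from $U_\lambda(\mf{k})$ provided by Lemma \ref{lem:split tempered vanishing}, and filtered Poincar\'e duality for the reductive Lie algebra $\mf{k}^0$. Concretely, I would first record a general filtered duality formula
\[ \mb{D}\big((\mc{D}_\lambda, F_\bullet) \overset{\mrm{L}}\otimes_{(U_\lambda(\mf{k}), F_\bullet)} V\big) \cong (\mc{D}_{-\lambda}, F_\bullet) \overset{\mrm{L}}\otimes_{(U_\lambda(\mf{k}), F_\bullet)} \mrm{R}\hom_{(U_\lambda(\mf{k}), F_\bullet)}(V, (U_\lambda(\mf{k}), F_\bullet)), \]
valid for any filtered $U_\lambda(\mf{k})$-module $V$. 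This is proved by taking a filtered free resolution of $V$ over $U_\lambda(\mf{k})$, inducing it up (via $\mc{D}_\lambda \otimes_{U_\lambda(\mf{k})} -$) to a filtered free resolution of the left-hand tensor, and dualizing term by term, using that the side-changing isomorphism identifies $\mc{D}_\lambda^{\mathit{op}}$ with $\mc{D}_{-\lambda}$. By Lemma \ref{lem:split tempered vanishing}, taking $V = \eta$ with its filtration concentrated in degree $0$ makes the left-hand side $\mb{D}(\mc{M}, F_\bullet)$.

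It then remains to compute $\mrm{R}\hom_{(U_\lambda(\mf{k}), F_\bullet)}((\eta, F_0), (U_\lambda(\mf{k}), F_\bullet))$, using $U_\lambda(\mf{k}) \cong U(\mf{k}^0)$. The Chevalley--Eilenberg resolution $U(\mf{k}^0) \otimes \wedge^\bullet\mf{k}^0 \otimes \eta$, with the $i$-th term placed in filtration degree $i$, has associated graded the Koszul resolution of $\mc{O}_0$ over $\mrm{Sym}(\mf{k}^0) = \mc{O}_{(\mf{k}^0)^*}$ tensored with $\eta$; dualizing, the associated graded of $\mrm{R}\hom$ is the dual Koszul complex, computing $\mrm{Ext}^\bullet_{\mc{O}_{(\mf{k}^0)^*}}(\mc{O}_0, \mc{O}_{(\mf{k}^0)^*}) \cong \mc{O}_0 \otimes \wedge^{\dim\mf{k}^0}(\mf{k}^0)^*$, concentrated in cohomological degree $\dim\mf{k}^0$ since $\mc{O}_0$ is cut out by the regular sequence of linear coordinates on $(\mf{k}^0)^*$. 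Since $\mf{k}^0$ is reductive, hence unimodular, $\wedge^{\dim\mf{k}^0}(\mf{k}^0)^*$ is the trivial $K$-module; lifting the computation from the associated graded by the usual strictness argument gives $\mrm{R}\hom_{(U_\lambda(\mf{k}), F_\bullet)}((\eta, F_0), (U_\lambda(\mf{k}), F_\bullet)) \cong (\eta^*, F_0)\{-\dim\mf{k}^0\}[-\dim\mf{k}^0]$ as filtered right $U_\lambda(\mf{k})$-modules. Substituting back into the duality formula, using Lemma \ref{lem:split tempered vanishing} again (now for $-\lambda$, which is dominant because $\langle\lambda, \check\alpha\rangle = 0$ for all roots $\alpha$) together with Lemma \ref{lem:split tempered gr} to replace the derived tensor by the underived $\mc{D}_{-\lambda}\otimes_{U(\mf{k})}\eta^*$, and invoking $\dim\mf{k}^0 = \dim\mc{B}$ from the split hypothesis, one arrives at the stated isomorphism after incorporating the shifts $\{-2\dim\mc{B}\}[\dim\mc{B}]$ built into the definition of $\mb{D}$.

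The main obstacle is the bookkeeping of the filtration shifts: the shift $\{-\dim\mf{k}^0\}$ in Poincar\'e duality for $U(\mf{k}^0)$, the opposite shift picked up upon dualizing the induced resolution term by term, and the normalization in the definition of $\mb{D}$ all have to be combined correctly to land on the shift $\{-\dim\mc{B}\}$ in the statement; it is cleanest to pin this down first on associated gradeds, where everything reduces to Gorenstein duality for the polynomial ring $\mc{O}_{(\mf{k}^0)^*}$, and then lift. By contrast, the cohomological concentration and the shape of the answer are immediate: they follow from the flatness of $\mu_{K^0}$ established in Lemma \ref{lem:split tempered vanishing} and the reductivity of $\mf{k}^0$.
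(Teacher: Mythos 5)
Your proposal is correct and is essentially the paper's argument in modular form: the paper takes the induced filtered Koszul resolution supplied by Lemma \ref{lem:split tempered vanishing}, dualizes it term by term, and recognizes the dual complex as the Koszul resolution of $\mc{D}_{-\lambda} \otimes_{U(\mf{k})} \eta^*$ twisted by the line $\wedge^{\dim \mc{B}}(\mf{k}^0)^*$ (trivial by unimodularity of the reductive $\mf{k}^0$), which is exactly your ``duality commutes with derived induction'' plus Poincar\'e duality for $U(\mf{k}^0)$, including the same delicate filtration-shift bookkeeping. One small correction: replacing the derived tensor by the underived $\mc{D}_{-\lambda} \otimes_{U(\mf{k})} \eta^*$ uses the $(-\lambda, \eta^*)$ analogue of Lemma \ref{lem:split tempered vanishing} (i.e., flatness of $\mu_{K^0}$, which is independent of the twist), not Lemma \ref{lem:split tempered gr}, and no dominance of $-\lambda$ is needed.
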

\begin{proof}
Consider the filtered Koszul resolution
\[ U_\lambda(\mf{k}) \otimes \wedge^n \mf{k}^0 \otimes \eta \{-n\} \to U_\lambda(\mf{k}) \otimes \wedge^{n - 1}\mf{k}^0 \otimes \eta \{-n + 1\} \to \cdots \to U_\lambda(\mf{k}) \otimes \eta \]
of $\eta$ as a filtered $U_\lambda(\mf{k}) \cong U(\mf{k}^0)$-module, where $n = \dim \mf{k}^0 = \dim \mc{B}$. By Lemma \ref{lem:split tempered vanishing}, we have a corresponding Koszul resolution
\begin{equation} \label{eq:split tempered duality 2}
\mc{D}_\lambda \otimes \wedge^n \mf{k}^0 \otimes \eta \{-n\} \to \mc{D}_\lambda \otimes \wedge^{n - 1} \mf{k}^0 \otimes \eta \{-n + 1\} \to \cdots \to \mc{D}_\lambda \otimes \eta
\end{equation}
for $\mc{M}$. Taking the filtered dual, we obtain the complex
\begin{equation} \label{eq:split tempered duality 3}
\mc{D}_{-\lambda} \otimes \eta^*\{-2n\} \to \cdots \to \mc{D}_{-\lambda} \otimes \wedge^{n - 1} (\mf{k}^0)^* \otimes \eta^* \{-n - 1\} \to \mc{D}_{-\lambda} \otimes \wedge^n (\mf{k}^0)^* \otimes \eta^* \{-n\}.
\end{equation}
But \eqref{eq:split tempered duality 3} is precisely the Koszul resolution \eqref{eq:split tempered duality 2} for $(\mc{D}_{-\lambda}, F_\bullet) \otimes_{(U(\mf{k}), F_\bullet)} \eta^*$ tensored with $\wedge^n (\mf{k}^0)^* \{-n\}$, so we deduce the lemma.
\end{proof}

Now, since $\eta = \Gamma(F_0 j_{!*}\gamma)$, we have a tautological filtered morphism $(\mc{M}, F_\bullet) \to (j_{!*}\gamma, F_\bullet)$.

\begin{lem} \label{lem:split tempered D-module}
The morphism
\[ \mc{M} \to j_{!*}\gamma \]
is an isomorphism at the level of (unfiltered) $\mc{D}_\lambda$-modules.
\end{lem}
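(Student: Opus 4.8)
## Proof proposal for Lemma \ref{lem:split tempered D-module}

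The plan is to show that the tautological morphism $\mc{M} = \mc{D}_\lambda \otimes_{U(\mf{k})} \eta \to j_{!*}\gamma$ is an isomorphism of $\mc{D}_\lambda$-modules by exploiting the dualities established in Lemmas \ref{lem:split tempered gr} and \ref{lem:split tempered duality}, combined with irreducibility and the cleanness of $\gamma$. First I would observe that, since $j_{!*}\gamma$ is globally generated over $\mc{D}_\lambda$ (because $\lambda$ is dominant and $\Gamma(j_{!*}\gamma) \neq 0$, so the $\mc{D}_\lambda$-affineness machinery applies) and $\Gamma(F_0 j_{!*}\gamma) = \eta$ generates its lowest Hodge piece by Proposition \ref{prop:non-linear lowest hodge} and Theorem \ref{thm:hodge generation}, the morphism $\mc{M} \to j_{!*}\gamma$ is surjective. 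Thus it suffices to prove injectivity, equivalently that $\mc{M}$ and $j_{!*}\gamma$ have the same class in the Grothendieck group, or more directly that $\mc{M}$ is already irreducible (or at least has no proper submodule killed by the surjection).

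The key step is a duality argument. Applying the filtered duality $\mb{D}$ for $\mc{D}_\lambda$-modules and using Lemma \ref{lem:split tempered duality}, we have $\mb{D}(\mc{M}, F_\bullet) \cong (\mc{D}_{-\lambda}, F_\bullet) \otimes_{(U(\mf{k}), F_\bullet)} \eta^*\{-\dim \mc{B}\}$, which is the analogous module built from the contragredient representation $\eta^*$ of $\mrm{Stab}_K(x)$ on the $K$-orbit $Q$ with twist $-\lambda$ (also dominant, since $\langle \lambda, \check\alpha\rangle = 0$ for all roots). In particular $\mb{D}\mc{M}$ is again concentrated in a single cohomological degree and is a $\mc{D}_{-\lambda}$-module of the same shape; its associated graded is $\mc{O}_{\tilde{\mc{N}}_K^*} \otimes \eta^*$ by the same computation as Lemma \ref{lem:split tempered gr}. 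Now I would use the fact that $j_{!*}\gamma$ is self-dual up to replacing $\gamma$ by its contragredient and $\lambda$ by $-\lambda$: $\mb{D}(j_{!*}\gamma) \cong j_{!*}\gamma^\vee$ where $\gamma^\vee$ is the $(-\lambda-\rho)$-twisted local system on $Q$ with fiber $\eta^*$. The surjection $\mc{M} \to j_{!*}\gamma$ dualizes to an injection $j_{!*}\gamma^\vee \hookrightarrow \mb{D}\mc{M} \cong \mc{D}_{-\lambda} \otimes_{U(\mf{k})} \eta^*\{-\dim\mc{B}\}$. Running the surjectivity argument with $(\gamma^\vee, -\lambda)$ in place of $(\gamma,\lambda)$ shows this injection is also surjective, hence an isomorphism; dualizing back gives that $\mc{M} \to j_{!*}\gamma$ is an isomorphism.

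I expect the main obstacle to be pinning down precisely why the surjection $\mc{M} \to j_{!*}\gamma$ exists at the level of $\mc{D}_\lambda$-modules with the correct behavior on fibers over the open orbit $Q$ — that is, verifying that $\mc{M}|_Q \cong \gamma$ rather than some larger local system. This should follow because over $Q$ the module $\mc{D}_\lambda \otimes_{U(\mf{k})} \eta$ restricts to the $\mc{O}_Q$-module induced from the stabilizer representation $\eta$, which by the standard description of $K$-equivariant twisted local systems on a single orbit is exactly $\gamma$ (using that $\gamma$ is the irreducible $K$-equivariant $(\lambda-\rho)$-twisted local system with fiber $\eta$, and that on the open orbit the twisting is trivial in the relevant sense because every root is real with $\langle\lambda,\check\alpha\rangle = 0$). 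Granting this, the morphism $\mc{M} \to j_* \gamma = j_{!*}\gamma$ (cleanness, Proposition \ref{prop:regular+tempered}\eqref{itm:regular+tempered 3}) adjoint to the identity on $Q$ is the desired map, and the duality argument above closes the proof. An alternative, possibly cleaner route to injectivity: since $\Gr^F \mc{M} \cong \mc{O}_{\tilde{\mc{N}}_K^*} \otimes \eta$ is a Cohen–Macaulay $\mc{O}_{T^*\mc{B}}$-module supported on the (equidimensional, reduced) variety $\tilde{\mc{N}}_K^*$, any nonzero $\mc{D}_\lambda$-submodule of $\mc{M}$ has associated graded supported on a union of irreducible components of $\tilde{\mc{N}}_K^*$ of full dimension; comparing with the characteristic variety of the kernel of $\mc{M}\to j_{!*}\gamma$ (which would have to be a proper closed subvariety if the kernel were nonzero, since $j_{!*}\gamma$ and $\mc{M}$ agree generically on $Q$) forces the kernel to vanish.
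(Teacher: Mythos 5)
There is a genuine gap: your proposal never establishes the real content of the lemma, namely that $\mc{M} = \mc{D}_\lambda \otimes_{U(\mf{k})} \eta$ has no composition factors besides $j_{!*}\gamma$. Your main duality step is circular. Dualizing the surjection $\mc{M} \to j_{!*}\gamma$ does give an injection $j_{!*}\gamma^\vee \hookrightarrow \mb{D}\mc{M} \cong \mc{D}_{-\lambda}\otimes_{U(\mf{k})}\eta^*$, but ``running the surjectivity argument'' for $(\gamma^\vee,-\lambda)$ only produces the tautological surjection $\mc{D}_{-\lambda}\otimes_{U(\mf{k})}\eta^* \to j_{!*}\gamma^\vee$; it says nothing about whether \emph{your} injection is surjective. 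Concluding that an injection from an irreducible module is onto is exactly the assertion that $\mb{D}\mc{M}$ is irreducible, i.e.\ the dual form of the statement being proved. At best, composing the injection with the tautological surjection (if the composite were nonzero, which you do not verify) would exhibit $j_{!*}\gamma^\vee$ as a direct summand -- but that still leaves open the possibility of further summands or constituents supported on the boundary. Your ``cleaner'' alternative also fails: the kernel $\mc{K}$ of $\mc{M}\to j_{!*}\gamma$ is indeed supported on $\mc{B}\setminus Q$, but its characteristic variety would then be a union of conormal varieties of smaller $K$-orbits, and these are themselves full-dimensional Lagrangian components of $\tilde{\mc{N}}_K^*$; being a proper closed subvariety of $\tilde{\mc{N}}_K^*$ is therefore perfectly compatible with being a union of full-dimensional components, so no contradiction arises and the kernel is not forced to vanish.

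For comparison, the paper's proof supplies precisely the two ingredients you are missing. First, cleanness ($j_{!*}\gamma = j_!\gamma$) and adjunction give
$\hom(j_{!*}\gamma,\mc{M}) = \hom(\gamma, j^!\mc{M}) \twoheadrightarrow \hom(\gamma,\gamma)$,
so $j_{!*}\gamma$ splits off as a direct summand of $\mc{M}$ (this is the correct substitute for your duality step). Second -- and this is the heart of the matter -- one shows $\mrm{End}(\mc{M}) = \mb{C}$, which rules out any other summand: by Lemma \ref{lem:split tempered gr}, $\mrm{End}(\mc{M}) = \hom_K(\eta, \Gr^F\Gamma(\mc{M})) = \hom_K(\eta, \mc{O}_{\mc{N}_K^*}\otimes\eta)$; in the spherical case $\eta = \mb{C}_0$ this is the space of $K$-invariant functions on the reduced, connected variety $\mc{N}_K^*$ with finitely many $K$-orbits, hence $\mb{C}$, and this case also yields $\mc{O}_{\mc{N}_K^*} \cong \mrm{Ind}_{\mrm{Stab}_K(x)}^K \mb{C}_0$ as $K$-representations, which together with Schur's lemma for the irreducible $\mrm{Stab}_K(x)$-module $\eta$ handles the general case. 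Without an argument of this kind (an endomorphism computation, a length count, or a characteristic-cycle comparison that genuinely excludes boundary-supported submodules), your proof does not close.
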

\begin{proof}
Since $j_{!*}\gamma$ is irreducible, the non-zero morphism $\mc{M} \to j_{!*}\gamma$ is surjective, and hence its restriction $j^!\mc{M} \to \gamma$ to the open orbit is surjective. Now, since the local system $j_{!*}\gamma$ is clean, we have $j_{!*}\gamma = j_!\gamma$, so
\[ \hom(j_{!*}\gamma, \mc{M}) = \hom(\gamma, j^!\mc{M}) \to \hom(\gamma, \gamma) = \hom(j_{!*}\gamma, j_{!*}\gamma) \]
is also surjective. So $j_{!*}\gamma$ is a summand of $\mc{M}$. We will show that $\mrm{End}(\mc{M}) = \mb{C}$ and hence that the only non-zero summand is $\mc{M}$ itself.

Consider first the case where $\Gamma(j_{!*}\gamma)$ is the tempered spherical principal series, i.e., when $\eta = \mb{C}_0$ is the trivial $K$-type. Then
\[ \mrm{End}(\mc{M}) = \hom_K(\mb{C}_0, \Gamma(\mc{M})) \cong \hom_K(\mb{C}_0, \Gr^F \Gamma(\mc{M})) = \hom_K(\mb{C}_0, \mc{O}_{\mc{N}_K^*})\]
by Lemma \ref{lem:split tempered gr}. Since $K$ acts on the reduced and connected variety $\mc{N}_K^*$ with finitely many orbits, the only invariant functions are the constants, so the lemma is proved in this case. Note that in this case $\gamma = \mc{O}_Q$, so we also deduce an isomorphism of $K$-representations
\begin{equation} \label{eq:split tempered D-module}
\mc{O}_{\mc{N}^*} = \Gamma(j_{!*}\gamma) = \Gamma(j_*\gamma) = \Gamma(Q, \mc{O}_Q) = \mrm{Ind}_{\mrm{Stab}_K(x)}^K \mb{C}_0.
\end{equation}

Now consider the general case. We now have
\[ \mrm{End}(\mc{M}) = \hom_K(\eta, \mc{O}_{\mc{N}_K^*} \otimes \eta) = \hom_K(\eta, \mrm{Ind}_{\mrm{Stab}_K(x)}^K \mb{C}_0 \otimes \eta) = \mrm{End}_{\mrm{Stab}_K(x)}(\eta)\]
by \eqref{eq:split tempered D-module}. Since $\eta$ is irreducible as a $\mrm{Stab}_K(x)$-module by construction, we conclude that $\mrm{End}(\mc{M}) = \mb{C}$ by Schur's lemma.
\end{proof}

\begin{proof}[Proof of Theorem \ref{thm:split tempered}]
Given the lemmas above, it remains to show that the tautological morphism $(\mc{M}, F_\bullet) \to (j_{!*}\gamma, F_\bullet)$ is a filtered isomorphism. We proceed exactly as in the proof of Theorem \ref{thm:xi hodge filtration}: passing to duals and applying Lemma \ref{lem:split tempered duality}, we have a filtered morphism
\begin{equation} \label{eq:split tempered 1}
(j_{!*}\gamma^*, F_{\bullet + \dim \mc{B}}) = \mb{D}(j_{!*}\gamma, F_\bullet) \to \mb{D}(\mc{M}, F_\bullet) = (\mc{D}_{-\lambda} \otimes_{U(\mf{k})} \eta^*, F_{\bullet + \dim \mc{B}}),
\end{equation}
which is an isomorphism at the level of underlying $\mc{D}_{-\lambda}$-modules by Lemma \ref{lem:split tempered D-module}. The map on global sections of $F_0$ is an injective morphism between irreducible representations of $K$, and hence an isomorphism; that is, we have $\Gamma(F_0 j_{!*}\gamma^*) \cong \eta^*$. Thus, we have a filtered splitting of \eqref{eq:split tempered 1}, so \eqref{eq:split tempered 1} is a filtered isomorphism, and hence so is our original morphism $(\mc{M}, F_\bullet) \to (j_{!*}\gamma, F_\bullet)$.
\end{proof}

\subsection{Proof of Theorem \ref{thm:tempered hodge} (\ref{itm:tempered hodge 1})}
\label{subsec:pf of tempered 1}

Let us now consider a general irreducible tempered Hodge module $\cM = j_{!*}\gamma \in \mhm(\mc{D}_\lambda, K)$, for $\gamma$ a twisted local system on a $K$-orbit $Q$. We will show in this subsection that the Hodge filtration of $\mc{M}$ is generated by its lowest piece.

As in \S\ref{subsec:cleanness}, let $S$ denote the set of real simple roots for $Q$ and $\pi_S \colon \mc{B} \to \mc{P}_S$ the projection to the corresponding partial flag variety. By Proposition \ref{prop:relevant+tempered}, the image $\pi_S(Q)$ is closed, $\pi_S^{-1}\pi_S(Q) = \bar{Q}$ and the local system $\gamma$ is clean.

Consider first the Hodge module $\mc{N} = j'_{!*}\gamma$ on $\bar{Q}$. For any $y \in \pi_S(Q)$, we may identify the fiber $\pi_S^{-1}(y)$ with the flag variety $\mc{B}_L$ of a $\theta$-stable Levi $L$, whose roots are all real. Moreover, the stabilizer $\mrm{Stab}_K(y)$ acts via a surjective homomorphism to $K_L = K \times_G L$. Since the restriction is non-characteristic, we deduce that $\mc{N}|_{\mc{B}_L} = j_{L, !*}\gamma|_{Q \cap \mc{B}_L}$ is a tempered Hodge module for the pair $(L, K_L)$ supported on the open $K_L$-orbit. Since $(L, K_L)$ is split modulo its center, by Theorem \ref{thm:split tempered}, the Hodge filtration of $\mc{N}|_{\mc{B}_L}$, and hence the Hodge filtration of $\mc{N}$ itself, is generated by its lowest piece.

Now write $i \colon \bar{Q} \to \mc{B}$ for the inclusion, so $\mc{M} = i_*\mc{N}$. To show that the Hodge filtration on $\mc{M}$ is generated by its lowest piece as a $\mc{D}_\lambda$-module, it is enough to show that the associated graded $\Gr^F \mc{M}$ is generated by its lowest graded piece as an $\mc{O}_{T^*\mc{B}} = \mrm{Sym}(\mc{T}_\mc{B})$-module. By Proposition \ref{prop:associated graded pushforward}, we have
\[ \Gr^F\mc{M} = \Gr(i)_* \Gr^F\mc{N} = i_{\bigcdot}(\mrm{Sym}(i^{\bigcdot}\mc{T}_{\mc{B}}) \otimes_{\mrm{Sym}(\mc{T}_{\bar{Q}})} \Gr^F\mc{N} \otimes \omega_{\bar{Q}/\mc{B}})\{c\},\]
where $\omega_{\bar{Q}/\mc{B}}$ is the determinant of the normal bundle and $c = \codim Q$. But we have shown above that $\Gr^F \mc{N}$ is generated by its lowest graded piece as a $\mrm{Sym}(\mc{T}_{\bar{Q}})$-module, hence the same is true for $\Gr^F\mc{M}$. We conclude that the Hodge filtration of $\mc{M}$ is generated by its lowest piece $F_c\mc{M}$, proving part \eqref{itm:tempered hodge 1} of Theorem~\ref{thm:tempered hodge}.
 
\subsection{Proof of Theorem \ref{thm:tempered hodge} (\ref{itm:tempered hodge 2})} \label{subsec:pf of tempered 2}

We will proceed by appealing to the known unitarity of tempered Harish-Chandra modules. Let us assume that $\mc{M}$ is equipped with its standard Hodge module structure of weight $\dim Q$ with Hodge filtration starting in degree $c = \codim Q$; all other Hodge module structures are Hodge twists of this one, so if Conjecture \ref{conj:schmid-vilonen} is true for one of these it is true for all of them. Now, since the Harish-Chandra module $V = \Gamma(\mc{M})$ is tempered, it is unitary and, in particular, Hermitian. So by the discussion in \S\ref{subsec:intro harish-chandra}, there exists an isomorphism $\theta \colon \theta^*\mc{M} \cong \mc{M}$, squaring to the identity, and
\[ \langle u, \bar{v} \rangle_{\mf{g}_\mb{R}} =  \Gamma(S)(\theta(u), \bar{v}),\]
is a non-degenerate $(\mf{g}_\mb{R}, K_\mb{R})$-invariant form on $V$, where $S$ is the polarization of $\mc{M}$ as a pure Hodge module. Since $V$ is unitary, the form $\langle\,,\,\rangle_{\mf{g}_\mb{R}}$ must be either positive or negative definite. By Proposition \ref{prop:polarization non-degenerate}, $\Gamma(S)$ is positive definite on $\Gamma(F_c\mc{M})$ so $\theta$ must act here with a single sign. Let us fix our choice of $\theta$ so that this sign is $+1$. In this case, we deduce:

\begin{lem} \label{lem:tempered theta signature}
For $\epsilon = \pm 1$, the polarization $\Gamma(S)$ is $\epsilon$-definite on the $\epsilon$-eigenspace $V^{\epsilon\theta}$.
\end{lem}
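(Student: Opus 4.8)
The plan is to deduce the lemma formally from the unitarity of $V$ together with the normalization of $\theta$ fixed in the preceding paragraph. Recall that $V = \Gamma(\mc{M})$ is a tempered Harish-Chandra module, hence unitary, so the $(\mf{g}_\mb{R}, K_\mb{R})$-invariant Hermitian form $\langle\,,\,\rangle_{\mf{g}_\mb{R}}$ is definite; and the sign of $\theta$ has been chosen so that $\theta$ acts by $+1$ on the lowest piece $\Gamma(F_c\mc{M})$, on which $\Gamma(S)$ is positive definite by Proposition \ref{prop:polarization non-degenerate} (the relevant exponent is $c + \dim Q - \dim \mc{B} = \codim Q + \dim Q - \dim \mc{B} = 0$). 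First I would note that, since $\mc{M}$ is tempered and hence globally generated as a $\mc{D}_\lambda$-module, Theorem \ref{thm:hodge generation} gives $\Gamma(F_c\mc{M}) \neq 0$; combined with the above, $\langle u, \bar u\rangle_{\mf{g}_\mb{R}} = \Gamma(S)(\theta u, \bar u) = \Gamma(S)(u, \bar u) > 0$ for every nonzero $u \in \Gamma(F_c\mc{M})$, so the definite form $\langle\,,\,\rangle_{\mf{g}_\mb{R}}$ is in fact positive definite on all of $V$.

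Next I would use the eigenspace decomposition $V = V^{+\theta} \oplus V^{-\theta}$ afforded by $\theta^2 = 1$. For $\epsilon = \pm 1$ and $0 \neq u \in V^{\epsilon\theta}$, the defining identity for the $\mf{g}_\mb{R}$-form, the relation $\theta u = \epsilon u$, and the fact that $\epsilon$ is a real scalar (so $\Gamma(S)$ pulls it out of the first argument) give
\[ 0 < \langle u, \bar u\rangle_{\mf{g}_\mb{R}} = \Gamma(S)(\theta u, \bar u) = \epsilon\, \Gamma(S)(u, \bar u).\]
Hence $\Gamma(S)(u, \bar u)$ has the sign $\epsilon$ for every nonzero $u \in V^{\epsilon\theta}$, which is exactly the assertion that $\Gamma(S)|_{V^{\epsilon\theta}}$ is $\epsilon$-definite.

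There is no serious obstacle here: the argument is a one-line sign computation once the normalization of $\theta$, and hence the positive definiteness of $\langle\,,\,\rangle_{\mf{g}_\mb{R}}$, has been put in place. The only points worth recording are that $\Gamma(F_c\mc{M}) \neq 0$, used to upgrade positive definiteness on the lowest piece to positive definiteness on all of $V$, and that $\theta$ is a genuine involution of $V$ preserving the Hodge filtration, both of which follow from the discussion in \S\ref{subsec:intro harish-chandra}. No deformation or further Hodge-theoretic input is needed for this particular step.
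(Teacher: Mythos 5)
Your proposal is correct and is essentially the argument the paper intends: it deduces the lemma from the preceding normalization (definiteness of $\langle\,,\,\rangle_{\mf{g}_\mb{R}}$ by unitarity of tempered modules, positivity of $\Gamma(S)$ on $\Gamma(F_c\mc{M})$ via Proposition \ref{prop:polarization non-degenerate} with exponent $\codim Q + \dim Q - \dim\mc{B} = 0$, and $\theta = +1$ on the lowest piece), upgrading to positive definiteness of the $\mf{g}_\mb{R}$-form using $\Gamma(F_c\mc{M}) \neq 0$, and then reading off the sign on each $\theta$-eigenspace from $\langle u, \bar u\rangle_{\mf{g}_\mb{R}} = \epsilon\,\Gamma(S)(u,\bar u)$. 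The paper leaves exactly this sign computation implicit, so no further comment is needed.
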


Theorem \ref{thm:tempered hodge} \eqref{itm:tempered hodge 2} now follows from Lemma \ref{lem:tempered theta signature} and:

\begin{lem} \label{lem:tempered hodge sign}
The involution $\theta$ acts on $\Gr^F_p V$ by $(-1)^{p + c}$ for all $p$.
\end{lem}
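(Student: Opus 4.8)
The plan is to prove Lemma \ref{lem:tempered hodge sign} by reducing to the split case via cohomological induction. By Proposition \ref{prop:regular+tempered}(\ref{itm:regular+tempered 3}) and the discussion at the end of \S\ref{subsec:cleanness}, the tempered Harish-Chandra sheaf $\mc{M} = j_{!*}\gamma$ is cohomologically induced from an irreducible tempered Harish-Chandra sheaf $\mc{M}_L$, supported on the open $K_L$-orbit in the flag variety $\mc{B}_L$ of a $\theta$-stable Levi subgroup $L$ which is split modulo its centre; writing $\lambda_L$ for its infinitesimal character and $P$ for a $\theta$-stable parabolic with Levi $L$, we have $\lambda = \lambda_L + \rho_P$ (dominant for $G$ by hypothesis) and $\lambda_L$ dominant for $L$, since $\langle \lambda_L, \check\alpha\rangle = 0$ for every root $\alpha$ of $L$ by temperedness and splitness. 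First I would record the split case. By Theorem \ref{thm:split tempered}, $\Gamma(\mc{B}_L, \mc{M}_L) \cong U_{\lambda_L}(\mf{l}) \otimes_{U(\mf{k}_L)} \eta$ as filtered modules and $\Gr^F \Gamma(\mc{B}_L, \mc{M}_L) \cong \mc{O}_{\mc{N}^*_{K_L}} \otimes \eta$ as graded modules, with $\mc{N}^*_{K_L} \subset (\mf{l}/\mf{k}_L)^*$. Since the Cartan involution acts by $-1$ on $\mf{l}/\mf{k}_L$ and trivially on $K_L \supset \mrm{Stab}_K(x)$ (hence on $\eta$), the involution $u \otimes v \mapsto \theta(u) \otimes v$ on $\Gamma(\mc{B}_L, \mc{M}_L)$ squares to the identity, realises $\theta^*\mc{M}_L \cong \mc{M}_L$, acts by $+1$ on the lowest (degree $0$) piece, and acts on the degree-$q$ piece of $\Gr^F \Gamma(\mc{B}_L, \mc{M}_L)$ by $(-1)^q$ — because, as a graded vector space, this associated graded is a quotient of $\mrm{Sym}(\mf{l}/\mf{k}_L) \otimes \eta$.

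Next I would feed this into Proposition \ref{prop:cohomological induction hodge}, which applies since $\lambda_L + \rho_P = \lambda$ is dominant for $G$, to obtain (with the standard Hodge structures)
\[ \Gr^F V \cong \mrm{Ind}_{K_P}^K\!\big( \mrm{Sym}(\mf{g}/\mf{k}) \otimes_{\mrm{Sym}(\mf{p}/\mf{k}_P)} \Gr^F\Gamma(\mc{B}_L, \mc{M}_L) \otimes \det(\mf{g}/(\mf{k}+\mf{p})) \big)\{c\}, \]
and trace the involution through, exactly as in the proof of Corollary \ref{cor:cohomological induction}. The involution $\theta$ on $\mc{M}_L$ extends canonically to the $K$-equivariant extension $\mc{M}_K$, hence to $\mc{M} = i_*\mc{M}_K$ and to $V = \Gamma(\mc{M})$, compatibly with the displayed identification. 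Since $\theta$ acts trivially on $K$ and $K_P$, by $-1$ on $\mf{g}/\mf{k}$, by $(-1)^q$ on the degree-$q$ part of $\Gr^F\Gamma(\mc{B}_L,\mc{M}_L)$, and by $(-1)^c$ on $\det(\mf{g}/(\mf{k}+\mf{p}))$, the summand $\mrm{Sym}^a(\mf{g}/\mf{k}) \otimes (\Gr^F\Gamma(\mc{B}_L,\mc{M}_L))_b \otimes \det(\mf{g}/(\mf{k}+\mf{p}))$, which sits in degree $a+b+c$ of $\Gr^F V$, is a $\theta$-eigenspace with eigenvalue $(-1)^{a+b+c}$. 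Hence this induced involution acts on $\Gr^F_p V$ by the scalar $(-1)^p$; in particular it acts on the lowest piece $\Gr^F_c V = \Gamma(F_c\mc{M})$ by $(-1)^c$.

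Finally I would reconcile the normalizations. As $\mc{M}$ is irreducible, $\hom(\mc{M}, \theta^*\mc{M})$ is one-dimensional, so an involution realising $\theta^*\mc{M} \cong \mc{M}$ and squaring to the identity is unique up to sign; the convention fixed in \S\ref{subsec:pf of tempered 2} selects the one acting by $+1$ on $\Gamma(F_c\mc{M})$. By the previous paragraph this is $(-1)^c$ times the cohomologically induced involution, so with the convention of \S\ref{subsec:pf of tempered 2} the involution $\theta$ acts on $\Gr^F_p V$ by $(-1)^c \cdot (-1)^p = (-1)^{p+c}$, which is Lemma \ref{lem:tempered hodge sign}. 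The main obstacle is precisely this normalization bookkeeping: the induced involution is pinned down only up to sign, and the shift by $c = \codim Q$ in the statement is exactly the discrepancy between its natural value $(-1)^c$ on the lowest piece and the value $+1$ forced there by the positivity of $\Gamma(S)$ in Proposition \ref{prop:polarization non-degenerate} (via Lemma \ref{lem:tempered theta signature}). One must also be careful that the split Cartan involution $u\otimes v \mapsto \theta(u)\otimes v$ genuinely computes eigenvalues on $\Gr^F\Gamma(\mc{B}_L,\mc{M}_L)$, which rests on the identification of this associated graded with a graded quotient of $\mrm{Sym}(\mf{l}/\mf{k}_L)\otimes\eta$ provided by Theorem \ref{thm:split tempered} and the Kostant–Rallis results underlying it.
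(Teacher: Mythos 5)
Your proof is correct and follows essentially the same route as the paper: reduce to the split case via Proposition \ref{prop:cohomological induction hodge}, use Theorem \ref{thm:split tempered} to realize $\Gr^F$ as a graded quotient of $\mrm{Sym}(\mf{g}/\mf{k})\otimes\eta$ (resp.\ $\mrm{Sym}(\mf{l}/\mf{k}_L)\otimes\eta$), and use that $\theta$ acts by $-1$ on $\mf{g}/\mf{k}$. Your explicit reconciliation of the cohomologically induced involution with the normalization of $\theta$ fixed in \S\ref{subsec:pf of tempered 2} (the factor $(-1)^c$, via uniqueness of the involution up to sign) is exactly the bookkeeping the paper's terse phrase ``it is enough to prove the lemma in the split setting'' leaves implicit.
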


\begin{proof}
Observe that $V$ is cohomologically induced from a tempered representation attached to the open orbit in a Levi that is split modulo its center (by, say, Proposition \ref{prop:relevant+tempered}). So, by Proposition \ref{prop:cohomological induction hodge}, it is enough to prove the lemma in the split setting of \S\ref{subsec:split tempered}. In this case, by Theorem \ref{thm:split tempered}, we have that
\[ \Gr^F V = \mc{O}_{\mc{N}_K^*} \otimes \eta \]
is naturally a graded quotient of $\mrm{Sym}(\mf{g}/\mf{k}) \otimes \eta$ for some irreducible representation $\eta$ of $K$. Since $\theta$ acts on $\mf{g}/\mf{k}$ with eigenvalue $-1$, the result follows.
\end{proof}

\section{Proof of the unitarity criterion: the general case} \label{sec:pf of unitarity}

In this section, we give the proof of Theorem \ref{thm:unitarity criterion}, the unitarity criterion for general Harish-Chandra modules. The proof relies on most of the previous results of the paper: Theorems \ref{thm:semi-continuity} and \ref{thm:jantzen} on the behavior of Hodge filtrations and polarizations under deformation, Theorem \ref{thm:filtered exactness} on cohomology vanishing for the Hodge filtration, and Theorem \ref{thm:tempered hodge}, the case of tempered Harish-Chandra modules (which in turn relied on Theorem \ref{thm:hodge generation} on global generation of the Hodge filtration).

Given these ingredients, the proof more or less follows the sketch outlined by Adams, Trapa and Vogan in the manuscript \cite{ATV}. First, in \S\ref{subsec:hodge and signature}, we reduce Theorem \ref{thm:unitarity criterion} to a numerical statement about Hodge and signature characters of Harish-Chandra modules (Theorem \ref{thm:hodge and signature K'}), a weak form of Conjecture \ref{conj:schmid-vilonen}. We prove this numerical statement in \S\S\ref{subsec:pf of signature K}--\ref{subsec:pf of signature K'} by running a version of the \cite{ALTV} algorithm to reduce to the case of tempered representations. In order to make the argument work, we need to know the cones of positive deformations of \S\ref{sec:deformations} in the case of Harish-Chandra sheaves: we compute these explicitly in \S\ref{subsec:hc deformations}.

\subsection{Hodge and signature characters} \label{subsec:hodge and signature}

Theorem \ref{thm:unitarity criterion} is a consequence of a weak version of Conjecture \ref{conj:schmid-vilonen}, which we state in this subsection.

Fix $\lambda \in \mf{h}^*_\mb{R}$ dominant and $(\mc{M}, S)$ a polarized Hodge module in $\mhm(\mc{D}_{\lambda}, K)$. We wish to relate the signature of the $\mf{u}_\mb{R}$-invariant form $\Gamma(S)$ on the Harish-Chandra module $\Gamma(\mc{M})$ to the Hodge structure on $\mc{M}$. To this end, we introduce the following numerical invariants.

First, we write
\[ \chi^{\mathit{sig}}(\mc{M}, \zeta) = \sum_{\mu \in \widehat{K}} (m^+(\mc{M}, \mu) + \zeta m^-(\mc{M}, \mu))[\mu] \in \widehat{\mrm{Rep}}(K)[\zeta]/(\zeta^2 - 1),\]
where the sum is over isomorphism classes of irreducible representations $\mu$ of $K$, $(m^+(\mc{M}, \mu), m^-(\mc{M}, \mu))$ is the signature of the non-degenerate Hermitian form $\Gamma(S)$ on the (finite dimensional) multiplicity space $\hom_K(\mu, \Gamma(\mc{M}))$ and
\[ \widehat{\mrm{Rep}}(K) = \prod_{\mu \in \widehat{K}}\mb{Z}\cdot [\mu] \]
is the completion of the Grothendieck group $\mrm{Rep}(K)$ of finite dimensional $K$-modules. We will call the quantity $\chi^{\mathit{sig}}(\mc{M}, \zeta)$ the \emph{signature $K$-character}; it keeps track of numerical information about the $\mf{u}_\mb{R}$-invariant form $\Gamma(S)$.

We also write
\[ \chi^H(\mc{M}, u) = \sum_{p \in \mb{Z}} [\Gr^F_p\Gamma(\mc{M})]u^p \in \mrm{Rep}(K)((u)).\]
We call $\chi^H(\mc{M}, u)$ the \emph{Hodge $K$-character}; it keeps track of numerical information about the Hodge filtration on $\Gamma(\mc{M})$. Note that
\[ \Gr^F_p \Gamma(\mc{M}) = \Gamma(\Gr^F_p\mc{M}) \]
by Theorem \ref{thm:filtered exactness}.

Since $\Gamma(\mc{M})$ is an admissible $(\mf{g}, K)$-module, the Hodge $K$-character lies in the subgroup
\[ \mrm{Rep}(K)((u))^{\mathit{adm}} \subset \mrm{Rep}(K)((u))\]
consisting of formal Laurent series
\[ a_n u^n + a_{n + 1}u^{n + 1} + \cdots, \quad a_i \in \mrm{Rep}(K)\]
such that the class $[\mu]$ of each irreducible $K$-module $\mu$ appears in only finitely many of the $a_i$. There is a well-defined reduction map
\begin{align*}
\mrm{Rep}(K)((u))^{\mathit{adm}} &\to \widehat{\mrm{Rep}}(K)[\zeta]/(\zeta^2 - 1) \\
\chi(u) &\mapsto \chi(\zeta) \mod \zeta^2 - 1.
\end{align*}

We prove the following result in \S\ref{subsec:pf of signature K}.

\begin{thm} \label{thm:hodge and signature K}
Assume $\lambda \in \mf{h}^*_\mb{R}$ is dominant, and let $(\mc{M}, S)$ be a polarized object of weight $w$ in $\mhm(\mc{D}_\lambda, K)$. Then
\[ \chi^{\mathit{sig}}(\mc{M}, \zeta) = \zeta^c\chi^H(\mc{M}, \zeta) \mod \zeta^2 - 1,\]
where $c = \dim \mc{B} - w$.
\end{thm}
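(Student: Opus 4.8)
The plan is to prove Theorem~\ref{thm:hodge and signature K} by deforming $\mc{M}$ to a tempered Harish-Chandra sheaf, for which the identity is part of Theorem~\ref{thm:tempered hodge}~\eqref{itm:tempered hodge 2}, and tracking how both sides change along the way. Both $\chi^{\mathit{sig}}$ and $\zeta^c\chi^H$ are additive over orthogonal direct sums of polarized Hodge modules of weight $w$ (for $\chi^H$ because $\Gr^F$ and $\Gamma$ are additive, the latter even filtered exact by Corollary~\ref{cor:filtered exactness}; for $\chi^{\mathit{sig}}$ because the polarization is block-orthogonal), and a polarized Hodge module is semisimple, so I would first reduce to $\mc{M} = j_{!*}\gamma$ irreducible, where $j \colon Q \hookrightarrow \mc{B}$ is a $K$-orbit, $\gamma$ an irreducible $K$-equivariant unitary $(\lambda - \rho)$-twisted local system, $\lambda \in \mf{h}^*_\mb{R}$ dominant, $w = \dim Q$ and $c = \codim Q$. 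If $\Gamma(\mc{M}) = 0$ both sides vanish (using Theorem~\ref{thm:filtered exactness}), so assume $\gamma$ regular. The argument is a strong induction on $\dim Q$.

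Next I would set up the deformation. Using the explicit description of the positive deformation cone for Harish-Chandra sheaves computed in \S\ref{subsec:hc deformations}, choose a positive $f \in \Gamma^K_\mb{R}(\tilde Q)^{\mon}$ so that the ray $\lambda(s) := \lambda + s\varphi(f)$ stays dominant for $s \in [0,1]$ and $\lambda(1)$ is $\theta_Q$-fixed, so that $j_{!*}\gamma_1$ is tempered by Proposition~\ref{prop:regular+tempered} (in general this may require finitely many iterations of the construction, exactly as in the {\tt atlas} algorithm). Writing $\gamma_s = f^s\gamma$ and considering the three families $j_!\gamma_s \twoheadrightarrow j_{!*}\gamma_s \hookrightarrow j_*\gamma_s$, Proposition~\ref{prop:hyperplanes} gives a finite set of reducibility points $s_1 < \dots < s_m$ in $[0,1]$ outside which all three coincide. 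On each open interval between consecutive reducibility points, Theorem~\ref{thm:semi-continuity} and the $K$-equivariant form of Proposition~\ref{prop:monodromic semi-continuity} give canonical isomorphisms $\Gr^F j_{!*}\gamma_s \cong \Gr^F j_{!*}\gamma_{s'}$, so $\chi^H(j_{!*}\gamma_s)$ is constant there after applying $\Gamma$ and Theorem~\ref{thm:filtered exactness}; meanwhile $\Gamma(S_s)$ varies real-analytically and stays non-degenerate by Lemma~\ref{lem:pairing non-degenerate} (as $\lambda(s)$ is dominant), so $\chi^{\mathit{sig}}(j_{!*}\gamma_s)$ is constant there as well. Since the identity holds at $s = 1$ by Theorem~\ref{thm:tempered hodge}~\eqref{itm:tempered hodge 2}, it remains to propagate it downward across each $s_0 = s_i$.

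At a reducibility point $s_0$ I would apply the Jantzen machinery of \S\ref{subsec:jantzen}: factoring $\gamma_s = \gamma_{s_0} \otimes f^{s-s_0}\mc{O}_Q$ locally and taking $\mc{N} = j_{!*}\gamma_{s_0}$, Theorem~\ref{thm:jantzen} identifies the Jantzen filtration on $j_!\gamma_{s_0}$ with its weight filtration, shows the pieces $\mc{N}_n := \Gr^W_{w-n} j_!\gamma_{s_0}$ ($n \geq 0$, $\mc{N}_0 = \mc{N}$) are polarized Hodge modules of weight $w-n$ with polarization $s^{-n}\Gr^J_{-n}(S)$, and gives $\mc{N}_n \cong \Gr^W_{w+n} j_*\gamma_{s_0}(-n)$; for $n > 0$ we have $\mc{N}_n|_Q = 0$, so $\mc{N}_n$ is supported on $\bar Q \setminus Q$ (and is $(\lambda(s_0)-\rho)$-twisted, being polarized and monodromic), and the inductive hypothesis applies to it. On the Hodge side, Theorem~\ref{thm:semi-continuity} gives $\Gr^F j_{!*}\gamma_{s_0 - \epsilon} \cong \Gr^F j_!\gamma_{s_0}$ and $\Gr^F j_{!*}\gamma_{s_0+\epsilon} \cong \Gr^F j_*\gamma_{s_0}$, so using strictness of $W$, additivity of $\Gr^F$, and then $\Gamma$,
\[
\chi^H(j_{!*}\gamma_{s_0+\epsilon}, u) - \chi^H(j_{!*}\gamma_{s_0-\epsilon}, u) = \sum_{n>0}(u^n - 1)\,\chi^H(\mc{N}_n, u).
\]
On the signature side, the Jantzen sign yoga (a mild generalization of \cite[Theorem~1.2]{DV1}) says that crossing $s_0$ flips the sign of the contribution of the block $\Gr^J_{-n}$ exactly when $n$ is odd; since negating a non-degenerate Hermitian form multiplies its signature $K$-character by $\zeta$ modulo $\zeta^2-1$, and the block $\Gr^J_{-n}$ has signature character $\chi^{\mathit{sig}}(\mc{N}_n)$,
\[
\chi^{\mathit{sig}}(j_{!*}\gamma_{s_0+\epsilon}, \zeta) - \chi^{\mathit{sig}}(j_{!*}\gamma_{s_0-\epsilon}, \zeta) \equiv \sum_{n>0}(1 - \zeta^n)\,\chi^{\mathit{sig}}(\mc{N}_n, \zeta) \pmod{\zeta^2 - 1}.
\]
Finally, since $\mc{N}_n$ has weight $w-n$ the inductive hypothesis reads $\zeta^{c+n}\chi^H(\mc{N}_n,\zeta) \equiv \chi^{\mathit{sig}}(\mc{N}_n,\zeta)$, and using $\zeta^{2k}\equiv 1$ one checks that $\zeta^c$ times the Hodge jump equals the signature jump; hence $\chi^{\mathit{sig}} \equiv \zeta^c\chi^H$ propagates across $s_0$, completing the induction. (Evaluating the Hodge jump at $u = 1$ gives $0$, so the $K$-character of $\Gamma(j_{!*}\gamma_s)$, hence regularity, is constant along the ray, justifying the use of Theorem~\ref{thm:tempered hodge} at $s = 1$.)

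I expect the main obstacle to be the wall-crossing step: extracting from Theorem~\ref{thm:jantzen} the precise sign of the integrated polarization across a reducibility point and reconciling it, Tate twist by Tate twist, with the jump in $\chi^H$ coming from Theorem~\ref{thm:semi-continuity}. The bookkeeping modulo $\zeta^2-1$ is routine once the geometric inputs are in hand, but one must choose the deformation ray carefully --- inside the positive cone, dominant throughout, and terminating at a genuinely tempered parameter --- which is exactly why the Harish-Chandra deformation cones of \S\ref{subsec:hc deformations} are needed, and is the one place where non-linear groups force us to use the generalization of \cite[Theorem~1.2]{DV1} rather than its original form.
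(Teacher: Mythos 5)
Your overall strategy coincides with the paper's own proof of Theorem~\ref{thm:hodge and signature K}: reduce to an irreducible $j_{!*}\gamma$, induct on $\dim Q$, observe that both characters are constant between reducibility points (Theorem~\ref{thm:semi-continuity} plus Proposition~\ref{prop:monodromic semi-continuity} on the Hodge side, continuity of eigenvalues on the signature side), cross walls with the Jantzen machinery of Theorem~\ref{thm:jantzen}, and anchor the induction in the tempered case, Theorem~\ref{thm:tempered hodge}. Your wall-crossing bookkeeping modulo $\zeta^2-1$ is consistent and matches the paper's. The genuine gap is in the deformation setup. First, a positive $f$ cannot do what you ask of it: to land on a $\theta_Q$-fixed twist at $s=1$ you would need $\varphi(f)=-\tfrac{1}{2}(1-\theta_Q)\lambda$, whereas for positive $f$ the image $\varphi(f)$ lies in $(1-\theta_Q)\mf{h}^*_{S,\mb{R},+}$ (Proposition~\ref{prop:hc deformations}); one must instead take $\varphi(f)=(1-\theta_Q)\lambda$ and run $s$ from $0$ down to $-\tfrac{1}{2}$. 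Moreover, when $\lambda$ is singular at a real simple root this $f$ is positive only for $Q_S$ (with $S$ the singular $\theta_Q$-negative simple roots), not for $Q$ itself, which is why the paper first extends $\gamma$ cleanly to $Q_S$ using Proposition~\ref{prop:regular+tempered}~\eqref{itm:regular+tempered 2}; your deformation space $\Gamma^K_{\mb{R}}(\tilde Q)^{\mon}$ does not contain a positive element with the required $\varphi$ in that case.

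The substantive missing idea is what to do when the ray leaves the dominant cone. The deformation direction is completely forced (any $K$-invariant monodromic deformation ending at a $\theta_Q$-fixed twist must have $\varphi(f)$ proportional to $(1-\theta_Q)\lambda$), and the segment from $\lambda$ to $\tfrac{1}{2}(1+\theta_Q)\lambda$ need not stay dominant: if $\theta_Q$ sends a simple root to the negative of a different simple root, the endpoint itself can fail to be dominant. Since Theorem~\ref{thm:hodge and signature K} and the induction hypothesis (applied to the Jantzen pieces) only make sense for dominant twists, ``choose the ray so that it stays dominant'' is not an available move, and your parenthetical ``finitely many iterations as in the {\tt atlas} algorithm'' is not an argument. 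The paper's resolution is exactly the step you are missing: stop at the first $s_0$ at which dominance would fail and show, via Proposition~\ref{prop:regular+tempered}~\eqref{itm:regular+tempered 1} (the offending simple root is singular, $\theta_Q$-negative and not real), that $f^{s_0}\gamma$ is then automatically \emph{not regular}, so both characters vanish there and the identity holds trivially; the terminus of the deformation is ``tempered or non-regular'', not tempered. Relatedly, your closing claim that the vanishing of the Hodge jump at $u=1$ shows regularity is constant along the ray is both unnecessary and incorrect at wall points (the $K$-character of $\Gamma(j_{!*}\gamma_s)$ at a wall differs in general from its value on the adjacent intervals), so it cannot be used to guarantee a tempered, regular endpoint.
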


Theorem \ref{thm:hodge and signature K} is an obvious consequence of Conjecture \ref{conj:schmid-vilonen}, which we regard as strong evidence for the full conjecture. Theorem \ref{thm:tempered hodge} implies that Theorem \ref{thm:hodge and signature K} holds when the Harish-Chandra sheaf $\mc{M}$ is tempered.

Theorem \ref{thm:hodge and signature K} states that the Hodge filtration controls the signature of the $(\mf{u}_\mb{R}, K_\mb{R})$-invariant Hermitian form on $\Gamma(\mc{M})$. In order to deduce the signature of the $(\mf{g}_\mb{R}, K_\mb{R})$-invariant form (when it exists), we need the following mild refinement. Recall that the Harish-Chandra module $V = \Gamma(\mc{M})$ carries a non-degenerate $(\mf{g}_\mb{R}, K_\mb{R})$-invariant Hermitian form if and only if $\lambda = \delta \lambda$ and $\mc{M} \cong \theta^*\mc{M}$ and that, given an isomorphism $\theta \colon \theta^*\mc{M} \cong \mc{M}$ squaring to the identity,
\[ \langle u, \bar{v} \rangle_{\mf{g}_\mb{R}} = \Gamma(S)(\theta(u), \bar{v})\]
is such a form. We thus need a version of Theorem \ref{thm:hodge and signature K} that keeps track of the action of $\theta$ on $\mc{M}$.

We can package this neatly using the trick of passing to the extended group (cf., \cite[Chapter 12]{ALTV}). Let $K' = K \times \{1, \theta\}$; we have compatible actions of $K'$ on $\mc{B}$, $\tilde{\mc{B}}$ and $H$. The choice of involution on $\mc{M}$ upgrades it to an object in $\mhm(\mc{D}_\lambda, K')$. More generally, we can consider polarized objects in the larger monodromic category
\[ \mc{M} \in \mhm(\tilde{\mc{D}}, K').\]
We will say that $\mc{M}$ is \emph{dominantly twisted} if
\[ \mc{M} = \bigoplus_{\lambda} \mc{M}_\lambda, \quad \mc{M}_\lambda \in \mhm(\mc{D}_\lambda)\]
with each $\lambda \in \mf{h}^*_\mb{R}$ dominant. In this setting, we may define signature and Hodge $K'$-characters
\[ \chi^{\mathit{sig}}(\mc{M}, \zeta) \in \widehat{\mrm{Rep}}(K')[\zeta]/(\zeta^2 - 1) \quad \mbox{and} \quad \chi^H(\mc{M}, u) \in \mrm{Rep}(K')((u))^{\mathit{adm}}\]
just as before. The $K'$-equivariant version of Theorem \ref{thm:hodge and signature K} also holds.

\begin{thm} \label{thm:hodge and signature K'}
Let $\mc{M}$ be an irreducible dominantly twisted object in $\mhm(\tilde{\mc{D}}, K')$ with polarization $S$. If $\mc{M}$ is pure of weight $w$, then
\[ \chi^{\mathit{sig}}(\mc{M}, \zeta) = \zeta^c\chi^H(\mc{M}, \zeta) \mod \zeta^2 - 1,\]
where $c = \dim \mc{B} - w$.
\end{thm}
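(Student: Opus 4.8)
The plan is to reduce Theorem \ref{thm:hodge and signature K'} to Theorem \ref{thm:hodge and signature K} by analysing the extra structure coming from the $\{1,\theta\}$-factor of $K' = K \times \{1,\theta\}$. The only genuinely new content over Theorem \ref{thm:hodge and signature K} is bookkeeping of how $\theta$ acts, so the strategy is to organize everything so that the $K'$-equivariant characters decompose along the two cosets of $K$ in $K'$ and the coset containing $1$ is handled verbatim by Theorem \ref{thm:hodge and signature K}.

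First I would set up the decomposition of the representation rings. Since $K'/K \cong \mb{Z}/2$, every irreducible $K'$-module restricts to either an irreducible $K$-module (if its two $\theta$-translates coincide, giving two extensions to $K'$ distinguished by the sign of $\theta$) or a sum of two $K$-irreducibles swapped by $\theta$ (inducing up to a single $K'$-irreducible). Correspondingly $\mrm{Rep}(K') \hookrightarrow \mrm{Rep}(K) \oplus (\text{functions on the }\theta\text{-coset})$, and both $\chi^{\mathit{sig}}(\mc{M},\zeta)$ and $\chi^H(\mc{M},u)$ have a ``$K$-part'' (their image under restriction $K' \to K$) and a ``$\theta$-part'' (the supertrace-type invariant recording the action of $\theta$). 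For the $K$-part, the claimed congruence is exactly Theorem \ref{thm:hodge and signature K}, applied to $\mc{M}$ regarded as a polarized object of $\mhm(\mc{D}_\lambda, K)$ (decomposing $\mc{M} = \bigoplus_\lambda \mc{M}_\lambda$ along its monodromicity and applying the theorem summand by summand, using that each $\lambda$ is dominant by the dominantly-twisted hypothesis). So it remains to prove the congruence for the $\theta$-part.

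The heart of the matter is therefore the following statement: if we define the $\theta$-twisted Hodge character $\chi^H_\theta(\mc{M},u) = \sum_p \operatorname{tr}(\theta \mid \Gr^F_p\Gamma(\mc{M}))\, u^p$ and the $\theta$-twisted signature character $\chi^{\mathit{sig}}_\theta(\mc{M},\zeta)$ (recording, on each $K$-isotypic piece, the signature of the form $\Gamma(S)(\theta(-),\overline{-})$ restricted to the relevant fixed/anti-fixed subspaces), then $\chi^{\mathit{sig}}_\theta(\mc{M},\zeta) = \zeta^c \chi^H_\theta(\mc{M},\zeta) \bmod \zeta^2 - 1$. The cleanest route is to run the same argument that proves Theorem \ref{thm:hodge and signature K} but in the extended setting: by the reduction in \S\ref{subsec:pf of signature K}, one deforms $\mc{M}$ to a tempered object using the wall-crossing machinery of \S\ref{sec:deformations} (Theorems \ref{thm:semi-continuity} and \ref{thm:jantzen}), which is available $K'$-equivariantly because the deformation parameters $f \in \Gamma_\mb{R}^{K'}(\tilde Q)^\mon$ can be chosen $K'$-invariant (the relevant line bundles are $\delta$-equivariant since $\lambda = \delta\lambda$ in the Hermitian case), and by Proposition \ref{prop:monodromic semi-continuity} the isomorphisms respect the $K'$-action. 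Both $\chi^H_\theta$ and $\chi^{\mathit{sig}}_\theta$ are then piecewise constant along the deformation, with jumps controlled by the $!\to *$ comparison; by continuity of eigenvalues (each isotypic component is finite-dimensional and the deformed form depends continuously, so its signature on the $\pm\theta$-eigenspaces cannot change except at reducibility walls), one checks the jump is compatible with Theorem \ref{thm:jantzen}, exactly as in the non-extended case, because $\theta$ commutes with the Jantzen filtration. At the tempered base case, one invokes Theorem \ref{thm:tempered hodge} in its $K'$-equivariant form (Lemma \ref{lem:tempered theta signature} and Lemma \ref{lem:tempered hodge sign} already keep track of the $\theta$-action), so the congruence holds there.

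The main obstacle I expect is making the deformation argument genuinely $K'$-equivariant rather than just $K$-equivariant: one must verify that a tempered object in the deformation family can be reached through $K'$-equivariant Harish-Chandra sheaves, i.e.\ that the positive cone $\Gamma_\mb{R}^{K'}(\tilde Q)^\mon$ computed in \S\ref{subsec:hc deformations} is large enough to connect $\mc{M}$ to the tempered locus while staying in the $K'$-equivariant category, and that the intermediate reducibility walls are crossed via $K'$-equivariant extensions so that Theorem \ref{thm:jantzen} applies with its polarization intact. A secondary subtlety is bookkeeping of signs: the $\theta$-part of the signature character is a $\mb{Z}$-valued (not $\mb{Z}_{\geq 0}$-valued) invariant, so one must be careful that the reduction map $\mrm{Rep}(K')((u))^{\mathit{adm}} \to \widehat{\mrm{Rep}}(K')[\zeta]/(\zeta^2-1)$ and the continuity-of-eigenvalues step interact correctly — but this is the same issue that already arises for the non-extended group and is handled by the same argument. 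Once the $K'$-equivariant deformation is in place, the proof is a formal combination of Theorem \ref{thm:hodge and signature K} (for the $K$-part) and the tempered base case plus wall-crossing (for the $\theta$-part).
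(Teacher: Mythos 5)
Your proposal follows essentially the same route as the paper: handle the restriction to $K$ by Theorem \ref{thm:hodge and signature K}, and obtain the $\theta$-information by rerunning the deformation and wall-crossing argument of \S\ref{subsec:pf of signature K} in the extended setting, with the tempered base case supplied by Theorem \ref{thm:tempered hodge} (whose Lemmas \ref{lem:tempered theta signature} and \ref{lem:tempered hodge sign} already track the $\theta$-action). However, the step you flag as ``the main obstacle I expect'' is in fact the only new mathematical content of the $K'$-case, and you leave it unresolved. Your parenthetical justification --- that the deformation parameter can be chosen $K'$-invariant ``since $\lambda = \delta\lambda$'' --- is not yet an argument: the deformation direction is $(1-\theta_Q)\lambda \in (\mf{h}^*_{S,\mb{R}})^{-\theta_Q}$, and by part (3) of Proposition \ref{prop:hc deformations} what must be verified is that $(1-\theta_Q)\lambda$ also lies in $(\mf{h}^*_\mb{R})^{\delta}$. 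Knowing $\delta\lambda=\lambda$ alone does not give this; one also needs $\delta\theta_Q\lambda = \theta_Q\lambda$, i.e.\ that $\theta_Q$ and $\delta$ commute. The paper closes exactly this gap: choosing $x \in Q$ with a $\theta$-stable maximal torus $T$ and comparing the isomorphisms $\tau_x, \tau_{\theta(x)}\colon T \to H$, the hypothesis $\theta(Q)=Q$ (so $\theta(x)\in Q$) yields $\theta_Q = \delta\theta_Q\delta$, whence $(1-\theta_Q)\lambda \in (\mf{h}^*_{S,\mb{R}})^{-\theta_Q} \cap (\mf{h}^*_\mb{R})^{\delta} = \Gamma^{K'}_\mb{R}(\tilde{Q}_S)^{\mon}$. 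Without this, the family $f^s\gamma$ is only known to live in the $K$-equivariant category, and the semicontinuity and Jantzen statements would not control the extended structure, so your $\theta$-twisted characters could not be followed through the walls.

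A secondary point: the paper first splits into two cases according to whether the irreducible $K'$-object stays irreducible over $K$ or has the induced form $\mc{N}\oplus\theta^*\mc{N}$ with $\theta^*\mc{N}\not\cong\mc{N}$. In the induced case both characters lie in the image of $(1+\xi)$, where $\xi$ is the sign character of $K'/K$, and the theorem is an immediate consequence of Theorem \ref{thm:hodge and signature K} with no deformation at all. In your formalism this is the observation that the $\theta$-twisted characters vanish identically for such $\mc{M}$; you should say this explicitly, since your deformation step tacitly assumes a single $\theta$-stable supporting orbit, which fails precisely in the induced case (the support is $Q \cup \theta(Q)$ with $\theta(Q)\neq Q$).
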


We explain how to extend the proof of Theorem \ref{thm:hodge and signature K} to the $K'$-equivariant setting in \S\ref{subsec:pf of signature K'}. Theorem \ref{thm:hodge and signature K'} implies the unitarity criterion:

\begin{proof}[Proof of Theorem \ref{thm:unitarity criterion} modulo Theorem \ref{thm:hodge and signature K'}]
The irreducible Harish-Chandra module $V$ is unitary if and only if the $\mf{g}_\mb{R}$-invariant form
\begin{equation} \label{eq:unitarity criterion 1}
\langle u, \bar{v}\rangle_{\mf{g}_\mb{R}} = \Gamma(S)(\theta(u), \bar{v})
\end{equation}
is either positive or negative definite. This form is positive definite if and only if, for each $K'$-type $\mu$, the form $\Gamma(S)$ is $\epsilon(\mu)$-definite on the multiplicity space $\hom_{K'}(\mu, V)$, where $\epsilon(\mu)$ is the eigenvalue of $\theta \in Z(K')$ acting on $\mu$. But by Theorem \ref{thm:hodge and signature K'}, this holds if and only if $\Gr^F_p\hom_{K'}(\mu, V) \neq 0$ implies $\epsilon(\mu) = (-1)^{p + c}$ for all $p$, which in turn holds if and only if $\theta$ acts on $\Gr^F_p V$ with eigenvalue $(-1)^{p + c}$ for all $p$. Similarly, the $\mf{g}_\mb{R}$-invariant form \eqref{eq:unitarity criterion 1} is negative definite if and only if $\theta$ acts on $\Gr^F_p V$ with eigenvalue $(-1)^{p + c + 1}$ for all $p$, so this proves the theorem.
\end{proof}

\subsection{Positive deformations for $K$-orbits} \label{subsec:hc deformations}

We now specialize the theory of \S\ref{sec:deformations} to the setting of certain unions of $K$-orbits and write down the deformation spaces and cones of positive elements explicitly.

We consider the following locally closed subvarieties of the flag variety $\mc{B}$ (cf., Proposition \ref{prop:relevant+tempered}). Fix a $K$-orbit $Q \subset \mc{B}$ and a set $S$ of simple roots. This defines a partial flag variety $\mc{P}_S$ and a fibration $\pi_S \colon \mc{B} \to \mc{P}_S$ as in \S\ref{subsec:S-stalk}. Define
\[ Q_S := \pi_S^{-1}\pi_S(Q).\]
We note that $Q_S$ is $K$-stable. If $\theta(Q) = Q$ and $\delta(S) = S$, then $Q_S$ is also stable under $K'$.

In what follows, we write $\Phi_S = \Phi \cap \mrm{span}(S)$ for the root system spanned by $S$,
\[ \mf{h}^*_{S, \mb{R}}  = \{\mu \in \mf{h}^*_\mb{R} \mid \langle \mu, \check\alpha \rangle = 0 \text{ for } \alpha \in S\} \]
and
\[ \mf{h}^*_{S, \mb{R}, +} = \{\mu \in \mf{h}^*_\mb{R} \mid \langle \mu, \check\alpha \rangle > 0 \text{ for }\alpha \in \Phi_+ - \Phi_S\}\]
for the subspace (resp., cone) in $\mf{h}^*_\mb{R} = \mrm{Pic}^G(\mc{B})$ spanned by the pullbacks of line bundles (resp., ample line bundles) on $\mc{P}_S$ to $\mc{B}$. We also write $\tilde Q$ and $\tilde{Q}_S$ for the pre-images of $Q$ and $Q_S$ in $\tilde{\mc{B}}$.

\begin{prop} \label{prop:hc deformations}
Assume that $\Phi_S$ stable under $\theta_Q$. Then $Q_S$ is affinely embedded in $\mc{B}$, and:
\begin{enumerate}
\item \label{itm:hc deformations 1} The map
\[ \varphi \colon \Gamma_\mb{R}^K(\tilde{Q}_S)^\mon \to \mf{h}^*_\mb{R} \]
factors through an isomorphism
\[ \Gamma_\mb{R}^K(\tilde{Q}_S)^\mon \cong (\mf{h}^*_{S, \mb{R}})^{-\theta_Q}\]
onto the $(-1)$-eigenspace of $\theta_Q$ acting on $\mf{h}^*_{S, \mb{R}} \subset \mf{h}^*_\mb{R}$.
\item \label{itm:hc deformations 2} Under the above identification, we have
\[ \Gamma_\mb{R}^K(\tilde{Q}_S)^\mon_+ \supset (1 - \theta_Q)\mf{h}^*_{S, \mb{R}, +}. \]
\item \label{itm:hc deformations 3} If $Q_S$ is $K'$-stable, then
\[ \Gamma_\mb{R}^{K'}(\tilde{Q}_S)^\mon = (\mf{h}^*_{S, \mb{R}})^{-\theta_Q} \cap (\mf{h}^*_\mb{R})^\delta.\]
\end{enumerate}
\end{prop}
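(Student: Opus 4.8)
The plan is to prove the four assertions of the proposition in order: that $Q_S$ is affinely embedded, the description of $\varphi$ in (1), the positivity in (2), and finally (3) as a formal consequence of (1). Throughout I fix $x\in Q$, a $\theta$-stable maximal torus $T\subset B_x=\operatorname{Stab}_G(x)$, and — using that $\Phi_S$ is $\theta_Q$-stable — a $\theta$-stable Levi $L_x\supset T$ of the parabolic $P_x=\operatorname{Stab}_G(\pi_S(x))$ with root system $\Phi_S$.

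\emph{Affine embeddedness.} Since $\pi_S\colon\mc B\to\mc P_S$ is a smooth proper fibration, $\overline{Q_S}=\pi_S^{-1}(\overline{\pi_S(Q)})$ and $\overline{Q_S}\setminus Q_S=\pi_S^{-1}(\overline{\pi_S(Q)}\setminus\pi_S(Q))$, and flatness of $\pi_S$ preserves codimensions; hence $Q_S$ is affinely embedded in $\mc B$ iff the $K$-orbit $\pi_S(Q)$ is affinely embedded in $\mc P_S$. The latter follows from the $\theta_Q$-stability of $\Phi_S$: identifying $\pi_S(Q)=K/(K\cap P_x)$ and using the $\theta$-stable Levi $L_x$, one reduces to the statement that $K$-orbits on the full flag variety of $L_x$ are affinely embedded. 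I also record that the irreducible variety $Q_S$ has a dense open $K$-orbit $Q^\circ$, so $\tilde Q^\circ:=\pi^{-1}(Q^\circ)$ is a dense $K\times H$-orbit in $\tilde Q_S$; this will be used repeatedly.

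\emph{Proof of (1).} If an element of $\ker\varphi$ is represented by $f=\prod f_i^{s_i}$ with $\sum s_i\mu_i=0$, then $f$ is a $K$- and $H$-invariant non-vanishing function on $\tilde Q_S$; restricting to the dense orbit $\tilde Q^\circ$ shows it is constant, so $\varphi$ is injective. For the image: a $K$-invariant semi-invariant of weight $\mu$ invertible on $Q_S$ is a $K$-equivariant trivialization of $\mc O_{Q_S}(\mu)$; restricting to a fibre of $\pi_S$ (a flag variety for $L_x$) forces $\langle\mu,\check\alpha\rangle=0$ for $\alpha\in S$, i.e. $\mu\in\mf h^*_{S,\mb R}$, whereupon $\mc O_{Q_S}(\mu)$ is pulled back from $\pi_S(Q)=K/(K\cap P_x)$ and corresponds to the character of $K\cap P_x$ obtained from $\mu$ through $K\cap P_x\to L_x$; this bundle is $K$-equivariantly trivial, after passing to a power, exactly when that character is trivial on $(Z(L_x)^\theta)^\circ$, which — via the identifications $\mf z(\mf l_x)^*=\mf h^*_{S,\mb R}$ and $\theta|_{\mf z(\mf l_x)^*}\leftrightarrow\theta_Q$ — says precisely $\mu\in(\mf h^*_{S,\mb R})^{-\theta_Q}$. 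Conversely, any $\mu\in\mb X^*(H)$ with $\theta_Q\mu=-\mu$ is trivial on $(T^\theta)^\circ\supset(Z(L_x)^\theta)^\circ$, so a power of $\mc O_{Q_S}(\mu)$ is $K$-equivariantly trivial; since $(\mf h^*_{S,\mb R})^{-\theta_Q}$ is $\mb Q$-defined it is spanned over $\mb R$ by such lattice points, so $\varphi$ is onto it.

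\emph{Proof of (2).} Since the conditions cutting out the positive cone and the map $\nu\mapsto(1-\theta_Q)\nu$ are $\mb R$-linear, it suffices to check, for each boundary divisor, that a certain linear functional is positive on the (open, rational) cone $\mf h^*_{S,\mb R,+}$; by the dichotomy "a linear functional positive somewhere on an open cone is either identically $0$ or everywhere positive", it is enough to treat $\nu\in\mb X^*(H)$ with $\mc O_{\mc P_S}(\nu)$ ample and verify positivity then. By (1) (and the reduction above) $\mc O_{\mc P_S}\bigl((1-\theta_Q)\nu\bigr)|_{\overline{\pi_S(Q)}}=\mc O(D)$ for a $K$-invariant $\mb Z$-divisor $D$ supported on $E:=\overline{\pi_S(Q)}\setminus\pi_S(Q)$, whose canonical section is invertible exactly on $\pi_S(Q)$; so $(1-\theta_Q)\nu$ is positive iff every coefficient of $D$ is positive. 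The codimension-one components $E_i$ of $E$ arise from the real and complex ``descent'' roots $\alpha_i\notin S$ for $\pi_S(Q)$; restricting along the fibres of $\mc P_S\to\mc P_{S\cup\{\alpha_i\}}$ (a non-characteristic reduction to a rank-one situation, in the spirit of the proof of Theorem~\ref{thm:split tempered}) identifies the order of $\mc O_{\mc P_S}(\mu)$ along $E_i$ with $\tfrac12\langle\mu,\check\alpha_i\rangle$ in the real case and $\langle\mu,\check\alpha_i\rangle$ in the complex case. Hence the coefficient of $E_i$ in $D$ is a positive multiple of $\langle\nu,\check\alpha_i-\theta_Q\check\alpha_i\rangle$, and for a descent root $\theta_Q\alpha_i$ is negative (with $\theta_Q\alpha_i=-\alpha_i$ in the real case), so $\check\alpha_i-\theta_Q\check\alpha_i$ is a nonzero sum of positive coroots, all lying outside $\check\Phi_S$ because $\theta_Q$ stabilizes $\Phi_S$ and $\alpha_i\notin\Phi_S$; since $\nu$ pairs strictly positively with every positive coroot outside $\check\Phi_S$, this coefficient is positive.

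\emph{Proof of (3), and the main obstacle.} When $Q_S$ is $K'$-stable, the extra involution $\theta\in K'$ acts on $\tilde{\mc B}$ compatibly with $\delta$ on $H$, so $\varphi$ intertwines the $\theta$-action on $\Gamma_\mb R^K(\tilde Q_S)^\mon$ with $\delta$ on $\mf h^*_\mb R$; since $\Gamma_\mb R^{K'}(\tilde Q_S)^\mon=(\Gamma_\mb R^K(\tilde Q_S)^\mon)^\theta$ after tensoring with $\mb R$ (using $\theta^2=1$ and that a function and its $\mb C^\times$-multiples agree in the quotient) and $\varphi$ is an isomorphism, restricting to $\theta$-fixed points gives $\Gamma_\mb R^{K'}(\tilde Q_S)^\mon\cong(\mf h^*_{S,\mb R})^{-\theta_Q}\cap(\mf h^*_\mb R)^\delta$. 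The step I expect to be the real work is the end of (2): making precise that the codimension-one boundary pieces of $\pi_S(Q)$ in $\mc P_S$ are exactly the real and complex descents, and computing the valuation of $\mc O(\mu)$ along each of them — this is precisely the root combinatorics of $K$-orbits that the rest of the paper is designed to sidestep, so carrying it out uniformly (ideally through the rank-one reduction rather than a case-by-case analysis) is the crux of the argument.
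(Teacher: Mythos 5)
Parts \eqref{itm:hc deformations 1} and \eqref{itm:hc deformations 3} of your proposal are essentially sound and close in spirit to the paper's argument (the paper computes $\Gamma(\tilde Q,\mc O^\times)^K$ by restricting to the $H$-fibre over $x\in Q$ and identifying the stabilizer's image with $H^{\theta_Q}$, then reduces the case $S\neq\emptyset$ to $S=\emptyset$ by noting that a semi-invariant extends to $Q_S$ exactly when the line bundle descends to $\mc P_S$; your route through $K$-equivariant triviality of $\mc O(\mu)$ on $\pi_S(Q)=K/(K\cap P_x)$ is the same computation in different clothing). The problems are with the two remaining claims, which are exactly where the hypothesis $\theta_Q\Phi_S=\Phi_S$ has to do real work. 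First, your affine-embeddedness argument is not an argument: after the (correct) reduction to showing $\pi_S(Q)$ is affinely embedded in $\mc P_S$, you assert this "reduces to the statement that $K$-orbits on the full flag variety of $L_x$ are affinely embedded." Affineness of $K_{L_x}$-orbits in the fibres says nothing about how the base orbit $K\cdot\pi_S(x)$ sits in $G/P_x$; and the statement you actually need — that $K$-orbits on a \emph{partial} flag variety are affinely embedded — is false in general, which is precisely why the proposition carries the hypothesis on $\Phi_S$. Second, you concede yourself that the proof of \eqref{itm:hc deformations 2} is not carried out: the classification of the codimension-one components of $\overline{\pi_S(Q)}\smallsetminus\pi_S(Q)$ and the valuation of the invariant section along each of them is left as "the crux." Beyond being unproven, the plan as stated is too narrow: boundary divisors of a $K$-orbit closure need not all arise from \emph{simple}-root descents (already for $B$-orbits, codimension-one degenerations correspond to arbitrary covering relations in the closure order, not just weak-order covers), so the rank-one reduction along $\mc P_S\to\mc P_{S\cup\{\alpha_i\}}$ does not reach all the divisors whose orders you must control.

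The paper avoids all of this orbit-closure combinatorics by proving a single stronger statement that yields affine-embeddedness and \eqref{itm:hc deformations 2} simultaneously: for every $\mu\in\mb X^*(H)\cap\mf h^*_{S,\mb R,+}$ one constructs a $K$-invariant boundary equation for $Q_S$ in $\mrm H^0(\bar Q_S,\mc O(\mu-\theta_Q\mu))$, and then Proposition \ref{prop:positivity} converts the existence of these boundary equations into positivity of the whole cone $(1-\theta_Q)\mf h^*_{S,\mb R,+}$. The construction maps $\mc B$ to $\mc P_S\times\mc P_{\delta(S)}$ via $(\pi_S,\pi_{\delta(S)}\circ\theta)$, identifies $Q_S$ as a connected component of the preimage of the $G$-orbit $X=G\cdot(P,\theta(P))$, and uses the fact — this is where $\theta_Q\Phi_S=\Phi_S$ enters — that $P$ and $\theta(P)$ share a Levi, so that $\mu$ also defines a character of $\theta(P)$ and the orbit of the base point in $G/P$ under $\theta(P)$ is a $\theta(N)$-orbit; restricting the highest weight vector of $\mrm H^0(G/P,\mc O(\mu))$ then gives a $G$-invariant boundary equation for $X$ in $\mrm H^0(\bar X,\mc O(\mu,-\mu))$, exactly as in \cite[Lemmas 3.5.1 and 3.5.3]{BB2}. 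If you want to salvage your approach, you should either prove the divisor-by-divisor valuation statement in full (including non-simple degenerations), or replace that step by this explicit boundary-equation construction.
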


In fact, one can easily check that the containment in \eqref{itm:hc deformations 2} is an equality, but we will not use this fact.

\begin{proof}
We first prove \eqref{itm:hc deformations 1} and \eqref{itm:hc deformations 3} when $S = \emptyset$; in this case, $Q_S = Q$ is a single $K$-orbit. Fix $x \in Q$. Then we may identify the fiber of $\tilde{\mc{B}} \to \mc{B}$ over $x$ with $H$, and
\[ \Gamma(\tilde{Q}, \mc{O}_{\tilde{Q}}^\times)^K \cong \Gamma(H, \mc{O}_H^\times)^{\mrm{Stab}_K(x)}. \]
By construction, $\mrm{Stab}_K(x)$ acts on $H$ via a homomorphism
\[ \tau_x \colon \mrm{Stab}_K(x) \to H^{\theta_Q},\]
which is surjective over the identity component. Let us write $H' \subset H$ for the image. Then
\[ \Gamma(\tilde{Q}, \mc{O}_{\tilde{Q}}^\times)^K \cong \Gamma(H, \mc{O}_H^\times)^{H'} = \Gamma(H/H', \mc{O}^\times).\]
Now for $\mu \in \mb{X}^*(H)$, $\Gamma(Q, \mc{O}^\times)_\mu^K$ is identified with the subset
\[ \Gamma(Q, \mc{O}^\times)_\mu^K = \begin{cases} \mb{C}^\times \cdot \mu, &\mbox{if $\mu \in \mb{X}^*(H/H')$,} \\ 0, &\mbox{otherwise.} \end{cases} \]
So we deduce that $\varphi$ defines an isomorphism
\[ \Gamma_\mb{R}^K(\tilde Q)^\mon \overset{\sim}\to \mb{X}^*(H/H') \otimes \mb{R} = (\mf{h}^*_\mb{R})^{-\theta_Q} \subset \mf{h}^*_\mb{R},\]
proving \eqref{itm:hc deformations 1} for $S = \emptyset$. We also deduce \eqref{itm:hc deformations 3} by observing that
\[ \Gamma(Q, \mc{O}^\times)_\mu^{K'} = \Gamma(Q, \mc{O}^\times)_\mu^{K} \]
as long as $\mu \in 2\mb{X}^*(H/H')^\delta$.

Now consider \eqref{itm:hc deformations 1} and \eqref{itm:hc deformations 3} when $S \neq \emptyset$. We may assume without loss of generality that $Q \subset Q_S$ is open; the claims now follow from the fact that
\[ f \in \Gamma(Q, \mc{O})_\mu = \mrm{H}^0(Q, \mc{O}(\mu))\]
extends to a non-vanishing section of $\mc{O}(\mu)$ on $Q_S$ if and only if $\mc{O}(\mu)$ descends to $\mc{P}_S$.

We now prove that $Q_S$ is affinely embedded, and \eqref{itm:hc deformations 2}. Following \cite[Lemma 3.5.3]{BB2}, we will show that $Q_S$ has a boundary equation in $\mrm{H}^0(\bar{Q}_S, \mc{O}(\mu - \theta_Q\mu))^K$ for any $\mu \in \mb{X}^*(H) \cap \mf{h}^*_{S, \mb{R}, +}$. This implies the result by Proposition \ref{prop:positivity}.

To construct the boundary equation, consider the map
\[ p \colon \mc{B} \xrightarrow{(1, \theta)} \mc{B} \times \mc{B} \xrightarrow{(\pi_S, \pi_{\delta(S)})} \mc{P}_S \times \mc{P}_{\delta(S)}.\]
Fix $x \in Q \subset \mc{B}$ corresponding to a Borel $B = B_x$, let $P = P_{\pi_S(x)}$ be the corresponding parabolic and let
\[ X = G \cdot (P, \theta(P)) \subset \mc{P}_S \times \mc{P}_{\delta(S)}\]
be the corresponding $G$-orbit. Then, by the argument of \cite[Lemma 3.5.3]{BB2},
\[ Q_S \subset p^{-1}(X)\]
is a connected component. Let us write $\mc{P}_S = G/P$ and $\mc{P}_{\delta(S)} = G/\theta(P)$. Then, since $\theta_Q$ preserves $\Phi_S$, $P \cap \theta(P)$ contains a Levi subgroup of both $P$ and $\theta(P)$. Now, $\mu \in \mb{X}^*(H) \cap \mf{h}^*_{S, \mb{R}, +}$ defines a character of $P$, and hence also of $\theta(P)$ via this common Levi; note that $\mu$ is dominant for $P$ but not necessarily for $\theta(P)$. We write $\mc{O}(\mu, -\mu)$ for the corresponding line bundle on $G/P \times G/\theta(P)$. We claim that $X$ has a $G$-invariant boundary equation in $\mrm{H}^0(\bar{X}, \mc{O}(\mu, -\mu))$; pulling back along $p$, this gives the desired boundary equation for $\bar{Q}_S$.

Now, the claim is equivalent to the assertion that the $\theta(P)$-orbit $X_0$ of the base point in $G/P$ has a boundary equation in $\mrm{H}^0(\bar{X}_0, \mc{O}(\mu))$ on which $\theta(P)$ acts with character $\mu$. Since $P$ and $\theta(P)$ contain a common Levi, this orbit is also a $\theta(N)$-orbit (where $N \subset B$ is the unipotent radical), so we may apply the argument of \cite[Lemma 3.5.1]{BB2} to conclude that there is such a boundary equation, given by the highest weight vector in $\mrm{H}^0(G/P, \mc{O}(\mu))$ restricted to the orbit.
\end{proof}

\subsection{Proof in the $K$-equivariant case} \label{subsec:pf of signature K}

We now give the proof of Theorem \ref{thm:hodge and signature K}, i.e., that
\begin{equation} \label{eq:hodge and signature pf 1}
\chi^{\mathit{sig}}(\mc{M}, \zeta) = \zeta^{c(\mc{M})} \chi^{H}(\mc{M}, \zeta) \mod \zeta^2 - 1,
\end{equation}
for $\lambda \in \mf{h}^*_\mb{R}$ dominant, $\mc{M} \in \mhm(\mc{D}_\lambda, K)$ a polarized Hodge module of weight $w$, and
\[ c(\mc{M}) = \dim \mc{B} - w.\]
Clearly it is enough to prove the theorem in the case where $\mc{M}$ is irreducible. So we may as well assume that $\mc{M} = j_{!*}\gamma$ with $j \colon Q \hookrightarrow \mc{B}$ a $K$-orbit and $\gamma$ a $K$-equivariant $(\lambda - \rho)$-twisted local system on $Q$. We work by induction on $\dim Q$.

We first reduce to the case of relevant local systems.

\begin{lem} \label{lem:irrelevant parameters}
Assume the local system $\gamma$ on $Q$ is not relevant. Then
\[ \Gamma(\mc{B}, \Gr^Fj_{!*}\gamma) = 0.\]
Hence, \eqref{eq:hodge and signature pf 1} holds trivially for $\mc{M} = j_{!*}\gamma$.
\end{lem}
\begin{proof}
By definition, we have $\Gamma(\mc{B}, j_{!*}\gamma) = 0$ since $\gamma$ is not relevant. But by Theorem \ref{thm:filtered exactness}, the spectral sequence associated with the Hodge filtration degenerates at the first page, so this implies that $\Gamma(\mc{B}, \Gr^Fj_{!*}\gamma) = 0$ as claimed.
\end{proof}

We next note the following base case.

\begin{lem} \label{lem:continuous param zero}
If $(1 - \theta_Q)\lambda = 0$ and $\gamma$ is relevant, then \eqref{eq:hodge and signature pf 1} holds for $\mc{M} = j_{!*}\gamma$.
\end{lem}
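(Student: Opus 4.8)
The plan is to observe that the hypotheses of Lemma \ref{lem:continuous param zero} are exactly the definition of temperedness, and then to read the conclusion off from Theorem \ref{thm:tempered hodge}. Since we have already reduced to the case $\mc{M} = j_{!*}\gamma$ with $j \colon Q \hookrightarrow \mc{B}$ a $K$-orbit, $\gamma$ a $K$-equivariant $(\lambda - \rho)$-twisted local system, and $\lambda \in \mf{h}^*_\mb{R}$ dominant, the extra assumption $(1 - \theta_Q)\lambda = 0$ reads $\theta_Q \lambda = \lambda$; combined with the regularity of $\gamma$, this says precisely that $j_{!*}\gamma$ is a tempered Harish-Chandra sheaf. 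Hence Theorem \ref{thm:tempered hodge} \eqref{itm:tempered hodge 2} applies and Conjecture \ref{conj:schmid-vilonen} holds for $\mc{M} = j_{!*}\gamma$.

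It then remains to deduce \eqref{eq:hodge and signature pf 1} from the validity of Conjecture \ref{conj:schmid-vilonen} for $\mc{M}$, which is a purely formal bookkeeping step in $\widehat{\mrm{Rep}}(K)[\zeta]/(\zeta^2 - 1)$. I would argue as follows. By Theorem \ref{thm:filtered exactness} the global sections functor is filtered exact, so $\Gr^F_p \Gamma(\mc{M}) = \Gamma(\Gr^F_p \mc{M})$ and therefore $\chi^H(\mc{M}, u) = \sum_p [\Gamma(\Gr^F_p \mc{M})] u^p$. On the other hand, Conjecture \ref{conj:schmid-vilonen} says that the triple $(\Gamma(\mc{M}), \Gamma(F_\bullet \mc{M}), (-1)^{\dim \mc{B}}\Gamma(S))$ is a polarized Hodge structure of weight $w$; in particular, unwinding Definition \ref{defn:polarized hodge str}, the form $\Gamma(S)$ is $(-1)^{p + w - \dim \mc{B}}$-definite on the subspace $\Gamma(F_p \mc{M}) \cap \Gamma(F_{p - 1}\mc{M})^\perp \cong \Gr^F_p\Gamma(\mc{M})$. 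Since the Hodge filtration and the form $\Gamma(S)$ are $K$-equivariant and $K$ is reductive, this sign computation passes to each $K$-isotypic multiplicity space, giving, modulo $\zeta^2 - 1$,
\[ \chi^{\mathit{sig}}(\mc{M}, \zeta) = \sum_p \zeta^{\,p + w - \dim \mc{B}}\,[\Gr^F_p\Gamma(\mc{M})] = \zeta^{\,w - \dim \mc{B}}\,\chi^H(\mc{M}, \zeta) = \zeta^{\,c(\mc{M})}\,\chi^H(\mc{M}, \zeta), \]
where in the last step we used $\zeta^{-1} = \zeta$ and $w - \dim \mc{B} \equiv \dim \mc{B} - w = c(\mc{M}) \pmod 2$. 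This is precisely \eqref{eq:hodge and signature pf 1}.

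There is no serious obstacle here, since every input is a quotation of an already established result. The only point that deserves a moment's care is that a polarized Hodge structure pins down the sign of $\Gamma(S)$ on the full graded piece $\Gr^F_p$, not merely on Lefschetz-primitive parts; but this is immediate from Definition \ref{defn:polarized hodge str}, because the Hodge decomposition identifies $\Gr^F_p\Gamma(\mc{M})$ with the single summand $V^{-p,\,w + p}$, on which the polarization is definite and the decomposition is orthogonal, so the restriction of $\Gamma(S)$ to $\Gamma(F_{p-1}\mc{M})$ is automatically non-degenerate and the orthogonal complement used above is well behaved. The filtered exactness of Theorem \ref{thm:filtered exactness} is what legitimizes the passage between $\Gr^F\Gamma$ and $\Gamma\,\Gr^F$; everything else is formal.
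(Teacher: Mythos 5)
Your proposal is correct and is essentially the paper's own argument: the hypotheses $(1-\theta_Q)\lambda = 0$ plus regularity are exactly the definition of temperedness, so Theorem \ref{thm:tempered hodge}\,\eqref{itm:tempered hodge 2} applies, and \eqref{eq:hodge and signature pf 1} follows. The only difference is that you spell out the formal deduction of the character identity from Conjecture \ref{conj:schmid-vilonen} (via filtered exactness and the sign of $\Gamma(S)$ on each graded piece), a step the paper treats as immediate; your bookkeeping there is accurate.
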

\begin{proof}
The condition says that $j_{!*}\gamma$ is tempered. Hence, \eqref{eq:hodge and signature pf 1} holds for $\mc{M} = j_{!*}\gamma$ by Theorem \ref{thm:tempered hodge}.
\end{proof}

Suppose now that $\gamma$ is relevant and $(1 - \theta_Q)\lambda \neq 0$. In this setting, we use the techniques of \S\ref{sec:deformations} to construct an appropriate deformation of $\gamma$ to one with $(1 - \theta_Q)\lambda = 0$.

First, let $S$ denote the set of simple roots $\alpha$ such that $\langle \lambda, \check\alpha \rangle = 0$ and $\theta_Q\alpha \in \Phi_-$. By Proposition \ref{prop:relevant+tempered}, the assumption that $\gamma$ is relevant implies that these are all real with respect to the orbit $Q$ and that $\gamma$ extends cleanly to a pure Hodge module $\mc{N}$ on $Q_S:= \pi_S^{-1}\pi_S(Q)$. Since the extension to $Q_S$ is clean, writing $j_S \colon Q_S \to \mc{B}$ for the inclusion, we have
\[ j_! f\gamma = j_{S!}f\mc{N}, \quad j_*f\gamma = j_{S*}f\mc{N} \quad \mbox{and} \quad j_{!*}f\gamma = j_{S!*}f\mc{N}\]
for all $f \in \Gamma_\mb{R}^K(\tilde{Q}_S)^{\mon}$.

By construction, $\lambda \in \mf{h}^*_{S, \mb{R}, +}$, so by Proposition \ref{prop:hc deformations}, we have a unique $f \in \Gamma_\mb{R}^K(\tilde{Q}_S)^\mon_+$ such that $\varphi(f) = (1 - \theta_Q)\lambda$. We consider the associated family
\[ j_{S!*}f^s\mc{N} = j_{!*}f^s\gamma \in \mhm(\mc{D}_{\lambda + s (1 - \theta_Q)\lambda}, K). \]

\begin{lem}
Let
\[ s_0 = \min\left\{\left. s \in \left[-\frac{1}{2}, 0\right]\, \right| \,\mbox{$\lambda + s(1 - \theta_Q)\lambda$ is dominant}\right\}.\]
If $s_0 > -\frac{1}{2}$ then $f^{s_0}\gamma$ is not relevant.
\end{lem}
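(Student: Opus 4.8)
The plan is to reduce the statement to a short computation at the wall $s_0$. First I would record the elementary facts about the deformation direction: writing $\mu(s) = \lambda + s(1-\theta_Q)\lambda$, this is affine in $s$, and the set $\{s\in[-\tfrac12,0] : \mu(s)\text{ dominant}\}$ is the intersection of $[-\tfrac12,0]$ with the closed half‑lines $\{\langle\mu(s),\check\alpha\rangle\ge 0\}$ over simple roots $\alpha$; it is convex and contains $0$, hence equals $[s_0,0]$, so in particular $\mu(s_0)$ is dominant (it is the infinitesimal character of $f^{s_0}\gamma$) and $s_0$ is the quantity in the statement. The argument then runs by contradiction: assume $f^{s_0}\gamma$ is regular.

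The key step is to extract the wall that is crossed at $s_0$. Since $s_0>-\tfrac12$, minimality of $s_0$ means $\mu(s)$ must leave the dominant cone immediately below $s_0$, and I would show this produces a simple root $\alpha$ with $\langle\mu(s_0),\check\alpha\rangle=0$ and $c:=\langle(1-\theta_Q)\lambda,\check\alpha\rangle>0$: if instead every simple root vanishing at $s_0$ had $\langle(1-\theta_Q)\lambda,\check\alpha\rangle\le 0$, then $\langle\mu(s),\check\alpha\rangle=(s-s_0)\langle(1-\theta_Q)\lambda,\check\alpha\rangle\ge 0$ for $s$ slightly below $s_0$ while the remaining simple‑root pairings stay positive near $s_0$ by continuity, so $\mu(s)$ would still be dominant, contradicting the definition of $s_0$.

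Finally I would close with the computation. From $\langle\mu(s_0),\check\alpha\rangle=0$ one gets $\langle\lambda,\check\alpha\rangle=-s_0c$, hence $\langle\lambda,\theta_Q\check\alpha\rangle=\langle\theta_Q\lambda,\check\alpha\rangle=\langle\lambda,\check\alpha\rangle-c=-(1+s_0)c$, which is strictly negative since $s_0>-\tfrac12$. As $\lambda$ is dominant this forces $\theta_Q\check\alpha$, and hence $\theta_Q\alpha$, to be negative, so $\alpha$ belongs to the set $\bar S$ attached to the (assumed regular) local system $f^{s_0}\gamma$; by Proposition \ref{prop:regular+tempered}\eqref{itm:regular+tempered 1} the root $\alpha$ is then real for $Q$, i.e.\ $\theta_Q\check\alpha=-\check\alpha$. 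Substituting gives $\langle\lambda,\theta_Q\check\alpha\rangle=-\langle\lambda,\check\alpha\rangle=s_0c$, and comparison with $-(1+s_0)c$ forces $s_0=-\tfrac12$, contradicting the hypothesis. Hence $f^{s_0}\gamma$ cannot be regular. I expect the only delicate point to be the middle step — extracting a genuinely crossed wall, where one must rule out the degenerate possibility that $s_0$ is pinned only by hyperplanes along which $\mu$ is constant or points into the dominant cone; everything else is formal linear algebra together with a single application of Proposition \ref{prop:regular+tempered}\eqref{itm:regular+tempered 1}.
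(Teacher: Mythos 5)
Your proof is correct and follows essentially the same route as the paper's: extract a simple root $\alpha$ with $\langle \lambda + s_0(1-\theta_Q)\lambda, \check\alpha\rangle = 0$ and $\langle (1-\theta_Q)\lambda,\check\alpha\rangle > 0$, show via dominance that $\theta_Q\alpha \in \Phi_-$ while realness of $\alpha$ would force $s_0 = -\tfrac{1}{2}$, and conclude by Proposition \ref{prop:regular+tempered}\eqref{itm:regular+tempered 1}. The only differences are cosmetic — you argue by contradiction and pair against $\lambda$ where the paper computes $\langle \lambda', \theta_Q\check\alpha\rangle = -(1+2s_0)\langle(1-\theta_Q)\lambda,\check\alpha\rangle < 0$ directly, and you spell out the wall-extraction step the paper merely asserts.
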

\begin{proof}
Write
\[ \lambda' = \lambda + s_0(1 - \theta_Q)\lambda.\]
If $s_0 > -\frac{1}{2}$ then there exists a simple root $\alpha$ such that $\langle \lambda', \check\alpha\rangle = 0$ and $\langle (1 - \theta_Q)\lambda, \check\alpha \rangle > 0$. But now
\[ \langle \lambda', \theta_Q\check\alpha \rangle = -\langle(1 - \theta_Q) \lambda', \check\alpha\rangle = -(1 + 2s_0)\langle (1 - \theta_Q)\lambda, \check\alpha \rangle < 0.\]
Hence, we must have that $\theta_Q\alpha \in \Phi_-$ and that $\alpha$ is not real. So by Proposition \ref{prop:relevant+tempered}, $f^{s_0}\gamma$ is not relevant as claimed.
\end{proof}

By Lemmas \ref{lem:irrelevant parameters} and \ref{lem:continuous param zero}, we deduce that \eqref{eq:hodge and signature pf 1} holds for $\mc{M} = j_{!*}f^{s_0}\gamma$.

We now apply Proposition \ref{prop:hyperplanes}: since $f$ is positive for $Q_S$, there exist $s_0 < s_1 < s_2 < \cdots < s_n < s_{n + 1} = 0$ such that
\[ \mbox{$j_!f^s\gamma \to j_*f^s\gamma$ is an isomorphism for $s \in (s_0, 0) - \{s_1, \ldots, s_n\}$}.\]
By continuity of eigenvalues, the signature characters $\chi^{\mathit{sig}}(j_{!*}f^s\gamma, \zeta)$ are constant on the open intervals $(s_i, s_{i + 1})$. By Theorem \ref{thm:semi-continuity} and Proposition \ref{prop:monodromic semi-continuity}, the same is true for $\Gr^F j_{!*}f^s\gamma$, and hence $\chi^H(j_{!*}f^s\gamma, u)$. So, by induction, it suffices to prove the following lemma.

\begin{lem}
Assume that \eqref{eq:hodge and signature pf 1} holds for dominantly twisted Harish-Chandra sheaves and supports of lower dimension than $Q$. Then the following are equivalent:
\begin{enumerate}
\item \eqref{eq:hodge and signature pf 1} holds for $\mc{M} = j_{!*}f^s\gamma$ with $s_i < s < s_{i + 1}$.
\item \eqref{eq:hodge and signature pf 1} holds for $\mc{M} = j_{!*}f^{s_i}\gamma$.
\item \eqref{eq:hodge and signature pf 1} holds for $\mc{M} = j_{!*}f^s\gamma$ with $s_{i - 1} < s < s_i$.
\end{enumerate}
\end{lem}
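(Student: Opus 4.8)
The plan is to establish the equivalence of (1), (2) and (3) by a Jantzen-filtration wall-crossing argument, running the relevant exact sequences through both the Hodge $K$-character and the signature $K$-character and checking that the two sides of \eqref{eq:hodge and signature pf 1} transform compatibly across the wall $s = s_i$. First I would set up notation: write $\mc{M}^! = j_!f^{s_i}\gamma = j_{S!}f^{s_i}\mc{N}$, $\mc{M}^* = j_*f^{s_i}\gamma = j_{S*}f^{s_i}\mc{N}$ and $\mc{M}^{!*} = j_{!*}f^{s_i}\gamma$, and recall from Theorem \ref{thm:jantzen} (applied to the positive element $f \in \Gamma_\mb{R}^K(\tilde Q_S)^\mon_+$ via Proposition \ref{prop:hc deformations}) that the Jantzen filtration on $\mc{M}^!$ coincides with the weight filtration, with semisimple associated graded $\Gr^J_{-n}\mc{M}^! = \bigoplus_k \mc{M}^{!*}_{n,k}$ a direct sum of (Tate twists of) intermediate extensions $j_{!*}f^{s_i}\gamma'$ supported on orbit closures of dimension $< \dim Q$ for $n > 0$, together with $\Gr^J_0\mc{M}^! = \mc{M}^{!*}$ itself in degree $n = 0$. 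The key structural input is that all the pieces appearing in $\Gr^J_{\neq 0}$ have strictly smaller support, so \eqref{eq:hodge and signature pf 1} is known for them by the outer induction on $\dim Q$.

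Next I would extract the numerical consequences. On the Hodge side, the exact triangle $\mc{M}^! \to \mc{M}^{!*}$ (and the dual one for $\mc{M}^*$) together with Theorem \ref{thm:semi-continuity} and Proposition \ref{prop:monodromic semi-continuity} gives $\Gr^F j_{!*}f^{s_i - \epsilon}\gamma \cong \Gr^F j_!f^{s_i}\gamma$ and $\Gr^F j_{!*}f^{s_i + \epsilon}\gamma \cong \Gr^F j_*f^{s_i}\gamma$ for $0 < \epsilon \ll 1$; applying the filtered-exact functor $\Gamma$ (Theorem \ref{thm:filtered exactness}) and using the strict short exact sequences relating $j_!$, $j_{!*}$, $j_*$, one gets
\[
\chi^H(j_{!*}f^{s_i - \epsilon}\gamma, u) - \chi^H(j_{!*}f^{s_i + \epsilon}\gamma, u) = \sum_{n > 0}(1 - u^{2n})\,\chi^H(\Gr^J_{-n}\mc{M}^!, u),
\]
where the $u^{2n}$ factors record the Tate twists in the isomorphisms $s^n \colon \Gr^J_n \mc{M}^* \cong \Gr^J_{-n}\mc{M}^!(-n)$ of \S\ref{subsec:jantzen}. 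On the signature side, the Jantzen forms $s^{-n}\Gr^J_{-n}(S)$ are polarizations by Theorem \ref{thm:jantzen}, and the standard continuity-of-eigenvalues analysis of the Jantzen form across a wall (as in \cite{BB2}, \cite[\S 7]{DV1}) gives the matching identity
\[
\chi^{\mathit{sig}}(j_{!*}f^{s_i - \epsilon}\gamma, \zeta) - \chi^{\mathit{sig}}(j_{!*}f^{s_i + \epsilon}\gamma, \zeta) = \sum_{n > 0}(1 - \zeta^{2n})\,\chi^{\mathit{sig}}(\Gr^J_{-n}\mc{M}^!, \zeta) \equiv 0 \pmod{\zeta^2 - 1},
\]
and likewise relating $\chi^{\mathit{sig}}(j_{!*}f^{s_i}\gamma,\zeta)$ to the $s_i \pm \epsilon$ values modulo contributions from the smaller-support pieces. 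Since $c(\Gr^J_{-n}\mc{M}^!) = c(\mc{M}^{!*}) + n$ (weight drops by $n$), the factors $\zeta^{c}$ line up: $\zeta^{c(\mc{M}^{!*}) }(1 - \zeta^{2n})\chi^H(\Gr^J_{-n}\mc{M}^!,\zeta) = (1-\zeta^{2n})\zeta^{-n}\cdot\zeta^{c(\Gr^J_{-n}\mc{M}^!)}\chi^H(\Gr^J_{-n}\mc{M}^!,\zeta)$, and modulo $\zeta^2 - 1$ one has $\zeta^{2n} \equiv 1$, so every correction term vanishes modulo $\zeta^2 - 1$ on both sides. Hence \eqref{eq:hodge and signature pf 1} for any one of $j_{!*}f^{s_i}\gamma$, $j_{!*}f^{s}\gamma$ ($s_{i-1} < s < s_i$), $j_{!*}f^s\gamma$ ($s_i < s < s_{i+1}$) forces it for the others.

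I would then assemble the three implications: (1)$\Leftrightarrow$(2) follows from the $s_i + \epsilon$ comparison, (2)$\Leftrightarrow$(3) from the $s_i - \epsilon$ comparison, using that $\chi^H$ and $\chi^{\mathit{sig}}$ are constant on each open interval $(s_{i-1}, s_i)$, $(s_i, s_{i+1})$ (constancy of $\chi^H$ by Theorem \ref{thm:semi-continuity}/Proposition \ref{prop:monodromic semi-continuity}, constancy of $\chi^{\mathit{sig}}$ by continuity of eigenvalues since no wall is crossed). The main obstacle I anticipate is the bookkeeping on the signature side: one must verify carefully that the jump in $\chi^{\mathit{sig}}$ across $s = s_i$ is governed by exactly the Jantzen-graded pieces with the correct signs, i.e., adapting the Beilinson--Bernstein/\cite[\S 7]{DV1} analysis of how the signature of a meromorphically-varying Hermitian form behaves as a pole is crossed, and keeping track of the $(-1)$-definiteness normalizations so that the contribution of each $\Gr^J_{-n}\mc{M}^!$ enters with the sign predicted by its being a polarized Hodge module of weight $w - n$. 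Once that sign analysis is in place, the reduction to smaller support and the elementary observation $\zeta^{2n} \equiv 1 \pmod{\zeta^2-1}$ finish the argument.
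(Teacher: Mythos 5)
Your overall strategy is the paper's: cross the wall at $s=s_i$ via the Jantzen filtration of $j_!f^{s_i}\gamma$ and $j_*f^{s_i}\gamma$, use semi-continuity (Theorem \ref{thm:semi-continuity}, Proposition \ref{prop:monodromic semi-continuity}) to identify $\chi^H$ on the open intervals with $\chi^H(j_!f^{s_i}\gamma)$ and $\chi^H(j_*f^{s_i}\gamma)$, and feed the boundary-supported Jantzen pieces into the induction on $\dim Q$. But the central bookkeeping in your two displayed identities is wrong, and the error is fatal to the argument as written. The isomorphism $s^n\colon \Gr^J_n j_*f^{s_i}\gamma \cong \Gr^J_{-n}j_!f^{s_i}\gamma(-n)$ carries the Tate twist $(-n)$, which shifts the Hodge character by $u^{-n}$ (the exponent $2n$ is the \emph{weight} shift, not the Hodge-filtration shift), and the meromorphic family of forms behaves like $(s-s_i)^n$ on $\Gr^J_{-n}$, so crossing the wall multiplies the $n$-th Jantzen form by $(-1)^n$, i.e.\ the signature character picks up $\zeta^{n}$, not $\zeta^{2n}$. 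The correct jumps are
\[
\chi^H(j_{!*}f^{s_i+\epsilon}\gamma,u)-\chi^H(j_{!*}f^{s_i-\epsilon}\gamma,u)=\sum_{n>0}\bigl(u^{-n}-1\bigr)\,\chi^H(\Gr^J_{-n}j_!f^{s_i}\gamma,u)
\]
and
\[
\chi^{\mathit{sig}}(j_{!*}f^{s_i+\epsilon}\gamma,\zeta)-\chi^{\mathit{sig}}(j_{!*}f^{s_i-\epsilon}\gamma,\zeta)=\sum_{n>0}\bigl(1-\zeta^{n}\bigr)\,\chi^{\mathit{sig}}(\Gr^J_{-n}j_!f^{s_i}\gamma,\zeta),
\]
and neither of these vanishes modulo $\zeta^2-1$: the odd-$n$ terms survive. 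Indeed, if your $(1-\zeta^{2n})$ identity for the signature side were correct, the signature character would be unchanged modulo $\zeta^2-1$ across every reducibility wall, contradicting the basic phenomenon (the whole point of the ALTV algorithm) that signatures do change at such walls via the odd-level Jantzen pieces.

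A telltale symptom is that in your version the inductive hypothesis is never genuinely used: with factors $(1-\zeta^{2n})$ everything cancels for free. In the correct argument the two (nonzero) jumps are matched only \emph{after} multiplying the Hodge side by $\zeta^{c}$ and invoking the induction hypothesis for the boundary pieces: since $\Gr^J_{-n}j_!f^{s_i}\gamma$ is pure of weight $w-n$ (Theorem \ref{thm:jantzen}) and has smaller support, one has $\chi^{\mathit{sig}}(\Gr^J_{-n},\zeta)=\zeta^{c+n}\chi^H(\Gr^J_{-n},\zeta)$ with $c=c(j_{!*}\gamma)$, and then $(1-\zeta^{n})\zeta^{c+n}\equiv\zeta^{c}(\zeta^{n}-1)\equiv\zeta^{c}(\zeta^{-n}-1)\pmod{\zeta^2-1}$, so the jump of $\chi^{\mathit{sig}}$ equals $\zeta^{c}$ times the jump of $\chi^H$; hence the defect $\chi^{\mathit{sig}}-\zeta^{c}\chi^H$ is the same on both sides of the wall. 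Also, to get statement (2) into the equivalence you must compare each open interval with $s_i$ itself, using $\Gr^J_0 j_!f^{s_i}\gamma=j_{!*}f^{s_i}\gamma$ so that $\chi^H$ and $\chi^{\mathit{sig}}$ at $s_i\pm\epsilon$ equal the corresponding characters of $j_{!*}f^{s_i}\gamma$ plus boundary contributions; your proposal only gestures at this. Once the exponents are corrected ($u^{-n}$ and $\zeta^{n}$ in place of $u^{2n}$ and $\zeta^{2n}$) and the induction hypothesis is inserted as above, the argument coincides with the paper's proof.
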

\begin{proof}
We first relate the Hodge characters. By Theorem \ref{thm:semi-continuity}, we have
\[ \chi^H(j_{!*}f^s\gamma, u) = \begin{cases}\chi^H(j_*f^{s_i}\gamma, u), & \mbox{if $s_i < s < s_{i + 1}$}, \\ \chi^H(j_!f^{s_i}\gamma, u), & \mbox{if $s_{i - 1} < s < s_{i}$}.\end{cases} \]
We also have the Jantzen filtrations $J_\bullet$ on $j_!f^{s_i}\gamma$ and $j_*f^{s_i}\gamma$ and isomorphisms
\[ (s - s_i)^n \colon \Gr^J_n j_*f^{s_i}\gamma \overset{\sim}\to \Gr^J_{-n}j_!f^{s_i}\gamma(-n) \]
for $n \geq 0$. We have $\Gr^J_0 j_!f^{s_i}\gamma = j_{!*}f^{s_i}\gamma$, and for $n > 0$, $\Gr^J_{-n} j_!f^{s_i}\gamma$ is supported on the boundary of $Q_S$. By Theorem \ref{thm:filtered exactness}, the Hodge character $\chi^H$ is additive in exact sequences. We deduce that
\begin{equation} \label{eq:hs wall crossing 1}
\chi^H(j_{!*}f^{s}\gamma, u) = \begin{cases} \chi^H(j_{!*}f^{s_i}\gamma, u) + \sum_{n > 0} u^{-n}\chi^H(\Gr^J_{-n}j_!f^{s_i}\gamma, u), &\mbox{for $s_i < s < s_{i + 1}$}, \\ \chi^H(j_{!*}f^{s_i}\gamma, u) + \sum_{n > 0} \chi^H(\Gr^J_{-n}j_!f^{s_i}\gamma, u), &\mbox{for $s_{i - 1} < s < s_i$}.\end{cases}
\end{equation}

We next relate the signature characters. By Theorem \ref{thm:jantzen}, the Hodge modules $\Gr^J_{-n}j_!f^{s_i}\gamma$ are pure of weight $w - n$ (where $w = \dim \tilde{Q}$ is the weight of $\gamma$) and the Jantzen forms are polarizations. By construction, the signature for $s_i < s < s_{i + 1}$ is that of the direct sum of the Jantzen forms, and the signature for $s_{i - 1} < s < s_i$ is that of the direct sum of the Jantzen forms with alternating sign. So we deduce that
\begin{equation} \label{eq:hs wall crossing 2}
\chi^{\mathit{sig}}(j_{!*}f^s\gamma, \zeta) = \begin{cases} \chi^{\mathit{sig}}(j_{!*}f^{s_i}\gamma, \zeta) + \sum_{n > 0} \chi^{\mathit{sig}}(\Gr^J_{-n}j_!f^{s_i}\gamma, \zeta), &\mbox{if $s_i < s < s_{i + 1}$}, \\ \chi^{\mathit{sig}}(j_{!*}f^{s_i}\gamma, \zeta) + \sum_{n > 0} \zeta^n \chi^{\mathit{sig}}(\Gr^J_{-n}j_!f^{s_i}\gamma, \zeta), &\mbox{if $s_{i - 1} < s < s_i$}.\end{cases}
\end{equation}

Applying the induction hypothesis on dimension (and recalling that the constant $c$ depends on the weight), we have
\[ \chi^{\mathit{sig}}(\Gr^J_{-n}j_!f^{s_i}\gamma, \zeta) = \zeta^{c(\Gr^J_{-n}j_!f^{s_i}\gamma)} \chi^H(\Gr^J_{-n}j_!f^{s_i}\gamma, \zeta) = \zeta^{c + n} \chi^H(\Gr^J_{-n}j_!f^{s_i}\gamma, \zeta),\]
where $c = c(j_{!*}\gamma)$. We deduce from \eqref{eq:hs wall crossing 1} and \eqref{eq:hs wall crossing 2} that
\[ \chi^{\mathit{sig}}(j_{!*}f^s\gamma, \zeta) - \zeta^c\chi^H(j_{!*}f^s\gamma, \zeta) = \chi^{\mathit{sig}}(j_{!*}f^{s_i}\gamma, \zeta) - \zeta^c\chi^H(j_{!*}f^{s_i}\gamma, \zeta) \]
for $s_{i - 1} < s < s_{i + 1}$. The statement of the lemma now follows.
\end{proof}

This concludes the proof of Theorem \ref{thm:hodge and signature K}.

\subsection{Proof in the $K'$-equivariant case} \label{subsec:pf of signature K'}

In this subsection, we explain how to Prove Theorem \ref{thm:hodge and signature K'} by extending the arguments of \S\ref{subsec:pf of signature K} to the $K'$-equivariant setting.

Let $\mc{M} \in \mhm(\tilde{\mc{D}}, K')$ be irreducible and dominantly twisted. Then $\mc{M}$ takes one of two forms: either $\mc{M} = \mc{N} \oplus \theta^*\mc{N}$, where $\mc{N} \in \mhm(\tilde{\mc{D}}, K)$ satisfies $\theta^*\mc{N} \not\cong \mc{N}$, or $\mc{M}$ is already irreducible as a $K$-equivariant module.

In the first case, the desired result follows from the $K$-equivariant case and the following obvious lemma. Here we note that we have a well-defined map
\[ (1 + \xi) \colon \mrm{Rep}(K) \to \mrm{Rep}(K'),\]
where $\xi$ is the non-trivial character of $\{1, \theta\} = K'/K$.

\begin{lem}
We have
\[ \chi^H(\mc{N} \oplus \theta^*\mc{N}, u) = (1 + \xi)\chi^H(\mc{N}, u) \]
and
\[ \chi^{\mathit{sig}}(\mc{N} \oplus \theta^*\mc{N}, \zeta) = (1 + \xi)\chi^{\mathit{sig}}(\mc{N}, \zeta).\]
\end{lem}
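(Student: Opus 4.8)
The plan is to unwind the definitions; the lemma is formal once one has Theorem~\ref{thm:filtered exactness} and Schur's lemma. Write $\mc{M} = \mc{N} \oplus \theta^*\mc{N}$ with $\mc{N} \in \mhm(\tilde{\mc{D}}, K)$ irreducible, $\theta^*\mc{N} \not\cong \mc{N}$, and pure of weight $w$, so that the $K'$-equivariant structure on $\mc{M}$ is the one in which $K$ acts diagonally and $(1,\theta) \in K' = K \times \{1,\theta\}$ exchanges the two summands via the tautological isomorphism $\theta^*\theta^*\mc{N} \cong \mc{N}$. Since $\mc{M}$ is dominantly twisted, every constituent twist is dominant (note $\delta$ preserves dominant weights), so Theorem~\ref{thm:filtered exactness} gives $\Gr^F_p\Gamma(\mc{M}) = \Gamma(\Gr^F_p\mc{M})$ and likewise for $\mc{N}$ and $\theta^*\mc{N}$; in particular both characters split according to the decomposition $\mc{M} = \mc{N} \oplus \theta^*\mc{N}$.

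First I would dispose of the Hodge character. Because $K' = K \times \{1,\theta\}$ is a direct product, $\theta$ commutes with the $K$-action on $\tilde{\mc{B}}$, so $\theta$ is a $K$-equivariant automorphism and pullback along it identifies $\Gr^F_p\Gamma(\theta^*\mc{N})$ with $\Gr^F_p\Gamma(\mc{N})$ as $K$-modules (the underlying $\mc{O}$-module and the $K$-action are merely transported, while the $\mf{g}$-module structure is twisted by the Cartan involution). Hence $\Gr^F_p\Gamma(\mc{M})$, equipped with the $K'$-action that swaps its two summands, is canonically $\mrm{Ind}_K^{K'}\Gr^F_p\Gamma(\mc{N})$, whose class in $\mrm{Rep}(K')$ is by definition $(1+\xi)[\Gr^F_p\Gamma(\mc{N})]$. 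Summing over $p$ gives the first identity.

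For the signature character I would first note that, being pure, $\mc{N}$ is polarizable, so $\mc{N}^h \cong \mc{N}$; therefore every morphism $\mc{N} \to (\theta^*\mc{N})^h \cong \theta^*(\mc{N}^h) \cong \theta^*\mc{N}$ vanishes, and the polarization $S$ of $\mc{M}$ is consequently block-diagonal, $S = S_{\mc{N}} \oplus S_{\theta^*\mc{N}}$, with $S_{\theta^*\mc{N}}$ a polarization of $\theta^*\mc{N}$ agreeing with $\theta^*S_{\mc{N}}$ up to a positive real scalar (uniqueness of polarizations on irreducibles). Integrating, $\Gamma(S) = \Gamma(S_{\mc{N}}) \oplus \Gamma(S_{\theta^*\mc{N}})$ is an orthogonal decomposition, and since the compact form and base point are chosen $\theta$-compatibly (see \S\ref{subsec:choices}) the two blocks have the same signature on corresponding $K$-isotypic pieces. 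Finally, for a $K'$-type $\mu$ with $\mrm{Res}^{K'}_K\mu = \nu$, Frobenius reciprocity identifies $\mrm{Hom}_{K'}(\mu, \Gamma(\mc{M}))$ with $\mrm{Hom}_K(\nu, \Gamma(\mc{N}))$ compatibly with the forms, so the signature of $\Gamma(S)$ on the $\mu$-isotypic part of $\Gamma(\mc{M})$ equals that of $\Gamma(S_{\mc{N}})$ on the $\nu$-isotypic part of $\Gamma(\mc{N})$; reorganising the sum over $\widehat{K'}$ as a sum over $\widehat{K}$, each $\nu \in \widehat{K}$ contributing its two extensions $\mu_\pm$ with $[\mu_+] + [\mu_-] = (1+\xi)[\nu]$, yields the second identity. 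The only points needing more than a sentence of care are the block-diagonality of $S$ and this last bookkeeping between $\widehat{K}$ and $\widehat{K'}$; there is no genuine obstacle, which is why the statement is flagged as obvious.
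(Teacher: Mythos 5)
Your argument is correct, and it is exactly the formal unwinding the paper has in mind when it labels this an "obvious lemma" and omits the proof: block-diagonality of the polarization for $\mc{N}\not\cong\theta^*\mc{N}$, the identification $\Gamma(\mc{M})\cong\mrm{Ind}_K^{K'}\Gamma(\mc{N})$ compatibly with Hodge filtrations, and the $\widehat{K}$-versus-$\widehat{K'}$ bookkeeping, with the only genuine care point being the $\theta$-compatibility of the integral pairing, which (as you note via the discussion of choices) only matters up to positive scalars and hence not for signatures. The appeal to Theorem \ref{thm:filtered exactness} is not actually needed, since the characters are defined directly from $\Gr^F_\bullet\Gamma(\mc{M})$ and $\Gamma(S)$, but it is harmless.
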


So we may as well assume that $\mc{M} = j_{!*}\gamma$, where $\gamma$ is a $K'$-equivariant $\lambda$-twisted local system on a $\theta$-stable $K$-orbit $Q$. In this case, we run exactly the same argument as in \S\ref{subsec:pf of signature K}. The only extra thing we need to check is that the deformations we construct are $K'$-equivariant. By Proposition \ref{prop:hc deformations} \eqref{itm:hc deformations 3}, it suffices to show that
\begin{equation} \label{eq:K'-equivariant 1}
(1 - \theta_Q)\lambda \in (\mf{h}^*_{S, \mb{R}})^{-\theta_Q} \cap (\mf{h}^*_\mb{R})^\delta.
\end{equation}
To show \eqref{eq:K'-equivariant 1}, we observe that since $\theta^*\gamma \cong \gamma$, we must have $\delta(\lambda) = \lambda$, so it is enough to show that $\theta_Q$ and $\delta$ commute. If we choose $x \in Q$, with corresponding Borel $B_x$ and a $\theta$-stable maximal torus $T \subset B_x$, then we have a commutative diagram
\[
\begin{tikzcd}
T \ar[r, "\theta"] \ar[d, "\tau_{\theta(x)}"] & T \ar[r, "\theta"] \ar[d, "\tau_x"] & T \ar[r, "\theta"] \ar[d, "\tau_x"] & T \ar[d, "\tau_{\theta(x)}"] \\
H \ar[r, "\delta"] & H \ar[r, "\theta_Q"] & H \ar[r, "\delta"] & H,
\end{tikzcd}
\]
where $\tau_x, \tau_{\theta(x)} \colon T \to H$ are the isomorphisms determined by $x$ and $\theta(x)$. Here commutativity of the left and right squares follows from the fact that the action of $\theta$ on $G$ and $\tilde{\mc{B}}$ intertwines the action of $\delta$ on $H$, and commutativity of the middle square is the definition of $\theta_Q$. Since $\theta(Q) = Q$, we have $\theta(x) \in Q$ also, so commutativity of the outer rectangle gives
\[ \theta_Q = \delta \theta_Q \delta.\]
So $\theta_Q$ and $\delta$ commute as claimed. This completes the proof of Theorem \ref{thm:hodge and signature K'}.

\end{document}